\title{Rooted Graph Minors and Reducibility of Graph Polynomials}
\author{Benjamin Richard Moore}
\date{April 6, 2017}
\keywords{Graph Minors; Rooted Minors; Reducibility; Feynman integrals; Symanzik polynomials}
\newtheorem{theorem}{Theorem}
\newtheorem{corollary}[theorem]{Corollary}
\newtheorem{proposition}[theorem]{Proposition}
\newtheorem{definition}[theorem]{Definition}
\newtheorem{remark}[theorem]{Remark}
\newtheorem{Conjecture}[theorem]{Conjecture}
\newtheorem{observation}[theorem]{Observation}
\newtheorem{example}[theorem]{Example}
\newtheorem{lemma}[theorem]{Lemma}
\newtheorem{question}[theorem]{Question}
\newtheoremstyle{named}{}{}{\itshape}{}{\bfseries}{.}{.5em}{\thmnote{#3's }#1}
\theoremstyle{named}
\newtheorem*{namedtheorem}{Theorem}
\newtheorem{jordantheorem}{Jordan Curve Theorem}
\begin{document}

\frontmatter
\maketitle{}
\makecommittee{}

\begin{abstract}
	 In 2009, Brown gave a set of conditions which when satisfied imply that a Feynman integral evaluates to a multiple zeta value. One of these conditions is called reducibility, which loosely says there is an order of integration for the Feynman integral for which Brown's techniques will succeed. Reducibility can be abstracted away from the Feynman integral to just being a condition on two polynomials, the first and second Symanzik polynomials. The first Symanzik polynomial is defined from the spanning trees of a graph, and the second Symanzik polynomial is defined from both spanning forests of a graph and some edge and vertex weights, called external momenta and masses. Thus reducibility is a property of graphs augmented with certain weights. We prove that for a fixed number of external momenta and no masses, reducibility is graph minor closed, correcting the previously claimed proofs of this fact. A computational study of reducibility was undertaken by Bogner and L\"{u}ders who found that for graphs with $4$-on-shell momenta and no masses, $K_{4}$ with momenta on each vertex is a forbidden minor. We add to this and find that when we restrict to graphs with four on-shell external momenta the following graphs are forbidden minors: $K_{4}$ with momenta on each vertex, $W_{4}$ with external momenta on the rim vertices, $K_{2,4}$ with external momenta on the large side of the bipartition, and one other graph. We do not expect that these minors characterize reducibility, so instead we give structural characterizations of the graphs not containing subsets of these minors. We characterize graphs not containing a rooted $K_{4}$ or rooted $W_{4}$ minor, graphs not containing rooted $K_{4}$ or rooted $W_{4}$ or rooted $K_{2,4}$ minors, and also a characterization of graphs not containing all of the known forbidden minors. Some comments are made on graphs not containing $K_{3,4}$, $K_{6}$ or a graph related to Wagner's graph as a minor.
\end{abstract}


\begin{acknowledgements} 
I'd like to thank my supervisor Karen Yeats for suggesting this problem to me and being a great supervisor. I'd like to thank Rick Brewster and Sean McGuinness for introducing me to research in graph theory. I'd like to thank Bruce Crofoot for initially helping me decide to pursue mathematics. I'd like to thank Erik Panzer for helping me use his code to determine if graphs are reducible. Also, I'd like to thank any friends and family who were supportive. Lastly, I'd like to thank NSERC for financial support. 

\end{acknowledgements}

\addtoToC{Table of Contents}\tableofcontents\clearpage
\addtoToC{List of Figures}\listoffigures


%

\mainmatter%

\chapter{Introduction}

\section{Motivation}

This thesis looks at when graphs have a property called reducibility. Reducibility is closely related to when Feynman integrals evaluate to multiple zeta values. Feynman integrals arise naturally in perturbative quantum field theory and have been the focus of a large amount of research.

One advance in calculating Feynman integrals was obtained by Brown in \cite{FrancisBig}, where he gave sufficient conditions for when a Feynman integral evaluates to a multiple zeta value. He also showed that these conditions hold for an infinite family of Feynman integrals, leading to one of the largest known families of graphs for which it is known how to calculate the Feynman integral. One of Brown's conditions is reducibility, which essentially tells you a suitable order of integration for Brown's techniques. However, reducibility can be abstracted away from Feynman integrals into simply being a property of polynomials. As the polynomials relevant for Feynman integrals come from graphs, reducibility can be viewed as a property of graphs. Furthermore, it can be shown that reducibility is graph minor closed, and thus one can use graph minor theory to try and understand reducibility. This thesis looks at reducibility from a graph minor point of view. The outline of this thesis is as follows. First, in chapter one, we  give an overview of the Symanzik polynomials. In chapter $2$, we give a proof that reduciblity is graph minor closed (Corollary \ref{MinorClosed}), correcting the proofs given in \cite{Martin} and \cite{Bognerpaper}. We also give a new forbidden minor, which we denote $L$, for reducibility for graphs with four on-shell external momenta, and rewrite the non-reducible graphs in  \cite{Martin} in terms of forbidden minors. In chapter $3$, we give two different excluded minor theorems for rooted $K_{4}$ and rooted $W_{4}$ minors (Theorem \ref{w4cuts} and Theorem \ref{3connectivityW4K4}). We also give the complete class of graphs not containing any of a rooted $K_{4}$, rooted $W_{4}$, or rooted $K_{2,4}$-minor (Theorem \ref{k4w4k24characterization}). Additionally, we give the complete class of graphs not containing any of a rooted $K_{4}$, rooted $W_{4}$, rooted $K_{2,4}$ or rooted $L$-minors (although here we suspect a nicer characterization should be able to be found). We also make some observations about $K_{3,4}$-minors, $K_{6}$-minors and minors of a graph related to Wagner's graph.  

\section{Graph Theory Basics}
Graphs will be the fundamental objects throughout this thesis. This section will give basic definitions of graph theory terms which will be used throughout. A good textbook for basic graph theory is \cite{BondyAndMurty}.

A graph $G$ is a pair $(V(G), E(G))$ where $E(G)$ is a 2-element (multi)set of $V(G)$. The elements of $V(G)$ are called the \textit{vertices} of the $G$, and the elements of $E(G)$ are the edges of $G$. 
Two vertices $u$ and $v$ in a graph are said to be adjacent if $\{u,v\} \in E(G)$, which we abbreviate to $uv \in E(G)$.  If $e =uv \in E(G)$, we say that $u$ and $v$ are \textit{neighbours} and that $e$ has \textit{endpoints} $u$ and $v$.  The set of all neighbors of a vertex $v$ is called the \textit{neighborhood} of $v$, denoted $N(v)$. Given a set of vertices $X$, we similarly define $N(X)$ to be the set of neighbours for the vertices in $X$, not including vertices in $X$. Given a graph $G = (V(G),E(G))$, and sets $V' \subseteq V(G)$ and $E' \subseteq E(G)$ such that $e=uv \in E'$ implies $u,v \in V'$, then the graph $G' = (V',E')$ is a \textit{subgraph} of $G$. If we have a graph $G = (V(G),E(G))$ and $X \subseteq V(G)$, we let $G[X]$ be the graph where $V(G[X]) = X$ and $E(G[X]) = \{uv | \ u \in X \ \text{and} \ v \in X\}$. We say that $G[X]$ is the \textit{graph induced by $X$}. If $e_{1} = uv \in E(G)$ and $e_{2} = uv \in E(G)$, but $e_{1} \neq e_{2}$ then we say that $e_{1}$ and $e_{2}$ are \textit{parallel edges}. If for some edge $e = uv \in E(G)$, we have $u=v$, then we say $e$ is a \textit{loop edge}. A graph is \textit{simple} if there are no parallel or loop edges.

Given a graph $G$, a vertex $v \in V(G)$ is a \textit{dominating vertex} if the neighbourhood of $v$ is $V(G) \setminus \{v\}$. Some authors refer to this as an \textit{apex vertex}.

Given a graph $G$, its \textit{line graph} $L(G)$ is a graph such that for each edge $e$ in $G$, we create a vertex $v$ in $L(G)$ and given two vertices $x,y \in V(L(G))$, we have $xy \in E(L(G))$ if and only if the corresponding edges in $G$ share an endpoint.

Given an edge $e= xy \in E(G)$, the graph obtained by \textit{contracting} $e$, denoted $G / e$, is the graph where we delete the vertices $x$ and $y$, and replace them with a new vertex $z$ where for each vertex $u$ adjacent to $x$, we have an edge $ux$, and for each edge adjacent to $y$ we have an edge $uy$. Given a graph $G$, to \textit{delete an edge}, for some edge $e \in E(G)$, is to create a graph $G \setminus e$ where $V(G \setminus e) = V(G)$ and $E(G \setminus e) = E(G) \setminus \{e\}$. It is an easy exercise to show that deletion and contraction commute.

A graph $G$ has a \textit{$H$-minor} for some graph $H$, if a graph isomorphic to $H$ can be obtained from $G$ through the contraction and deletion of some edges, and possibly the removal of some isolated vertices. It is easy to see that if $G$ is connected, then we never need to delete isolated vertices. If for some $H$, a graph $G$ does not contain an $H$-minor, then we say that $G$ is \textit{$H$-minor-free}. If $J$ is a minor of a graph $G$, then $J$ is \textit{proper} if $J$ is not isomorphic to $G$. A set of graphs, $\mathcal{G}$, is \textit{graph minor closed} if for every graph $G \in \mathcal{G}$, all proper minors of $G$ are in $\mathcal{G}$.
Let $\mathcal{G}$ be a set of graphs which is graph minor closed. A \textit{forbidden minor} of $\mathcal{G}$ is a graph $H$ such that $H \not \in \mathcal{G}$ but all proper minors of $H$ are in $\mathcal{G}$.

Given a graph $G$ and an edge $e = xy \in E(G)$, to \textit{subdivide an edge $e$} is to create a new graph $G'$ where $V(G') = V(G) \cup \{z\}$ and $E(G') = E(G) \setminus \{xy\} \cup \{zx,zy\}$. A graph $G$ has a graph $H$ as a \textit{topological minor} if there is a subgraph of $G$ which is isomorphic to a iterated subdivision of $H$ in $G$.

Given a graph $G$, a \textit{$k$-separation} of $G$ is a pair $(A,B)$ such that $A \subseteq V(G)$, $B \subseteq V(G)$, $A \cup B = V(G)$, $|A \cap B| \leq k$, and if $v \in B \setminus A$, and $u \in A \setminus B$, then $uv \not \in E(G)$. The vertices in $A \cap B$ are called the \textit{vertex boundary} of the separation. We say a $k$-separation is \textit{proper} if $A \setminus (A \cap B) \neq \emptyset$ and $B \setminus (A \cap B) \neq \emptyset$.  A proper $k$-separation is \textit{tight} if for all subsets $X \subsetneq A \cap B$, the set $X$ is not the vertex boundary of a separation. In a proper $1$-separation $(A,B)$, the vertex in $A \cap B$ is called a \textit{cut-vertex}. In general, a set of vertices, $\{v_{1},\ldots,v_{k}\}$, is a \textit{$k$-vertex-cut} if there is a proper $k$-separation $(A,B)$ such that $A \cap B = \{v_{1},\ldots,v_{k}\}$. 

We will run into a few common graph families throughout the thesis. Let $n \in \mathbb{N}$.  The \textit{complete graph on $n$ vertices}, denoted $K_{n}$, is the graph where we can label the vertices $\{v_{0},\ldots,v_{n-1}\}$ such that $E(G) = \{v_{i}v_{j} |\  \forall \ i,j \in \{0,\ldots,n-1\} , i\neq j\}$. If a graph $G$ has a complete graph on $k$ vertices as a subgraph, the subgraph will be called  a \textit{$k$-clique}.

Let $G_{1}$ and $G_{2}$ be graphs a $k$-cliques as subgraphs. A \textit{$k$-clique-sum} or just \textit{$k$-sum} of $G_{1}$ and $G_{2}$ is a bijective identification of pairs of vertices in the two $k$-cliques with, if desired, removal of some edges from the new $k$-clique. We note sometimes authors enforce that all edges are removed in a $k$-sum. In practice, under the assumption we can have parallel edges, this is equivalent to the above definition, as one simply adds parallel edges as desired. 

A graph $G = (V,E)$ is \textit{bipartite} if there is a bipartition $(A,B)$ of $V$ such that if $e = uv \in E$, then exactly one of $u$ and $v$ is in $A$ and exactly one of $u$ or $v$ is in $B$. 
Let $n,m \in \mathbb{N}$. A \textit{complete bipartite graph} with partition sizes $n$ and $m$, denoted $K_{n,m}$, is a bipartite graph with a bipartition $(A,B)$ such that $|A| = n$, $|B|=m$ and every vertex in $A$ is adjacent to every vertex in $B$. 

A graph $P$ is a \textit{path on $n$ vertices} if we can label the vertices of $P$,  $v_{0},\ldots,v_{n-1}$ such that $E(P) = \{v_{i}v_{i+1} | \ \forall i \in \{0,\ldots,n-2\}\}$. Given a path $P$, the vertices $v_{0}$ and $v_{n-1}$ are the \textit{endpoints} of $P$. An \textit{$(a,b)$-path} is a path with endpoints $a$ and $b$. Two paths are \textit{disjoint} if their vertex sets are disjoint. Two $(a,b)$-paths are \textit{internally disjoint} if the intersection of their vertex sets is $\{a,b\}$. Given two sets of vertices $X$ and $Y$, an $(X,Y)$-path is a path with one endpoint in $X$ and one endpoint in $Y$.
  A graph is \textit{connected} if for every pair of vertices, $x,y$, it contains an $(x,y)$-path. A graph $G$ is \textit{$k$-connected} if $|V(G)| \geq k+1$ and for every pair of vertices $x,y$, there are $k$ internally disjoint $(x,y)$-paths. Menger's Theorem is a very important and well known result on $k$-connectivity (\cite{Menger1927}).
  
 \begin{namedtheorem}[Menger] 
Let $G$ be a graph and $x,y$ be non-adjacent vertices. Then the maximum number of pairwise internally disjoint $(x,y)$-paths is equal to the minimum order of a separation $(A,B)$ where $x \in A$ and $y \in B$.
\end{namedtheorem}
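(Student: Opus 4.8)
The plan is to prove the two inequalities separately; the inequality ``at most'' is routine, so I begin with it. Let $P_{1},\dots,P_{t}$ be pairwise internally disjoint $(x,y)$-paths and let $(A,B)$ be any separation with $x\in A$ and $y\in B$. Since $x$ and $y$ are non-adjacent we may assume $x\in A\setminus B$ and $y\in B\setminus A$ (a minimum $(x,y)$-separation can always be taken in this form). Traversing $P_{i}$ from $x$ towards $y$, let $w$ be its first vertex lying in $B$ and $w'$ the vertex of $P_{i}$ just before $w$; then $w'\in A$, and because the edge $w'w$ cannot join a vertex of $A\setminus B$ to a vertex of $B\setminus A$, one of $w,w'$ lies in $A\cap B$. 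Hence each $P_{i}$ meets $A\cap B$, and since the $P_{i}$ are internally disjoint and $x,y\notin A\cap B$, they meet $A\cap B$ in $t$ distinct vertices; thus $t\le|A\cap B|$. Taking the maximum over path systems and the minimum over separations gives this inequality.

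For the reverse inequality I would prove, by induction on $|E(G)|$, the standard more convenient form: \emph{for any two vertex sets $A,B$ of a graph $G$, the minimum order of a separation separating $A$ from $B$ equals the maximum number of disjoint $A$--$B$ paths}; the theorem then follows by applying this to $G-\{x,y\}$ with $A=N(x)$ and $B=N(y)$, using the evident correspondence between internally disjoint $(x,y)$-paths (resp. $(x,y)$-separations) in $G$ and disjoint $N(x)$--$N(y)$ paths (resp. separations) in $G-\{x,y\}$. The base case $E(G)=\emptyset$ is immediate, since $A\cap B$ is then a minimum separator that already supplies $|A\cap B|$ trivial paths. For the inductive step, let $k$ be the minimum separator size and pick an edge $e=uv$. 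If $G-e$ still has minimum separator size $k$, the induction hypothesis yields $k$ disjoint $A$--$B$ paths already in $G-e\subseteq G$. Otherwise $G-e$ has a separator $Y$ of order $k-1$ (it cannot be smaller, as $Y$ together with one endpoint of $e$ would then be a separator of $G$ of order below $k$), and $Y\cup\{u\}$, $Y\cup\{v\}$ are minimum separators of $G$. I would then split $G$ along these sets into two graphs $G_{u}$ and $G_{v}$, each with strictly fewer edges, apply the induction hypothesis to each to obtain $k$ disjoint paths meeting every vertex of the relevant size-$k$ separator, and concatenate them along $Y$ — routing the single leftover path across the edge $e$ — to assemble $k$ disjoint $A$--$B$ paths in $G$.

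The delicate part, which I expect to absorb most of the effort, is precisely this splitting step. One must set up $G_{u}$ and $G_{v}$ so that \emph{each} genuinely loses at least one edge — this is why the split is taken along $Y$ plus an endpoint of the deleted edge $e$, rather than along an arbitrary minimum separator — and one must verify that each of $G_{u},G_{v}$ still has minimum separator size exactly $k$, which is where the minimality of the chosen separator of $G$ enters; the concatenated system then has to be checked to consist of genuinely internally disjoint paths. (A shorter but heavier alternative is to invoke the max-flow--min-cut theorem on an auxiliary network built from $G$, but the edge-induction above keeps the argument self-contained and stays within the tools already introduced.)
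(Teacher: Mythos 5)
The paper does not prove this statement: Menger's Theorem is quoted as a classical named theorem with a citation to the original 1927 paper (\cite{Menger1927}), and no proof appears anywhere in the thesis. So there is no proof in the paper for me to compare your attempt against; the theorem is treated as a black box that feeds Corollary~\ref{XYpaths} and the later connectivity reductions.

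That said, a few remarks on your sketch taken on its own terms. The ``$\le$'' direction is essentially complete and correct, though slightly convoluted: once $w$ is defined as the first vertex of $P_i$ in $B$ and $w'$ as its predecessor, $w'$ is forced into $A\setminus B$ (it precedes the first vertex in $B$), so the separation condition forces $w\in A\cap B$ directly --- you never actually need the ``one of $w,w'$'' disjunction. One also needs to justify, as you gesture at, that the minimum in the theorem statement is attained by a separation with $x\in A\setminus B$ and $y\in B\setminus A$; as literally written (``$x\in A$ and $y\in B$'') the statement would be vacuous, since $A=\{x\}$, $B=V(G)$ is a $1$-separation. The ``$\ge$'' direction is a correct outline of the standard edge-deletion induction (G\"oring's proof, as in Diestel), and you have correctly located the crux: constructing $G_u$, $G_v$ so that each strictly drops an edge and still has separator number exactly $k$, then concatenating the two systems of $k$ disjoint paths along $Y$. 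As written that step is deferred rather than carried out, so this is a proof plan rather than a proof; but the plan is sound and the acknowledged gap is exactly where the real work lies in every textbook treatment.
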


A useful immediate implication of Menger's Theorem is:

\begin{corollary}
\label{XYpaths}
Suppose $G$ is $k$-connected. Let $X,Y \subseteq V(G)$ and $|X|, |Y| \geq k$. Then there is a family of $k$ disjoint $(X,Y)$-paths.
\end{corollary}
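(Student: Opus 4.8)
The plan is to reduce the statement about two vertex sets $X$ and $Y$ to the classical two-vertex form of Menger's theorem by adding two auxiliary vertices. First I would introduce a new vertex $x^{*}$ adjacent to every vertex of $X$ and a new vertex $y^{*}$ adjacent to every vertex of $Y$, calling the resulting graph $G^{+}$. The point of this gadget is that disjoint $(X,Y)$-paths in $G$ correspond (after deleting the endpoints $x^{*},y^{*}$) to internally disjoint $(x^{*},y^{*})$-paths in $G^{+}$, so it suffices to produce $k$ internally disjoint $(x^{*},y^{*})$-paths in $G^{+}$.

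The next step is to verify that $x^{*}$ and $y^{*}$ are non-adjacent in $G^{+}$ (they are, since we only joined $x^{*}$ to $X$ and $y^{*}$ to $Y$, and we may assume $X\cap Y$ does not already make them adjacent — in any case we do not add the edge $x^{*}y^{*}$), so Menger's theorem applies: the maximum number of internally disjoint $(x^{*},y^{*})$-paths in $G^{+}$ equals the minimum order of a separation $(A,B)$ of $G^{+}$ with $x^{*}\in A$, $y^{*}\in B$. So I would show every such separation has order at least $k$. Given such a separation $(A,B)$ with boundary $S = A\cap B$, note $S\subseteq V(G)$. If some vertex of $X$ lies in $B\setminus A$, then since $x^{*}\in A\setminus B$ is adjacent to it, that contradicts the separation property unless that $X$-vertex is in $S$; hence $X\setminus S \subseteq A\setminus B$, and symmetrically $Y\setminus S\subseteq B\setminus A$. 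Now if $|S| < k$, then since $|X|\ge k$ we have $X\setminus S\neq\emptyset$, so $A\setminus B$ contains a genuine vertex of $G$, and likewise $B\setminus A$ contains a genuine vertex of $G$; restricting $(A\cap V(G), B\cap V(G))$ gives a proper separation of $G$ of order $|S| < k$, contradicting that $G$ is $k$-connected (here one should also note $|V(G)|\ge k+1$ so $G$ genuinely has such vertices to separate). Therefore every separation of $G^{+}$ between $x^{*}$ and $y^{*}$ has order at least $k$, and Menger yields $k$ internally disjoint $(x^{*},y^{*})$-paths.

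Finally, I would translate back: deleting $x^{*}$ and $y^{*}$ from these $k$ paths produces $k$ paths in $G$, each with one end in $X$ and one end in $Y$, and internal disjointness of the $(x^{*},y^{*})$-paths forces the resulting $(X,Y)$-paths to be pairwise vertex-disjoint. One small bookkeeping point: a path could in principle consist of the single edge through $x^{*}$ and $y^{*}$'s common neighbour, i.e. have length $2$ in $G^{+}$ and collapse to a single vertex lying in $X\cap Y$; this is still a legitimate (trivial) $(X,Y)$-path, and it remains disjoint from the others, so the conclusion stands.

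The main obstacle is not conceptual but a matter of care: getting the boundary-chasing argument exactly right so that a small separation of $G^{+}$ really does restrict to a \emph{proper} separation of $G$ of the same order — one must be sure that after removing $x^{*},y^{*}$ both sides still contain a vertex outside the boundary, which is exactly where the hypotheses $|X|,|Y|\ge k$ and $k$-connectivity of $G$ are used. Everything else is routine.
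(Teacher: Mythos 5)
Your proof is correct and is the standard textbook reduction: adjoin dummy vertices $x^{*}$ (adjacent to all of $X$) and $y^{*}$ (adjacent to all of $Y$), apply the vertex form of Menger's theorem to $x^{*},y^{*}$, and check that no separation of $G^{+}$ between them can have order below $k$ because it would restrict to a proper separation of $G$ of the same order. The paper gives no proof at all here --- it labels the corollary ``a useful immediate implication of Menger's Theorem'' and moves on --- so there is nothing to contrast approaches against; your writeup simply fills in the folklore argument the author treated as standard.

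One point worth keeping explicit, which you already flag: the paper's statement of Menger says only ``$x\in A$ and $y\in B$,'' which read literally admits degenerate separations with $x$ or $y$ in the boundary $A\cap B$. You implicitly read it as $x\in A\setminus B$, $y\in B\setminus A$, which is the correct (and standard) reading and the one the paper must intend; under that reading your boundary-chasing step, showing $X\setminus S\subseteq A\setminus B$ and $Y\setminus S\subseteq B\setminus A$ so that both sides of the restricted separation are nonempty, goes through exactly as you describe. The observation that a length-$2$ path through a vertex of $X\cap Y$ collapses to a single-vertex $(X,Y)$-path is also the right thing to notice and is consistent with the paper's definition of an $(X,Y)$-path, which only constrains the endpoints.
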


  A \textit{cycle on $n$ vertices}, denoted $C_{n}$,  is a graph where $V(C_{n}) = \{v_{0},\ldots,v_{n-1}\}$ and $E(C_{n}) = \{v_{i}v_{i+1}\} \cup \{v_{0}v_{n-1}\}$ for $i \in \{0,\ldots,n-2\}$. A \textit{tree} is a connected graph with no cycles. Given a graph $G$, a \textit{spanning tree of $G$}, $T$, is a subgraph of $G$ where $T$ is a tree and $V(T) = V(G)$. 
 
  A \textit{wheel on $n$ spokes}, denoted $W_{n}$, is a graph where $V(W_{n}) = \{v_{0},\ldots,v_{n}\}$ such that $\{v_{0},\ldots,v_{n-1}\}$ induces a cycle on $n$ vertices, and $v_{n}$ is adjacent to every other vertex. The vertices $\{v_{0},\ldots,v_{n-1}\}$ are called \textit{rim vertices} and $v_{n}$ is called the \textit{hub}.

\begin{figure}
\begin{center}
\includegraphics[scale =0.5]{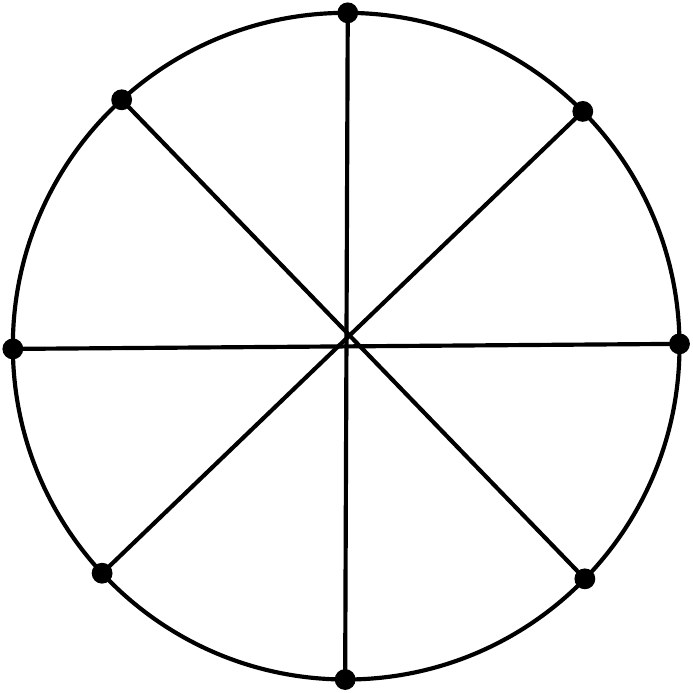}
\caption{The Wagner graph, $V_{8}$.}
\end{center}
\end{figure}

The \textit{Wagner graph}, denoted  $V_{8}$, is defined to have vertex set $\{0,1,2,3,4,5,6,7\}$ and edge set $\{01,12,23,34,45,56,67,70,04,15,26,37\}$. 

A graph $G$ is \textit{planar} if $G$ can be embedded in the Euclidean plane, where embedded means the graph can be drawn in the Euclidean plane without edges crossing. For $2$-connected graphs which are embedded in the plane, a \textit{face} is a region bounded by a cycle $C$ such that all vertices and edges not in $C$ are drawn outside of $C$. A cycle which bounds a face is a \textit{facial cycle}. A \textit{facial path} between two vertices on a facial cycle $C$ is a path $P$ between the two vertices such that $V(P) \subseteq V(C)$. We will use the following well known result.

\begin{jordantheorem}
Any continuous simple closed curve in the plane separates the plane into two disjoint regions, the interior and the exterior.
\end{jordantheorem}

Any other undefined graph theory terminology can be found in Bondy and Murty's graph theory textbook (\cite{BondyAndMurty}).

\section{The Symanzik Polynomials}

This section will give an overview of the \textit{Symanzik polynomials}, which are graph polynomials relevant to Feynman integrals.

To define the Symanzik polynomials, we first need to augment our graph with additional labels. Let $G$ be a graph. First, for each edge $e \in E(G)$ we create a variable $\alpha_{e}$, called a \textit{Schwinger parameter}. Additionally, for each edge $e \in E(G)$, we associate with $e$ an edge weight, $m_{e} \in \mathbb{R}$, called the \textit{mass} of edge $e$. We note that some authors' (see \cite{Bognerpaper}) allow for complex masses to allow for particles such as tachyons, however we restrict to real masses. An edge $e$ is \textit{massive} if $m_{e} \neq 0$. Also, for each vertex $v\in V(G)$, we will associate a vector $\rho_{v} \in \mathbb{R}^{4}$, called the \textit{external momentum at $v$}. We will enforce $\rho_{v} = 0$ for most vertices throughout this thesis. For the vertices where we allow $\rho_{v} \neq 0$, we will say there is an \textit{external edge} at $v$. In figures, we will draw external edges as a edge with only one endpoint (see Figure \ref{examplegraph}).

 We will enforce that the external momenta of our graph will satisfy conservation of momentum. Namely, for a given graph $G$, 
\[ \sum_{\mathclap{v \in V(G)}} \rho_{v} =0. \]

 \begin{definition}
Let $G$ be a graph with associated external momenta, masses, and Schwinger parameters. Let $\mathcal{T}$ be the set of spanning trees of $G$. Then the \emph{first Symanzik polynomial} of $G$ is 
 
 \[ \psi_{G} = \sum_{T \in \mathcal{T}} \prod_{e \not \in E(T)} \alpha_{e}. \]
  \end{definition}

 In the special case where $G$ is disconnected, then $\psi_{G} = 0$. Although we do not do so here, it is common to derive the above definition from the Laplacian matrix by using the Matrix-tree Theorem (\cite{Feynmangraphpolynomials}). We note that some authors refer to the first Symanzik polynomial as the \textit{Kirchoff polynomial} (\cite{Crumppaper}), or occasionally as the \textit{dual Kirchoff polynomial} (\cite{Feynmangraphpolynomials}).
 
 \begin{example}
 \label{cycleSymanzikpoly}
 Consider any cycle, $C_{n}$. Then $\psi_{C_{n}} = \sum_{e 
 \in E(C_{n})} \alpha_{e}$.
 \end{example}
 
 To see this, notice that deleting any edge of $C_{n}$ results in a path and every spanning tree of $C_{n}$ is such a path.
 
 Before defining the second Symanzik polynomial we need some definitions. Given a graph $G$, a \textit{spanning $2$-forest} of $G$ is a pair $(T_{1},T_{2})$ where $T_{1}$ and $T_{2}$ are trees such that $V(T_{1}) \cup V(T_{2}) = V(G)$, and $V(T_{1}) \cap V(T_{2}) = \emptyset$.

 Let $G$ be a graph and $H$ a subgraph of $G$. We will say the momentum flowing into $H$ is 
 \[ \sum_{\mathclap{v \in V(H)}} \rho_{v}.\]
 
 We will denote the momentum flowing into $H$ as  $\rho^{H}$. Now we can define the second Symanzik polynomial.

 \begin{definition}
 Let $G$ be a graph with associated external momenta, masses, and Schwinger parameters.  Let $\mathcal{T}$ be the set of spanning $2$-forests of $G$. Then the \emph{second Symanzik polynomial} is 
 
 \[\phi_{G} = \sum_{\mathclap{(T_{1},T_{2}) \in \mathcal{T}}} (\rho^{T_{1}})^{2}  \prod_{\mathclap{e \not \in T_{1} \cup T_{2}}} \alpha_{e} + \psi_{G}\sum_{i =1}^{|E(G)|} \alpha_{i}m_{i}^{2}.\]
 \end{definition}
 
Here we note that the two forests are unordered, as in $(T_{1},T_{2}) = (T_{2},T_{1})$. Also, $(\rho^{T_{1}})^{2}$ is taken to mean take a norm squared, under an appropriate norm (we define the Minkowski metric later). In the literature, it is common to take just the terms involving the momenta to be the second Symanzik polynomial, and include the masses as a separate polynomial (see \cite{Brownsurveypaper}, \cite{Feynmangraphpolynomials}). Additionally, as with the first Symanzik polynomial, it is possible to derive the second Symanzik polynomial from a matrix. See \cite{Feynmangraphpolynomials} for an exposition. As a special case, we  note that $\phi_{G}=0$ when $G$ has more than $2$ connected components.
 
  As an example, we calculate the second Symanzik polynomial of the graph in Figure \ref{examplegraph}. Notice every spanning $2$-forest of $K_{3}$ is an edge combined with an isolated vertex. By example \ref{cycleSymanzikpoly}, $\psi_{K_{3}} = \alpha_{1} + \alpha_{2} + \alpha_{3}$. Therefore,
 \[\phi_{K_{3}} = (\rho_{1} + \rho_{2})^{2}\alpha_{2}\alpha_{3} + (\rho_{2} + \rho_{3})^{2}\alpha_{1}\alpha_{2} + (\rho_{3} + \rho_{1})^{2}\alpha_{3}\alpha_{1} + \psi_{K_{3}}(\alpha_{3}m_{3}^{2}).\]
 
Additionally, by conservation of momenta, we have $\rho_{1} + \rho_{2} + \rho_{3} =0$, so one can rewrite the above equation as,

\[\phi_{K_{3}} = \rho_{3}^{2}\alpha_{2}\alpha_{3} + \rho_{1}^{2}\alpha_{1}\alpha_{2} + \rho_{2}^{2}\alpha_{3}\alpha_{1} + \psi_{K_{3}}\alpha_{3}m_{3}^{2}.\] 

\begin{figure}
\begin{center}
\includegraphics[scale =0.5]{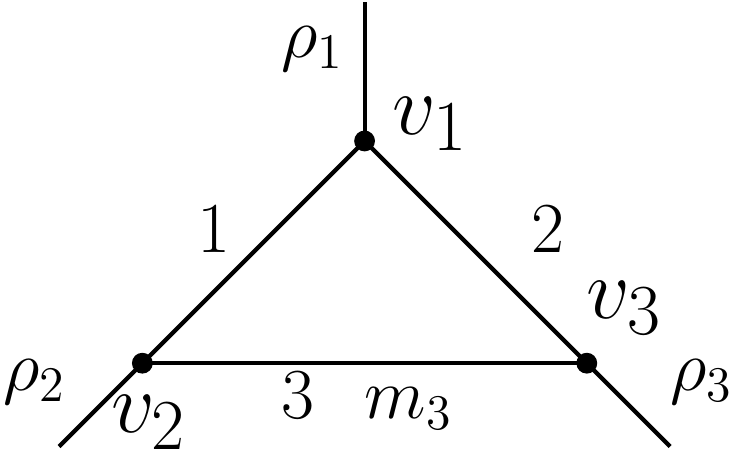}
\caption{A small graph. Edges are labelled $1$, $2$, $3$. Edge $3$ has mass $m_{3}$ and each vertex has external momenta incoming at the vertex.}
\label{examplegraph}
\end{center}
\end{figure}

 With these, we define the Feynman integral of a graph $G$ in parametric space to be:
 \[\frac{\Gamma(\nu - LD/2)}{\prod_{j=1}^{m} \Gamma(\nu_{j})} \int_{\alpha_{j} \geq 0} \delta(1 - \sum_{i =1}^{m} \alpha_{i})(\prod_{j=1}^{m}(d\alpha_{j})\alpha_{j}^{\nu_{j}-1})\frac{\psi^{\nu-(L+1)D/2}}{\phi^{\nu-LD/2}}.
\] 
 
 Here, $m$ is the number of edges in $G$, $D$ is the dimension of the physical theory, $\Gamma$ is the the gamma function, $\delta$ is the Dirac delta function, $L$ is the rank of the cycle space of $G$ (where the rank of the cycle space of $G$ is $|E(G)| - |V(G)| +1$), and $\alpha_{i}$ is the Schwinger parameter for edge $i$. The variable $\nu_j$ is a parameter which can be used to regularize the integral (converting a divergent integral into an expansion with poles
in the regularization parameters) or it can be used for keeping track of how subgraph insertions into the original graph affect the integral. It is noted in \cite{Martin} that for most applications $\nu_{j} =1$, for all $j \in \{1,\ldots,m\}$, however in general that might not be the case. Lastly, $\nu = \sum_{j=1}^{m} \nu_{j}$. Other techniques have been created to deal with divergent Feynman integrals, such as renormalization, however this thesis will not look at these tools (see, for example, \cite{HopfAlgebraRenormalization}).

 We will now go through some basic properties of the Symanzik polynmials.  An easy observation is that $\psi_{G}$ is a homogeneous polynomial linear in all variables, and if there are no massive edges, then $\phi_{G}$ is a homogeneous polynomial linear in all variables (\cite{Feynmangraphpolynomials}). A second observation which is well known (see \cite{Bognerpaper}, \cite{Feynmangraphpolynomials}, for example) is that the Symanzik polynomials behave nicely under deletion and contraction. 
  
\begin{lemma}
\label{deletioncontraction}
Let $G$ be a graph. Let $e \in E(G)$ such that $e$ is not massive and $e$ is not a loop. Then the following identities hold:

\[\psi_{G} = \psi_{G\setminus e}\alpha_{e} + \psi_{G / e},\]

\[\phi_{G} = \phi_{G\setminus e}\alpha_{e} + \phi_{G / e}.\]
\end{lemma}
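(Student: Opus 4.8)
The plan is to prove both identities by sorting the relevant (forest) structures of $G$ according to whether they use the edge $e$. First I would handle the first Symanzik polynomial. Write $e = xy$, which by hypothesis is neither a loop nor massive. Every spanning tree $T$ of $G$ either contains $e$ or does not. If $e \notin E(T)$, then $T$ is a spanning tree of $G \setminus e$, and conversely every spanning tree of $G \setminus e$ is a spanning tree of $G$ avoiding $e$; moreover the monomial contributed by $T$ to $\psi_G$ is $\alpha_e \prod_{f \notin E(T), f \neq e} \alpha_f$, which is exactly $\alpha_e$ times the monomial $T$ contributes to $\psi_{G \setminus e}$. Summing over this class gives $\psi_{G \setminus e}\,\alpha_e$. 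If $e \in E(T)$, then $T / e$ is a spanning tree of $G / e$ (contracting a tree edge of a spanning tree yields a spanning tree of the contracted graph), and this correspondence is a bijection between spanning trees of $G$ using $e$ and spanning trees of $G/e$; since the edges outside $T$ are unaffected by contracting $e$, the monomial is preserved exactly, giving $\psi_{G/e}$. Adding the two classes yields the first identity. One should note here that loops and massiveness are irrelevant to $\psi_G$; the hypotheses are stated uniformly for both parts, and for $\psi$ only "$e$ not a loop" is genuinely needed (a loop is in no spanning tree, so contraction/deletion would not split things as claimed).

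Next I would repeat the argument for $\phi_G$, which has two pieces. For the spanning $2$-forest piece, a spanning $2$-forest $(T_1, T_2)$ of $G$ with $e \notin T_1 \cup T_2$ is precisely a spanning $2$-forest of $G \setminus e$, contributing $\alpha_e$ times its $\phi_{G\setminus e}$-monomial, with the coefficient $(\rho^{T_1})^2$ unchanged since $V(T_1)$ is the same vertex set in both graphs. If instead $e \in T_1 \cup T_2$, then since $e$ is not a loop its endpoints $x,y$ lie in the same part, say $T_1$ (they cannot be split between $T_1$ and $T_2$ because $(T_1,T_2)$ is a forest in which $e$ is an edge within one tree), and $(T_1/e, T_2)$ is a spanning $2$-forest of $G/e$; this is a bijection, and the momentum flowing into $T_1/e$ equals $\rho^{T_1}$ because contraction only identifies $x$ and $y$ and the external momenta of all vertices are unchanged (the merged vertex carries $\rho_x + \rho_y$, and summing over $V(T_1/e)$ recovers $\sum_{v \in V(T_1)} \rho_v$). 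So this class contributes exactly the spanning-$2$-forest part of $\phi_{G/e}$. For the mass piece, $\psi_G \sum_i \alpha_i m_i^2$, I would use the first identity together with the facts that $e$ is massless (so $e$ contributes nothing to the mass sum) and that the mass sum over edges of $G \setminus e$ equals that over $G/e$ equals the mass sum over $G$ minus the (zero) term for $e$; then $\psi_G \sum_{i \in E(G)} \alpha_i m_i^2 = (\psi_{G\setminus e}\alpha_e + \psi_{G/e}) \sum_i \alpha_i m_i^2 = \alpha_e\bigl(\psi_{G\setminus e}\sum_{i \in E(G\setminus e)}\alpha_i m_i^2\bigr) + \psi_{G/e}\sum_{i \in E(G/e)}\alpha_i m_i^2$, matching the mass pieces of $\alpha_e \phi_{G\setminus e}$ and $\phi_{G/e}$. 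Combining the forest piece and the mass piece gives the second identity.

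The main obstacle I expect is bookkeeping rather than anything deep: making sure the bijection "contract $e$ in a spanning tree / in the part of a $2$-forest containing $e$" is genuinely a bijection onto the spanning trees / $2$-forests of $G/e$, and verifying that the exponent vectors of the monomials and the momentum coefficients transport correctly under this bijection. The one genuinely substantive point, and where the hypotheses bite, is the $2$-forest case: I must argue that if $e \in T_1 \cup T_2$ then both endpoints of $e$ lie in the same tree (this is exactly where "$e$ not a loop" is also used trivially, and where the forest structure matters), and that after contraction the two vertex sets still partition $V(G/e)$ so $(T_1/e, T_2)$ is a legitimate spanning $2$-forest with the same momentum invariant. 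Once those correspondences are pinned down, everything else is a term-by-term comparison of polynomials, and the masslessness of $e$ is what lets the $\psi_G \sum \alpha_i m_i^2$ term split cleanly without an extra $\alpha_e^2 m_e^2$ contribution.
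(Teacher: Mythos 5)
Your proof is correct and is the standard deletion--contraction argument for the Symanzik polynomials: sort spanning trees (respectively spanning $2$-forests) according to whether they use $e$, observe that the two classes are in monomial-preserving bijection with the spanning trees (respectively $2$-forests) of $G\setminus e$ and $G/e$, and handle the mass term of $\phi_G$ via the $\psi$ identity together with $m_e=0$. The paper itself states this lemma without proof, deferring to the references it cites; your argument is precisely the one found there, and you correctly isolate the only nontrivial points, namely that both endpoints of $e$ must lie in the same component of the $2$-forest and that the contracted vertex inherits momentum $\rho_x+\rho_y$ so that $\rho^{T_1}$ is preserved.
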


\begin{lemma} 
\label{derivativeandevaluationsetrelationship}
Let $G$ be a graph. Let $e \in E(G)$ such that $e$ is not massive and $e$ is not a loop. Then the following identities hold:

\[\frac{\partial}{\partial \alpha_{e}}\phi_{G} = \phi_{G\setminus e}, \ \phi_{G}|_{\alpha_{e} = 0} = \phi_{G / e},\]

\[\frac{\partial}{\partial\alpha_{e}}\psi_{G} = \psi_{G\setminus e}, \  \psi_{G}|_{\alpha_{e} = 0} = \psi_{G / e}.\]

\end{lemma}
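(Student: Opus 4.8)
The plan is to deduce this lemma immediately from Lemma \ref{deletioncontraction}, the only genuine input being the observation that the Schwinger parameter $\alpha_{e}$ does not occur in any of $\psi_{G\setminus e}$, $\psi_{G/e}$, $\phi_{G\setminus e}$, or $\phi_{G/e}$. Indeed, the Symanzik polynomials of a graph are, by definition, polynomials in the Schwinger parameters of \emph{that} graph's edges, and $e$ is not an edge of $G\setminus e$ (trivially) nor of $G/e$ (contraction removes $e$). So each of these four polynomials is a constant with respect to $\alpha_{e}$.

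Granting this, the proof is a one-line computation for each identity. Writing $\psi_{G} = \psi_{G\setminus e}\,\alpha_{e} + \psi_{G/e}$ as in Lemma \ref{deletioncontraction}, the right-hand side is an affine function of $\alpha_{e}$ with $\alpha_{e}$-free coefficients, so $\tfrac{\partial}{\partial\alpha_{e}}\psi_{G} = \psi_{G\setminus e}$ and $\psi_{G}|_{\alpha_{e}=0} = \psi_{G/e}$. Applying the very same reasoning to $\phi_{G} = \phi_{G\setminus e}\,\alpha_{e} + \phi_{G/e}$ gives $\tfrac{\partial}{\partial\alpha_{e}}\phi_{G} = \phi_{G\setminus e}$ and $\phi_{G}|_{\alpha_{e}=0} = \phi_{G/e}$. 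The hypotheses that $e$ is non-massive and not a loop are exactly what is needed to invoke Lemma \ref{deletioncontraction}, so nothing further is required there.

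The one point that deserves care — and the closest thing to an obstacle — is the claim that $\alpha_{e}$ is truly absent from the contracted polynomial $\psi_{G/e}$ (and $\phi_{G/e}$): one must check that the contraction operation as defined earlier does not secretly retain a reference to $\alpha_{e}$, e.g. through parallel edges produced by the contraction, each of which keeps its own distinct parameter rather than inheriting $\alpha_{e}$. This is immediate from the definition of $G/e$ given in the preliminaries, where $e$ is deleted and all surviving edges keep their identities. Alternatively, one could bypass Lemma \ref{deletioncontraction} entirely and argue directly from the defining sums: partition the spanning trees of $G$ (respectively the spanning $2$-forests) according to whether or not they use $e$, note that those avoiding $e$ contribute a factor $\alpha_{e}$ while those using $e$ do not, and read off both the derivative (linear coefficient of $\alpha_{e}$) and the evaluation at $\alpha_{e}=0$ (constant term). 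Since Lemma \ref{deletioncontraction} is already established, the short route above is cleaner and is the one I would present.
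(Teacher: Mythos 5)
Your proof is correct, and since the paper presents this lemma without proof (as a well-known fact, citing the literature alongside Lemma \ref{deletioncontraction}), your derivation from Lemma \ref{deletioncontraction} is exactly the standard argument one would supply. The key observation you isolate — that $\alpha_e$ does not appear in $\psi_{G\setminus e}$, $\psi_{G/e}$, $\phi_{G\setminus e}$, $\phi_{G/e}$, because $e$ is not an edge of either $G\setminus e$ or $G/e$ — is indeed the entire content, and your caution about parallel edges created by contraction retaining their own parameters is well placed and correctly resolved.
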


\begin{lemma}
\label{loopedgesSymanzikPolys}
Let $G$ be a graph and consider any edge $e$ which is a loop. Then $\psi_{G} = \alpha_{e}\psi_{G \setminus e}$, and $\phi_{G} = \phi_{G \setminus e}\alpha_{e}$.
\end{lemma}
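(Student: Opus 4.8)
The plan is to exploit the elementary fact that a loop is itself a cycle, so a loop edge $e$ can never lie in a spanning tree of $G$, nor in either component of a spanning $2$-forest of $G$ (both components are trees, hence acyclic). The first step is to record the bijective correspondence this gives: since $e$ is a loop, $V(G \setminus e) = V(G)$, every spanning tree of $G$ is a spanning tree of $G \setminus e$ and vice versa, and likewise the spanning $2$-forests of $G$ and of $G \setminus e$ coincide. Moreover, for any subgraph $H$ the quantity $\rho^{H} = \sum_{v \in V(H)} \rho_v$ depends only on the vertex set of $H$, which is unaffected by deleting $e$; hence the momentum flowing into each component of a $2$-forest is the same computed in $G$ or in $G \setminus e$.

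Granting this, the computation for $\psi$ is immediate: for every spanning tree $T$ we have $e \notin E(T)$, so $\prod_{f \notin E(T)} \alpha_f = \alpha_e \prod_{f \in E(G\setminus e),\, f \notin E(T)} \alpha_f$; summing over the (common) set of spanning trees and pulling $\alpha_e$ out of the sum gives $\psi_G = \alpha_e \psi_{G \setminus e}$.

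For $\phi$ I would split the defining sum into its two parts. In the spanning-$2$-forest part, every term again has $e$ in the complement $E(G) \setminus (E(T_1) \cup E(T_2))$, so $\alpha_e$ factors out exactly as above, and by the first step the coefficients $(\rho^{T_1})^2$ are unchanged; this part therefore equals $\alpha_e$ times the corresponding part of $\phi_{G\setminus e}$. For the mass part $\psi_G \sum_{i} \alpha_i m_i^2$, I substitute $\psi_G = \alpha_e \psi_{G\setminus e}$ from the previous paragraph; here one uses that $e$ carries no mass (consistent with the hypotheses of Lemmas~\ref{deletioncontraction}--\ref{derivativeandevaluationsetrelationship}), so that $\sum_{i \in E(G)} \alpha_i m_i^2 = \sum_{i \in E(G\setminus e)} \alpha_i m_i^2$, and again $\alpha_e$ factors out. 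Adding the two parts yields $\phi_G = \alpha_e \phi_{G\setminus e}$.

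The only point requiring any care — and the step I would flag — is the treatment of the mass term: if one genuinely allowed a massive loop, the identity would pick up an extra $\alpha_e^2 m_e^2 \psi_{G\setminus e}$ summand, so the statement should be read with the standing convention that loop edges are non-massive (equivalently, one restricts to non-massive $e$, exactly as in the preceding lemmas). Everything else is a one-line factorisation, so I expect no genuine obstacle beyond making the ``a loop lies in no spanning tree'' observation precise.
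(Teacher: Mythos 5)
Your proof is correct, and it is the natural (essentially unique) argument: a loop lies in no spanning tree and in no component of a spanning $2$-forest, so $e$ always sits in the complement and $\alpha_e$ factors out of every term of both sums; the momentum factors $(\rho^{T_1})^2$ are untouched because they depend only on vertex sets, and $V(G\setminus e)=V(G)$. The paper in fact states Lemma~\ref{loopedgesSymanzikPolys} without proof, so there is no paper argument to contrast with; your write-up supplies exactly the missing justification.

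Your flag on the mass term is a genuine and worthwhile catch. As written, the lemma does not hypothesize $m_e=0$, and if $e$ were a massive loop the correct identity would be $\phi_G = \alpha_e\phi_{G\setminus e} + \alpha_e^2 m_e^2\,\psi_{G\setminus e}$, not $\phi_G=\alpha_e\phi_{G\setminus e}$. The statement is therefore only valid under the paper's standing (and, in the place where the lemma is actually used, explicit) assumption that there are no massive edges — see the hypothesis of Corollary~\ref{MinorClosed}, where this lemma is invoked. You are right to read it with that convention built in; ideally the lemma statement itself would say ``$e$ not massive,'' matching Lemmas~\ref{deletioncontraction} and~\ref{derivativeandevaluationsetrelationship}.
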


Of course, the fact that this thesis deals with graph minors and the Symanzik polynomials exhibit nice deletion contradiction identities is no coincidence. Throughout this thesis, we will impose a natural condition on the external momenta which will simplify the second Symanzik polynomial. Before defining the condition, we will abuse notation slightly and introduce the Minkowski metric. Let $\rho \in \mathbb{R}^{4}$. Then under the Minkowski metric we define $\rho^{2} = \rho_{1}^{2} + \rho_{2}^{2} + \rho_{3}^{2} - \rho_{4}^{2}$. Here physically we have $\rho_{1}, \rho_{2}$ and $\rho_{3}$ corresponding to the three spacial dimensions, and $\rho_{4}$ corresponding to time.  This does not actually define a metric since there are non-zero vectors such that $\rho_{1}^{2} + \rho_{2}^{2} + \rho_{3}^{2} - \rho_{4}^{2} =0$, but since quantum field theory is formulated in Minkowski space, it is used by physicists and thus we abuse notation and call it a metric.  Sometimes these types of functions are called pseudometrics. Now we introduce a natural physics condition which we will impose on our momenta.

\begin{definition}
  Let $G$ be a graph let $v \in V(G)$. The momentum $\rho_{v}$ is \emph{on-shell} if $\rho_{v}^{2} =m^2$, where $m$ is the mass of the external edge corresponding to $\rho_{v}$. For the purposes of this thesis, we will always assume $m^2=0$, so a momentum vector $\rho_{v}$ is on-shell if $\rho_{v}^{2}=0$.
\end{definition} 

 Observe that under the Euclidean norm, if a momenta $\rho$ is on-shell, then $\rho =\vec{0}$. Thus if for some graph $G$, all of the momenta were on-shell, the terms involving the external momenta in the second Symanzik polynomial would be zero. However, under the Minkowski metric we can have on-shell momenta and still have a non-trivial dependence on external momenta in the second Symanzik polynomial. Notice under the Minkowski metric, we have $\rho_{1}^{2} + \rho_{2}^{2} + \rho_{3}^{2} = \rho_{4}^{2}$, which is a cone, so being on-shell means that the momentum vector is lying on the shell of the cone.  From a physics perspective, being on-shell means that the momenta is satisfying Einstein's energy and momentum relationship, and in our restricted notion of on-shell, this is saying that the external momenta are behaving in a light-like fashion. Therefore it is reasonable to study graphs whose momenta are on-shell.

As we are interested in studying reducibility, which we define in chapter two, we will mainly be interested in graphs where there are exactly four vertices with external edges and all momenta are on-shell or graphs with no external momenta. To justify this seemingly arbitrary choice of external momenta, we can show that the cases with $1$, $2$ or $3$ on-shell external momenta reduce to the case with zero external momenta. If a graph $G$ had one external momentum, $\rho$, then by conservation of momenta $\rho$ is the zero vector, so we actually have no dependence on the external momenta in the second Symanzik polynomial. If $G$ has two external momenta and they are both on-shell, then again by conservation of momenta and the on-shell condition, there is no dependence on external momenta in the second Symanzik polynomial.  With two external momenta, even if they are not on-shell, in \cite{FrancisSmall} it was shown that for a wide class of graphs, we can reduce the problem to a different graph with no external momenta. In essence what is found in \cite{FrancisSmall} is a relationship between the second Symanzik polynomial of graphs with two external momenta, and the first Symanzik polynomial of a specifically constructed graph. That result encourages looking at the case where we have zero external momenta instead of two.  

Now suppose we have three on-shell external momenta for some graph $G$. If for some spanning $2$-forest, all of the momenta are in one of the trees, then conservation of momenta implies that the $2$-forest contributes nothing to the second Symanzik polynomial. If we have exactly one momentum in one of the trees, then the on-shell condition implies that two forest contributes nothing to the second Symanzik polynomial. Therefore, at least for graphs with no massive edges, having three on-shell momenta is the same as having zero external momenta. Therefore, graphs with four on-shell external momenta are the smallest case where there is a non-trivial dependence on the external momenta. However, we can still simplify the second Symanzik polynomial in this case, though not to the point of triviality.

\begin{lemma}
\label{MomentumLemma}
Let $G$ be a graph and let $X = \{x_{1},x_{2},x_{3},x_{4}\} \subseteq V(G)$ be the set of vertices with external momenta in $G$ such that for all $i \in \{1,2,3,4\}$, $\rho_{x_{i}}$ is on-shell. Let  $(T_{1}, T_{2})$ be a spanning $2$-forest of $G$. If $|X \cap V(T_{1})| \neq 2$, then \[(\rho_{1}^{T_{1}})^{2} \prod_{\mathclap{e \not \in T_{1} \cup T_{2}}} \alpha_{e} =0.\]
\end{lemma}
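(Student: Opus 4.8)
The plan is to show that whenever the number of external-momentum vertices landing in $T_1$ is not exactly two, the coefficient $(\rho^{T_1})^2$ vanishes. There are five cases for $|X \cap V(T_1)| \in \{0,1,2,3,4\}$, and we must handle $0,1,3,4$. The key inputs are conservation of momentum, $\sum_{v \in V(G)} \rho_v = 0$, and the on-shell condition, which here means $\rho_{x_i}^2 = 0$ for each $i$ (since we assume zero mass on external edges). Note that $(T_1,T_2)$ is a spanning $2$-forest, so $V(T_1)$ and $V(T_2)$ partition $V(G)$, and $\rho^{T_1} = \sum_{v \in V(T_1)} \rho_v = \sum_{x_i \in V(T_1)} \rho_{x_i}$ since all other vertices have zero external momentum.

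The cases $|X \cap V(T_1)| \in \{0,1\}$ are immediate. If $|X \cap V(T_1)| = 0$, then $\rho^{T_1} = \vec 0$, so $(\rho^{T_1})^2 = 0$. If $|X \cap V(T_1)| = 1$, say $X \cap V(T_1) = \{x_j\}$, then $\rho^{T_1} = \rho_{x_j}$, and $(\rho^{T_1})^2 = \rho_{x_j}^2 = 0$ by the on-shell assumption. The cases $|X \cap V(T_1)| \in \{3,4\}$ follow by symmetry: since the $2$-forest is unordered, I would observe that $\rho^{T_1} = -\rho^{T_2}$ by conservation of momentum (the two sums over $V(T_1)$ and $V(T_2)$ together give $\sum_{v \in V(G)} \rho_v = 0$), hence $(\rho^{T_1})^2 = (\rho^{T_2})^2$. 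If $|X \cap V(T_1)| = 4$ then $|X \cap V(T_2)| = 0$, reducing to the first case applied to $T_2$; if $|X \cap V(T_1)| = 3$ then $|X \cap V(T_2)| = 1$, reducing to the second case applied to $T_2$. In either situation $(\rho^{T_1})^2 = (\rho^{T_2})^2 = 0$, and so the displayed product term is zero.

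I do not anticipate a genuine obstacle here; the only point requiring a little care is making the reduction of the $\ge 3$ cases to the $\le 1$ cases rigorous, i.e.\ explicitly invoking that $(T_1,T_2)$ unordered plus conservation of momentum gives $\rho^{T_1} = -\rho^{T_2}$ and hence $(\rho^{T_1})^2 = (\rho^{T_2})^2$ under the (bilinear, symmetric) Minkowski form, so that negation does not change the squared norm. Once that symmetry is recorded, all four cases collapse to the two trivial computations above, and the lemma follows.
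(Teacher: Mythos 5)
Your proof is correct and follows the same case analysis as the paper: the cases $|X \cap V(T_1)| \in \{0,1\}$ are handled directly by conservation of momentum and the on-shell condition, and $\{3,4\}$ reduce to these via $\rho^{T_1} = -\rho^{T_2}$. Nothing is missing.
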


\begin{proof}
We will analyze the four cases individually.

\textbf{Case 1:} Suppose $|X \cap V(T_{1})| = 0$. Then $\rho_{1}^{T_{1}} =  \sum_{v \in V(T_{1})} \rho_{v} = \sum_{v \in V(T_{1})} 0 =0$ and thus $(\rho_{1}^{T_{1}})^{2} \prod_{e \not \in T_{1} \cup T_{2}} \alpha_{e} =0$. 

\textbf{Case 2:} Suppose $|X \cap V(T_{1})| =4$. As $V(T_{1}) \cup V(T_{2}) = V(G)$ and $V(T_{1}) \cap V(T_{2}) = \emptyset$ by conservation of momentum we have that $\rho^{T_{1}} = -\rho^{T_{2}}$. By the argument from case $1$, we have that $-\rho^{T_{2}} =0$ and thus $(\rho_{1}^{T_{1}})^{2} \prod_{e \not \in T_{1} \cup T_{2}} \alpha_{e} =0$.

\textbf{Case 3:} Suppose that $|X \cap V(T_{1})|=1$. Without loss of generality, suppose $X \cap V(T_{1}) = \{x_{1}\}$. Then $(\rho^{T_{1}})^{2} = \rho_{x_{1}}^{2} = 0$ by the on-shell condition. Thus $(\rho_{1}^{T_{1}})^{2} \prod_{e \not \in T_{1} \cup T_{2}} \alpha_{e} =0$.

\textbf{Case 4:} Suppose that $|X \cap V(T_{1})| = 3$. Then $|X \cap V(T_{2})| =1$. Thus by the argument in case $3$, we have that $(\rho^{T_{2}})^{2} =0$, and thus by conservation of momentum we have $(\rho^{T_{1}})^{2} =0$. Thus $(\rho_{1}^{T_{1}})^{2} \prod_{e \not \in T_{1} \cup T_{2}} \alpha_{e} =0$.
\end{proof}

\begin{corollary}
 Let $G$ be a graph and $X = \{x_{1},x_{2},x_{3},x_{4}\} \subseteq V(G)$ such that for $i \in \{1,2,3,4\}$, $\rho_{x_{i}}$ is on-shell. Suppose that for all vertices $v \in V(G) \setminus X$, we have $\rho_{v} = \vec{0}$. Let $\mathcal{T}$ be the set of spanning $2$-forests $(T_{1},T_{2})$ such that $|X \cap T_{1}| =2$. Then 
  \[\phi_{G} = \sum_{\mathclap{(T_{1},T_{2}) \in \mathcal{T}}}(\rho^{T_{1}})^{2}  \prod_{\mathclap{e \not \in T_{1} \cup T_{2}}} \alpha_{e} + \psi_{G}\sum_{i =1}^{|E(G)|} \alpha_{i}m_{i}^{2}.\]
\end{corollary}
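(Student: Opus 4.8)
The plan is to start from the definition of the second Symanzik polynomial, in which the momentum part is a sum over \emph{all} spanning $2$-forests of $G$, and then use Lemma \ref{MomentumLemma} to kill every term indexed by a $2$-forest $(T_1,T_2)$ with $|X\cap V(T_1)|\neq 2$. Concretely, let $\mathcal{T}_{\mathrm{all}}$ denote the set of all spanning $2$-forests of $G$, so that by definition
\[
\phi_G \;=\; \sum_{(T_1,T_2)\in\mathcal{T}_{\mathrm{all}}} (\rho^{T_1})^2 \prod_{e\notin T_1\cup T_2}\alpha_e \;+\; \psi_G\sum_{i=1}^{|E(G)|}\alpha_i m_i^2.
\]
I would then partition $\mathcal{T}_{\mathrm{all}}$ into $\mathcal{T}=\{(T_1,T_2): |X\cap V(T_1)|=2\}$ and its complement $\mathcal{T}_{\mathrm{all}}\setminus\mathcal{T}$, split the first sum accordingly, and invoke Lemma \ref{MomentumLemma} to conclude that each summand indexed by an element of $\mathcal{T}_{\mathrm{all}}\setminus\mathcal{T}$ is the zero polynomial. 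The mass term $\psi_G\sum_i\alpha_i m_i^2$ carries over unchanged since it does not reference the $2$-forest sum, and the claimed identity follows.

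Before doing the splitting I would address one small well-definedness issue forced by the convention that the pairs are unordered, i.e. $(T_1,T_2)=(T_2,T_1)$. First, the index set $\mathcal{T}$ is well defined: since $V(T_1)\cup V(T_2)=V(G)$, $V(T_1)\cap V(T_2)=\emptyset$, and $X\subseteq V(G)$, we have $|X\cap V(T_1)|=2$ if and only if $|X\cap V(T_2)|=2$, so the defining condition is symmetric in the two trees. Second, the summand $(\rho^{T_1})^2\prod_{e\notin T_1\cup T_2}\alpha_e$ is also independent of the labelling: the product over non-forest edges is visibly symmetric, and for the momentum factor, conservation of momentum gives $\rho^{T_1}=-\rho^{T_2}$ (as in Case~2 of the proof of Lemma \ref{MomentumLemma}), hence $(\rho^{T_1})^2=(\rho^{T_2})^2$ under the (Minkowski) norm squared. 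So writing a single representative per unordered pair is harmless.

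There is essentially no hard step here; this is a direct corollary of Lemma \ref{MomentumLemma}. If I had to name the one place to be careful, it is exactly the bookkeeping above: making sure that "the" tree called $T_1$ in the condition $|X\cap T_1|=2$ and in the term $(\rho^{T_1})^2$ refers to the same tree, and that the whole expression does not depend on this choice, so that the reindexing of the sum from $\mathcal{T}_{\mathrm{all}}$ to $\mathcal{T}$ is legitimate. Once that is in place, applying Lemma \ref{MomentumLemma} termwise and reading off the remaining sum completes the argument.
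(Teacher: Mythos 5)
Your proposal is correct and matches the paper's approach exactly: the paper's own proof is simply the one-line remark that the statement follows immediately from Lemma \ref{MomentumLemma}, which is precisely the termwise application you spell out. Your extra care about well-definedness under the unordered-pair convention is a reasonable elaboration, not a divergence.
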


\begin{proof}
Follows immediately from Lemma \ref{MomentumLemma}.
\end{proof}

This simplified version of the second Symanzik polynomial will prove useful. We end this section by proving some identities for the first and second Symanzik polynomials in graphs with cut vertices. These lemmas are going to allow us to do some connectivity reductions when looking at reducibility. The first lemma is well-known (see, for example \cite{FrancisBig}).
 
\begin{lemma}
\label{firstsymanzikoneconnected}
Let $G$ be a graph such that $G$ has a $1$-separation $(A,B)$. Then $\psi_{G} = \psi_{G[A]}\psi_{G[B]}$. 
\end{lemma}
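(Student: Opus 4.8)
The plan is to prove the identity combinatorially, by exhibiting a bijection between the spanning trees of $G$ and the pairs $(T_A,T_B)$ of spanning trees of $G[A]$ and $G[B]$, and then checking that this bijection is compatible with the monomials $\prod_{e\notin E(T)}\alpha_e$ appearing in the definition of $\psi$. First I would clear away the degenerate situations. If $G$ is disconnected then $\psi_G=0$ and there is nothing to do; if $|A\cap B|=0$ then, for connected $G$, one of $A,B$ must be empty and the claim is immediate; and any loop edges may be deleted from $G$, $G[A]$, $G[B]$ simultaneously at the cost of a common monomial factor by Lemma~\ref{loopedgesSymanzikPolys}. So I may assume $G$ is connected and loopless and write $A\cap B=\{v\}$. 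The separation condition then forces every edge of $G$ to have both endpoints in $A$ or both in $B$, so $E(G)=E(G[A])\sqcup E(G[B])$, the union being disjoint because $G$ is loopless.

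The heart of the argument is the claim that $T\mapsto (T\cap G[A],\,T\cap G[B])$ is a bijection from the spanning trees of $G$ onto the set of pairs (spanning tree of $G[A]$, spanning tree of $G[B]$), with inverse $(T_A,T_B)\mapsto T_A\cup T_B$. For the forward direction, $T_A:=T\cap G[A]$ is acyclic and spans $A$, and the only point that needs an argument is connectedness: for $x,y\in A$, the unique $x$--$y$ path in $T$ cannot visit a vertex of $B\setminus A$, since both entering and leaving $B\setminus A$ require passing through the cut vertex $v$, which would put $v$ on the path twice; hence this path lies in $G[A]$, so $T_A$ is connected and therefore a spanning tree of $G[A]$ (and symmetrically for $T_B$). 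For the reverse direction, $T_A\cup T_B$ spans $A\cup B=V(G)$, is connected because $T_A$ and $T_B$ share the vertex $v$, and is acyclic because a cycle in it would need an edge touching $A\setminus B$ and an edge touching $B\setminus A$ (each of $T_A,T_B$ being individually acyclic) and would thus cross $v$ twice; counting edges, $(|A|-1)+(|B|-1)=|V(G)|-1$, confirms it is a spanning tree of $G$. Since $E(G[A])\cap E(G[B])=\emptyset$, these two maps are mutually inverse, giving the desired bijection.

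With the bijection established the remainder is bookkeeping. If $T$ corresponds to $(T_A,T_B)$ then $E(T)=E(T_A)\sqcup E(T_B)$, hence $E(G)\setminus E(T)=\big(E(G[A])\setminus E(T_A)\big)\sqcup\big(E(G[B])\setminus E(T_B)\big)$, so $\prod_{e\notin E(T)}\alpha_e=\big(\prod_{e\notin E(T_A)}\alpha_e\big)\big(\prod_{e\notin E(T_B)}\alpha_e\big)$; summing over all spanning trees of $G$ and using the bijection to factor the sum yields $\psi_G=\psi_{G[A]}\psi_{G[B]}$. An alternative is to induct on $|E(G)|$: choose an edge $e\in E(G[A])$ when one exists, observe that $G\setminus e$ and $G/e$ both inherit a $1$-separation of the same shape, and apply Lemma~\ref{deletioncontraction} to $G$ and (in reverse) to $G[A]$; the base case is $A=\{v\}$, where $G=G[B]$ and $\psi_{G[A]}=1$. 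I expect the only real friction to come from the degenerate cases flagged above; the connectivity facts underpinning the bijection are immediate once one uses that $v$ is a cut vertex, so they should not be an obstacle.
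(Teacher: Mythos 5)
Your proof takes essentially the same approach as the paper's: exhibiting the bijection between spanning trees of $G$ and pairs of spanning trees of $G[A]$ and $G[B]$, and observing that the corresponding monomials multiply. The paper's version is a three-sentence sketch that leaves the verification of the bijection and the bookkeeping of the $\alpha_e$-products implicit (and contains a small slip, writing $G[V(T)\cap A]$ where it means the restriction $T\cap G[A]$ that you use); your write-up simply fills in those details and adds a cleaner handling of the degenerate cases.
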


\begin{proof}

Let $T_{A}$, $T_{B}$ be spanning trees of $G[A]$, $G[B]$ respectively. Notice that $T_{A} \cup T_{B}$ is a spanning tree of $G$. Similarly, for any spanning tree $T$ of $G$, the graphs $G[V(T) \cap A]$ and $G[V(T) \cap B]$ are spanning trees of $G[A]$ and $G[B]$ respectively. The result follows.
\end{proof}

Unlike the first Symanzik polynomial, we have to be careful where the external edges are in the graph relative to the $1$-separation and restrict the masses to obtain similar factorization results.

\begin{lemma}
\label{fourzerocaseSymanzikPolynomials}
Let $G$ be a graph with no massive edges and let $X =\{x_{1},x_{2},x_{3},x_{4}\} \subseteq V(G)$ be the set of vertices with external momenta. Suppose $G$ has a $1$-separation $(A,B)$ such that $A \cap B = \{v\}$. Suppose $X \subseteq A$ . Then $\phi_{G} = \phi_{G[A]}\psi_{G[B]}$.
\end{lemma}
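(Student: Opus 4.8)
The plan is to mimic the proof of Lemma \ref{firstsymanzikoneconnected}, but keep careful track of where the four external-momentum vertices $X$ sit relative to the $1$-separation. Since $G$ has no massive edges, the mass term in $\phi_G$ vanishes, so $\phi_G = \sum_{(T_1,T_2)} (\rho^{T_1})^2 \prod_{e \notin T_1 \cup T_2} \alpha_e$, where the sum is over unordered spanning $2$-forests. By the Corollary following Lemma \ref{MomentumLemma}, only those spanning $2$-forests $(T_1,T_2)$ with $|X \cap V(T_1)| = 2$ (equivalently $|X \cap V(T_2)| = 2$) contribute. The key structural observation is that because $X \subseteq A$ and $A \cap B = \{v\}$, any such contributing $2$-forest must have \emph{all} of $B$ (minus possibly $v$) lying in the single tree that sits on the $A$-side; more precisely, the component of the $2$-forest meeting $B \setminus \{v\}$ is the same component that contains the part of $X$ on that side. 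Since $X$ has $2$ vertices in each tree and all of $X$ is in $A$, the bipartition of $V(G)$ into the two trees, restricted to $A$, is a genuine $2$-forest split of $G[A]$, while $G[B]$ stays entirely within one tree, hence $G[V(T_i) \cap B]$ is connected for exactly one $i$ and empty-or-just-$\{v\}$ issues are handled by the cut vertex.

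First I would set up the bijection. Given a contributing spanning $2$-forest $(T_1,T_2)$ of $G$, consider $v \in A \cap B$ and say $v \in V(T_1)$. I claim $B \setminus \{v\} \subseteq V(T_1)$: no vertex of $B \setminus \{v\}$ can be adjacent to anything in $A \setminus \{v\}$ by the separation property, so a vertex of $B$ in $T_2$ would force $V(T_2) \cap B$ to be a union of components of $G[B \setminus \{v\}]$ disconnected from the rest of $T_2$ unless $T_2 \subseteq B$; but $T_2$ must contain two vertices of $X \subseteq A$, and those cannot be reached from $B \setminus\{v\}$ without passing through $v \in T_1$, contradiction. Hence $V(T_2) \subseteq A$ and $B \subseteq V(T_1)$. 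Then $T_1 \cap G[A]$ together with $T_2$ forms a spanning $2$-forest of $G[A]$ with $|X \cap V(T_2)| = 2$, and $T_1 \cap G[B]$ is a spanning tree of $G[B]$. Conversely, given a spanning $2$-forest $(S_1, S_2)$ of $G[A]$ with $|X \cap V(S_2)| = 2$ (so $v$ lies in one of them, say $S_1$) and a spanning tree $T_B$ of $G[B]$, glue $T_B$ to $S_1$ at $v$ to get a spanning $2$-forest of $G$; this is the inverse map.

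Next I would check that the edge weights multiply correctly. The edges not in $T_1 \cup T_2$ split as $(E(G[A]) \setminus (S_1 \cup S_2)) \cup (E(G[B]) \setminus T_B)$ since every edge of $G$ lies in exactly one of $G[A]$, $G[B]$ (edges incident to $v$ on either side go to the respective induced subgraph, and there are no $A$--$B$ edges other than through $v$). Also $\rho^{T_1}$ computed in $G$ equals $\rho^{S_1}$ computed in $G[A]$, because $V(T_1) \cap X = V(S_1) \cap X$ (all momentum vertices are in $A$, and $B$ contributes zero momentum). So $(\rho^{T_1})^2 \prod_{e \notin T_1 \cup T_2} \alpha_e = (\rho^{S_1})^2 \left(\prod_{e \notin S_1 \cup S_2} \alpha_e\right)\left(\prod_{e \notin T_B} \alpha_e\right)$. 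Summing over the bijection gives
\[
\phi_G = \left(\sum_{(S_1,S_2)} (\rho^{S_1})^2 \prod_{e \notin S_1 \cup S_2} \alpha_e\right)\left(\sum_{T_B} \prod_{e \notin T_B} \alpha_e\right) = \phi_{G[A]} \, \psi_{G[B]},
\]
using again that $G[A]$ has no massive edges so its mass term vanishes and the restricted sum over $2$-forests with $|X \cap V(S_2)| = 2$ is exactly $\phi_{G[A]}$ by the Corollary to Lemma \ref{MomentumLemma}.

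The main obstacle I anticipate is the careful argument in the first step that a contributing $2$-forest cannot split $B$, including the degenerate cases: $A = \{v\}$ or $B = \{v\}$ (then one factor is $\psi$ of a single vertex, which is $1$, and the identity is trivial), and the case where one of $S_1, S_2$ in $G[A]$ reduces to the isolated vertex $v$ after gluing. I would also want to double-check the unordered-pair bookkeeping: since the two trees of a spanning $2$-forest are unordered but the labelling $T_1$ versus $T_2$ in the momentum condition $|X \cap V(T_1)| = 2$ is symmetric (both contain exactly two vertices of $X$), the bijection is well-defined on unordered pairs, and the factor $\psi_{G[B]}$ — whose spanning trees are genuinely unordered single trees — causes no double counting. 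Everything else is a routine matching of monomials.
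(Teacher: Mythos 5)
Your proof is correct and follows the same route as the paper's: restrict to spanning $2$-forests with two momentum vertices in each tree, observe that $B$ must lie entirely in the tree containing $v$, split each monomial into $G[A]$- and $G[B]$-factors, and factor the sum. You fill in the step the paper dismisses as an ``easy observation'' (correctly noting that it relies on the momentum restriction and not merely on $v$ being a cut vertex, and handling the unordered-pair and degenerate cases explicitly), but the argument is substantively identical.
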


\begin{proof}
By Lemma \ref{MomentumLemma} we restrict our attention to spanning $2$-forests where each of the trees contain exactly two vertices of $X$. Let $\mathcal{T}$ denote the set of spanning two forests for $G$, $\mathcal{T}_{1}$ the set of spanning $2$-forests in $G[A]$, and $\mathcal{T}_{2}$ the set of spanning trees in $G[B]$. Before doing the calculation, we make the easy observation that if $(T_{1},T_{2})$ is a spanning $2$-forest of $G$, and $v \in V(T_{2})$, then $G[B \cap V(T_{2})]$ is a spanning tree of $G[B]$. Up to relabelling, we will assume that $v \in T_{2}$. Then,

\begin{align*}
 \phi_{G} &=
  \quad \sum_{\mathclap{(T_{1},T_{2}) \in \mathcal{T}}} (\rho^{T_{1}})^{2}  \prod_{\mathclap{e \not \in T_{1} \cup T_{2}}} \alpha_{e} \\[0.6ex]
  &= \quad \sum_{\mathclap{(T_{1},T_{2}) \in \mathcal{T}}} (\rho^{T_{1}})^{2}  (\prod_{\mathclap{\substack{e \not \in T_{1} \cup T_{2} \\ e \in G[A]}}}\alpha_{e} \cdot \prod_{\mathclap{\substack{e \not \in T_{2} \\ e \in G[B]}}} \alpha_{e}) 
  \\[0.6ex]
   &= \quad (\sum_{\mathclap{(T_{1},T_{2}) \in \mathcal{T}_{1}}} (\rho^{T_{1}})^{2}  \prod_{\mathclap{\substack{e \not \in T_{1} \cup T_{2} \\ e \in G[A]}}}\alpha_{e}) \cdot (\sum_{\substack{T \in \mathcal{T}_{2}}}\prod_{\mathclap{\substack{e \not \in T \\[0.6ex] e \in G[B]}}} \alpha_{e}) \\[0.6ex] 
  &= \phi_{G[A]}\psi_{G[B]},
\end{align*}

which completes the claim.
\end{proof}

\begin{lemma}
\label{momentaoneconnectivityreduction}
Let $G$ be a graph with no massive edges and let $X =\{x_{1},x_{2},x_{3},x_{4}\} \subseteq V(G)$  be the set of vertices with external momenta, and furthermore suppose all external momenta are on-shell. Suppose $G$ has a $1$-separation $(A,B)$ such that $A \cap B = \{v\}$. Suppose $x_{1},x_{2},x_{3} \in A$ and  $x_{4} \in B \setminus \{v\}$. Let $G[A]$ have the same external momenta, masses and Schwinger parameters as $A$ in $G$ with the exception that $\rho_{x_{4}} = \rho_{v}$. Then $\phi_{G} = \phi_{G[A]}\psi_{G[B]}$.
\end{lemma}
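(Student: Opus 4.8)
The plan is to follow the template of the proof of Lemma~\ref{fourzerocaseSymanzikPolynomials}, paying attention to how the momentum originally sitting at $x_4$ must be re-routed onto the cut vertex $v$. Since $G$ has no massive edges, $\phi_G$ has no mass term, and by Lemma~\ref{MomentumLemma} its defining sum may be restricted to spanning $2$-forests $(T_1,T_2)$ with $|X \cap V(T_i)| = 2$ for $i \in \{1,2\}$; I will call these the \emph{contributing} forests. We may assume $v \notin X$, since otherwise the prescription ``$G[A]$ has the momenta of $A$ except $\rho_{x_4} = \rho_v$'' is not well defined. Write $X' = (X \setminus \{x_4\}) \cup \{v\}$ for the set of vertices carrying external momenta in $G[A]$; the momentum newly placed at $v$ equals $\rho_{x_4}$, which is on-shell in $G$, so all of $X'$ is on-shell and Lemma~\ref{MomentumLemma} applies verbatim to $G[A]$.

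First I would establish the structure of the contributing forests. Let $(T_1,T_2)$ be contributing and, relabelling the two trees if necessary, assume $v \in V(T_2)$. The tree $T_1$ is connected and misses the cut vertex $v$, so it lies entirely in $A \setminus \{v\}$ or entirely in $B \setminus \{v\}$; since $X \cap (B \setminus \{v\}) \subseteq \{x_4\}$, the latter possibility would give $|X \cap V(T_1)| \le 1$, contradicting that $(T_1,T_2)$ is contributing. Hence $T_1 \subseteq A \setminus \{v\}$ and therefore $B \subseteq V(T_2)$; in particular $x_4 \in V(T_2)$, so $T_2$ contains both $v$ and $x_4$ while $T_1$ contains neither. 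Exactly as in Lemma~\ref{fourzerocaseSymanzikPolynomials}, the subgraph of $T_2$ induced on $B$ is then a spanning tree of $G[B]$, and the pair formed by $T_1$ and the subgraph of $T_2$ induced on $A$ is a spanning $2$-forest of $G[A]$. Because $\{v,x_4\} \subseteq V(T_2)$ and the momenta of $G$ and of $G[A]$ differ only at $v$ and $x_4$, the momentum flowing into $T_1$ is the same whether computed in $G$ or in $G[A]$, and similarly $|X \cap V(T_1)| = |X' \cap V(T_1)|$.

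Conversely, gluing a spanning $2$-forest $(F_1,F_2)$ of $G[A]$, with $v \in V(F_2)$ say, to a spanning tree $T$ of $G[B]$ along the shared vertex $v$ yields a spanning $2$-forest $(F_1,\,F_2 \cup T)$ of $G$, and this gluing is inverse to the restriction maps of the previous paragraph. Moreover $(F_1, F_2 \cup T)$ is contributing if and only if $|X' \cap V(F_1)| = 2$, which by Lemma~\ref{MomentumLemma} applied to $G[A]$ is exactly the condition under which the corresponding term of $\phi_{G[A]}$ survives. Since every edge of $G$ lies in exactly one of $G[A]$ and $G[B]$ (any loop at $v$ can be pulled out first using Lemma~\ref{loopedgesSymanzikPolys}), the monomial $\prod_{e \notin T_1 \cup T_2}\alpha_e$ of a contributing forest splits into its $G[A]$-part and its $G[B]$-part precisely as in Lemma~\ref{fourzerocaseSymanzikPolynomials}. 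Combining this splitting with the equality of $(\rho^{T_1})^2$ in $G$ and in $G[A]$, each term of $\phi_G$ equals the product of the matching term of $\phi_{G[A]}$ with the matching term of $\psi_{G[B]}$; summing over the bijection gives $\phi_G = \phi_{G[A]}\psi_{G[B]}$.

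The step I expect to be the main obstacle is the momentum bookkeeping in the middle paragraph: one must be sure that in \emph{every} contributing $2$-forest the vertex $x_4$, the cut vertex $v$, and in fact all of $B$, lie in the same tree, so that transplanting $\rho_{x_4}$ onto $v$ in $G[A]$ neither adds momentum to nor removes momentum from the tree kept on the $A$ side. Once that structural fact and the on-shell-ness of the new momentum at $v$ are secured, the remaining manipulations duplicate those in the proof of Lemma~\ref{fourzerocaseSymanzikPolynomials}.
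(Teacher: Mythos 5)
Your proof is correct and follows the same route as the paper's: verify conservation of momentum in $G[A]$, use Lemma~\ref{MomentumLemma} to restrict to contributing $2$-forests, show that in each such forest the cut vertex $v$ and all of $B$ lie in the tree containing $x_4$, and then factor each monomial into its $G[A]$-part and $G[B]$-part. You fill in a couple of small details the paper leaves implicit (that $T_1\subseteq A\setminus\{v\}$ because it cannot reach across the cut vertex, and that the re-rooted momentum at $v$ is still on-shell so Lemma~\ref{MomentumLemma} applies to $G[A]$), but the overall argument is the same.
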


\begin{proof}

First note that $G[A]$ satisfies conservation of momentum as we have

\begin{align*}
\sum_{\mathclap{x \in V(G[A])}} \rho_{x} = \rho_{x_{1}} + \rho_{x_{2}} + \rho_{x_{3}} + \rho_{v} = \rho_{x_{1}} + \rho_{x_{2}} + \rho_{x_{3}} + \rho_{x_{4}} = \sum_{\mathclap{x \in V(G)}} \rho_{x} =  0.
\end{align*}

By Lemma \ref{MomentumLemma} we restrict our attention to spanning $2$-forests which contain exactly two vertices of $X$. Let $(T_{1},T_{2})$ be a spanning $2$-forest of $G$ and without loss of generality suppose that $|T_{1} \cap X \cap A| =2$. Then $|T_{2} \cap A \cap X| =1$. Thus $|T_{2} \cap B \cap X|=1$ which implies $v \in T_{2}$. As $(T_{1},T_{2})$ is a spanning $2$-forest of $G$, we have that $T_{2} \cap B$ is a spanning tree of $B$. Therefore,

\begin{align*} 
(p^{T_{1}})^{2}  \prod_{\mathclap{e \not \in T_{1} \cup T_{2}}} \alpha_{e} &= (p^{T_{1}})^{2} (\prod_{\mathclap{\substack{e \not \in T_{1} \cup T_{2} \\ e \in G[A]}}} \alpha_{e} \cdot \prod_{\mathclap{\substack{e \not \in T_{2} \\ e \in G[B]}}} \alpha_{e}).
\end{align*}

Now let $(T_{1},T_{2})$ be a spanning $2$-forest of $G[A]$ and without loss of generality, suppose that $v \in T_{2}$. We can extend $(T_{1},T_{2})$ to a spanning $2$-forest of $G$ by considering any spanning tree of $B$, say $T_{3}$ and noticing $(T_{1},T_{2} \cup T_{3})$ is a spanning $2$-forest of $G$. Also notice that for $(T_{1},T_{2} \cup T_{3})$, we have $|T_{1} \cap X| =2$.  Also, as $\rho_{v} = \rho_{w}$ in $G[A]$, we have $(\rho^{T_{1}})^{2}$ is the same in $G[A]$ as in $G$.

 Let $\mathcal{T}$ denote the set of spanning $2$-forests in $G$, $\mathcal{T}_{1}$ denote the set of spanning $2$-forests in $G[A]$, and $\mathcal{T}_{2}$ denote the set of spanning trees in $B$. Then we have 

\begin{align*}
 \phi_{G} &=
  \quad \sum_{\mathclap{(T_{1},T_{2}) \in \mathcal{T}}} (\rho^{T_{1}})^{2}  \prod_{\mathclap{e \not \in T_{1} \cup T_{2}}} \alpha_{e} \\[0.6ex]
  &= \quad \sum_{\mathclap{(T_{1},T_{2}) \in \mathcal{T}}} (\rho^{T_{1}})^{2}  (\prod_{\mathclap{\substack{e \not \in T_{1} \cup T_{2} \\ e \in G[A]}}}\alpha_{e} \cdot \prod_{\mathclap{\substack{e \not \in T_{2} \\ e \in G[B]}}} \alpha_{e}) 
  \\[0.6ex] &= \quad (\sum_{\mathclap{(T_{1},T_{2}) \in \mathcal{T}_{1}}} (\rho^{T_{1}})^{2}  \prod_{\mathclap{\substack{e \not \in T_{1} \cup T_{2} \\ e \in G[A]}}}\alpha_{e}) \cdot (\sum_{\substack{T \in \mathcal{T}_{2}}}\prod_{\mathclap{\substack{e \not \in T \\[0.6ex] e \in G[B]}}} \alpha_{e}) \\[0.6ex] 
  &= \phi_{G[A]}\psi_{G[B]},
\end{align*}

which completes the claim.

\end{proof}

\begin{lemma}
\label{symanzikpolynomialtwomomentaoneachside}
Let $G$ be a graph with no massive edges and let $X = \{x_{1},x_{2},x_{3},x_{4}\} \subseteq V(G)$ be the set of vertices with external momenta, and furthermore suppose all external momenta is on-shell. Suppose $G$ has a $1$-separation $(A,B)$ such that $A \cap B = \{v\}$. Suppose and that $X \cap (A \setminus \{v\}) = \{x_{1},x_{2}\}$ and that $X \cap (B \setminus \{v\}) = \{x_{3},x_{4}\}$. Let $A'= G[A]$ have the same external momenta and Schwinger parameters as in $G$ with the exception that we let $\rho_{v} = \rho_{x_{1}} + \rho_{x_{2}}$. Similarly, Let $B' = G[B]$ have the same external momenta and Schwinger parameters as in $G$ with the exception that we let $\rho_{v} = \rho_{x_{3}} + \rho_{x_{4}}$. Then

\[\phi_{G} = \phi_{A'}\psi_{B'} + \phi_{B'}\psi_{A'}.  \]
\end{lemma}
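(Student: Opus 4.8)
The plan is to follow the pattern of the proofs of Lemmas~\ref{fourzerocaseSymanzikPolynomials} and~\ref{momentaoneconnectivityreduction}: reduce $\phi_G$, $\phi_{A'}$ and $\phi_{B'}$ to sums over spanning $2$-forests, use the $1$-separation to set up a bijection, and then match the momentum and Schwinger-parameter factors term by term.

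First I would apply Lemma~\ref{MomentumLemma}: since $G$ has no massive edges, $\phi_G = \sum (\rho^{T_1})^2\prod_{e\notin T_1\cup T_2}\alpha_e$, summed over spanning $2$-forests $(T_1,T_2)$ of $G$ with $|X\cap V(T_i)| = 2$ for $i = 1,2$. Fix such a forest, let $T$ be the part containing the cut vertex $v$ and $T'$ the other part. Since $T'$ is connected and misses $v$, and $(A,B)$ is a $1$-separation, $T'$ lies entirely in $A\setminus\{v\}$ or entirely in $B\setminus\{v\}$; together with $|X\cap V(T')| = 2$ and the placement of $x_1,\dots,x_4$, this forces $V(T')\cap X = \{x_1,x_2\}$ (hence $V(T)\cap X = \{x_3,x_4\}$ and $B\subseteq V(T)$) in the first subcase, and symmetrically $V(T')\cap X = \{x_3,x_4\}$ (hence $A\subseteq V(T)$) in the second. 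These two subcases partition the $2$-forests contributing to $\phi_G$.

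In the first subcase, set $S_1 := T[V(T)\cap A]$ and $U := T[B]$. One checks $(S_1,T')$ is a spanning $2$-forest of $G[A] = A'$ with $v\in V(S_1)$ and $\{x_1,x_2\}\subseteq V(T')$, while $U$ is a spanning tree of $G[B] = B'$; gluing $S_1$ and $U$ back together at $v$ recovers $T$, so this is a bijection onto all such pairs. Since every edge of $G$ lies in $E(G[A])$ or $E(G[B])$, the edge product factors as $\bigl(\prod_{e\in E(A),\, e\notin S_1\cup T'}\alpha_e\bigr)\bigl(\prod_{e\in E(B),\, e\notin U}\alpha_e\bigr)$, and the momentum weight is $(\rho^{T})^2 = (\rho_{x_3}+\rho_{x_4})^2$, which by conservation of momentum in $G$ equals $(\rho_{x_1}+\rho_{x_2})^2 = \rho_v^2$ as measured in $A'$. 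Summing over the bijection, this subcase contributes $\bigl(\sum \rho_v^2 \prod_{e\in E(A),\, e\notin S_1\cup T'}\alpha_e\bigr)\cdot\psi_{B'}$, the inner sum running over the spanning $2$-forests of $A'$ in which $v$ lies in one part and $x_1,x_2$ both lie in the other. The second subcase contributes the corresponding product with the roles of $A'$ and $B'$ exchanged.

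The remaining point, which I expect to be the main obstacle, is to recognize that inner sum as $\phi_{A'}$ (and its analogue as $\phi_{B'}$). Here Lemma~\ref{MomentumLemma} does not apply verbatim, because $A'$ carries only three external momenta --- at $x_1,x_2$ (on-shell, inherited from $G$) and at $v$ (with $\rho_v^2 = (\rho_{x_1}+\rho_{x_2})^2$, in general off-shell). So I would argue directly on $2$-forests of $A'$: if one part contains exactly one of $x_1,x_2$ (and no other momentum-bearing vertex), its momentum weight is $\rho_{x_i}^2 = 0$; if one part contains all of $x_1,x_2,v$, the complementary part has momentum weight $0$ by conservation of momentum in $A'$; hence the surviving $2$-forests are exactly those separating $v$ from $\{x_1,x_2\}$, each with weight $\rho_v^2$. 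This is the only place the on-shell hypothesis and conservation of momentum are used beyond the combinatorics of the separation, so it is where I would take the most care; note in particular that the assignment $\rho_v = \rho_{x_1}+\rho_{x_2}$ should be read up to sign, so that conservation of momentum holds in $A'$ --- this does not affect $\rho_v^2$. Putting the pieces together yields $\phi_G = \phi_{A'}\psi_{B'} + \phi_{B'}\psi_{A'}$.
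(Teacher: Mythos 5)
Your proof is correct and follows essentially the same route as the paper's: restrict by Lemma~\ref{MomentumLemma} to $2$-forests with two terminals in each part, observe that the part missing the cut vertex $v$ lies wholly in $A\setminus\{v\}$ or wholly in $B\setminus\{v\}$, set up the bijection with a $2$-forest of one side paired with a spanning tree of the other, and then check directly that the surviving $2$-forests of $A'$ are exactly those separating $v$ from $\{x_1,x_2\}$, each contributing weight $\rho_v^2$. Your aside about the sign of $\rho_v$ flags a genuine (though harmless) discrepancy in the paper: the lemma statement assigns $\rho_v = \rho_{x_1}+\rho_{x_2}$ in $A'$, while the paper's proof consistently uses $\rho_v = \rho_{x_3}+\rho_{x_4} = -(\rho_{x_1}+\rho_{x_2})$ (both to verify conservation of momentum and in the $2$-forest analysis for $A'$); since only $\rho_v^2$ enters the second Symanzik polynomial, the final identity is unaffected, exactly as you observe.
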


\begin{proof}
First we claim that both $A'$ and $B'$ satisfy conservation of momentum. For $A'$ we have

\[\sum_{\mathclap{x \in V(A')}} \rho_{x} = \rho_{x_{1}} + \rho_{x_{2}} + \rho_{v} = \rho_{x_{1}} + \rho_{x_{2}} + \rho_{x_{3}} + \rho_{x_{4}} = \sum_{\mathclap{x \in V(G)}} \rho_{x} =  0. \]

Applying a similar argument gives the result for $B'$. 
Now consider a spanning $2$-forest $(T_{1},T_{2})$ of $G$. By Lemma \ref{MomentumLemma} we restrict our attention to spanning two forests where $|X \cap T_{1}| =2$. If $v \in V(T_{2})$, then $T_{2} \cap B'$ is a spanning tree of $B'$. Otherwise, $v \not \in V(T_{2})$ and $T_{1} \cap B'$ is a spanning tree of $B'$. 

Now let $(T_{1},T_{2})$ be a spanning $2$-forest of $A'$. Note that $\rho_{v}$ may not be on-shell. However, as $\rho_{v} = \rho_{x_{3}} + \rho_{x_{4}}$, and both $\rho_{x_{1}}$ and $\rho_{x_{2}}$ are on-shell, applying the arguments from Lemma \ref{MomentumLemma}, we may assume that either $x_{1},x_{2} \in V(T_{1})$ and $v \not \in V(T_{1})$ or that $v \in V(T_{1})$ and $x_{1},x_{2} \not \in V(T_{1})$. Let $T_{3}$ be a spanning tree of $B$. If $v \in T_{1}$, then $(T_{1} \cup T_{3}, T_{2})$ is a spanning $2$-forest of $G$ such that $|T_{2} \cap X| =2$. If $v \in V(T_{2})$, then we have that $(T_{1},T_{2} \cup T_{3})$ is a spanning $2$-forest of $G$ such that $|T_{1} \cap X| =2$. Similar statements hold for $B'$. 

Let $\mathcal{T}, \mathcal{T}_{1},\mathcal{T}_{2}$  be the set of spanning $2$-forests for $G$, $A'$ and $B'$ respectively. Let $\mathcal{T}_{3},\mathcal{T}_{4}$ be the set of spanning trees of $A'$ and $B'$ respectively. Throughout the upcoming equations, we will assume when $(T_{1},T_{2}) \in \mathcal{T}$, that $v \in V(T_{1})$. Then we have:

\begin{align*}
\phi_{G} &=  \quad \sum_{\mathclap{(T_{1},T_{2}) \in \mathcal{T}}} (\rho^{T_{1}})^{2}  \prod_{\mathclap{e \not \in T_{1} \cup T_{2}}} \alpha_{e} \\[0.6ex]
 &= \quad \sum_{\mathclap{\substack{(T_{1},T_{2}) \in \mathcal{T} \\ (T_{1} \cap A') \in \mathcal{T}_{3}}}} (\rho^{T_{1}})^{2} \prod_{\mathclap{e \not \in T_{1} \cup T_{2}}} \alpha_{e} + \sum_{\mathclap{\substack{(T_{1},T_{2}) \in \mathcal{T} \\ (T_{1} \cap B') \in \mathcal{T}_{4}}}}(\rho^{T_{1}})^{2} \prod_{\mathclap{e \not \in T_{1} \cup T_{2}}} \alpha_{e}
\\[0.6ex] 
&= \quad \sum_{\mathclap{\substack{(T_{1},T_{2}) \in \mathcal{T} \\ (T_{1} \cap A') \in \mathcal{T}_{3}}}} (\rho^{T_{1}})^{2} \prod_{\mathclap{\substack{e \not \in T_{1} \cup T_{2} \\ e \in E(A')}}} \alpha_{e} \cdot \prod_{\mathclap{\substack{e \not \in T_{1} \\ e \in E(B')}}} \alpha_{e} + \sum_{\mathclap{\substack{(T_{1},T_{2}) \in \mathcal{T} \\ (T_{1} \cap B') \in \mathcal{T}_{4}}}}(\rho^{T_{1}})^{2}\prod_{\mathclap{\substack{e \not \in T_{1} \cup T_{2} \\ e \in E(B')}}}\alpha_{e} \cdot \prod_{\mathclap{\substack{e \not \in T_{2} \\ e \in E(A')}}} \alpha_{e}
\\[0.6ex] &= \quad (\sum_{\mathclap{(T_{1},T_{2}) \in \mathcal{T}_{1}}} (\rho^{T_{1}})^{2} \prod_{\mathclap{e \not \in T_{1} \cup T_{2}}} \alpha_{e}) \cdot (\sum_{T \in \mathcal{T}_{4}} \prod_{\mathclap{\substack{e \not \in T }}} \alpha_{e}) + (\sum_{\mathclap{(T_{1},T_{2}) \in \mathcal{T}_{2}}} (\rho^{T_{1}})^{2} \prod_{\mathclap{e \not \in T_{1} \cup T_{2}}} \alpha_{e}) \cdot (\sum_{T \in \mathcal{T}_{3}}\prod_{\mathclap{e \not \in T}} \alpha_{e})
\\[0.6ex] &=\quad \phi_{A'}\psi_{B'} + \phi_{B'}\psi_{A'}.
\end{align*}

\end{proof}

Since in Lemma \ref{symanzikpolynomialtwomomentaoneachside} the momenta at $v$ is not necessarily on-shell, the equation we get is not quite as nice as one might hope for. For reducibility, this is going to mean that we are not going to be able to reduce the problem to $2$-connected graphs in all of the cases. Another well known equation for the Symanzik polynomials is:

\begin{proposition}
\label{subdivided}
Let $H$ be a graph and consider any edge $e$. Let $G$ be the graph obtained by subdividing edge $e$ into two edges $e_{1}$ and $e_{2}$ with no external momenta on the vertex obtained from the subdivision. Furthermore, suppose $G$ has no massive edges and four on-shell momenta. Then $\phi_{G} = \phi_{H \setminus e}(\alpha_{e_{1}} + \alpha_{e_{2}}) + \phi_{H / e}$, and $\psi_{G} = \psi_{H \setminus e}(\alpha_{e_{1}} + \alpha_{e_{2}}) + \psi_{H /e}$.
\end{proposition}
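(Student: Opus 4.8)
The plan is to derive both identities from the deletion–contraction lemma (Lemma \ref{deletioncontraction}) applied to one of the two new edges, say $e_1$, in $G$. Since $G$ has no massive edges and $e_1$ is not a loop, Lemma \ref{deletioncontraction} gives $\psi_G = \psi_{G \setminus e_1}\alpha_{e_1} + \psi_{G/e_1}$ and $\phi_G = \phi_{G \setminus e_1}\alpha_{e_1} + \phi_{G/e_1}$. The first step is to identify the two smaller graphs appearing here. Contracting $e_1$ in $G$ merges the subdivision vertex back into one of the endpoints of the original edge $e$, so $G/e_1$ is isomorphic to $H$ itself (with $e_2$ playing the role of $e$); hence $\psi_{G/e_1} = \psi_H$ and $\phi_{G/e_1} = \phi_H$, using that none of these operations disturb the external momenta (the subdivision vertex carries none) or create massive edges. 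Deleting $e_1$ from $G$ leaves the subdivision vertex as a vertex of degree $1$ attached by $e_2$; a pendant vertex and pendant edge add nothing to spanning trees or spanning $2$-forests beyond a harmless relabelling, so $G \setminus e_1$ has the same Symanzik polynomials as $H \setminus e$, i.e. $\psi_{G\setminus e_1} = \psi_{H \setminus e}$ and $\phi_{G \setminus e_1} = \phi_{H \setminus e}$. (One must be slightly careful that deleting a pendant edge does not leave behind an isolated vertex that kills the polynomial; here the subdivision vertex would become isolated, so strictly one argues directly with the spanning-forest definition, noting every spanning tree of $H\setminus e$ together with the pendant edge $e_2$ is a spanning tree of $G\setminus e_1$ and conversely, and similarly for $2$-forests.)

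Combining these, $\psi_G = \psi_{H\setminus e}\alpha_{e_1} + \psi_H$ and $\phi_G = \phi_{H \setminus e}\alpha_{e_1} + \phi_H$. To get the stated symmetric form, I apply Lemma \ref{deletioncontraction} once more, this time to the edge $e$ in $H$ (which is not massive and, assuming it is not a loop, not a loop either): $\psi_H = \psi_{H \setminus e}\alpha_e + \psi_{H/e}$ and $\phi_H = \phi_{H\setminus e}\alpha_e + \phi_{H/e}$. Substituting, and observing that $\alpha_e$ and $\alpha_{e_2}$ denote the same Schwinger parameter slot after the relabelling (the edge $e_2$ in $G$ corresponds to $e$ in $H$), yields $\psi_G = \psi_{H \setminus e}(\alpha_{e_1} + \alpha_{e_2}) + \psi_{H/e}$ and $\phi_G = \phi_{H\setminus e}(\alpha_{e_1}+\alpha_{e_2}) + \phi_{H/e}$, as required. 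An alternative, slightly cleaner route is to skip the first invocation of Lemma \ref{deletioncontraction} and instead prove directly that $\psi_{G/e_1} = \psi_H$ is not needed: just apply deletion–contraction to $e_1$ and then to $e_2$ within $G$, noting $G \setminus e_1 \setminus e_2$ has the subdivision vertex isolated (contributing $0$ to $\psi$) and $G/e_1$, $G \setminus e_1 / e_2 \cong H \setminus e$, $G / e_1 / e_2 \cong H/e$ — but tracking which contraction gives $H\setminus e$ versus $H/e$ requires the same care, so I would present the two-step argument above.

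The main obstacle is bookkeeping rather than mathematical depth: one must verify carefully that subdividing, contracting, and deleting interact correctly with the placement of external momenta and masses, so that the hypotheses of Lemma \ref{deletioncontraction} (no massive edges, no loops) genuinely apply at each stage, and that the correspondence $e_2 \leftrightarrow e$ is used consistently so the variable $\alpha_{e_2}$ appears where expected. In particular one should note explicitly that the new vertex carries no external momentum (given in the hypothesis) so that $\phi_H$ really does equal $\phi_{G/e_1}$ with the original momentum data on $H$, and that if $e$ happens to be a loop in $H$ the statement should be read via Lemma \ref{loopedgesSymanzikPolys} instead — though in the intended applications $e$ is an ordinary edge, so I would either add this as a remark or simply assume $e$ is not a loop.
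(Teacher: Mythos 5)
Your proof is correct, but it takes a genuinely different route from the paper. The paper's proof is a direct combinatorial argument: it classifies spanning trees (resp.\ spanning $2$-forests) of $G$ according to whether they use one or both of $e_{1},e_{2}$, exhibits a bijection with spanning trees (forests) of $H\setminus e$ and $H/e$, and disposes of the lone exceptional $2$-forest---the one with the subdivision vertex $z$ as an isolated tree---by invoking Lemma~\ref{MomentumLemma}. Your proof instead applies the deletion--contraction identity (Lemma~\ref{deletioncontraction}) twice: once to $e_{1}$ in $G$, giving $\psi_{G}=\psi_{G\setminus e_{1}}\alpha_{e_{1}}+\psi_{G/e_{1}}$, and once to $e$ in $H$, then stitches these together via the isomorphisms $G/e_{1}\cong H$ (with $e_{2}\leftrightarrow e$) and the pendant-vertex identification $\psi_{G\setminus e_{1}}=\psi_{H\setminus e}$, $\phi_{G\setminus e_{1}}=\phi_{H\setminus e}$. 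The tradeoff is that your argument is more modular---it recycles an already-proved lemma rather than re-deriving the bijection from scratch---but it shifts the burden to two graph-level identifications that must be checked carefully against the momentum and mass data, which is exactly the bookkeeping you flag. One small remark: the pendant-vertex step needs slightly more care for $\phi$ than for $\psi$. For $\psi$ the bijection ``spanning trees of $G\setminus e_{1}$ $\leftrightarrow$ spanning trees of $H\setminus e$ union $\{e_{2}\}$'' is clean, but for $\phi$ there is the extra spanning $2$-forest of $G\setminus e_{1}$ in which $z$ is an isolated tree; this contributes zero because $\rho_{z}=0$ and conservation of momentum force the other tree's total momentum to vanish (equivalently, one may cite Lemma~\ref{MomentumLemma} as the paper does). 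Your parenthetical gestures at this but phrases the fix in terms of the spanning-tree bijection, which does not by itself cover the isolated-$z$ case for $\phi$. You also correctly note that the second invocation of Lemma~\ref{deletioncontraction} presupposes $e$ is not a loop in $H$, an implicit hypothesis shared by both proofs.
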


\begin{proof}
Let $T$ be a spanning tree of $H$. Then either $e \in E(T)$ or $e \not \in E(T)$. If $e \not \in E(T)$, then $T$ can be extended to a spanning tree of $G$ by using exactly one of edge $e_{1}$ or $e_{2}$. Otherwise, $T$ can be extended to a spanning tree of $G$ by using both edges $e_{1}$ and $e_{2}$. It is easy to see that these are all the spanning trees of $G$. Thus $\psi_{G} = \psi_{H \setminus e}(\alpha_{e_{1}} + \alpha_{e_{2}}) + \psi_{H / e}$.

Now consider a spanning two forest $(T_{1},T_{2})$ of $H$. If $e \not \in E(T_{1}) \cup E(T_{2})$,  we can extend this to a spanning $2$-forest of $G$ by either adding edge $e_{1}$ or $e_{2}$ to one of the trees. Otherwise without loss of generality we have that $e \in T_{1}$. Then we can extend $(T_{1},T_{2})$ to a spanning $2$-forest of $G$ by adding $e_{1}$ and $e_{2}$ to $T_{1}$. Now notice that this covers all spanning $2$-forests of $G$ except for the ones that have the subdivided vertex by itself as a tree. But by Lemma \ref{MomentumLemma}, that spanning $2$-forest does not have exactly two momenta in each tree, and thus the term goes to zero. The result follows. 
\end{proof}

Our final Lemma is an observation of how the second Symanzik polynomial behaves in graphs with exactly two connected components.

\begin{lemma}
Let $G$ be a graph with no massive edges and let $X = \{x_{1},x_{2},x_{3},x_{4}\} \subseteq V(G)$ be the set of vertices with external momenta, and furthermore suppose all external momenta is on-shell. Let $G$ have exactly two connected components, $G_{1}$ and $G_{2}$. If $|X \cap V(G_{1})| \neq 2$, then $\psi_{G} =0$. Otherwise, let $x_{1}$ and $x_{2}$ be in $V(G_{1})$. Then $\psi_{G} = (\rho_{x_{1}} + \rho_{x_{2}})^{2}(\psi_{G_{1}}\psi_{G_{2}})$. 
\end{lemma}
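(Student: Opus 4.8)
The plan is to reduce the statement to a description of the spanning $2$-forests of a graph with exactly two components, and then to invoke Lemma~\ref{MomentumLemma} together with the factorization idea used in Lemma~\ref{firstsymanzikoneconnected}. First note that since $G$ is disconnected it has no spanning tree, so $\psi_G = 0$ is immediate; the substance of the statement is the corresponding assertion about $\phi_G$, and since $G$ has no massive edges the term $\psi_G\sum_i \alpha_i m_i^2$ of $\phi_G$ also vanishes, so it suffices to analyze the spanning-$2$-forest sum.

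The key structural observation I would establish is that if $(T_1,T_2)$ is a spanning $2$-forest of $G$, then one of $T_1,T_2$ is a spanning tree of $G_1$ and the other is a spanning tree of $G_2$. Indeed, each $T_i$ is connected and hence contained in a single component of $G$; since $V(T_1)\cap V(T_2)=\emptyset$ and $V(T_1)\cup V(T_2)=V(G)$, they cannot lie in the same component, and together they must exhaust $V(G_1)$ and $V(G_2)$. Conversely, any pair consisting of a spanning tree of $G_1$ and a spanning tree of $G_2$ is a spanning $2$-forest of $G$. I would also record here that $V(T_1)\cup V(T_2)=V(G)$ forces $\rho^{T_1}=-\rho^{T_2}$ by conservation of momentum, so $(\rho^{T_1})^2=(\rho^{T_2})^2$ and we may unambiguously take $T_1$ to be the tree spanning $G_1$; moreover every edge of $G$ lies in exactly one of $G_1,G_2$, so $\prod_{e\notin T_1\cup T_2}\alpha_e = (\prod_{e\in E(G_1)\setminus E(T_1)}\alpha_e)(\prod_{e\in E(G_2)\setminus E(T_2)}\alpha_e)$.

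With $T_1$ chosen to span $G_1$ we have $|X\cap V(T_1)| = |X\cap V(G_1)|$. If $|X\cap V(G_1)|\neq 2$, then Lemma~\ref{MomentumLemma} applies directly, giving $(\rho^{T_1})^2\prod_{e\notin T_1\cup T_2}\alpha_e = 0$ for every spanning $2$-forest, hence $\phi_G = 0$. If instead $|X\cap V(G_1)| = 2$, say $x_1,x_2\in V(G_1)$, then $(\rho^{T_1})^2 = (\rho_{x_1}+\rho_{x_2})^2$ is the same constant for every spanning $2$-forest, so using the factorization of the $\alpha$-product and summing independently over spanning trees $T_1$ of $G_1$ and spanning trees $T_2$ of $G_2$ (exactly as in the proof of Lemma~\ref{firstsymanzikoneconnected}) yields $\phi_G = (\rho_{x_1}+\rho_{x_2})^2\,\psi_{G_1}\psi_{G_2}$.

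There is no serious obstacle here; the only points that need a little care are the bookkeeping caused by spanning $2$-forests being unordered (handled by the remark that $(\rho^{T_1})^2$ does not depend on which of the two trees we name $T_1$, since they carry opposite total momentum) and the verification that the two products of Schwinger parameters genuinely separate across the components, which is immediate because no edge of $G$ joins $G_1$ to $G_2$.
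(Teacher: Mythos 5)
Your proof is correct and takes essentially the same route as the paper: split spanning $2$-forests as a spanning tree of $G_1$ paired with a spanning tree of $G_2$, apply Lemma~\ref{MomentumLemma} to dispose of the $|X\cap V(G_1)|\neq 2$ case, and factor the $\alpha$-products across components to get the product formula. You are also right that the quantity being computed here is $\phi_G$ rather than $\psi_G$ (the latter being trivially $0$ for disconnected $G$); the paper's statement and displayed computation both carry the same $\psi$-for-$\phi$ typo, which your remark correctly flags.
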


\begin{proof}
Let $(T_{1},T_{2})$ be any spanning $2$-forest of $T$. Then up to relabelling, $T_{1}$ is a spanning tree of $G_{1}$ and $T_{2}$ is a spanning tree of $G_{2}$. Then it is immediate from Lemma \ref{MomentumLemma} that if $|X \cap V(G_{1})| \neq 2$, then $\psi_{G} =0$. Therefore we assume that $x_{1}$ and $x_{2}$ are in $V(G_{1})$. Notice that since $G$ is disconnected that $\psi_{G} =0$, and so the terms involving masses in the second Symanzik polynomial go to zero. Let $\mathcal{T}$ denote the spanning two forests of $G$, $\mathcal{T}_{1}$ denote the spanning trees of $G_{1}$, and $\mathcal{T}_{2}$ denote the spanning trees of $G_{2}$. Thus we have

\begin{align*}
\psi_{G} &= \sum_{\mathclap{(T_{1},T_{2}) \in \mathcal{T}}} (\rho^{T_{1}})^{2}  \prod_{\mathclap{e \not \in T_{1} \cup T_{2}}} \alpha_{e} + \psi_{G}\sum_{i =1}^{|E(G)|} \alpha_{i}m_{i}^{2} \\
 &= \sum_{\mathclap{(T_{1},T_{2}) \in \mathcal{T}}} (\rho^{T_{1}})^{2}  \prod_{\mathclap{e \not \in T_{1} \cup T_{2}}} \alpha_{e} \\
 &=  (\rho_{x_{1}} + \rho_{x_{2}})^{2}(\sum_{T \in \mathcal{T}_{1}} \prod_{e \not \in T} \alpha_{e})(\sum_{T \in \mathcal{T}_{2}} \prod_{e \not \in T} \alpha_{e}.) \\
 &= (\rho_{x_{1}} + \rho_{x_{2}})^{2} \psi_{G_{1}}\psi_{G_{2}}
\end{align*}
\end{proof}

Lastly, Erik Panzer has implemented functions in his program HyperInt (\cite{HyperintArticle}) which can compute the Symanzik polynomials for a given graph and given external momenta. All Symanzik polynomial calculations done throughout this thesis are done with HyperInt.

\chapter{Reducibility}

This chapter introduces the notion of reducibility, and in particular will look at reducibility of graphs with respect to the Symanzik polynomials. 

Francis Brown introduced the notion of reducibility which tells you if there is a suitable order of integration for his integration algorithm (\cite{FrancisBig}). The notion of reducibility has gone through numerous iterations, with each refinement extending the number of graphs which are reducible. As the definitions build on each other, it is instructive to see how the definitions are modified over time. We note that the actual integration algorithm always remains the same, but the different notions of reducibility get closer to correctly representing when the integration algorithm can be used. 

Polynomial reduction algorithms are at the heart of reducibility. If we focus on the Symanzik polynomials, what a polynomial reduction algorithm does is decide if there is an admissible order of integration for Brown's integration technique. A nice property of reduction algorithms is they do not require looking at the integral at all, they just act on the polynomials. Since the Symanzik polynomials come from graphs, what we will see is that we can decide when the reduction algorithm will succeed solely by looking at the structure of the graph. The most simplistic of the reduction algorithms was introduced in \cite{FrancisSmall}, and is aptly named the simple reduction algorithm.

\section{Reduction Algorithms}
\subsection{The Simple Reduction Algorithm}

Let $S$ be a set of polynomials in the polynomial ring $\mathbb{Q}[\alpha_{1},\alpha_{2},\ldots, \alpha_r]$.  Let $\sigma$ be a permutation of $\{\alpha_{1},\ldots,\alpha_{r}\}$. As an abuse of notation, for this thesis we will let $\sigma(i) = \sigma(\alpha_{i})$.  The idea of the algorithm will be to create a sequence of new sets of polynomials with rational coefficients $S,S_{(\sigma(1))},S_{(\sigma(1),\sigma(2))},\ldots, S_{(\sigma(1),\ldots,\sigma(r))}$ and check that each polynomial in the set $S_{(\sigma(1),\ldots,\sigma(i))}$ is linear in $\sigma(i+1)$ for each $i \in \{1,\ldots,r-1\}$.  Now for the definition; suppose we are at the $k^{th}$ iteration of the algorithm. If $k>1$, we have a set of polynomials with rational coefficients $S_{(\sigma(1),\ldots,\sigma(k-1))} = \{f_{1},\ldots,f_{n}\}$. Otherwise $k=1$ and we use $S$.  We then do the following:

\begin{enumerate}
\item{If there is a polynomial $f \in S_{(\sigma(1),\ldots,\sigma(k-1))}$ such that $f$ is not linear in $\sigma(k)$, we end the algorithm. Otherwise continue.}

\item{Then for all $i \in \{1,\ldots,n\}$, we write $f_{i} = g_{i}\sigma(k) + h_{i}$, where $g_{i} =  \frac{\partial f_{i} }{\partial \sigma(k)}$ and $h_{i} = f_{i}|_{\sigma(k)=0}$.}

\item{Let $S^{1} = \{g_{i} | i \in \{1,\ldots,n\}$. Let $S^{2} = \{h_{i} | i \in \{1,\ldots,n\}$. Let $S^{3} = \{g_{i}h_{j}-h_{i}g_{j}| i,j \in \{1,\ldots,n\}, i \neq j\}$. Let $S^{4} = S^{1} \cup S^{2} \cup S^{3}$}. 

\item{Let $f \in S^{4}$ and let $f_{1},f_{2},f_{3},\ldots,f_{m}$ be the polynomials such that $\prod_{i =1}^{m} f_{i} = f$, where each $f_{i}$ has coefficients in $\mathbb{Q}$ and each $f_{i}$ is irreducible over $\mathbb{Q}$. Let $\tilde{S}$ be the set of polynomials $f_{1},\ldots,f_{m}$ for each $f \in S^{4}$.} 

\item{We define $S_{(\sigma(1),\ldots,\sigma(k))}$ to be $\tilde{S}$. Now repeat the above steps with $S_{(\sigma(1),\ldots,\sigma(k))}$ in place of $S_{(\sigma(1),\ldots,\sigma(k-1))}$ for $r-1$ iterations or the until first step ends the algorithm.}
\end{enumerate}

\begin{definition}
Let $S$ be a set of polynomials in the polynomial ring $\mathbb{Q}[\alpha_{1},\ldots \alpha_{r}]$ and let $\sigma$ be a permutation of $\{\alpha_{1},\ldots, \alpha_{r}\}$. Then $S$ is \textit{simply reducible with respect to $\sigma$} if for all $i \in \{1,\ldots,r-1\}$, all polynomials in $S_{(\sigma(1),\ldots,\sigma(i))}$ are linear in $\sigma(i+1)$. If there exists a permutation $\sigma$ of $\{\alpha_{1},\ldots,\alpha_{r}\}$ such that $S$ is simply reducible with respect to $\sigma$, then we say that $S$ is simply reducible. If for some graph $G$, the set $S \subseteq \{\psi_{G}, \phi_{G}\}$ is simply reducible, then we will say $G$ is \emph{simply reducible with respect to $S$}.
\end{definition} 

Throughout this section and the other reduction algorithms, if we say a graph is reducible without mentioning what set of polynomials, reducibility is assumed to be with respect to both Symanzik polynomials. 
\begin{remark}
\label{monomialsandconstants}
If $S_{(\sigma(1),\ldots,\sigma(k))}$ contains any monomial of the form $\alpha_{i}$ or any constant, we may remove them as they do not affect whether the set is simply reducible.  As in, if $S = [S',c]$ for some constant $c$, and some set of polynomials $S'$, then $S$ is simply reducible if and only if $S'$ is simply reducible. The same statement holds if $S = [S',\alpha]$ for some variable $\alpha$.
\end{remark}

\begin{proof}
Suppose $S'$ is a set of polynomials in the polynomial ring $\mathbb{Q}[\alpha_{1},\ldots,\alpha_{r}]$ which is simply reducible for some permutation $\sigma$ of $\alpha_{1},\ldots,\alpha_{r}$. Now let $S = [S', c]$, where $c$ is some constant. For $\sigma(1)$, the polynomial $c$ forces $c \in S^{2}$ and $0 \in S^{1}$. Thus the polynomial $g_{i}h_{j} - h_{i}g_{j} = cg_{j}$ when $f_{i} = c$. Then after factoring over $\mathbb{Q}$, we get $\tilde{S} = \{c, S'_{\sigma(1)}\}$. Notice that this happens at every step of the algorithm. Then since $c$ is linear in every variable, by induction $S$ is simply reducible with respect to $\sigma$.

Similarly, suppose we have $S = [S', \alpha]$ for some variable $\alpha$. Let $S'$ and $\sigma$ be as above. First suppose $\sigma(1) = \alpha$. Then from $\alpha$, we have $1 \in S^{1}$ and $0 \in S^{2}$. Thus the polynomial $g_{i}h_{j} - h_{i}g_{j} = h_{j}$, when  $f_{i} = \alpha$. Thus $\tilde{S} = \tilde{S'} = S'_{\sigma(1)}$. Then since $S'$ is reducible, $S$ is reducible. 

Now consider when $\sigma(1) \neq \alpha$. Then from $\alpha$, we have $0 \in S^{1}$ and $\alpha \in S^{2}$.  Thus the polynomial $g_{i}h_{j} - h_{i}g_{j}= g_{j}\alpha$, when $f_{i} = \alpha$. Thus after factoring we get $S_{\sigma(1)} = \{\alpha,S'_{\sigma(1)}\}$. Noticing that the above argument holds for every step of the algorithm, by induction we have that $S$ is simply reducible with respect to $\alpha$. See Lemma \ref{subsetlemma} for the proof of the converse.
\end{proof}

Then if we are given a polynomial which is just a monomial, we can ignore it in the reduction algorithm, since a monomial factors into single monomials of the form $\alpha_{i}$, and then we can apply Remark \ref{monomialsandconstants}. Before continuing we give some examples.
\begin{example}
Let $n \in \mathbb{N}$. Let $S =\{\Sigma_{i=1}^{n} x_{i}\}$. Then $S$ is simply reducible.
\end{example}
\begin{proof}
We proceed by induction on $n$. If $n =1$ this is immediate. Now assume $n \geq 2$. Let $\sigma$ be the identity permutation on $\{x_{1},\ldots,x_{n}\}$. Then applying the simple reduction algorithm to $\sigma(1) = x_{1}$, we get $S^{4} = \{1, \Sigma_{i=2}^{n} x_{i}\}$. As these polynomials are already irreducible over $\mathbb{Q}$, we have $\tilde{S} = S^{4}$. Thus $S_{\sigma(1)} = \{1, \Sigma_{i=2}^{n} x_{i}\}$. By Remark \ref{monomialsandconstants}, we may remove the constant and continue the reduction with $S_{\sigma(1)} = \{\Sigma_{i =2}^{n} x_{i}\}$. By induction, $S_{\sigma(1)}$ is simply reducible with the identity permutation, therefore $S$ is reducible with the identity permutation.

\end{proof}

From example \ref{cycleSymanzikpoly}, $\psi_{C_{n}} = \Sigma_{i = 1}^{n} \alpha_{i}$. So the above example says that all cycles are simply reducible with respect to the first Symanzik polynomial. Also, notice that the identity permutation was not critical in the above example, with some slight modifications, any permutation would have sufficed. So cycles are simply reducible with respect to the first Symanzik polynomial for any permutation.

As a remark, notice that the second Symanzik polynomial may not have coefficients in $\mathbb{Q}$ since it depends on masses and external momenta. As we want to test reducibility on the second Symanzik polynomial, we will allow coefficients of polynomials to be algebraic functions of the masses and external momenta or rational numbers, following what is done in \cite{Bognerpaper}. While we will generally assume all our graphs have no massive edges, for the next example we will consider massive edges. 

\begin{example}
Let $G$ be a $2$-connected graph. Suppose every edge is massive.  Then $G$ is not simply reducible with respect to $\psi$. 
\end{example}

\begin{proof}

As $G$ is $2$-connected, for every edge $e \in E(G)$ there exists a spanning tree $T$ such that $e \not \in E(T)$. Thus for every $e \in E(G)$, there is a term of $\phi_{G}$ which has $\alpha_{e}$ in it. Then, as each edge is massive, 
\[\psi_{G}\sum_{i =1}^{|E(G)|} \alpha_{i}m_{i}^{2}\]

is quadratic in every variable. Therefore $\phi_{G}$ is quadratic in every variable. Thus for every permutation of the variables of $\phi_{G}$, the simple reduction algorithm will end at the first step.
\end{proof}

The next example was first calculated in Martin L{\"u}ders masters thesis (\cite{Martin}), although we give a slightly different argument, but one with the same spirit. It will become a forbidden minor for reducibility, so it is worthwhile to work out all the details.

\begin{figure}
\centering
\includegraphics[clip,trim=1cm 7.5cm 1cm 2cm, scale =0.4]{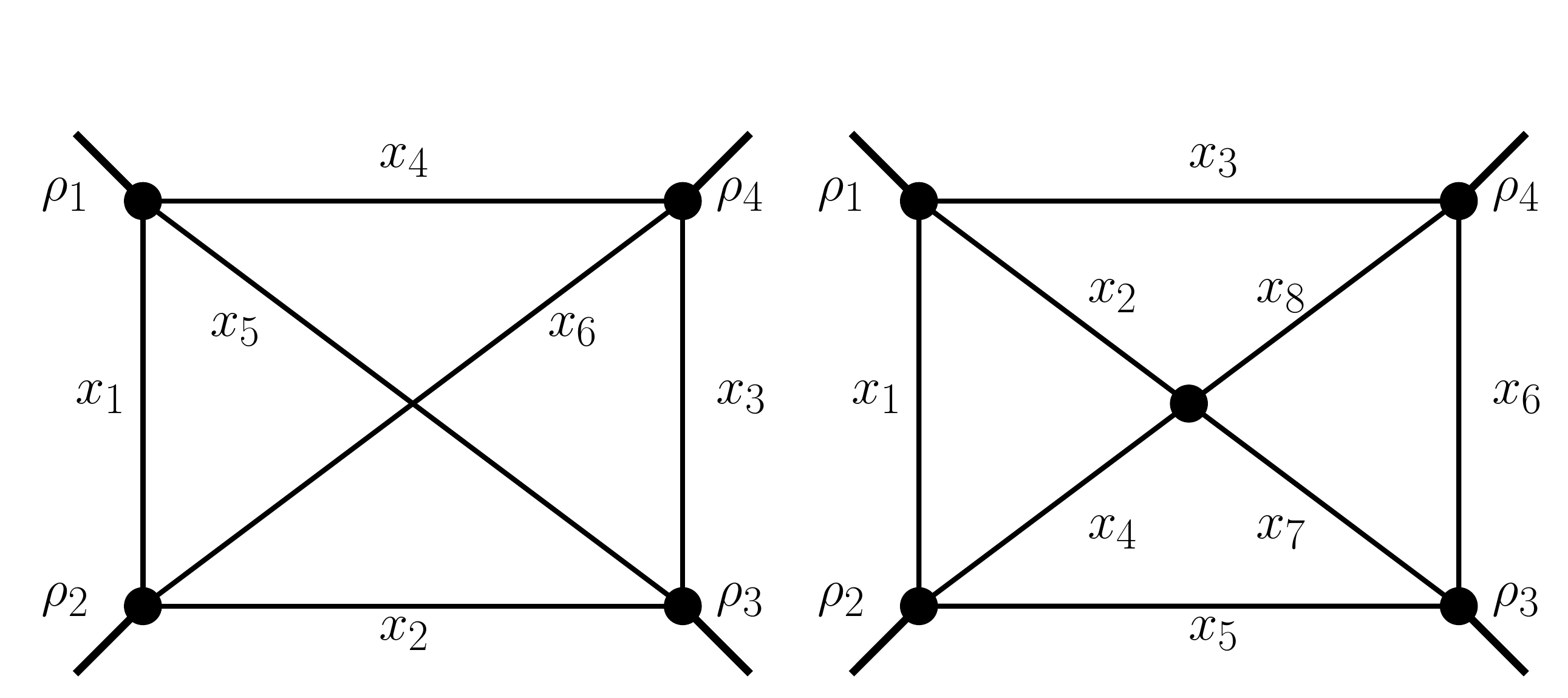}
\caption{A labelling of $K_4$ and $W_4$.}
\label{W4K4labelling}
\end{figure}

\begin{example}[\cite{Martin}]
\label{K4notreducible}
The graph $K_{4}$ is not simply reducible with respect to both $\{\psi,\phi\}$ when each vertex has on-shell external momenta and no edges are massive.
\end{example}
\begin{proof}

  Let $\sigma$ be a permutation of $\{x_{1},\ldots,x_{6}\}$. Using the labels as in Figure \ref{W4K4labelling}, one can calculate that,
\begin{equation*}
\begin{aligned}
\psi_{K_{4}} = {} & x_{{1}}x_{{3}}x_{{2}}+x_{{1}}x_{{4}}x_{{2}}+x_{{1}}x_{{5}}x_{{2}}+x_{{
1}}x_{{3}}x_{{4}}+x_{{3}}x_{{1}}x_{{5}}+x_{{1}}x_{{3}}x_{{6}}+x_{{1}}x
_{{4}}x_{{6}}+x_{{1}}x_{{5}}x_{{6}} \\ & +x_{{2}}x_{{3}}x_{{4}} + x_{{2}}x_{{3
}}x_{{6}}+x_{{2}}x_{{4}}x_{{5}}+x_{{2}}x_{{4}}x_{{6}}+x_{{2}}x_{{5}}x_
{{6}}+x_{{3}}x_{{4}}x_{{5}}+x_{{3}}x_{{5}}x_{{6}}+x_{{4}}x_{{5}}x_{{6}
}.
\end{aligned}
\end{equation*}

When calculating $\phi_{K_{4}}$, since the four momenta are on-shell,  by Lemma \ref{MomentumLemma} there are only three relevant spanning $2$-forests. Therefore we have:
\[\phi_{K_{4}} =  (p_1 + p_3)^{2}(x_{1}x_{2}x_{3}x_{4}) + (p_1 + p_2)^{2}(x_{4}x_{2}x_{5}x_{6}) + (p_1 +p_4)^{2}(x_{1}x_{3}x_{5}x_{6}).\]

For notational ease, we will let $(p_{1} +p_{2})^{2} = s$, $(p_1 + p_3)^{2} = t$, $(p_1 + p_2)^{2} = u$. Due to the symmetry of $K_{4}$, without loss of generality we may assume that $\sigma(1) = x_{1}$. 

Then we calculate:

$S^{1} = \{x_{2}x_{3} + x_{2}x_{4} + x_{2}x_{5} + x_{3}x_{4} + x_{3}x_{5} + x_{3}x_{6} + x_{4}x_{6} + x_{5}x_{6},sx_{2}x_{3}x_{4} + ux_{3}x_{5}x_{6} \}, $

$S^{2} = \{x_{2}x_{3}x_{4} + x_{2}x_{3}x_{6} + x_{2}x_{4}x_{5} + x_{2}x_{4}x_{6} + x_{2}x_{5}x_{6} + x_{3}x_{4}x_{5} + x_{3}x_{5}x_{6} + x_{4}x_{5}x_{6},tx_{2}x_{4}x_{5}x_{6} \},$

$S^{3} = \{(x_{2}x_{3} + x_{2}x_{4} + x_{2}x_{5} + x_{3}x_{4} + x_{3}x_{5} + x_{3}x_{6} + x_{4}x_{6} + x_{5}x_{6})(tx_{2}x_{4}x_{5}x_{6}) - (sx_{2}x_{3}x_{4} + ux_{3}x_{5}x_{6})(x_{2}x_{3} + x_{2}x_{4} + x_{2}x_{5} + x_{3}x_{4} + x_{3}x_{5} + x_{3}x_{6} + x_{4}x_{5} + x_{5}x_{6})\}. $

We claim that the polynomial in $S^{3}$ does not factor into linear polynomials in any variable. Let $f$ denote the polynomial in $S^{3}$. By the symmetry of $K_{4}$, it suffices to show that $f$ does not factor into polynomials linear in $x_{2}$ or $x_{3}$. If we collect the coefficients of $f$ in terms of $x_2$, we get $f = Ax_{2}^{2} + Bx_{2} + C$ where 

$A = -sx_{{3}}x_{{4}}x_{{5}}x_{{6}}-s{x_{{4}}}^{2}x_{{5}}x_{{6}}-sx_{{4}}{x
_{{5}}}^{2}x_{{6}}+t{x_{{3}}}^{2}{x_{{4}}}^{2}+t{x_{{3}}}^{2}x_{{4}}x_
{{6}}+tx_{{3}}{x_{{4}}}^{2}x_{{5}}+tx_{{3}}{x_{{4}}}^{2}x_{{6}}+tx_{{3
}}x_{{4}}x_{{5}}x_{{6}},
$

$B = -sx_{{3}}{x_{{4}}}^{2}x_{{5}}x_{{6}}-sx_{{3}}x_{{4}}{x_{{5}}}^{2}x_{{6
}}-sx_{{3}}x_{{4}}x_{{5}}{x_{{6}}}^{2}-s{x_{{4}}}^{2}x_{{5}}{x_{{6}}}^
{2}-sx_{{4}}{x_{{5}}}^{2}{x_{{6}}}^{2}+t{x_{{3}}}^{2}{x_{{4}}}^{2}x_{{
5}}+t{x_{{3}}}^{2}x_{{4}}x_{{5}}x_{{6}}+tx_{{3}}{x_{{4}}}^{2}x_{{5}}x_
{{6}}+u{x_{{3}}}^{2}x_{{4}}x_{{5}}x_{{6}}+u{x_{{3}}}^{2}x_{{5}}{x_{{6}
}}^{2}+ux_{{3}}x_{{4}}{x_{{5}}}^{2}x_{{6}}+ux_{{3}}x_{{4}}x_{{5}}{x_{{
6}}}^{2}+ux_{{3}}{x_{{5}}}^{2}{x_{{6}}}^{2},
$

$C = u{x_{{3}}}^{2}x_{{4}}{x_{{5}}}^{2}x_{{6}}+u{x_{{3}}}^{2}{x_{{5}}}^{2}{
x_{{6}}}^{2}+ux_{{3}}x_{{4}}{x_{{5}}}^{2}{x_{{6}}}^{2}
$.

Suppose $f = f_{1}f_{2}$ for some polynomials $f_{1}$ and $f_{2}$ where $\deg(f_{i},x_{2}) = 1$, $i \in \{1,2\}$. Then by the quadratic formula, $B^{2} - 4AC$ is a perfect square. If we treat $s,t$ and $u$ as integers, then setting $s=t=u =1$ and $x_{i} = 1$, for all $i \in \{3,4,5,6\}$,  we get $B^2 -4AC =-15$ which is not a square. Therefore $B^2 -4AC$ is not a perfect square and thus $f$ does not factor into two polynomials linear in $x_2$. 

Applying the same strategy to $x_{3}$, we collect the coefficients of $f$ in terms of $x_3$, and we get $f = Ax_{3}^{2} + Bx_{3} +C$ where 

$A= t{x_{{2}}}^{2}{x_{{4}}}^{2}+t{x_{{2}}}^{2}x_{{4}}x_{{6}}+tx_{{2}}{x_{{
4}}}^{2}x_{{5}}+tx_{{2}}x_{{4}}x_{{5}}x_{{6}}+ux_{{2}}x_{{4}}x_{{5}}x_
{{6}}+ux_{{2}}x_{{5}}{x_{{6}}}^{2}+ux_{{4}}{x_{{5}}}^{2}x_{{6}}+u{x_{{
5}}}^{2}{x_{{6}}}^{2}
$,

$B = -s{x_{{2}}}^{2}x_{{4}}x_{{5}}x_{{6}}-sx_{{2}}{x_{{4}}}^{2}x_{{5}}x_{{6
}}-sx_{{2}}x_{{4}}{x_{{5}}}^{2}x_{{6}}-sx_{{2}}x_{{4}}x_{{5}}{x_{{6}}}
^{2}+t{x_{{2}}}^{2}{x_{{4}}}^{2}x_{{5}}+t{x_{{2}}}^{2}{x_{{4}}}^{2}x_{
{6}}+t{x_{{2}}}^{2}x_{{4}}x_{{5}}x_{{6}}+tx_{{2}}{x_{{4}}}^{2}x_{{5}}x
_{{6}}+ux_{{2}}x_{{4}}{x_{{5}}}^{2}x_{{6}}+ux_{{2}}x_{{4}}x_{{5}}{x_{{
6}}}^{2}+ux_{{2}}{x_{{5}}}^{2}{x_{{6}}}^{2}+ux_{{4}}{x_{{5}}}^{2}{x_{{
6}}}^{2}
$,

$C = -s{x_{{2}}}^{2}{x_{{4}}}^{2}x_{{5}}x_{{6}}-s{x_{{2}}}^{2}x_{{4}}{x_{{5
}}}^{2}x_{{6}}-sx_{{2}}{x_{{4}}}^{2}x_{{5}}{x_{{6}}}^{2}-sx_{{2}}x_{{4
}}{x_{{5}}}^{2}{x_{{6}}}^{2}
$.

Now setting $x_{i} = 1$ for all $i \in \{2,3,4,6\}$, $x_{5} = 2$, and $s=t=u =1$ we get $B^2 -4AC = 985$, which is not a perfect square. Thus $B^2 -4AC$ is not a perfect square so $f$ does not factor into two polynomials linear in $x_{3}$. With this we have shown that for all $x_{i}$, $i \in \{2,\ldots,6\}$, $f$ does not factor into two polynomials linear in $x_{i}$. Thus $f \in \tilde{S}$ and thus $S_{(\sigma(1))}$ has a polynomial not linear in $\sigma(2)$, completing the claim. 

\end{proof}

The same technique used above can be used to prove that $W_{4}$ is not simply reducible when we add external momenta to the rim vertices. As with $K_{4}$, this example will end up becoming a forbidden minor for reducibility, and thus it is worthwhile to work out the details.
\begin{example}
\label{W4notreducible}
The graph $W_{4}$ is not simply reducible with respect to both $\{\psi,\phi\}$ when the rim vertices have on-shell external momenta and no edges are massive. 
\end{example}

\begin{proof}
Let $\sigma$ be any permutation of $\{x_{1},x_{2},\ldots,x_{8}\}$. Using the labels in Figure \ref{W4K4labelling}, one can calculate that:

$\psi_{W_{4}} = x_{{1}}x_{{2}}x_{{4}}x_{{6}}+x_{{1}}x_{{2}}x_{{4}}x_{{7}}+x_{{1}}x_{{2
}}x_{{4}}x_{{8}}+x_{{1}}x_{{2}}x_{{5}}x_{{6}}+x_{{1}}x_{{2}}x_{{5}}x_{
{7}}+x_{{1}}x_{{2}}x_{{5}}x_{{8}}+x_{{1}}x_{{2}}x_{{6}}x_{{7}}+x_{{1}}
x_{{2}}x_{{7}}x_{{8}}+x_{{1}}x_{{3}}x_{{4}}x_{{6}}+x_{{1}}x_{{3}}x_{{4
}}x_{{7}}+x_{{1}}x_{{3}}x_{{4}}x_{{8}}+x_{{1}}x_{{3}}x_{{5}}x_{{6}}+x_
{{1}}x_{{3}}x_{{5}}x_{{7}}+x_{{1}}x_{{3}}x_{{5}}x_{{8}}+x_{{1}}x_{{3}}
x_{{6}}x_{{7}}+x_{{1}}x_{{3}}x_{{7}}x_{{8}}+x_{{1}}x_{{4}}x_{{6}}x_{{8
}}+x_{{1}}x_{{4}}x_{{7}}x_{{8}}+x_{{1}}x_{{5}}x_{{6}}x_{{8}}+x_{{1}}x_
{{5}}x_{{7}}x_{{8}}+x_{{1}}x_{{6}}x_{{7}}x_{{8}}+x_{{2}}x_{{3}}x_{{4}}
x_{{6}}+x_{{2}}x_{{3}}x_{{4}}x_{{7}}+x_{{2}}x_{{3}}x_{{4}}x_{{8}}+x_{{
2}}x_{{3}}x_{{5}}x_{{6}}+x_{{2}}x_{{3}}x_{{5}}x_{{7}}+x_{{2}}x_{{3}}x_
{{5}}x_{{8}}+x_{{2}}x_{{3}}x_{{6}}x_{{7}}+x_{{2}}x_{{3}}x_{{7}}x_{{8}}
+x_{{2}}x_{{4}}x_{{5}}x_{{6}}+x_{{2}}x_{{4}}x_{{5}}x_{{7}}+x_{{2}}x_{{
4}}x_{{5}}x_{{8}}+x_{{2}}x_{{4}}x_{{6}}x_{{7}}+x_{{2}}x_{{4}}x_{{6}}x_
{{8}}+x_{{2}}x_{{5}}x_{{6}}x_{{8}}+x_{{2}}x_{{5}}x_{{7}}x_{{8}}+x_{{2}
}x_{{6}}x_{{7}}x_{{8}}+x_{{3}}x_{{4}}x_{{5}}x_{{6}}+x_{{3}}x_{{4}}x_{{
5}}x_{{7}}+x_{{3}}x_{{4}}x_{{5}}x_{{8}}+x_{{3}}x_{{4}}x_{{6}}x_{{7}}+x
_{{3}}x_{{4}}x_{{7}}x_{{8}}+x_{{4}}x_{{5}}x_{{6}}x_{{8}}+x_{{4}}x_{{5}
}x_{{7}}x_{{8}}+x_{{4}}x_{{6}}x_{{7}}x_{{8}}$.

As the momenta are on-shell, appealing to Lemma \ref{MomentumLemma}, we calculate that:

$\phi_{W_{4}} = sx_{{1}}x_{{3}}x_{{5}}x_{{7}}x_{{8}}+sx_{{2}}x_{{3}}x_{{4}}x_{{5}}x_{{
6}}+sx_{{2}}x_{{3}}x_{{4}}x_{{5}}x_{{7}}+sx_{{2}}x_{{3}}x_{{4}}x_{{5}}
x_{{8}}+sx_{{2}}x_{{3}}x_{{5}}x_{{7}}x_{{8}}+sx_{{3}}x_{{4}}x_{{5}}x_{
{7}}x_{{8}}+ux_{{1}}x_{{2}}x_{{4}}x_{{6}}x_{{7}}+ux_{{1}}x_{{2}}x_{{4}
}x_{{6}}x_{{8}}+ux_{{1}}x_{{2}}x_{{5}}x_{{6}}x_{{8}}+ux_{{1}}x_{{2}}x_
{{6}}x_{{7}}x_{{8}}+ux_{{1}}x_{{3}}x_{{4}}x_{{6}}x_{{7}}+ux_{{1}}x_{{4
}}x_{{6}}x_{{7}}x_{{8}}$, 

where $u = (p_{1} + p_{4})^{2}$ and $s = (p_{1} + p_{3})^{2}$. 

By symmetry of $W_{4}$, without loss of generality we may assume that $\sigma(1) = x_1$ or $\sigma(1) = x_{2}$. We first consider the case where $\sigma(1) = x_{1}$. Then we have

$S^{1} = \{ x_{{2}}x_{{4}}x_{{6}}+x_{{2}}x_{{4}}x_{{7}}+x_{{2}}x_{{4}}x_{{8}}+x_{{
2}}x_{{5}}x_{{6}}+x_{{2}}x_{{5}}x_{{7}}+x_{{2}}x_{{5}}x_{{8}}+x_{{2}}x
_{{6}}x_{{7}}+x_{{2}}x_{{7}}x_{{8}}+x_{{3}}x_{{4}}x_{{6}}+x_{{3}}x_{{4
}}x_{{7}}+x_{{3}}x_{{4}}x_{{8}}+x_{{3}}x_{{5}}x_{{6}}+x_{{3}}x_{{5}}x_
{{7}}+x_{{3}}x_{{5}}x_{{8}}+x_{{3}}x_{{6}}x_{{7}}+x_{{3}}x_{{7}}x_{{8}
}+x_{{4}}x_{{6}}x_{{8}}+x_{{4}}x_{{7}}x_{{8}}+x_{{5}}x_{{6}}x_{{8}}+x_
{{5}}x_{{7}}x_{{8}}+x_{{6}}x_{{7}}x_{{8}},sx_{{3}}x_{{5}}x_{{7}}x_{{8}}+ux_{{2}}x_{{4}}x_{{6}}x_{{7}}+ux_{{2}}x_
{{4}}x_{{6}}x_{{8}}+ux_{{2}}x_{{5}}x_{{6}}x_{{8}}+ux_{{2}}x_{{6}}x_{{7
}}x_{{8}}+ux_{{3}}x_{{4}}x_{{6}}x_{{7}}+ux_{{4}}x_{{6}}x_{{7}}x_{{8}} \}, $ 

$S^{2} = \{x_{{2}}x_{{3}}x_{{4}}x_{{6}}+x_{{2}}x_{{3}}x_{{4}}x_{{7}}+x_{{2}}x_{{3
}}x_{{4}}x_{{8}}+x_{{2}}x_{{3}}x_{{5}}x_{{6}}+x_{{2}}x_{{3}}x_{{5}}x_{
{7}}+x_{{2}}x_{{3}}x_{{5}}x_{{8}}+x_{{2}}x_{{3}}x_{{6}}x_{{7}}+x_{{2}}
x_{{3}}x_{{7}}x_{{8}}+x_{{2}}x_{{4}}x_{{5}}x_{{6}}+x_{{2}}x_{{4}}x_{{5
}}x_{{7}}+x_{{2}}x_{{4}}x_{{5}}x_{{8}}+x_{{2}}x_{{4}}x_{{6}}x_{{7}}+x_
{{2}}x_{{4}}x_{{6}}x_{{8}}+x_{{2}}x_{{5}}x_{{6}}x_{{8}}+x_{{2}}x_{{5}}
x_{{7}}x_{{8}}+x_{{2}}x_{{6}}x_{{7}}x_{{8}}+x_{{3}}x_{{4}}x_{{5}}x_{{6
}}+x_{{3}}x_{{4}}x_{{5}}x_{{7}}+x_{{3}}x_{{4}}x_{{5}}x_{{8}}+x_{{3}}x_
{{4}}x_{{6}}x_{{7}}+x_{{3}}x_{{4}}x_{{7}}x_{{8}}+x_{{4}}x_{{5}}x_{{6}}
x_{{8}}+x_{{4}}x_{{5}}x_{{7}}x_{{8}}+x_{{4}}x_{{6}}x_{{7}}x_{{8}}, sx_{{2}}x_{{3}}x_{{4}}x_{{5}}x_{{6}}+sx_{{2}}x_{{3}}x_{{4}}x_{{5}}x_{{
7}}+sx_{{2}}x_{{3}}x_{{4}}x_{{5}}x_{{8}}+sx_{{2}}x_{{3}}x_{{5}}x_{{7}}
x_{{8}}+sx_{{3}}x_{{4}}x_{{5}}x_{{7}}x_{{8}} \}, $

$S^{3} = \{( x_{{2}}x_{{4}}x_{{6}}+x_{{2}}x_{{4}}x_{{7}}+x_{{2}}x_{{4}}x_{{
8}}+x_{{2}}x_{{5}}x_{{6}}+x_{{2}}x_{{5}}x_{{7}}+x_{{2}}x_{{5}}x_{{8}}+
x_{{2}}x_{{6}}x_{{7}}+x_{{2}}x_{{7}}x_{{8}}+x_{{3}}x_{{4}}x_{{6}}+x_{{
3}}x_{{4}}x_{{7}}+x_{{3}}x_{{4}}x_{{8}}+x_{{3}}x_{{5}}x_{{6}}+x_{{3}}x
_{{5}}x_{{7}}+x_{{3}}x_{{5}}x_{{8}}+x_{{3}}x_{{6}}x_{{7}}+x_{{3}}x_{{7
}}x_{{8}}+x_{{4}}x_{{6}}x_{{8}}+x_{{4}}x_{{7}}x_{{8}}+x_{{5}}x_{{6}}x_
{{8}}+x_{{5}}x_{{7}}x_{{8}}+x_{{6}}x_{{7}}x_{{8}}) ( sx_{
{2}}x_{{3}}x_{{4}}x_{{5}}x_{{6}}+sx_{{2}}x_{{3}}x_{{4}}x_{{5}}x_{{7}}+
sx_{{2}}x_{{3}}x_{{4}}x_{{5}}x_{{8}}+sx_{{2}}x_{{3}}x_{{5}}x_{{7}}x_{{
8}}+sx_{{3}}x_{{4}}x_{{5}}x_{{7}}x_{{8}}) - ( sx_{{3}}x_{{
5}}x_{{7}}x_{{8}}+ux_{{2}}x_{{4}}x_{{6}}x_{{7}}+ux_{{2}}x_{{4}}x_{{6}}
x_{{8}}+ux_{{2}}x_{{5}}x_{{6}}x_{{8}}+ux_{{2}}x_{{6}}x_{{7}}x_{{8}}+ux
_{{3}}x_{{4}}x_{{6}}x_{{7}}+ux_{{4}}x_{{6}}x_{{7}}x_{{8}}) 
 ( x_{{2}}x_{{3}}x_{{4}}x_{{6}}+x_{{2}}x_{{3}}x_{{4}}x_{{7}}+x_{{
2}}x_{{3}}x_{{4}}x_{{8}}+x_{{2}}x_{{3}}x_{{5}}x_{{6}}+x_{{2}}x_{{3}}x_
{{5}}x_{{7}}+x_{{2}}x_{{3}}x_{{5}}x_{{8}}+x_{{2}}x_{{3}}x_{{6}}x_{{7}}
+x_{{2}}x_{{3}}x_{{7}}x_{{8}}+x_{{2}}x_{{4}}x_{{5}}x_{{6}}+x_{{2}}x_{{
4}}x_{{5}}x_{{7}}+x_{{2}}x_{{4}}x_{{5}}x_{{8}}+x_{{2}}x_{{4}}x_{{6}}x_
{{7}}+x_{{2}}x_{{4}}x_{{6}}x_{{8}}+x_{{2}}x_{{5}}x_{{6}}x_{{8}}+x_{{2}
}x_{{5}}x_{{7}}x_{{8}}+x_{{2}}x_{{6}}x_{{7}}x_{{8}}+x_{{3}}x_{{4}}x_{{
5}}x_{{6}}+x_{{3}}x_{{4}}x_{{5}}x_{{7}}+x_{{3}}x_{{4}}x_{{5}}x_{{8}}+x
_{{3}}x_{{4}}x_{{6}}x_{{7}}+x_{{3}}x_{{4}}x_{{7}}x_{{8}}+x_{{4}}x_{{5}
}x_{{6}}x_{{8}}+x_{{4}}x_{{5}}x_{{7}}x_{{8}}+x_{{4}}x_{{6}}x_{{7}}x_{{
8}} )\}. $

Let $f$ be the polynomial in $S^{3}$. We claim that $f$ does not factor into polynomials linear in any of $x_{4},x_{5},x_{6}$ or $x_{8}$.  By the symmetry of $W_{4}$, this suffices to show that $S_{\sigma(1)}$ is not linear in $\sigma(2)$. For $x_{5}$, we assume that $f = f_{1}f_{2}$ where $f_{1}$ and $f_{2}$ are both linear in $x_{5}$ and we write $f = Ax_{5}^{2} + Bx_{5} + C$. Then from the quadratic formula, $\sqrt{B^{2} -4AC}$ is a polynomial. However by evaluating $B^{2} -4AC$ at $1$ for all $s,t,u$ and all $x_{i}, i \in \{2,3,4,6,7,8\}$, we get $B^{2} -4AC = 3564$ which is not a perfect square, a contradiction. 

Applying the same strategy to $x_{4}$, evaluating $B^{2} - 4AC$ at one for $s,t,u$ and $x_{i} =1$ for $i \in \{2,3,5,6,7,8\}$ we get $-1008$ which is not perfect square, a contradiction.

For $x_{8}$, evaluating $B^{2}-4AC$ at one for $s,u$ and $x_{i}$, $i \in \{2,\ldots,7\}$ we get $585$ which is not a perfect square, a contradiction. 

For $x_{6}$, evaluating $B^{2} - 4AC$ at one for $s,u$ and $x_{i}$, $i \in \{2,3,4,5,7,8\}$ we get $10368$ which is not a perfect square, a contradiction. 

With that, we may assume that $\sigma(1) \neq x_{1}$. Therefore  $\sigma(1) =x_{2}$. Then we calculate:

$S^{1} = \{x_{{1}}x_{{4}}x_{{6}}+x_{{1}}x_{{4}}x_{{7}}+x_{{1}}x_{{4}}x_{{8}}+x_{{
1}}x_{{5}}x_{{6}}+x_{{1}}x_{{5}}x_{{7}}+x_{{1}}x_{{5}}x_{{8}}+x_{{1}}x
_{{6}}x_{{7}}+x_{{1}}x_{{7}}x_{{8}}+x_{{3}}x_{{4}}x_{{6}}+x_{{3}}x_{{4
}}x_{{7}}+x_{{3}}x_{{4}}x_{{8}}+x_{{3}}x_{{5}}x_{{6}}+x_{{3}}x_{{5}}x_
{{7}}+x_{{3}}x_{{5}}x_{{8}}+x_{{3}}x_{{6}}x_{{7}}+x_{{3}}x_{{7}}x_{{8}
}+x_{{4}}x_{{5}}x_{{6}}+x_{{4}}x_{{5}}x_{{7}}+x_{{4}}x_{{5}}x_{{8}}+x_
{{4}}x_{{6}}x_{{7}}+x_{{4}}x_{{6}}x_{{8}}+x_{{5}}x_{{6}}x_{{8}}+x_{{5}
}x_{{7}}x_{{8}}+x_{{6}}x_{{7}}x_{{8}}, sx_{{3}}x_{{4}}x_{{5}}x_{{6}}+sx_{{3}}x_{{4}}x_{{5}}x_{{7}}+sx_{{3}}x_
{{4}}x_{{5}}x_{{8}}+sx_{{3}}x_{{5}}x_{{7}}x_{{8}}+ux_{{1}}x_{{4}}x_{{6
}}x_{{7}}+ux_{{1}}x_{{4}}x_{{6}}x_{{8}}+ux_{{1}}x_{{5}}x_{{6}}x_{{8}}+
ux_{{1}}x_{{6}}x_{{7}}x_{{8}} \},$

$S^{2} = \{x_{{1}}x_{{3}}x_{{4}}x_{{6}}+x_{{1}}x_{{3}}x_{{4}}x_{{7}}+x_{{1}}x_{{3
}}x_{{4}}x_{{8}}+x_{{1}}x_{{3}}x_{{5}}x_{{6}}+x_{{1}}x_{{3}}x_{{5}}x_{
{7}}+x_{{1}}x_{{3}}x_{{5}}x_{{8}}+x_{{1}}x_{{3}}x_{{6}}x_{{7}}+x_{{1}}
x_{{3}}x_{{7}}x_{{8}}+x_{{1}}x_{{4}}x_{{6}}x_{{8}}+x_{{1}}x_{{4}}x_{{7
}}x_{{8}}+x_{{1}}x_{{5}}x_{{6}}x_{{8}}+x_{{1}}x_{{5}}x_{{7}}x_{{8}}+x_
{{1}}x_{{6}}x_{{7}}x_{{8}}+x_{{3}}x_{{4}}x_{{5}}x_{{6}}+x_{{3}}x_{{4}}
x_{{5}}x_{{7}}+x_{{3}}x_{{4}}x_{{5}}x_{{8}}+x_{{3}}x_{{4}}x_{{6}}x_{{7
}}+x_{{3}}x_{{4}}x_{{7}}x_{{8}}+x_{{4}}x_{{5}}x_{{6}}x_{{8}}+x_{{4}}x_
{{5}}x_{{7}}x_{{8}}+x_{{4}}x_{{6}}x_{{7}}x_{{8}},sx_{{1}}x_{{3}}x_{{5}}x_{{7}}x_{{8}}+sx_{{3}}x_{{4}}x_{{5}}x_{{7}}x_{{
8}}+ux_{{1}}x_{{3}}x_{{4}}x_{{6}}x_{{7}}+ux_{{1}}x_{{4}}x_{{6}}x_{{7}}
x_{{8}} \},$

$S^{3} = \{ ( x_{{1}}x_{{4}}x_{{6}}+x_{{1}}x_{{4}}x_{{7}}+x_{{1}}x_{{4}}x_{{
8}}+x_{{1}}x_{{5}}x_{{6}}+x_{{1}}x_{{5}}x_{{7}}+x_{{1}}x_{{5}}x_{{8}}+
x_{{1}}x_{{6}}x_{{7}}+x_{{1}}x_{{7}}x_{{8}}+x_{{3}}x_{{4}}x_{{6}}+x_{{
3}}x_{{4}}x_{{7}}+x_{{3}}x_{{4}}x_{{8}}+x_{{3}}x_{{5}}x_{{6}}+x_{{3}}x
_{{5}}x_{{7}}+x_{{3}}x_{{5}}x_{{8}}+x_{{3}}x_{{6}}x_{{7}}+x_{{3}}x_{{7
}}x_{{8}}+x_{{4}}x_{{5}}x_{{6}}+x_{{4}}x_{{5}}x_{{7}}+x_{{4}}x_{{5}}x_
{{8}}+x_{{4}}x_{{6}}x_{{7}}+x_{{4}}x_{{6}}x_{{8}}+x_{{5}}x_{{6}}x_{{8}
}+x_{{5}}x_{{7}}x_{{8}}+x_{{6}}x_{{7}}x_{{8}} )  ( sx_{{1}}
x_{{3}}x_{{5}}x_{{7}}x_{{8}}+sx_{{3}}x_{{4}}x_{{5}}x_{{7}}x_{{8}}+ux_{
{1}}x_{{3}}x_{{4}}x_{{6}}x_{{7}}+ux_{{1}}x_{{4}}x_{{6}}x_{{7}}x_{{8}}
 ) - ( sx_{{3}}x_{{4}}x_{{5}}x_{{6}}+sx_{{3}}x_{{4}}x_{{5}}
x_{{7}}+sx_{{3}}x_{{4}}x_{{5}}x_{{8}}+sx_{{3}}x_{{5}}x_{{7}}x_{{8}}+ux
_{{1}}x_{{4}}x_{{6}}x_{{7}}+ux_{{1}}x_{{4}}x_{{6}}x_{{8}}+ux_{{1}}x_{{
5}}x_{{6}}x_{{8}}+ux_{{1}}x_{{6}}x_{{7}}x_{{8}})( x_{{1}
}x_{{3}}x_{{4}}x_{{6}}+x_{{1}}x_{{3}}x_{{4}}x_{{7}}+x_{{1}}x_{{3}}x_{{
4}}x_{{8}}+x_{{1}}x_{{3}}x_{{5}}x_{{6}}+x_{{1}}x_{{3}}x_{{5}}x_{{7}}+x
_{{1}}x_{{3}}x_{{5}}x_{{8}}+x_{{1}}x_{{3}}x_{{6}}x_{{7}}+x_{{1}}x_{{3}
}x_{{7}}x_{{8}}+x_{{1}}x_{{4}}x_{{6}}x_{{8}}+x_{{1}}x_{{4}}x_{{7}}x_{{
8}}+x_{{1}}x_{{5}}x_{{6}}x_{{8}}+x_{{1}}x_{{5}}x_{{7}}x_{{8}}+x_{{1}}x
_{{6}}x_{{7}}x_{{8}}+x_{{3}}x_{{4}}x_{{5}}x_{{6}}+x_{{3}}x_{{4}}x_{{5}
}x_{{7}}+x_{{3}}x_{{4}}x_{{5}}x_{{8}}+x_{{3}}x_{{4}}x_{{6}}x_{{7}}+x_{
{3}}x_{{4}}x_{{7}}x_{{8}}+x_{{4}}x_{{5}}x_{{6}}x_{{8}}+x_{{4}}x_{{5}}x
_{{7}}x_{{8}}+x_{{4}}x_{{6}}x_{{7}}x_{{8}})\}. 
$

As above, let $f$ be the polynomial in $S^{3}$. We will show that $f$ does not factor into polynomials linear in any of $x_{1},x_{4}$ or $x_{5}$. By the symmetry of $W_{4}$, this suffices to show that $f$ does not factor into polynomials linear in any of the variables. 

As before, we assume that $f = f_{1}f_{2}$ where $f_{1}$ and $f_{2}$ are linear in $x_{1}$. Then we write $f$ as $f = Ax_{1}^{2} + Bx_{1} +C$, and evaluate $B^2 -4AC$ at one for all $s,t,u$ and $x_{i}$, $i \in \{3,\ldots,8\}$ gives $-1008$ which is not a perfect square, a contradiction.

For $x_{4}$, writing $f$ as $f= Ax_{4}^2 + Bx_{1} +C$, and evaluating $B^2-4AC$ at one for $s,u$ and $x_{i}$, $i \in \{1,3,5,6,7,8\}$ we get $-567$ which is not a square, a contradiction. 

For $x_{5}$, writing $f$ as $f= Ax_{5}^2 + Bx_{5} +C$, and evaluating $B^2-4AC$ at one for $s,u$ and $x_{i}$, $i \in \{1,3,4,6,7,8\}$ we get $585$ which is not a perfect square, a contradiction. 

Therefore, $S_{(\sigma(1))}$ is not linear in $\sigma(2)$ for any permutation $\sigma$, and thus $W_{4}$ with on-shell external momenta on the rim vertices is not simply reducible. 
\end{proof}

The last example in this section first appeared in \cite{FrancisSmall} and is an example of a non-trivial graph which is simply reducible with respect to the first Symanzik polynomial. 

\begin{example}
The graph $K_{4}$ is simply reducible with respect to the first Symanzik polynomial. 
\end{example}

\begin{proof}
Using the labels from Figure \ref{W4K4labelling} recall that
\begin{equation*}
\begin{aligned}
\psi_{K_{4}} = {} & x_{{1}}x_{{3}}x_{{2}}+x_{{1}}x_{{4}}x_{{2}}+x_{{1}}x_{{5}}x_{{2}}+x_{{
1}}x_{{3}}x_{{4}}+x_{{3}}x_{{1}}x_{{5}}+x_{{1}}x_{{3}}x_{{6}}+x_{{1}}x
_{{4}}x_{{6}}+x_{{1}}x_{{5}}x_{{6}} \\ & +x_{{2}}x_{{3}}x_{{4}} + x_{{2}}x_{{3
}}x_{{6}}+x_{{2}}x_{{4}}x_{{5}}+x_{{2}}x_{{4}}x_{{6}}+x_{{2}}x_{{5}}x_
{{6}}+x_{{3}}x_{{4}}x_{{5}}+x_{{3}}x_{{5}}x_{{6}}+x_{{4}}x_{{5}}x_{{6}
}.
\end{aligned}
\end{equation*}

Consider the permutation $\sigma$ where $\sigma(1) = x_{1}, \sigma(2) =x_{2}, \sigma(3) = x_{5}, \sigma(4) = x_{6}, \sigma(5) = x_{3}$. One can calculate that:
\begin{equation*}
\begin{aligned}
S_{\sigma(1)} = {} & \{x_{{2}}x_{{3}}+x_{{2}}x_{{4}}+x_{{2}}x_{{5}}+x_{{3}}x_{{4}}+x_{{5}}x_{{3}}+x_{{6}}x_{{3}}+x_{{4}}x_{{6}}+x_{{5}}x_{{6}},x_{{2}}x_{{3}}x_{{4}}+x_{{2}}x_{{3}}x_{{6}} \\ &+x_{{2}}x_{{4}}x_{{5}}+x_{{2}}x_{{4}}x_{{6}}+x
_{{2}}x_{{5}}x_{{6}}+x_{{3}}x_{{4}}x_{{5}}+x_{{3}}x_{{5}}x_{{6}}+x_{{4
}}x_{{5}}x_{{6}}
\}. 
\end{aligned}
\end{equation*}

Now for $\sigma(2)$ we calculate that:

$S^{1} = \{x_{{3}}+x_{{4}}+x_{{5}},x_{{3}}x_{{4}}+x_{{6}}x_{{3}}+x_{{5}}x_{{4}}+x_{{4}}x_{{6}}+x_{{5}}x_{{6}} \},$

$S^{2} = \{ x_{{3}}x_{{4}}+x_{{5}}x_{{3}}+x_{{6}}x_{{3}}+x_{{4}}x_{{6}}+x_{{5}}x_
{{6}},x_{{3}}x_{{4}}x_{{5}}+x_{{3}}x_{{5}}x_{{6}}+x_{{4}}x_{{5}}x_{{6}
} \}, $

$S^{3} = \{-{x_{{3}}}^{2}{x_{{4}}}^{2}-2\,{x_{{3}}}^{2}x_{{4}}x_{{6}}-{x_{{3}}}^{
2}{x_{{6}}}^{2}-2\,x_{{3}}{x_{{4}}}^{2}x_{{6}}-2\,x_{{3}}x_{{4}}x_{{5}
}x_{{6}}-2\,x_{{3}}x_{{4}}{x_{{6}}}^{2}-2\, x_{{3}}x_{{5}}{x_{{6}}}^{2}
-{x_{{4}}}^{2}{x_{{6}}}^{2}-2\,x_{{4}}x_{{5}}{x_{{6}}}^{2}-{x_{{5}}}^{
2}{x_{{6}}}^{2}\}.  $

Now one can check that the polynomial in $S^{3}$ can be expressed as $- \left( x_{{3}}x_{{4}}+x_{{3}}x_{{6}}+x_{{4}}x_{{6}}+x_{{5}}x_{{6}}
 \right) ^{2}$, and that $x_{{3}}x_{{4}}x_{{5}}+x_{{3}}x_{{5}}x_{{6}}+x_{{4}}x_{{5}}x_{{6}} = x_{5}\left( x_{{3}}x_{{4}}+x_{{3}}x_{{6}}+x_{{4}}x_{{6}} \right)$. Ignoring the monomial, we have
 
 $S_{(\sigma(1),\sigma(2))} = \{x_{{3}}+x_{{4}}+x_{{5}},x_{{3}}x_{{4}}+x_{{3}}x_{{6}}+x_{{4}}x_{{5}}+
x_{{4}}x_{{6}}+x_{{5}}x_{{6}},x_{{3}}x_{{4}}+x_{{3}}x_{{5}}+x_{{3}}x_{
{6}}+x_{{4}}x_{{6}}+x_{{5}}x_{{6}},x_{{3}}x_{{4}}+x_{{3}}x_{{6}}+x_{{4
}}x_{{6}},x_{{3}}x_{{4}}+x_{{3}}x_{{6}}+x_{{4}}x_{{6}}+x_{{5}}x_{{6}} \}.$

Then for $\sigma(3)$, ignoring monomials and constants  we calculate that:

$S_{(\sigma(1),\sigma(2),\sigma(3))} = \{x_{{3}}+x_{{4}},x_{{4}}+x_{{6}},x_{{3}}x_{{4}}+x_{{3}}x_{{6}}+x_{{4}}
x_{{6}},x_{{3}}+x_{{6}},x_{{3}}-x_{{4}}
 \}.$ 
 
 Then for $\sigma(4)$ we calculate: 
 $S_{(\sigma(1),\sigma(2),\sigma(3), \sigma(4))} = \{ x_{3} + x_{4}, x_{3} -x_{4}\}$. 
 
 Then finally we see that $S_{(\sigma(1),\ldots,\sigma(5))}$ after removing monomials is the empty set and thus $K_{4}$ is simply reducible with respect to the first Symanzik polynomial. 
\end{proof}

We can relate the simple reduction algorithm back to Francis Brown's integration algorithm for Feynman integrals. The set $S$ at the start is the Symanzik polynomials appearing the Feynman integral and the permutation $\sigma$ is corresponding to a choice of order of integration. The polynomials we create at each iteration are an overestimate of the polynomials which appear at that step of the integration under that ordering. So a graph being simply reducible says that there is an order such that at each step the polynomials appearing will not cause a problem for the integration algorithm.

The reason I say we are only approximating the polynomials appearing at each step during the intergration is because with the above algorithm, we possibly get spurious polynomials. By this I mean, we possibly get polynomials in the simple reduction which do not appear in the integration. Clearly, the more polynomials we have in the reduction sets the more likely we are to say the graph is not simply reducibile. Thus a set of Symanzik polynomials may be amenable to Brown's integration algorithm but not simply reducible. Given that the point of the reduction algorithm is to decide if we can use Brown's integration algorithm, it would be advantageous to reduce the number of spurious polynomials. An easy improvement comes from noting that the simple reduction algorithm fundamentally relied on an ordering of the variables, but by Fubini's Theorem, the order of integration does not change the outcome of the integration. The next reduction algorithm reflects this, and was introduced by Brown in \cite{FrancisSmall}. 

\subsection{The Fubini Reduction Algorithm}

As in the simple reduction algorithm, let $S$ be a set of polynomials in the polynomial ring $Q[\alpha_1,\alpha_2,\ldots, \alpha_r]$. Let $\sigma$ be a permutation of $\{\alpha_1,\ldots,\alpha_r\}$. Recall that $S_{(\sigma(1),\ldots,\sigma(k))}$ is the set obtained by running the simple reduction algorithm on the variables $\sigma(1),\ldots,\sigma(k)$ in that order.

We define $S_{[\sigma(1)]} = S_{(\sigma(1))}$ and $S_{[\sigma(1),\sigma(2)]} = S_{(\sigma(1),\sigma(2))} \cap S_{(\sigma(2),\sigma(1))}$. In general, we have:

\begin{center}
\[S_{[\sigma(1),\ldots,\sigma(k)]} = \bigcap_{1 \leq i \leq k} S_{[\sigma(1),\ldots,\hat{\sigma(i)},\ldots,\sigma(k)](\sigma(i))}  \]
\end{center}

Where $\hat{\sigma(i)}$ means we are omitting $\sigma(i)$. As stated this definition is not well defined. It is entirely possible that in the intersection one of the sets $S_{[\sigma(1),\ldots,\hat{\sigma(i)},\ldots,\sigma(k)](\sigma(i))}$ is undefined.  In this case, we simply ignore that set and continue on. In the event $S_{[\sigma(1),\ldots,\hat{\sigma(i)},\ldots,\sigma(k)](\sigma(i))}$ is undefined for all $i$, the algorithm stops. Additionally, here when we take intersections, we do so up to constants. Thus if two polynomials differ by a constant factor, we do not remove them in the intersection.

\begin{definition}
Let $S$ be a set of polynomials in the polynomial ring $\mathbb{Q}[\alpha_{1},\ldots,\alpha_{r}]$.  Let $\sigma$ be a permutation of $\{\alpha_{1},\ldots,\alpha_{r}\}$. We say $S$ is \emph{Fubini reducible with respect to $\sigma$} if for all $ 1 \leq i \leq r-1$, all polynomials in the set $S_{[\sigma(1),\ldots,\sigma(i)]}$ are linear in $\sigma(i+1)$. If there exists a permutation $\sigma$ such that $S$ is Fubini reducible with respect to $\sigma$, we say that $S$ is \emph{Fubini reducible}. Given a graph $G$, and $S \subseteq \{\psi_{G},\phi_{G}\}$, if $S$ is Fubini reducible, then we say $G$ is reducible with respect to $S$.
\end{definition}

\begin{remark}
\label{Simpleimpliesfubini}
If $S$ is simply reducible with respect to some permutation $\sigma$, then $S$ is Fubini reducible with respect to $\sigma$.
\end{remark}

\begin{proof}
It follows immediately from the definitions that for any integer $k$,  $S_{[\sigma(1),\ldots,\sigma(k)]} \subseteq S_{(\sigma(1),\dots,\sigma(k))}$. As $S$ is simply reducible, $S_{(\sigma(1),\dots,\sigma(k))}$ is linear in $\sigma(k+1)$ for all $k$. Therefore, $S_{[\sigma(1),\ldots,\sigma(k)]}$ is linear in $\sigma(k+1)$ for all $k$, completing the proof.
\end{proof}

\begin{example}
The graphs $K_{4}$ and $W_{4}$ with external momenta as in example \ref{K4notreducible} and example \ref{W4notreducible} are not Fubini reducible with respect to the first and second Symanzik polynomials.
\end{example}

\begin{proof}
Notice that given any set of polynomials $S$, and any permutation $\sigma$ of the Schwinger parameters, $S_{[\sigma(1)]} = S_{(\sigma(1))}$. From example \ref{K4notreducible} and example \ref{W4notreducible} both $W_{4}$ and $K_{4}$ have $S_{(\sigma(1),\sigma(2))}$ undefined for all permutations $\sigma$. Thus we can conclude that $K_{4}$ and $W_{4}$ are not Fubini reducible with respect to the first and second Symanzik polynomials. 
\end{proof}

\begin{example}[\cite{FrancisSmall}]
The graph $K_{4}$ is Fubini reducible with respect to the first Symanzik polynomial, and the Fubini reduction produces less polynomials than the simple reduction. 
\end{example}

\begin{proof}
The fact that $K_{4}$ is Fubini reducible is immediate by Remark \ref{Simpleimpliesfubini} as $K_{4}$ is simply reducible with respect to the first Symanzik polynomial. We partially go through the calculation so as to show that the Fubini reduction algorithm does indeed remove some spurious polynomials. As before, we use the labelling in Figure \ref{W4K4labelling}. Recall that 

\begin{equation*}
\begin{aligned}
\psi_{K_{4}} = {} & x_{{1}}x_{{3}}x_{{2}}+x_{{1}}x_{{4}}x_{{2}}+x_{{1}}x_{{5}}x_{{2}}+x_{{
1}}x_{{3}}x_{{4}}+x_{{3}}x_{{1}}x_{{5}}+x_{{1}}x_{{3}}x_{{6}}+x_{{1}}x
_{{4}}x_{{6}}+x_{{1}}x_{{5}}x_{{6}} \\ & +x_{{2}}x_{{3}}x_{{4}} + x_{{2}}x_{{3
}}x_{{6}}+x_{{2}}x_{{4}}x_{{5}}+x_{{2}}x_{{4}}x_{{6}}+x_{{2}}x_{{5}}x_
{{6}}+x_{{3}}x_{{4}}x_{{5}}+x_{{3}}x_{{5}}x_{{6}}+x_{{4}}x_{{5}}x_{{6}
}.
\end{aligned}
\end{equation*}

Let $\sigma$ be a permutation of $\{x_{1},x_{2},x_{3},x_{4},x_{5}\}$ such that $\sigma(1) = x_{1}, \sigma(2) = x_{2}, \sigma(3) = x_{5}, \sigma(4) = x_{6},$ and $ \sigma(5) = x_{3}$.  We have already calculated that:

$S_{[\sigma(1)]} = \{ x_{{2}}x_{{3}}+x_{{2}}x_{{4}}+x_{{2}}x_{{5}}+x_{{3}}x_{{4}}+x_{{5}}x_{{3}}+x_{{6}}x_{{3}}+x_{{4}}x_{{6}}+x_{{5}}x_{{6}},x_{{2}}x_{{3}}x_{{4
}}+x_{{2}}x_{{3}}x_{{6}}+x_{{2}}x_{{4}}x_{{5}}+x_{{2}}x_{{4}}x_{{6}}+x_{{2}}x_{{5}}x_{{6}}+x_{{3}}x_{{4}}x_{{5}}+x_{{3}}x_{{5}}x_{{6}}+x_{{4}}x_{{5}}x_{{6}}\}.$

Similarly, we have that,

$S_{[\sigma(2)]} = \{x_{{1}}x_{{3}}+x_{{1}}x_{{4}}+x_{{1}}x_{{5}}+x_{{3}}x_{{4}}+x_{{3}}x_
{{6}}+x_{{4}}x_{{5}}+x_{{4}}x_{{6}}+x_{{5}}x_{{6}},x_{{1}}x_{{3}}x_{{4
}}+x_{{1}}x_{{3}}x_{{5}}+x_{{1}}x_{{3}}x_{{6}}+x_{{1}}x_{{4}}x_{{6}}+x
_{{1}}x_{{5}}x_{{6}}+x_{{3}}x_{{4}}x_{{5}}+x_{{3}}x_{{5}}x_{{6}}+x_{{4
}}x_{{5}}x_{{6}}\}.$

Then we calculate that:

$S_{[\sigma(1)](\sigma(2))} = \{x_{{3}}+x_{{4}}+x_{{5}},x_{{3}}x_{{4}}+x_{{3}}x_{{5}}+x_{{3}}x_{{6}}+
x_{{4}}x_{{6}}+x_{{5}}x_{{6}},x_{{3}}x_{{4}}+x_{{3}}x_{{6}}+x_{{4}}x_{
{5}}+x_{{4}}x_{{6}}+x_{{5}}x_{{6}},x_{{3}}x_{{4}}+x_{{3}}x_{{6}}+x_{{4
}}x_{{6}}+x_{{5}}x_{{6}},x_{{3}}x_{{4}}+x_{{3}}x_{{6}}+x_{{4}}x_{{6}}\}.$

Similarly we can calculate that $S_{[\sigma(2)](\sigma(1))} = S_{[\sigma(1)](\sigma(2))}$. Therefore $S_{[\sigma(1),\sigma(2)]} = S_{[\sigma(1)](\sigma(2))}$.

Since $S_{[\sigma(1),\sigma(2)]} = S_{[\sigma(1)](\sigma(2))}$, we have that $S_{[\sigma(1),\sigma(2)](\sigma(3))} = S_{(\sigma(1),\sigma(2),\sigma(3))}$ so we have that:

$S_{[\sigma(1),\sigma(2)](\sigma(3))} = \{x_{{3}}+x_{{4}},x_{{4}}+x_{{6}},x_{{3}}x_{{4}}+x_{{3}}x_{{6}}+x_{{4}}
x_{{6}},x_{{3}}+x_{{6}},x_{{3}}-x_{{4}}
 \}.$ 
 
 Now we calculate $S_{[\sigma(1),\sigma(3)](\sigma(2))}$ and we get that:

$S_{[\sigma(1),\sigma(3)]} = \{x_{{2}}+x_{{4}}+x_{{5}}+x_{{6}},x_{{4}}+x_{{6}},x_{{2}}+x_{{5}},x_{{4
}}+x_{{5}},x_{{2}}+x_{{6}},x_{{4}}x_{{2}}-x_{{5}}x_{{6}}\}$, and that

$S_{[\sigma(1),\sigma(3)](\sigma(2))} = \{x_{{4}}+x_{{6}},x_{{4}}+x_{{5}},x_{{4}}+x_{{5}}+x_{{6}}\}$.

Now notice that $S_{[\sigma(1), \sigma(3)](\sigma(2))} \cap S_{[\sigma(1), \sigma(2)](\sigma(3))} = \{x_{4} + x_{6}\}$.  This set has significantly fewer polynomials than $S_{(\sigma(1), \sigma(2),\sigma(3))}$. Therefore $S_{[\sigma(1),\sigma(2),\sigma(3)]}$ has fewer polynomials than $S_{(\sigma(1),\sigma(2),\sigma(3))}$ so the Fubini reduction algorithm does reduce the number of polynomials in the reduction. We do not finish the calculation as we already know that $W_{4}$ is Fubini reducible with respect to $\psi$.
\end{proof}

We define the \textit{loop order} of a graph to be $E(G) - E(T)$ where $T$ is a spanning tree of $G$. The loop order is also known as the dimension of the cycle space or the \textit{Betti number} of $G$. It turns out, if $G$ has loop order less than $6$, then $G$ is Fubini reducible with respect to the first Symanzik polynomial (\cite{panzerphdthesis}). Furthermore, most massless graphs with four on-shell momenta and loop order less than $3$ are Fubini reducible \cite{panzerphdthesis}. A computational study of Fubini reducibility for graphs with no massive edges and four on-shell momenta was undertaken by  L{\"u}ders in \cite{Martin}. For $3$-loop graphs, he tested $109$ graphs and found that $39$ of them were not Fubini reducible. For $4$-loop graphs, $600$ graphs were tested and around $200$ were found to not be Fubini reducible. Largely, the graphs that he found to not be reducible can be explained by having $W_{4}$ as a minor with the external momenta as in example \ref{W4notreducible}. The other non-Fubini reducible graphs all will turn out to be reducible by the next polynomial reduction algorithm we introduce. For those that would like to actually run the Fubini algorithm to do tests as in \cite{Martin}, Christian Bogner has implemented the Fubini algorithm in Maple and his program MPL can be freely downloaded (\cite{MPLarticle}).

 However despite the improvement the Fubini algorithm offers, this algorithm still produces spurious polynomials. Analysing Francis Brown's integration algorithm, it turns out that the polynomials $h_{i}g_{j}-h_{j}g_{i}$ only appear in the integration sometimes. Therefore one can create a bookkeeping method which keeps track of which keeps track of when we want to consider $h_{i}g_{j}-h_{j}g_{i}$ in our reduction. Additionally, we note that under computational tests, the Fubini reduction algorithm fares much better than the simple reduction algorithm for the number of graphs which are determined to be reducible. However, it is hard to prove infinite families of graphs are Fubini reducible as one needs to have a strong understanding of how the polynomials factor throughout the reduction. The compatibility graph algorithm deals with both concerns.

\subsection{Compatibility Graph Reduction Algorithm}

Compatibility graphs are essentially a book-keeping method for keeping track of when we want to consider the polynomial $h_{i}g_{j}-h_{j}g_{i}$ in our reduction algorithms. The notion first arose in \cite{FrancisBig}, and was essential in proving an infinite family of graphs have Feynman integrals  which evaluate to multiple zeta values. Let $S$ be a set of polynomials. A \textit{compatibility graph} for $S$, $C_{S}$, is a graph where $V(C_{S}) = \{f | f \in S\}$. If $fg \in E(C_{S})$, then we will say the polynomials $f$ and $g$ are \textit{compatible}. First, we describe the compatibility graph reduction modification for the simple reduction algorithm.

Let $S$ be a set of polynomials in the polynomial ring $Q[\alpha_1,\alpha_2,\ldots, \alpha_r]$.  Let $\sigma$ be a permutation of $\{\alpha_1,\ldots,\alpha_r\}$. Let $C_{S}$ be the initial compatibility graph for $S$.  The idea will be to create a sequence of new sets of polynomials with rational coefficients and compatibility graphs $(S_{(\sigma(1))}, C_{(\sigma(1)}))$,
$(S_{(\sigma(1),\sigma(2))},C_{(\sigma(1),\sigma(2))}),\ldots, (S_{(\sigma(1),\ldots,\sigma(r))},C_{(\sigma(1),\ldots,\sigma(r))})$  and check that each polynomial in the set $S_{(\sigma(1),\ldots,\sigma(i))}$ is linear in $\sigma(i+1)$ for all $i \in \{1,\ldots,r-1\}$. Suppose we are at the $k^{th}$ iteration of the algorithm. If $k \geq 2$, then  we have a set of polynomials with rational coefficients $S_{(\sigma(1),\ldots,\sigma(k-1))} =  \{f_{1},\ldots,f_{n}\}$ and compatibility graph $C_{(\sigma(1),\ldots,\sigma(k-1))}$. Otherwise $k=1$ and we use $S$ and $C_{S}$. We then do the following:

\begin{enumerate}
\item{If there is a polynomial $f \in S_{(\sigma(1),\ldots,\sigma(k))}$ such that $f$ is not linear in $\sigma(k)$, we end the algorithm, otherwise continue.}

\item{Then for all $i \in \{1,\ldots,n\}$, given a polynomial $f_i \in S_{(\sigma(1),\ldots,\sigma(k))}$, we write $f_{i} = g_{i}\sigma(k) + h_{i}$, where $g_{i} =  \frac{\partial f_{i} }{\partial \sigma(k)}$ and $h_{i} = f_{i}|_{\sigma(k)=0}$.}

\item{Let $S^{1} = \{g_{i} | i \in \{1,\ldots,n\}$. Let $S^{2} = \{h_{i} | i \in \{1,\ldots,n\} \}$. Let $S^{3} = \{g_{i}h_{j}-h_{i}g_{j}| i,j \in \{1,\ldots,n\}, i \neq j  \text{ where }  f_{i} \text{ and } f_{j} \text{ are compatible}\}$. Let $S^{4} = S^{1} \cup S^{2} \cup S^{3}$. }

\item{Let $f \in S^{4}$ and let $f_{1},f_{2},f_{3},\ldots,f_{N}$ be the polynomials such that $\prod_{i =1}^{N} f_{i} = f$, and $f_{i}$ is an irreducible polynomial over $\mathbb{Q}$. Let $\tilde{S}$ be the set of polynomials $f_{1},\ldots,f_{N}$ for each $f \in S^4$.}

\item{Let $S_{(\sigma(1),\ldots,\sigma(k))} = \tilde{S}$ and construct a new compatibility graph $C_{(\sigma(1),\ldots,\sigma(k))}$ with some rule set (see below).}

\item{ Repeat the above steps with $S_{(\sigma(1),\ldots,\sigma(k))}$ in place of $S_{(\sigma(1),\ldots,\sigma(k-1))}$ and $C_{(\sigma(1),\ldots,\sigma(k))}$ in place of $C_{(\sigma(1),\ldots,\sigma(k-1))}$.}
\end{enumerate}

Observe that this algorithm is equivalent to the simple reduction algorithm if we set the compatibility graphs to be the complete graph at each step.  Brown noticed that there is a scheme for constructing compatability graphs which results in removing non-trivial polynomials, while still getting an output which gives a suitable ordering for his integration algorithm. However, while his method for constructing compatability graphs works for the Symanzik polynomials, if one abstracts Feynman integrals to a more general class of integrals, it is unclear if Brown's compatability method still works to give a suitable order of integration for the integrals (\cite{panzerphdthesis}). In \cite{panzerphdthesis}, Panzer describes an alternative construction which holds for more general integrals. We outline both constructions below.

\subsubsection{First Compatibility Graph Construction}

Before describing Francis Brown's method for constructing compatibility graphs, we give some intuition for the construction. Essentially, at some iteration of the reduction, we look back at the genealogy of the polynomials in our current set. If far back enough, the polynomials do not share a common ancestor then we will say those two polynomials are not compatible. 
 
Now for the definition. Suppose we are at the $k^{th}$ iteration of the reduction algorithm under a given permutation $\sigma$ of $\{\alpha_{1},\ldots,\alpha_{r}\}$. Then we have the set  $S_{(\sigma(1),\ldots \sigma(k-1))}$ and at the end of the iteration we obtain the set  $S_{(\sigma(1),\ldots,\sigma(k))}$. 

We now define $C_{(\sigma(1),\ldots,\sigma(k))}$. Recall, $V(C_{(\sigma(1),\ldots,\sigma(k))}) = \{f \ | \ f \in S_{(\sigma(1),\ldots \sigma(k-1))}\}$. To determine the edges of $C_{(\sigma(1),\ldots,\sigma(k))}$ we associate a set of $2$-tuples to each vertex of $C_{(\sigma(1),\ldots,\sigma(k))}$ to keep track of how the polynomials were created. Let $m \in V(C_{(\sigma(1),\ldots,\sigma(k))})$. 
\begin{center}
\textbf{$2$-tuple assignments}
\end{center}
\begin{itemize}
\item{If $m$ is an irreducible factor of a polynomial $g_{i} \in S^{1}$ we associate the set $\{0,i\}$ with $m$.}

\item{ If $m$ is an irreducible factor of some polynomial $h_{i} \in S^{2}$ then we associate the set $\{i,\infty\}$ with $m$. Additionally, if $h_{i} = f_{i}$ where $f_{i} \in S_{(\sigma(1),\ldots \sigma(k-1))}$ then we associate the set $\{0,i\}$ to $m$ as well as $\{i,\infty\}$.}

\item{If $m$ is the irreducible polynomial of some polynomial $g_{i}h_{j}-h_{i}g_{j} \in S^{3}$, then we associate the set $\{i,j\}$ to $m$.}
\end{itemize}

Now let $m,n \in V(C_{(\sigma(1),\ldots,\sigma(k)})$. The edge $mn \in E(C_{(\sigma(1),\ldots,\sigma(k)})$ if and only if there exists an set associated with $m$ and an set associated with $n$ such that their intersection is non-empty. 

 As an example, suppose a polynomial $f$ is associated with the sets $\{1,\infty\}$ and $\{3,4\}$, and polynomial $g$ is associated with the sets $\{2,\infty\}$ and $\{1,3\}$. Then since $\{3,4\} \cap \{1,3\} = \{3\}$, the polynomials $f$ and $g$ are compatible. However if $g$ instead was associated with the sets $\{5,6\}$ and $\{0,2\}$, then $f$ and $g$ would not be compatible. 

Note that this construction does indeed remove sone non-trivial polynomials. For the first Symanzik polynomial one must go at least three iterations before you can see the effects. We will see later in the chapter that when looking at both Symanzik polynomials we see the effect of compatibility graphs much sooner. The general principle is, under the above construction, if two polynomials are compatible, then they share an ancestor two  iterations back (\cite{FrancisBig}). With this, we can now state one version of the compatibility graph reduction.

\subsection{Brown's Compatibility Graph Reduction}

Throughout this algorithm, all compatibility graphs will be constructed using Brown's technique outlined above.
 Let $S$ be a set of polynomials in the polynomial ring $Q[\alpha_1,\alpha_2,\ldots, \alpha_r]$. Initialize $C_{S}$ to be the complete graph on $|S|$ vertices and let $\sigma$ be a permutation of $\{\alpha_1,\ldots,\alpha_r\}$.

We define $S_{[\sigma(1)]} = S_{(\sigma(1))}$ where we calculate $S_{(\sigma(1))}$ using the simple compatibility graph algorithm outlined at the start of the section. We define $C_{[\sigma(1)]} = C_{(\sigma(1))}$. We define $S_{[\sigma(1),\sigma(2)]} = S_{(\sigma(1),\sigma(2))} \cap S_{(\sigma(2),\sigma(1))}$ and $C_{[\sigma(1),\sigma(2)]}$ to be a graph where $fg \in E(C_{[\sigma(1),\sigma(2)]})$ if and only if $fg \in E(C_{(\sigma(1),\sigma(2))}) \cap E(C_{(\sigma(2),\sigma(1))})$. We define

\begin{center}
\[S_{[\sigma(1),\ldots,\sigma(k)]} = \bigcap_{1 \leq i \leq k} S_{[\sigma(1),\ldots,\hat{\sigma(i)},\ldots,\sigma(k)](\sigma(i))},  \]
\end{center}

and we define $C_{[\sigma(1),\ldots,\sigma(k)]}$ to be the compatibility graph for $S_{[\sigma(1),\ldots,\sigma(k)]}$ such that $fg \in C_{[\sigma(1),\ldots,\sigma(k)]}$ if and only if

 \[fg \in  \bigcap_{1 \leq i \leq k} E(C_{[\sigma(1),\ldots,\hat{\sigma(i)},\ldots,\sigma(k)](\sigma(i))}).\]

\begin{definition}
Let $S$ be a set of polynomials in the polynomial ring $\mathbb{Q}[\alpha_{1},\ldots,\alpha_{r}]$.  Let $\sigma$ be a permutation of $\{\alpha_{1},\ldots,\alpha_{r}\}$. We say $S$ is \textit{compatibility graph reducible with respect to $\sigma$} if for all $ 1 \leq i \leq r-1$, all polynomials in the set $S_{[\sigma(1),\ldots,\sigma(i)]}$ are linear in $\sigma(i+1)$. If there exists a permutation $\sigma$ such that $S$ is compatibility graph reducible with respect to $\sigma$, we say that $S$ is \emph{compatibility graph reducible}. Given a graph $G$, and $S \subseteq \{\psi_{G},\phi_{G}\}$, if $S$ is compatibility graph reducible, then we say $G$ is compatibility graph reducible with respect to $S$.
\end{definition}

As this will be the notion of reducibility which will be used for the rest of the thesis, we will abbreviate  compatibility graph reducibility to just reducibility.

\begin{remark}
If $S$ is Fubini reducible with respect to $\sigma$, then $S$ is reducible with respect to $\sigma$. 
\end{remark}

\begin{proof}
Observe that the Fubini reduction algorithm is the compatibility graph reduction algorithm described in this section where at each step the compatibility graph is a complete graph. Let $S'_{[\sigma(1),\ldots,\sigma(k)]}$ denote the set obtained by the Fubini reduction algorithm at iteration $k$ and $S_{[\sigma(1),\ldots,\sigma(k)]}$ the set obtained from the compatibility graph reduction algorithm at iteration $k$. Then we have $S_{[\sigma(1),\ldots,\sigma(k)]} \subseteq S'_{[\sigma(1),\ldots,\sigma(k)]}$ for every $k$. Since $S$ is Fubini reducible, $S'_{[\sigma(1),\ldots,\sigma(k)]}$ is linear in $\sigma(k+1)$, and thus $S_{[\sigma(1),\ldots,\sigma(k)]}$ is linear in $\sigma(k+1)$, completing the proof.
\end{proof}

\begin{figure}
\begin{center}
\includegraphics[scale =0.4]{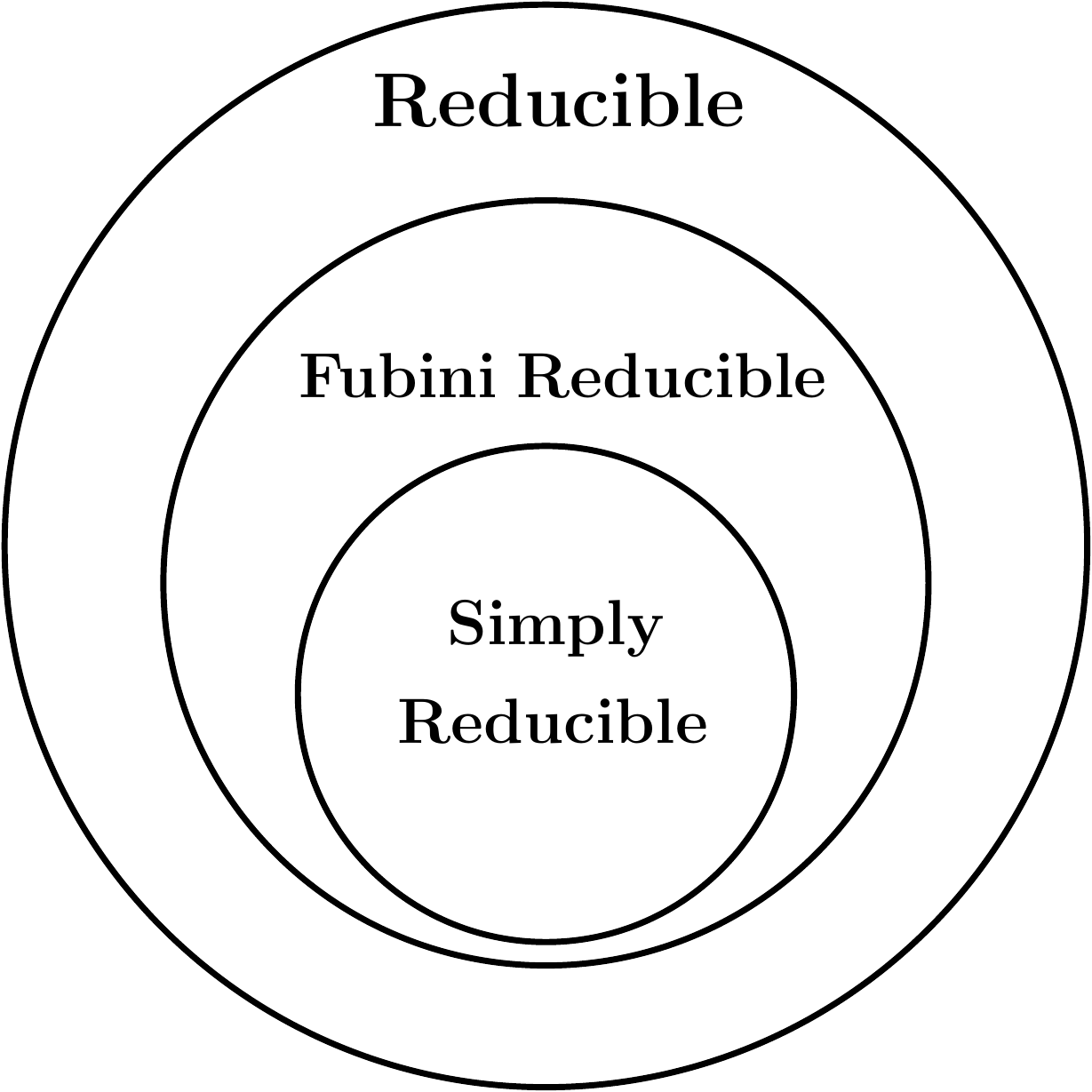}
\end{center}
\caption{The relationship between the different versions of reducibility.}
\label{reducibilityinclusion}
\end{figure} 

\begin{example}
The graphs $W_{4}$ and $K_{4}$ with external momenta as in example \ref{K4notreducible} and example \ref{W4notreducible} are not reducible with respect to the first and second Symanzik polynomials. 
\end{example}

\begin{proof}
Let $\sigma$ be a permutation. In the compatibility graph reduction algorithm, the initial compatibility graph is the complete graph. Therefore, $S_{[\sigma(1)]}$ is the same under the Fubini algorithm as the compatibility graph algorithm. As seen in example \ref{K4notreducible} and example \ref{W4notreducible}, $S_{[\sigma(1)]}$ is not linear in $\sigma(2)$, and thus $W_{4}$ and $K_{4}$ are not reducible.
\end{proof}

There is an nice infinite family of graphs which are known to be reducible with respect to the first Symanzik polynomial. 

\begin{definition}
Let $G$ be a graph with $m$ edges. The \emph{width} of an ordering $e_1, e_2,\ldots, e_m$ of $E(G)$ is the maximum order of a separation of the form $(\{e_1,\ldots,e_l\},\{e_{l+1},...,e_m\})$ for $l \in \{1,\ldots,m\}$. To be consistent with the given definition of separation, here we are viewing $\{e_1, \ldots,e_l\}$ as a set of vertices induced by the edges $e_{1},\ldots, e_{l}$. The \emph{vertex width} of $G$ is the minimum width among all edge orders of $G$.
\end{definition}

We note that the notion of vertex width is similar to the more well known width variant pathwidth. 

\begin{theorem}[Brown, \cite{FrancisBig}]
\label{vertexwidth3}
All graphs with vertex width $ \leq 3$ are reducible with respect to the first Symanzik polynomial. 
\end{theorem}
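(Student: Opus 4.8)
The plan is to run the compatibility graph reduction algorithm on the singleton set $S=\{\psi_G\}$, using as the order in which to eliminate variables the edge order $e_1,e_2,\ldots,e_m$ that witnesses that $G$ has vertex width at most $3$; that is, set $\sigma(i)=\alpha_{e_i}$. Write $F_k=\{e_1,\ldots,e_k\}$ and let $B_k$ be the vertex boundary of the separation $(\{e_1,\ldots,e_k\},\{e_{k+1},\ldots,e_m\})$, so $|B_k|\le 3$ for every $k$. Since $S$ is a singleton, $S^3$ is empty at the first step and $S_{(\sigma(1))}$ is just the set of irreducible factors of $\psi_{G\setminus e_1}=\partial\psi_G/\partial\alpha_{e_1}$ and $\psi_{G/e_1}=\psi_G|_{\alpha_{e_1}=0}$ (Lemma \ref{deletioncontraction}, Lemma \ref{derivativeandevaluationsetrelationship}).

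The core of the argument is to exhibit a class of polynomials that is closed under every operation the algorithm performs and all of whose members are \emph{multilinear}, i.e.\ of degree at most one in every Schwinger parameter. Because the degree in a fixed variable is additive under multiplication, every irreducible factor of a multilinear polynomial is again multilinear, hence in particular linear in $\sigma(k+1)$ --- which is exactly the condition the definition of reducibility imposes at step $k$. I would take this class to be Brown's \emph{Dodgson polynomials} $\Psi^{I,J}_K(G)$ from \cite{FrancisBig} --- signed minors of a matrix representing $\psi_G=\Psi^{\emptyset,\emptyset}_\emptyset$ --- restricting, after the $k$-th iteration, to those with $I\sqcup J\sqcup K=F_k$ and $|I|=|J|$. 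Equivalently one may use Brown's spanning-forest polynomials: because all edges of $I\cup J\cup K$ lie on the left side $F_k$ of a separation whose boundary $B_k$ has at most three vertices, each such $\Psi^{I,J}_K$, read as a polynomial in the surviving variables $\alpha_{e_{k+1}},\ldots,\alpha_{e_m}$, is an integer combination of spanning-forest polynomials of the subgraph formed by $e_{k+1},\ldots,e_m$ with marked-vertex set $B_k$; there are at most five partitions of a set of size at most $3$, so at most five such polynomials, and each is multilinear.

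The induction is driven by the contraction--deletion rule for Dodgson polynomials (the analogue of Lemmas \ref{deletioncontraction} and \ref{derivativeandevaluationsetrelationship}): if $f_i=\Psi^{I,J}_K$ and $e=\sigma(k)\notin I\cup J\cup K$, then $g_i=\partial f_i/\partial\alpha_e=\pm\Psi^{Ie,Je}_K$ and $h_i=f_i|_{\alpha_e=0}=\Psi^{I,J}_{Ke}$, so $S^1$ and $S^2$ stay in the class with $e$ absorbed into $F_k$. For $S^3$ one needs that for every \emph{compatible} pair $f_i,f_j$ the combination $g_ih_j-h_ig_j$ factors into a product of Dodgson polynomials; this follows from a Dodgson-type (Pl\"ucker/Jacobi) identity for minors, and the compatibility graph is built precisely so that the index data of any compatible pair is in the configuration for which that identity applies. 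Passing to irreducible factors therefore never leaves the class, so every intermediate set is multilinear, and iterating over all $m$ variables shows $G$ is reducible with respect to $\psi_G$.

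The main obstacle I anticipate is establishing the closure of the class under the $S^3$ operation --- that $g_ih_j-h_ig_j$ always factors into linear forms --- and, hand in hand with it, seeing exactly why the bound $3$ rather than $4$ is essential: with four or more boundary vertices the spanning-forest polynomials in play no longer assemble so that these combinations factor, and an irreducible quadratic can survive. Getting the index bookkeeping of the Dodgson polynomials to match the geometry of the width-$3$ separations, and verifying the relevant identities among the (at most five) spanning-forest polynomials attached to a $3$-vertex boundary, is where essentially all of the work sits; the rest is formal once that closure is secured.
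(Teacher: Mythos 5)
This theorem is quoted in the thesis from \cite{FrancisBig}; the paper itself does not supply a proof, so there is nothing internal to compare your sketch against. That said, your outline faithfully reproduces the architecture of Brown's argument: run the reduction in the edge order that witnesses width at most $3$, observe that every polynomial that can arise lives in a fixed class indexed by the current boundary $B_k$ (Dodgson polynomials $\Psi^{I,J}_K$, equivalently integer combinations of spanning-forest polynomials marked at $B_k$), use the Dodgson contraction--deletion identities for $S^1$ and $S^2$, use Pl\"ucker/Jacobi-type identities for the $S^3$ resultants, and note that with $|B_k|\le 3$ the small set of spanning-forest polynomials involved closes up under factorization and is multilinear. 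You also correctly flag that the compatibility-graph bookkeeping is there to restrict $S^3$ to exactly those pairs whose index data sits in the configuration to which the Dodgson identity applies.

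The genuine gap is exactly the one you acknowledge, and it is not a small one: you assert but do not establish that $g_ih_j - h_ig_j$ always factors into members of your class for compatible pairs, nor that the class is closed under passage to irreducible factors. Those two closure statements are where essentially the entire proof lives, and they are not formal consequences of what you have written. Two refinements worth making when you attempt them: (i) irreducible factors of a Dodgson polynomial need not themselves be Dodgson polynomials, which is precisely why Brown works instead with spanning-forest polynomials $\Phi^P$ on the boundary and identifies a small, explicitly closed lattice of products among them when $|B_k|\le 3$; your ``at most five partitions'' remark gestures at this but the relevant count (and the identities) involve partitions of subsets of $B_k$ and need to be written out; and (ii) the reason the bound is $3$ rather than $4$ is concrete --- at a $4$-vertex boundary the five-invariant enters, it is generically irreducible and quadratic, and the $S^3$ factorizations fail --- and that failure is the content you would have to isolate to see that your invariance argument stops where Brown's theorem stops.
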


At this point, all infinite families of reducible graphs for the first Symanzik polynomial reside in the family of graphs with vertex width less than $3$. Also it is important to note that in the proof of Theorem \ref{vertexwidth3}, compatability graphs are essential.  A nice result of Black, Crump, Deos, and Yeats (\cite{Crumppaper}) is that a $3$-connected graph $G$ has vertex width $ \leq 3$ if and only if $G$ does not contain one of the cube, octahedron, $K_{5}$, $K_{3,3}$ and one other graph as a minor (see Figure \ref{vertexwidth3pic}). This result also appears in Crump's masters thesis (\cite{CrumpMastersthesis}).

Note there are graphs which are do not satisfy vertex width $\leq 3$ but are reducible with respect to the first Symanzik polynomial. For example, in \cite{FrancisBig}, it is shown that both $K_{3,3}$ and $K_{5}$ are reducible with respect to the first Symanzik polynomial, but it is easy to see that both $K_{3,3}$ and $K_{5}$ have vertex width greater than $3$, and they are in fact both forbidden minors for graphs having vertex width $3$.

We note that in \cite{Martin}, a computational study of reducibility for graphs with four on-shell momenta and no massive edges was undertaken. From L{\"u}der's tests, he found $34$ graphs which are not reducible with respect to both Symanzik polynomials where the graph is massless and there are four on-shell momenta. Compared to the $39$ graphs he found for Fubini reducibility, this is a slight improvement.  This tells us there are graphs which are reducible but not Fubini reducible. See Figure \ref{Martinexample} for a graph which is reducible but not Fubini reducible. We note that every single graph that he found was not reducible has either a rooted $K_{4}$-minor or a rooted $W_{4}$-minor (defined in chapter $3$) which will explain all the graphs he found to be non-reducible.  We end this section with an example of a compatibility graph reduction.

\begin{figure}
\begin{center}
\includegraphics[scale =0.5]{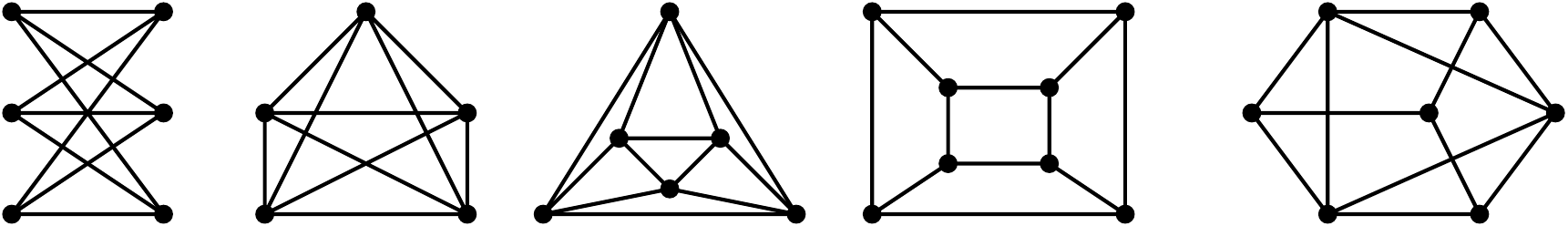}
\caption{Forbidden minors for $3$-connected graphs with vertex width $3$.}
\label{vertexwidth3pic}
\end{center}
\end{figure}

While it takes at least three iterations to start to see the usefulness of compatibility graphs for the first Symanzik polynomial, their usefulness is easy to see for reductions involving both Symanzik polynomials. Take any  $3$-connected graph $G$ with some external momenta and no massive edges such that $\phi_{G} \neq 0$. Consider $\{\psi_{G},\phi_{G}\}$. Let $\sigma$ be any permutation of the variables in $\psi_{G}$ and $\phi_{G}$. Then in $S_{\sigma(1)}$, we have the polynomials $\psi_{G / e},\psi_{G \setminus e}, \phi_{G / e},$ and $ \phi_{G \setminus e}$, for some edge $e$ corresponding to the edge associated with the Schwinger variable $\sigma(1)$. We note that those polynomials do not factor  as $G \setminus e$ and $G /e$ are both at least $2$-connected since $G$ is $3$-connected.
Assume that $\psi_{G / e} \phi_{G \setminus e} - \psi_{G \setminus e}\phi_{G / e}$ does not factor into any of $\psi_{G / e}, \psi_{G \setminus e}, \phi_{G / e},$ and $ \phi_{G \setminus e}$. Let $\psi_{G} = f_{1}$ and $\phi_{G} = f_{2}$. Then the polynomial $\phi_{G /e}$ only is associated with the set $\{1,\infty\}$ and the polynomial $\phi_{G \setminus e}$ is only associated with the set $\{0,2\}$. Thus $\phi_{G /e}$ and $\phi_{G \setminus e}$ are not compatible. So as quickly as the second step of the reduction algorithm we likely see fewer polynomials in the compatibility graph reduction versus the Fubini reduction.

For those who would like to test if sets are reducible,  Panzer in \cite{panzerphdthesis} implemented this algorithm in his program HyperInt (\cite{HyperintArticle}) and it is freely available for download. Additionally, Christian Bogner's MPL can also be used to test if sets are reducible (\cite{MPLarticle}).

\subsubsection{Second Compatibility Graph Construction}

Recall that the point of the reduction algorithms is to find a suitable order of integration for Brown's integration technique. The technique was created initially for Feynman integrals, but in principle it can be applied to a huge class of integrals far more general than just Feynman integrals.  
Up until the previous reduction algorithm, the reduction algorithms could be used to find an order for an arbitrary integral satisfying the correct properties and not necessarily just the Feynman integrals. However, in the last reduction algorithm the idea of intersecting compatibility graphs from the given construction to reduce the number of spurious polynomials may not guarantee that Brown's integration technique will still succeed for integrals not defined from the Symanzik polynomials (\cite{panzerphdthesis}). In \cite{FrancisBig}, the compatability graph reduction given above was proven to maintain the necessary properties for Feynman integrals, but no proof was given for arbitrary integrals. In \cite{panzerphdthesis}, there is some evidence that intersecting compatibility graphs as in the previous algorithm may still satsify the properties we want to use the integration algorithm, but no proof has been given. In \cite{panzerphdthesis}, Panzer gave a different construction which gets around these problems.  Following the notation of \cite{panzerphdthesis} we have:

\begin{definition}
Let $f_{1}$ and $f_{2}$ be polynomials linear in $\alpha$. Then $f_{1} = g_{1}\alpha + h_{1}$ and $f_{2} = g_{2}\alpha + h_{2}$.  Then we  define $[f_{1},0]_{\alpha}=  g_{1}$, $[f_{1},f_{2}]_{\alpha} = g_{1}h_{2} - g_{2}h_{1}$ and  $[f_{1}, \infty]_{\alpha} = h_{1}$ if $h_{1} \neq 0$, otherwise $[f_{1},\infty]_{\alpha} = g_{1}$.
\end{definition}

\begin{figure}
\begin{center}
\includegraphics[scale =0.5]{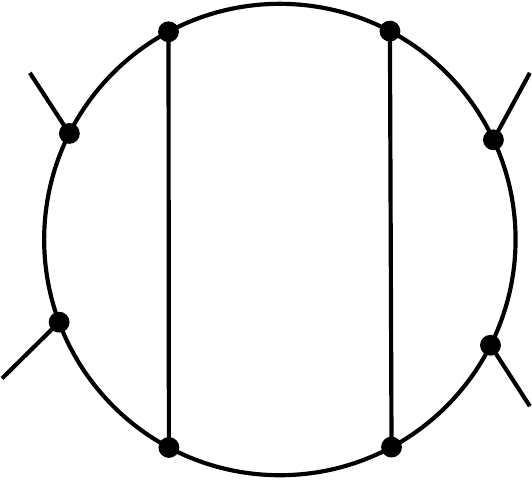}
\caption{A graph which is not Fubini reducible, but is reducible with respect to both Symanzik polynomials. Here all edges are massless, and the external momenta are on-shell. Calculated in \cite{Martin}.}
\label{Martinexample}
\end{center}
\end{figure}

Let $(S,C)$ be a set of polynomials in $\mathbb{Q}[\alpha_{1},\ldots,\alpha_{n}]$ and its corresponding compatibility graph. Suppose all polynomials in $S$ are linear in $\alpha_{i}$. We define $(S_{\alpha_{i}},C_{\alpha_{i}})$ as follows. We let $S_{\alpha_{i}} = \text{irreducible factors of} \;  \{[f,0]_{\alpha_{i}} \; [f,g]_{\alpha_{i}} \; [f,\infty]_{\alpha_{i}} \} $ for $f \in S$ and $f,g \in S$ such that $fg \in E(C)$.

Then we say two polynomials $p,q$ are compatible, or $pq \in E(C_{\alpha_{i}})$,  if there exists $f,g,h \in S \cup \{0,\infty\}$ such that $p \ |\  [f,g]_{\alpha_{i}}$, $q \  | \  [g,h]_{\alpha_{i}}$ and each pair $fg,gh,hf \in E(C)$. Here we consider that $0$ and $\infty$ are compatible with each other and compatible with all other polynomials. 

Applying the above construction for compatibility graphs to the simple reduction algorithm gives an alternative reduction algorithm which in \cite{panzerphdthesis} can be used to find admissible orderings for a large class of integrals. Notice that this construction generates fewer polynomials per iteration, but by not intersecting off compatibility graphs, we possibly end up with extra polynomials relative to the other compatibility graph construction.

\begin{definition}
Let $S$ be a set of polynomials in $\mathbb{Q}[\alpha_{1},\ldots,\alpha_{n}]$. Let $C$ be the complete graph on $|S|$ vertices. If there exists a permutation $\sigma$ of $\{\alpha_{1},\ldots,\alpha_{n}\}$ such that $(S_{(\sigma(1),\ldots,\sigma(i))},C_{(\sigma(1),\ldots,\sigma(i))})$ exist and are linear in $\sigma(i+1)$ for all $i \in \{1,\ldots,n-1\}$, then we will say \emph{$S$ is reducible with respect to Panzer's method}.  
\end{definition}

Aside from proving that this reduction holds for more general integrals, this reduction algorithm was also used to proof a generalization of Theorem \ref{vertexwidth3} to graphs with three not necessarily on-shell external momenta. 

\begin{definition}
Let $G$ be a graph and $e_{1},\ldots,e_{|E(G)|}$ an ordering of the edges of $G$. This ordering induces a sequence of graphs of $G$, $\emptyset \subsetneq G_{1} \subsetneq G_{2} \subsetneq \ldots \subsetneq G_{|E(G)|} = G$ such that $G_{i}$ differs from $G_{i+1}$ by edge $e_{i+1}$. Similarly, the ordering induces a sequence of graphs of $G$, $G = G^{1} \supsetneq G^{2} \supsetneq \ldots \supsetneq \emptyset$ where $G^{i}$ differs from $G^{i+1}$ by exactly one edge for all $i \in \{1,\ldots,|E(G)|-1\}$. Define $A_{i} = V(G_{i}) \cap V(G^{i+1})$. The graph $G$ is \emph{$k$-constructable} if for all $A_{i}$, we have that $|A_{i}| = k$. 
\end{definition}

We note that in the above definition, additional vertices are added to the graph if the edge being added has that vertex as an endpoint. Thus the notion of \textit{adding a vertex} in the construction is well defined.  Observe that if we restrict ourselves to $3$-constructable graphs, they are precisely the graphs with vertex width less than $3$.
 
\begin{theorem}[\cite{panzerphdthesis}]
Let $G$ be $3$-constructable and let $v_{1},v_{2},v_{3}$ be the last three vertices added in the construction. Suppose $v_{1},v_{2},v_{3}$ are the only vertices with external momenta. Suppose there are no massive edges. Then $G$ is reducible with respect to Panzer's method. 
\end{theorem}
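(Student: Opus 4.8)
The plan is to peel the small momentum-carrying part off $G$, reduce it in a bounded number of steps, and observe that what is left is exactly the reduction Brown performs for the first Symanzik polynomial in Theorem~\ref{vertexwidth3}; the hypothesis that $v_{1},v_{2},v_{3}$ are the last three vertices of the construction is what keeps that part small.

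First I would set up the separation and the shape of $\phi_{G}$. Put $G'=G\setminus\{v_{1},v_{2},v_{3}\}$ and let $B$ be the set of neighbours of $\{v_{1},v_{2},v_{3}\}$ lying outside $\{v_{1},v_{2},v_{3}\}$. Because $v_{1},v_{2},v_{3}$ are the last three vertices added, at the stage $\ell$ just before $v_{1}$ is added every remaining edge is incident to $\{v_{1},v_{2},v_{3}\}$, so $A_{\ell}=B$ and hence $|B|\le 3$; moreover $(V(G)\setminus\{v_{1},v_{2},v_{3}\},\,\{v_{1},v_{2},v_{3}\}\cup B)$ is a $\le 3$-separation, and $G'$, being a minor of $G$, again has vertex width at most $3$. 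As for $\phi_{G}$: with no massive edges the mass term drops out, and since only $v_{1},v_{2},v_{3}$ carry momenta with $\rho_{v_{1}}+\rho_{v_{2}}+\rho_{v_{3}}=0$, the case analysis of Lemma~\ref{MomentumLemma} (for three momenta) kills every spanning $2$-forest except those isolating one $v_{i}$ from the other two, so, writing $\Phi^{v_{i}\mid v_{j}v_{k}}_{G}$ for the sum of $\prod_{e\notin T_{1}\cup T_{2}}\alpha_{e}$ over those $2$-forests, $\phi_{G}=\rho_{v_{1}}^{2}\Phi^{v_{1}\mid v_{2}v_{3}}_{G}+\rho_{v_{2}}^{2}\Phi^{v_{2}\mid v_{1}v_{3}}_{G}+\rho_{v_{3}}^{2}\Phi^{v_{3}\mid v_{1}v_{2}}_{G}$. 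Each $\Phi^{v_{i}\mid v_{j}v_{k}}_{G}$ is an integer combination of the spanning-forest (Dodgson) polynomials of $G$ relative to the marked set $\{v_{1},v_{2},v_{3}\}$ that organize Brown's proof, and, like $\psi_{G}$ and $\phi_{G}$, is linear in each edge variable.

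Then I would run Panzer's reduction on $\{\psi_{G},\phi_{G}\}$ along the reverse of the construction edge order, so that the edges incident to $v_{1},v_{2},v_{3}$ --- a terminal block of the construction, hence an initial block of the reverse order --- are integrated out first. The inductive claim is that after integrating out any initial segment, the polynomial set produced (with Panzer's compatibility graphs) lies in the span, over $\mathbb{Q}[\{\rho_{v_{i}}\!\cdot\!\rho_{v_{j}}\}]$, of the spanning-forest polynomials of the current minor of $G$ taken relative to its interface $A_{\ell}$ (of size at most $3$) together with whichever of $v_{1},v_{2},v_{3}$ still survive. The base case is the previous paragraph. In the inductive step, linearity in the next variable is automatic, since $\psi_{G}$, $\phi_{G}$ and every spanning-forest polynomial are linear in each variable and the irreducible factors extracted by the algorithm are again of this type; the operations $f\mapsto[f,0]_{\alpha},[f,\infty]_{\alpha}$ and $(f,g)\mapsto[f,g]_{\alpha}$ send members of this family to products of members --- this is the system of contraction--deletion and Dodgson identities used in the proof of Theorem~\ref{vertexwidth3} in \cite{FrancisBig}, with $v_{1},v_{2},v_{3}$ absorbed into the role of at most three extra interface vertices; and since $|A_{\ell}|\le 3$ and at most three marks ever occur, the family of possible polynomials is finite, so the reduction cannot stall. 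Once the block of momentum-incident edges is exhausted the momenta disappear: what remains is a set of ordinary spanning-forest polynomials of a minor of $G$ that omits $v_{1},v_{2},v_{3}$ and has vertex width at most $3$, to which the argument of Theorem~\ref{vertexwidth3} (in the form valid for Panzer's method, \cite{panzerphdthesis}) applies. Hence $\{\psi_{G},\phi_{G}\}$ is reducible with respect to Panzer's method.

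The main obstacle is the closure of the marked family under the cross-products $[f,g]_{\alpha}$: for $\psi$ alone the needed factorizations are Brown's Dodgson identities, and one must check that adjoining up to three marked momentum vertices --- equivalently, the separation constraints $v_{i}\mid v_{j}v_{k}$ --- still forces every $[f,g]_{\alpha}$ to factor into members of the (marked) family linear in each remaining variable, and that Panzer's compatibility construction, which prunes differently from Brown's, nonetheless retains every polynomial these identities require while keeping the family finite along the reduction. A secondary point is the bookkeeping over the initial block of steps, where the interface genuinely meets $\{v_{1},v_{2},v_{3}\}$: one needs $|A_{\ell}|\le 3$ throughout, which is exactly what the hypothesis that $v_{1},v_{2},v_{3}$ are the last three vertices of the construction provides.
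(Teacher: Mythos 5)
This theorem is stated in the paper with the attribution ``\cite{panzerphdthesis}'' and no proof is given --- it is used as a cited black box --- so there is no argument in the paper itself for your attempt to be compared against. Your proposal therefore has to stand on its own as a reconstruction of Panzer's argument, and on that reading it is an outline rather than a proof.

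The setup is sound. The observation that the interface $A_{\ell}$ just before $v_{1},v_{2},v_{3}$ are introduced has size at most $3$, and the decomposition of $\phi_{G}$ via conservation of momentum into $\rho_{v_{1}}^{2}\Phi^{v_{1}\mid v_{2}v_{3}}_{G}+\rho_{v_{2}}^{2}\Phi^{v_{2}\mid v_{1}v_{3}}_{G}+\rho_{v_{3}}^{2}\Phi^{v_{3}\mid v_{1}v_{2}}_{G}$, are both correct and are the right starting points, and integrating out the edges in reverse construction order is indeed the intended strategy. The problem is the inductive step. You flag it yourself at the end, but I want to stress that what you call ``the main obstacle'' is not a secondary verification: the entire content of the theorem is precisely the claim that the marked spanning-forest family is closed (up to factorization into members that are linear in each remaining variable) under $[f,0]_{\alpha}$, $[f,\infty]_{\alpha}$, and those $[f,g]_{\alpha}$ that survive Panzer's compatibility pruning. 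Without establishing that closure --- which means working out exactly which Dodgson-type polynomials relative to the marks $\{v_{1},v_{2},v_{3}\}$ (and the current interface $A_{\ell}$) can appear and which algebraic identities among them factor the cross-terms --- the assertion ``the family of possible polynomials is finite, so the reduction cannot stall'' is circular, since finiteness presupposes the closure you have not proved. You also lean on ``the argument of Theorem~\ref{vertexwidth3} in the form valid for Panzer's method,'' but transporting Brown's proof from Brown's compatibility construction to Panzer's second construction is itself one of the nontrivial contributions of \cite{panzerphdthesis}, not something that can be read off from \cite{FrancisBig}. In short, the plan is plausible and matches what one expects Panzer to do, but the step carrying essentially all the weight is asserted rather than proved, and the smaller claim that ``$|A_{\ell}|\le 3$ throughout'' comes from the $3$-constructability hypothesis, not (as you say at the end) from $v_{1},v_{2},v_{3}$ being added last; the latter is what ensures the momentum vertices sit on the tail of the construction so that they are consumed first in the reverse order.
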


While this reduction algorithm is seemingly weaker than the technique given by Brown, in \cite{panzerphdthesis} it is shown to prove most of the known results for reducible graphs. Additionally, a large number of computational results on reduciblity using this variant are covered in \cite{panzerphdthesis} which we do not include here. 

Erik Panzer has implemented this reduction alogrithm as well as the reduction algorithm for Brown's construction in his software package HyperInt, which is freely available (\cite{HyperintArticle}).

\section{Some facts about Reducibility}

Now that we have gone through the reduction algorithms, we are ready to prove some facts about reducibility. First we will prove some reducibility facts for arbitrary sets of polynomials, and then move into reducibility specifically for the Symanzik polynomials, with the main result giving a correct proof that reducibility is graph minor closed. We note that throughout this section, most of these observations have been noted in other papers or theses but not always explicitly written down. 

Additionally, to avoid specifying which type of reducibility all the time, the word reducibility will refer to Brown's compatibility graph algorithm exclusively. That being said, most of the proofs apply to all the algorithms. We start this section by noting that reducibility is well behaved under subsets.

\begin{lemma}
\label{subsetlemma}
Let $S = \{f_1,\ldots,f_n\}$ be a set of polynomials in the polynomial ring $\mathbb{Q}[\alpha_{1},\ldots,\alpha_{r}]$ and let $\sigma$ be a permutation of $\{\alpha_{1},\ldots,\alpha_{r}\}$. Suppose $S$ is reducible with respect to $\sigma$. Then any subset $L \subseteq S$ is reducible with respect to $\sigma$.  
\end{lemma}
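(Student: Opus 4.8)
The plan is to prove a \emph{monotonicity principle}: when Brown's compatibility graph reduction is started from the subset $L$ (with the complete graph on $|L|$ vertices as initial compatibility graph) instead of from $S$, the set of polynomials produced at every stage is contained in the set produced from $S$, and the compatibility graph produced at every stage has its edges ``inherited'' from the one produced from $S$. Since $S$ is reducible with respect to $\sigma$, every stage of its run is linear in the next variable of $\sigma$, and containment then forces the same for $L$. I will carry this out by induction on the iteration number $k$. Throughout, write $S_{[k]}$, $C^{S}_{[k]}$ for the set of polynomials and compatibility graph produced from $S$ after processing $\sigma(1),\dots,\sigma(k)$, and $L_{[k]}$, $C^{L}_{[k]}$ for the analogues obtained from $L$. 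The precise claim to be carried through the induction is: (a) $L_{[k]}\subseteq S_{[k]}$; (b) there is an injection $\iota$ from the index set of $L_{[k]}$ into that of $S_{[k]}$ with $f^{L}_{i}=f^{S}_{\iota(i)}$ for all $i$, and, extending $\iota$ to fix the symbols $0$ and $\infty$, the set of genealogy $2$-tuples attached to any polynomial $m\in L_{[k]}$ by Brown's construction, pushed forward by $\iota$, is contained in the set of $2$-tuples attached to $m$ when that construction is run inside $S$; hence (c) every edge of $C^{L}_{[k]}$ is an edge of $C^{S}_{[k]}$ (identifying vertices of $C^L_{[k]}$ with vertices of $C^S_{[k]}$ via equality of polynomials).

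\textbf{The induction.} For $k=0$ the set $L_{[0]}=L\subseteq S=S_{[0]}$, $\iota$ is the inclusion of indices, and both initial compatibility graphs are complete, so (a)--(c) are immediate. For the inductive step, assume (a)--(c) at stage $k-1$. Because $S$ is reducible with respect to $\sigma$, every polynomial of $S_{[k-1]}$ is linear in $\sigma(k)$, so by (a) every polynomial of $L_{[k-1]}$ is too, and the run on $L$ does not halt at step $1$. The quantities $g_{i}=\partial f_{i}/\partial\sigma(k)$ and $h_{i}=f_{i}|_{\sigma(k)=0}$ depend only on $f_{i}$, so the sets $S^{1},S^{2}$ formed from $L_{[k-1]}$ are subsets of those formed from $S_{[k-1]}$; and if $f^{L}_{i}f^{L}_{j}$ is an edge of $C^{L}_{[k-1]}$ then by (c) $f^{S}_{\iota(i)}f^{S}_{\iota(j)}$ is an edge of $C^{S}_{[k-1]}$, with $g^{L}_{i}h^{L}_{j}-h^{L}_{i}g^{L}_{j}=g^{S}_{\iota(i)}h^{S}_{\iota(j)}-h^{S}_{\iota(i)}g^{S}_{\iota(j)}$, so $S^{3}$ formed from $L$ is contained in $S^{3}$ formed from $S$. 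Taking unions and then factoring into $\mathbb{Q}$-irreducibles preserves containment, giving (a). For (b): each $m\in L_{[k]}$ divides some $g^{L}_{i}$, some $h^{L}_{i}$, or some $g^{L}_{i}h^{L}_{j}-h^{L}_{i}g^{L}_{j}$, hence by the identities above it divides the corresponding object indexed by $\iota(i)$ (and $\iota(j)$) in $S$'s run, so the $2$-tuple rules assign to $m$ in $L$'s run only tuples whose $\iota$-images occur among those assigned in $S$'s run; in $S$'s run $m$ may divide additional cross terms involving polynomials absent from $L_{[k-1]}$, so its $S$-tuple set is a superset of the $\iota$-image of its $L$-tuple set. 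Then (c) follows: if $mn\in E(C^{L}_{[k]})$, some $L$-tuple of $m$ meets some $L$-tuple of $n$; since $\iota$ is injective and fixes $0,\infty$, their $\iota$-images still meet, and these images lie among the $S$-tuples of $m$ and of $n$, so $mn\in E(C^{S}_{[k]})$.

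\textbf{Finishing and the obstacle.} The Fubini-type steps defining $S_{[k]}=\bigcap_{i}S_{[\sigma(1),\dots,\widehat{\sigma(i)},\dots,\sigma(k)](\sigma(i))}$ and the corresponding edge intersections preserve (a)--(c), since an intersection of subsets is a subset and an intersection of edge-subsets is an edge-subset; an undefined term is dropped on both sides, and a term is undefined for $L$ only when the corresponding term for $S$ is (again by (a)), so the run on $L$ terminates no earlier than the run on $S$. Carrying the induction through all $r-1$ iterations yields $L_{[\sigma(1),\dots,\sigma(i)]}\subseteq S_{[\sigma(1),\dots,\sigma(i)]}$ for every $i$; the right side is linear in $\sigma(i+1)$ because $S$ is reducible with respect to $\sigma$, hence so is the left side, and $L$ is reducible with respect to $\sigma$. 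The only place where the argument is not pure monotonicity of polynomial sets is the bookkeeping for Brown's construction in items (b)--(c): one must check that deleting ancestors can only shrink, never enlarge, the genealogy $2$-tuples and hence the compatibility edges. This is exactly what the re-indexing injection $\iota$ and the observation ``fewer cross terms give fewer tuples'' accomplish, and I expect it to be the main (though routine) obstacle; everything else is immediate from the recursive definitions. I note this lemma also supplies the converse direction deferred in the proof of Remark \ref{monomialsandconstants}.
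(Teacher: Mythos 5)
Your proposal is correct and follows essentially the same path as the paper's proof: a nested induction establishing that the reduction sets from $L$ are contained in those from $S$ and that the compatibility graph for $L$'s run is a subgraph of the one for $S$'s run (restricted to the common polynomials), with the key bookkeeping step verifying that the genealogy $2$-tuples assigned in $L$'s run are inherited from those in $S$'s run. The main cosmetic difference is that you formalize the re-indexing with an explicit injection $\iota$ where the paper argues informally that ``the polynomials which gave $m$ and $n$ the associated $2$-tuples in $L$'s run exist in $S$'s run,'' but the substance is identical.
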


\begin{proof}

Consider the sequence of sets and compatibility graphs \[(S,C_{S}), (S_{[\sigma(1)]},C_{[\sigma(1)]}),\ldots,(S_{[\sigma(1),\ldots,\sigma(r-1)]},C_{[\sigma(1),\ldots,\sigma(r-1)]}).\]

 We claim that that for all $i \in \{0,1,\ldots,r-1\}$, the set $L_{[\sigma(1),\ldots,\sigma(i)]} \subseteq S_{[\sigma(1),\ldots,\sigma(i)]}$ and that $C_{L_{[\sigma(1),\ldots,\sigma(i)]}}$ is a subgraph of $C_{S_{[\sigma(1),\ldots,\sigma(i)]}}[L_{[\sigma(1),\ldots,\sigma(i)]}]$. When $i=0$ we consider the sets $(S,C_{S})$ and $(L,C_{L})$. We proceed by induction on $i$.
 
The base case follows trivially. Now consider the set $L_{[\sigma(1),\ldots,\sigma(i-1)]}$ and the compatibility graph $L_{C_{[\sigma(1),\ldots,\sigma(i-1)]}}$. By induction we have $L_{[\sigma(1),\ldots,\sigma(i-1)]} \subseteq S_{[\sigma(1),\ldots,\sigma(i-1)]}$ and that $C_{L_{[\sigma(1),\ldots,\sigma(i-1)]}}$ is a subgraph of $C_{S_{[\sigma(1),\ldots,\sigma(i-1)]}}[L_{[\sigma(1),\ldots,\sigma(i-1)]}]$. We will now apply one step of the reduction algorithm to these sets. 

We claim that for all $1 \leq j \leq i$, we have $ L_{[\sigma(1),\ldots,\hat{\sigma(j)},\ldots,\sigma(i)](\sigma(j))} \subseteq S_{[\sigma(1),\ldots,\hat{\sigma(j)},\ldots,\sigma(i)](\sigma(j))}$ when the set  $S_{[\sigma(1),\ldots,\hat{\sigma(j)},\ldots,\sigma(i)](\sigma(j))}$ exists. We know that $S$ is reducible with respect to $\sigma$, so there exists a $j \in \{1,\ldots,i\}$ such that $S_{[\sigma(1),\ldots,\hat{\sigma(j)},\ldots,\sigma(i)](\sigma(j))}$ exists. Fix any such $j \in \{1,\ldots,i\}$. Then by induction we have 

\[L_{[\sigma(1),\ldots,\hat{\sigma(j)},\ldots,\sigma(i)]} \subseteq S_{[\sigma(1),\ldots,\hat{\sigma(j)},\ldots,\sigma(i)]},\] and $C_{L_{[\sigma(1),\ldots,\hat{\sigma(j)},\ldots,\sigma(i)]}}$ is a subgraph of $C_{S_{[\sigma(1),\ldots,\hat{\sigma(j)},\ldots,\sigma(i)]}}[L_{[\sigma(1),\ldots,\hat{\sigma(j)},\ldots,\sigma(i)]}].$

Note as $S_{[\sigma(1),\ldots,\hat{\sigma(j)},\ldots,\sigma(i)](\sigma(j))}$ exists, all polynomials in $L_{[\sigma(1),\ldots,\hat{\sigma(j)},\ldots,\sigma(i)]}$ are linear in $\sigma(j)$. We will denote the set $L^{4}$ to be the set obtained in the second step of the reduction algorithm when starting with $L_{[\sigma(1),\ldots,\hat{\sigma(j)},\ldots,\sigma(i)]}$, and $S^{4}$ will be the set obtained in the second step of the reduction algorithm applied to $S_{[\sigma(1),\ldots,\hat{\sigma(j)},\ldots,\sigma(i)]}$.  Then $L^{4} \subseteq S^{4}$ as $L_{[\sigma(1),\ldots,\hat{\sigma(j)},\ldots,\sigma(i)]} \subseteq S_{[\sigma(1),\ldots,\hat{\sigma(j)},\ldots,\sigma(i)]}$ and since if two polynomials in $L_{[\sigma(1),\ldots,\hat{\sigma(j)},\ldots,\sigma(i)]}$ are compatible then they are compatible in $ S_{[\sigma(1),\ldots,\hat{\sigma(j)},\ldots,\sigma(i)]}$.

 Let $\tilde{S}$ and $\tilde{L}$ be the sets obtained from step four of the reduction algorithm in for $S_{[\sigma(1),\ldots,\hat{\sigma(j)},\ldots,\sigma(i)]}$ and $L_{[\sigma(1),\ldots,\hat{\sigma(j)},\ldots,\sigma(i)]}$ respectively. Then as $L^{4} \subseteq S^{4}$, we have that  $\tilde{L} \subseteq \tilde{S}$.
 Now consider the compatibility graph constructed $C_{L_{[\sigma(1),\ldots,\hat{\sigma(j)},\ldots,\sigma(i)](\sigma(j))}}$. If $mn \in E(C_{L_{[\sigma(1),\ldots,\hat{\sigma(j)},\ldots,\sigma(i)](\sigma(j))}})$, then there is an $2$-tuple associated to $m$ and a $2$-tuple associated to  $n$ such that their intersection is non-empty. By construction, these $2$-tuples are 
 derived from some polynomials in $L_{[\sigma(1),\ldots,\hat{\sigma(j)},\ldots,\sigma(i)](\sigma(j))}$. But we know that $L_{[\sigma(1),\ldots,\hat{\sigma(j)},\ldots,\sigma(i)](\sigma(j))} \subseteq S_{[\sigma(1),\ldots,\hat{\sigma(j)},\ldots,\sigma(i)](\sigma(j))}$ 
 and thus the polynomials which gave $m$ and $n$ the associated $2$-tuples in $L_{[\sigma(1),\ldots,\hat{\sigma(j)},\ldots,\sigma(i)](\sigma(j))}$ exist in $S_{[\sigma(1),\ldots,\hat{\sigma(j)},\ldots,\sigma(i)](\sigma(j))}$. Thus $mn \in C_{S_{[\sigma(1),\ldots,\hat{\sigma(j)},\ldots,\sigma(i)](\sigma(j))}}$. Therefore we have 
 $C_{L_{[\sigma(1),\ldots,\hat{\sigma(j)},\ldots,\sigma(i)](\sigma(j))}}$ is a subgraph of  
\\ $C_{S_{[\sigma(1),\ldots,\hat{\sigma(j)},\ldots,\sigma(i)](\sigma(j))}}[L_{[\sigma(1),\ldots,\hat{\sigma(j)},\ldots,\sigma(i)](\sigma(j))}].$ Therefore,
 
 \[L_{[\sigma(1),\ldots,\sigma(i)]} = \bigcap_{1 \leq j \leq k} L_{[\sigma(1),\ldots,\hat{\sigma(j)},\ldots,\sigma(i)](\sigma(j))} \subseteq \bigcap_{1 \leq j \leq k} S_{[\sigma(1),\ldots,\hat{\sigma(j)},\ldots,\sigma(i)](\sigma(j))} = S_{[\sigma(1),\ldots,\sigma(i)]},  \]
 
 and $C_{L_{[\sigma(1),\ldots,\sigma(i)]}}$ is a subgraph of $C_{S_{[\sigma(1),\ldots\sigma(i)]}}[L_{[\sigma(1),\ldots,\sigma(i)]}]$, which completes the claim.
\end{proof}

\begin{definition}
Let $S = \{f_1,\ldots,f_n\}$ be a set of polynomials in the polynomial ring  \\ $\mathbb{Q}[\alpha_{1},\ldots,\alpha_{r}]$. We will denote the \emph{set of reducible orders}, $O_{S}$, to be the permutations $\sigma$ of $\{\alpha_{1},\dots,\alpha_{r}\}$ such that $S$ is reducible with respect to $\sigma$. 
\end{definition}

A possibly more useful restatement of Lemma \ref{subsetlemma} is:

\begin{corollary}
Let $S = \{f_1,\ldots,f_n\}$ be a set of polynomials in the polynomial ring \\ $\mathbb{Q}[\alpha_{1},\ldots,\alpha_{r}]$. Let $L_{1},\ldots, L_{n}$ be subsets of $S$. If 
\[ \bigcap_{i =1}^{n} O_{L_{i}}= \emptyset,\] then $S$ is not reducible. 
\end{corollary}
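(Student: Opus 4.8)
The plan is to derive this as an immediate consequence of Lemma \ref{subsetlemma}. The key observation is that Lemma \ref{subsetlemma} tells us exactly how the set of reducible orders behaves under taking subsets: if $L \subseteq S$, then whenever $S$ is reducible with respect to a permutation $\sigma$, so is $L$. Rephrased in terms of the notation just introduced, this says $O_{S} \subseteq O_{L}$ for every subset $L \subseteq S$.

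First I would make this containment explicit. Fix any $\sigma \in O_{S}$. By definition, $S$ is reducible with respect to $\sigma$, so Lemma \ref{subsetlemma} applies and gives that $L_{i}$ is reducible with respect to $\sigma$ for each $i \in \{1,\ldots,n\}$, i.e. $\sigma \in O_{L_{i}}$ for every $i$. Hence $\sigma \in \bigcap_{i=1}^{n} O_{L_{i}}$. Since $\sigma$ was arbitrary, this shows
\[ O_{S} \subseteq \bigcap_{i=1}^{n} O_{L_{i}}. \]

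Now I would finish by contraposition. Suppose, toward the statement, that $\bigcap_{i=1}^{n} O_{L_{i}} = \emptyset$. Then by the containment just established, $O_{S} = \emptyset$ as well; that is, there is no permutation $\sigma$ of $\{\alpha_{1},\ldots,\alpha_{r}\}$ with respect to which $S$ is reducible. By the definition of reducibility (a set is reducible iff it is reducible with respect to some permutation), this means precisely that $S$ is not reducible, which is what we wanted.

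There is really no main obstacle here: the corollary is a direct unwinding of definitions on top of Lemma \ref{subsetlemma}. The only point requiring any care is making sure the quantifier on $\sigma$ is handled correctly — Lemma \ref{subsetlemma} is stated ``with respect to $\sigma$,'' so one must apply it for a fixed $\sigma \in O_{S}$ and to each subset $L_{i}$ separately before intersecting, rather than invoking any global statement about reducibility. Once that is observed the argument is two lines.
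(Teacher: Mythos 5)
Your proof is correct and is exactly the argument the paper has in mind: the corollary is stated there without proof as "a possibly more useful restatement of Lemma \ref{subsetlemma}," and your observation that $O_S \subseteq O_{L_i}$ for every subset $L_i$, followed by contraposition, is precisely that restatement made explicit.
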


We would like to assume that in our initial set of polynomials, all of the polynomials are all irreducible with respect to $\mathbb{Q}$. Essentially we can, as long as we are able to get past the first step of the reduction algorithm.

\begin{lemma}
\label{irreduciblepolynomials}
Let $S = \{f_{1},\ldots,f_{n}\}$ be a set of polynomials in the polynomial ring $\mathbb{Q}[\alpha_{1},\ldots,\alpha_{r}]$. Let $\sigma$ be a permutation of $\{\alpha_{1},\ldots,\alpha_{r}\}$. Let $S^{I}$ be the set of irreducible factors of $S$ with rational coefficients.  If $S$ is reducible with respect to $\sigma$ then $S^{I}$ is reducible with respect to $\sigma$. Furthermore, if $S^{I}$ is reducible with respect to $\sigma$,  and all polynomials in $S$ are linear in $\sigma(1)$, then $S$ is reducible with respect to $\sigma$. 
\end{lemma}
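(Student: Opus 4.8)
The plan is to derive both implications from a single structural fact about the very first step of the reduction algorithm: step~4 of that step already factors every polynomial into irreducibles over $\mathbb{Q}$, so $S_{[\sigma(1)]}$ and $(S^{I})_{[\sigma(1)]}$ are directly comparable, and I claim they coincide up to monomial and constant factors together with their compatibility graphs. Granting this, the lemma follows quickly. For the first implication, reducibility of $S$ with respect to $\sigma$ forces $S_{[\sigma(1)]}$ to be defined, hence every polynomial of $S$ is linear in $\sigma(1)$, hence every irreducible factor of every such polynomial has degree at most one in $\sigma(1)$; so the first step can also be run on $S^{I}$, and since the resulting pair $(S^{I}_{[\sigma(1)]},C_{(S^{I})_{[\sigma(1)]}})$ equals $(S_{[\sigma(1)]},C_{[\sigma(1)]})$, the remaining reduction of $S^{I}$ proceeds exactly as that of $S$ and therefore succeeds. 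For the ``furthermore'' implication the extra hypothesis is used in exactly one place: requiring every polynomial of $S$ to be linear in $\sigma(1)$ is what allows the first step to be run on $S$ itself (this is not guaranteed by reducibility of $S^{I}$ alone, since a polynomial of $S$ could be quadratic in $\sigma(1)$ while all of its irreducible factors are linear), after which the same identification transports reducibility of $S^{I}$ back to $S$.

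So the technical core is the identity $S_{[\sigma(1)]}=(S^{I})_{[\sigma(1)]}$, up to monomials and constants, which by Remark~\ref{monomialsandconstants} do not affect reducibility. To prove it, write each $f_{i}\in S$ as $f_{i}=q_{i}r_{i}$, where $q_{i}$ collects the irreducible factors of $f_{i}$ involving $\sigma(1)$ and $r_{i}$ collects the rest; since $f_{i}$ is linear in $\sigma(1)$, $q_{i}$ is a single irreducible factor $q_{i}=a_{i}\sigma(1)+b_{i}$ with $a_{i},b_{i}$ free of $\sigma(1)$ (and $q_{i}$ is absent when $f_{i}$ is $\sigma(1)$-free). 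Then $g_{i}=a_{i}r_{i}$, $h_{i}=b_{i}r_{i}$, and $g_{i}h_{j}-h_{i}g_{j}=r_{i}r_{j}(a_{i}b_{j}-a_{j}b_{i})$, so the first step applied to $S$ produces exactly the irreducible factors of the $a_{i}$, the $b_{i}$, the $r_{i}$, and the brackets $a_{i}b_{j}-a_{j}b_{i}$. Running the first step on $S^{I}$ instead, the polynomials $q_{i}$ contribute the irreducible factors of the $a_{i}$, the $b_{i}$, and the $a_{i}b_{j}-a_{j}b_{i}$; the irreducible factors of the $r_{i}$ already lie in $S^{I}$ and, being $\sigma(1)$-free, reappear through $S^{2}$; and the mixed cross terms contribute only factors of the $a_{i}$ and of those $r$-factors. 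Hence the two resulting sets of irreducible polynomials agree. The analogous bookkeeping for the compatibility graph must be run in the same spirit, checking that the $2$-tuples attached to each surviving irreducible polynomial give matching adjacency relations (equivalently, matching ``two-iterations-back'' common ancestors), so that the two compatibility graphs coincide as well.

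With the first step settled, the remaining steps are handled by an induction parallel to the proof of Lemma~\ref{subsetlemma}: assuming the $k$-th reduction sets and compatibility graphs of $S^{I}$ are equal to (or, in degenerate situations, contained in) those of $S$, one applies one more step and propagates this through the intersections over the omitted variables that define the next reduction set. One must be attentive here to the fact that $S$ and $S^{I}$ need not have the same degree pattern in the later variables $\sigma(2),\dots,\sigma(r)$, so that some of the alternative-order reduction sets entering those intersections are defined for one of $S,S^{I}$ but not the other; one argues that this can only shrink the $S^{I}$-reduction sets, which is harmless for the first implication, while after $\sigma(1)$ has been processed the two objects are identical, which is what is needed for the second.

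The step I expect to be the main obstacle is precisely this compatibility-graph and ordering bookkeeping: the polynomial-set identity $S_{[\sigma(1)]}=(S^{I})_{[\sigma(1)]}$ is a short direct computation, but verifying that the $2$-tuple structure (hence all subsequent compatibility graphs) also matches, while keeping track of which alternative-order reduction sets are defined along the way, is where the real care lies. The treatment of monomials, constants, and the degenerate cases $g_{i}=0$ and $h_{i}=f_{i}$ is routine, relying on Remark~\ref{monomialsandconstants}.
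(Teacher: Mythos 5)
Your overall strategy matches the paper's: show that the first reduction step on $S$ and on $S^{I}$ yields the same set of irreducible polynomials with the same compatibility graph, and conclude that the rest of the reduction coincides. What differs is the decomposition. The paper replaces one polynomial $p\in S$ by a single factor pair $\{p_{1},p_{2}\}$, proves $S_{[\sigma(1)]}=L_{[\sigma(1)]}$ with isomorphic compatibility graphs for $L=S\setminus\{p\}\cup\{p_{1},p_{2}\}$, and then iterates; you factor all the way to $S^{I}$ in one shot. Your algebra is correct — writing $f_{i}=q_{i}r_{i}$ with $q_{i}=a_{i}\sigma(1)+b_{i}$ the unique irreducible factor involving $\sigma(1)$, you do get $g_{i}=a_{i}r_{i}$, $h_{i}=b_{i}r_{i}$, $g_{i}h_{j}-h_{i}g_{j}=r_{i}r_{j}(a_{i}b_{j}-a_{j}b_{i})$, and the surviving irreducible factors agree for $S$ and $S^{I}$. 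But the one-shot version moves nearly all the real difficulty onto the compatibility-graph comparison, which you explicitly defer: $S^{I}$ can have far more polynomials than $S$ (every irreducible factor of every $r_{i}$ enters separately), so the $2$-tuple bookkeeping runs over a much larger indexing, and it takes a genuine argument — not just ``in the same spirit'' — to see that the resulting adjacencies coincide with those produced from $S$. The paper's step-by-step splitting is what keeps that bookkeeping tractable: only one polynomial changes per step, so only $2$-tuples mentioning the index of $p$ need to be tracked, and the two short cases ($\deg(p,\sigma(1))=0$ or $1$) handle it.

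One further point worth sharpening, although it is a weakness the paper's own converse argument shares rather than one unique to your proposal: you note that $(S^{I})_{[\sigma(1),\ldots,\sigma(k)]}$ can be a proper subset of $S_{[\sigma(1),\ldots,\sigma(k)]}$, because the Fubini intersection admits more orderings for $S^{I}$ than for $S$ (an irreducible factor can be linear in some $\sigma(j)$, $j>1$, while $S$ is not). You correctly observe this is harmless for the first implication, since shrinking $(S^{I})$-sets preserves linearity. But then ``after $\sigma(1)$ has been processed the two objects are identical'' does not resolve the converse: to show $S$ reducible you need linearity of $S_{[\sigma(1),\ldots,\sigma(k)]}$, and reducibility of $S^{I}$ only gives linearity of the possibly smaller $(S^{I})_{[\sigma(1),\ldots,\sigma(k)]}$. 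An honest proof of the converse as stated would have to argue that the surplus $S_{[\ldots]}\setminus(S^{I})_{[\ldots]}$ — polynomials removed by intersecting against the extra orderings $S^{I}$ permits — is still linear in the next variable, or would have to strengthen the hypothesis (in the lemma's actual applications in the paper, $S$ is linear in every variable, which does collapse the distinction). The paper's ``the same argument works, as the only obstruction is if the reduction algorithm stops immediately'' glosses over the same issue.
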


\begin{proof}

Suppose $S$ is reducible with respect to $\sigma$. Let 
$p \in S$ and $p = p_{1}p_{2}$ where $p_{1},p_{2}$ are polynomials in $\mathbb{Q}[\alpha_{1},\ldots,\alpha_{r}]$. Let $L = S \setminus \{p\} \cup \{p_{1},p_{2}\}$. We claim $L$ is reducible with respect to $\sigma$. Note this completes the first claim as  one can repeatedly apply this fact. We will show that $S_{[\sigma(1)]} = L_{[\sigma(1)]}$ and that their compatibility graphs are isomorphic. We consider two cases.

\textbf{Case 1:} Suppose $\deg(p,\sigma(1)) =0$. Then  $\deg(p_{i},\sigma(1)) =0$ for $i \in \{1,2\}$. We perform the first iteration of the reduction algorithm on $S$ and $L$. Let $L^{4}$ and $S^{4}$ be the sets obtained in step three of the reduction algorithm for $\sigma(1)$ for $L$ and $S$ respectively. As $\deg(p,\sigma(1)) = 0$, we have that $p_{1},p_{2} \in L^{4}$ and $p \in S^{4}$. Notice that the polynomial $g_{i}h_{j} - g_{j}h_{i} = pg_{1}$ when one of the polynomials is $p$. Similarly, when one of the polynomials is $p_{1}$ or $p_{2}$, we get $p_{1}g_{i}$ or $p_{2}g_{i}$. Then after factoring, we have  $\tilde{L} = \tilde{S}$, and so $L_{[\sigma(1)]} = S_{[\sigma(1)]}$. Thus it suffices to show $C_{L_{\sigma(1)}}$ is isomorphic to $C_{S_{\sigma(1)}}$.

As $L = S \setminus \{p\} \cup \{p_{1},p_{2}\}$, the graphs $C_{L_{[\sigma(1)]}}$ and $C_{S_{[\sigma(1)]}}$ have the same vertex set, and we may restrict our attention to compatibilities caused by $p$, $p_{1}$ and $p_{2}$. Notice the irreducible factors of $p$, $p_{1}$ and $p_{2}$ all contain an associated $2$-tuple containing $\infty$ and $0$, thus we may restrict our attention to $2$-tuples not containing $\infty$ or $0$.

Suppose an irreducible factor of $p$, say $p'$, is adjacent to a vertex $g'$ in $S_{C_{\sigma(1)}}$, where $g'$ is an irreducible factor of some $g_{i}$. Since we are assuming that the compatibility did not arise from a $2$-tuple containing $\{0\}$ or $\{\infty\}$ we may assume that compatibilities comes from the polynomial $pg_{j}$. Then without loss of generality, $p'$ is an irreducible factor of  $p_{1}$ and so $p_{1}g_{j}$ would provide the desired index. The same argument works in the other direction as well, so we have that $C_{L_{[\sigma(1)]}}$ is isomorphic to $C_{S_{[\sigma(1)]}}$. As $S_{[\sigma(1)]} = L_{[\sigma(1)]}$ and their compatibility graphs are the same, since $S$ is reducible, $L$ is reducible.

\textbf{Case 2:} Suppose $\deg(p,\sigma(1)) =1$. Notice that as $p$ is linear in $\sigma(1)$, exactly one of $p_{1}$ or $p_{2}$ is linear in $\sigma(1)$. Without loss of generality, we assume $\deg(p_{1},\sigma(1)) =1$.
 
 As before let $S^{4}, \tilde{S}$ and $L_{4},\tilde{L}$ denote the sets obtained from the third and fourth step of the reduction for $S$ 
 and $L$ respectively. Let $p_{1} = g_{1}\sigma(1) +h_{1}$. Then $p = p_2(g_{1}\sigma(1) + h_{1})$. Then  $p_{2},h_{1},g_{1} \in L^{4}$ , and $p_{2}g_{1}, p_{2}h_{1} \in S^{4}$.  Notice that the polynomial $g_{i}h_{j} - h_{i}g_{j} = p_2(g_{1}h_{j}-h_{1}g_{j})$ when one of the polynomials is $p$. Thus  $S_{[\sigma(1)]}$ will contain all the irreducible factors obtained from $p_{1}$ and $p_{2}$. Similarly, $L_{[\sigma(1)]}$ will contain all the irreducible factors obtained from $p$, so $S_{[\sigma(1)]} = L_{[\sigma(1)]}$. 
 
For the compatibility graphs, as before all irreducible factors of $p$ are adjacent in $C_{S_{[\sigma(1)]}}$ as they share an $2$-tuple from $p$. In $C_{L_{[\sigma(1)]}}$, all irreducible factors of either $p_{1}$ or $p_{2}$ are adjacent. This follows since $\deg(p_{2},\sigma(1))=0$, all irreducible factors of $p_{2}$ have a $2$-tuple with $0$ and a $2$-tuple with $\infty$, and every irreducible factor of $p_{1}$ has a $2$-tuple containing either $0$ or $\infty$. If $f$ and $g$ are adjacent in $C_{S_{[\sigma(1)]}}$ and the $2$-tuples which make them adjacent came from a polynomial $p_{2}(g_{1}h_{j}-h_{1}g_{j})$, then the desired $2$-tuples exist for $C_{L_{[\sigma(1)]}}$ as the polynomials $p_{2}g_{1} \in L^{4}$ and $g_{1}h_{j}-h_{1}g_{j} \in L^{4}$. A similar statement holds for two adjacent polynomials in $C_{L_{[\sigma(1)]}}$. Therefore one can see that $C_{L_{[\sigma(1)]}}$ is isomorphic to $C_{S_{[\sigma(1)]}}$, and thus as $S$ is reducible with respect to $\sigma$, $L$ is reducible with respect to $\sigma$. Notice for the partial converse, the same argument works, as the only obstruction is if the reduction algorithm stops immediately.
\end{proof}

As a remark, in the above proof we did not rely on the fact that the initial compatibility graph is complete. As we will see later in the chapter, it can be convenient to apply the lemma in the middle of the a reduction.

\begin{lemma}
\label{independantpolynomials}
Let $S= \{f_{1},\ldots,f_{n}\}$ be a set of irreducible polynomials in $\mathbb{Q}[\alpha_{1},\ldots,\alpha_{r}]$. Let $\sigma$ be a permutation of $\{\alpha_{1},\ldots,\alpha_{r}\}$. Suppose we have sets $L' \subseteq S$ and $L'' \subseteq S$ such that $L' \cap L'' = \emptyset$, $L' \cup L'' = S$, and up to relabeling, there exists an $l \in \{1,\ldots,r-1\}$ such that  all polynomials in $L'$ are in the polynomial ring $\mathbb{Q}[\alpha_{1},\ldots,\alpha_{l}]$ and all polynomials in $L''$ are in the polynomial ring $\mathbb{Q}[\alpha_{l+1},\ldots,\alpha_{r}]$. Then $S$ is reducible if and only if both of the sets $L'$ and $L''$ are reducible.
\end{lemma}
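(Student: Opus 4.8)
The plan is to prove the two implications separately, with the forward direction being essentially free and the reverse direction carrying all the weight.

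\textbf{Forward direction.} If $S$ is reducible with respect to some permutation $\sigma$, then since $L' \subseteq S$ and $L'' \subseteq S$, Lemma \ref{subsetlemma} immediately gives that $L'$ and $L''$ are reducible with respect to $\sigma$, hence reducible. Nothing more is needed here.

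\textbf{Reverse direction.} Suppose $L'$ is reducible with respect to a permutation $\tau'$ of $\{\alpha_{1},\ldots,\alpha_{l}\}$ and $L''$ is reducible with respect to a permutation $\tau''$ of $\{\alpha_{l+1},\ldots,\alpha_{r}\}$. I would take the permutation $\sigma$ of $\{\alpha_{1},\ldots,\alpha_{r}\}$ that runs through $\tau'$ and then $\tau''$, namely $\sigma(k) = \tau'(k)$ for $k \leq l$ and $\sigma(k) = \tau''(k-l)$ for $k > l$, and show $S$ is reducible with respect to $\sigma$. The core of the argument is the following invariant, proved by induction on $j$ for $0 \leq j \leq l$: up to removing constant polynomials, $S_{[\sigma(1),\ldots,\sigma(j)]} = L'_{[\tau'(1),\ldots,\tau'(j)]} \cup L''$; the compatibility graph $C_{S_{[\sigma(1),\ldots,\sigma(j)]}}$ restricted to the descendants of $L'$ agrees with $C_{L'_{[\tau'(1),\ldots,\tau'(j)]}}$; and the polynomials of $L''$, together with their mutual compatibilities, are untouched. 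Granting this, when $0 \leq j \leq l-1$ the set $S_{[\sigma(1),\ldots,\sigma(j)]}$ is linear in $\sigma(j+1)$ because its $L'$-part is (as $L'$ is reducible with respect to $\tau'$, and when $j \le l-1$ we have $\sigma(j+1)$ among the first $l$ variables) and its $L''$-part is constant in $\sigma(j+1)$; applying the invariant at $j = l$, every descendant of $L'$ now involves none of $\alpha_{1},\ldots,\alpha_{l}$ and is therefore a constant, so $S_{[\sigma(1),\ldots,\sigma(l)]}$ equals $L''$ up to constants. By the reasoning of Remark \ref{monomialsandconstants}, which applies at any stage of the reduction (as noted after Lemma \ref{irreduciblepolynomials}, such lemmas may be invoked mid-reduction), the reduction of $S$ in the remaining variables $\sigma(l+1),\ldots,\sigma(r)$ succeeds if and only if the reduction of $L''$ with respect to $\tau''$ does, which holds by hypothesis. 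Hence $S$ is reducible with respect to $\sigma$.

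\textbf{The inductive step, and the main obstacle.} The real work is justifying the invariant's inductive step, and this is where I expect the difficulty to lie. When we process a variable $\sigma(k) = \tau'(k)$ with $k \leq l$, every descendant of $L''$ is constant in $\sigma(k)$, contributing $g_{i} = 0$ and $h_{i} = f_{i}$; so for $f_{i}, f_{j}$ both from the $L''$-part we get $g_{i}h_{j} - h_{i}g_{j} = 0$, and for $f_{i}$ a descendant of $L'$ and $f_{j}$ from the $L''$-part we get the one-line identity $g_{i}h_{j} - h_{i}g_{j} = g_{i}f_{j}$. This last polynomial is a product of an $L'$-land polynomial and an $L''$-land polynomial, so its irreducible factors are either irreducible factors of $g_{i}$, already present in $S^{1}$, or the polynomial $f_{j}$ itself, already present in $S^{2}$; thus the cross terms contribute no new polynomials and $\tilde{S}$ is exactly the union of the output of the $L'$-reduction step and $L''$. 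The only delicate point is the compatibility graph: a cross term $g_{i}f_{j}$ attaches the $2$-tuple $\{i,j\}$ to an irreducible factor $m$ of $g_{i}$, but $m$ divides $g_{i} \in S^{1}$ and hence already carries the $2$-tuple $\{0,i\}$, and one checks that any edge between two $L'$-land polynomials forced by such a tuple is already forced by the $\{0,\cdot\}$ tuples, so the restriction of the compatibility graph to the $L'$-land exactly reproduces the one computed in the reduction of $L'$ alone (where the initial $C_{S}$ restricted to $L'$ is complete, matching $C_{L'}$). Since the intersection defining $S_{[\sigma(1),\ldots,\sigma(j)]}$ ranges over exactly the same index set of omitted variables for $S$ and for $L'$, these set-and-graph identities survive the intersection step as well. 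In short, the polynomial content is the identity $g_{i}h_{j} - h_{i}g_{j} = g_{i}f_{j}$ — which formalizes the intuition that polynomials in disjoint variable sets cannot interact — and the main obstacle is the purely mechanical compatibility-graph bookkeeping around it.
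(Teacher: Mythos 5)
Your proof is correct and takes essentially the same approach as the paper: concatenate the reducible orders of $L'$ and $L''$, argue by induction that the reduction of $S$ through the first $l$ variables tracks the reduction of $L'$ while $L''$ is carried along untouched as dominating vertices of the compatibility graph, then finish by running the $L''$ reduction. Your explicit treatment of the identity $g_i h_j - h_i g_j = g_i f_j$ and of why the cross-tuples force no new $L'$-side edges spells out a detail the paper handles tersely, while the paper is slightly more explicit that each $L''$ polynomial is a dominating vertex; these are two sides of the same invariant.
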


\begin{proof}
If $S$ is reducible with respect to $\sigma$, then by Lemma \ref{subsetlemma} both $L'$ and $L''$ are reducible with respect to $\sigma$.

Now suppose $L'$ and $L''$ are reducible with respect to $\sigma_{1}$ and $\sigma_{2}$ respectively. Let $\sigma = \sigma_{1}\sigma_{2}$ represent the concatenation of $\sigma_{1}$ and $\sigma_{2}$. We claim $S$ is reducible with respect to $\sigma$. It suffices to show that $S_{[\sigma(1),\ldots, \sigma(l)]} = L''$, and that $C_{[\sigma(1),\ldots,\sigma(l)]} = K_{|L''|}$. By induction, we assume at the $k^{th}$ step, $k \leq l$, we have $S_{[\sigma(1),\ldots,\sigma(k-1)]} = \{L'_{[\sigma(1),\ldots,\sigma(k-1)]}, L''\}$ and that the compatibility graph is the compatibility graph for $L'_{[\sigma(1),\ldots,\sigma(k-1)]}$ with every vertex of $L''$ being a dominating vertex. We note that the base case calculation is the same as the inductive step, and thus we ignore the base case. Then we want to show that \[S_{[\sigma(1),\ldots,\sigma(k)]} = \bigcap_{1 \leq i \leq k+1} S_{[\sigma(1),\ldots,\hat{\sigma(i)},\ldots,\sigma(k)](\sigma(i))} = \{L'_{[\sigma(1),\ldots,\sigma(k-1)]}, L''\}.\]

So it suffices to show that for any $i$, $1 \leq i \leq k+1$, that 

\[S_{[\sigma(1),\ldots,\hat{\sigma(i)},\ldots,\sigma(k)](\sigma(i))} = \{L'_{[\sigma(1),\ldots,\hat{\sigma(i)},\ldots,\sigma(k)](\sigma(i))}, L'' \}.\]

 Fix $i \in \{1,\ldots,k\}$. Notice that $S_{[\sigma(1),\ldots,\hat{\sigma(i)},\ldots,\sigma(k)](\sigma(i))}$ exists if and only if \\ $L'_{[\sigma(1),\ldots,\hat{\sigma(i)},\ldots,\sigma(k)](\sigma(i))}$ exists, and thus we may assume $L'_{[\sigma(1),\ldots,\hat{\sigma(i)},\ldots,\sigma(k)](\sigma(i))}$ exists.

 Let $p \in L'_{[\sigma(1),\ldots,\hat{\sigma(i)},\ldots,\sigma(k)](\sigma(i))}$. Then $p$ is the irreducible factor for some $p'$ where either $p' = g_{m}$, $p'= h_{m}$ or $p' = g_{m}h_{j} - h_{m}g_{j}$ for some $f_{m}, f_{j} \in L'_{[\sigma(1),\ldots,\sigma(k)]}$. By our induction hypothesis, $f_{m}, f_{j} \in  S_{[\sigma(1),\ldots,\sigma(k)]}$, and thus $p \in S_{[\sigma(1),\ldots,\hat{\sigma(i)},\ldots,\sigma(k)](\sigma(i))}'$. Also, all polynomials from $L''$ are in $S_{[\sigma(1),\ldots,\hat{\sigma(i)},\ldots,\sigma(k)](\sigma(i))}$ since all polynomials in $L''$ are irreducible and do not contain $\sigma(i)$ as a variable.

Therefore, it suffices to show that in the compatibility graph for $S_{[\sigma(1),\ldots,\hat{\sigma(i)},\ldots,\sigma(k)](\sigma(i))}$, all polynomials in $L''$ are dominating vertices. Let $f \in L''$. By assumption $\deg(f,\alpha(i)) = 0$ so $f$ has receives a $2$-tuple containing $0$ and a $2$-tuple containing $\infty$. Consider any polynomial  $f_{z} \in S_{[\sigma(1),\ldots,\hat{\sigma(i)},\ldots,\sigma(k)](\sigma(i))}$. Then the polynomial $f_{z}f \in S^{4}$ gives $f$ a $2$-tuple containing $z$. Therefore for every possible value one of the associated $2$-tuples can have, $f$ has a $2$-tuple containing that value. Thus $f$ is adjacent to every polynomial, completing the claim.
\end{proof}

Now we will prove an important relationship between reducibility, leading coefficients and setting variables to zero, which is going to give us our desired graph minor closed property.

\begin{theorem}[\cite{Bognerpaper}]
\label{FubiniReducibility}
Let $S = \{P_{1},\ldots,P_{N}\}$ be a set of polynomials which is Fubini reducible in the order $(\alpha_{1},\ldots, \alpha_{n})$. Fix some $l \in \{1,\ldots,n\}$  and suppose that each polynomial $P_{i} \in S$ is linear in $\alpha_{l}$ for all $1 \leq i \leq N$. Then the sets $S^{l} = \{\frac{\partial P_{1}}{\partial\alpha_{l}},\frac{\partial P_{2}}{\partial\alpha_{l}},\ldots, \frac{\partial P_{n}}{\partial\alpha_{l}}\}$ and $S_{l} = \{P_{1}|_{\alpha_{l} =0}, \ldots, P_{N}|_{\alpha_{l}=0}\}$ are Fubini reducible in the order $(\alpha_{1}, \ldots, \hat{\alpha_{l}}, \ldots, \alpha_{n})$.
\end{theorem}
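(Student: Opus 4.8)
The plan is to follow $S^{l}$ and $S_{l}$ through their own Fubini reductions in the order $\tau:=(\alpha_{1},\ldots,\hat{\alpha_{l}},\ldots,\alpha_{n})$ and to compare these, stage by stage, with the Fubini reduction of $S$ in the order $(\alpha_{1},\ldots,\alpha_{n})$. The starting observation is structural: since every $P_{i}\in S$ is linear in $\alpha_{l}$, a simple reduction step in the variable $\alpha_{l}$ is legal for $S$, and in that step the set ``$S^{1}$'' is exactly $S^{l}$, the set ``$S^{2}$'' is exactly $S_{l}$, and ``$S^{3}$'' consists of the bilinear ``cross'' combinations $(\partial_{\alpha_{l}}P_{i})(P_{j}|_{\alpha_{l}=0})-(\partial_{\alpha_{l}}P_{j})(P_{i}|_{\alpha_{l}=0})$. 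Moreover every polynomial of $S^{l}$ and of $S_{l}$ has degree at most $1$ in $\alpha_{1}$, because $S$ is linear in $\alpha_{1}$ (being Fubini reducible in the order $(\alpha_{1},\ldots,\alpha_{n})$) and neither differentiating with respect to $\alpha_{l}$ nor setting $\alpha_{l}=0$ can raise the degree in $\alpha_{1}$ above $\deg_{\alpha_{1}}P_{i}\le 1$; so the first-step linearity condition of Fubini reducibility in the order $\tau$ holds, and by the ``furthermore'' half of Lemma \ref{irreduciblepolynomials} it is enough to prove that the sets of irreducible factors $(S^{l})^{I}$ and $(S_{l})^{I}$ are Fubini reducible in the order $\tau$. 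One caveat that shapes the proof: one cannot merely note that these irreducible factors all lie in $S_{(\alpha_{l})}$ and quote Lemma \ref{subsetlemma}, because the cross polynomials in $S^{3}$ are in general quadratic in $\alpha_{1}$, so $S_{(\alpha_{l})}$ itself need not be Fubini reducible in the order $\tau$; the two sets $S^{l}$ and $S_{l}$ must be followed separately.

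The case of $S_{l}$ is comparatively clean and I would treat it first. The evaluation $P\mapsto P|_{\alpha_{l}=0}$ is a ring homomorphism, hence commutes with $\partial_{\alpha_{j}}$, with $P\mapsto P|_{\alpha_{j}=0}$, with products and with differences for every $j\neq l$; and by the remark following Lemma \ref{irreduciblepolynomials} a single reduction step depends only on the irreducible factors of its input, so the factorization step causes no difficulty. Combining this with the monotonicity of a reduction step under passage to subsets (which is the content of Lemma \ref{subsetlemma}), I would prove by induction on the number of variables already processed that every polynomial occurring at a given stage of the Fubini reduction of $S_{l}$ in the order $\tau$ divides $R|_{\alpha_{l}=0}$ for some polynomial $R$ occurring at the corresponding stage of the Fubini reduction of $S$ in the order $(\alpha_{1},\ldots,\alpha_{n})$. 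Since those $R$ are linear in the next variable, and that variable is never $\alpha_{l}$, their specializations at $\alpha_{l}=0$ remain linear there, which gives the statement for $S_{l}$.

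The case of $S^{l}$ is where the real work lies, and I expect it to be the main obstacle. Here $\partial_{\alpha_{l}}$ is not a ring homomorphism, so the product rule prevents it from commuting with the bilinear cross step $g_{i}h_{j}-h_{i}g_{j}$; worse, after even one reduction step in a variable $\alpha_{j}\neq\alpha_{l}$ the polynomials are no longer linear in $\alpha_{l}$, so one cannot simply ``carry $\partial_{\alpha_{l}}$ along''. The way I would get around this is to track the pair $(\partial_{\alpha_{l}}P,\;P|_{\alpha_{l}=0})$ of each relevant polynomial simultaneously throughout the reduction, so that $P=(\partial_{\alpha_{l}}P)\alpha_{l}+P|_{\alpha_{l}=0}$ can always be reconstructed, and then to prove a ``degree conservation'' statement: if some polynomial of degree at least $2$ in a later variable $\alpha_{m}$ were produced in the Fubini reduction of $S^{l}$ in the order $\tau$, then reconstructing the corresponding polynomial of $S$ via that identity (and its analogue at later stages) would exhibit a polynomial of degree at least $2$ in $\alpha_{m}$ already inside the Fubini reduction of $S$ in the order $(\alpha_{1},\ldots,\alpha_{n})$, contradicting its Fubini reducibility. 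Pushing this argument through the intersections that define the Fubini sets and through the successive irreducible factorizations is the technical heart of the proof. Once $(S^{l})^{I}$ and $(S_{l})^{I}$ are known to be Fubini reducible in the order $\tau$, the linearity in $\alpha_{1}$ established above together with Lemma \ref{irreduciblepolynomials} upgrades this to the statement that $S^{l}$ and $S_{l}$ are themselves Fubini reducible in the order $\tau$, completing the proof.
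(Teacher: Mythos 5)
The paper does not actually prove Theorem~\ref{FubiniReducibility}. It is stated as a theorem of Bogner and L\"uders, and immediately afterward the text remarks that the proof in \cite{Bognerpaper} ``is flawed as printed'' and that the author instead proves the analogous statements for Brown's compatibility-graph reducibility, namely Theorem~\ref{reducibilityevalutation} (the $P\mapsto P|_{\alpha_l=0}$ half) and Theorem~\ref{leadingterms} (the $P\mapsto lc(P)$ half, which for polynomials linear in $\alpha_l$ coincides with $\partial_{\alpha_l}$). Those two theorems also keep $\alpha_l$ in the target order rather than omitting it, a cosmetic difference since a reduction step in $\alpha_l$ is then a no-op on polynomials not involving $\alpha_l$. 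So there is no ``paper's own proof'' of the Fubini version to compare against; your blind attempt is a proposal to supply a proof the paper deliberately withholds.

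Judged on its own, your sketch tracks the paper's strategy for the compatibility-graph analogues closely on the easy half and stops short on the hard half. The $S_l$ case is the analogue of the proof of Theorem~\ref{reducibilityevalutation}: since evaluation at $\alpha_l=0$ is a ring homomorphism it commutes with $\partial_{\alpha_j}$, with specialization at $\alpha_j=0$, and with the bilinear cross term, so an induction on the stage of the reduction pushes the inclusion through. You also correctly flag why one cannot simply drop $S^l$ and $S_l$ into $S_{(\alpha_l)}$ and quote Lemma~\ref{subsetlemma}: the cross terms in that set can be quadratic in $\alpha_1$, so that detour is blocked. This part of your proposal is on solid footing, modulo the usual care needed when passing through the Fubini intersections and irreducible factorizations.

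The $S^l$ case, however, is where both the original Bogner--L\"uders argument and your sketch run into trouble, and your proposed ``degree conservation by reconstruction'' does not resolve it. You observe, correctly, that after one reduction step in a variable $\alpha_j\neq\alpha_l$ the polynomials are no longer linear in $\alpha_l$, so the operator $\partial_{\alpha_l}$ cannot be carried along. But tracking the pair $(\partial_{\alpha_l}P,\,P|_{\alpha_l=0})$ and then ``reconstructing'' $P$ to derive a contradiction is not a well-defined step once the reduction has been taken: the Fubini sets are built from irreducible \emph{factors} of cross terms, intersected over several orderings, and there is no inverse map from a factor of a cross term of $S^l$ back to a polynomial in the reduction of $S$. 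What actually makes the induction go through in the paper's analogue (Theorem~\ref{leadingterms}) is the reformulation in terms of leading coefficients $lc_{\alpha_l}$ rather than derivatives, together with the explicit identity of Lemma~\ref{equationgrind}: for $f_1,f_2$ linear in $\alpha_1$, the cross term of the leading coefficients either vanishes or equals the leading coefficient of the cross term. That identity requires a genuine case analysis (several of the eight degree patterns behave differently), and it is exactly the piece your sketch gestures at without supplying. Without it, the claim that a nonlinear polynomial appearing in the reduction of $S^l$ would ``exhibit'' a nonlinear polynomial in the reduction of $S$ is an assertion, not an argument. If you want to complete the proposal, the natural route is to replace $\partial_{\alpha_l}$ with $lc_{\alpha_l}$ from the start, prove a Fubini version of Lemma~\ref{equationgrind}, and then run the same minimal-counterexample induction the paper uses for the compatibility-graph case, taking care that the inclusions survive the intersections in the definition of $S_{[\sigma(1),\ldots,\sigma(k)]}$.
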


The proof given in \cite{Bognerpaper} for Theorem \ref{FubiniReducibility} is flawed as printed,  however it appears that their proof can be fixed with some effort, but we do not go through the details (the suggested fix pointed was out by Christian Bogner via email correspondence (\cite{BognerCommunication})). Instead, we prove essentially the same theorem for reducibility (split into two theorems as the arguments are slightly different).

\begin{theorem}
\label{reducibilityevalutation}
Let $S = \{P_{1},\ldots,P_{N}\}$ be a set of polynomials which is reducible in the order $(\alpha_{1},\ldots, \alpha_{n})$. Fix some $l \in \{1,\ldots,n\}$. Then the set $S^{l} = \{P_{1}|_{\alpha_{l} =0}, \ldots, P_{N}|_{\alpha_{l}=0}\}$ is reducible in the order $(\alpha_{1},\ldots, \alpha_{n})$.
\end{theorem}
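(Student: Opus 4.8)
The plan is to run the (compatibility graph) reduction algorithm on $S$ and on $S^{l}$ at the same time, using the common order $(\alpha_{1},\ldots,\alpha_{n})$, and to show that the $S^{l}$--reduction is, step by step, the image of the $S$--reduction under the substitution $\alpha_{l}\mapsto 0$. Since that substitution cannot raise the degree of a polynomial in any other variable, this correspondence forces that whenever every polynomial of $S_{[\sigma(1),\ldots,\sigma(i)]}$ is linear in $\sigma(i+1)$, the same holds for $(S^{l})_{[\sigma(1),\ldots,\sigma(i)]}$; as $S$ is reducible in the order $(\alpha_{1},\ldots,\alpha_{n})$, that is exactly what is needed. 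Before starting I would invoke Lemma~\ref{irreduciblepolynomials} (which, as remarked after its proof, may be applied partway through a reduction) to assume every $P_{i}$ is irreducible over $\mathbb{Q}$; this is legitimate since each $P_{i}$, being linear in $\sigma(1)=\alpha_{1}$, stays linear in $\alpha_{1}$ after setting $\alpha_{l}=0$, so neither reduction halts at the first step.

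The one genuinely algebraic ingredient is a \emph{commutation lemma}: for any variable $\alpha_{j}\neq\alpha_{l}$ and any set $T$ of polynomials linear in $\alpha_{j}$, performing one reduction step in $\alpha_{j}$ and then substituting $\alpha_{l}\mapsto 0$ (and refactoring) gives the same set of irreducible polynomials as substituting $\alpha_{l}\mapsto 0$ (and refactoring) and then performing one reduction step in $\alpha_{j}$. This rests on three routine facts: $\partial/\partial\alpha_{j}$ and $(\cdot)|_{\alpha_{j}=0}$ each commute with $(\cdot)|_{\alpha_{l}=0}$; the cross term $g_{i}h_{j}-h_{i}g_{j}$ commutes with $(\cdot)|_{\alpha_{l}=0}$; and the irreducible factors of $P|_{\alpha_{l}=0}$ are precisely the irreducible factors of the polynomials $q|_{\alpha_{l}=0}$, as $q$ ranges over the irreducible factors of $P$. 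Using this, an induction on $i$ shows that for $0\leq i\leq l-1$ the set $(S^{l})_{[\alpha_{1},\ldots,\alpha_{i}]}$ is exactly the set of irreducible factors of $\{\,g|_{\alpha_{l}=0}:g\in S_{[\alpha_{1},\ldots,\alpha_{i}]}\,\}$, with the compatibilities transported accordingly; in particular these polynomials are linear in $\alpha_{i+1}$ (trivially so when $i+1=l$, since they contain no $\alpha_{l}$). The step in which $\alpha_{l}$ itself is processed does nothing to $S^{l}$ except reset its compatibility graph, after which $(S^{l})_{[\alpha_{1},\ldots,\alpha_{l}]}$ is a subset of $S_{[\alpha_{1},\ldots,\alpha_{l}]}$ whose compatibility graph is a subgraph of the one induced from $C_{S_{[\alpha_{1},\ldots,\alpha_{l}]}}$. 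The tail of the argument, processing $\alpha_{l+1},\ldots,\alpha_{n}$, is then immediate from Lemma~\ref{subsetlemma} applied to the set $S_{[\alpha_{1},\ldots,\alpha_{l}]}$ with its compatibility graph, which is reducible in the order $(\alpha_{l+1},\ldots,\alpha_{n})$ because $S$ is reducible in the order $(\alpha_{1},\ldots,\alpha_{n})$.

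The main obstacle is the compatibility-graph bookkeeping in the range $i<l$: one must verify that Brown's genealogical $2$--tuple assignments are carried correctly along the substitution $\alpha_{l}\mapsto 0$ — an irreducible factor of $g_{i}|_{\alpha_{l}=0}$ should inherit exactly the ancestry data attached to $g_{i}$, even when $g_{i}|_{\alpha_{l}=0}$ splits into several irreducible pieces — and that this survives the Fubini-style intersections $\bigcap_{i} C_{[\alpha_{1},\ldots,\widehat{\alpha_{i}},\ldots,\alpha_{k}](\alpha_{i})}$ that define the compatibility graph at each stage. This is essentially the point at which the argument for Theorem~\ref{FubiniReducibility} in \cite{Bognerpaper} is flawed, and it is the only part of the present proof that needs real care; the degenerate situations (a polynomial becoming a unit or a constant in $\sigma(k)$ after the substitution, or a cross term that factors) are handled just as in the proofs of Lemma~\ref{irreduciblepolynomials} and Lemma~\ref{subsetlemma}. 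Once the correspondence is in hand, reducibility of $S^{l}$ in the order $(\alpha_{1},\ldots,\alpha_{n})$ follows at once.
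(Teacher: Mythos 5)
Your proposal rests on the same algebraic core as the paper's proof: the substitution $\alpha_l\mapsto 0$ commutes with $\partial/\partial\alpha_j$, with $(\cdot)|_{\alpha_j=0}$, and with the cross-term $g_ih_j-h_ig_j$ when $j\neq l$; it respects factorisation via $(f_1f_2)|_{\alpha_l=0}=f_1|_{\alpha_l=0}\cdot f_2|_{\alpha_l=0}$; and the bookkeeping Lemmas~\ref{subsetlemma} and~\ref{irreduciblepolynomials} finish the argument. But the two proofs are organised differently, and your organisation introduces a real gap. The paper does not run the two reductions in parallel through all $n$ steps. It takes a minimal counterexample with respect to $n$, unfolds exactly \emph{one} step (processing $\alpha_1$), and then applies the theorem inductively to the smaller set $S_{[\alpha_1]}$ in $n-1$ variables: setting $(S_{[\alpha_1]})^{l'}=\{f|_{\alpha_l=0}:f\in S_{[\alpha_1]}\}$, minimality gives that $(S_{[\alpha_1]})^{l'}$ is reducible in $(\alpha_2,\ldots,\alpha_n)$; a short computation shows $S^l_{[\alpha_1]}$ is contained in the irreducible factors of $(S_{[\alpha_1]})^{l'}$; Lemmas~\ref{irreduciblepolynomials} and~\ref{subsetlemma} then close the argument. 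The point of this architecture is that the commutation only ever has to be verified for a single reduction step; the cascading, multi-step compatibility-graph bookkeeping you flag as the main obstacle never has to be carried out.

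Your parallel-reduction plan, by contrast, must carry the correspondence through all $l-1$ steps before $\alpha_l$ is processed, and you state it as an exact equality: ``$(S^l)_{[\alpha_1,\ldots,\alpha_i]}$ is exactly the set of irreducible factors of $\{g|_{\alpha_l=0}:g\in S_{[\alpha_1,\ldots,\alpha_i]}\}$, with the compatibilities transported accordingly.'' That is too strong and is not true in general. The compatibility graph along the $S^l$-reduction can have strictly fewer edges than the one you would get by transporting $C_{S_{[\ldots]}}$, because factoring patterns change after setting $\alpha_l=0$ and because the Fubini-style intersections $\bigcap_i C_{[\ldots,\hat{\alpha_i},\ldots](\alpha_i)}$ are taken on both sides over different families of sets; with fewer compatible pairs the $S^l$-reduction forms fewer cross-terms, so $(S^l)_{[\ldots]}$ can be a \emph{proper} subset of the image. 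A cross-term in the $S$-reduction can also vanish identically after the substitution. What you need — and what the paper proves one step at a time — is only the one-sided containment with compatibility subgraph. You should either restructure to the paper's one-step-and-recurse shape, or at least replace ``exactly'' by ``a subset of (with compatibility subgraph),'' and then supply an induction that propagates that weaker relation. As written, the equality claim would not survive the first place where $(\cdot)|_{\alpha_l=0}$ changes a factorisation.
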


\begin{proof}
Let $S$ and $S^{l}$ be a counterexample with $n$ minimized, as in, $S$ is reducible with order $(\alpha_{1},\ldots,\alpha_{n})$, but $S^{l}$ is not reducible with order $(\alpha_{1},\ldots,\alpha_{n})$. 

First suppose that $l= 1$. Then for all polynomials $P \in S$, we have that $\deg(P,\alpha_{l}) \leq 1$. Then notice that $S^{l} = S^{2}$ where $S^{2}$ is the set obtained by applying the reduction algorithm to $S$ for $\alpha_{l}$. Then $S^{l}_{[\alpha_{l}]} \subseteq S_{[\alpha_{1}]}$. Then by Lemma \ref{subsetlemma}, we have that $S^{l}_{[\alpha_{l}]}$ is reducible with order $(\alpha_{2},\ldots,\alpha_{n})$, which implies that $S^{l}$ is reducible with order $(\alpha_{1},\ldots,\alpha_{n})$, a contradiction.

 Therefore we assume that $l \neq 1$ and consider one step of the reduction algorithm. As $S$ is reducible with order $(\alpha_{1},\ldots,\alpha_{n})$, we have that $S_{[\alpha_{1}]}$ is reducible with order $(\alpha_{2},\ldots,\alpha_{n})$. Consider the set $S^{l'}_{[\alpha_{1}]} = \{f|_{\alpha_{l}=0} | f \in S_{[\alpha_{1}]} \}$.  Then since $S$ is a minimal counterexample with respect to $n$, we have that $S^{l'}_{[\alpha_{1}]}$ is reducible with order $(\alpha_{2},\ldots,\alpha_{n})$. Additionally, by Lemma \ref{irreduciblepolynomials}, the set of irreducible polynomials of  $S^{l'}_{[\alpha_{1}]}$  is reducible with order $(\alpha_{2},\ldots,\alpha_{n})$. Notice from the definitions we have:
\begin{align*}
S^{l'}_{[\alpha_{1}]} &= \{f|_{\alpha_{l}=0} | f \in S_{[\alpha_{1}]} \} 
\\&=
\{f|_{\alpha_{l}=0} | f \in \text{irreducible factors of } S^{4} \}.
\end{align*}

 For notational convenience, we will say $S^{jl}$ will be the set $S^{j}$ obtained by starting with $S^{l}$ for $j \in \{1,2,3,4\}$. Suppose we have a polynomial $f$, and $f = f_{1}f_{2}$ for some polynomials $f_{1}$ and $f_{2}$. Then fix any variable $\alpha$ and notice that  $f|_{\alpha =0} = f_{1}|_{\alpha = 0}f_{2}|_{\alpha =0}$. This follows since any term which contains $\alpha$ in $f$ is generated by a pair of terms in $f_{1}$ and $f_{2}$, where at least one of these terms contains $\alpha$. Now, notice that $S^{l}_{[\alpha_{1}]}$ exists and,

\begin{align*}
S^{l}_{[\alpha_{1}]} &= \text{irreducible factors of } S^{4l}
 \\ &=
\text{irreducible factors of } (S^{1l} \cup S^{2l} \cup S^{3l}) 
\\ &=
\text{irreducible factors of } (\{ \frac{\partial f}{\partial \alpha_{1}} | f \in S^{l} \} \cup    \{f|_{\alpha_{1}=0} | f \in S^{l} \} \cup \\& \qquad \{\frac{\partial f_{1}}{\partial \alpha_{1}}f_{2}|_{\alpha_{1} = 0} - \frac{\partial f_{2}}{\partial \alpha_{1}}f_{1}|_{\alpha_{1}=0} | f_{1},f_{2} \in S^{l} \}) 
\\&=
\text{irreducible factors of } (\{ \frac{\partial f|_{\alpha_{l}=0}}{\partial \alpha_{1}}  | f \in S\} \cup    \{f|_{\alpha_{l}, \alpha_{1}=0} | f \in S\} \cup \\& \qquad \{\frac{\partial f_{1}|_{\alpha_{l}=0}}{\partial \alpha_{1}}f_{2}|_{\alpha_{l},\alpha_{1} = 0} - \frac{\partial f_{2}|_{\alpha_{l}=0}}{\partial \alpha_{1}}f_{1}|_{\alpha_{l},\alpha_{1}=0} | f_{1},f_{2} \in S \}) 
\\ &=
\text{irreducible factors of } (\{ \frac{\partial f}{\partial \alpha_{1}}|_{\alpha_{l}=0}  | f \in S\} \cup    \{f|_{\alpha_{l}, \alpha_{1}=0} | f \in S\} \cup \\& \qquad \{\frac{\partial f_{1}}{\partial \alpha_{1}}|_{\alpha_{l}=0}f_{2}|_{\alpha_{l},\alpha_{1} = 0} - \frac{\partial f_{2}}{\partial \alpha_{1}}|_{\alpha_{l}=0}f_{1}|_{\alpha_{l},\alpha_{1}=0} | f_{1},f_{2} \in S \})
\\&= \text{irreducible factors of } (\{ \frac{\partial f}{\partial \alpha_{1}}|_{\alpha_{l}=0}  | f \in S\} \cup    \{f|_{\alpha_{l}, \alpha_{1}=0} | f \in S\} \cup \\& \qquad \{(\frac{\partial f_{1}}{\partial \alpha_{1}}f_{2}|_{\alpha_{1} = 0} - \frac{\partial f_{2}}{\partial \alpha_{1}}f_{1}|_{\alpha_{1}=0})|_{\alpha_{l} =0} | f_{1},f_{2} \in S \}) \\
& \subseteq  \text{irreducible factors of } \{f|_{\alpha_{l}=0} | f \in \text{irreducible factors of } S^{4} \} \\
&= \text{irreducible factors of } S^{l'}_{[\alpha_{1}]}.
\end{align*}

By previous observations, the set of irreducible factors of $S^{l'}_{[\alpha_{1}]}$ is reducible with order $(\alpha_{2},\ldots,\alpha_{n})$, so by Lemma \ref{subsetlemma}, we get that  $S^{l}_{[\alpha_{1}]}$ is reducible with order $(\alpha_{2},\ldots,\alpha_{n})$, completing the proof. 
\end{proof}

\begin{lemma}
\label{equationgrind}
Let $f_{1}$ and $f_{2}$ be polynomials linear in $\alpha_{1}$. Fix a variable $\alpha_{l}$ such that $\alpha_{l} \neq \alpha_{1}$. For a polynomial $f$, let $lc(f)$ be the leading coefficient of $f$ with respect to $\alpha_{l}$.  Then either
\begin{enumerate}
 \item{$\frac{\partial lc(f_{1})}{\partial \alpha_{1}}lc(f_{2})|_{\alpha_{1} = 0} - \frac{\partial lc(f_{2})}{\partial \alpha_{1}}lc(f_{1})|_{\alpha_{1}=0} = 0$ or}
 \item{ $\frac{\partial lc(f_{1})}{\partial \alpha_{1}}lc(f_{2})|_{\alpha_{1} = 0} - \frac{\partial lc(f_{2})}{\partial \alpha_{1}}lc(f_{1})|_{\alpha_{1}=0} = lc(\frac{\partial f_{1}}{\partial \alpha_{1}} f_{2}|_{\alpha_{1}=0} - \frac{\partial f_[2}{\partial \alpha_{1}} f_{1}|_{\alpha_{1} = 0}).$}
\end{enumerate}
\end{lemma}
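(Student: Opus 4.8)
The plan is to make the operation $lc$ concrete by writing each $f_i$ in the form used by the reduction algorithm. Set $g_i = \partial f_i/\partial\alpha_1$ and $h_i = f_i|_{\alpha_1=0}$, so that $f_i = g_i\alpha_1 + h_i$ with $g_i,h_i$ free of $\alpha_1$. Extracting the coefficient of the top power of $\alpha_l$ does not introduce $\alpha_1$, so $lc(f_i)$ is again linear in $\alpha_1$; hence $\partial lc(f_i)/\partial\alpha_1$ and $lc(f_i)|_{\alpha_1=0}$ are exactly the two ``parts'' of $lc(f_i)$, and the left-hand side of the two displayed alternatives is precisely $[\,lc(f_1),lc(f_2)\,]_{\alpha_1} = G_1H_2 - G_2H_1$ where $lc(f_i) = G_i\alpha_1 + H_i$. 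Thus the lemma says that this bracket of the leading coefficients is either zero or equal to $lc$ (with respect to $\alpha_l$) of $[f_1,f_2]_{\alpha_1} = g_1h_2 - h_1g_2$.

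Next I would record the elementary facts that drive the computation, all valid because the ground ring is an integral domain: (i) $lc(pq) = lc(p)\,lc(q)$ and $\deg_{\alpha_l}(pq) = \deg_{\alpha_l}(p) + \deg_{\alpha_l}(q)$ for nonzero $p,q$; (ii) for a sum, if $\deg_{\alpha_l}(p) > \deg_{\alpha_l}(q)$ then $lc(p+q) = lc(p)$, while if the two $\alpha_l$-degrees agree the coefficient of that top power in $p+q$ is $lc(p) + lc(q)$; and (iii) since $f_i = g_i\alpha_1 + h_i$ and the two summands carry different powers of $\alpha_1$, no cancellation in $\alpha_l$ occurs between them, so $\deg_{\alpha_l}(f_i) = \max(\deg_{\alpha_l}(g_i), \deg_{\alpha_l}(h_i))$ and, reading off the coefficient of that top power, $\bigl(\partial lc(f_i)/\partial\alpha_1,\ lc(f_i)|_{\alpha_1=0}\bigr)$ equals $(lc(g_i),0)$, $(0,lc(h_i))$, or $(lc(g_i),lc(h_i))$ according as $\deg_{\alpha_l}(g_i)$ is $>$, $<$, or $=$ to $\deg_{\alpha_l}(h_i)$ (using the convention $\deg_{\alpha_l}(0) = -\infty$, which also absorbs the degenerate cases $g_i = 0$ or $h_i = 0$, and hence $f_i$ a monomial/constant).

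Finally I would run the resulting $3\times 3$ case split on the pair of trichotomies from (iii). When $g_1,g_2$ both strictly dominate their respective $h_i$, or both are strictly dominated, the left-hand side is $0$ and alternative~1 holds. In each of the six ``mixed'' cases, fact (iii) forces $\deg_{\alpha_l}(g_1h_2)$ and $\deg_{\alpha_l}(g_2h_1)$ to be strictly comparable, so by (ii) there is no cancellation and $lc(g_1h_2 - h_1g_2)$ is the single surviving product $\pm lc(g_i)lc(h_j)$, which is exactly the left-hand side; alternative~2 holds. The one case requiring care is when $lc(f_1)$ and $lc(f_2)$ both genuinely depend on $\alpha_1$: there $\deg_{\alpha_l}(g_1h_2) = \deg_{\alpha_l}(g_2h_1)$, and by (ii) the coefficient of that common top power in $g_1h_2 - h_1g_2$ is $lc(g_1)lc(h_2) - lc(g_2)lc(h_1)$, i.e.\ the left-hand side; if it is nonzero it is $lc(g_1h_2 - h_1g_2)$ (alternative~2), and otherwise alternative~1 holds. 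This ``possible cancellation'' case is the only real obstacle; the rest is bookkeeping.
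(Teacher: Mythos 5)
Your proof is correct and takes essentially the same approach as the paper: both hinge on decomposing $f_i = g_i\alpha_1 + h_i$ (the paper writes out the $\alpha_l$-coefficient arrays $f_{i,j,1},f_{i,j,2}$ explicitly, where $f_{1,k,1},f_{1,k,2}$ are precisely your $G_1,H_1$) and then casing on which top-degree coefficients vanish to decide whether cancellation can occur at degree $\deg_{\alpha_l}(g_1h_2)$ versus $\deg_{\alpha_l}(g_2h_1)$. Your reorganization via the trichotomy of $\deg_{\alpha_l}(g_i)$ against $\deg_{\alpha_l}(h_i)$ is tidier bookkeeping than the paper's enumeration of zero-patterns among $f_{1,k,1},f_{1,k,2},f_{2,t,1},f_{2,t,2}$, and your isolation of the $(=,=)$ case as the sole place where alternative~1 with a nonzero polynomial can arise is exactly the point the paper handles by disposing of $\text{LHS}=0$ up front; the substance is identical.
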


\begin{proof}
Let $f_{1} = \sum_{i=0}^{k} (f_{1,i,1}\alpha_{1} + f_{1,i,2})\alpha^{i}_{l}$ and $f_{2} = \sum_{j=0}^{t} (f_{2,j,1}\alpha_{1} + f_{2,j,2})\alpha^{j}_{l}$. Then 
\begin{align*}
\frac{\partial lc(f_{1})}{\partial \alpha_{1}}lc(f_{2})|_{\alpha_{1} = 0} - \frac{\partial lc(f_{2})}{\partial \alpha_{1}}lc(f_{1})|_{\alpha_{1}=0} = f_{1,k,1}f_{2,t,2} - f_{2,t,1}f_{1,k,2},
\end{align*}
and 
\begin{align*}
lc(\frac{\partial f_{1}}{\partial \alpha_{1}} f_{2}|_{\alpha_{1}=0} - \frac{\partial f_{2}}{\partial \alpha_{1}} f_{1}|_{\alpha_{1} = 0}) =
lc((\sum_{i=0}^{k}f_{1,i,1}\alpha_{l}^{i})(\sum_{j=0}^{t}f_{2,j,2}\alpha_{l}^{j}) - (\sum_{j=0}^{t} f_{2,j,1}\alpha_{l}^{j})(\sum_{i=0}^{k}f_{1,i,2}\alpha_{l}^{i})).
\end{align*}

Now we consider various cases. If $f_{1,k,1}f_{2,t,2} - f_{2,t,1}f_{1,k,2} =0$, then the claim immediately holds. Therefore we assume that $f_{1,k,1}f_{2,t,2} - f_{2,t,1}f_{1,k,2} \neq 0$. Suppose  each of $f_{1,k,1},f_{2,t,2},f_{2,t,1}$ and $f_{1,k,2}$ are non-zero, then the largest power of $\alpha_{l}$ in $(\sum_{i=0}^{k}f_{1,i,1}\alpha_{l}^{i})(\sum_{j=0}^{t}f_{2,j,2}\alpha_{l}^{j}) - (\sum_{j=0}^{t} f_{2,j,1}\alpha_{l}^{j})(\sum_{i=0}^{k}f_{1,i,2}\alpha_{l}^{i})$ is $\alpha_{l}^{k+t}$, and thus 
\[lc((\sum_{i=0}^{k}f_{1,i,1}\alpha_{l}^{i})(\sum_{j=0}^{t}f_{2,j,2}\alpha_{l}^{j}) - (\sum_{j=0}^{t} f_{2,j,1}\alpha_{l}^{j})(\sum_{i=0}^{k}f_{1,i,2}\alpha_{l}^{i})) = f_{1,k,1}f_{2,t,2} - f_{2,t,1}f_{1,k,2}.\] 

Now suppose that $f_{1,k,1} =0$ and $f_{2,t,2}, f_{2,t,1},$ and $f_{1,k,2}$ are not zero. Then the highest power of $\alpha_{l}$ in $(\sum_{i=0}^{k}f_{1,i,1}\alpha_{l}^{i})(\sum_{j=0}^{t}f_{2,j,2}\alpha_{l}^{j}) - (\sum_{j=0}^{t} f_{2,j,1}\alpha_{l}^{j})(\sum_{i=0}^{k}f_{1,i,2}\alpha_{l}^{i})$ is $\alpha_{l}^{k+t}$ which only occurs in  $(\sum_{j=0}^{t} f_{2,j,1}\alpha_{l}^{j})(\sum_{i=0}^{k}f_{1,i,2}\alpha_{l}^{i})$ so,

\[lc((\sum_{i=0}^{k}f_{1,i,1}\alpha_{l}^{i})(\sum_{j=0}^{t}f_{2,j,2}\alpha_{l}^{j}) - (\sum_{j=0}^{t} f_{2,j,1}\alpha_{l}^{j})(\sum_{i=0}^{k}f_{1,i,2}\alpha_{l}^{i})) =  -f_{2,t,1}f_{1,k,2}.\]

Now suppose that $f_{1,k,2} = 0$ and $f_{2,t,2}, f_{2,t,1},$ and $f_{1,k,2}$ are not zero. Then the highest power of $\alpha_{l}$ in $(\sum_{i=0}^{k}f_{1,i,1}\alpha_{l}^{i})(\sum_{j=0}^{t}f_{2,j,2}\alpha_{l}^{j}) - (\sum_{j=0}^{t} f_{2,j,1}\alpha_{l}^{j})(\sum_{i=0}^{k}f_{1,i,2}\alpha_{l}^{i})$ is $\alpha_{l}^{k+t}$ so, 

\[lc((\sum_{i=0}^{k}f_{1,i,1}\alpha_{l}^{i})(\sum_{j=0}^{t}f_{2,j,2}\alpha_{l}^{j}) - (\sum_{j=0}^{t} f_{2,j,1}\alpha_{l}^{j})(\sum_{i=0}^{k}f_{1,i,2}\alpha_{l}^{i})) =  f_{1,k,1}f_{2,t,2}.\]

The case where $f_{2,t,1} =0$ and $f_{2,t,2}, f_{1,k,1},$ and $f_{1,k,2}$ are non-zero and the case where $f_{2,t,2} = 0$ and $f_{2,t,1}$, $f_{1,k,1},f_{1,k,2}$ are non-zero follow similarly. 

Now consider the case where $f_{1,k,1} = 0$, $f_{2,t,1} =0$, and $f_{1,k,2}$, $ f_{2,t,1}$ are not zero. Then $f_{1,k,1}f_{2,t,2} - f_{2,t,1}f_{1,k,2} =0$,  satisfying the claim.

Now consider the case where $f_{1,k,2} =0$,$f_{2,t,2} =0$, and $f_{1,k,1}$,$f_{2,t,1}$ are not zero. Then $f_{1,k,1}f_{2,t,2} - f_{2,t,1}f_{1,k,2} =0$, satisfying the claim. 

Now consider the case where $f_{1,k,1} =0$, $f_{2,t,2} =0$ and $f_{1,k,2}$, $f_{2,t,1}$ are non-zero. Then the highest power of $\alpha_{l}$ in $(\sum_{i=0}^{k}f_{1,i,1}\alpha_{l}^{i})(\sum_{j=0}^{t}f_{2,j,2}\alpha_{l}^{j}) - (\sum_{j=0}^{t} f_{2,j,1}\alpha_{l}^{j})(\sum_{i=0}^{k}f_{1,i,2}\alpha_{l}^{i})$ is $\alpha_{l}^{k+t}$ so, 

\[lc((\sum_{i=0}^{k}f_{1,i,1}\alpha_{l}^{i})(\sum_{j=0}^{t}f_{2,j,2}\alpha_{l}^{j}) - (\sum_{j=0}^{t} f_{2,j,1}\alpha_{l}^{j})(\sum_{i=0}^{k}f_{1,i,2}\alpha_{l}^{i})) =  -f_{2,t,1}f_{1,k,2}.\]

The case where $f_{1,k,2} = 0$, $f_{2,t,1}=0$ and $f_{1,k,1}$,$f_{2,t,1}$ are non-zero follows similarly. Notice that these are all possible cases, so the claim holds. 
\end{proof}

\begin{theorem}
\label{leadingterms}
Let $S = \{P_{1},\ldots,P_{N}\}$ be a set of polynomials which is reducible in the order $(\alpha_{1},\ldots,\alpha_{n})$. Fix $l \in \{1,\ldots,n\}$. For all $i \in \{1,\ldots,N\}$,  Let $S^{l} = \{lc(P)| P \in S\}$. Then $S^{l}$ is reducible with the order $(\alpha_{1},\ldots, \alpha_{n})$. 
\end{theorem}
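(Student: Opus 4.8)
The plan is to mirror the structure of the proof of Theorem \ref{reducibilityevalutation} almost verbatim, replacing the operation ``set $\alpha_l=0$'' with ``take the leading coefficient with respect to $\alpha_l$'', and to use Lemma \ref{equationgrind} exactly where the earlier proof used the trivial fact $f|_{\alpha_l=0}=f_1|_{\alpha_l=0}f_2|_{\alpha_l=0}$. First I would take a counterexample $(S,S^l)$ with $n$ minimized: $S$ is reducible in the order $(\alpha_1,\ldots,\alpha_n)$ but $S^l$ is not.

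\textbf{The base case $l=1$.} Here $lc(P)$ is just $\partial P/\partial\alpha_1$ when $P$ is linear in $\alpha_1$, or $P$ itself when $P$ is independent of $\alpha_1$; in either case $lc(P)$ appears among the irreducible factors of a polynomial in $S^1\cup S^2$ arising from the first reduction step applied to $S$ at $\alpha_1$. (If some $P$ has degree $\geq 2$ in $\alpha_1$ the algorithm would already stop, contradicting reducibility of $S$, so every $P\in S$ is linear in $\alpha_1$.) Thus $S^l_{[\alpha_1]}\subseteq S_{[\alpha_1]}$ up to irreducible factors, and since $S_{[\alpha_1]}$ is reducible in $(\alpha_2,\ldots,\alpha_n)$, Lemma \ref{subsetlemma} gives that $S^l_{[\alpha_1]}$ is reducible, hence $S^l$ is reducible in $(\alpha_1,\ldots,\alpha_n)$, a contradiction.

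\textbf{The inductive step $l\neq 1$.} Run one step of the reduction on $S$ at $\alpha_1$ to get $S_{[\alpha_1]}$, reducible in $(\alpha_2,\ldots,\alpha_n)$. Form $S^{l\prime}_{[\alpha_1]}=\{lc(f): f\in S_{[\alpha_1]}\}$; by minimality of $n$ this is reducible in $(\alpha_2,\ldots,\alpha_n)$, and by Lemma \ref{irreduciblepolynomials} so is its set of irreducible factors. On the other hand, I compute $S^l_{[\alpha_1]}$ directly: it is the irreducible factors of $S^{1l}\cup S^{2l}\cup S^{3l}$, where $S^{1l}=\{\partial\,lc(f)/\partial\alpha_1\}$, $S^{2l}=\{lc(f)|_{\alpha_1=0}\}$, and $S^{3l}=\{\partial lc(f_1)/\partial\alpha_1\cdot lc(f_2)|_{\alpha_1=0}-\partial lc(f_2)/\partial\alpha_1\cdot lc(f_1)|_{\alpha_1=0}\}$. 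For the $S^{1l}$ and $S^{2l}$ terms one uses that taking $lc$ with respect to $\alpha_l$ commutes with $\partial/\partial\alpha_1$ and with $|_{\alpha_1=0}$ (since $\alpha_l\neq\alpha_1$), so these already appear as leading coefficients of polynomials in $S^4$. The one non-routine point is the $S^{3l}$ terms: Lemma \ref{equationgrind} says each such polynomial is either $0$ or equals $lc\bigl(\tfrac{\partial f_1}{\partial\alpha_1}f_2|_{\alpha_1=0}-\tfrac{\partial f_2}{\partial\alpha_1}f_1|_{\alpha_1=0}\bigr)$, i.e.\ the leading coefficient of a polynomial in $S^4$. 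Hence every irreducible factor of a polynomial in $S^l_{[\alpha_1]}$ is an irreducible factor of $lc(g)$ for some $g\in S^4$, so $S^l_{[\alpha_1]}\subseteq$ (irreducible factors of $S^{l\prime}_{[\alpha_1]}$) up to irreducible factors. By Lemma \ref{subsetlemma} again, $S^l_{[\alpha_1]}$ is reducible in $(\alpha_2,\ldots,\alpha_n)$, so $S^l$ is reducible in $(\alpha_1,\ldots,\alpha_n)$, contradicting the choice of counterexample.

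\textbf{Main obstacle.} The delicate step is the $S^3$ part of the computation: a priori $lc$ of a difference of products need not be the corresponding difference of the $lc$'s, because leading terms can cancel. Lemma \ref{equationgrind} is precisely the bookkeeping that resolves this (either total cancellation, giving $0$, which we can discard, or no cancellation, giving exactly what we want). I would also need to note, as in the earlier proof and its remark, that the argument does not require the starting compatibility graph to be complete, so it applies mid-reduction; the compatibility-graph side of the induction is handled exactly as in Theorem \ref{reducibilityevalutation}, since passing to leading coefficients only refines the set of polynomials and edges and never creates new compatibilities. Everything else is the same $n$-minimal-counterexample bookkeeping already carried out for $S_l$.
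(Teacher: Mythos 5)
Your proposal is correct and takes essentially the same approach as the paper's proof: both run an induction on $n$ (phrased by you as a minimal-counterexample argument), handle $l=1$ by observing every $P\in S$ is linear in $\alpha_1$ so that $S^l_{[\alpha_1]}\subseteq S_{[\alpha_1]}$ and then invoking Lemma~\ref{subsetlemma}, and handle $l\neq 1$ by applying the inductive hypothesis to $(S_{[\alpha_1]})^{l}$ and then using Lemma~\ref{equationgrind} on the $S^3$ terms to establish $S^l_{[\alpha_1]}\subseteq$ (irreducible factors of $(S_{[\alpha_1]})^{l}$). The closing appeal to Lemmas~\ref{irreduciblepolynomials} and~\ref{subsetlemma} is the same, as is your remark that the argument does not depend on the initial compatibility graph being complete.
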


\begin{proof}
We proceed by induction on $n$. If $n=1$, then all polynomials in $S$ are monomials, and the result is immediate. Therefore we assume $n > 1$. 

First we suppose that $l =1$. Then for all $P \in S$, we have $\deg(P, \alpha_{l}) \leq 1$. Then $S^{l} = S^{1}$ where $S^{1}$ is the set obtained in the reduction algorithm applied to $S$ and $\alpha_{l}$. Then $S^{l}_{[\alpha_{l}]} \subseteq S_{[\alpha_{l}]}$. Since $S_{[\alpha_{l}]}$ is reducible with order $(\alpha_{2},\ldots,\alpha_{n})$, by Lemma \ref{subsetlemma} we have that $S^{l}_{[\alpha_{l}]}$ is reducible with order $(\alpha_{2},\ldots,\alpha_{n})$, and thus $S^{l}$ is reducible with order $(\alpha_{1},\ldots,\alpha_{n})$.

Therefore we can assume that $l \neq 1$. Now consider the set $(S_{[\alpha_{1}]})^{l}$, which we define to be the set of leading coefficients of polynomials in $S_{[\alpha_{1}]}$ with respect to $\alpha_{l}$. Since $S$ is reducible with order $(\alpha_{1},\ldots,\alpha_{n})$, we have that $S_{[\alpha_{1}]}$ is reducible with order $(\alpha_{2},\ldots,\alpha_{n})$ and thus by induction, $(S_{[\alpha_{1}]})^{l}$ is reducible with order $(\alpha_{2},\ldots, \alpha_{n})$.  Notice from the definitions we have:
\begin{align*}
(S_{[\alpha_{1}]})^{l} &= \{lc(f) | f \in S_{[\alpha_{1}]} \} 
\\&=
\{lc(f) | f \in \text{irreducible factors of } S^{4} \}.
\end{align*}

 For notational convenience, we will say $S^{jl}$ will be the set $S^{j}$ obtained by starting with $S^{l}$ for $j \in \{1,2,3,4\}$. Suppose we have a polynomial $f$, and $f = f_{1}f_{2}$ for some polynomials $f_{1}$ and $f_{2}$. Now, notice that $S^{l}_{[\alpha_{1}]}$ exists and,
\allowdisplaybreaks
\begin{align*}
S^{l}_{[\alpha_{1}]} 
&= \text{irreducible factors of } S^{4l}
 \\ &=
\text{irreducible factors of } (S^{1l} \cup S^{2l} \cup S^{3l}) 
\\ &=
\text{irreducible factors of } (\{ \frac{\partial f}{\partial \alpha_{1}} | f \in S^{l} \} \cup    \{f|_{\alpha_{1}=0} | f \in S^{l} \} \cup \\& \qquad \{\frac{\partial f_{1}}{\partial \alpha_{1}}f_{2}|_{\alpha_{1} = 0} - \frac{\partial f_{2}}{\partial \alpha_{1}}f_{1}|_{\alpha_{1}=0} | f_{1},f_{2} \in S^{l} \}) 
\\&=
\text{irreducible factors of } (\{ \frac{\partial lc(f)}{\partial \alpha_{1}}  | f \in S\} \cup    \{lc(f)|_{\alpha_{1}=0} | f \in S\} \cup \\& \qquad \{\frac{\partial lc(f_{1})}{\partial \alpha_{1}}lc(f_{2})|_{\alpha_{1} = 0} - \frac{\partial lc(f_{2})}{\partial \alpha_{1}}lc(f_{1})|_{\alpha_{1}=0} | f_{1},f_{2} \in S \}) 
\\& \subseteq \text{irreducible factors of } (\{ lc(\frac{\partial f}{\partial \alpha_{1}})  | f \in S\} \cup    \{lc(f|_{\alpha_{1}=0}) | f \in S\} \cup \\& \qquad \{lc(\frac{\partial f_{1}}{\partial \alpha_{1}}f_{2}|_{\alpha_{1} = 0} - \frac{\partial f_{2}}{\partial \alpha_{1}}f_{1}|_{\alpha_{1}=0}) | f_{1},f_{2} \in S \}) \\
&=  \text{irreducible factors of } \{lc(f) | f \in \text{irreducible factors of } S^{4} \}  \\
&= \text{irreducible factors of } (S_{[\alpha_{1}]})^{l}
\end{align*}

We note that the subset relationship between lines $4$ and $5$ above holds by appealing to Lemma \ref{equationgrind} and assuming that if $\frac{\partial lc(f_{1})}{\partial \alpha_{1}}lc(f_{2})|_{\alpha_{1} = 0} - \frac{\partial lc(f_{2})}{\partial \alpha_{1}}lc(f_{1})|_{\alpha_{1}=0} = 0$, then we remove the $0$ from the set. This does not affect reducibility since $0$ is a constant (see Remark \ref{monomialsandconstants}).

Then since $(S_{[\alpha_{1}]})^{l}$ is reducible with order $(\alpha_{2},\ldots,\alpha_{n})$, the set of irreducible factors of $(S_{[\alpha_{1}]})^{l}$ is reducible with order $(\alpha_{2},\ldots,\alpha_{n})$ by Lemma \ref{irreduciblepolynomials}. Then by Lemma \ref{subsetlemma},  we have that $S^{l}_{[\alpha_{1}]}$ is reducible with order $(\alpha_{2},\ldots,\alpha_{n})$, completing the claim. 
\end{proof}

\begin{corollary}
\label{MinorClosed}
Let $r$ be a fixed positive integer. Let $\mathcal{G}$ be the set of graphs which have $r$ external momenta, no massive edges, and are reducible with respect to $\{\phi,\psi\}$. Then if $G \in \mathcal{G}$, all minors of $G$ are in $\mathcal{G}$.
\end{corollary}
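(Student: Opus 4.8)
The plan is to reduce the statement to the two theorems just proved, Theorem \ref{reducibilityevalutation} and Theorem \ref{leadingterms}, together with the deletion/contraction identities for the Symanzik polynomials. Since minors are obtained by sequences of edge deletions, edge contractions, and deletions of isolated vertices, and since any composition of such operations is captured by doing one operation at a time, it suffices to show that $\mathcal{G}$ is closed under each of these three single operations. Deleting an isolated vertex changes neither $\psi_G$ nor $\phi_G$ (an isolated vertex forces a disconnected graph, but more simply, it contributes no edge variables and appears in every spanning tree, so removing it is invisible to both polynomials), so that case is immediate. Loops are also easy: by Lemma \ref{loopedgesSymanzikPolys}, deleting a loop $e$ just divides out a factor of $\alpha_e$, i.e. $\psi_G = \alpha_e \psi_{G\setminus e}$ and $\phi_G = \alpha_e \phi_{G\setminus e}$, and by Remark \ref{monomialsandconstants} (or Lemma \ref{subsetlemma}) this does not affect reducibility. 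So the real content is deletion and contraction of a non-loop, non-massive edge $e$ (all edges are non-massive by hypothesis).

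First I would fix an edge $e$ corresponding to the Schwinger variable $\alpha_e = \alpha_l$ for some $l$, and recall from Lemma \ref{derivativeandevaluationsetrelationship} that $\psi_{G\setminus e} = \tfrac{\partial}{\partial \alpha_l}\psi_G$ and $\psi_{G/e} = \psi_G|_{\alpha_l = 0}$, and similarly $\phi_{G\setminus e} = \tfrac{\partial}{\partial \alpha_l}\phi_G$ and $\phi_{G/e} = \phi_G|_{\alpha_l = 0}$. Since $\psi_G$ and $\phi_G$ are each linear in $\alpha_l$ (as $e$ is not massive and not a loop, using the observation that $\psi_G$ is multilinear and $\phi_G$ is multilinear in the massless case), the derivative with respect to $\alpha_l$ is exactly the leading coefficient with respect to $\alpha_l$. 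Now suppose $G \in \mathcal{G}$, so $\{\psi_G, \phi_G\}$ is reducible with respect to some permutation $\sigma$ of the Schwinger parameters. By relabelling I may assume $\sigma$ is the identity and $\alpha_l = \alpha_n$ is last — actually more carefully, I want $\alpha_l$ in some position, and both Theorem \ref{reducibilityevalutation} and Theorem \ref{leadingterms} are stated for an arbitrary fixed $l$ with the order $(\alpha_1,\ldots,\alpha_n)$, so no relabelling is even needed. Applying Theorem \ref{reducibilityevalutation} to $S = \{\psi_G, \phi_G\}$ with this $l$ gives that $\{\psi_G|_{\alpha_l = 0}, \phi_G|_{\alpha_l = 0}\} = \{\psi_{G/e}, \phi_{G/e}\}$ is reducible, so $G/e \in \mathcal{G}$. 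Applying Theorem \ref{leadingterms} to the same $S$ and $l$ gives that $\{lc(\psi_G), lc(\phi_G)\} = \{\tfrac{\partial}{\partial\alpha_l}\psi_G, \tfrac{\partial}{\partial\alpha_l}\phi_G\} = \{\psi_{G\setminus e}, \phi_{G\setminus e}\}$ is reducible, so $G\setminus e \in \mathcal{G}$.

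Having handled each elementary operation, I would assemble them: any minor $H$ of $G$ arises from a finite sequence $G = G_0, G_1, \ldots, G_k = H$ where each $G_{i+1}$ is obtained from $G_i$ by deleting an edge (loop or not), contracting a non-loop edge, or deleting an isolated vertex; note the number of external momenta is preserved throughout since we do not touch the momentum assignment (for contraction of an edge, if both endpoints are non-momentum vertices this is clear, and if a momentum vertex is involved we keep its momentum on the identified vertex — the statement fixes $r$ as the number of external momenta, and as long as contraction does not merge two momentum-carrying vertices we stay at $r$; one should note that contracting an edge between two momentum vertices is not needed to realize any minor on the momentum-free part, but in the generality of the corollary one should check that the momentum-carrying structure is handled consistently — I would simply observe that the polynomial identities of Lemma \ref{derivativeandevaluationsetrelationship} hold regardless of where the momenta sit, so the argument goes through). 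By induction on $k$, $G_i \in \mathcal{G}$ implies $G_{i+1} \in \mathcal{G}$, hence $H \in \mathcal{G}$.

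The main obstacle, and the reason this corollary required a new proof, is entirely upstream: it is Theorem \ref{leadingterms} (and to a lesser extent Theorem \ref{reducibilityevalutation}), whose subtlety is the interaction between taking leading coefficients/setting variables to zero and the \emph{compatibility graph} bookkeeping — one must check that the compatibility structure is inherited along these operations, which is precisely where the previously published proofs in \cite{Martin} and \cite{Bognerpaper} were flawed (the Bogner argument for the Fubini version being repairable but not repaired there). Given those two theorems as proved above, the corollary itself is a short and essentially mechanical unwinding of the deletion/contraction dictionary; the only things to be careful about are the loop case (handled by Lemma \ref{loopedgesSymanzikPolys} and Remark \ref{monomialsandconstants}) and confirming linearity in $\alpha_l$ so that ``derivative'' and ``leading coefficient'' coincide.
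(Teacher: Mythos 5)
Your proposal follows the paper's own proof essentially verbatim: the loop case via Lemma \ref{loopedgesSymanzikPolys} and Remark \ref{monomialsandconstants}, and the non-loop case via Lemma \ref{derivativeandevaluationsetrelationship} together with the observation that linearity in $\alpha_l$ makes the derivative equal the leading coefficient, so Theorems \ref{reducibilityevalutation} and \ref{leadingterms} apply directly. One small slip in an aside: an isolated vertex is \emph{not} invisible to $\psi_G$ --- a graph with an isolated vertex has no spanning tree at all, so $\psi_G = 0$, and deleting the vertex can produce a graph with $\psi \neq 0$; the paper sidesteps this by implicitly restricting to connected graphs (recall the earlier remark that for connected $G$ one never needs to delete isolated vertices), and you should do the same rather than claim the operation leaves the polynomials unchanged. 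This does not affect the substance of your argument, which correctly identifies the real work as residing in the two upstream theorems.
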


\begin{proof}
Let $G \in \mathcal{G}$. Pick any edge $e \in E(G)$. We consider two cases.

 \textbf{Case 1:} Suppose $e$ is a loop. Notice for loops, deletion and contraction are the same. Then by Lemma \ref{loopedgesSymanzikPolys}, we have $\{\phi_{G}, \psi_{G}\} = \{\phi_{G \setminus e}\alpha_{e}, \psi_{G \setminus e}\alpha_{e}\}$. By Lemma \ref{irreduciblepolynomials}, $\{\phi_{G \setminus e}\alpha_{e}, \psi_{G \setminus e}\alpha_{e}\}$ is reducible if and 
only if $\{\phi_{G \setminus e}, \psi_{G \setminus e}, \alpha_{e}\}$ is reducible. Since $\alpha_{e}$ is a monomial, we may remove it and thus we get that $\{\phi_{G}, \psi_{G}\}$ is reducible if and only if $\{\phi_{G \setminus e}, \psi_{G \setminus e}\}$ is reducible. Therefore $G \setminus e \in \mathcal{G}$ and thus by induction, all minors of $G \setminus e$ are in $\mathcal{G}$. 

\textbf{Case 2:} Suppose that $e$ is not a loop edge and consider $G \setminus e$ and $G / e$. By Lemma \ref{derivativeandevaluationsetrelationship} the Symanzik polynomials for $G \setminus e$ are $\{ \frac{\partial}{\partial\alpha_{e}}\phi_{G}, \frac{\partial}{\partial\alpha_{e}}\psi_{G} \}$. Similarly by Lemma \ref{derivativeandevaluationsetrelationship} the Symanzik polynomials for $G /e$ are $\{\psi_{G|_{\alpha_{e} = 0}}, \phi_{G|_{\alpha_{e}=0}}\}$. Notice that since we have no massive edges, both of the Symanzik polynomials are linear in $\alpha_{l}$, thus $\frac{\partial}{\partial \alpha_{e}} \phi_{G} = lc(\phi_{G})$ and $\frac{\partial}{\partial \alpha_{e}} \phi_{G} = lc(\phi_{G})$. Thus both sets  are reducible by appealing to Theorem \ref{leadingterms} and Theorem \ref{reducibilityevalutation} and thus in $\mathcal{G}$. Therefore by induction, all minors of $G$ are in $\mathcal{G}$, completing the claim.   
\end{proof}

Therefore we have that reducibility with respect to the Symanzik polynomials when we have a fixed set of external momenta is graph minor closed. This fact motivates chapter three. Notice that by similar arguments, reducibility with respect to the first (or second) Symanzik polynomial is also graph minor closed. We note it is interesting that in the above proof, that leading coefficients were the key to prove reducibility is closed under contraction, whereas in the literature contraction is usually viewed as taking a derivative. Now we can give some easy connectivity reductions for reduciblity of graphs with respect to the Symanzik polynomials. 

\begin{lemma}
Let $G$ be a graph with $1$-separation $(A,B)$. Then $G$ is reducible with respect to the first Symanzik polynomial if and only if $G[A]$ and $G[B]$ are reducible with respect to the first Symanzik polynomial.
\end{lemma}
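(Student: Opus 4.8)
The plan is to leverage two facts already at hand: the multiplicativity of the first Symanzik polynomial across a $1$-separation (Lemma \ref{firstsymanzikoneconnected}), which gives $\psi_{G} = \psi_{G[A]}\psi_{G[B]}$, and the independence lemma for reducibility of sets of polynomials whose variable supports are disjoint (Lemma \ref{independantpolynomials}). The key structural observation is that the edge sets $E(G[A])$ and $E(G[B])$ are disjoint (any edge of $G$ has both endpoints in $A$ or both in $B$ by the definition of a $1$-separation), so the Schwinger parameters appearing in $\psi_{G[A]}$ and those appearing in $\psi_{G[B]}$ are disjoint sets of variables. Thus $\{\psi_{G}\} = \{\psi_{G[A]}\psi_{G[B]}\}$ is, up to factoring, the set $\{\psi_{G[A]}, \psi_{G[B]}\}$, and these two polynomials live in disjoint polynomial rings.

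First I would apply Lemma \ref{firstsymanzikoneconnected} to write $\psi_{G} = \psi_{G[A]}\psi_{G[B]}$. Next I would invoke Lemma \ref{irreduciblepolynomials} to pass from the singleton set $\{\psi_{G}\}$ to its set of irreducible factors: since $\psi_{G}$ is linear in each of its variables (it is homogeneous and linear in all Schwinger parameters), the hypothesis that all polynomials are linear in $\sigma(1)$ is automatically met, so $\{\psi_{G}\}$ is reducible if and only if the set of irreducible factors of $\psi_{G[A]}\psi_{G[B]}$ is reducible. That set of irreducible factors is precisely the union of the irreducible factors of $\psi_{G[A]}$ with those of $\psi_{G[B]}$. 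Then I would apply Lemma \ref{independantpolynomials} with $L'$ = irreducible factors of $\psi_{G[A]}$ (living in $\mathbb{Q}[\alpha_e : e \in E(G[A])]$) and $L''$ = irreducible factors of $\psi_{G[B]}$ (living in $\mathbb{Q}[\alpha_e : e \in E(G[B])]$); the disjointness of edge sets furnishes exactly the variable-partition hypothesis of that lemma. This gives: $\{\psi_{G}\}$ reducible $\iff$ both factor-sets reducible. Finally, applying Lemma \ref{irreduciblepolynomials} once more in each coordinate (again the linearity hypothesis is free), the set of irreducible factors of $\psi_{G[A]}$ is reducible iff $\{\psi_{G[A]}\}$ is, and likewise for $B$, which yields the claimed equivalence: $G$ is reducible with respect to $\psi$ iff both $G[A]$ and $G[B]$ are.

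The main obstacle, such as it is, is a bookkeeping one: making sure the hypotheses of Lemma \ref{independantpolynomials} and Lemma \ref{irreduciblepolynomials} are literally satisfied — in particular that we are working with \emph{irreducible} polynomials before invoking Lemma \ref{independantpolynomials} (which requires an irreducible set), and that the linearity-in-$\sigma(1)$ condition needed for the converse direction of Lemma \ref{irreduciblepolynomials} holds, which it does because $\psi$ is linear in every Schwinger parameter when there are no massive edges. One should also note the degenerate case where one side of the separation contributes a constant (e.g.\ $G[B]$ is a single vertex, so $\psi_{G[B]} = 1$); Remark \ref{monomialsandconstants} handles this, as constants may be discarded without affecting reducibility, and then the statement reads trivially. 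No genuinely hard estimate or new idea is required; the lemma is a direct corollary of the machinery already developed.
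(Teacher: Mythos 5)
Your proposal is correct and follows essentially the same route as the paper: write $\psi_G = \psi_{G[A]}\psi_{G[B]}$ via Lemma \ref{firstsymanzikoneconnected}, pass through Lemma \ref{irreduciblepolynomials}, and then split using Lemma \ref{independantpolynomials} on the disjoint variable supports. You are in fact slightly more careful than the paper's own (very brief) proof, in that you explicitly pass to irreducible factors before invoking Lemma \ref{independantpolynomials} (whose hypothesis literally requires irreducible polynomials, whereas $\psi_{G[A]}$ and $\psi_{G[B]}$ need not themselves be irreducible), and you note the degenerate constant case via Remark \ref{monomialsandconstants}; these are worthwhile clarifications, not a different argument.
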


\begin{proof}
By Lemma \ref{firstsymanzikoneconnected} we have $\psi_{G} = \psi_{G[A]}\psi_{G[B]}$. So the set $\{\psi_{G}\}$ is reducible if and only if $\{\psi_{G[A]},\psi_{G[B]}\}$ is reducible by Lemma \ref{irreduciblepolynomials}. Observe that there are no shared variables between $\psi_{G[A]}$ and $\psi_{G[B]}$. Then by Lemma \ref{independantpolynomials}, $\{\psi_{G[A]},\psi_{G[B]}\}$ is reducible if and only if both $\{\psi_{G[A]}\}$ and $\{\psi_{G[B]}\}$ are reducible. By definition, that occurs exactly when both $G[A]$ and $G[B]$ are reducible with respect to the first Symanzik polynomial. 
\end{proof}

\begin{lemma}
Let $G$ be a graph with where $X = \{a,b,c,d\} \subseteq V(G)$ is the set of vertices with momenta, and assume that all external momenta is on-shell. Furthermore, suppose $G$ has no massive edges.  Suppose $G$ has a $1$-separation $(A,B)$. Suppose $a,b,c \in  A \setminus (A \cap B)$ and $d \in B \setminus (A \cap B)$. Then $G$ is reducible with respect to both Symanzik polynomials if and only if $G[A]$ is reducible with respect to both Symanzik polynomials and $G[B]$ is reducible with respect to the first Symanzik polynomial.
\end{lemma}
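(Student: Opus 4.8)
The plan is to combine the two $1$-separation factorization lemmas with the algebraic facts about reducibility proved earlier in this section. First I would apply Lemma~\ref{firstsymanzikoneconnected} to get $\psi_{G} = \psi_{G[A]}\psi_{G[B]}$, and then, since $a,b,c$ lie in $A\setminus(A\cap B)$ and $d$ lies in $B\setminus(A\cap B)$, apply Lemma~\ref{momentaoneconnectivityreduction} (with $v$ the cut vertex and $\rho_v$ set equal to $\rho_d$) to get $\phi_{G} = \phi_{G[A]}\psi_{G[B]}$; here $G[A]$ carries the on-shell momenta $\rho_a,\rho_b,\rho_c$ at $a,b,c$ together with $\rho_d$ at $v$ (four distinct on-shell momenta, all other vertices of $G[A]$ having zero momentum), and this is exactly the meaning of ``$G[A]$ is reducible with respect to both Symanzik polynomials'' in the statement. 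Thus $\{\psi_{G},\phi_{G}\} = \{\psi_{G[A]}\psi_{G[B]},\,\phi_{G[A]}\psi_{G[B]}\}$, and since $G$ (hence $G[A]$ and $G[B]$) has no massive edges, every one of $\psi_{G},\phi_{G},\psi_{G[A]},\phi_{G[A]},\psi_{G[B]}$ is linear in each of its Schwinger parameters.

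Next I would pass to irreducible factors. By Lemma~\ref{irreduciblepolynomials}, and because linearity in the first variable of any order makes the converse direction of that lemma available, $\{\psi_{G},\phi_{G}\}$ is reducible in a given order if and only if its set of irreducible factors over $\mathbb{Q}$ is reducible in that order. That set is exactly the union of the irreducible factors of $\psi_{G[A]}$, of $\phi_{G[A]}$, and of $\psi_{G[B]}$ (any monomial factors, e.g.\ coming from loops at $v$, being discarded via Remark~\ref{monomialsandconstants}), i.e.\ it is the set of irreducible factors of $\{\psi_{G[A]},\phi_{G[A]},\psi_{G[B]}\}$. Because $E(G[A])$ and $E(G[B])$ partition $E(G)$ apart from loops at the cut vertex, the factors coming from $\psi_{G[A]},\phi_{G[A]}$ involve only the variables $\{\alpha_e : e\in E(G[A])\}$ while the factors of $\psi_{G[B]}$ involve only the variables $\{\alpha_e : e\in E(G[B])\}$.

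I would then apply Lemma~\ref{independantpolynomials} to this set of irreducible factors, splitting it as $L'$ (the $A$-side factors) and $L''$ (the factors of $\psi_{G[B]}$): it is reducible if and only if $L'$ and $L''$ are each reducible. Unwinding once more with Lemma~\ref{irreduciblepolynomials} (again using linearity), $L'$ is reducible if and only if $\{\psi_{G[A]},\phi_{G[A]}\}$ is, i.e.\ $G[A]$ is reducible with respect to both Symanzik polynomials, and $L''$ is reducible if and only if $\{\psi_{G[B]}\}$ is, i.e.\ $G[B]$ is reducible with respect to the first Symanzik polynomial. Chaining these equivalences gives both directions of the lemma.

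The routine parts are checking that $G[A]$ and $G[B]$ are connected (so the factorizations and the statements about $\psi\neq 0$ are non-degenerate — this follows from properness of the separation and connectedness of $G$) and that $G[A]$ really does carry four on-shell external momenta so that Lemma~\ref{momentaoneconnectivityreduction} and the ambient notion of reducibility apply verbatim. The step requiring the most care is the bookkeeping around Lemma~\ref{irreduciblepolynomials}: its converse direction only returns reducibility of the original set when that set is linear in the first variable of the order, which is precisely why the no-massive-edges hypothesis is essential, and one must confirm that the set of irreducible factors is genuinely respected by the splitting in Lemma~\ref{independantpolynomials}, i.e.\ that no irreducible factor mixes an $A$-variable with a $B$-variable — this is where disjointness of the edge sets and the treatment of loops at $v$ enter.
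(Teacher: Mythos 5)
Your proof is correct and follows essentially the same route as the paper: factor $\psi_G$ and $\phi_G$ across the $1$-separation via Lemma~\ref{firstsymanzikoneconnected} and Lemma~\ref{momentaoneconnectivityreduction}, pass to irreducible factors with Lemma~\ref{irreduciblepolynomials}, split the factor set by variable disjointness with Lemma~\ref{independantpolynomials}, and unwind. The paper states these steps more tersely (it does not explicitly flag the linearity hypothesis needed for the converse direction of Lemma~\ref{irreduciblepolynomials}, nor the disposal of monomial factors, nor the connectivity of $G[A]$ and $G[B]$), so your version is simply a more carefully bookkept rendition of the same argument.
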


\begin{proof}
By Lemma \ref{firstsymanzikoneconnected} and Lemma \ref{momentaoneconnectivityreduction}, we have $\phi_{G} = \phi_{G[A]}\psi_{G[B]}$ and $\psi_{G} = \psi_{G[A]}\psi_{G[B]}$. Then $\{\phi_{G},\psi_{G}\}$ is reducible if and only if $\{\phi_{G[A]},\psi_{G[A]},\psi_{G[B]}\}$ is reducible by Lemma \ref{irreduciblepolynomials}. By Lemma \ref{independantpolynomials}, $\{\phi_{G[A]},\psi_{G[A]},\psi_{G[B]}\}$ is reducible if and only if both $\{\phi_{G[A]}, \psi_{G[A]}\}$ and $\{\psi_{G[B]}\}$ are reducible. By definition, $\{\phi_{G[A]}, \psi_{G[A]}\}$ and $\{\psi_{G[B]}\}$ are reducible exactly when $G[A]$ is reducible with respect to both the first and second Symanzik polynomials, and $G[B]$ is reducible with respect to the first Symanzik polynomial. 
\end{proof}

\begin{lemma}
Let $G$ be a graph with where $X = \{a,b,c,d\} \subseteq V(G)$ is the set of vertices with momenta, and assume that all external momenta are on-shell. Furthermore, suppose $G$ has no massive edges. Suppose $G$ has a $1$-separation $(A,B)$ and that $X \subseteq A$. Then $G$ is reducible with respect to both Symanzik polynomials if and only $G[A]$ is reducible with respect to both Symanzik polynomials and $G[B]$ is reducible with respect to the first Symanzik polynomial. 
\end{lemma}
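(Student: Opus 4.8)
The plan is to mirror the proofs of the two preceding lemmas, using the factorization of both Symanzik polynomials across the $1$-separation together with the structural lemmas on reducibility. First I would record the two polynomial identities: by Lemma~\ref{firstsymanzikoneconnected} we have $\psi_{G} = \psi_{G[A]}\psi_{G[B]}$, and since $X \subseteq A$, Lemma~\ref{fourzerocaseSymanzikPolynomials} gives $\phi_{G} = \phi_{G[A]}\psi_{G[B]}$. I would also note that $E(G[A])$ and $E(G[B])$ are disjoint, since the cut vertex carries no edges between the two sides, so the Schwinger variables appearing in $\{\phi_{G[A]}, \psi_{G[A]}\}$ are disjoint from those appearing in $\psi_{G[B]}$.

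Next I would pass to irreducible factors. Since $G$ has no massive edges, both $\phi_{G}$ and $\psi_{G}$ are homogeneous and linear in every Schwinger variable; in particular each is linear in whichever variable comes first in any chosen order. Hence Lemma~\ref{irreduciblepolynomials} applies in both directions and shows that $\{\phi_{G}, \psi_{G}\}$ is reducible if and only if the set of irreducible factors of $\{\phi_{G[A]}\psi_{G[B]},\, \psi_{G[A]}\psi_{G[B]}\}$ is reducible. Up to constants and repeated elements (harmless by Remark~\ref{monomialsandconstants} and Lemma~\ref{subsetlemma}), this latter set is the set of irreducible factors of $\{\phi_{G[A]}, \psi_{G[A]}, \psi_{G[B]}\}$.

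Then I would invoke Lemma~\ref{independantpolynomials} with the variable partition noted above: after taking irreducible factors, $\{\phi_{G[A]}, \psi_{G[A]}, \psi_{G[B]}\}$ splits into one family living in the Schwinger variables of $G[A]$ and one family living in those of $G[B]$, so it is reducible if and only if both $\{\phi_{G[A]}, \psi_{G[A]}\}$ and $\{\psi_{G[B]}\}$ are reducible; by definition this is precisely the statement that $G[A]$ is reducible with respect to both Symanzik polynomials and $G[B]$ is reducible with respect to the first Symanzik polynomial. The only subtlety is the bookkeeping in passing between a product of polynomials and its irreducible factors, together with the degenerate case $\phi_{G[A]} = 0$ (equivalently $\phi_{G} = 0$, which also subsumes the case $G$ disconnected where $\psi$ vanishes), where the zero polynomial is simply discarded as a constant; I expect this to be the main but quite minor obstacle, and it is handled exactly as in Lemma~\ref{fourzerocaseSymanzikPolynomials} and the two reduction lemmas immediately preceding this one.
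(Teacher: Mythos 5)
Your proof is correct and follows essentially the same route as the paper: factor $\psi_G$ and $\phi_G$ across the cut vertex, pass to irreducible factors via Lemma \ref{irreduciblepolynomials}, and split the resulting set by the disjoint Schwinger-variable support using Lemma \ref{independantpolynomials}. You are in fact slightly more careful than the paper's version, which cites Lemma \ref{momentaoneconnectivityreduction} where Lemma \ref{fourzerocaseSymanzikPolynomials} (the $X \subseteq A$ case, which you correctly use) is the applicable factorization, and which leaves implicit the linearity and bookkeeping points you spell out.
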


\begin{proof}
By Lemma \ref{firstsymanzikoneconnected} and Lemma \ref{momentaoneconnectivityreduction} we have that $\phi_{G} = \phi_{G[A]}\psi_{G[B]}$ and $\psi_{G} = \psi_{G[A]}\psi_{G[B]}$.
Then by $\{\phi_{G},\psi_{G}\}$ is reducible if and only if $\{\phi_{G[A]},\psi_{G[A]},\psi_{G[B]}\}$ is reducible by Lemma \ref{irreduciblepolynomials}. By Lemma \ref{independantpolynomials}, $\{\phi_{G[A]},\psi_{G[A]},\psi_{G[B]}\}$ is reducible if and only if both $\{\phi_{G[A]}, \psi_{G[A]}\}$ and $\{\psi_{G[A]}\}$ are reducible. By definition, $\{\phi_{G[A]}, \psi_{G[A]}\}$ and $\{\psi_{G[B]}\}$ are reducible exactly when $G[A]$ is reducible with respect to both the first and second Symanzik polynomials, and $G[B]$ is reducible with respect to the first Symanzik polynomial. 
\end{proof}

One cannot extend the above lemmas to when we have a $1$-separation $(A,B)$ such that two of the vertices with external momenta are in $A \setminus (A \cap B)$ and two of the momenta are in $B \setminus (A \cap B)$. This follows since the second Symanzik polynomial does not factor nicely to allow a reduction to the $2$-connected case (see Lemma \ref{symanzikpolynomialtwomomentaoneachside}).

\section{Graphs which are not reducible}

\begin{figure}
\begin{center}
\includegraphics[scale =0.5]{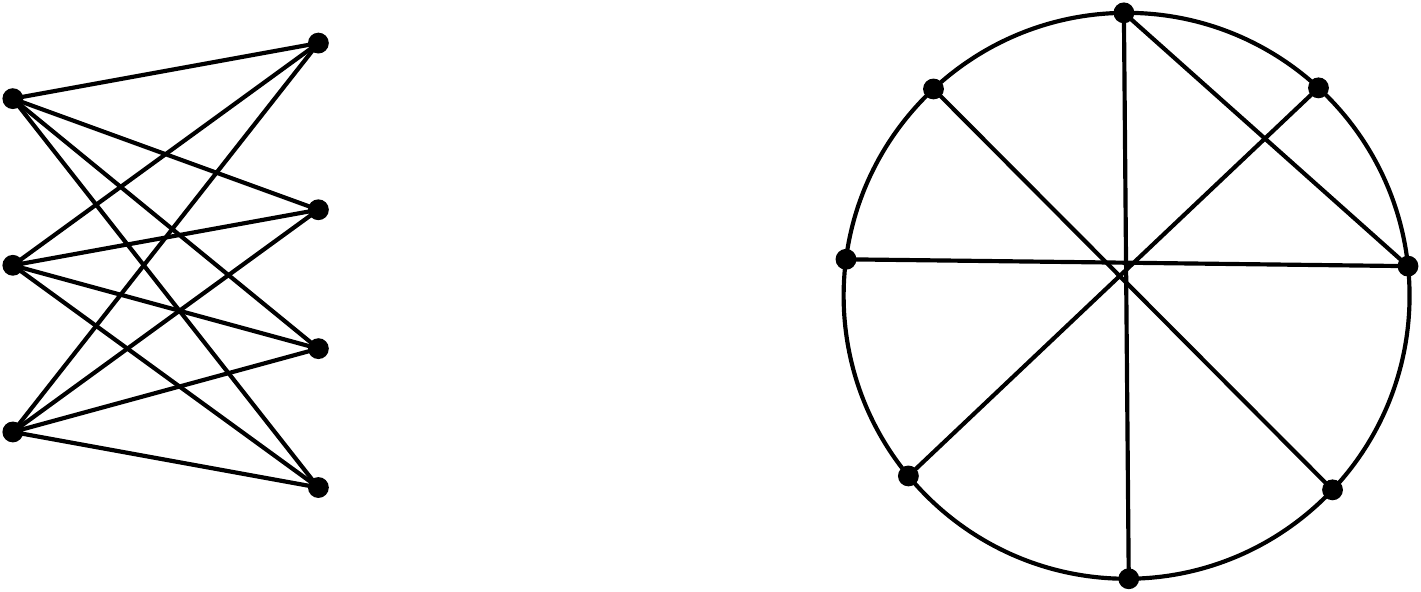}
\end{center}
\caption{Forbidden minors for reducibility with respect to $\psi$.}
\label{forbiddenminorsfirst}
\end{figure}

A celebrated result is that if a graph class is graph minor closed, then it is characterized by a finite set of forbidden minors (\cite{WQOtheorem}). Then by Corollary \ref{MinorClosed}, reducibility with a fixed number of momenta is characterized by a finite set of forbidden minors. Therefore it is of interest to try and exactly determine that set. We do not claim to have anywhere near a complete characterization, but we list some of the graphs which are known to be non-reducible here. 

Quite a bit of work has been done on reducibility with respect to the first Symanzik polynomial. In \cite{FrancisBig}, the graph $K_{3,4}$ was shown to be non-reducible with respect to $\psi$ and that, in particular, for any permutation $\sigma$, the set $S_{[\sigma(1),\ldots,\sigma(7)]}$ (if it exists) is quadratic in $\sigma(8)$. Furthermore, by direct computation, one can show that all proper minors of $K_{3,4}$ are reducible. Therefore $K_{3,4}$ is a forbidden minor for the first Symanzik polynomial (\cite{FrancisBig}). 

Also in \cite{FrancisBig}, Brown notes that the graph $V_{8} \cup \{02\}$ (see Figure \ref{forbiddenminorsfirst}, also known as $Q48$ in \cite{olivercensus}, $V_{8}$ is Wagner's graph as defined in chapter $1$) is not reducible with respect to $\psi$. Again by direct computation, it is possible to show that $V_{8} \cup \{02\}$ is a forbidden minor with respect to $\psi$ (\cite{FrancisBig}).

A census of periods of Feynman integrals has been constructed in \cite{olivercensus}. Since reducibility is tied to Brown's integration algorithm, and we know that when Brown's algorithm succeeds that the Feynman integral is a multiple zeta value, all graphs in \cite{olivercensus} whose Feynman integral is not a multiple zeta value are not reducible. There are some graphs which are known to not have periods which are multiple zeta values, but they are almost certainly not forbidden minors. However, it is quite difficult computationally to try and find the forbidden minor from these graphs. For example, we attempted to show $K_{6}$ was not reducible by running it on the Westgrid servers for seven days but it was only around halfway through the reduction, and seemingly slowing down as the reduction continued.

One graph which I suspect to be a forbidden minor for reducibility with respect to the first Symanzik polynomial is $K_{6}$. While I leave it open on whether or not $K_{6}$ is reducible, we will see in Corollary \ref{k6minors}, that if $K_{6}$ is not reducible it does not change the class of reducible graphs much.

\begin{figure}
\begin{center}
\includegraphics[scale = 0.5]{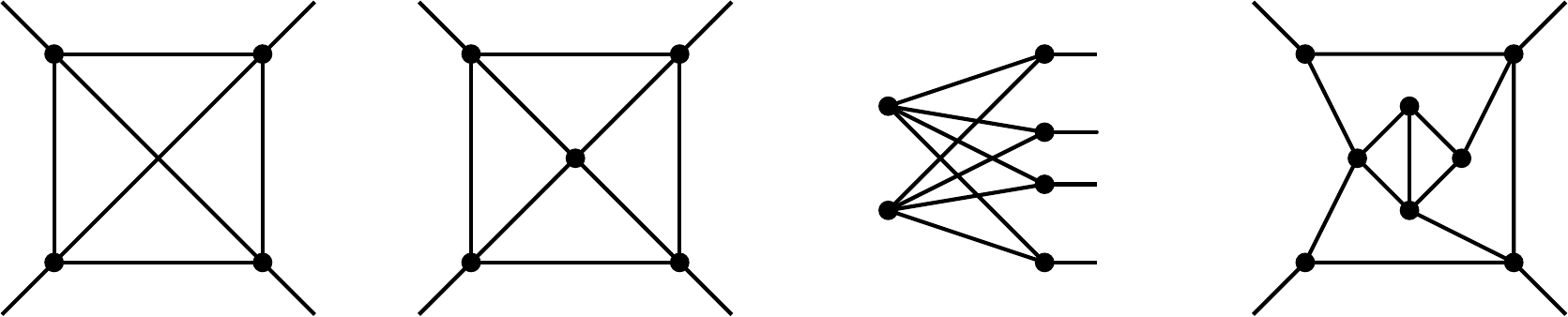}
\end{center}
\caption{Forbidden minors for reducibility with respect to both Symanzik polynomials. The vertices with the on-shell momenta are denoted by lines with adjacent to only one vertex.}
\label{forbiddenminorssecond}
\end{figure}

Now we shift focus to reducibility with respect to both Symanzik polynomials, where the underlying graph has four on-shell external momenta, and $G$ has no massive edges. From example \ref{K4notreducible} and \ref{W4notreducible} we have that $K_{4}$ and $W_{4}$ are not reducible with four on-shell external momenta. Furthermore, one can directly compute that those examples are minor minimal. Using HyperInt, one can show that $K_{2,4}$ and the other graph in Figure \ref{forbiddenminorssecond} which we will call $L$, are forbidden minors. The graph $L$ was found by doing a brute force test for reducibility on small graphs. We note that by Lemma \ref{subsetlemma}, all graphs which are not reducible with respect to the first Symanzik polynomial are also not reducible with respect to both Symanzik polynomials. 

So to give a list of known forbidden minors, $K_{3,4}$ and $V_{8} \cup \{02\}$ (see Figure \ref{forbiddenminorsfirst}) are forbidden minors for the first Symanzik polynomial. The graphs $K_{2,4}$, $W_{4}$, $K_{4}$ and the graph $L$ (see Figure \ref{forbiddenminorssecond}) are forbidden minors for both Symanzik polynomials. We note that we almost surely do not have the full list of forbidden minors. In the next chapter, we look at graphs not containing the above minors and see what structure those graphs must have.

\chapter{Graph Minors}

 Recall that a graph $G$ has a \textit{$H$-minor} for some graph $H$, if a graph isomorphic to $H$ can be obtained from $G$ through the contraction and deletion of some edges, and possibly the removal of some isolated vertices. It is easy to see that if $G$ is connected, then we never need to delete isolated vertices. Before jumping into new results on graph minors, we give a brief survey.
 
  Graph minors have been heavily studied in the literature. The most notable contribution is the graph minors project by Neil Robertson and Paul Seymour. The brunt of the project is dedicated to approximately describing the structure of graphs without a $K_{t}$-minor. The theorem which accomplishes this is usually known as the Graph Structure Theorem, which says if a graph $G$ does not have a $K_{t}$-minor, then $G$ ``almost embeds'' on a surface of low Euler genus relative to $t$ (\cite{graphstructuretheorem}). From this theorem one obtains one of the most well known and celebrated results of the graph minors project. 

\begin{theorem}[\cite{WQOtheorem}]
Graphs are well quasi ordered with respect to the minor operation.
\end{theorem}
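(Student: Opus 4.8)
The statement is the Graph Minor Theorem of Robertson and Seymour, whose proof occupies the twenty-odd papers of the Graph Minors series; I only sketch the architecture here. Recall that a quasi-order is a well-quasi-order precisely when it has no infinite strictly descending chain and no infinite antichain. For the minor order on finite graphs the descending-chain condition is immediate, since $|V(H)| + |E(H)| < |V(G)| + |E(G)|$ whenever $H$ is a proper minor of $G$, so the entire content is the absence of an infinite antichain, equivalently of a \emph{bad sequence} $G_1, G_2, \ldots$ in which $G_i$ is not a minor of $G_j$ for all $i < j$. The first reduction is standard: if such a bad sequence existed, then every $G_j$ with $j \geq 2$ would be $G_1$-minor-free, so $G_2, G_3, \ldots$ would be a bad sequence lying entirely inside the class of $G_1$-minor-free graphs. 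Hence it suffices to prove that for every fixed graph $H$ the class of $H$-minor-free graphs is well-quasi-ordered under minors.

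The second, and by far the deepest, ingredient is the Graph Structure Theorem (\cite{graphstructuretheorem}), invoked just above in the text: for each $H$ there are constants so that every $H$-minor-free graph is obtained by clique-sums of bounded order from graphs that ``almost embed'' in a surface of bounded Euler genus, that is, graphs embeddable in such a surface after deleting a bounded number of \emph{apex} vertices and after allowing a bounded number of \emph{vortices} of bounded depth attached along face boundaries. The plan is then to prove well-quasi-ordering for this structured family directly, working with sequences that have first been passed to subsequences along which all the bounded parameters (genus, number of apices, number and depth of vortices, width of the clique-sum decomposition) are constant.

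The remaining work is a hierarchy of labelled well-quasi-ordering arguments built on a Nash-Williams minimal-bad-sequence scheme. At the base one needs two facts. First, \emph{graphs of bounded tree-width are well-quasi-ordered under minors}: this is Graph Minors IV, itself a Kruskal-type argument that treats the tree decomposition as a tree labelled by its bags, using that finite trees are well-quasi-ordered under topological embedding with labels from a well-quasi-order. Second, \emph{graphs embeddable in a fixed surface are well-quasi-ordered}, which reduces, via the grid/tangle dichotomy, either to the bounded-tree-width case or to a ``society''-type argument controlling how disjoint paths can cross a fixed piece of surface. One then upgrades these to allow vertex- and edge-labels from an arbitrary well-quasi-order, adds the apex vertices as a bounded amount of extra labelled data, adds the vortices (each a society whose transactions are controlled by a bounded-width linear decomposition, handled by a path/sequence argument in the spirit of Higman's lemma), and finally assembles the clique-summed pieces along their decomposition tree by one more Kruskal-style argument, the label of each tree node being the structured piece sitting there, now known to be well-quasi-ordered. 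Threading a single minimal bad sequence through all of these reductions, so that a bad sequence at the top produces a bad sequence at some base level where well-quasi-ordering is already established, yields the contradiction.

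The main obstacle is, frankly, everything after the first paragraph: the Graph Structure Theorem is among the hardest results in combinatorics, and even granting it, the well-quasi-ordering argument is delicate because the structural description is not canonical — one must choose decompositions coherently along the whole sequence, control how vortices interact with the embedded part, and ensure that taking minors of the structured pieces respects the decomposition. The vortex bookkeeping and the surface case (ruling out that large ``crossing'' patterns produce an antichain) are where the real difficulty concentrates, and within the scope of this thesis the theorem will simply be cited as \cite{WQOtheorem} rather than reproved.
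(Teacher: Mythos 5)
The paper does not prove this statement; it simply cites \cite{WQOtheorem}, as one would expect for the Robertson--Seymour Graph Minor Theorem. Your sketch is a faithful high-level account of the actual argument: the reduction from WQO to excluding an infinite antichain, then to $H$-minor-free classes via the standard ``first element of a bad sequence'' trick, the appeal to the Graph Structure Theorem, and the cascade of labelled Kruskal/Nash-Williams arguments handling bounded tree-width, bounded-genus surfaces, vortices, apices, and clique-sums. Nothing in it is wrong, and you are right that the genuine difficulty lies in the Structure Theorem and in making the WQO bookkeeping coherent across non-canonical decompositions --- which is precisely why the thesis (and essentially every application in the literature) cites the result rather than reproving it.
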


In other words, if $\mathcal{G}$ is a minor closed class of graphs, then $\mathcal{G}$ is characterized by a finite set of forbidden minors. Of course, this theorem is meaningless without some examples of classes of graphs which are minor closed, but fortunately there are lots. Before listing some examples, a few definitions are needed. For some integer $k$, a graph $G$ has $\text{treewidth} < k$ if $G$ can be decomposed into clique sums of graphs with less than $k+1$ vertices. We note that is not the standard definition of treewidth but can easily be shown to be equivilant (see, for example, \cite{TreewidthSurvey}).  A graph $G$ is \textit{outerplanar} if $G$ is planar and all of its vertices lie on the outerface. Some examples of graph minor closed classes are:

\begin{enumerate}
\item{Planar Graphs.}

\item{Outerplanar graphs.}

\item{Graphs with treewidth less than $k$ for a fixed positive integer $k$.}

\item{Graphs with vertex width less than $k$ for a fixed positive integer $k$.}

\item{Graphs which embed in a fixed surface.} 

\item{Graphs which do not contain $k$ pairwise disjoint cycles, for a fixed positive integer $k$.}

\item{Graphs which are reducible with respect to the Symanzik polynomials for a fixed number of external momenta.}

\item{Graphs with no path of length $k$.}

\item{For a fixed set graph $H$, graphs not containing an $H$-minor.}

\end{enumerate}

Notice for the classes $1,2,3,4,5,$ and $7$, it is not obvious what the complete set of forbidden minors are. But for some of these classes the complete set of forbidden minors is known. For example, a graph is planar if and only if it does not contain a $K_{5}$ or $K_{3,3}$ as a minor (\cite{WagnersTheorem}). A graph is outerplanar if and only if it does not contain $K_{4}$ or $K_{2,3}$ as a minor (\cite{outerplanarcharacterization}). On the other hand, it is an open problem to determine the full set of forbidden minors for graphs with treewidth less than $k$ for all values of $k \geq 4$ (\cite{TreewidthSurvey}, \cite{twlessthan3minors}). Also, with the exception of the plane (\cite{WagnersTheorem}) and the projective plane (\cite{Projectiveplaneminors}), the full forbidden minor set for graphs which embed in some fixed surface is unknown (\cite{Lovazgraphsurvey}). In general the size of the list of forbidden minors can get rather large, for instance Chambers and Myrvold have found that there are at least $16,629$ forbidden minors for graphs which embed on the torus (\cite{Torusminors}). As an even more extreme example, Wye-Delta-Wye reducibility currently has at least $69$ billion known forbidden minors (\cite{DeltaWyereducibility}). So a natural question is, given a minor closed class $\mathcal{G}$, what are the forbidden minors for $\mathcal{G}$? 

However for the classes $6,8$, and $9$, that question is essentially useless since the class is being defined from the set of forbidden minors. So for those classes, it would be nice to have a theorem describing the structure of graphs in these classes in some meaningful way. These type of theorems are usually called excluded minor theorems. They generally outline some small set of graphs which do not contain an $H$-minor for some topological reason and then give a way to glue the small graphs together in a way which prevents an $H$-minor. Examples of such theorems include:

\begin{theorem}[\cite{noK33minor}]
A graph $G$ is $K_{3,3}$-minor free if and only if $G$ can be constructed from a series of $0$-sums, $1$-sums, and $2$-sums of planar graphs and $K_{5}$.
\end{theorem}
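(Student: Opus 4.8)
The plan is to prove both implications by induction, with essentially all the content in the $3$-connected case. For the \emph{if} direction I would first isolate the standard gluing lemma: if $H$ is $3$-connected and $H$ is a minor of a graph $G$ that is a $0$-sum, $1$-sum, or $2$-sum of $G_1$ and $G_2$, then $H$ is already a minor of $G_1$ or of $G_2$ --- a branch decomposition of a $K_{3,3}$-model cannot be split across a separation of order at most two because $K_{3,3}$ has no vertex-cut of order $\le 2$. Since planar graphs are $K_{3,3}$-minor free by Wagner's theorem (\cite{WagnersTheorem}) and $K_5$ has too few vertices to contain the six-vertex graph $K_{3,3}$ as a minor, an induction on the number of summands then shows that every graph assembled from $0$-, $1$-, and $2$-sums of planar graphs and copies of $K_5$ is $K_{3,3}$-minor free.

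For the \emph{only if} direction I would induct on $|V(G)|$ for $K_{3,3}$-minor free $G$. If $G$ is disconnected it is the $0$-sum of its components; if $G$ has a cut vertex it is a $1$-sum along it; in either case the pieces are minors of $G$, hence $K_{3,3}$-minor free, and the induction hypothesis applies. If $G$ is $2$-connected with a $2$-cut $\{x,y\}$, I would take a proper $2$-separation $(A,B)$ with $A \cap B = \{x,y\}$ and set $G_1 = G[A] + xy$ and $G_2 = G[B] + xy$, so that $G$ is the $2$-sum of $G_1$ and $G_2$. The crucial observation is that $G_1$ and $G_2$ are again $K_{3,3}$-minor free: a $K_{3,3}$-model in $G_i$ that uses the virtual edge $xy$ can be transported into $G$ by rerouting $xy$ along an internally disjoint $x$--$y$ path through the opposite side (such a path exists because $G$ is $2$-connected and the separation is proper), contradicting the choice of $G$. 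As $|V(G_i)| < |V(G)|$, induction decomposes $G_1$ and $G_2$, and regluing along $xy$ decomposes $G$. Graphs on at most four vertices are planar, so in all remaining cases $G$ is $3$-connected with $|V(G)| \ge 5$.

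The heart of the argument is therefore the claim that a $3$-connected $K_{3,3}$-minor free graph $G$ with $|V(G)| \ge 5$ is planar or isomorphic to $K_5$. If $G$ is planar we are done; otherwise Kuratowski--Wagner (\cite{WagnersTheorem}) gives $G$ a $K_5$-minor or a $K_{3,3}$-minor, and the latter is forbidden, so $G$ has a $K_5$-minor. It then suffices to prove that a $3$-connected graph with a $K_5$-minor and no $K_{3,3}$-minor must be $K_5$ itself. I would take a $K_5$-model $B_1,\ldots,B_5$ in $G$ minimizing $\sum_i |B_i|$ and argue that if some $B_i$ has more than one vertex, or if $\bigcup_i B_i \ne V(G)$, then $3$-connectivity --- via Menger's theorem and disjoint path systems as in Corollary \ref{XYpaths} --- lets one reassemble the model into six branch sets realizing a $K_{3,3}$, a contradiction; hence the $B_i$ are singletons covering $V(G)$ and $G = K_5$. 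This reassembly --- handling a vertex outside the model joined to it by three disjoint paths, and handling a branch set of size at least two --- is the delicate, case-heavy part, and I expect it to be the main obstacle. A cleaner route to the same endpoint is to invoke Tutte's theorem that a $3$-connected graph on at least five vertices has a contractible edge $e$ with $G / e$ still $3$-connected, apply induction to $G / e$ (which is planar or $K_5$), and then analyze the vertex split that recovers $G$ from $G / e$, locating where a Kuratowski subgraph first appears; the non-planar sub-case $G / e = K_5$ forces $|V(G)| = 6$ and is checked directly.
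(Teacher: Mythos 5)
The paper states this theorem with a citation to \cite{noK33minor} and never proves it, so there is no in-paper argument to compare against; your proposal has to be judged on its own terms. Its skeleton is the standard one and is sound: the \emph{if} direction is Lemma~\ref{lowordersepsminors} applied to $H=K_{3,3}$, together with the facts that planar graphs are $K_{3,3}$-minor-free and that no five-vertex graph contains a six-vertex minor; in the \emph{only if} direction the $2$-separation step works because, in a $2$-connected graph, both sides of a proper $2$-separation with boundary $\{x,y\}$ contain an $(x,y)$-path, so the rerouting of the virtual edge is legitimate and shows the two summands inherit $K_{3,3}$-minor-freeness.

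The genuine gap is the one you flag yourself: the claim that a $3$-connected, non-planar, $K_{3,3}$-minor-free graph on at least five vertices is $K_5$. That is not a routine reduction but the entire content of the theorem, and neither of your sketches yet proves it. For the contractible-edge route, the sub-case $G/e$ planar needs Whitney's uniqueness of the embedding of a $3$-connected planar graph plus an argument that a vertex split which does not respect the rotation at the contracted vertex yields a $K_{3,3}$-subdivision (and you must handle neighbours adjacent to both ends of $e$, since the split is then not a partition of the neighbourhood); the sub-case $G/e = K_5$ is not to be ``checked for planarity'' but ruled out --- every $3$-connected $6$-vertex $G$ with $G/e = K_5$ contains $K_{3,3}$ as a subgraph once each end of $e$ inherits at least two of the four old neighbours, which $3$-connectivity forces, so that case contradicts the hypothesis. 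For the model-minimization route, the clean entry point is to upgrade the $K_5$-minor to a $K_5$-subdivision (this upgrade is exactly where the absence of a $K_{3,3}$-minor is used) and then use $3$-connectivity and a sixth vertex to build a $K_{3,3}$-subdivision. Until one of these is written out, you have a correct plan, not a proof.
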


\begin{theorem}[\cite{WagnersTheorem}]
A graph $G$ is $K_{5}$-minor free if and only if $G$ can be constructed from a series of $0$-sums, $1$-sums, $2$-sums, and $3$-sums of planar graphs and $V_{8}$ (as defined in the graph theory basics section). 
\end{theorem}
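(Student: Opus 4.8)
The plan is to prove this theorem of Wagner by the two standard directions, localizing all the difficulty in the $3$-connected case. For the easy direction I would first check that both building blocks are $K_{5}$-minor-free: planar graphs exclude a $K_{5}$-minor by Wagner's planarity criterion (equivalently Kuratowski's theorem), and for $V_{8}$ a degree count suffices, since $K_{5}$ has minimum degree $4$ while $V_{8}$ is cubic, so no branch set of a hypothetical $K_{5}$-model in $V_{8}$ can be a single vertex; each of the five branch sets then has at least two vertices, forcing $|V(V_{8})|\ge 10$, a contradiction. Next I would prove the gluing lemma: if $G$ is a $k$-clique-sum of $G_{1}$ and $G_{2}$ with $k\le 3$ and both $G_{i}$ are $K_{5}$-minor-free, then so is $G$. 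Here the point is that $K_{5}$ is $4$-connected, so given a $K_{5}$-model in $G$ and the separation $(A,B)$ of order at most $3$ coming from the sum, one reroutes through the separating clique each branch set meeting both sides and concludes the whole model may be taken inside $G[A]$ or inside $G[B]$ (with the clique edges added), contradicting $K_{5}$-minor-freeness of the corresponding $G_{i}$. Iterating along the construction sequence gives the direction.

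For the hard direction I would induct on $|V(G)|$, showing a $K_{5}$-minor-free $G$ is constructible from planar graphs and $V_{8}$ by $0$-, $1$-, $2$-, and $3$-sums. If $|V(G)|\le 4$ then $G$ is planar. If $G$ has a separation $(A,B)$ of order at most $2$, let $G_{i}$ be $G[A]$ respectively $G[B]$ with an edge added on $A\cap B$; each $G_{i}$ is a minor of $G$ (the added edge is realized by a path through the other side, assuming $G$ connected), hence $K_{5}$-minor-free and strictly smaller, so by induction each decomposes appropriately, and therefore so does $G$, which is their $\le 2$-sum. Thus we may assume $G$ is $3$-connected, and the theorem reduces to the claim that \emph{every $3$-connected $K_{5}$-minor-free graph is planar or isomorphic to $V_{8}$.}

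For this core I would invoke Tutte's classical theorem that every $3$-connected graph on at least five vertices has an edge $e$ with $G/e$ again $3$-connected. Then $G/e$ is $3$-connected, $K_{5}$-minor-free and smaller, so by induction it is planar or equal to $V_{8}$. If $G/e=V_{8}$, then $G$ arises from $V_{8}$ by splitting one vertex into an edge; the degree bounds force the two endpoints to have a common neighbour, so the split creates a triangle, and using the vertex-transitivity of $V_{8}$ this becomes a short finite case check in which one always exhibits a $K_{5}$-minor, a contradiction. If $G/e$ is planar, it is $3$-connected and so has an essentially unique plane embedding by Whitney's theorem; examining the rotation of the contracted vertex $v$ and how its neighbours split between the endpoints $x,y$ of $e$ (with the common neighbour $z$ at the interface), one shows that either the two neighbour-arcs do not interleave, in which case $G$ itself embeds in the plane, or they interleave, in which case a routing of three independent paths available from $3$-connectivity produces a $K_{5}$-minor. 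In either case $G$ is planar or $V_{8}$, completing the induction.

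The routine parts are the degree argument for $V_{8}$, the gluing lemma, and the low-connectivity reduction; the real content — and the step I expect to be the main obstacle — is the $3$-connected core: carrying out the vertex-splitting case analysis when $G/e=V_{8}$, and, when $G/e$ is planar, making precise the dichotomy ``non-interleaving neighbours $\Rightarrow$ $G$ planar / interleaving neighbours $\Rightarrow$ $K_{5}$-minor'' and extracting the explicit minor in the second case. An alternative route to the core avoids Tutte's theorem: a $3$-connected non-planar graph contains a subdivision of $K_{5}$ or of $K_{3,3}$ by Kuratowski's theorem, a $K_{5}$-subdivision already yields a $K_{5}$-minor, and one may instead analyze how the bridges of a fixed $K_{3,3}$-subdivision attach in a $3$-connected $K_{5}$-minor-free graph and deduce the graph is $V_{8}$; this is essentially Wagner's original argument and has the same hard kernel.
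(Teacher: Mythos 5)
The paper states this result only as a citation (\cite{WagnersTheorem}); there is no proof in the text to compare against, so I evaluate your sketch on its own. Your easy direction (planar graphs and $V_8$ are $K_5$-minor-free; $\le 3$-clique-sums preserve $K_5$-minor-freeness since $K_5$ is $4$-connected) and the reduction across separations of order $\le 2$ are sound, but the hard direction contains a genuine gap.

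The problem is the step ``Thus we may assume $G$ is $3$-connected, and the theorem reduces to the claim that every $3$-connected $K_5$-minor-free graph is planar or isomorphic to $V_8$.'' That intermediate claim is false. Using the paper's labelling of $V_8$, contract the rim edge $01$ and call the new vertex $v$; the resulting graph $G'$ on $\{v,2,3,4,5,6,7\}$ has edge set $\{v2,v4,v5,v7,23,34,45,56,67,26,37\}$. It is $K_5$-minor-free (being a minor of $V_8$); it is $3$-connected (it has the Hamilton cycle $v,2,3,4,5,6,7$ and the additional chords $v4,v5,26,37$ rule out any $2$-cut); and it is non-planar: the parts $\{v,3,6\}$ and $\{2,5,7\}$, with $3,4,5$ as the one path through an internal vertex and all other pairs joined by single edges, form a $K_{3,3}$-subdivision. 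Since $|V(G')|=7$, it is neither planar nor $V_8$, so the $3$-connected case cannot be closed by your dichotomy.

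What is missing is the $3$-separation step in the decomposition. After reducing to $3$-connectivity one must still recurse along suitable $3$-cuts via $3$-sums, which is delicate: triangulating an arbitrary $3$-cut may create a $K_5$-minor in one of the pieces. (This is precisely why $V_8$ is indecomposable: each of its $3$-cuts isolates a single vertex, and triangulating such a cut produces a $K_5$-minor on the large side.) Wagner's route, and the standard textbook one, passes to edge-maximal $K_5$-minor-free graphs, where every $\le 3$-separator is automatically a clique so both pieces are genuine minors of $G$; the highly connected core to be settled is then ``every $4$-connected $K_5$-minor-free graph is planar,'' with $V_8$ entering only as the $3$-connected but not $4$-connected edge-maximal exception. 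Relatedly, your dichotomy ``interleaving neighbour-arcs $\Rightarrow$ $K_5$-minor'' in the planar-$G/e$ case is also not tight: interleaving yields a topological $K_5$ \emph{or} $K_{3,3}$, and the $K_{3,3}$ outcome carries no contradiction with $K_5$-minor-freeness -- this is exactly the door through which $V_8$, and graphs such as $G'$ above, must re-enter the analysis.
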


\begin{theorem}[\cite{noK4minor}]
A graph $G$ is $K_{4}$-minor free if and only if $G$ can be constructed from a series of $0$-sums, $1$-sums and $2$-sums of $K_{1}$, $K_{2}$ and $K_{3}$.
\end{theorem}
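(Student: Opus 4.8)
The plan is to prove both implications by induction, built on two ingredients: the class of $K_4$-minor-free graphs is closed under $0$-, $1$-, and $2$-sums, and every $3$-connected graph on at least four vertices has a $K_4$-minor. Only the latter is nonroutine; I discuss it last.

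\emph{The ``if'' direction.} Here I would observe that $K_1$, $K_2$, $K_3$ are $K_4$-minor-free (each has at most three vertices), and then show that a $k$-sum with $k \le 2$ of two $K_4$-minor-free graphs $G_1$, $G_2$ is again $K_4$-minor-free; induction on the number of summands then gives the result. The cases $k \in \{0,1\}$ are immediate, since each connected branch set of a minor model lies entirely in $G_1$ or entirely in $G_2$. For $k = 2$, say $G$ arises by identifying an edge $uv$ of $G_1$ with an edge $uv$ of $G_2$ (then possibly deleting copies of it), and let $(A,B)$ be the associated $2$-separation with $A\cap B=\{u,v\}$. If $G$ had a $K_4$-minor with branch sets $B_1,\dots,B_4$, then since $\{u,v\}$ separates $A\setminus\{u,v\}$ from $B\setminus\{u,v\}$, any connected branch set meeting both of these must contain $u$ or $v$; hence at most two branch sets are ``mixed'', and a short case analysis (using that the edge $uv$, present in both $G_1$ and $G_2$, is exactly what reconnects a mixed branch set after it is restricted to one side) shows the model can be realised entirely inside $G_1$ or entirely inside $G_2$. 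This contradicts $G_1$, $G_2$ being $K_4$-minor-free.

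\emph{The ``only if'' direction.} I would induct on $|V(G)|$. A graph on at most three vertices is $K_4$-minor-free and is easily assembled from $K_1$, $K_2$, $K_3$ by $0$-, $1$-, and $2$-sums, parallel edges being themselves iterated $2$-sums of copies of $K_2$; this is the base case. If $|V(G)|\ge 4$ and $G$ is disconnected, it is the $0$-sum of its components; if $G$ has a cut vertex, it is the $1$-sum of the corresponding pieces; in both situations the pieces are subgraphs of $G$ (hence $K_4$-minor-free) with strictly fewer vertices, and induction applies. Otherwise $G$ is $2$-connected with $|V(G)|\ge 4$, so $G$ is not $3$-connected (else it would have a $K_4$-minor), and therefore has a proper $2$-separation $(A,B)$ with $A\cap B=\{u,v\}$. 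Set $G_1=G[A]+uv$ and $G_2=G[B]+uv$: each has at least three and fewer than $|V(G)|$ vertices, and each is $K_4$-minor-free, since a $K_4$-minor in $G_1$ would give one in $G$ after replacing the (possibly added) edge $uv$ by a $u$--$v$ path inside $G[B]$, which exists as $G$ is $2$-connected. By the inductive hypothesis each $G_i$ is an iterated $\le 2$-sum of $K_1$, $K_2$, $K_3$, and $G=G_1\oplus_2 G_2$, finishing this direction.

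\emph{The main obstacle.} The one substantial step is the lemma that every $3$-connected graph $H$ with $|V(H)|\ge 4$ has a $K_4$-minor (equivalently, a $2$-connected $K_4$-minor-free graph on at least four vertices is never $3$-connected). First I would show $H$ has a non-spanning cycle $C$: if $H$ is non-Hamiltonian a longest cycle works, while if $H$ is Hamiltonian it is not merely a cycle (a cycle on $\ge 4$ vertices is only $2$-connected), so $H$ has a chord, and that chord together with a shorter arc of the Hamilton cycle is a non-spanning cycle. Fixing $x\notin V(C)$, the fan version of Menger's theorem (see \cite{BondyAndMurty}) then supplies three $x$--$V(C)$ paths meeting pairwise only at $x$ and ending at three distinct vertices of $C$; these three paths, together with the three arcs into which their endpoints divide $C$, form a subdivision of $K_4$, hence a $K_4$-minor. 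The remaining work---the clique-sum case checking in the ``if'' direction and the multigraph bookkeeping in the base cases---is standard.
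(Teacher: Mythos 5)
The paper states this theorem with a citation to \cite{noK4minor} and does not prove it, so there is no in-paper argument to compare against; you are supplying a self-contained proof. That proof is correct, and it is essentially the classical argument for the series-parallel / $K_4$-minor-free characterization. A few remarks on efficiency and on small gaps in exposition.

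For the ``if'' direction you do not need to redo the two-sum case analysis from scratch: the paper itself records (Lemma \ref{lowordersepsminors}) that for a $3$-connected $H$, a $k$-sum with $k\le 2$ has an $H$-minor if and only if one of the two summands does, and $K_4$ is $3$-connected, so that lemma closes the direction immediately. Your own sketch of the $k=2$ case is in the right spirit, but the phrase ``at most two branch sets are mixed'' and the appeal to the edge $uv$ need a little more care than you give them: you must first rule out having a non-mixed branch set on each side (two non-adjacent branch sets would contradict $K_4$ being complete), then show that for a mixed branch set $B_i$ containing, say, $u$ but not $v$, the trace $B_i\cap A$ is already connected inside $G[A]$ (any $B$-excursion must pass through $u$, so it can be cut off there), and finally note that the one possibly-missing adjacency between the two mixed branch sets is supplied by the edge $uv\in E(G_1)$. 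None of this is hard, but ``a short case analysis'' leaves the reader to reconstruct it.

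Your ``only if'' direction and your proof of the key lemma (every $3$-connected graph on at least four vertices has a $K_4$-minor) are both correct as written. Two small points worth making explicit: in the fan-lemma step you should truncate each of the three $x$--$V(C)$ paths at its first meeting with $C$ so that the paths are internally disjoint from $C$, and in the ``only if'' induction you should say a word about why $|A|,|B|<|V(G)|$ when $(A,B)$ is a proper $2$-separation (this is immediate from properness, but it is the reason the induction terminates). With those clarifications the argument is complete.
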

 
 Similar theorems are known for the cube (\cite{Excludedminorcube}), prism (\cite{noprismminor}), octahedron (\cite{Excludedminorocta}), $V_{8}$ (\cite{NoV8minors}), and all $3$-connected graphs with less than $11$ edges (\cite{Smallminors}). One might notice that all the graphs I have listed are $3$-connected, and this is due to the usefulness of Seymour's Splitter Theorem (see Theorem \ref{splittertheorem}). That being said, there are non-trivial excluded minor theorems known for graphs which are not $3$-connected, for instance graphs not containing $K_{2,4}$-minors were completely characterized in \cite{k24excludedminors}. 

This chapter is dedicated to looking at excluded minor theorems for graphs which are forbidden minors for reducibility. First let us reformulate the notion of a minor in a way which will be easy to generalize. 

\begin{proposition} 
\label{branchsets}
Let $G$ and $H$ be graphs. The graph $G$ has an $H$-minor if and only if we can find a set $\{G_{x} : x \in V(H)\}$ of pairwise disjoint connected subgraphs of $G$ indexed by the vertices of $H$, such that if $xy \in E(H)$ then there exists a vertex $v \in V(G_{x})$ and a vertex $u \in V(G_{y})$ such that $uv \in E(G)$. 
\end{proposition}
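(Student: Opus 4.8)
The statement is the standard equivalence between the operational definition of a minor (via edge deletions, edge contractions, and removals of isolated vertices) and the branch-set (model) description, so the plan is to prove the two implications separately: the forward direction by induction on the number of minor operations, and the reverse direction by exhibiting an explicit sequence of deletions and contractions.

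For the forward direction, suppose $H$ is a minor of $G$ and fix a witnessing sequence $G = G_0, G_1, \ldots, G_k$ in which each $G_{i+1}$ arises from $G_i$ by one of the three allowed operations and $G_k$ is isomorphic to $H$. I would induct on $k$. When $k = 0$ we have $G \cong H$ and may take each $G_x$ to be the (single-vertex) subgraph corresponding to $x$. For the inductive step, $H$ is a minor of $G_1$ via the sequence $G_1, \ldots, G_k$, so by the induction hypothesis $G_1$ carries branch subgraphs $\{W_x : x \in V(H)\}$ with the stated properties, and it remains to pull these back across the first operation $G_0 \to G_1$. If that operation is an edge deletion or the removal of an isolated vertex, then $G_1$ is a subgraph of $G_0$ on the same or a subset of the vertex set, so the $W_x$ are already pairwise disjoint connected subgraphs of $G_0$ and every edge of $G_1$ joining $W_x$ to $W_y$ is also an edge of $G_0$. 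If instead $G_1 = G_0 / e$ with $e = ab$ contracted to a new vertex $z$, I replace $z$ by the pair $\{a,b\}$ inside whichever branch set contained $z$ (leaving the others unchanged); the resulting subgraph is connected because every edge of $G_1$ incident with $z$ corresponds to an edge of $G_0$ incident with $a$ or with $b$, and the edge $ab \in E(G_0)$ joins the two halves, while disjointness and the cross-edge condition are immediate.

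For the reverse direction, suppose $\{G_x : x \in V(H)\}$ is a family of pairwise disjoint connected subgraphs of $G$ with an edge of $G$ between $V(G_x)$ and $V(G_y)$ whenever $xy \in E(H)$. I would fix a spanning tree $T_x$ of each $G_x$ and, for each $xy \in E(H)$, one edge $f_{xy} \in E(G)$ with one endpoint in $V(G_x)$ and the other in $V(G_y)$. Now delete from $G$ every edge except those lying in some $T_x$ or equal to some $f_{xy}$: after this every vertex outside $\bigcup_{x} V(G_x)$ has lost all of its edges, hence is isolated and may be removed. Finally contract each tree $T_x$ to a single vertex; since deletion and contraction commute, the order is immaterial. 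The graph obtained has one vertex for each $x \in V(H)$, and its edges are exactly the images of the $f_{xy}$, that is, a copy of the edge set of $H$, so it is isomorphic to $H$ and $H$ is a minor of $G$.

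The only step that needs genuine care, and thus the one I would write out in full, is the contraction case of the forward direction — checking that ``unsplitting'' the contracted vertex $z$ into $\{a,b\}$ preserves connectedness of the branch set — which reduces to lifting walks through $z$ in $G_1$ to walks in $G_0$; the rest is routine bookkeeping.
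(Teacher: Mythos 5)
Your proof is correct and takes essentially the same approach as the paper's: the forward direction tracks which vertices of $G$ end up contracted onto each vertex of $H$ (the paper does this directly, you formalize it as an induction on the operation sequence and pull branch sets back across each step), and the reverse direction contracts each branch set and disposes of the rest. Your reverse direction is in fact slightly more careful than the paper's terse "contract all other edges arbitrarily," since by deleting non-tree, non-$f_{xy}$ edges first you avoid any ambiguity about what happens to material outside $\bigcup_x V(G_x)$.
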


\begin{proof}
Suppose $G$ has an $H$-minor. For each $x \in V(H)$, let $V(G_{x})$ be the set of vertices in $G$ which were contracted to obtain $x$. Then let $G_{x} = G[V(G_{x})]$. From the definition of contraction it follows that $G_{x}$ is connected. Additionally, since we started with an $H$-minor, if $xy \in E(H)$ there exists a vertex in $V(G_{x})$ which is adjacent to a vertex in $V(G_{y})$.

Conversely suppose we are given a set of disjoint subgraphs $\{G_{x} : x \in V(H)\}$. Then contract each subgraph to a distinct vertex. All other edges can be contracted arbitrarily and, if needed, other isolated vertices can be deleted. It is easy to see this results in an $H$-minor.
\end{proof}

We will refer to the set of subgraphs $\{G_{x} : x \in V(H)\}$ as a \textit{$H$-model of $G$} and the subgraph $G_{x}$ is the \textit{branch set} of $x$. Note for a connected graph $G$ and an $H$-model of $G$, without loss of generality  we can assume that $\bigcup_{x \in V(H)} V(G_{x})= V(G)$.

 Now we can naturally generalize our notion of graph minors. Let $X \subseteq V(G)$ and let $\pi : X \to V(H)$ be a one-to-one mapping. We say $G$ has a \textit{rooted $H$-minor on $X$ with respect to $\pi$} if there is an $H$-model of $G$ such that for all $v \in X$, we have $v \in G_{\pi(v)}$. If $G$ has a rooted $H$-minor on $X$ with respect to $\pi$, we will say $G$ has an $H(X)$-minor with respect to $\pi$, and we will call the vertices in $X$ the \textit{roots} or \textit{terminals} of the $H$-minor. For the application of reducibility, the terminals are taking the role of the $4$ on-shell external momenta.

 We note this definition is a little stricter than what we want from the point of view of reducibility. In general we will allow families of maps from $X$ to $H$, say $\pi_{1}, \pi_{2},\ldots,\pi_{n}$, such that if a graph $G$ has an $H(X)$-minor with respect to $\pi_{i}$, for any $i \in \{1,\ldots,n\}$, we will say that $G$ has an $H(X)$-minor with respect to $\pi_{1},\ldots,\pi_{n}$. For ease of notation, once the family of maps is defined for some graph $H$ and $X$, we will just say that $G$ has an $H(X)$-minor.

Now we can try and understand excluded minor theorems for rooted minors. So the question we are trying to solve is: if the size of $X$ is fixed, the graph $H$ is fixed, and the family of mappings $\pi_{i}: X \to V(H)$ is fixed, what is the structure of graphs without an $H(X)$-minor? We note that such problems have been studied in the literature, and even show up in the graph minors project (\cite{rootedk23theorem}), but they appear to be much less studied than standard excluded minor theorems.

We will survey some examples of excluded rooted minor theorems. Consider a $K_{3}(X)$-minor where $X = \{a,b,c\}$ and where the family of maps is the family of surjective maps from $X$ to $V(K_{3})$. Then we have the following theorem.

\begin{theorem}[\cite{rootedk3}]
For distinct vertices $a,b,c$ in a graph $G$, there is a $K_{3}(X)$-minor on $a,b,c$ unless for some vertex $v \in V(G)$, at most one of $a,b,c$ are in each component of $G - v$. 
\end{theorem}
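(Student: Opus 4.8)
The plan is to prove both directions; the ``only if'' part (a separating vertex forbids the rooted minor) is short, and the ``if'' part is the real content.

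\textbf{Easy direction and reduction.} If $v$ has the property that every component of $G-v$ contains at most one of $a,b,c$, then no $K_3(X)$-model $\{G_a,G_b,G_c\}$ can exist: the branch sets are pairwise disjoint, so two of them, say $G_a$ and $G_b$, avoid $v$, and being connected they lie inside single — necessarily distinct — components of $G-v$, so no edge of $G$ joins them, contradicting adjacency in the model. The same remark reduces the hard direction to the case that $G$ is connected: if $G$ is disconnected and $a,b,c$ are not all in one component then one of $a,b,c$ is itself a vertex of the kind in the statement, and if they all lie in one component $C$ then both properties in question depend only on $C$.

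\textbf{Finding a good block.} Assume $G$ connected with no vertex as in the statement. A non-cut vertex can never be such a vertex (its deletion leaves a connected graph still containing at least two terminals), so the hypothesis amounts to: every cut vertex $v$ has a component of $G-v$ holding at least two terminals. In the block--cut tree $T$, attach to each terminal its representing node (the block it lies in if it is not a cut vertex, its own cut-vertex node otherwise), and let $m$ be the median of these three nodes. If $m$ is a cut-vertex node, then $m$ is a forbidden vertex (no component of $G-m$ contains two terminals, since the three terminal-nodes lie in distinct components of $T-m$), a contradiction; so $m$ is a block-node $B^{\ast}$. The three directions of $T$ from $m$ toward $a,b,c$ single out three vertices $v_a,v_b,v_c\in V(B^{\ast})$ — the terminal $t$ itself when its node is $m$, otherwise the cut vertex of $B^{\ast}$ through which $T$ departs toward $t$ — and these are pairwise distinct (a short check: a coincidence would either identify two of $a,b,c$ or force two of the terminal-nodes onto the same side of $m$, contradicting that $m$ is the median). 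Hence $|V(B^{\ast})|\ge 3$, so $B^{\ast}$ is $2$-connected, and each $v_t$ is joined to $t$ in $G$ by a path meeting $B^{\ast}$ only at $v_t$.

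\textbf{The $2$-connected core and assembly.} It remains to show: a $2$-connected graph has a rooted $K_3$-minor on \emph{any} three distinct vertices $v_a,v_b,v_c$. Take two internally disjoint $v_a$--$v_b$ paths $P_1,P_2$ (Menger's theorem); if $v_c$ lies on $D:=P_1\cup P_2$, then $D$ is a cycle through all three terminals and its three arcs (each cut just short of the next terminal) are the branch sets. Otherwise attach $v_c$ to $D$ by two disjoint paths ending at distinct vertices $d_1,d_2$ of $D$, and distinguish whether $v_a,v_b$ lie on a common arc of $D$ between $d_1,d_2$ or on opposite arcs; in each case one carves out three pairwise disjoint connected pieces, one through each of $v_a,v_b,v_c$, with an edge between each pair. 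To finish in $G$, enlarge the branch set of $v_t$ by the entire part of $G$ pendant to $B^{\ast}$ at $v_t$ (which contains $t$); because distinct cut vertices of $B^{\ast}$ carry vertex-disjoint pendant parts — blocks being maximal $2$-connected — the three branch sets remain disjoint, and they now contain $a,b,c$, yielding the $K_3(X)$-minor.

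\textbf{Main obstacle.} The skeleton is short; the actual labour is the degenerate configurations — a terminal that already lies in $B^{\ast}$, a Menger path reduced to a single edge, an attachment vertex $d_i$ coinciding with $v_a$ or $v_b$, a vertex of $B^{\ast}$ that is simultaneously a terminal and a cut vertex — each of which must be folded into the case analysis of the $2$-connected core. I would keep this in check by permitting branch sets to be single vertices throughout, and, whenever two ``access vertices'' in $B^{\ast}$ threaten to collide, re-routing one of them using the $2$-connectivity of $B^{\ast}$, so that only a handful of essentially different cases remain.
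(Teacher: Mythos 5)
The paper does not supply a proof of this theorem; it is cited verbatim from \cite{rootedk3}, so there is nothing in the text to compare against. On its own merits, your argument is correct and is the natural route. The easy direction is a clean disjointness argument (two of the three branch sets avoid $v$, lie in connected components of $G-v$ that by hypothesis are distinct, and hence cannot meet an edge). The reduction to connected $G$ is sound: if the terminals are split across components, one of them is itself a forbidden vertex. For the hard direction, the block--cut tree observation is exactly right: the median of the three representing nodes cannot be a cut-vertex node (that vertex would be forbidden), so it is a block $B^\ast$; the three access vertices $v_a,v_b,v_c$ are pairwise distinct, so $B^\ast$ is $2$-connected; the theta-graph construction produces a $K_3$-model in $B^\ast$ on $v_a,v_b,v_c$; and attaching the pairwise disjoint pendant parts of $B^\ast$ at $v_a,v_b,v_c$ pulls the roots to $a,b,c$. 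One small gap in your stated justification of ``$v_a,v_b,v_c$ pairwise distinct'': besides the two cases you name (identifying two terminals, or two nodes falling on the same side of the median), there is a mixed case where $v_a=t_a$ is a terminal lying in $B^\ast$ and $v_b$ is a cut vertex of $B^\ast$; a coincidence here is ruled out because a terminal whose representing node is $B^\ast$ is, by your own convention, not a cut vertex and so cannot equal one. You use this fact implicitly and should record it when writing up the case analysis.
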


Jung's $2$-linkage Theorem states that given four vertices $(s_{1},t_{1},s_{2},t_{2})$ in a $4$-connected graph $G$, there is a $(s_{1},s_{2})$-path which is disjoint from a $(t_{1},t_{2})$-path unless $G$ is planar and $s_{1},t_{1},s_{2},t_{2}$ lie on a face in that cyclic order (\cite{Junglinkage}). This can be viewed as a partial characterization for the rooted minor problem on a graph $K_{2} + K_{2}$ where $V(K_{2} + K_{2}) = \{s_{1},s_{2},t_{1},t_{2}\}$ and $E(K_{2} + K_{2}) = \{s_{1}s_{2}, t_{1}t_{2}\}$, $X = \{a,b,c,d\}$, and the family of maps we are considering is the family where $\{a,b\}$ gets mapped surjectively to $\{t_{1},t_{2}\}$ and $\{c,d\}$ gets mapped subjectively to $\{s_{1},s_{2}\}$. Generalizations of the $2$-linkage theorem with a rooted minor flavour have been studied in \cite{Paulwollansthesis}. Various results have been found on the following question: Given a graph $G$ with $n$ vertices, what is the maximium number of edges $G$ can have before it must have an $H(X)$-minor (\cite{extermalrootedminorleif}, \cite{Paulwollansthesis})? 

This chapter will give some characterizations to assorted $H(X)$-minor classes which are relevant to reducibility, namely graphs without $K_{4}(X)$-minors, $W_{4}(X)$-minors, $K_{2,4}(X)$-minors and $L(X)$-minors (see Figure \ref{forbiddenminorssecond} for a picture of the graphs, Figure \ref{L(X)labelling} for a picture just of the graph $L$). As mentioned before, from the point of view of reducibility, the set $X$ represents the external momenta of a graph. First we outline a characterization of $K_{4}(X)$-minors which was proved in \cite{root}. 

For the purposes of this thesis, given a graph $G$ and $X = \{a,b,c,d\} \subseteq V(G)$, $G$ has a $K_{4}(X)$ minor if and only if $G$ has a $K_{4}(X)$-minor with respect to $\pi$, where $\pi$ is any surjective map from $X$ to $V(K_{4})$. Thus $G$ has a $K_{4}(X)$-minor if $G$ has a $K_{4}$-minor where each vertex of $X$ ends up in a distinct branch set. Alternatively, $G$ has a $K_{4}(X)$ minor if $G$ has a $K_{4}$-minor where none of $a,b,c$ or $d$ get contracted together (here we are enforcing that when we contract $a,b,c$ or $d$, that the resulting vertex is called $a,b,c$ or $d$, respectively). 
Before we can state Monroy and Wood's result on $K_{4}(X)$-minors, we need some definitions.

\begin{figure}
\begin{center}
\includegraphics[scale =0.4]{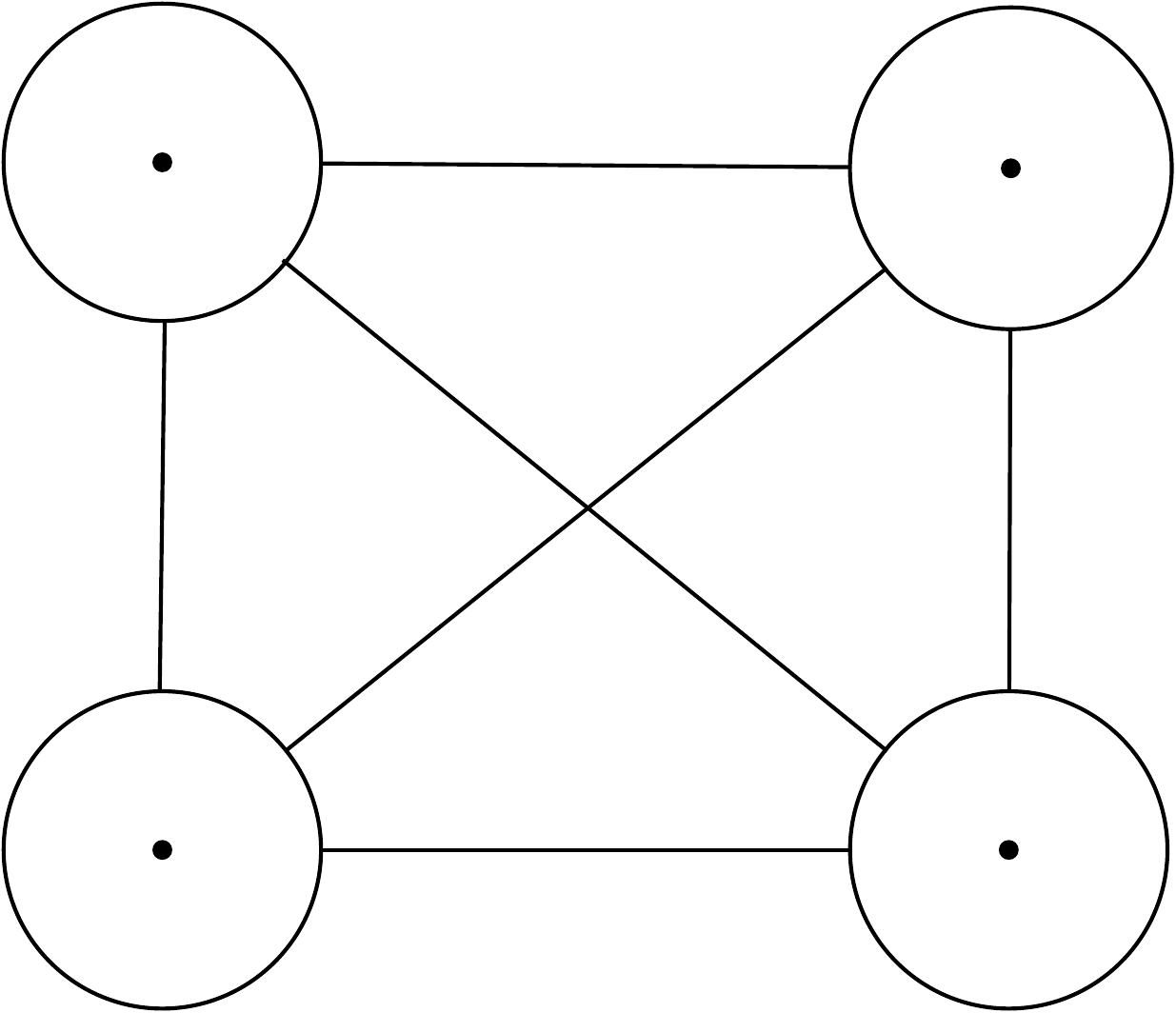}
\caption{A graph $G$ that has a model of a $K_{4}(X)$-minor. The circles represent connected subgraphs. The vertices represent vertices from $X$.}
\end{center}
\end{figure}

Let $H$ be a graph. The graph $H^{+}$ is defined in the following way: For each triangle  $T = \{x,y,z\}$  in $H$, let $F_{T}$ be a clique of arbitrary size such that all vertices in $F_{T}$ are adjacent to every vertex in $T$ and no other vertices of $H$. Let $F$ be the disjoint union of $F_{T}$ for all triangles $T$. Then we define $V(H^{+}) = V(H) \cup V(F)$, and $E(H^{+}) =  E(H) \cup E(F) \cup \{xy | x \in V(T), \text{T a triangle}, y \in F_{T}\}$. Note that given $H$ and $F$, the graph $H^{+}$ is uniquely defined so we will frequently use the notation $H^{+} = (H,F)$. Note if $H$ is a planar graph, it is said that the graph $H^{+}$ admits a \textit{flat embedding}.  

Now, consider a planar graph $H$ where the outerface is a $4$-cycle which we will call $C_{4}$, and every internal face of $H$ is a triangle, and every triangle is a face. Let $V(C_{4}) = \{x_{1},x_{2},x_{3},x_{4}\}$. Then we call the graph $H^{+}$ an \textit{$\{x_{1},x_{2},x_{3},x_{4}\}$-web}. Now we can state the excluded $K_{4}(X)$-minor Theorem.

\begin{theorem}[Monroy, Wood, \cite{root}]
\label{k4free}
Let $G$ be a graph and $X = \{a,b,c,d\} \subseteq V(G)$. Then either $G$ has a $K_{4}(X)$-minor or $G$ is the spanning subgraph of a graph belonging to one of the following six classes of graphs:

\begin{itemize}

\item{Class $\mathcal{A}$: Let $H$ be the graph with vertex set $V(H) = \{a,b,c,d,e\}$ and with edge set $E(H) = \{ae,ad,be,bd,ce,cd,de\}$.  Class $\mathcal{A}$ is the set of all graphs $H^{+}$.}

\item{Class $\mathcal{B}$: Let $H$ be the graph with vertex set $V(H) = \{a,b,c,d,e,f\}$ and with edge set $E(H) = \{ae,af,be,bf,ce,cf,de,df\}$. Class $\mathcal{B}$ is the set of all graphs $H^{+}$.}

\item{Class $\mathcal{C}$: Let $H$ be the graph such that $V(H) = \{a,b,c,d,e,f,g\}$ and with edge set $E(H) = \{ae,ag,be,bg,cf,cg,df,dg,ef,eg,fg\}$. Class $\mathcal{C}$ is the set of all graphs $H^{+}$.}

\item{Class $\mathcal{D}$: The set of all $\{a,b,c,d\}$-webs.}

\item{Class $\mathcal{E}$: Let $H'$ be a planar graph with a $4$-cycle on the outerface and every internal face is a triangle, and every triangle is a face. Let $V(C) = \{c,d,e,f\}$ and suppose they appear in that order on $C$. Let $H$ be the graph with vertex set $V(H) = V(H') \cup \{a,b\}$ and edge set $E(H) = E(H') \cup \{ae,af,be,bf\}$. Class $\mathcal{E}$ is the set of all graphs $H^{+}$.}

\item{Class $\mathcal{F}$: Let $H'$ be a planar graph with a $4$-cycle on the outerface and every internal face is a triangle, and every triangle is a face. Let $V(C) = \{e,f,g,h\}$ and suppose they appear in that order in $H'$. Let $H$ be the graph with vertex set $V(H) = V(H') \cup \{a,b,c,d\}$ and edge set $E(H) = E(H') \cup \{ae,af,be,bf,cg,ch,dg,dh\}$. Class $\mathcal{E}$ is the set of all graphs $H^{+}$.}

\end{itemize} 
\end{theorem}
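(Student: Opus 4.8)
The plan is to follow the usual template for an excluded-minor theorem: pass to an edge-maximal graph, peel off the clique gadgets encoded by the $H^{+}$ construction, classify the resulting highly connected ``core'', and then reassemble. Since having a $K_4(X)$-minor is preserved under adding edges, every $K_4(X)$-minor-free graph is a spanning subgraph of an edge-maximal one, so it suffices to prove that every edge-maximal graph with no $K_4(X)$-minor is isomorphic to a member of one of the six classes $\mathcal{A},\ldots,\mathcal{F}$. Throughout I would work with the $H$-model description of Proposition \ref{branchsets}: a $K_4(X)$-minor is a system of four disjoint connected branch sets, one containing each of $a,b,c,d$, that are pairwise joined by an edge.

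The next step is a connectivity reduction. Suppose $(A,B)$ is a separation of an edge-maximal $G$ with $|A\cap B|\le 3$ and at most one terminal in $B\setminus A$. Then edge-maximality forces $A\cap B$ to be a clique, and a connected piece attached to a clique of size at most $3$ and carrying at most one terminal contracts to a single vertex adjacent to at most three mutually adjacent vertices; such a vertex can never be the branch set of a new terminal, so it contributes nothing to $K_4(X)$-minors elsewhere in $G$. Hence, without changing whether $G$ has a $K_4(X)$-minor, every such piece may be replaced by the clique-gadget $F_T$ that the $H^{+}$ construction hangs on the triangle $T = A\cap B$. Iterating, I may assume $G = H^{+}$ for a ``reduced core'' $H$ admitting no separation of order at most $3$ with at most one terminal strictly on the small side. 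A finite case check then handles cores $H$ with few vertices and produces exactly classes $\mathcal{A}$, $\mathcal{B}$, $\mathcal{C}$.

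It remains to analyse the large cores, and here the first claim is that $H$ must be planar. If not, $H$ contains a subdivision of $K_5$ or $K_{3,3}$; since each of these has a $K_4$-minor, and using the connectivity of $H$ together with Menger's theorem (Corollary \ref{XYpaths}), one routes four disjoint paths from $a,b,c,d$ to the four branch sets of such a $K_4$-model, and absorbing each path into its target branch set yields a $K_4(X)$-minor. I expect this routing argument — in particular tracking where the terminals sit relative to the Kuratowski subgraph and relative to the gadget-triangles, and making it go through at the lowest connectivity left after the reduction — to be the main obstacle. Once $H$ is known to be planar, I would fix an embedding and invoke the Jordan Curve Theorem together with Jung's $2$-linkage theorem \cite{Junglinkage}: unless the four terminals lie either at the corners of a quadrilateral face whose interior is triangulated, or split as two ``apexes'' over one boundary edge together with two corners, or two apexes over each of two opposite boundary edges, there is no planar obstruction to simultaneously linking $a,b,c,d$ in the $K_4$ pattern, and a $K_4(X)$-minor is found. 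Verifying that the three surviving arrangements are genuinely $K_4(X)$-minor-free, again via the Jordan Curve Theorem, identifies the planar cores as the bases of classes $\mathcal{D}$, $\mathcal{E}$, $\mathcal{F}$; reinstating the $H^{+}$-gadgets then yields the six classes. The rooted $K_3$-minor theorem of \cite{rootedk3} and Wagner's theorem \cite{WagnersTheorem} serve as auxiliary tools — respectively for controlling how triples of terminals can be connected and for organising the planarity test — and the whole argument can be arranged as a splitter-theorem-style induction on $|V(H)|+|E(H)|$.
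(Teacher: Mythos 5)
First, note that the thesis does not actually prove Theorem~\ref{k4free}; it is imported wholesale from Fabila-Monroy and Wood \cite{root} and used as a black box (only the two corollaries are restated). So there is no internal proof to measure your reconstruction against, and I can only judge it on its own terms.

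There is a genuine gap in your reduction step. You assert that a piece $G[B]$ hanging off a cut of order at most three and ``carrying at most one terminal'' may, without affecting the existence of a $K_4(X)$-minor, be replaced by the clique-gadget $F_T$. That is fine when $B\setminus A$ contains \emph{no} terminal: contracting the piece yields a vertex dominated by the clique $A\cap B$, which is exactly what $F_T$ encodes and is harmless for rooted minors. But when $B\setminus A$ contains a terminal, say $a$, the substitution destroys the terminal --- in every class $\mathcal{A}$--$\mathcal{F}$ the roots $a,b,c,d$ are vertices of the core $H$, never of an $F_T$ gadget --- and it also changes the answer: contracting a terminal-carrying piece to a single vertex can create a $K_4(X)$-minor where $G$ had none, because the contracted branch set for $a$ becomes adjacent to all of $A\cap B$, an adjacency the original piece need not provide. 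The correct move in this case is not an $F_T$-replacement but a contraction of $B$ onto a single vertex \emph{playing the role of} $a$, with the adjacencies it acquires tracked carefully; these two reductions are different and must not be conflated, and the classes $\mathcal{A}$, $\mathcal{C}$ arise precisely from the terminal-carrying small sides, so the distinction is not cosmetic.

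The second, larger gap is the central claim that a non-planar reduced core forces a $K_4(X)$-minor. After your reduction the core is \emph{not} $4$-connected --- classes $\mathcal{B}$, $\mathcal{C}$, $\mathcal{E}$, $\mathcal{F}$ all contain $2$-separations splitting the terminals $2$--$2$, and these survive your reduction since neither side has at most one terminal --- so Menger/Corollary~\ref{XYpaths} does not hand you four pairwise-disjoint terminal-to-branch-set paths, and in low connectivity such a routing can genuinely fail. You flag this as ``the main obstacle,'' which is honest, but it is the theorem: without a worked-out routing lemma that is robust under $2$-separations with a $2$--$2$ terminal split, the argument does not close. Jung's $2$-linkage theorem is also not quite the right tool as invoked, since it assumes $4$-connectivity and characterises disjoint \emph{paths}, not pairwise-adjacent branch sets; translating between the two is exactly the sort of case analysis that makes the actual proof in \cite{root} long. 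So the proposal is a plausible high-level template, but neither of its two load-bearing lemmas is established.
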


Figure \ref{k4freepic} gives a pictorial representation of the graphs $H$ in the above classes. It is easily seen that the result simplifies significantly when we restrict to $3$-connected graphs.

\begin{corollary}[\cite{root}]
\label{k4(x)3conn}
Let $G$ be a $3$-connected graph and $X = \{a,b,c,d\} \subseteq V(G)$. Then either $G$ has a $K_{4}(X)$-minor or $G$ is a spanning subgraph of a Class $\mathcal{D}$ graph. In other words, $G$ is a spanning subgraph of an $\{a,b,c,d\}$-web.
\end{corollary}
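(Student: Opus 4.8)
The plan is to derive Corollary \ref{k4(x)3conn} directly from Theorem \ref{k4free} by showing that, once $G$ is assumed to be $3$-connected, the only one of the six classes of which $G$ can be a spanning subgraph is Class $\mathcal{D}$. So suppose $G$ does not have a $K_{4}(X)$-minor; by Theorem \ref{k4free}, $G$ is a spanning subgraph of some $H^{+}$ lying in one of the classes $\mathcal{A},\ldots,\mathcal{F}$. The key observation is that a spanning subgraph inherits every separation of the ambient graph: if $H^{+}$ has a proper $2$-separation $(P,Q)$, then since $V(G)=V(H^{+})$ and $E(G)\subseteq E(H^{+})$, the pair $(P,Q)$ is also a proper $2$-separation of $G$, which contradicts $3$-connectivity. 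Hence it suffices to exhibit a proper $2$-separation in every member of Classes $\mathcal{A},\mathcal{B},\mathcal{C},\mathcal{E},\mathcal{F}$.

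For each of these five classes I would locate a vertex of the underlying graph $H$ of degree $2$ whose two neighbours form a cut in $H^{+}$. Concretely: in Class $\mathcal{A}$ the vertices $d,e$ are adjacent to all of $a,b,c$, so $\{d,e\}$ separates $a$ (together with the clique $F_{\{a,d,e\}}$ attached to the triangle $ade$) from $b$ and $c$; in Class $\mathcal{B}$ the graph $H$ has no triangles, so $H^{+}=H=K_{2,4}$ with parts $\{e,f\}$ and $\{a,b,c,d\}$, and $\{e,f\}$ is a cut; in Class $\mathcal{C}$ one checks that $g$ is a dominating vertex and that $\{e,g\}$ separates $a$ and its attached clique from $c,d,f$; in Classes $\mathcal{E}$ and $\mathcal{F}$ the added vertex $a$ has neighbourhood exactly $\{e,f\}$, and $\{e,f\}$ separates $a$ from the remaining vertices of the planar part $H'$, which is nonempty since $H'$ contains the whole outer $4$-cycle. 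In every case one must note that the extra clique vertices $F_{T}$ are, by construction, adjacent only to the vertices of the triangle $T$ and to one another, so they never bridge the separation, and that both sides of the separation are nonempty because $|V(H)|\ge 5$ in each of these five classes. This produces a proper $2$-separation of $H^{+}$ in each case, forcing $G$ into Class $\mathcal{D}$, i.e.\ making $G$ a spanning subgraph of an $\{a,b,c,d\}$-web.

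I do not expect a genuine obstacle here: the argument is essentially bookkeeping over the five explicit small graphs $H$ appearing in the statement of Theorem \ref{k4free} (depicted in Figure \ref{k4freepic}). The only points needing care are the (immediate) remark that a proper $2$-separation of $H^{+}$ persists in the spanning subgraph $G$, and the routine verification that the cut sets identified above really do disconnect $H^{+}$ after the attached cliques $F_{T}$ are accounted for. An alternative phrasing would be to observe that a $3$-connected spanning subgraph of a web forces the web itself to be $3$-connected, but the separation argument above is the most direct route and avoids needing that observation.
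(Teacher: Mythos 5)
Your proposal is correct, and it supplies a proof that the paper itself omits: the text merely remarks that the corollary ``is easily seen'' from Theorem \ref{k4free} and cites \cite{root}. Your key observation --- that a proper $2$-separation of $H^{+}$ persists in any spanning subgraph $G$ (same vertex set, fewer edges), contradicting $3$-connectivity --- is exactly the right mechanism, and your case-by-case identification of a $2$-cut in each of Classes $\mathcal{A},\mathcal{B},\mathcal{C},\mathcal{E},\mathcal{F}$ checks out: $\{d,e\}$ in $\mathcal{A}$, $\{e,f\}$ in $\mathcal{B}$ (where $H^{+}=K_{2,4}$ since $H$ is triangle-free), $\{e,g\}$ in $\mathcal{C}$, and $\{e,f\}$ in $\mathcal{E}$ and $\mathcal{F}$; in each case the attached cliques $F_{T}$ meet $V(H)$ only in the triangle $T$ and so cannot bridge the cut, and both sides are nonempty. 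One small imprecision: in Classes $\mathcal{E}$ and $\mathcal{F}$ the vertex $a$ lies in the triangle $aef$ (since $ef$ is an edge of the outer $4$-cycle), so $a$ may have further neighbours in $F_{aef}$ inside $H^{+}$; its $H^{+}$-neighbourhood is not literally $\{e,f\}$, but since $F_{aef}$ attaches only to $a,e,f$ the set $\{e,f\}$ still separates $\{a\}\cup F_{aef}$ from the rest, so the conclusion stands. The closing remark about ``a $3$-connected spanning subgraph of a web forcing the web to be $3$-connected'' is a bit of a red herring --- you are eliminating the other classes, not establishing anything about webs --- but it does no harm to the argument.
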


When we have $3$-connected planar graphs $G$, this result can be simplified even further. 

\begin{corollary}[\cite{root}]
\label{k4(x)planar}
Let $G$ be a $3$-connected planar graph and $X = \{a,b,c,d\} \subseteq V(G)$.  Then $G$ does not have a $K_{4}(X)$-minor if and only if all the vertices of $X$ lie on the same face.
\end{corollary}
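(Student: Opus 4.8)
The plan is to derive the corollary from the structural description of $3$-connected graphs with no $K_4(X)$-minor, Corollary \ref{k4(x)3conn}, using only elementary facts about plane graphs. For the ``only if'' direction (no $K_4(X)$-minor $\Rightarrow$ $a,b,c,d$ cofacial), I would apply Corollary \ref{k4(x)3conn}: since $G$ is $3$-connected and has no $K_4(X)$-minor, $G$ is a spanning subgraph of an $\{a,b,c,d\}$-web $W=(H',F)$, where $H'$ is a plane graph whose outer face is the $4$-cycle $C$ through $a,b,c,d$ (in this cyclic order), every internal face a triangle, and $W$ is obtained by inserting a clique $F_T$ (possibly empty) completely joined to each triangle $T$ of $H'$. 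In $H'$ the vertices $a,b,c,d$ already lie on a common face, so the work is to transfer this to $G$. I would first observe that since every $v\in F_T$ has $N_W(v)\subseteq T\cup(F_T\setminus\{v\})$, the set $T$ is a $3$-cut of $G$ separating $V(F_T)$ from the rest of $G$; because $G$ is planar, the ``blob'' $G[T\cup V(F_T)]$ must be drawable inside a single face of $G-V(F_T)$, which forces $T$ onto a common face of the core and lets one re-insert each $F_T$-blob into the triangular face of $H'$ it came from. Since passing from $H'$ to the spanning subgraph $G[V(H')]\subseteq H'$ only deletes edges (merging faces, never removing a vertex from a face it was on) and since inserting the blobs only touches internal faces, the outer region bounded by $C$ survives in the resulting plane embedding of $G$ as a face incident to all of $a,b,c,d$.

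For the ``if'' direction (cofacial $\Rightarrow$ no $K_4(X)$-minor) I would argue by contradiction. Assume $a,b,c,d$ lie on a common face of $G$, chosen to be the outer face, and suppose $G$ has a $K_4(X)$-model $\{G_a,G_b,G_c,G_d\}$. Contracting each branch set onto its root and then deleting and contracting the remaining edges and vertices yields a plane drawing of $K_4$ on $\{a,b,c,d\}$; contracting an edge of a plane graph keeps an outer-face vertex on the outer face (and a root keeps its name), and no deletion ever removes $a,b,c,d$, so in this drawing all four of $a,b,c,d$ are incident to the outer face. But $K_4$ is edge-maximal planar, so in every plane embedding every face, including the outer one, is a triangle and hence has only three vertices on its boundary — contradicting that $a,b,c,d$ all lie on it.

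The step I expect to be the main obstacle is the transfer in the ``only if'' direction: an $\{a,b,c,d\}$-web is in general not planar (it only admits a flat embedding), so one must use the hypothesis that $G$ itself is planar to tame the clique-blobs $F_T$ and be sure they can be embedded inside the triangular faces of the plane core without pushing $a,b,c,d$ off their common face; writing this carefully is the delicate point. If it proves awkward, an alternative is to bypass Corollary \ref{k4(x)3conn} and prove the ``only if'' direction directly by planar duality, working in the unique plane embedding of the $3$-connected graph $G$, associating to each $x\in X$ the facial cycle $C_x^{\ast}$ of its incident faces in the dual $G^{\ast}$, noting that ``$a,b,c,d$ not cofacial'' means $C_a^{\ast}\cap C_b^{\ast}\cap C_c^{\ast}\cap C_d^{\ast}=\emptyset$, and extracting from this four pairwise-adjacent disjoint connected subgraphs of $G$; but I expect the web-based argument above to be the shorter route.
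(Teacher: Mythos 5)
Your proof is essentially correct, and since the paper only cites this corollary from \cite{root} rather than proving it, there is no in-text proof to compare against; I evaluate your argument on its own and against the closest material in the paper.

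Your ``if'' direction is clean and elementary: contraction of a connected plane subgraph keeps any of its outer-face vertices on the outer face, deletion never removes a vertex from a face, so any $K_4(X)$-model would yield a plane drawing of $K_4$ with all four vertices on one face, contradicting that every face of $K_4$ is a triangle. One small point worth making explicit is the paper's convention that the branch sets of a model can be taken to partition $V(G)$, so that after contracting the four branch sets you have exactly four vertices and no residual vertices to account for; with that, the argument is airtight.

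For the ``only if'' direction, the structure you use (derive the web structure from Corollary~\ref{k4(x)3conn}, then transfer cofaciality from the plane core to $G$) is exactly the route the paper itself takes elsewhere: Observation~\ref{planarface} is the technical lemma you are reinventing, and it does the blob re-insertion you describe. The genuinely delicate step, which you correctly flag, is the claim that each blob $G[T\cup V(F_T)]$ admits a plane embedding with $T$ on its outer boundary so it can be glued into the interior triangular face of the core. This does hold, but it needs a sentence: because $G$ is planar and $N_G(V(F_T))\subseteq T$, each component of $G[V(F_T)]$ sits inside a single face of $G - V(F_T)$ whose boundary meets $T$, so the inherited plane drawing of the blob has $T$ accessible from one face. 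You should also invoke Whitney's theorem explicitly at the end: you construct \emph{one} plane embedding of $G$ with $a,b,c,d$ cofacial, and $3$-connectivity is what lets you conclude this holds in \emph{the} embedding. With those two sentences added, the only-if direction is complete. Your dual-graph alternative is plausible in spirit but vaguer than the web route; the web route is the one that matches the paper's machinery and is the one I would keep.
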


\begin{figure}
\centering
\includegraphics[scale = 0.5]{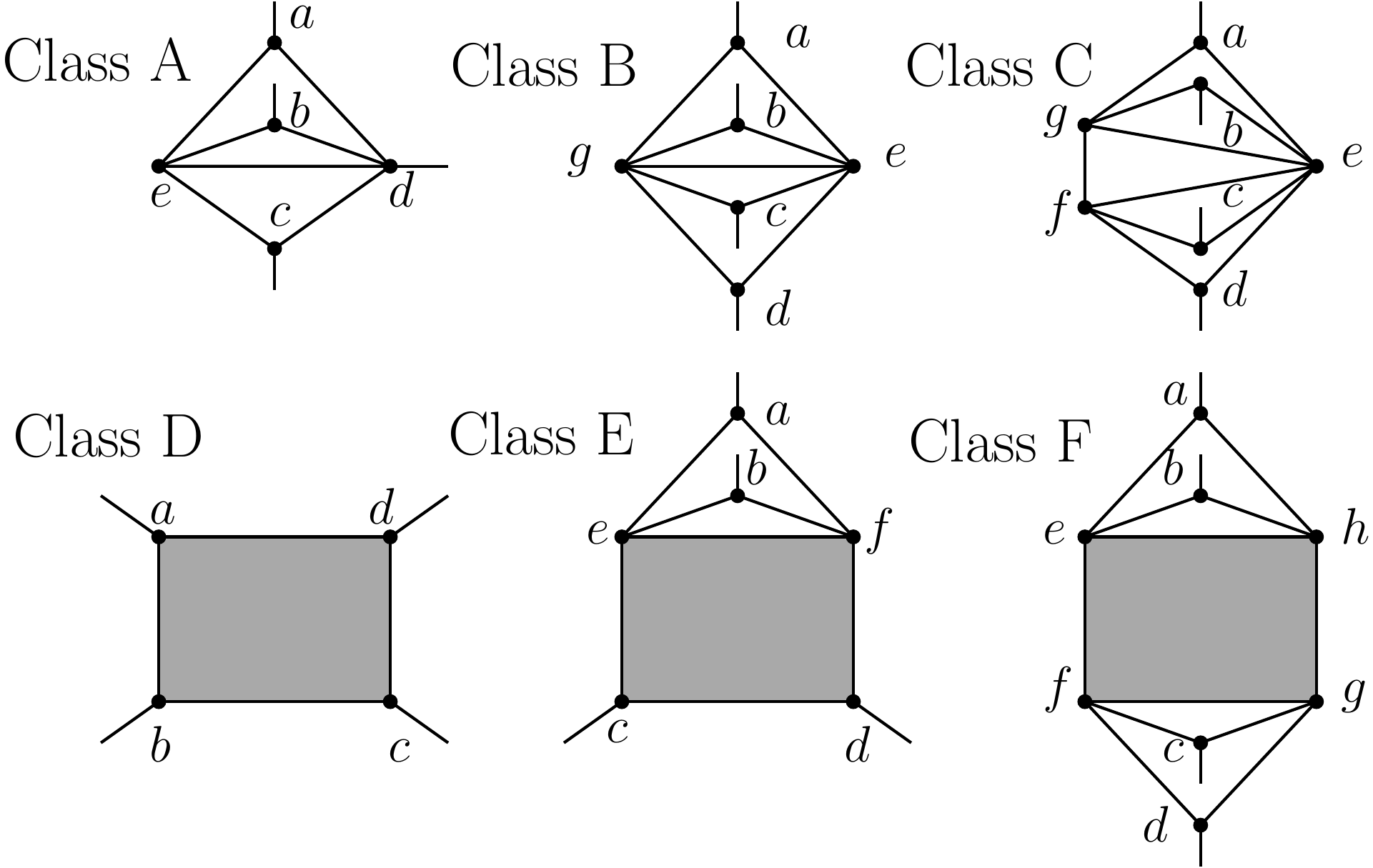}
\caption{The graphs $H$ which appear in Theorem \ref{k4free}. The cliques in the triangles are omitted. The shaded sections are webs.}
\label{k4freepic}
\end{figure}

We will use this characterization as a starting point for most of the upcoming minor characterizations. 

\section{Connectivity reductions}

In this section we will show that if I have a graph $G$, and I want to determine an $H(X)$-minor, then in some sense, I can assume that the connectivity of $G$ is the same as the connectivity of $H$. Throughout this section, if we do not say what the underlying family of maps $ \pi_{1},\pi_{2},\ldots,\pi_{n}$ is, it is assumed that we have an arbitrary family of maps. We make the following easy observations.

\begin{observation}
\label{connectedfromdisconnected}
Let $H$ be a connected graph. Let $G$ be a graph and $X \subseteq V(G)$. Then $G$ has an $H(X)$-minor if and only if $X$ is contained in a connected component of $G$, and the connected component has an $H(X)$-minor. 
\end{observation}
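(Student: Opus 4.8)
The plan is to use the branch-set reformulation of rooted minors from Proposition~\ref{branchsets} together with the hypothesis that $H$ is connected. The reverse implication is immediate: if $X$ is contained in a connected component $C$ of $G$ and $C$ has an $H(X)$-minor, then since $C$ is a subgraph of $G$ containing $X$, any $H(X)$-model of $C$ is also an $H(X)$-model of $G$, so $G$ has an $H(X)$-minor.

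For the forward implication, suppose $G$ has an $H(X)$-minor. Then there is a map $\pi$ from the underlying family and an $H$-model $\{G_{x} : x \in V(H)\}$ of $G$ with $v \in V(G_{\pi(v)})$ for every $v \in X$. First I would argue that $W := \bigcup_{x \in V(H)} V(G_{x})$ induces a connected subgraph of $G$: each $G_{x}$ is connected by definition of a model, and whenever $xy \in E(H)$ there is an edge of $G$ with one endpoint in $V(G_{x})$ and one in $V(G_{y})$; since $H$ is connected, one can pass between any two branch sets through such edges, so $G[W]$ is connected. Hence $W$ is contained in a single connected component $C$ of $G$. Because every vertex of $X$ lies in some branch set, $X \subseteq W \subseteq V(C)$, which gives the first part of the conclusion. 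Finally, all vertices and edges of the model $\{G_{x}\}$ lie inside $C$, so the same model is an $H$-model of $C$ with the same roots, and therefore $C$ has an $H(X)$-minor.

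There is essentially no obstacle: the only step that genuinely uses the hypothesis is the one forcing all branch sets into a common component, which would fail for disconnected $H$. I would also note explicitly that the argument does not depend on which map of the family witnesses the minor, since the component $C$ and the restricted model are extracted directly from one fixed witnessing model, so the equivalence holds verbatim for an arbitrary family of maps.
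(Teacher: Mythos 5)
Your proof is correct and follows essentially the same approach as the paper: both arguments use the connectedness of $H$ to force all branch sets of the model into a single connected component of $G$. Your version is phrased directly (showing $W = \bigcup_x V(G_x)$ is connected) rather than by contradiction, but the underlying idea is identical.
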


\begin{proof}
If $X$ is contained in a connected component of $G$ and that component has an $H(X)$-minor then immediately $G$ has an $H(X)$-minor.

Conversely, suppose $\{G_{x} | x \in V(H)\}$ is a model of an $H(X)$-minor in $G$. Let $u,v \in X$ such that $u$ and $uv$ lie in different connected components of $G$. Let $u \in G_{x}$ and $v \in G_{y}$ respectively.  Since $H$ is connected, there is a $(x,y)$-path in $H$. Then there is a sequence of branch sets $G_{x},G_{x_{1}}, G_{x_{2}}, \ldots,G_{x_{n}},G_{y}$ such that $G_{x}$ has a vertex which is adjacent to a vertex in $G_{x_{1}}$, $G_{x_{i}}$ has a vertex which is adjacent to a vertex in $G_{x_{i+1}}$ for all $i \in \{1,\ldots,n-1\}$ and $G_{x_{n}}$ has a vertex adjacent to $G_{y}$. Then since each branch set is connected, each of $G_{x}$,$G_{x_{1}} \ldots,G_{y}$ are in the same connected component. But this contradicts that we assumed $x$ and $y$ were in different components. Therefore all vertices of $X$ lie in the same component. Call this component $C$. Notice that if there is a branch set $G_{z}$ which is not contained in $C$, then it has no vertices in $C$ since branch sets are connected. By essentially the same argument as above, this implies that there is no path from $x$ to $z$ in $H$, which contradicts $H$ being connected, and so $C$ has an $H(X)$-minor.
\end{proof}

\begin{observation}
Let $H$ be a simple graph. Let $G$ be a graph and $X \subseteq V(G)$. Then $G$ has an $H(X)$-minor if and only if the graph obtained from $G$ by removing all parallel edges and loop edges has an $H(X)$-minor.  
\end{observation}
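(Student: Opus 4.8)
The plan is to prove both directions directly from the model characterization of rooted minors (Proposition \ref{branchsets}). Write $G'$ for the graph obtained from $G$ by deleting every loop and, from each class of parallel edges, all but one representative edge. The forward implication is immediate: $G'$ is a spanning subgraph of $G$, so any $H(X)$-model in $G'$ is in particular an $H(X)$-model in $G$, and this is uniform over the underlying family of maps.

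For the reverse implication, suppose $\{G_x : x \in V(H)\}$ is an $H(X)$-model of $G$ witnessing $\pi_i$ for one of the maps $\pi_i$ in the family. I would set $G_x'$ to be the subgraph of $G'$ with $V(G_x') = V(G_x)$ and $E(G_x') = E(G_x) \cap E(G')$; that is, keep the same vertex sets but discard any loop or redundant parallel edge that was used inside a branch set. The key step is checking that each $G_x'$ is still connected. Loops never contribute to connectivity, so their removal is harmless; and whenever a branch set used a parallel edge $uv$ that does not survive in $G'$, the retained representative of that parallel class joins the same pair $u,v$ and does lie in $G'$, so replacing each deleted edge by the retained representative preserves connectedness on the unchanged vertex set. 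Since the vertex sets are untouched, the $G_x'$ remain pairwise disjoint, and every root $v \in X$ still satisfies $v \in V(G_{\pi_i(v)}')$.

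Finally I would verify the adjacency condition. If $xy \in E(H)$, the $G$-model provides $a \in V(G_x)$ and $b \in V(G_y)$ with $ab \in E(G)$. Because $G_x$ and $G_y$ are disjoint we have $a \neq b$, so this edge is not a loop; it is therefore a member of some parallel class of $G$, whose representative joins $a$ and $b$ in $G'$. Hence $a$ and $b$ are adjacent in $G'$, and $\{G_x' : x \in V(H)\}$ is an $H(X)$-model of $G'$ with respect to $\pi_i$; running this over all maps in the family shows $G'$ has an $H(X)$-minor. There is essentially no obstacle here: the only point needing a moment's care is that $H$ being simple, together with disjointness of branch sets, is what forces the witnessing edge between two branch sets to have distinct endpoints, so that the loop case cannot occur.
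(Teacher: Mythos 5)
Your proof is correct and follows the same model-based argument as the paper; the paper's version is terser, asserting directly that the original branch sets serve as a model in $G'$ because $H$ simple means only single edges between branch sets are needed. Your extra step of explicitly replacing each branch set $G_x$ by $G'_x = G_x \cap G'$ and checking that connectivity survives the removal of loops and redundant parallel edges is a worthwhile clarification that the paper leaves implicit.
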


\begin{proof}
Let $\{G_{x} | x \in V(H)\}$ be a model of an $H$-minor in $G$. Since $H$ is simple, all the required adjacencies between branch sets are single edges. Therefore $\{G_{x} | x \in V(H)\}$ is a model of an $H$-minor in $G'$, where $G'$ is the graph obtained from $G$ by deleting all parallel and loop edges. Conversely, if $G'$ has an $H(X)$-model, this immediately implies that $G$ has an $H(X)$-model.
\end{proof}

Then since all of our forbidden minors we will discuss are simple and connected,  we will assume all graphs are simple and connected. Notice that in observation \ref{connectedfromdisconnected}, since $H$ was connected, we could restrict our attention to the connected components of $G$. There is a more general principle at work here. If $H$ is $k$-connected, then essentially we can reduce the problem of determining if $G$ has an $H$-minor to looking at the ``$k$-connected-components'' of $G$. The following lemma is a well known example of this.

\begin{lemma}
\label{lowordersepsminors}
Let $H$ be a $3$-connected graph. Let $G$ be a $k$-sum of $G_{1}$ and $G_{2}$ where $k \in \{0,1,2\}$. Then $G$ has an $H$-minor if and only if $G_{1}$ or $G_{2}$ has an $H$-minor. 
\end{lemma}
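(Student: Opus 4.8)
The ``if'' direction should be immediate, so I plan to dispatch it in one sentence and spend the effort on the converse. Using the remark in the excerpt that a $k$-sum which deletes edges of the identified clique is equivalent, up to parallel edges, to one which retains them, one may assume $G$ contains a complete $k$-clique on the identified vertex set $S$, so that $G_1 = G[A]$ and $G_2 = G[B]$ are both subgraphs of $G$; since $H$ is simple, an $H$-minor of either summand is then an $H$-minor of $G$. The real content is the ``only if'' direction, and the hypothesis I expect to be decisive is the $3$-connectivity of $H$.

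So suppose $G$ has an $H$-minor and fix an $H$-model $\{G_x : x \in V(H)\}$ in $G$. Let $(A,B)$ be the separation coming from the $k$-sum, $S = A\cap B$, so $|S| = k \le 2$ and $G$ has no edge between $A\setminus S$ and $B\setminus S$. The first step is the observation that, since each branch set $G_x$ is connected, if $V(G_x)$ meets both $A\setminus S$ and $B\setminus S$ then $V(G_x)$ must contain a vertex of $S$ (a path inside $G_x$ from one side to the other is forced through $S$). Set $T = \{x\in V(H) : V(G_x)\cap S \neq\emptyset\}$; because the branch sets are pairwise disjoint, $|T|\le|S|\le 2$, and every branch set $G_y$ with $y\notin T$ is contained entirely in $A\setminus S$ or entirely in $B\setminus S$.

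Next I would partition $V(H)\setminus T$ into $V_A$ (branch sets inside $A\setminus S$) and $V_B$ (branch sets inside $B\setminus S$). If $x\in V_A$ and $y\in V_B$ were adjacent in $H$, the edge of $G$ realizing this adjacency would join $A\setminus S$ to $B\setminus S$, which is impossible; hence $H - T$ has no edge between $V_A$ and $V_B$. Since $H$ is $3$-connected and $|T|\le 2$, the graph $H - T$ is connected, which forces one of $V_A, V_B$ to be empty. This is the crucial use of $3$-connectivity, and it is the step I regard as the heart of the proof; say $V_B = \emptyset$, so every branch set not indexed by $T$ lies in $A$.

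It then remains to push the at most two branch sets indexed by $T$ into $G_1 = G[A]$. For $x\in T$ I would replace $G_x$ by $G_x' := G_x[V(G_x)\cap A]$, together with the clique edge $s_1s_2 \in E(G_1)$ when $k=2$ and that edge is needed. One checks $G_x'$ is connected: a detour of a path of $G_x$ into $B\setminus S$ enters and leaves through $V(G_x)\cap S$, hence is removable when $|V(G_x)\cap S|=1$ and can be replaced by the edge $s_1s_2$ when $|V(G_x)\cap S|=2$. For the adjacencies: an edge of $G$ from $G_x$ ($x\in T$) to some $G_y$ with $y\notin T\subseteq A$ must have its $G_x$-endpoint in $A$, so it survives; and an edge joining two $T$-branch sets which happens to lie entirely in $B$ can be replaced by the clique edge $s_1s_2$, since the two retracted branch sets then contain $s_1$ and $s_2$ respectively. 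Thus $\{G_x' : x\in V(H)\}$ is an $H$-model inside $G_1$, so $G_1$ has an $H$-minor; the cases $k=0$ and $k=1$ are the degenerate specializations (respectively $T=\emptyset$ with $G$ disconnected, or a single branch set straddling the cut-vertex). I expect the fiddliest part to be the adjacency bookkeeping in this last paragraph — particularly the edge buried in $B$ between two $S$-meeting branch sets — but that is routine once the connectivity argument has already trapped all the free branch sets on one side.
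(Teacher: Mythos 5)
The paper cites this lemma as well known and gives no proof, so there is no argument to compare against; your proof is correct and is the standard one. The essential observations are all in place: the set $T$ of branch sets meeting $S$ has size at most $|S|\le 2$; every other branch set lives wholly on one side of the cut; the absence of edges between $A\setminus S$ and $B\setminus S$ combined with $3$-connectivity (so $H-T$ is connected) forces all $T$-free branch sets onto one side; and the truncated branch sets $G_x'=G_x[V(G_x)\cap A]$ can be reconnected using the clique edge $s_1s_2\in E(G_1)$, which also supplies the one adjacency (between the two $T$-branch sets when $k=2$) that might otherwise be realized only inside $B$. Your note that $G_1$, being one of the summands, genuinely contains the full $k$-clique even if some clique edges were deleted from $G$ is exactly the point needed to make the reconnection legitimate. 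The only place the exposition is slightly loose is the opening of the ``if'' direction: rather than ``one may assume $G$ contains the clique,'' the cleaner phrasing is that under the paper's convention (which permits parallel edges and so is equivalent to deleting no clique edges) each $G_i$ sits inside $G$ as a subgraph, or, more robustly, that each $G_i$ is a minor of $G$ by contracting the opposite side onto the shared clique; either way the direction is immediate, as you say.
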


The next two sections give an extension of this lemma to rooted minors when we have four roots, which the author could not find written down anywhere.

\subsection{Cut vertices}
For the following sequence of lemmas suppose that we have a  simple $2$-connected graph $H$, and a connected graph $G$ where $G$ has a $1$-separation $(A,B)$ such that $A \cap B = \{v\}$. Furthermore, let $X = \{a,b,c,d\} \subseteq V(G)$, and without loss of generality suppose that $|A \cap X| \geq |B \cap X|$. Let $\mathcal{F}$ be an arbitrary family of maps from $X$ to $V(H)$. Figure \ref{Oneconnectedreductionspic} gives a picture representation for the upcoming lemmas. 

\begin{lemma}
\label{allonesidecutvertex}
If $X \subseteq A$, then $G$ has an $H(X)$-minor if and only if $G[A]$ has an $H(X)$-minor.
\end{lemma}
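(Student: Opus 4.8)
The plan is to prove both directions, with the forward direction being essentially trivial and the reverse direction requiring a ``routing'' argument through the cut vertex $v$. First I would observe that if $G[A]$ has an $H(X)$-minor, then since $G[A]$ is a subgraph of $G$ and $X \subseteq A$, the same $H$-model witnesses an $H(X)$-minor in $G$; this handles one direction immediately.

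For the converse, suppose $G$ has an $H(X)$-minor, witnessed by a model $\{G_{x} : x \in V(H)\}$, and by Proposition \ref{branchsets} we may assume $\bigcup_{x} V(G_{x}) = V(G)$. The goal is to produce a model living entirely in $G[A]$. The key point is that $B \setminus \{v\}$ attaches to the rest of $G$ only through $v$. Consider the branch set $G_{z}$ that contains $v$ (or, if no branch set contains $v$ — which cannot happen under the spanning assumption — some adjacent set). For every branch set $G_{x}$, the subgraph $G_{x} \cap G[B]$, if nonempty, must be connected to the rest of $G_{x}$ only via $v$, so in fact for each $x \ne z$, either $G_{x} \subseteq B \setminus \{v\}$ entirely or $G_{x} \cap (B \setminus \{v\}) = \emptyset$; and since $X \subseteq A$ no root lies in the former type. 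I would then argue that we can delete all the vertices in $B \setminus \{v\}$ from every branch set: for $x \ne z$ with $G_{x} \subseteq B \setminus \{v\}$, the set $G_{x}$ had to be joined to the ``$A$-side'' of the model through $v \in G_{z}$, but then since $H$ is $2$-connected, $z$ has degree at least $2$ in $H$, and I must check that deleting such branch sets does not disconnect the model — this is where $2$-connectivity of $H$ is used to reroute. The cleaner way to phrase it: restrict each $G_{x}$ to $G_{x} \cap A$. Each restricted set is still connected (since the only way $G_{x}$ used $B$ was to hang off $v$, and $v \in A$), it still contains the appropriate root, and any adjacency between $G_{x}$ and $G_{y}$ that was realized by an edge inside $B \setminus \{v\}$ — but such an edge would force both $G_x$ and $G_y$ to meet $B\setminus\{v\}$, hence (being connected through $v$) both to contain $v$, impossible since branch sets are disjoint. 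Hence every required adjacency is realized within $A$, so $\{G_{x} \cap A : x \in V(H)\}$ is an $H(X)$-model in $G[A]$.

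The main obstacle is making rigorous the claim that no branch set is ``stranded'' in $B \setminus \{v\}$ in a way that breaks the model when we restrict to $A$ — equivalently, that every branch set meeting $B \setminus \{v\}$ also meets $A$, or can be discarded. The spanning assumption on the model together with the fact that $v$ is a cut vertex handles most of this, but one must be slightly careful: a branch set $G_{x}$ could in principle sit entirely inside $B \setminus \{v\}$, and then $x$ is a vertex of $H$ not represented in $A$. Here I would invoke that such an $x$ is not a root, delete $G_{x}$ from the model, and note that since all of $G_x$'s neighbours in the model must route through $v$, and $H$ is connected, the resulting configuration on $V(H) \setminus \{x\}$ still realizes an $H'$-minor where $H'$ is large enough — but actually the cleanest fix is to instead enlarge $G_{z}$ (the branch set containing $v$) or simply re-home $G_x$'s vertices, since we only need the roots placed correctly and $H$ fixed. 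I expect the write-up to proceed by taking the model, intersecting with $A$, and verifying connectivity and adjacency as above, with a short separate paragraph dispatching any branch set disjoint from $A$.
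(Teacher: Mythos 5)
Your forward direction is fine, and you correctly decompose the converse into the key claim that no branch set can lie entirely inside $B \setminus \{v\}$. However, you never actually establish that claim, and the three workarounds you float --- deleting a stranded branch set, enlarging $G_z$, or ``re-homing'' its vertices --- all fail for the same reason: a model of an $H$-minor must have exactly one branch set for each vertex of $H$, so you cannot discard or absorb a branch set and still claim to have an $H$-model. Your invocation of $2$-connectivity (``$z$ has degree at least $2$ in $H$, and I must check that deleting such branch sets does not disconnect the model'') gestures in the wrong direction: the argument is not about rerouting after a deletion.

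What the paper does instead is use $2$-connectivity of $H$ to show that a stranded branch set cannot exist in the first place. Suppose $G_y \subseteq B \setminus \{v\}$, and let $G_z$ be the branch set containing $v$ (which exists once you take the model to be spanning). Since the only edges leaving $B \setminus \{v\}$ are incident to $v$, every branch set adjacent to $G_y$ either contains $v$ (hence is $G_z$) or itself lies entirely inside $B \setminus \{v\}$. So if $S = \{w \in V(H) : G_w \subseteq B \setminus \{v\}\}$, then $y \in S$ and every edge of $H$ leaving $S$ ends at $z$. Since $X \subseteq A$, no root index lies in $S$; moreover at least one root vertex (in fact at least three of the four) sits in $A \setminus \{v\}$, so $V(H) \setminus (S \cup \{z\})$ is nonempty. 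Thus $z$ is a cut vertex of $H$, contradicting that $H$ is $2$-connected. Once this is in place, exactly one branch set meets $B$, it also contains $v$, and intersecting every branch set with $A$ gives the desired model, as you correctly observe. The adjacency check you do perform is sound, but without the stranded-branch-set exclusion the restriction step is not well-defined, so the proof as written does not go through.
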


\begin{proof}
First suppose $G[A]$ has an $H(X)$-minor. Then $G$ has an $H(X)$-minor by extending the branch set containing $v$ to contain all of $G[B]$.

Conversely, suppose $G$ has an $H(X)$-minor and let $\{G_{x} |x \in V(H)\}$ be a model of an $H(X)$-minor in $G$. As $X \subseteq A$, and the graph $H$ is $2$-connected, and $v$ is a cut vertex, there is no branch set which is contained inside $B \setminus \{v\}$. Notice if all of the branch sets are contained inside $G[A]$, then we are done since $\{G[V(G_{x}) \cap A] | x \in V(H)\}$ would be the desired $H(X)$-model in $G[A]$. Therefore we assume at least one branch set contains vertices from $B$. Furthermore, as $v$ is a cut vertex and each branch set is a connected subgraph, we have at most one branch set with vertices in $B$. Let $G_{z}$, $z \in V(H)$, be such branch set. Notice that $G_{z} \cap G[A]$ is a connected subgraph of $G[A]$, and as $G_{z}$ was the only branch set containing vertices in $B$, $\{G_{x} \cap G[A] | x \in V(H)\}$ is an $H(X)$-model in $G[A]$. 
\end{proof}

\begin{lemma}
Suppose $a,b,c \in A \setminus \{v\}$, and $d \in B \setminus \{v\}$. Let $X_{A} = \{a,b,c,v\}$ and for each $\pi \in \mathcal{F}$, define $\pi': X_{A} \to V(H)$ such that $\pi' = \pi$ except that $\pi'(v) = \pi(d)$.  Then $G$ has an $H$-minor if and only if $G[A]$ has an $H(X_{A})$-minor.
\end{lemma}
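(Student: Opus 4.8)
The plan is to follow the pattern of Lemma \ref{allonesidecutvertex}, proving the two implications separately; write $\mathcal{F}' = \{\pi' : \pi \in \mathcal{F}\}$ for the induced family of maps $X_A \to V(H)$, and recall that $G$ being connected with a $1$-separation $(A,B)$, $A\cap B=\{v\}$, forces $G[B]$ to be connected (a vertex of $B\setminus\{v\}$ has all its neighbours in $B$, so a shortest path to $v$ stays inside $G[B]$).

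\textbf{Forward direction.} Suppose $G[A]$ has an $H(X_A)$-minor realizing some $\pi'\in\mathcal{F}'$, and fix a model $\{G_x : x\in V(H)\}$ in $G[A]$. Then $v$ lies in the branch set $G_{\pi'(v)}=G_{\pi(d)}$. Since $G[B]$ is connected, choose a $(v,d)$-path $P$ with $V(P)\setminus\{v\}\subseteq B\setminus\{v\}$, and replace $G_{\pi(d)}$ by $G_{\pi(d)}\cup P$, leaving all other branch sets unchanged. The enlarged branch set is still connected (it meets $P$ in $v$) and still disjoint from the others (since $V(P)\setminus\{v\}$ avoids $A$), every edge of $H$ is still realized, and now $d\in G_{\pi(d)}$ while $a,b,c$ stay in their branch sets; so $G$ has an $H(X)$-minor with respect to $\pi$.

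\textbf{Reverse direction.} Suppose $G$ has an $H(X)$-minor with respect to some $\pi\in\mathcal{F}$, and fix a model $\{G_x : x\in V(H)\}$ with $\bigcup_x V(G_x)=V(G)$ (Proposition \ref{branchsets}). Since $v$ is a cut vertex, exactly one branch set contains $v$, say $G_z$, and every other branch set lies entirely in $A\setminus\{v\}$ or entirely in $B\setminus\{v\}$. The key claim is that no branch set other than $G_z$ lies in $B\setminus\{v\}$: let $W=\{x\ne z : V(G_x)\subseteq B\setminus\{v\}\}$ and suppose $W\ne\emptyset$. Any edge of $H$ from $x\in W$ to $y\notin W$ is realized in $G$ by an edge with one endpoint in $B\setminus\{v\}$; the separation property forces its other endpoint into $B$, and if $y\ne z$ that endpoint lies in $B\setminus\{v\}$, which would put $y\in W$ — a contradiction. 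Hence $N_H(W)\setminus W\subseteq\{z\}$. But $\pi(a),\pi(b),\pi(c)\notin W$ because $a,b,c\in A\setminus\{v\}$, and $\pi$ injective forces at least two of these indices into $V(H)\setminus(W\cup\{z\})$, which is therefore nonempty; together with $W\ne\emptyset$ this makes $z$ a cut vertex of $H$, contradicting $2$-connectivity. So $W=\emptyset$, and since $d\in G_{\pi(d)}$ meets $B\setminus\{v\}$ we get $\pi(d)=z$. Now set $G'_x=G[V(G_x)\cap A]$ for each $x$: for $x\ne z$ this is $G_x$ itself and is connected, while $G'_z$ is connected because any detour of a path in $G_z$ through $B\setminus\{v\}$ can be shortcut at $v$. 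These branch sets are pairwise disjoint, every edge of $H$ is still realized inside $G[A]$ (an edge of $G$ incident to a vertex of $A\setminus\{v\}$ has both endpoints in $A$), and $a,b,c$ lie in their branch sets with $v\in G'_z=G'_{\pi'(v)}$; thus $G[A]$ has an $H(X_A)$-minor.

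The main obstacle is the claim $W=\emptyset$ in the reverse direction: this is the step that genuinely uses the $2$-connectivity of $H$ together with the injectivity of $\pi$ and the fact that three roots lie on the $A$-side, and it is the rooted analogue of the standard fact that a highly connected $H$-minor cannot be split across a low-order separation; the remainder is routine bookkeeping with branch sets paralleling Lemma \ref{allonesidecutvertex}.
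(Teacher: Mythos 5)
Your proof is correct and follows essentially the same approach as the paper: both directions use the observation that since $H$ is $2$-connected and $v$ is a cut vertex of $G$, the whole of $G[B]$ must collapse into the single branch set containing $v$. You simply spell out the key claim (what you call $W=\emptyset$) in more detail, whereas the paper asserts $B\subseteq V(G_d)$ directly from $2$-connectivity, and you extend by a $(v,d)$-path rather than by all of $G[B]$ in the forward direction, which is an inessential variation.
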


\begin{proof}
Let $\{G_{x} | x \in V(H)\}$ be an $H(X_{A})$-model in $G[A]$. Suppose that $G_{d}$ is the branch set where $v \in V(G_{d})$. Then we obtain an $H(X)$-model of $G$ by extending $G_{d}$ to contain all of $G[B]$. By construction, the now extended $G_{d}$ is connected, and thus we have an $H(X)$-model in $G$. 

Conversely, let $\{G_{x} |x \in V(H)\}$ be a model of an $H(X)$-minor in $G$. Let $G_{d}$ be the branch set where $d \in V(G_{d})$. As $H$ is $2$-connected, and $v$ is a cut vertex, and since we may assume that the branch sets partition $V(G)$,  $B \subseteq V(G_{d})$, and in particular, $v \in G_{d}$. Therefore all other branch sets are contained inside $G[A]$, and thus all required adjacencies for the $H(X)$-minor exist in $G[A]$. Therefore $\{G_{x} \cap G[A] \ | \ x \in V(H)\}$ is an $H(X_{A})$-minor of $G[A]$.  
\end{proof}

\begin{lemma}
Suppose $v=a$. Then $G$ has an $H(X)$-minor if and only if $X \subseteq A$ and $G[A]$ has an $H(X)$-minor.
\end{lemma}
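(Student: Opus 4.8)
The plan is to deduce this from the immediately preceding Lemma~\ref{allonesidecutvertex}, which says that when $X\subseteq A$ one has an $H(X)$-minor in $G$ if and only if one has an $H(X)$-minor in $G[A]$. The reverse implication of the statement is then free: if $X\subseteq A$ and $G[A]$ has an $H(X)$-minor, Lemma~\ref{allonesidecutvertex} gives that $G$ has one. So the entire content is the forward implication, and the real work is to show that if $G$ has an $H(X)$-minor then in fact $X\subseteq A$; once that is established, Lemma~\ref{allonesidecutvertex} finishes the proof.

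To force $X\subseteq A$, I would first run the obvious counting argument. Since $a=v\in A\cap B$ and $b,c,d$ are distinct from $v$, each of $b,c,d$ lies in exactly one of $A\setminus\{v\}$ or $B\setminus\{v\}$, and the standing hypothesis $|A\cap X|\ge|B\cap X|$ forces at least two of $b,c,d$ into $A\setminus\{v\}$. If all three lie in $A$ then $X\subseteq A$ and we are done, so the only remaining case is, after relabelling, $b,c\in A\setminus\{v\}$ and $d\in B\setminus\{v\}$; I will derive a contradiction in this case.

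Fix an $H(X)$-model $\{G_x:x\in V(H)\}$ of $G$ whose branch sets partition $V(G)$ (legitimate by the remark after Proposition~\ref{branchsets} since $G$ is connected), with underlying injection $\pi:X\to V(H)$. Because $v$ is a cut vertex and each branch set is connected, a branch set containing vertices of both $A\setminus\{v\}$ and $B\setminus\{v\}$ must contain $v$; as the branch sets are pairwise disjoint, the only such branch set is $G_{\pi(a)}$, and every other branch set lies wholly in $A\setminus\{v\}$ or wholly in $B\setminus\{v\}$. Set $Y=\{y\in V(H):V(G_y)\subseteq B\setminus\{v\}\}$. Then $\pi(d)\in Y$ (so $Y\neq\emptyset$), while $\pi(a)\notin Y$ (its branch set contains $v$) and $\pi(b),\pi(c)\notin Y$ (their branch sets meet $A\setminus\{v\}$, which is disjoint from $B\setminus\{v\}$). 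The key claim is that every edge of $H$ joining $Y$ to $V(H)\setminus Y$ is incident with $\pi(a)$: if $yy'\in E(H)$ with $y\in Y$ and $y'\notin Y$, then $V(G_y)\subseteq B\setminus\{v\}$ and no edge leaves $B\setminus\{v\}$ except to $v$, so the adjacency witnessing $yy'$ lands in $B$; this forces either a vertex of $G_{y'}$ in $B\setminus\{v\}$ (impossible, since $y'\notin Y$ and $y'\neq\pi(a)$ would confine $G_{y'}$ to $A\setminus\{v\}$) or $v\in G_{y'}$, i.e.\ $y'=\pi(a)$. Hence $H-\pi(a)$ has no edge between the nonempty set $Y$ and the nonempty set $V(H)\setminus(Y\cup\{\pi(a)\})$ (which contains $\pi(b)$ and $\pi(c)$), so $\pi(a)$ is a cut vertex of $H$, contradicting that $H$ is $2$-connected.

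The main obstacle is the careful bookkeeping in this last paragraph: one must split ``$G_{y'}$ meets $B$'' into the subcases $v\in G_{y'}$ versus $G_{y'}\cap(B\setminus\{v\})\neq\emptyset$, and keep track that exactly one branch set, the one containing $v=a$, is permitted to straddle the separation. Everything else is the same connectivity reasoning used in the previous two lemmas of this subsection, and the final reduction to $G[A]$ is handed to us by Lemma~\ref{allonesidecutvertex}.
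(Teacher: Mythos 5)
Your proposal is correct and runs along essentially the same lines as the paper's proof: both handle the backward implication by citing the preceding lemma, both reduce the forward implication to the case where exactly one of $b,c,d$ falls in $B\setminus\{v\}$, and both derive a contradiction from the $2$-connectivity of $H$. The only cosmetic difference is that you package the contradiction as ``$\pi(a)$ would be a cut vertex of $H$'' by partitioning the branch sets into those inside $B\setminus\{v\}$ and the rest, while the paper chooses a specific path in $H$ from $\pi$ of the $B$-side root to $\pi$ of an $A$-side root avoiding $\pi(a)$ and shows the corresponding chain of branch sets cannot cross the separation; these are two equivalent ways of invoking the same connectivity fact, so there is no genuinely new approach here.
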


\begin{proof}
Sufficiency follows from Lemma \ref{allonesidecutvertex}.

Conversely, let $\{G_{x} |x \in V(H)\}$ be a model of an $H(X)$-minor in $G$. Towards a contradiction,  we consider the case where $d,c \in A \setminus \{v\}$ and $b \in B \setminus \{v\}$. Let $G_{v_{1}}$, $G_{v_{n}}$, $G_{z}$ be the branch sets for which $b \in V(G_{v_{1}})$, $c \in G_{v_{n}}$, and $a \in G_{z}$.  As $H$ is $2$-connected, there is a path $P = v_{1},v_{2},\ldots,v_{n}$ in $H$ such that $z \not \in V(P)$. Then there is a sequence of branch sets, $G_{v_{1}},\ldots,G_{v_{n}}$ such that $G_{v_{i}}$ has a vertex which is adjacent to a vertex in $G_{v_{i+1}}$ for all $i \in \{1,\ldots,n-1\}$. But $G_{v_{1}} \subseteq G[A-v]$, $G_{v_{n}} \subseteq G[B-v]$,  $G_{z} \neq G_{v_{i}}$ for any $i \in \{1,\ldots,n\}$ and $a \in V(G_{z})$, a contradiction. The other cases follow similarly. 
\end{proof}

\begin{lemma}
If exactly two vertices of $X$ are in $B \setminus \{v\}$ and exactly two vertices of $X$ are in $A \setminus \{v\}$, then $G$ does not have an $H(X)$-minor.
\end{lemma}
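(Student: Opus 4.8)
The plan is to argue by contradiction. Suppose some $\pi \in \mathcal{F}$ and some $H(X)$-model $\{G_{x} : x \in V(H)\}$ of $G$ realizes the rooting; as noted after Proposition~\ref{branchsets} I may assume the branch sets partition $V(G)$. Relabel so that $a,b \in A \setminus \{v\}$ and $c,d \in B \setminus \{v\}$; note $a,b \in A \setminus B$ and $c,d \in B \setminus A$ since $A \cap B = \{v\}$. The first step I would carry out is a ``straddling'' observation: because $v$ is a cut vertex and each branch set is a connected subgraph, any branch set containing both a vertex of $A \setminus \{v\}$ and a vertex of $B \setminus \{v\}$ must contain $v$ (a path inside it between such vertices crosses the separation, hence hits $A \cap B = \{v\}$). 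Since branch sets are pairwise disjoint, at most one of them — call it $G_{z}$ — contains $v$, and every other branch set lies entirely inside $G[A \setminus \{v\}]$ or entirely inside $G[B \setminus \{v\}]$.

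The second step is to use that $\pi$ is one-to-one on $X$, so at most one element of $X$ maps to $z$. Hence, after possibly swapping the names $a \leftrightarrow b$, I may assume $\pi(a) \neq z$, and after possibly swapping $c \leftrightarrow d$ I may assume $\pi(c) \neq z$. By the straddling observation, $G_{\pi(a)}$ (connected, avoiding $v$, containing $a$) lies in $G[A \setminus \{v\}]$, and likewise $G_{\pi(c)}$ lies in $G[B \setminus \{v\}]$; in particular $\pi(a) \neq \pi(c)$, as these branch sets are disjoint and nonempty.

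The crux is the third step, which invokes $2$-connectivity of $H$: $H - z$ is connected, so it contains a path $P$ from $\pi(a)$ to $\pi(c)$ none of whose vertices equals $z$. Tracing $P$ through the model produces a connected subgraph $G'$ of $G$, namely the union of the branch sets indexed by $V(P)$ together with one connecting edge for each edge of $P$. Then $v \notin V(G')$ (no vertex of $P$ is $z$, so no branch set in the union contains $v$), yet $G'$ meets $A \setminus B$ (at $a$) and meets $B \setminus A$ (at $c$). A connected subgraph meeting both $A \setminus B$ and $B \setminus A$ must use a vertex of $A \cap B = \{v\}$, contradicting $v \notin V(G')$. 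This contradiction shows no such model exists, so $G$ has no $H(X)$-minor.

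I expect the only point requiring care is the bookkeeping in the straddling observation — in particular allowing for the degenerate possibility that no branch set, or exactly one, contains $v$ — together with checking that injectivity of $\pi$ is genuinely available; both are immediate from the definitions given in this excerpt, so I do not anticipate a substantive obstacle. The argument is essentially the same Menger-style flow-blocking idea used in the preceding lemmas of this subsection, now applied symmetrically on both sides of the cut so that $z$ cannot simultaneously shield a root on each side.
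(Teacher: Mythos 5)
Your argument is correct and is essentially the same Menger-style argument the paper gives: the paper picks terminals $a\in A\setminus\{v\}$, $b\in B\setminus\{v\}$ and observes that every path in $H$ between the corresponding branch-set vertices must pass through the vertex $y$ whose branch set contains $v$, so $y$ would be a cut vertex of $H$. You phrase the contradiction dually, by using $2$-connectivity to build a path in $H-z$ and lifting it to a connected subgraph of $G$ that straddles the cut while missing $v$; these are the same idea. Your version is actually a bit more careful: you explicitly note that since there are two terminals on each side and at most one branch set contains $v$, you can always select $a$ and $c$ with $\pi(a)\neq z$ and $\pi(c)\neq z$, a point the paper's proof passes over without comment (its argument would be vacuous if the chosen terminal's branch set were $G_y$).
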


\begin{proof}

Let $\{G_{x} \ | \ x \in V(H)\}$ be an $H$-model of $G$. Let $G_{y}$ be the branch set containing $v$.  Suppose $a \in A \setminus \{v\}$ and $b \in B \setminus \{v\}$ and $a \in G_{a}$ and $b \in G_{b}$. Then if we contract each branch set to a vertex to obtain the $H$-minor, all $(a,b)$-paths in $H$ contain $y$ since $v \in G_{y}$ and $v$ is a cut vertex. But then $H$ is not $2$-connected, a contradiction. 
\end{proof}

\begin{figure}
\begin{center}
\includegraphics[clip,trim=0.5cm 4cm 0.5cm 0.5cm, scale=0.5]{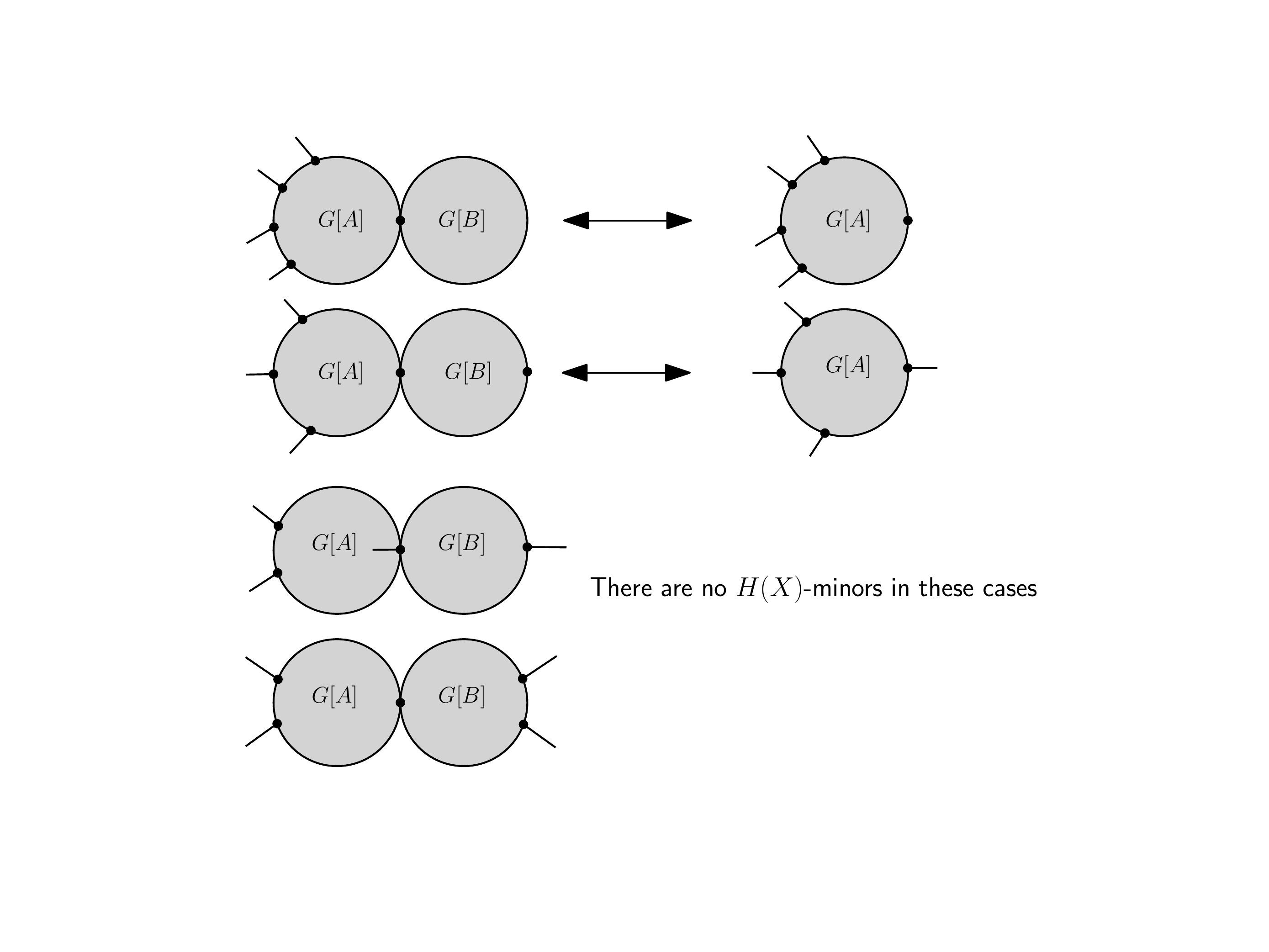}
\caption{Cut vertex reductions. Edges with only one endpoint represent vertices of $X$.  }
\label{Oneconnectedreductionspic}
\end{center}
\end{figure}

\subsection{$2$-connected reductions}

For this section suppose that $H$ is a $3$-connected simple graph and that $G$ is a $2$-connected graph. Furthermore, assume that $G$ has a $2$-separation $(A,B)$ such that $A \cap B = \{u,v\}$, and let $X = \{a,b,c,d\} \subseteq V(G)$. We define $G_{A} = G[A] \cup \{uv\}$ and $G_{B} = G[B] \cup \{uv\}$ (if this results in the graph having a parallel edge, we simply remove the parallel edge). Let $\mathcal{F}$ be a family of maps from $X$ to $V(H)$. By the discussion in the previous section, there is no loss of generality by assuming $G$ is $2$-connected.

\begin{lemma}
\label{2sepW4}
Let $L \subseteq X$ such that $|L| =2$ and suppose $L \subseteq A \setminus \{u,v\}$. Furthermore, suppose that $X \setminus L \subseteq B \setminus \{u,v\}$. Let $X_{A} = \{L,u,v\}$ and $X_{B} = \{(X \setminus L),u,v\}$. Suppose that for any $\pi_{1} \in \mathcal{F}$, there exists a $\pi_{2} \in \mathcal{F}$ such that $\pi_{1}(c) = \pi_{2}(d)$ and $\pi_{1}(d) = \pi_{2}(c)$. For each $\pi \in \mathcal{F}$, define $\pi_{A}$ and $\pi_{B}$ in the natural way so that $u,v$ replace the vertices in $L$ and $X \setminus L$ respectively. Then $G$ has an $H(X)$-minor if and only if either $G_{A}$ has an $H(X_{A})$-minor or $G_{B}$ has an $H(X_{B})$-minor. 

\end{lemma}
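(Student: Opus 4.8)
The plan is to prove the two implications separately; the bulk of the work is a structural analysis of how an $H$-model meets the $2$-separation, exploiting that $H$ is $3$-connected while $\{u,v\}$ is only a $2$-cut of $G$. Throughout I take $L=\{a,b\}$ and $X\setminus L=\{c,d\}$, as in the hypothesis.

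\emph{The ``only if'' direction.} Given an $H(X)$-model $\{G_x : x\in V(H)\}$ of $G$ realising some $\pi\in\mathcal{F}$ (with $\pi$ injective, so $\pi(a),\pi(b),\pi(c),\pi(d)$ are distinct), I would first argue that $u$ and $v$ lie in distinct branch sets: if both lay in one branch set $G_w$ then, since $A\cap B=\{u,v\}\subseteq V(G_w)$, the graph $G-V(G_w)$ would have no path between $A\setminus B$ and $B\setminus A$, yet branch sets of $H-w$ occur on both sides (one of $\pi(a),\pi(b)$ and one of $\pi(c),\pi(d)$ differs from $w$), so $H-w$ would be disconnected, contradicting $3$-connectivity. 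Writing $u\in G_p$, $v\in G_q$ with $p\ne q$, every other branch set lies wholly in $A$ or wholly in $B$ (a connected subgraph meeting both $A\setminus B$ and $B\setminus A$ must contain $u$ or $v$); letting $\mathcal{A}_0,\mathcal{B}_0$ be the indices of branch sets contained in $A\setminus\{u,v\}$, resp.\ $B\setminus\{u,v\}$, we get $V(H)=\mathcal{A}_0\sqcup\mathcal{B}_0\sqcup\{p,q\}$, and $\{p,q\}$ separates $\mathcal{A}_0$ from $\mathcal{B}_0$ in $H$ (an $H$-edge between them would be a $G$-edge between $A\setminus B$ and $B\setminus A$). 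By $3$-connectivity of $H$ one of $\mathcal{A}_0,\mathcal{B}_0$ is empty, say $\mathcal{B}_0=\emptyset$. Then the branch sets of $\pi(c),\pi(d)$ must contain $u$ or $v$ (each meets $B\setminus\{u,v\}$ but is not contained in it), so $\{\pi(c),\pi(d)\}=\{p,q\}$; and since $v\notin G_p$, no $G_p$-path between two $A$-vertices can dip into $B\setminus A$ (that would force two visits to $u$), whence $G_p\cap A$ is connected and contains $u$, similarly for $G_q\cap A$, while $G_x\cap A=G_x$ for $x\in\mathcal{A}_0$. Restricting every branch set to $A$ then gives an $H$-model in $G_A$: edges incident to $\mathcal{A}_0$ stay inside $A$, and the possible edge $pq$ is supplied by the added edge $uv$. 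This model is rooted at $X_A$ with $u,v$ filling the branch sets indexed by $\{\pi(c),\pi(d)\}$, hence realises the induced map of $\pi$ or, after applying the hypothesised $c\leftrightarrow d$ exchange $\pi_2$, that of $\pi_2$; so $G_A$ has an $H(X_A)$-minor. The case $\mathcal{A}_0=\emptyset$ is symmetric and gives $G_B$ an $H(X_B)$-minor.

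\emph{The ``if'' direction, and the main obstacle.} Suppose $G_A$ has an $H(X_A)$-minor (the $G_B$ case is symmetric, with the roles of $\{a,b\}$ and $\{c,d\}$ interchanged). Take a model $\{M_x\}$ realising an induced map, so $u$ and $v$ lie in branch sets indexed by $\{\pi(c),\pi(d)\}$ for some $\pi\in\mathcal{F}$. Since $G$ is $2$-connected, $G_B=G[B]\cup\{uv\}$ is $2$-connected with at least two vertices outside $\{u,v\}$, so Corollary~\ref{XYpaths} applied to $G_B$ with $\{u,v\}$ and $\{c,d\}$ yields two disjoint paths, neither of which can use the edge $uv$ (a path leaving $u$ along $uv$ would meet $v$, the start of the other path), so both lie in $G[B]$. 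When the $G_A$-model avoids the edge $uv$, I would simply grow the branch sets of $u$ and $v$ along these two $G[B]$-paths to absorb $c$ and $d$, obtaining an $H(X)$-model of $G$ for $\pi$ or its $c\leftrightarrow d$ exchange. The delicate case is when $uv$ is indispensable to the $G_A$-model: then $uv$ can only be realising the single $H$-edge between the $u$- and $v$-branch sets, and I must replace it by finding inside the $2$-connected $G_B$ two disjoint connected subgraphs — one holding $u$ and one of $\{c,d\}$, the other holding $v$ and the other — joined by a genuine edge of $G[B]$. Establishing that such a configuration exists for at least one of the two pairings of $\{u,v\}$ with $\{c,d\}$ (which is all that is needed, thanks to the $c\leftrightarrow d$ exchange hypothesis) is the technical heart of the proof; I expect this to require a short Menger-type argument, e.g.\ building a cycle through $u$ and $c$ in $G_B$, routing $v$ and $d$ onto it by $2$-connectivity, and choosing an edge of the resulting scaffold that separates the four marked vertices in the desired way.
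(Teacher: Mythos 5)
Your ``only if'' direction is sound and in fact more carefully argued than the thesis's version; both hinge on the same observation that $\{p,q\}$ would be a $2$-cut of $H$ if branch sets lay strictly on both sides of the separation, so one side is empty. The gap is in the ``if'' direction. You correctly handle the case where the $G_A$-model avoids the artificial edge $uv$, but you leave the ``delicate case'' (where $uv$ realises the $H$-edge $\pi(c)\pi(d)$) as a sketch to be filled by an unspecified Menger-type argument, and you invoke without proof that $G_B$ is $2$-connected. Neither is necessary. The cleaner route, which is what the thesis does, is to show that $G_A$ is itself a rooted minor of $G$, sidestepping the case split entirely: apply Corollary~\ref{XYpaths} to $G$ (not to $G_B$) to get two disjoint $(\{u,v\},\{c,d\})$-paths, which necessarily lie in $G[B]$; extend them (via a spanning tree of the connected graph $G[B]$ containing both paths, minus one edge of the connecting arc) to a partition of $B$ into two connected pieces $B_u\ni u,c$ and $B_v\ni v,d$; contract $B_u$ onto $u$ and $B_v$ onto $v$. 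Since $G[B]$ is connected, some edge joins $B_u$ to $B_v$ and becomes $uv$ after contraction, so the result contains $G_A$ as a spanning subgraph with $c,d$ contracted onto $u,v$. Hence any $H(X_A)$-model in $G_A$ (realising $\pi_A$, or the $c\leftrightarrow d$ exchange thereof) lifts to an $H(X)$-model in $G$; no separate treatment of whether the model uses $uv$ is needed.
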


\begin{proof}
Suppose that $G_{A}$ contains an $H(X_{A})$-minor and suppose that $c,d \in B \setminus \{u,v\}$. By Corollary \ref{XYpaths}, there are two disjoint paths between $\{u,v\}$ and $\{c,d\}$. Since $c,d \in B \setminus \{u,v\}$, these paths are contained inside of $G[B]$. Thus we can contract $G[B]$ to $\{u,v\}$ in such a way that $c$ and $d$ do not get identified together. Since we supposed that for any $\pi_{1} \in \mathcal{F}$, there exists a $\pi_{2} \in \mathcal{F}$ such that $\pi_{1}(c) = \pi_{2}(d)$ and $\pi_{1}(d) = \pi_{2}(c)$, the graph $G$ has an $H(X)$-minor. The other cases follow similarly. 

Conversely, let $\{G_{x} | x \in V(H)\}$ be a model of an $H(X)$-minor. Suppose $a,b \in A \setminus \{u,v\}$ and $c,d \in B \setminus \{u,v\}$. Let $a \in G_{a}$, $b \in G_{b}$, $c \in G_{c}$, and $d \in G_{d}$. First, suppose there are branch sets $G_{y}$ and $G_{z}$ such that $G_{y} \subseteq G[A - \{u,v\}]$ and $G_{z} \subseteq G[B- \{u,v\}]$. Then if we contract each branch set down to a vertex, there would be at most two internally disjoint $(y,z)$-paths, contradicting that $H$ is $3$-connected.  

Therefore we can assume that either $u \in G_{a}$, $v \in G_{b}$ and $A \subseteq V(G_{a}) \cup V(G_{b})$  or $u \in G_{c}$, $v \in G_{d}$ and $B \subseteq V(G_{c}) \cup V(G_{d})$. Suppose $u \in G_{a}$ and $v \in G_{b}$ and $A \subseteq V(G_{a}) \cup V(G_{b})$. Then all other branch sets are contained in $G[B- \{u,v\}]$. Then since $uv \in E(G_{b})$,  $\{G'_{x} = G_{B}[V(G_{x}) \cap V(G_{B})] \; | x \in V(H)\}$ is an $H(X)$-model in $G_{B}$. The other case follows similarly.
\end{proof}

We remark that all rooted graph minors we will see throughout this thesis satisfy the technical condition in the above lemma.

\begin{lemma}
\label{oneterminalonesideothersother}
Suppose  $a \in A \setminus \{u,v\}$ and $b,c,d \in B \setminus \{u,v\}$.  Let $X_{1} = \{u,b,c,d\}$ and $X_{2} = \{v,b,c,d\}$. For each $\pi \in \mathcal{F}$, let $\pi_{1}$ satisfy $\pi_{1} = \pi$ on $X_{2} \setminus \{u\}$ and $\pi_{1}(u) = \pi(a)$. Let $\pi_{2} = \pi$ on $X_{1} \setminus \{v\}$ and $\pi_{2}(v) = \pi(a)$. Then $G$ has an $H(X)$-minor if and only if either $G_{B}$ has an $H(X_{1})$-minor or an $H(X_{2})$-minor. 
\end{lemma}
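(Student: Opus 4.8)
The plan is to prove this by the same strategy used in Lemma~\ref{2sepW4}, treating the single terminal $a$ in $A\setminus\{u,v\}$ as something that can be ``pushed'' to either $u$ or $v$. First I would prove sufficiency: suppose $G_B$ has an $H(X_1)$-minor (the case of an $H(X_2)$-minor is symmetric, swapping the roles of $u$ and $v$). Since $G$ is $2$-connected, by Menger's theorem there is a path from $a$ to $u$ in $G$ avoiding $v$; because the only way from $A\setminus\{u,v\}$ to $B$ passes through $\{u,v\}$, this path can be taken inside $G[A]$ with its interior in $A\setminus\{v\}$. Contract that path, identifying $a$ with $u$, and contract all of $G[A]$ appropriately so that $G_B$'s copy of the edge $uv$ is realized; this extends the $H(X_1)$-model of $G_B$ to an $H(X)$-model of $G$ in which $a$ lies in the branch set formerly rooted at $u$, i.e.\ with respect to $\pi_1$. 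The definition of $\pi_1$ (namely $\pi_1 = \pi$ on $\{b,c,d\}$ and $\pi_1(u)=\pi(a)$) is exactly what makes this an $H(X)$-minor with respect to $\pi$.

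For necessity, let $\{G_x : x\in V(H)\}$ be a model of an $H(X)$-minor in $G$, with $a\in G_a$, $b\in G_b$, $c\in G_c$, $d\in G_d$, where $G_a,G_b,G_c,G_d$ are the branch sets (not necessarily distinct from other branch sets, but I may assume the branch sets partition $V(G)$). The key structural claim is that \emph{no branch set is entirely contained in $A\setminus\{u,v\}$ except possibly $G_a$}, and moreover $u$ and $v$ cannot both lie outside $G_a$ in a way that disconnects things: more precisely, I claim there is a branch set $G_z$ with $G_z \supseteq A\setminus\{u,v\}$ in the relevant part, or equivalently that $A\setminus\{u,v\}$ meets only the branch set $G_a$ together with $G_u' , G_v'$ (the branch sets containing $u,v$). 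Suppose instead that some branch set $G_y \subseteq A\setminus\{u,v\}$ with $y\neq a$. Then, contracting each branch set to a point, every $(y,z)$-path in $H$ for any $z$ with $G_z\subseteq B\setminus\{u,v\}$ must pass through the (at most two) branch sets containing $u$ or $v$, so there are at most two internally disjoint such paths, contradicting $3$-connectivity of $H$ --- provided such a $G_z$ exists. Since $b,c,d\in B\setminus\{u,v\}$ give three branch sets with vertices in $B\setminus\{u,v\}$ and $H$ is $3$-connected on at least $4$ vertices, at least one of them is actually contained in $B\setminus\{u,v\}$ (again by a cut argument: if all three also reached $A$, routing through $\{u,v\}$ would force a $2$-cut in $H$). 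Hence no such $G_y$ exists, so $A\setminus\{u,v\} \subseteq V(G_a)\cup V(G_{u}')\cup V(G_{v}')$.

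Now I split on which branch sets contain $u$ and $v$. If $u\in V(G_a)$, then all branch sets other than $G_a$ (and the one containing $v$) lie in $G[B]$, and $G_a$ meets $B$ only at $u$ (or at $u$ and $v$); restricting to $G_B$ and noting the edge $uv$ is present in $G_B$ to supply any adjacency that passed through $G[A]$, the sets $\{G_B[V(G_x)\cap V(G_B)] : x\in V(H)\}$ form an $H(X_1)$-model of $G_B$ (with $a$'s role taken over by $u$, matching $\pi_1$). Symmetrically if $v\in V(G_a)$ we get an $H(X_2)$-model. If neither $u$ nor $v$ is in $G_a$, then $G_a\subseteq A\setminus\{u,v\}$, so $u$ and $v$ lie in (at most two) other branch sets $G_u',G_v'$; then removing those two branch sets from $H$ disconnects $G_a$'s vertex from $\{G_b,G_c,G_d\}$, so $H$ has a $2$-cut, contradiction --- unless $G_u' = G_v'$, but then $G_u'$ is a single cut branch set of $H$ separating $a$ from $b,c,d$, again contradicting $2$-connectivity. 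So this final case cannot occur, and one of the two model restrictions always goes through.

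The main obstacle I expect is the bookkeeping in necessity: carefully justifying that among the three branch sets meeting $B\setminus\{u,v\}$ at least one is fully contained there, and dually ruling out a stray branch set inside $A\setminus\{u,v\}$ other than $G_a$ --- both via the same ``a small separator in $G$ forces a small separator in $H$'' principle, but applied with some care about whether the branch sets containing $u,v$ coincide. Once that structural dichotomy ($u\in G_a$ or $v\in G_a$) is pinned down, the restriction-of-model argument and the edge-$uv$-in-$G_B$ observation are routine, exactly as in Lemma~\ref{2sepW4}.
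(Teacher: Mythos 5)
Your proof is correct and follows essentially the same overall strategy as the paper's: Menger's Theorem to push $a$ onto $u$ (or $v$) for sufficiency, and a ``small separator in $G$ induces a small separator in $H$'' argument against the $3$-connectivity of $H$ for necessity. The only real difference is that your necessity argument is cleaner: you observe directly that since $u$ and $v$ each lie in a single branch set, at least one of $G_b,G_c,G_d$ lies entirely in $B\setminus\{u,v\}$, and then one uniform cut argument rules out any branch set (including $G_a$) lying inside $A\setminus\{u,v\}$; the paper instead splits on whether $yb\in E(H)$ and then does a further case split, arriving at the same place less economically. Your handling of the coincidence $G_u'=G_v'$ and of the connectivity of restricted branch sets via the added edge $uv$ in $G_B$ are both sound.
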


\begin{proof}
Suppose $G_{B}$ has an $H(X_{1})$-minor. By Menger's Theorem there exists a path from $a$ to $u$ which does not contain $v$, and therefore we can contract $G[A]$ to $\{u,v\}$ in such a way that $a$ gets contracted onto $u$. Therefore $G$ has an $H(X)$-minor. The case where $G_{B}$ has an $H(X_{2})$-minor follows similarly.

Conversely, let $\{G_{x} | x \in V(H)\}$ be a model of an $H(X)$-minor in $G$. Let $a \in G_{a}$, $b \in G_{b}, c \in G_{c}$, and $d \in G_{d}$. First, suppose for some $y \in V(H)$, $G_{y} \subseteq G[A - \{u,v\}]$. If $yb \in E(H)$, then one of $u$ or $v$ is in $V(G_{b})$. Then at least two of the following occur: $V(G_{a}) \subseteq A \setminus \{u,v\}$, $V(G_{c}) \subseteq B \setminus \{u,v\}$ or $V(G_{d}) \subseteq B \setminus \{u,v\}$. If $V(G_{a}) \subseteq A \setminus \{u,v\}$ and $V(G_{c}) \subseteq B \setminus \{u,v\}$, then if we contract all the branch sets down to a vertex, there are at most two internally disjoint $(a,c)$-paths contradicting that $H$ is $3$-connected. The other case when $V(G_{a}) \subseteq A \setminus \{u,v\}$ follows similarly, and thus we can assume that $G_{a}$ contains one of $u$ or $v$.   But then, contracting all branch sets to a vertex there are at most two internally disjoint $(c,y)$-paths contracting that $H$ is $3$-connected. 

Thus we may assume that $yb \not \in E(H)$. Then by Menger's Theorem there are three internally disjoint $(b,y)$-paths. If either $u$ or $v$ is in $V(G_{b})$, then the above argument can be applied to derive a contradiction. Therefore we assume that $u,v \not \in V(G_{b})$. But then contradicting all the branch sets down to a vertex, every $(b,y)$-path uses the vertex which was obtained by contracting the branch sets that $u$ or $v$ were in down to a single vertex.  But that implies there are at most two internally disjoint $(b,y)$-paths, a contradiction. Therefore for every $y \in V(H)$, $G_{y} \not \subseteq G[A - \{u,v\}]$. 

Then since $a \in A \setminus \{u,v\}$, at least one of $u$ or $v$ is contained in $G_{a}$. Therefore at most one other branch set contains vertices from $A$. Then since $uv \in G_{B}$, $\{G'_{x} = G_{B}[V(G_{x}) \cap V(G_{B})] \; | x \in V(H)\}$ is a model for either an $H(X_{1})$ or $H(X_{2})$-minor in $G_{B}$, depending on which of $u$ and $v$ is in $G_{a}$. 
\end{proof}

\begin{lemma}
\label{allonesidegeneralH}
Suppose that $X \subseteq A$. Then $G$ has an $H(X)$-minor if and only if $G_{A}$ has an $H(X)$-minor.
\end{lemma}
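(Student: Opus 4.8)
The plan is to mirror the cut-vertex argument of Lemma~\ref{allonesidecutvertex}, now using that $H$ is $3$-connected rather than merely $2$-connected. Two facts drive the proof. First, since $G$ is $2$-connected and the separation $(A,B)$ is proper, $G[B]$ contains a $u$--$v$ path whose internal vertices lie in $B\setminus\{u,v\}$; consequently $G_A$ is obtained from $G$ by contracting that path (and discarding the rest of $B$), so $G_A$ is a minor of $G$. Second, in any $H(X)$-model of $G$ no branch set can be contained in $B\setminus\{u,v\}$. I would prove the two implications separately, the forward one being routine once the first fact is in hand.

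For the forward direction, suppose $\{(G_A)_x : x\in V(H)\}$ is an $H(X)$-model in $G_A$. If the edge $uv$ is not used by the model at all, it is already an $H(X)$-model in $G[A]\subseteq G$. Otherwise $uv$ either lies inside a single branch set or realizes some adjacency $yz\in E(H)$ between two branch sets; in either case I would "uncontract" it by choosing a $u$--$v$ path $P$ through $G[B]$ and adjoining the internal vertices of $P$ to the branch set containing $u$. Because $V(P)\setminus\{u,v\}\subseteq B\setminus\{u,v\}$ is disjoint from every branch set (all branch sets except those meeting $\{u,v\}$ lie in $A$), the branch sets stay pairwise disjoint and connected, every adjacency realized inside $A$ is untouched, and the new path supplies the $u$--$v$ link; since $X\subseteq A$ the roots are unaffected. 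Hence $G$ has an $H(X)$-minor.

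For the converse, let $\{G_x : x\in V(H)\}$ be an $H(X)$-model of $G$ whose branch sets partition $V(G)$. I would first establish the second fact: if $G_y\subseteq B\setminus\{u,v\}$ for some $y$, then $y$ is not a root vertex (every root of $X$ lies in $A$), while the four roots of $X$ give four distinct vertices of $H$, all different from $y$; but at most two branch sets meet $\{u,v\}$, so in $H$ every path from $y$ to a root vertex must pass through one of these $\le 2$ vertices, contradicting the $3$-connectivity of $H$. Therefore at most two branch sets, say $G_y\ni u$ and $G_z\ni v$ (possibly $y=z$), meet $B$, and every other branch set is contained in $A\setminus\{u,v\}$. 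Restricting each branch set to $A$ keeps it connected: the $B$-portion of $G_y$ attaches to the rest of $G_y$ only through $u$ when $y\neq z$ (since $v\notin G_y$), and when $y=z$ the edge $uv$ of $G_A$ keeps $G_y\cap A$ connected; likewise for $G_z$. Every adjacency among branch sets lying in $A$ survives because an edge leaving $A\setminus\{u,v\}$ stays in $A$, and the adjacency $yz$, if present, is recovered by the edge $uv$ of $G_A$. This yields an $H(X)$-model in $G_A$.

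The main obstacle is the connectivity bookkeeping in the converse: one must check case by case that restricting branch sets to $A$ preserves connectivity (in particular the $y=z$ case, where the edge $uv$ is essential) and that no required $H$-adjacency is lost — especially an adjacency between a branch set contained in $A$ and one of $G_y,G_z$, which has to be shown to be realized inside $A$. One should also flag the implicit hypothesis that the $2$-separation is proper, which is precisely what guarantees the $u$--$v$ path in $G[B]$ used throughout.
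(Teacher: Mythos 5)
Your proof is correct and follows the same structure as the paper's: show via $3$-connectivity of $H$ that no branch set lives entirely in $B\setminus\{u,v\}$, deduce that at most two branch sets meet $B$, and restrict each branch set to $A$, using the added edge $uv$ of $G_A$ to preserve connectivity and the $yz$-adjacency. Your version spells out more of the connectivity bookkeeping in the converse and the ``uncontraction'' in the forward direction, and your observation that properness of the separation is implicitly required is a fair (if minor) point, but the argument is the same.
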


\begin{proof}
Suppose $G_{A}$ has an $H(X)$-minor. Then contracting $G[B]$ onto $\{u,v\}$ gives $G_{A}$, and thus $G$ has a $W_{4}(X)$-minor.

Let $\{G_{x} | x \in V(H)\}$ be a model of an $H(X)$-minor. Let $a \in G_{a}, b \in G_{b}, c \in G_{c}$ and $d \in G_{d}$. Suppose there is a $y \in V(H)$ such that $G_{y}$ is contained in $G[B \setminus \{u,v\}]$. Note that $y \neq a,b,c$ or $d$. Then since $\{u,v\}$ is a $2$-vertex cut, at least two of the following occur: $G_{a} \in G[A \setminus \{u,v\}]$, $G_{b} \in G[A \setminus \{u,v\}]$, $G_{c} \in G[A \setminus \{u,v\}]$ and $G_{d} \in G[A \setminus \{u,v\}]$. Without loss of generality, suppose that $G_{a} \in G[A \setminus \{u,v\}]$. But then if we contract each branch set down to a vertex, there is at most two internally disjoint $(a,y)$-paths in $H$, contradicting that $H$ is $3$-connected. Therefore there are no branch sets contained in $G[B \setminus \{u,v\}]$. Then since $\{u,v\}$ is a $2$-vertex cut, there are at most two branch sets using vertices in $B$. If there is only one branch set using vertices from $B$, then easily $\{G'_{x} = G[V(G_{x}) \cap V(G_{A})] \; | x \in V(H)\}$ is an $H(X)$-minor of $G_{A}$. If two branch sets contain vertices from $B$, then since $uv \in E(G_{A})$,  $\{G'_{x} = G_{A}[V(G_{x}) \cap V(G_{A})] \; | x \in V(H)\}$ is a model of an $H(X)$-minor of $G_{A}$. 
\end{proof}

\begin{lemma}
\label{noW4minor}
Suppose $L \subseteq X$ where $L = \{u,v\}$. Furthermore, suppose that there is a vertex of $X \setminus L$ in $A \setminus \{u,v\}$ and a vertex of $X \setminus L$ in $B \setminus \{u,v\}$. Then $G$ does not have an $H(X)$-minor.
\end{lemma}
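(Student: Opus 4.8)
The plan is to argue by contradiction: assume $G$ has an $H(X)$-minor and then extract from the model an $(a,b)$-path in $G$ that avoids the pair $\{u,v\}$, which is impossible since $(A,B)$ is a $2$-separation with $A\cap B=\{u,v\}$. First I would fix a map $\pi\in\mathcal{F}$ witnessing an $H(X)$-minor together with a model $\{G_x : x\in V(H)\}$ of $G$ rooted at $X$ via $\pi$ (Proposition~\ref{branchsets}). Relabel $X\setminus L$ so that $a$ is its vertex lying in $A\setminus\{u,v\}$ and $b$ is its vertex lying in $B\setminus\{u,v\}$; then $a\in V(G_{\pi(a)})$, $b\in V(G_{\pi(b)})$, $u\in V(G_{\pi(u)})$, $v\in V(G_{\pi(v)})$, and since $\pi$ is one-to-one on $X$ the four vertices $\pi(a),\pi(b),\pi(u),\pi(v)$ of $H$ are pairwise distinct (this is where $L\subseteq X$, i.e.\ that $u$ and $v$ are themselves roots, gets used).

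Next I would invoke the $3$-connectivity of $H$: deleting the two vertices $\pi(u),\pi(v)$ from $H$ leaves a connected graph, so there is a path $P=p_0p_1\cdots p_k$ in $H$ with $p_0=\pi(a)$, $p_k=\pi(b)$, and $p_i\notin\{\pi(u),\pi(v)\}$ for every $i$. Because $\{G_x\}$ is a model, consecutive branch sets $G_{p_i}$ and $G_{p_{i+1}}$ are joined by an edge of $G$, and each $G_{p_i}$ is connected, so $\bigcup_{i=0}^{k} G_{p_i}$ is a connected subgraph of $G$ containing both $a$ and $b$. Moreover it contains neither $u$ nor $v$: the branch sets are pairwise disjoint, $u\in V(G_{\pi(u)})$ and $v\in V(G_{\pi(v)})$, and $\pi(u),\pi(v)$ do not occur among $p_0,\dots,p_k$. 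Hence $G$ contains an $(a,b)$-path that avoids $\{u,v\}$.

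Finally I would close the argument with the separation hypothesis: since $(A,B)$ is a $2$-separation with $A\cap B=\{u,v\}$, no edge of $G$ joins $A\setminus\{u,v\}$ to $B\setminus\{u,v\}$, so every $(a,b)$-path with $a\in A\setminus\{u,v\}$ and $b\in B\setminus\{u,v\}$ must pass through $u$ or $v$ — contradicting the path just produced. I do not expect a substantive obstacle; the only place needing care is verifying that the branch sets holding $u$ and $v$ are disjoint from the branch sets appearing along $P$, which is precisely where the distinctness of the four roots in $H$ and the disjointness of branch sets are essential, and this is also exactly the point that would fail without the assumption that both of $u,v$ are themselves terminals.
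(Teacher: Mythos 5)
Your proof is correct and follows essentially the same approach as the paper: both arguments exploit the $3$-connectivity of $H$ together with the fact that $u$ and $v$ are roots lying in distinct branch sets, concluding that $\{\pi(u),\pi(v)\}$ would have to be a $2$-cut separating $\pi(a)$ from $\pi(b)$ in $H$. You phrase it by lifting a $(\pi(a),\pi(b))$-path of $H$ avoiding $\{\pi(u),\pi(v)\}$ back to a forbidden path in $G$, while the paper projects down to $H$ and observes there are at most two internally disjoint $(\pi(a),\pi(b))$-paths; these are the same argument read in opposite directions.
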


\begin{proof}

Consider any $L \subseteq X$. Suppose for a contradiction that $\{G_{x} \ | \ x \in V(H)\}$ is a model of an $H(X)$-minor.  Consider the case where $b,c \in L$ and $a \in A \setminus \{u,v\}$, $d \in B \setminus \{u,v\}$, and $a \in G_{a}$, and $d \in G_{d}$. Then $V(G_{a}) \subseteq A \setminus \{u,v\}$ and $V(G_{d}) \subseteq B \setminus \{u,v\}$. But then contracting each branch set down to a vertex, there are at most two internally disjoint $(a,d)$-paths, contradicting that $H$ is $3$-connected. The other cases follow similarly. 
\end{proof}

\begin{figure}
\begin{center}
\includegraphics[scale =0.5]{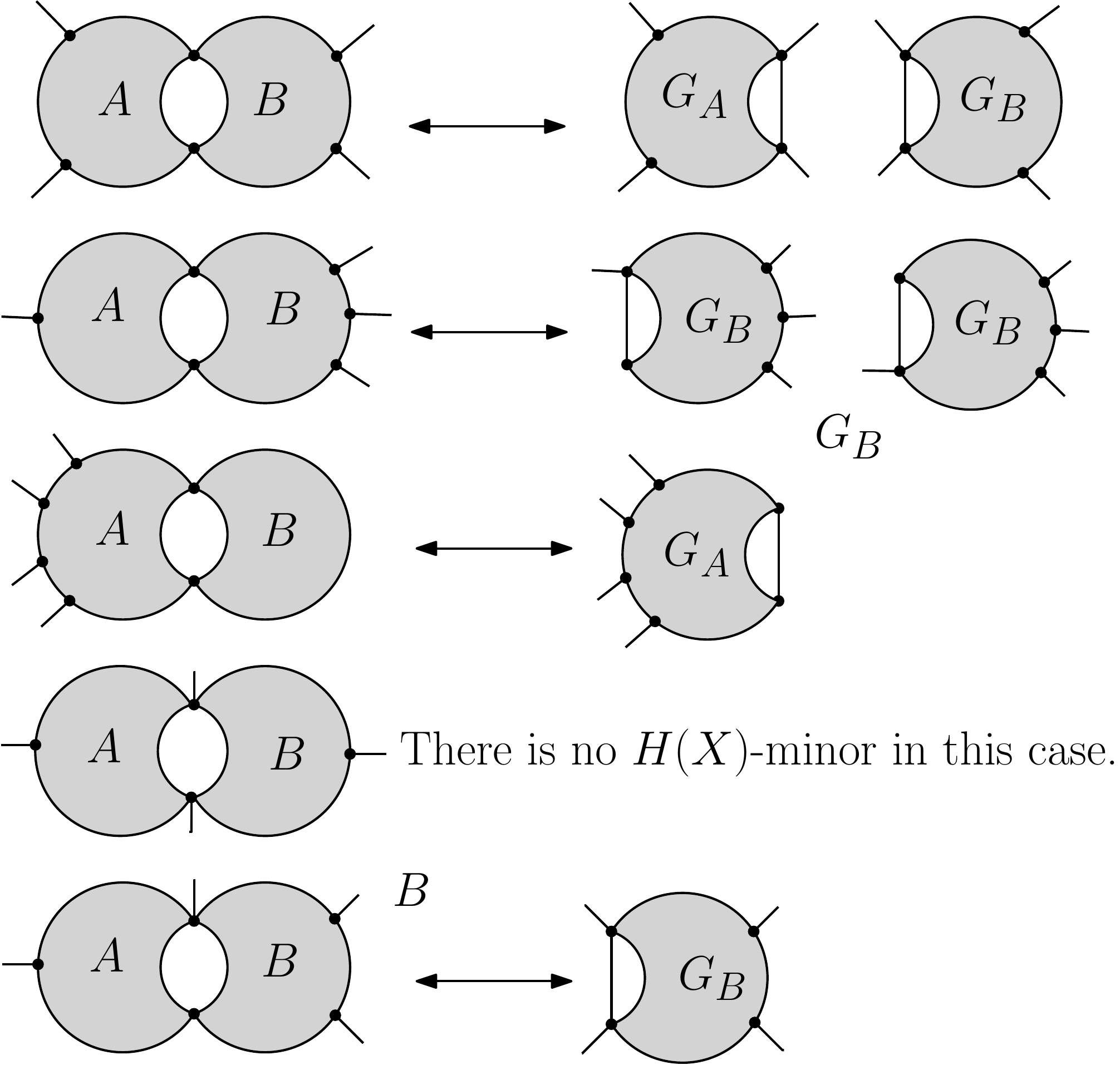}
\caption{The $2$-connected reductions. Edges with only one endpoint represent vertices from $X$.}
\label{2connectivitylemmas}
\end{center}
\end{figure}

\begin{lemma}
\label{oneterminalinthecut2conn}
Suppose $u = a$, and that there is exactly one vertex of $X$ in $A \setminus \{u,v\}$ and two vertices of $X$ in $B \setminus \{u,v\}$. Let $X_{A} = \{u,v,(X \cap B \setminus \{u,v\})\}$. For each $\pi \in \mathcal{F}$, define $\pi_{A} = \pi$ on $X_{A} \setminus \{v\}$ and let $\pi_{A}(v) = \pi(x)$ where $x \in X \setminus \{u, (X \cap A \setminus \{u,v\})\}$. Then $G$ has an $H(X)$-minor if and only if $G_{A}$ has an $H(X_{A})$-minor. 
\end{lemma}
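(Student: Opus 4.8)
The plan is to prove both implications, treating this as the variant of Lemma~\ref{oneterminalonesideothersother} in which the single ``$A$-side'' root coincides with the cut vertex $u$. Throughout, write $x$ for the unique vertex of $X$ lying in $A\setminus\{u,v\}$, so that $X=\{a,x\}\cup(X\cap(B\setminus\{u,v\}))$ with $a=u$ and $v\notin X$; and let $\pi_A$ be as in the statement, i.e.\ $\pi_A$ agrees with $\pi$ on $X_A\setminus\{v\}$ and $\pi_A(v)=\pi(x)$ (this map is injective since $\pi$ is and $a\neq x$). The reduction keeps the side of the separation carrying two roots and lets the cut vertex $v$ play the role of $x$.

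For the ``if'' direction, suppose the reduced graph has a rooted $H$-minor with the prescribed roots. Since $G$ is $2$-connected and $A\setminus\{u,v\}\neq\emptyset$, the subgraph $G[A\setminus\{u\}]$ is connected: every $w\in A\setminus\{u,v\}$ is joined to $v$ by a path of $G$ avoiding $u$ (Menger's Theorem), and, because $\{u,v\}$ separates $A$ from $B$, such a path stays inside $A$; moreover $u$ has a neighbour in $A\setminus\{u\}$, since otherwise deleting $v$ would disconnect $A\setminus\{u,v\}$ from the rest of $G$, contradicting $2$-connectivity. Hence contracting $G[A\setminus\{u\}]$ to a single vertex turns $G$ into a graph that contains the reduced graph, with the contracted vertex in the role of $v$ and the edge $uv$ present. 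Lifting the $H$-model back through this contraction, the branch set indexed by $\pi(x)$ absorbs all of $A\setminus\{u\}$ and thus contains both $v$ and $x$, while the branch set of $\pi(a)$ still contains $u=a$; reading off the roots shows $G$ has an $H(X)$-minor.

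For the ``only if'' direction, start from an $H(X)$-model $\{G_w:w\in V(H)\}$ of $G$. Let $p=\pi(a)$, so $u\in G_p$, and let $q$ be the index with $v\in G_q$. The key structural step is to show $p\neq q$ and that every branch set meeting $A$ is $G_p$ or $G_q$. Because $\{u,v\}$ is a $2$-cut and each $G_w$ is connected, any branch set not containing $u$ or $v$ lies entirely in $A\setminus\{u,v\}$ or entirely in $B\setminus\{u,v\}$. Writing $H_A$ (resp.\ $H_B$) for the set of indices whose branch set meets $A\setminus\{u,v\}$ (resp.\ $B\setminus\{u,v\}$), one gets $V(H)=H_A\cup H_B\cup\{p,q\}$ and, crucially, no edge of $H$ joins $H_A\setminus\{p,q\}$ to $H_B\setminus\{p,q\}$ (such an edge would require an edge of $G$ between $A\setminus\{u,v\}$ and $B\setminus\{u,v\}$). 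Since $H$ is $3$-connected, the two-element set $\{p,q\}$ cannot separate $H$; as at least one of the two $B$-side roots has index in $H_B\setminus\{p,q\}$ (they are distinct and both different from $p$), we conclude $H_A\setminus\{p,q\}=\emptyset$. In particular the branch set containing $x\in A\setminus\{u,v\}$ is $G_p$ or $G_q$, and it is not $G_p=G_{\pi(a)}$ by injectivity, so it is $G_q$; this also forces $p\neq q$. Finally, restrict to the $B$-side: set $G'_w=G_B[V(G_w)\cap B]$. For $w\notin\{p,q\}$ we have $G_w\subseteq B\setminus\{u,v\}$, so $G'_w=G_w$ is connected; for $w\in\{p,q\}$, any detour of a path of $G[V(G_w)]$ into $A\setminus\{u,v\}$ must enter and leave through $V(G_w)\cap\{u,v\}$, which (as $p\neq q$) is a single vertex, so no such detour exists and $G'_w$ is connected. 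The needed adjacencies survive because any edge of $G$ witnessing an edge of $H$ that is not contained in $B$ must join $G_p$ to $G_q$, and the edge $uv\in E(G_B)$ with $u\in G'_p$, $v\in G'_q$ supplies it. The roots land as required, giving the reduced graph the desired rooted $H$-minor.

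I expect the main obstacle to be this ``only if'' direction — specifically the bookkeeping establishing that restricting the branch sets to $B$ preserves connectivity and that the single edge $uv$ is available as the unique link between $G_p$ and $G_q$. The essential input is the $3$-connectivity of $H$: it is exactly what makes the two-vertex separator argument work, ruling out a branch set becoming trapped inside $A\setminus\{u,v\}$ and pinning down that the branch set of the ``displaced'' root $x$ is forced to be the one containing $v$.
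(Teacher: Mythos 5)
Your proof is correct and follows essentially the same route as the paper's: contract the $A$-side onto $v$ for the easy direction, and for the converse use $3$-connectivity of $H$ to rule out any branch set trapped in $A\setminus\{u,v\}$ and then restrict the branch sets to the $B$-side. You phrase the $3$-connectivity step as ``$\{p,q\}$ cannot separate $H$'' whereas the paper counts internally disjoint paths between a branch set confined to $A\setminus\{u,v\}$ and one confined to $B\setminus\{u,v\}$ --- these are the same argument --- and you also spell out the verification that the restricted branch sets $G_B[V(G_w)\cap B]$ stay connected, which the paper asserts without comment. One remark: you correctly kept the $B$-side of the separation throughout; the lemma statement itself (and the first half of the paper's proof) says $G_A$ where it should say $G_B$, since $X_A\subseteq B$.
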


\begin{proof}

 Suppose $G_{A}$ has an $H(X_{A})$-minor. Then by Menger's Theorem there is a path from the vertex of $X \cap A \setminus \{u,v\}$ to $v$ not containing $u$. Thus we can contract $G[A]$ to  $\{u,v\}$ such that we obtain the edge $uv$ and the vertex of $X$ in $A \setminus \{u,v\}$ is contracted to $v$. The resulting graph is isomorphic to $G_{A}$ and thus $G$ has an $H(X)$-minor.

Conversely, let $\{G_{x} | x \in V(H)\}$ be a model of an $H(X)$-minor in $G$. Let $b \in A \setminus \{u,v\}$ and suppose $a \in G_{a}$, $b \in G_{b}$, $c \in G_{c}$ and $d \in G_{d}$. First, suppose for some $y \in V(H)$,  $V(G_{y}) \subseteq A \setminus \{u,v\}$.  As $a =u$,  either $V(G_{c}) \subseteq B \setminus \{u,v\}$ or $V(G_{d}) \subseteq B \setminus \{u,v\}$. Without loss of generality, suppose that $V(G_{c}) \subseteq B \setminus \{u,v\}$.  Then if we contract all of the branch sets to a vertex, there are at most two internally disjoint $(y,c)$-paths, contradicting that $H$ is $3$-connected. Thus the only two branch sets with vertices in $A$ are $G_{a}$ and $G_{b}$. Since $uv \in E(G_{B})$, we have that $\{G'_{x} = G_{B}[V(G_{x}) \cap V(G_{B})] \; | x \in V(H)\}$ is a model of an $H(X)$-minor of $G_{B}$. 
 
\end{proof}

\section{A characterization of graphs without a $K_{4}(X)$ or $W_{4}(X)$ minors}

First we define what we mean by $W_{4}(X)$-minors. Let $G$ be a graph and $X = \{a,b,c,d\} \subseteq V(G)$. Let $\mathcal{F}$ be the family of maps from $X$ to $V(W_{4})$ such that each vertex of $X$ is mapped to a distinct rim vertex of $W_{4}$.  For the purposes of this thesis, a $W_{4}(X)$-minor refers to the $X$ and $\mathcal{F}$ given above (see Figure \ref{W4(X)modelpic}).

In this section we will prove that all planar $3$-connected graphs have either a $W_{4}(X)$-minor or a $K_{4}(X)$-minor, and then we will use that result to show that every $3$-connected graph has either a $W_{4}(X)$-minor or a $K_{4}(X)$-minor (see Theorem \ref{3connectivityW4K4}). Since the graphs $W_{4}$ and $K_{4}$ are $3$-connected, by the previous sections discussion, this completely characterizes graphs with no $W_{4}(X)$ or $K_{4}(X)$-minor.  
\begin{figure}
\begin{center}
\includegraphics[scale =0.4]{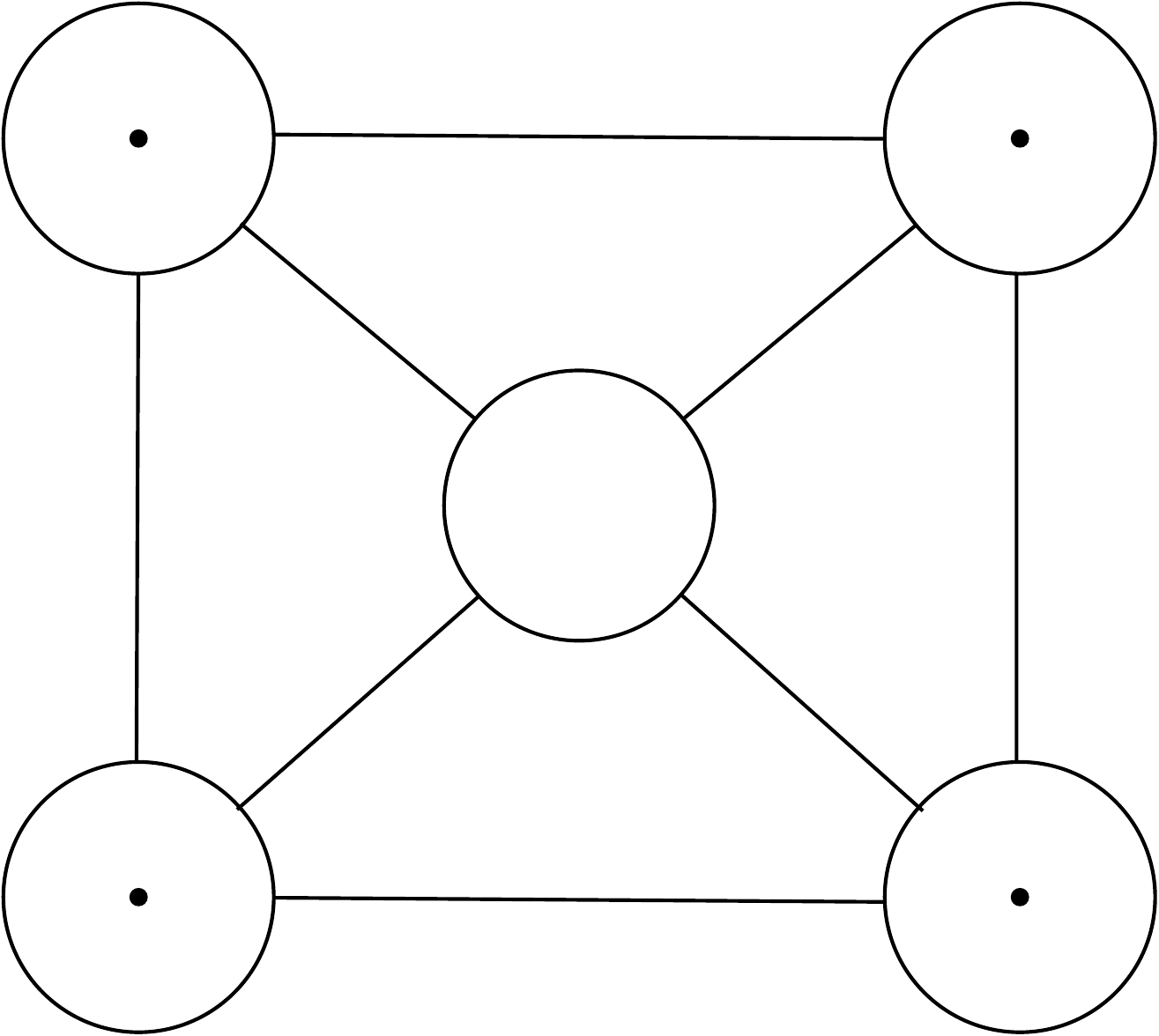}
\caption{A model of a $W_{4}(X)$-minor in some graph $G$. Each circle represents a connected subgraph of $G$. The vertices represent vertices from $X$}
\label{W4(X)modelpic}
\end{center}
\end{figure}

\begin{lemma}
\label{3connectedplanarpaths}
Let $G$ be a $3$-connected planar graph and let $C$ be a facial cycle of $G$. Suppose $v,w \in V(C)$ and $vw \not \in E(G)$. Then there is a $(v,w)$-path $P$ such that $V(P) \cap V(C) = \{v,w\}$.  
\end{lemma}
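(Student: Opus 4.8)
The plan is to reduce the statement to a connectivity fact about the ``interior'' of the facial cycle $C$, and then invoke standard properties of facial cycles in $3$-connected plane graphs. Throughout, fix a planar embedding of $G$ with $C$ bounding the outer face and let $I = V(G)\setminus V(C)$ be the set of interior vertices.

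First I would observe that it suffices to show that $v$ and $w$ lie in the same connected component of $H := G - (V(C)\setminus\{v,w\})$: since $V(H)\cap V(C) = \{v,w\}$, any $(v,w)$-path in $H$ is a path $P$ in $G$ with $V(P)\cap V(C) = \{v,w\}$, and note $V(H) = I\cup\{v,w\}$. Since $vw\notin E(G)$ and $C$ is a cycle, $|V(C)|\ge 4$; if $|V(C)| = 4$ then $|V(C)\setminus\{v,w\}| = 2$ and $H$ is obtained from $G$ by deleting two vertices, hence connected because $G$ is $3$-connected, so we are done. Thus the substantive case is $|V(C)|\ge 5$.

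Next I would invoke two classical facts about a facial cycle $C$ of a $3$-connected plane graph. Fact (i): $C$ is induced. Indeed, a chord $xy$ of $C$ must be drawn in the interior of $C$, and since $x,y$ are non-consecutive on $C$, one of the two $C$-arcs between $x$ and $y$ together with $xy$ bounds a disk containing at least one further vertex; all vertices strictly inside this disk, together with the interior vertices of that arc, are separated from the (nonempty) remainder of $G$ by $\{x,y\}$, contradicting $3$-connectivity. Fact (ii): $C$ is non-separating, i.e.\ $G[I]$ is connected; this is the statement that the faces of a $3$-connected plane graph are exactly its induced non-separating cycles, and can be proved by the region-splitting/Jordan-curve argument: if $D$ is a component of $G[I]$, then $N_G(D)\subseteq V(C)$ is a separator of size at least $3$, and contracting $D$ and splitting the interior disk along three of its attachment edges confines some other component of $G[I]$ to a proper subarc of $C$; iterating gives a contradiction. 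With these in hand: by (i) the only neighbours of $v$ on $C$ are its two $C$-neighbours, so $\deg_G(v)\ge 3$ forces a neighbour $v'\in N_G(v)\cap I$, and similarly a neighbour $w'\in N_G(w)\cap I$; by (ii) there is a $(v',w')$-path $Q$ in $G[I]$, and then $v\,v'\,Q\,w'\,w$ is a $(v,w)$-path inside $G[I\cup\{v,w\}] = H$, as required.

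The main obstacle is Fact (ii), the non-separating property of facial cycles; this is where planarity (ultimately the Jordan Curve Theorem, already available in the excerpt) is genuinely needed, and in the write-up I would either cite it as standard or include the region-splitting argument. An alternative route that avoids Fact (ii) is to suppose $S := V(C)\setminus\{v,w\}$ separates $v$ from $w$, extract (via Menger and $3$-connectivity) a minimal such separator $S'\subseteq S$ with $|S'|\ge 3$, observe that $S'$ must meet both $v$--$w$ arcs of $C$ (otherwise one arc connects them in $G - S'$), and then obtain a crossing contradiction from planarity using $|S'|$ internally disjoint $(v,w)$-paths each meeting $S'$ once together with the fact that $C$ bounds a face; this is no shorter and still rests on planarity, so I would present the argument via (i) and (ii).
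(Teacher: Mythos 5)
Your proof is correct, but it takes a genuinely different route from the paper's. The paper works directly with three internally disjoint $(v,w)$-paths $P_1,P_2,P_3$ (from Menger and $3$-connectivity) and the two facial $(v,w)$-walks $F_1,F_2$ of $C$; it argues via the Jordan Curve Theorem that each $P_i$ can meet at most one of $F_1,F_2$ beyond $\{v,w\}$, and that at most one of the $P_i$ can meet each $F_j$ beyond $\{v,w\}$, so some $P_i$ is internally disjoint from $C$. You instead reduce the claim to the statement that $v$ and $w$ lie in the same component of $G - (V(C)\setminus\{v,w\})$, and obtain this from the two structural facts that a facial cycle of a $3$-connected plane graph is induced and non-separating (so $v$ and $w$ each have a neighbour in the connected interior $G[I]$). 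Your approach is more modular and conceptually cleaner, and the $|V(C)|=4$ shortcut via deleting two vertices is a nice touch; the cost is that it outsources the real work to a non-trivial classical theorem (Tutte/Whitney: faces of a $3$-connected plane graph are exactly the induced non-separating cycles). Your sketch of the non-separating property is a bit loose at the end: choosing three arbitrary attachments of a component $D_1$ and then ``iterating'' does not quite close the argument, since a sub-arc cut off by two of the three chosen attachments may still contain further attachments of $D_1$, preventing the $2$-cut. The clean version takes two \emph{consecutive} attachments $a,b$ of $D_1$ such that another component $D_2$ lies in the region bounded by the arc $(a,b)$ and a piece of the $D_1$-bridge; then $\{a,b\}$ immediately separates $D_2$ (and the interior vertices of that arc) from $D_1$, no iteration needed. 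With that tightening, or simply citing the induced-non-separating characterization as standard, your argument is a valid and arguably cleaner alternative to the paper's more hands-on three-paths case analysis.
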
 

\begin{proof}
  Let $P_{1},P_{2},P_{3}$ be internally disjoint $(v,w)$-paths, and let $F_{1},F_{2}$ be the two facial walks of $C$ from $v$ to $w$. If any two of $P_{1},P_{2},P_{3}$ are the facial walks then we are done.

Therefore we assume at most one of $P_{1},P_{2},P_{3}$ is a facial walk. First we claim that $i \in \{1,2,3\}$, either $V(P_{i}) \cap V(F_{1}) = \{v,w\}$ or $V(P_{i}) \cap V(F_{2}) = \{v,w\}$. If not, then there exists an $i \in \{1,2,3\}$ such that we have two vertices $x_{1} \in V(F_{1}) \setminus \{v,w\}$ and $x_{2} \in V(F_{2}) \setminus \{u,v\}$ where $x_{1},x_{2} \in V(P_{i})$. Then consider the $(x_{1},x_{2})$-subpath on $P_{i}$ which we denote $P_{x_{1},x_{2}}$. We may assume that this subpath has no additional vertices from $F_{1}$ or $F_{2}$ (if it did, then we change the subpath to one with fewer vertices from $F_{1}$ and $F_{2}$). Note that this subpath partitions $C$ into two cycles which separate $v$ and $w$. But then as $C$ is a facial cycle, and $G$ is planar, and $G$ is $3$-connected any path from $u$ to $v$ intersects the subpath $P_{x_{1},x_{2}}$, contradicting that $P_{1},P_{2},P_{3}$ were three internally disjoint $(v,w)$-paths.

Now suppose $\{v,w\} \subsetneq V(P_{1}) \cap V(F_{1})$. We claim that both $V(P_{2}) \cap V(F_{1}) = \{v,w\}$ and $V(P_{3}) \cap V(F_{1}) = \{v,w\}$. By the above argument, $V(P_{1}) \cap V(F_{2}) = \{v,w\}$.  Therefore every vertex of $F_{1}$ either belongs to $P_{1}$ or lies in a cycle created from some subpath of $P_{1}$ and a subpath of $F_{1}$. But then since $C$ is a facial cycle, $G$ is planar, and $P_{2}$ and $P_{3}$ are internally disjoint from $P_{1}$, $P_{2}$ and $P_{3}$ both are internally disjoint from $F_{1}$. By similar arguments, at most one of $P_{2}$, $P_{3}$ contains a vertex in $F_{2}$ which is not $v,w$. Therefore at least one of $P_{1}$,$P_{2}$, and $P_{3}$ has no vertex from $C$ except for $v$ and $w$, completing the claim.  
\end{proof}

\begin{lemma}
\label{Planar $3$-connected}
Let $G$ be a $3$-connected planar graph and let $X = \{a,b,c,d\} \subseteq V(G)$. Then $G$ has either a $K_{4}(X)$-minor or a $W_{4}(X)$-minor.
\end{lemma}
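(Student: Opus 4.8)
The plan is to split on whether $X$ lies on a common face of $G$. If no face of $G$ contains all of $a,b,c,d$, then by Corollary \ref{k4(x)planar} the graph $G$ already has a $K_4(X)$-minor and we are done. So the real work is the facial case: I must show that if $a,b,c,d$ all lie on one face, then $G$ has a $W_4(X)$-minor. I will produce an explicit $W_4(X)$-model whose rim branch sets are the four arcs of the facial cycle between consecutive terminals, and whose hub is built by crossing two ``diagonal'' paths through the interior.

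\textbf{Setting up the facial case.} Suppose all of $X$ lies on a face; since $G$ is $3$-connected and planar this face is bounded by a cycle $C$ with $X\subseteq V(C)$, and after relabelling we may assume $a,b,c,d$ occur in this cyclic order on $C$. (The easy case $|V(G)|=4$, where $G=K_4$ and the $K_4(X)$-minor is trivial, can be dispatched first; also note $V(G)\neq V(C)$, as otherwise $G$ would be a $3$-connected outerplanar graph on at least four vertices, which is impossible.) The key structural claim is that $ac\notin E(G)$ and $bd\notin E(G)$. Indeed, $ac$ is not an edge of $C$ because $a$ and $c$ are not consecutive on $C$ (each arc between them contains a terminal), so if $ac\in E(G)$ it is a chord drawn on the non-facial side of $C$; this chord together with the two arcs of $C$ between $a$ and $c$ bounds two closed regions, one containing $b$ and one containing $d$. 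If both regions are faces then $G$ is just $C$ together with the chord $ac$, which is not $3$-connected; hence one region, say the one containing $b$, has an interior vertex $u$, and then $\{a,c\}$ separates $u$ from $d$, contradicting $3$-connectivity. The argument for $bd$ is symmetric. Now apply Lemma \ref{3connectedplanarpaths} to the facial cycle $C$: there is an $(a,c)$-path $Q$ with $V(Q)\cap V(C)=\{a,c\}$ and a $(b,d)$-path $Q'$ with $V(Q')\cap V(C)=\{b,d\}$.

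\textbf{Crossing the diagonals and assembling the minor.} By planarity and the Jordan Curve Theorem, $Q$ and $Q'$ must share a vertex: drawing $Q$ in the closed disk on the non-facial side of $C$, it divides that disk into two sub-disks, one having the arc of $C$ through $b$ on its boundary and the other having the arc through $d$; since all internal vertices of $Q'$ avoid $C$, any walk from $b$ to $d$ of this kind must meet $Q$. Pick $m\in V(Q)\cap V(Q')$; then $m\notin V(C)$ (as $\{a,c\}\cap\{b,d\}=\emptyset$), so $m\notin X$. Since $ac,bd\notin E(G)$, both $Q-\{a,c\}$ and $Q'-\{b,d\}$ are nonempty paths, and they share $m$, so $B_5:=(Q\cup Q')-X$ is a connected subgraph of $G-V(C)$; moreover the neighbour of $a$ on $Q$, the neighbour of $c$ on $Q$, the neighbour of $b$ on $Q'$, and the neighbour of $d$ on $Q'$ all lie in $B_5$, so $B_5$ is adjacent to each of $a,b,c,d$. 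For the rim, let $P_1,P_2,P_3,P_4$ be the arcs of $C$ joining $a$--$b$, $b$--$c$, $c$--$d$, $d$--$a$ respectively, and set $B_1=V(P_1)\setminus\{b\}$, $B_2=V(P_2)\setminus\{c\}$, $B_3=V(P_3)\setminus\{d\}$, $B_4=V(P_4)\setminus\{a\}$; these are pairwise disjoint connected subpaths of $C$ with $a\in B_1$, $b\in B_2$, $c\in B_3$, $d\in B_4$ and with the cyclic adjacencies $B_1B_2$, $B_2B_3$, $B_3B_4$, $B_4B_1$, and they are disjoint from $B_5\subseteq G-V(C)$. Thus $\{B_1,B_2,B_3,B_4,B_5\}$ is a model of $W_4$ in which $a,b,c,d$ occupy distinct rim branch sets in cyclic order and $B_5$ is the hub, i.e.\ $G$ has a $W_4(X)$-minor.

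\textbf{Main obstacle.} The only genuinely delicate points are the chord-exclusion claim ($ac,bd\notin E(G)$) and the claim that $Q$ and $Q'$ are forced to intersect; both are soft topological arguments combining $3$-connectivity with planarity (via the Jordan Curve Theorem and Lemma \ref{3connectedplanarpaths}), and neither requires any computation. Once these are in hand, the construction of the $W_4(X)$-model from the four facial arcs and the crossed diagonal paths is routine.
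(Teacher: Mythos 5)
Your proof is correct and follows essentially the same approach as the paper: reduce to the facial case via Corollary \ref{k4(x)planar}, exclude the chords $ac$ and $bd$, apply Lemma \ref{3connectedplanarpaths} to obtain diagonal paths internally disjoint from the facial cycle, and invoke the Jordan Curve Theorem to force them to cross, giving the hub of a $W_4(X)$-model. The only cosmetic differences are in the chord-exclusion step (the paper argues via Menger and a $(b,d)$-path avoiding $\{a,c\}$, while you exhibit a $2$-cut) and in that you write out the branch sets explicitly, but the substance is identical.
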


\begin{proof}
We assume $G$ does not contain a $K_{4}(X)$-minor. Then by Corollary \ref{k4(x)planar},  $a,b,c$ and $d$ lie on a facial cycle, $F$ in that order.

First, suppose that $ac \in E(G)$. Without loss of generality we may assume $ac$ lies in the interior of $F$. Notice either the edge $bd \in E(G)$ or by Menger's Theorem  there is a $(b,d)$-path $P$ where $a,c \not \in V(P)$. In either case, this would contradict $F$ being a facial cycle, and $G$ not having a $K_{4}(X)$-minor. 

Therefore we can assume that $ac \not \in E(G)$. Then by Lemma \ref{3connectedplanarpaths} there is an $(a,c)$-path, $P_{a,c}$, which is internally disjoint from $F$. Without loss of generality we may assume that $P_{a,c}$ lies in the interior of $F$. Let $F_{a,c}$ be a facial walk from $a$ to $c$. Notice $C = P_{a,c} \cup F_{a,c}$ partitions the interior of $F$ into two regions, and that $b$ and $d$ lie in distinct regions. By Lemma \ref{3connectedplanarpaths}, we also have a $(b,d)$-path, $P_{b,d}$, which is disjoint from $F$. If $P_{b,d}$ lies on the exterior of $F$, then this would contradict that $F$ is a face. Thus $P_{b,d}$ lies in the interior of $F$. As $b$ and $d$ lie on differing sides of the partition of $C$, by the Jordan Curve Theorem, $P_{a,c}$ intersects $P_{b,d}$ at some vertex $v$.
   Then, contracting $F$ down to a four cycle on $a,b,c,d$ and contracting the subpaths from $v$ to $a,b,c,d$ on $P_{a,c}$ and $P_{b,d}$  internal vertices of the paths $P_{a,c}$ and $P_{b,d}$ to a vertex give a $W_{4}(X)$-minor. 
\end{proof}

Now that we have shown that all planar $3$-connected graphs have a $W_{4}(X)$ or a $K_{4}(X)$-minor, we will prove that all $3$-connected graphs have a $W_{4}(X)$ or $K_{4}(X)$-minor by reducing to the planar case.

\begin{lemma}
\label{planarreduction}
Let $G$ be a $3$-connected graph and $X = \{a,b,c,d\} \subseteq V(G)$. Suppose $G$ is a spanning subgraph of an $\{a,b,c,d\}$-web, $H^{+} = (H,F)$. Then there is a $3$-connected planar graph $K$ such that $K$ is a minor of $G$.
\end{lemma}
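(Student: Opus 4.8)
The plan is to build $K$ by contracting the clique gadgets $F_T$ of the web down into the planar ``skeleton'' $H$, using $3$-connectivity of $G$ both to control how each gadget attaches and to keep the contracted graph $3$-connected. Since the contractions will only ever touch gadget vertices, the vertices $a,b,c,d$ (which lie on the outer $4$-cycle of $H$, hence in no gadget) survive, so in fact $\{a,b,c,d\}\subseteq V(K)$; this is what makes the lemma usable in tandem with the planar case (Lemma~\ref{Planar $3$-connected}).

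Fix a plane embedding of $H=(V(H),E(H))$ whose outer face is the $4$-cycle through $a,b,c,d$, whose bounded faces are triangles, and in which each gadget $F_T$ is drawn inside its triangular face $T$. The first step is the observation that $3$-connectivity forces every gadget to be ``$3$-attached'': for each triangular face $T=\{x,y,z\}$ of $H$ and each connected component $A$ of $G[F_T]$ we have $N_G(A)=\{x,y,z\}$. Indeed, in $H^{+}$ the only neighbours of $F_T$ outside $F_T$ lie in $\{x,y,z\}$, so $N_G(A)\subseteq\{x,y,z\}$; if the containment were proper, then $N_G(A)$ would be a vertex cut of $G$ of size at most $2$ separating $A$ from the remainder of the web, which is nonempty (it still contains the third corner of $T$ and at least two of $a,b,c,d$), contradicting $3$-connectivity of $G$.

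Now I form $K$. Assign each triangular face $T$ with a nonempty gadget to one of its corners; for a vertex $w\in V(H)$ let $B_w$ be $\{w\}$ together with one chosen connected component of $G[F_T]$ for each face $T$ assigned to $w$. The sets $B_w$ are pairwise disjoint, and each $B_w$ is connected because every chosen component is adjacent to $w$. Delete every gadget component not used in some $B_w$, contract each $B_w$ to the single vertex $w$, and simplify; call the result $K$. Then $V(K)=V(H)$, and every edge of $K$ is either an edge already present in $G$ among $V(H)$ (hence an edge of $H$) or an edge from a contraction-corner $w$ to one of the other two corners of a face assigned to $w$ (hence an edge of $H$, since $T$ is a triangle of $H$). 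Thus $K$ is a spanning subgraph of the planar graph $H$, so $K$ is planar; $K$ is a minor of $G$ by construction; and $a,b,c,d$, never being in a gadget, remain as vertices of $K$.

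The main obstacle is showing $K$ is $3$-connected. The strategy is cut lifting: any set $S$ with $|S|\le 2$ separating $K$ lifts, by replacing each $w\in S$ by its preimage ($B_w$, or the single vertex itself when $w$ was not a contraction-corner), to a set $P\subseteq V(G)$ with $G-P$ disconnected, and one derives a contradiction with the $3$-connectivity of $G$ from the fact that each $B_w$ is connected and joined to the rest of $G$ only through $N_G(w)$ together with the ``far'' corners of the faces assigned to $w$. Two technical wrinkles must be handled here: (i) the face-to-corner assignment (and, if needed, the order in which the edges inside each $B_w$ are contracted) must be chosen so that no intermediate $3$-cut of $G$ is created by a single contraction; and (ii) simplification can momentarily leave a vertex of degree at most $2$ in $K$ — precisely when $G$ had a degree-$3$ vertex all of whose edges ran into a single $B_w$ — which is dealt with by suppressing that vertex via contraction of an incident edge, an operation that preserves planarity and the property of being a minor of $G$, after which the cut-lifting argument is rerun. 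This connectivity bookkeeping is where essentially all of the work lies; the rest follows directly from the definition of a web (Class $\mathcal D$ in Theorem~\ref{k4free}) and the structure theorem (Corollary~\ref{k4(x)3conn}) that supplied the hypothesis.
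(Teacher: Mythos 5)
Your construction is genuinely different from the paper's, and the difference matters: you contract every surviving gadget component onto a corner of its triangle so that $V(K)=V(H)$, whereas the paper keeps one gadget component per triangle $T$ as a distinct vertex $v_T$ (contracting the others onto corners), so that $K$ is $H$ together with one extra vertex inside each nonempty triangular face. The paper's $v_T$ is adjacent to all three corners of $T$ (this follows from $3$-connectivity of $G$, which is the same ``$3$-attached'' fact you observe), and $3$-connectivity of $K$ is then immediate by rerouting any of the three disjoint $(x,y)$-paths of $G$ through $v_T$ whenever the path enters $F_T$.

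The gap in your version is the $3$-connectivity argument, and it is not merely bookkeeping. First, the cut-lifting idea as you state it does not give a contradiction: if $S=\{w_1,w_2\}$ separates $K$, the lifted set $P=B_{w_1}\cup B_{w_2}$ generally has more than two vertices, so ``$G-P$ is disconnected'' is perfectly compatible with $G$ being $3$-connected. Second, and more seriously, a bad face-to-corner assignment really can destroy $3$-connectivity, so the existence of a good assignment is a real obligation, not a technical wrinkle. Concretely, let $H=W_4$ with hub $e$, rim $abcd$, and a single gadget vertex $u$ in the face $abe$ adjacent to $a,b,e$; take $G=H^{+}\setminus\{ab\}$. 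One checks $G$ is $3$-connected. If you assign the face $abe$ to the corner $e$, then $B_e=\{e,u\}$ contracts to $e$ without creating the edge $ab$, and $K=W_4\setminus\{ab\}$ has the $2$-cut $\{d,e\}$; its lift $\{d,e,u\}$ is a $3$-cut of $G$, so no contradiction with $3$-connectivity of $G$ ever arises. (Assigning $abe$ to $a$ or $b$ instead does work in this example, since the contraction restores the missing $ab$.) So your proof as written is incomplete: you would need to specify a rule for the assignment and prove that it always yields a $3$-connected $K$, which is nontrivial and is exactly the difficulty the paper's construction sidesteps by keeping $v_T$.
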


\begin{proof}
For every triangle $T \in H$, consider the graph $G[V(F_{T})]$ and let $C_{1}, \ldots, C_{n}$ be the connected components of $G[V(F_{T})]$. Now for every triangle $T \in H$, contract $C_{1}$ down to a vertex, which we will call $v_{T}$, and contract all of $C_{2}, \ldots, C_{n}$ to any of the vertices of $T$, removing any parallel edges or loops created.   Let $K$ be the graph obtained from $G$ after applying the above construction. We claim $K$ is planar and $3$-connected. 

First we show $G$ is $3$-connected. Notice for each $T$ where $V(T) = \{x_{1},x_{2},x_{3}\}$, there is a vertex $v \in V(C_{1})$ such that $vx_{i} \in E(G)$ for all $i \in \{1,2,3\}$. Thus $v_{T}$ is adjacent to $x_{i}$ for all $i \in \{1,2,3\}$. Then $v_{T}$ has $3$ internally disjoint paths to any other vertex of $K$ as $G$ is $3$ connected. Now let $x,y 
\in V(H)$ and consider $3$ internally disjoint $(x,y)$-paths $P_{1},P_{2}$ and $P_{3}$ in $G$. Now notice that for every triangle $T \in H$, at most one of $P_{1},P_{2}$ or $P_{3}$ uses vertices from $V(F_{T})$, as $|V(T)| = 3$. Therefore in $K$, if necessary, we can reroute the path using vertices from $V(F_{T})$ to use $v_{T}$, and thus we still have $3$ internally disjoint $(x,y)$-paths. Therefore $K$ is $3$-connected.
 
 So it suffices to show that $K$ is planar. Notice that if given a some planar embedding of $H$, to all the faces bounded by a triangle, we can add a vertex to the interior of the face and make the vertex adjacent to every vertex in the triangle and remain planar. The graph from that construction contains $K$ as a subgraph, so $K$ is planar, completing the proof.  
\end{proof}

\begin{theorem}
\label{3connectivityW4K4}
Let $G$ be a $3$-connected graph and $X = \{a,b,c,d\} \subseteq V(G)$. Then $G$ either has a $K_{4}(X)$-minor or a $W_{4}(X)$-minor.
\end{theorem}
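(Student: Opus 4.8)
Looking at Theorem \ref{3connectivityW4K4}, I need to prove that every $3$-connected graph $G$ with $X = \{a,b,c,d\} \subseteq V(G)$ has a $K_4(X)$-minor or a $W_4(X)$-minor.

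\textbf{The plan.} The proof will combine the three immediately preceding results. The strategy is a case split on whether $G$ has a $K_4(X)$-minor. If it does, we are done. If it does not, then Corollary \ref{k4(x)3conn} applies: $G$ is a spanning subgraph of an $\{a,b,c,d\}$-web $H^{+} = (H,F)$. Now I would invoke Lemma \ref{planarreduction}, which produces a $3$-connected planar minor $K$ of $G$; I need to carry this out slightly more carefully than the bare statement, because I want $a,b,c,d$ to survive as distinct vertices in $K$ so that the rooted-minor structure is preserved. Inspecting the construction in Lemma \ref{planarreduction}, the only contractions performed are inside the clique-parts $F_T$ attached to triangles of $H$; since $a,b,c,d$ are vertices of $H$ (they lie on the outer $4$-cycle $C_4$ of the web's planar skeleton and are not inside any $F_T$), none of them gets contracted or identified with another terminal. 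Hence $K$ is a $3$-connected planar graph, $X \subseteq V(K)$, and $K$ is a minor of $G$ obtained without identifying terminals.

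\textbf{Finishing.} Apply Lemma \ref{Planar $3$-connected} to the $3$-connected planar graph $K$ with terminal set $X$: either $K$ has a $K_4(X)$-minor or $K$ has a $W_4(X)$-minor. Since $K$ is a minor of $G$ via a model in which $a,b,c,d$ remain in distinct (singleton) branch sets, any $K_4(X)$-minor or $W_4(X)$-minor of $K$ lifts to the corresponding rooted minor of $G$ — concretely, one composes the two models, replacing each branch set of the minor of $K$ by the union of the $G$-branch sets of $K$'s vertices it uses, and this preserves the property that each of $a,b,c,d$ lies in its own branch set. Therefore $G$ has a $K_4(X)$-minor or a $W_4(X)$-minor, completing the proof.

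\textbf{Main obstacle.} The only genuinely delicate point is verifying that the minor operations in Lemma \ref{planarreduction} respect the roots: I must confirm that the terminals $a,b,c,d$ of the web lie on the planar skeleton $H$ and not in any attached clique $F_T$, so that they are never contracted together, and that ``minor of $G$'' here really means ``rooted minor on $X$.'' Everything else is a routine invocation of Corollary \ref{k4(x)3conn}, Lemma \ref{planarreduction}, and Lemma \ref{Planar $3$-connected}, together with the standard transitivity of rooted minors (if $J$ is a rooted minor of $K$ on $X$ and $K$ is a rooted minor of $G$ on $X$, then $J$ is a rooted minor of $G$ on $X$), which follows directly from the branch-set formulation in Proposition \ref{branchsets}.
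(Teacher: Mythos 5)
Your proposal matches the paper's own proof: assume no $K_4(X)$-minor, invoke Corollary \ref{k4(x)3conn} to get a spanning subgraph of a web, apply Lemma \ref{planarreduction} to pass to a $3$-connected planar minor, and finish with Lemma \ref{Planar $3$-connected}. Your extra care in checking that the terminals survive the contraction to the planar minor is a welcome but minor elaboration of the same argument.
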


\begin{proof}
We may assume $G$ does not have a $K_{4}(X)$-minor. By Corollary \ref{k4(x)3conn}, $G$ is a spanning subgraph of an $\{a,b,c,d\}$-web. Then by Lemma \ref{planarreduction}, $G$ has a $3$-connected planar minor $K$. By Lemma \ref{Planar $3$-connected}, $K$ has $W_{4}(X)$-minor and thus $G$ has a $W_{4}(X)$-minor. 
\end{proof}

\subsection{An alternative characterization of $2$-connected graphs without $W_{4}(X)$ and $K_{4}(X)$-minors}

We have a characterization of graphs without $W_{4}(X)$ and $K_{4}(X)$ minors, given by Theorem \ref{3connectivityW4K4}, however it relies on reducing to the $3$-connected case, which will not be useful as $K_{2,4}$ and $L$ are both $2$-connected (and as every $3$-connected graph either has a $K_{4}(X)$ or $W_{4}(X)$-minor). Therefore it would be nice to have a characterization in terms of spanning subgraphs as in Theorem \ref{k4free}. This section gives such a characterization  (see Theorem \ref{w4cuts}).

\begin{lemma}
Let $G$ be $2$-connected and a spanning subgraph of a class $\mathcal{A},\mathcal{B}$ or $\mathcal{C}$ graph from Theorem \ref{k4free}. Then $G$ does not have a $W_{4}(X)$-minor. 
\end{lemma}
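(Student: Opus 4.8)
The plan is to reduce the statement to showing that the host graph $H^{+}=(H,F)$ itself has no $W_{4}(X)$-minor, for each of the three choices of $H$. This reduction is immediate from monotonicity of the minor relation: any collection of disjoint connected branch sets of the spanning subgraph $G$ witnessing a $W_{4}(X)$-minor is also such a collection in $H^{+}$ (the roots stay in the prescribed sets and every required adjacency is an edge of $G\subseteq H^{+}$), so a $W_{4}(X)$-minor of $G$ would produce one of $H^{+}$. In particular the $2$-connectivity hypothesis is not essential; it is just the context in which the lemma is applied.

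The main tool will be the following observation about $W_{4}(X)$-models. In $W_{4}$ each rim vertex has exactly three neighbours (the hub and its two rim neighbours), so in any $W_{4}(X)$-model $\{G_{x}\}$ each branch set $G_{r}$ with $r\in X$ must have edges to at least three other branch sets. Suppose $r$ lies in a set $S\subseteq V(H^{+})$ with $r\in S$, $N_{H^{+}}(S)\cap S=\emptyset$ and $|N_{H^{+}}(S)|=2$. If $G_{r}\subseteq S$, then every edge leaving $G_{r}$ ends in the two-element set $N_{H^{+}}(S)$, so $G_{r}$ meets at most two other branch sets, a contradiction. Hence $G_{r}\not\subseteq S$, and since $G_{r}$ is connected it must contain a vertex of $N_{H^{+}}(S)$. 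Informally: a root trapped in such a two-boundary pocket must, in any $W_{4}(X)$-model, absorb one of the two boundary vertices.

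For Class $\mathcal{B}$, $H=K_{2,4}$ with the roots on the large side and $H$ triangle-free, so $H^{+}=H$. A $W_{4}$-minor of the six-vertex graph $K_{2,4}$ must use either all six vertices (in five branch sets, i.e.\ arise from contracting one edge) or five of them as singletons; the latter is impossible because every subgraph of $K_{2,4}$ is bipartite, and the former is impossible because contracting any edge of $K_{2,4}$ gives a five-vertex graph with only seven edges, whereas $W_{4}$ has eight. So $K_{2,4}$ has no $W_{4}$-minor. For Classes $\mathcal{A}$ and $\mathcal{C}$ I would identify the pockets. In Class $\mathcal{A}$ the triangles of $H$ are $\{a,d,e\},\{b,d,e\},\{c,d,e\}$, and $S_{a}=\{a\}\cup V(F_{\{a,d,e\}})$ and its analogues $S_{b},S_{c}$ each have boundary $\{d,e\}$ and are pairwise disjoint, since the clique $F_{T}$ attached to a triangle $T$ has edges only to $T$. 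Because $d$ is itself a root, $d\in G_{d}$, so $d\notin G_{a},G_{b},G_{c}$; the observation applied to $a,b,c$ then forces $e\in G_{a}$, $e\in G_{b}$, $e\in G_{c}$, contradicting disjointness. In Class $\mathcal{C}$, $g$ is dominating, the root-containing triangles are $\{a,e,g\},\{b,e,g\},\{c,f,g\},\{d,f,g\}$, and the disjoint pockets $S_{a},S_{b}$ have boundary $\{e,g\}$ while $S_{c},S_{d}$ have boundary $\{f,g\}$, all avoiding $e,f,g$. The observation gives that each of $G_{a},G_{b}$ contains $e$ or $g$ and each of $G_{c},G_{d}$ contains $f$ or $g$, and a short case check on which branch set (if any) contains $g$ finishes it: if $g\in G_{a}$ then $e\in G_{b}$ and $f\in G_{c}$, leaving $G_{d}$ with nothing to take; if $g$ lies in none of $G_{a},G_{b},G_{c},G_{d}$ then both $G_{a}$ and $G_{b}$ must contain $e$; and the remaining cases are symmetric.

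The only genuine work lies in the two finite case analyses and in checking that the pockets $S_{r}$ are really separated by their two-vertex boundaries, which is exactly the structural feature of $H^{+}$ that $F_{T}$ attaches only to $T$. I expect the Class $\mathcal{C}$ analysis (five branch sets and the vertex $g$) to be the fiddliest part, though it is entirely routine.
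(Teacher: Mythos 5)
Your class $\mathcal{B}$ argument is correct and elegant (bipartiteness plus an edge count rules out both the contraction and the subgraph case). The reduction from a spanning subgraph $G$ to the host $H^{+}$ is also fine, and the overall strategy of analysing $W_{4}(X)$-models directly is a legitimate alternative to the paper's route (which instead applies its pre-proved $2$-separation reduction lemmas to $\{d,e\}$ resp.\ $\{e,f\},\{f,g\}$). However, the ``pocket observation'' on which your classes $\mathcal{A}$ and $\mathcal{C}$ arguments rest is incorrectly justified, and this is a genuine gap.

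You assert: if $G_{r}\subseteq S$ with $N_{H^{+}}(S)\cap S=\emptyset$ and $|N_{H^{+}}(S)|=2$, then every edge leaving $G_{r}$ ends in $N_{H^{+}}(S)$, so $G_{r}$ is adjacent to at most two other branch sets. But an edge leaving $G_{r}$ may end at a vertex of $S\setminus G_{r}$, and nothing prevents $S\setminus G_{r}$ from containing vertices of other branch sets — in particular the hub branch set $G_{h}$ could well have part (or all) of its vertex set inside $S$. For instance, with $F_{\{a,d,e\}}$ nonempty, $G_{a}=\{a\}$ and $G_{h}\subseteq F_{\{a,d,e\}}$ are both inside $S_{a}$, and $G_{a}$ is adjacent to $G_{h}$ via an internal edge of the clique $S_{a}$, plus potentially $G_{d}$ via $d$ and one of $G_{b},G_{c}$ via $e$ — that is already three model-neighbours, with $G_{a}\subseteq S_{a}$. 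So the degree count you give does not by itself force $G_{r}$ to absorb a boundary vertex. The conclusion you want is in fact true, but the correct route is via $3$-connectivity of $W_{4}$: for any $2$-separation $(A,B)$ of $H^{+}$, a $W_{4}$-model cannot have one branch set contained in $A\setminus(A\cap B)$ and another contained in $B\setminus(A\cap B)$ (otherwise the two corresponding vertices of $W_{4}$ would be separated by a $\le 2$-vertex cut). In class $\mathcal{A}$, if $G_{a}\subseteq S_{a}$ then no branch set is contained in the other side of the cut $\{d,e\}$, yet since $d\in G_{d}$ each of $G_{b},G_{c}$ would have to contain $e$, a contradiction; repeating for $b,c$ gives the disjointness contradiction you wanted. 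This is a substantively different argument from the one you wrote, so the observation as stated needs to be replaced, not just polished. The same fix is needed for class $\mathcal{C}$ (where, incidentally, you should also account for the fifth triangle $\{e,f,g\}$ and its clique $F_{\{e,f,g\}}$, though it doesn't change the outcome).
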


\begin{proof}
We treat each case separately.

If $G$ is the spanning subgraph of a class $\mathcal{A}$ graph, then $\{d,e\}$ is a $2$-vertex cut. Applying Lemma \ref{oneterminalinthecut2conn} and Lemma \ref{noW4minor} successively to the separation induced by $\{d,e\}$, we see $G$ does not have a $W_{4}(X)$-minor.

If $G$ is the spanning subgraph of a class $\mathcal{B}$ graph, then $\{e,f\}$ is a $2$-vertex cut. Applying  Lemma \ref{2sepW4} and Lemma \ref{noW4minor} successively to the separation induced by $\{e,f\}$, we see $G$ does not have a $W_{4}(X)$-minor.

 If $G$ is the spanning subgraph of a class $\mathcal{C}$ graph, then $\{g,f\}$ is a $2$-vertex cut. Apply Lemma \ref{2sepW4} to the separation induced by $\{g,f\}$ and let $G_{1}$ and $G_{2}$ be the graphs obtained from Lemma \ref{2sepW4}. Without loss of generality, let $G_{1}$ be the graph such that  $\{f,g\}$ induces a separation satisfying Lemma \ref{noW4minor}. Then $G_{1}$ does not have a $W_{4}(X)$-minor.  Then in $G_{2}$, to the separation induced by the $2$-vertex cut $\{g,e\}$, apply Lemma \ref{oneterminalinthecut2conn} to obtain a graph $G_{3}$. Then in $G_{3}$, notice that Lemma \ref{noW4minor} applies, thus $G_{3}$ does not have a $W_{4}(X)$-minor, and then by working back through the lemmas, we get that $G$ does not have a $W_{4}(X)$-minor. 
\end{proof}

Notice that by applying Lemma \ref{2sepW4} to $2$-connected spanning subgraphs of class $\mathcal{E}$ and $\mathcal{F}$ graphs, we see that one of these graphs has a $W_{4}(X)$-minor if and only if the corresponding web from class $\mathcal{D}$ has a $W_{4}(X)$-minor. So now we restrict ourselves to looking at $\{a,b,c,d\}$-webs. First we show that $\{a,b,c,d\}$-webs always have a cycle which contains $\{a,b,c,d\}$.

\begin{observation}
\label{planarface}
Suppose $G$ is a planar spanning subgraph of some $\{a,b,c,d\}$-web $H^{+} = (H,F)$. Then the graph $G'$ defined by $V(G') = V(G)$ and $E(G') = E(G) \cup \{ab,bc,cd,da\}$ is planar, and furthermore the cycle $C$ with edge set $ab,bc,cd,da$ is a face in $G'$. 
\end{observation}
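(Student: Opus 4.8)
The plan is to unpack the definition of an $\{a,b,c,d\}$-web and exploit the planarity that is built into it. Recall that by definition an $\{a,b,c,d\}$-web is a graph $H^{+} = (H,F)$ where $H$ is a planar graph whose outer face is a $4$-cycle $C_{4}$ on $\{a,b,c,d\}$, every internal face of $H$ is a triangle, and every triangle of $H$ is a face; the graph $F$ consists of cliques $F_{T}$ stuffed into the triangular faces. So first I would fix a plane embedding of $H$ in which the outer face is bounded by the $4$-cycle $a,b,c,d$ (in that cyclic order). The clique vertices in each $F_{T}$ can be drawn inside the triangular face $T$, each adjacent to the three vertices of $T$ and nothing else, keeping the drawing plane; hence $H^{+}$ itself has a plane embedding with outer boundary $a,b,c,d$. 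Since $G$ is a spanning subgraph of $H^{+}$, restricting this embedding to the edges of $G$ gives a plane embedding of $G$.

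The key step is then to observe that the four vertices $a,b,c,d$ all lie on the outer face of this embedding of $G$, appearing in the cyclic order $a,b,c,d$ along that face (the outer face of $G$ can only be larger than the outer face of $H^{+}$, since deleting edges only merges faces, so it still has $a,b,c,d$ on its boundary in this cyclic order). Now form $G'$ by adding the edges $ab, bc, cd, da$ if they are not already present. Each such edge can be drawn in the outer face of $G$ following the boundary arc between consecutive terminals, and because these four arcs are internally disjoint pieces of the outer face boundary, the four new edges can be added simultaneously without crossings. This keeps $G'$ planar and creates the $4$-cycle $C$ on edge set $\{ab,bc,cd,da\}$ drawn entirely along (the old boundary of) the outer face; the region exterior to $C$ in the new drawing is then a face of $G'$, so $C$ bounds a face of $G'$, as required.

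The main obstacle I expect is the bookkeeping around the case where some of the edges $ab,bc,cd,da$ already belong to $G$, and around multi-edge issues: if, say, $ab\in E(G)$ but it was drawn in the interior rather than along the boundary, I need to argue I can still realize a face bounded by the $4$-cycle — this is handled by noting that on the boundary walk of the outer face of $G$ the vertices $a,b,c,d$ occur in the stated cyclic order, so there is always a boundary arc between consecutive terminals along which the ``new'' edge (or a re-embedding of the existing one) can be routed, and re-routing an edge to lie on the outer face of a plane graph when both its ends are on that face is always possible. A clean way to sidestep the re-embedding subtlety is to first contract each boundary arc between consecutive terminals down so that the outer face of $G$ has boundary exactly $a,b,c,d$ — but since we only want planarity of $G'$ and faciality of $C$, not a minor statement, the direct argument above suffices. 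I would also remark that this observation is essentially Corollary~\ref{k4(x)planar} read in reverse: $a,b,c,d$ lying on a common face of a planar graph is exactly what lets us add the bounding $4$-cycle while preserving planarity.
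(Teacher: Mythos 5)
Your central step --- that ``$H^{+}$ itself has a plane embedding with outer boundary $a,b,c,d$'' --- is false, and this is a genuine gap. By definition each $F_{T}$ is a \emph{clique}, and every vertex of $F_{T}$ is adjacent to every vertex of the triangle $T$, so the subgraph of $H^{+}$ induced on $V(F_{T}) \cup V(T)$ is a complete graph on $|V(F_{T})|+3$ vertices. As soon as some $F_{T}$ has two or more vertices, $H^{+}$ contains $K_{5}$ and is non-planar. The slip is visible in your phrase ``each adjacent to the three vertices of $T$ and nothing else'': the vertices of $F_{T}$ are also pairwise adjacent to one another, and those intra-clique edges are exactly what prevents you from drawing $F_{T}$ inside the face $T$ without crossings. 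A telling symptom is that your construction never actually uses the hypothesis that $G$ is planar: you produce a drawing of $G$ only by restricting a (nonexistent) drawing of $H^{+}$, so if the reasoning were sound you could apply it with $G = H^{+}$ itself and conclude $H^{+}$ is planar, which is absurd.

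This is also where the paper's proof differs in intent from yours. It does not try to embed $H^{+}$; it fixes an embedding of the genuinely planar graph $H$, and then, for each triangle $T$, it appeals to the planarity of $G$ to obtain a planar embedding of (what is meant to be read as) the subgraph of $G$ induced on $V(F_{T}) \cup V(T)$, chosen with $T$ on the outer boundary, and inserts that local drawing into the interior of the triangular face of $H$. That per-triangle re-embedding is the load-bearing use of the planarity hypothesis and is precisely the step your argument elides by pretending all of $H^{+}$ is planar. (The paper's own wording is also loose --- it too writes ``$H^{+}$ is planar,'' and the re-embedding step is less automatic than the write-up suggests --- but the crucial point is that planarity of $G$ must enter, and in your version it never does.) The second half of your write-up, about routing the four new edges along the outer face once $a,b,c,d$ appear there in cyclic order, is fine, but it only has content after a plane drawing of $G$ with that property has actually been produced.
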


\begin{proof}

Fix a planar embedding $\tilde{H}$ of $H$. As $G$ is planar, for each triangle $T \in H$ the graph $F_{T} \cup T$ is planar. For each triangle $T \in H$, fix a  planar embedding of $F_{T} \cup T$ where $T$ is the outerface. Then we can combine the planar embedding of $H$ with the planar embeddings of $F_{T} \cup T$ by joining $F_{T} \cup T$ to the appropriate triangle. This implies that the graph $H^{+}$ is planar, and thus $G'$ is planar. Additionally, notice in  $H^{+}$, the cycle with edge set $ab,bc,cd,da$ is a face in $H^{+}$, and thus the cycle with edge set $ab,bc,cd,da$ in $G'$ is a face in $G'$. 
\end{proof}

\begin{lemma}
\label{webcycleplanar}
Let $G$ be a $2$-connected planar graph and let $X = \{a,b,c,d\} \subseteq V(G)$. If $G$ is the spanning subgraph of an $\{a,b,c,d\}$-web, then there is a cycle, $C$, such that $X \subseteq V(C)$.
\end{lemma}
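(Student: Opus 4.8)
The plan is to lean on Observation~\ref{planarface}. Set $G' = (V(G),\, E(G) \cup \{ab,bc,cd,da\})$; by that observation $G'$ is planar and the $4$-cycle $C_0 := abcd$ bounds a face of $G'$. I would fix a plane embedding of $G'$ in which $C_0$ is the \emph{outer} face, so that $a,b,c,d$ all lie on the boundary of the unbounded region. Since $E(G') \setminus E(G) \subseteq \{ab,bc,cd,da\}$, we have $G = G' - Z$ where $Z := \{ab,bc,cd,da\} \setminus E(G)$, and restricting the embedding of $G'$ to $G$ gives a plane embedding of $G$ whose outer region contains the outer region of $G'$.

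Next I would check that each of $a,b,c,d$ still lies on the boundary of the outer face of $G$. Fix $v \in \{a,b,c,d\}$ and let $vx, vy$ be the two edges at $v$ lying on $C_0$. If at least one of them, say $vx$, survives in $G$, then $vx$ still borders the outer region of $G$, hence so does $v$. If both $vx$ and $vy$ are deleted, then since $G$ is $2$-connected we have $\deg_G(v) \ge 2$, so $v$ retains some edge; moreover in $G'$ the edge $vx$ had the outer face on one side and some other face $F_1$ on the other, and $v$ lies on $\partial F_1$, so after deleting $Z$ the region $F_1$ has merged into the outer region of $G$ and $v$ still lies on its boundary. Thus in every case $v$ is incident to the outer face of $G$.

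Finally I would invoke the standard fact that every face of a $2$-connected plane graph is bounded by a cycle (see \cite{BondyAndMurty}). Applying this to the outer face of the plane embedding of $G$ produces a cycle $C$, and by the previous paragraph $a,b,c,d$ all lie on $C$, so $X \subseteq V(C)$, as required. The only delicate point is the degenerate case in which both $C_0$-edges at some root are deleted, which is exactly where the $2$-connectivity hypothesis on $G$ is used; the rest is routine.
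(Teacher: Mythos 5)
Your proof is correct, and it takes a genuinely different and cleaner route than the paper's. The paper argues by induction on edge deletions from the full web $H^{+}$ down to $G$: given a cycle through $X$ in $G_{i-1}$, it re-routes around the deleted edge via a case analysis on the arcs $P_{a,b},P_{b,c},P_{c,d},P_{d,a}$ and the Jordan curve theorem. You instead read the cycle off the embedding directly: add the boundary edges back, embed with the $4$-cycle $abcd$ as the outer face (Observation~\ref{planarface}), delete the extra edges, and observe that the outer face of the resulting $2$-connected plane graph is bounded by a cycle which still contains $a,b,c,d$. This avoids all of the re-routing machinery. The one place that deserves a sentence more care is your second case: when both $C_0$-edges $vx,vy$ at a root $v$ are deleted, the statement ``$v\in\partial F_1$ and $F_1$ merges into the outer region'' is not by itself enough --- $v$ must lie on $\partial F_1$ via an edge that survives into $G$. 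This does hold, because the other edge bounding the $F_1$-wedge at $v$ cannot equal $vy$ (that would force $\deg_{G'}(v)=2$ and hence $\deg_G(v)=0$, contradicting $2$-connectivity), so it belongs to $E(G)$ and $v$ retains a wedge of $G$ inside the merged outer face. With that small clarification the argument is complete, and both proofs then handle the non-planar case through Corollary~\ref{webcycle} in exactly the same way.
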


\begin{proof}
Let $G_{1},\ldots,G_{n}$ be a sequence of graphs where $G_{1} = H^{+}$, $G_{n} = G$ and $G_{i+1} = G_{i} \setminus \{e\}$ where $e$ is some edge of $G_{i}$. We proceed by induction on $i$.  When $i =1$, $G_{1} = H^{+}$ and the $4$-cycle on $a,b,c,d$ in $H$ is our desired cycle.

 Now consider $G_{i}$, $i \geq 2$ and let $e=xy \in E(G)$ be the edge such that $G_{i} = G_{i-1} \setminus \{e\}$. By induction, $G_{i-1}$ contains a cycle $C$ containing $X$. We may assume that $e \in E(C)$ as otherwise $C$ completes the claim. Let $P= C \setminus \{e\}$.   Without loss of generality, suppose that $a,b,c,d$ appear in that order in $C$, and that $x$ and $y$ lie on the $(a,d)$-path, $P_{a,d}$, in $C$ in $G_{i-1}$ which does not contain $c$ and $d$, such that $a,x,y,d$ appear in that order. Similarly define paths $P_{a,b}, P_{b,c}$ and $P_{d,a}$. Additionally define $P_{a,x}$ to be the $(a,x)$-subpath on $P_{a,d}$ and $P_{y,d}$ to be the $(y,d)$-subpath on $P_{a,d}$.
 
  By observation \ref{planarface}, in the graph $G'_{i-1}$, we have that $P_{a,d} \cup \{ad\}$, $P_{a,b} \cup \{ab\}, P_{b,c} \cup \{bc\}$ and $P_{c,d} \cup \{cd\}$ are cycles. Furthermore, we may assume that in a planar embedding of $G'_{i-1}$, no edges cross $ad,ab,bc,$ or $cd$. We define the interior of $C$ to be the component of $G'_{i-1} -C$ which does not contain any of $ab,bc,cd$ or $ab$, and the exterior is the component which is not the interior. We abuse notation and will refer also interior and exterior of $C$ in $G_{i-1}$. Notice that if we have a path whose two endpoints are on $P_{a,d}$ and whose vertices only use exterior vertices, then that path does not contain any vertices from $P_{a,b} \cup P_{b,c} \cup P_{c,d} \setminus \{a,d\}$.
  
As $G_{i}$ is $2$-connected, there is an $(x,y)$-path, $P'$ such that $P' \neq P$. If $V(P') \cap V(P) = \{x,y\}$, then $P' \cup P$ is our desired cycle. Therefore we may assume that every $(x,y)$-path intersects $P$.
   If there is any path $P''$ from a vertex $x' \in V(P_{a,x})$ to a vertex $y' \in  V(P_{y,d})$ using only vertices from the exterior, then $P_{a,x'} \cup P'' \cup P_{y',d} \cup P_{c,d} \cup P_{b,c} \cup P_{a,d}$ is a cycle, since no edges cross the edge $ad \in E(G'_{i-1})$. Here $P_{a,x'}$ is the $(a,x')$-subpath on $P_{a,x}$ and $P_{y',d}$ is the $(y',d)$-subpath on $P_{y,d}$. Therefore we assume no such path of that form exists. By essentially the same argument, we can assume no path of that form exists with vertices in the interior which does not intersect any of $P_{a,b}, P_{b,c}$ and $P_{c,d}$.
   
Since $G_{i}$ is $2$-connected, there are two internally disjoint $(x,y)$-paths, say $P'$ and $P''$. By our previous discussion, we may assume that both $P'$ and $P''$ are not $P$, and that both $P'$ and $P''$ intersect $P$. Suppose that $P'$ intersects all of $P_{b,c}$, $P_{c,d}$ and $P_{a,b}$. Notice that by planarity, these paths cannot cross, so it is well defined to say that one of $P' - \{x,y\}$ or $P'' - \{x,y\}$ lies on the interior of the cycle $P' \cup \{xy\}$ or $P'' \cup \{xy\}$.  Without loss of generality, suppose that $P' -\{x,y\}$ lies on the interior of $P'' \cup \{xy\}$. Then by our previous discussion, and planarity, the only way for $P''$ to be an internally disjoint $(x,y)$-path is for $P''$ to intersect $P_{a,b}$, go through the exterior of $C$ and intersect $P_{a,b}$ again, do this some finite number of times, then intersect $P_{b,c}$, go through the exterior of $C$ and intersect $P_{b,c}$ again, do this some finite number of times, then intersect $P_{c,d}$, go through the exterior of $C$ and intersect $P_{c,d}$ again. Then we can reroute $P$ along paths in the exterior of $C$ along $P''$ to get a new path $P'''$ such that $P'''$ contains all of $a,b,c,d$ and $V(P''') \cap V(P') \setminus \{x,y\} = \emptyset$. But then $P''' \cup P'$ a cycle satisfying the claim. We note the same strategy holds if $P'$ intersects any subset of $P_{b,c}$, $P_{c,d}$ and $P_{a,b}$. Therefore there is a cycle containing $a,b,c$, and $d$ in $G$. 
\end{proof}

\begin{figure}
\begin{center}
\includegraphics[scale = 0.5]{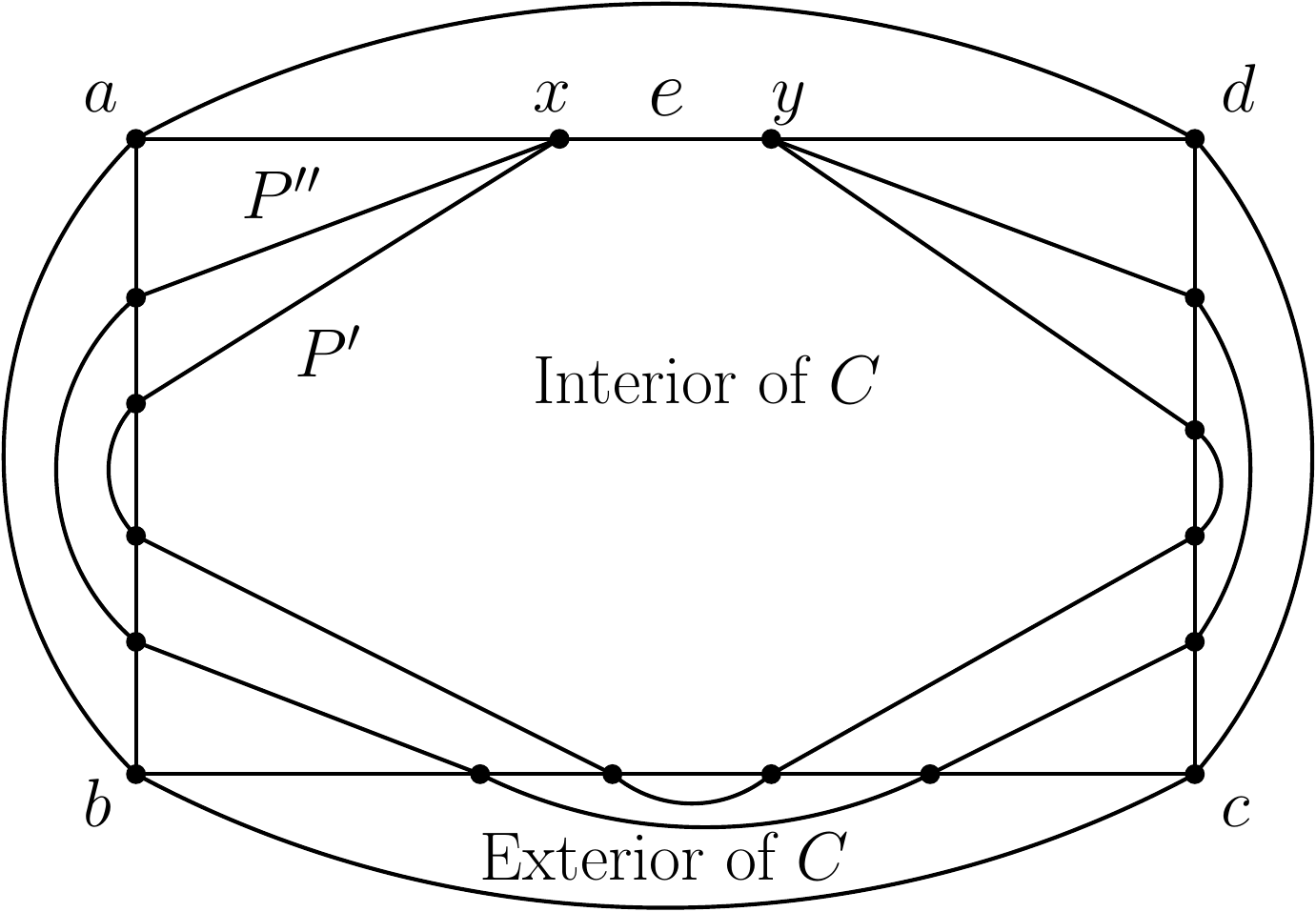}
\caption{The situation in Lemma \ref{webcycleplanar}. The edges $ab,bc,cd$ and $ab$ exist only in $G'_{i}$. The path $P'''$ is obtained by rerouting along paths in the exterior of $C$ which are subpaths of $P''$.}
\end{center}
\end{figure}

\begin{corollary}
\label{webcycle}
Let $G$ be a $2$-connected graph and let $X = \{a,b,c,d\} \subseteq V(G)$. If $G$ is the spanning subgraph of an $\{a,b,c,d\}$-web, $H^{+} = (H,F)$, then there is a cycle, $C$, such that $X \subseteq V(C)$.
\end{corollary}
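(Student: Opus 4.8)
The plan is to reduce to the planar case, Lemma \ref{webcycleplanar}, by contracting away the parts of $G$ coming from the cliques $F_T$. Concretely, write the given web as $H^{+}=(H,F)$ with $a,b,c,d$ on the outer $4$-cycle of the planar triangulation $H$. For each triangle $T$ of $H$, let $D^{T}_{1},\dots,D^{T}_{k_{T}}$ be the connected components of $G[V(F_{T})]$; contract each $D^{T}_{i}$ to a single new vertex $w^{T}_{i}$, keeping all edges from $D^{T}_{i}$ to $V(T)$. Let $K$ be the resulting graph. Note that no vertex of $X$ is contracted, since $X\subseteq V(H)$ and $X\cap V(F_{T})=\emptyset$.

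I would then check three properties of $K$. First, $K$ is planar: fix a planar embedding of $H$; the vertices $w^{T}_{i}$ are pairwise non-adjacent and each is adjacent to at most the three corners of $T$, so they can be drawn inside the face $T$. Second, $K$ is a spanning subgraph of the $\{a,b,c,d\}$-web $(H,F')$, where $F'_{T}$ is the clique on $\{w^{T}_{1},\dots,w^{T}_{k_{T}}\}$ (the $w^{T}_{i}$ are non-adjacent in $K$ and adjacent only to $V(T)$, so $K$ embeds in $(H,F')$). Third, $K$ is $2$-connected: since $G$ is $2$-connected and every vertex of $F_{T}$ has all its $G$-neighbours in $V(F_{T})\cup V(T)$, each component $D^{T}_{i}$ has $|N_{G}(V(D^{T}_{i}))|\ge 2$ (a one-vertex neighbourhood would be a cut vertex of $G$), so each $w^{T}_{i}$ has degree at least $2$ in $K$; a hypothetical cut vertex of $K$ lifts to a separation of $G$ which either forces a cut vertex of $G$ or forces $V(G)\subseteq V(F_{T})\cup\{x,y\}$ for two vertices $x,y\in V(T)$, impossible since $X$ has at least two vertices outside $V(F_{T})\cup\{x,y\}$. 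In the one genuinely degenerate configuration — a vertex $z\in V(T)$ all of whose $G$-neighbours lie inside a single component $D$ — one instead contracts $D\cup\{z\}$ onto $z$ (exactly as in the proof of Lemma \ref{planarreduction}), which still yields a $2$-connected planar spanning subgraph of an $\{a,b,c,d\}$-web.

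With these three facts, Lemma \ref{webcycleplanar} applied to $K$ gives a cycle $C^{*}$ with $X\subseteq V(C^{*})$. Finally I would lift $C^{*}$ to $G$: each contracted vertex $w^{T}_{i}$ that lies on $C^{*}$ is traversed by a single sub-arc of $C^{*}$ that enters at some $p\in V(T)$ and leaves at some $q\in V(T)$ with $p,q\in N_{G}(V(D^{T}_{i}))$; replace $w^{T}_{i}$ by a $(p,q)$-path whose interior lies in the connected set $V(D^{T}_{i})$. These replacement paths have pairwise disjoint interiors (distinct components $D^{T}_{i}$, hence disjoint vertex sets, all inside $V(F)$ and disjoint from $V(H)\supseteq X$), so the closed walk obtained from $C^{*}$ by all these substitutions is in fact a cycle of $G$ containing $X$.

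The main obstacle is the $2$-connectivity bookkeeping in the third step: unlike in Lemma \ref{planarreduction}, where the extra connectivity of a $3$-connected $G$ makes this essentially automatic, here one must check that contracting the $F_{T}$-components cannot create a cut vertex, and arrange the contraction (contracting certain low-degree components onto a vertex of $T$ rather than to a fresh vertex) to avoid the degenerate cases. The planarity of $K$, its web-membership, and the lifting of the cycle are all routine once the contraction is set up in this way.
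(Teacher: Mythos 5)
Your approach is essentially the same as the paper's: reduce to Lemma \ref{webcycleplanar} by contracting the components of each $G[V(F_T)]$ to single vertices and lifting the resulting cycle back to $G$. However, there is a genuine gap in your planarity claim for $K$. You contract each component $D^T_i$ to its own vertex $w^T_i$ and keep \emph{all} of them, asserting that since the $w^T_i$ are pairwise non-adjacent and each is adjacent only to the corners of $T$, they can all be drawn inside the face bounded by $T$. This is false when two or more components of $G[V(F_T)]$ are each adjacent in $G$ to all three vertices of $T$. In that case the induced subgraph on $T \cup \{w^T_i, w^T_j\}$ is $K_5$ minus one edge, which, although planar, has no planar embedding in which the triangle $T$ bounds a face; and since the rest of $H$ occupies the exterior of $T$, there is no room to place both $w^T_i$ and $w^T_j$ inside. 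A concrete example: let $H = W_4$ with outer $4$-cycle $abcd$ and hub $e$, and let $F_{aeb}$ be a $2$-clique $\{f_1, f_2\}$; the spanning subgraph $G = H^+ \setminus \{f_1 f_2\}$ is $2$-connected, and your $K$ is $G$ itself, which is non-planar.

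The paper's proof avoids exactly this: for each triangle $T$, it keeps at most \emph{one} component-vertex if some component dominates $T$, and otherwise (using $2$-connectivity to guarantee each component sees exactly two corners of $T$) keeps at most one representative per unordered pair of corners; all remaining components are contracted onto vertices of $T$. That selection is what ensures the contracted graph really is planar. Your proposal needs the same pruning step inserted before the planarity argument; the degenerate case you flag (a corner $z$ of $T$ all of whose neighbours lie in a single component) concerns $2$-connectivity and is a separate matter from the missing planarity argument. Once the pruning is in place, the lifting of the cycle $C^*$ back to $G$ is routine and matches the paper.
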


\begin{proof}
By Lemma \ref{webcycleplanar}, we may assume that $G$ is non-planar. For each triangle $T \in H$, consider the graph $G[V(F_{T})]$ and let $M_{1},\ldots,M_{n}$ be the connected components of $G[V(F_{T})]$. Now for each triangle $T \in H$ let $G'$ be the graph obtained by contracting each connected component, $M_{i}$, down to a vertex, call it $v^{i}_{T}$, $i \in \{1,\ldots,n\}$.

 Now consider some triangle $T \in H$ such that $V(T) = \{x_{1},x_{2},x_{3}\}$. First suppose there exists an $i \in \{1,\ldots,n\}$ such that $v^{i}_{T}$ is adjacent to $x_{j}$ for all $j \in \{1,2,3\}$. Then for all $v^{k}_{T}$, $k \in \{1,\ldots,n\}, k \neq i$, contract $v^{k}_{T}$ to any vertex of $T$.
 
 Now suppose that there was no $i \in \{1,\ldots,n\}$ such that $v^{i}_{T}$ is adjacent to $x_{j}$ for all $j \in \{1,2,3\}$. Since $G$ is $2$-connected, that means that for all $i \in \{1,\ldots,n\}$, $v^{i}_{T}$ is adjacent to exactly two of $x_{1},x_{2}$ and $x_{3}$. Let $v^{i}_{T}, v^{j}_{T}, v^{k}_{T}$ be vertices such that $v^{i}_{T}$ is adjacent to $x_{1},x_{2}$, and $v^{j}_{T}$ is adjacent to $x_{1},x_{3}$, and $v^{k}_{T}$ is adjacent to $x_{2},x_{3}$ for $i,j,k \in \{1,\ldots,n\}$, $i \neq j \neq k$. Then for all $v^{l}_{T}$, $l \neq i,j,k$, contract $v^{l}_{T}$ to an arbitrary vertex of $T$. Let $G'$ be the resulting graph after applying the above procedure to every triangle $T \in H$ to $G$. We note that some subset of the vertices $v^{i}_{T}, v^{j}_{T}, v^{k}_{T}$ may not exist, but in this case we just do not have that subset of vertices in $G'$ . We claim that $G'$ is planar and $2$-connected.
 
First we show that $G'$ is $2$-connected. Notice that since $G$ is $2$-connected, for every $T \in H$, all of the $v^{i}_{T}$ have $2$ internally disjoint paths to every other vertex. Now consider two vertices $x,y \in V(H)$ and let $P_{1},P_{2}$ be two internally disjoint $(x,y)$-paths in $G$. Notice that for each triangle $T \in H$, at most one of $P_{1}$ or $P_{2}$ uses vertices from any connected component of $G[V(F_{T})]$, so these paths exist in $G'$ by possibly augmenting them to the appropriate vertex $v^{i}_{T},v^{j}_{T}$ or $v^{k}_{T}$. Therefore $G'$ is $2$-connected. 

So it suffices to show that $G'$ is planar. Take any planar embedding of $H$, and for every face bounded by a triangle $T$, either add a vertex adjacent to all of the vertices of $T$ to the interior of the face, or add three vertices $v_{1},v_{2},v_{3}$ to the interior of the face such that for all $i \in \{1,2,3\}$, $v_{i}$ is adjacent to two vertices of $T$, and $N(v_{i}) \neq N(v_{j})$ if $i \neq j$. Note that  the resulting graph is planar. Furthermore, using this construction, we can obtain a planar graph $K$ such that $G'$ is a subgraph of $K$, and thus $G'$ is planar. 

Now since $G'$ is $2$-connected, planar, and by construction we did not contract any of $a,b,c$ or $d$ together, we can apply Lemma \ref{webcycleplanar}. Thus $G'$ has a cycle $C'$ containing $X$. But then $G$ has a cycle $C$ containing $X$, obtained by extending $C'$ along the contracted edges, if necessary. 
\end{proof}

We note that we cannot extend the above lemma to the non-web $K_{4}(X)$-free classes, as none of them have a cycle containing $X$. 

\begin{lemma}
Let $G$ be a $2$-connected graph and $X = \{a,b,c,d\} \subseteq V(G)$. Suppose $G$ is the spanning subgraph of a class $\mathcal{A},\mathcal{B},\mathcal{C}, \mathcal{E}$ or $\mathcal{F}$ graph. Then there is no cycle $C$ in $G$ which contains $X$. 
\end{lemma}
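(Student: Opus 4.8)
The plan is to show, for each of the five classes, that $G$ has a two-element vertex set $\{u,v\}$ whose removal separates three of the four roots into three pairwise distinct components of $G-\{u,v\}$. Since removing two vertices from a cycle leaves at most two arcs, no cycle of $G$ can meet three components of $G-\{u,v\}$, and hence no cycle can contain all of $X$.

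The only fact about cycles I would use is the elementary one that if $C$ is a cycle in a graph and $u,v$ are any two vertices, then $C-\{u,v\}$ has at most two connected components. Consequently, if $w_1,w_2,w_3\in V(C)\setminus\{u,v\}$ lie in three pairwise distinct components of $G-\{u,v\}$, then — since every component of $C-\{u,v\}$ is contained in a single component of $G-\{u,v\}$ — the graph $C-\{u,v\}$ would have at least three components, a contradiction. So the whole argument reduces to producing, in each class, such a pair $\{u,v\}$ together with a triple of roots.

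I would then run through the classes of Theorem \ref{k4free} using only their explicit edge sets, together with two routine observations: first, since $G$ is a spanning subgraph of $H^+$ we have $V(G)=V(H^+)$ and $E(G)\subseteq E(H^+)$, so vertices in different components of $H^+-\{u,v\}$ stay in different components of $G-\{u,v\}$; and second, for a triangle $T$ of $H$ the set $T\setminus\{u,v\}$ is a nonempty clique, hence connected, and $F_T$ is joined to the rest of $H^+$ only through $T$, so the gadget $F_T$ attaches to a single component of $H-\{u,v\}$ and the components of $H^+-\{u,v\}$ never merge two components of $H-\{u,v\}$. The choices I would make are: Class $\mathcal{A}$, $\{u,v\}=\{d,e\}$ (each of $a,b,c$ is adjacent in $H$ only to $d$ and $e$, so $a,b,c$ lie in three distinct components, and $\{a,b,c\}$ is disjoint from $\{d,e\}$); Class $\mathcal{B}$, $\{u,v\}=\{e,f\}$ (here $H=K_{2,4}$ is triangle-free so $H^+=H$, and $a,b,c,d$ become isolated vertices of $G-\{e,f\}$); Class $\mathcal{C}$, $\{u,v\}=\{e,g\}$ (deleting all edges of $H$ incident with $e$ or $g$ leaves only $cf$ and $df$, so $H-\{e,g\}$ has components $\{a\}$, $\{b\}$, $\{c,d,f\}$, and the gadgets stay within these three pieces); Class $\mathcal{E}$, $\{u,v\}=\{e,f\}$ ($a$ and $b$ are each adjacent only to $e,f$ and hence form their own components, while $c\in V(H')$ lies in a third); Class $\mathcal{F}$, $\{u,v\}=\{e,f\}$ (again $a$ and $b$ each form their own component, while $c$ is joined to the rest only through $g,h\in V(H')$ and so lies in a third). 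In every case three roots end up in three distinct components of $G-\{u,v\}$, so by the reduction above $G$ has no cycle through $X$.

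There is essentially no hard step; the ``obstacle'', such as it is, is purely the class-by-class bookkeeping — in particular checking in Class $\mathcal{C}$ (where the cut vertices are incident to many triangles) that all clique gadgets are correctly attributed to one of the three pieces, which is exactly what the second observation above delivers, and noting in Class $\mathcal{A}$ that although the cut $\{d,e\}$ contains the root $d$, the separated triple $\{a,b,c\}$ avoids the cut, so the generic argument still applies.
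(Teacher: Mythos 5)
Your proposal is correct and follows essentially the same strategy as the paper: for each class, exhibit a two-vertex set whose removal separates three of the roots into distinct components, and then observe that a cycle minus two vertices has at most two components. There is one place where you diverge from the paper's own choices, and it is a place where you are right and the paper is wrong: for class $\mathcal{C}$, with $E(H) = \{ae,ag,be,bg,cf,cg,df,dg,ef,eg,fg\}$, the paper asserts that $G - \{e,f\}$ has at least three components, but removing $\{e,f\}$ leaves the edges $ag,bg,cg,dg$ intact, so $a,b,c,d,g$ all lie in a single component (and the gadget for the triangle $efg$ still attaches to $g$), so $H^{+} - \{e,f\}$ is in fact connected. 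The paper's $\{e,f\}$ is evidently a typo for $\{f,g\}$ — the cut it uses for class $\mathcal{C}$ in the companion $W_4(X)$ lemma — and your choice $\{e,g\}$ is the symmetric alternative; both give $H - \{u,v\}$ with components $\{a\},\{b\},\{c,d,f\}$ (resp. $\{a,b,e\},\{c\},\{d\}$), so your class-$\mathcal{C}$ argument is sound where the paper's is not. You also make explicit two points the paper leaves implicit — that each $F_T$ attaches only through $T$ and so cannot bridge distinct components of $H - \{u,v\}$, and the $C - \{u,v\}$ component count — which is worth doing since the second observation is exactly why the one-line claims in classes $\mathcal{A}$, $\mathcal{E}$, $\mathcal{F}$ (where only three roots, not all four, are separated) still suffice.
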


\begin{proof}
Suppose $G$ is the spanning subgraph of a class $\mathcal{A}$ graph. Then notice that $G - \{d,e\}$ has at least $3$ components, say $A,B,$ and $C$ such that $a \in A$, $b \in B$, and $c \in D$. Since $|\{d,e\}|=2$, there is no cycle containing $a,b,c$ in $G$. Therefore there is no cycle containing $a,b,c$ and $d$ in $G$. 

Suppose $G$ is the spanning subgraph of a class $\mathcal{B}$ graph. Notice $G - \{e,f\}$ has at least four components and each vertex of $X$ lies in a distinct component. Then there is no cycle $C$ in $G$ which contains $X$.  

Suppose $G$ is the spanning subgraph of a class $\mathcal{C}$ graph. Then notice $G - \{e,f\}$ has at least $3$ components and at least $3$ components  contain vertices from $X$. Then $G$ does not have a cycle containing $X$.

Suppose $G$ is the spanning subgraph of a class $\mathcal{E}$ or $\mathcal{F}$ graph. Then notice $G - \{e,f\}$ has at least $3$ components and at least $3$ components  contain vertices from $X$. Then $G$ does not have a cycle containing $X$.
\end{proof}

Now we will give a characterization of graphs not containing $K_{4}(X)$ and $W_{4}(X)$-minors of a different flavour that from the previous sections. First we give some definitions.

A common idea which appears in the study of graph minors is the notion of a $k$-dissection, which is simply a sequence of nested $k$-separations. Formally, a sequence $((A_{1},B_{1}),\\ \ldots,(A_{n},B_{n}))$ is a \textit{$k$-dissection} if for all $i \in \{1,\ldots,n\}$,  $(A_{i},B_{i})$ is a $k$-separation, and for all $i \neq n$,  $A_{i} \subseteq A_{i+1}$, and $B_{i+1} \subseteq B_{i}$. We will use special types of $k$-dissections. 

\begin{definition}
Let $G$ be a $2$-connected graph and $X = \{a,b,c,d\} \subseteq V(G)$. Let $n$ be any positive integer. Let $((A_{1},B_{1}), \ldots,(A_{n},B_{n}))$ be a $2$-dissection. If $A_{i} \cap  B_{i} \cap  A_{i+1} \cap B_{i+1} \neq \emptyset$  for all $i \in \{1,\ldots, n-1\}$, then we will say the $2$-dissection is a $2$-chain. Let $((A_{1},B_{1}),\ldots,(A_{n},B_{n}))$ be a $2$-chain. Suppose both $A_{1} \cap B_{1}$ and $A_{n} \cap B_{n}$ contain at least one vertex of $X$, and there is exactly one vertex of $X$ in $A_{1} \setminus (A_{1} \cap B_{1})$, and exactly one vertex of $X$ in $B_{n} \setminus (A_{n} \cap B_{n})$. Then we say $((A_{1},B_{1}),\ldots, (A_{n},B_{n}))$ is a \emph{terminal separating $2$-chain}. 
\end{definition}

\begin{definition}
Let $G$ be a $2$-connected graph and $X = \{a,b,c,d\} \subseteq V(G)$. Suppose there are three distinct $2$-separations $(A_{1},B_{1}),(A_{2},B_{2}),(A_{3},B_{3})$. We say these separations form a \emph{triangle} if $A_{1} \cap B_{1} = \{x,y\}$, $A_{2} \cap B_{2} = \{x,v\}$ and $A_{3} \cap B_{3} = \{v,y\}$ for distinct vertices $x,y,v \in V(G)$. For notational convenience, we will enforce that in a triangle, $(A_{i} \setminus (A_{i} \cap B_{i})) \cap (A_{j} \setminus (A_{j} \cap B_{j})) = \emptyset$ for any $i,j \in \{1,2,3\}$, $i \neq j$.  We say a triangle $(A_{1},B_{1}),(A_{2},B_{2}),(A_{3},B_{3})$, is \emph{terminal separating} if exactly two vertices of $X$ are contained in $A_{1}$, exactly one vertex of $X$ is contained in $A_{2} \setminus (A_{2} \cap B_{2})$ and exactly one vertex of $X$ is contained in $A_{3} \setminus (A_{3} \cap B_{3})$. Two triangles  $((A^{1}_{1},B^{1}_{1}), (A^{1}_{2},B^{1}_{2}), (A^{1}_{3},B^{1}_{3}))$, $((A^{2}_{1},B^{2}_{1}),(A^{2}_{2},B^{2}_{2}),(A^{2}_{3},B^{2}_{3}))$, are distinct if there exists an $i \in \{1,2,3\}$, such that for $A^{2}_{i}$, $A^{1}_{j} \cap B^{1}_{j} \subseteq A^{2}_{i}$ for all $j \in \{1,2,3\}$ and there exists an $i \in \{1,2,3\}$ such that for  $A^{1}_{i}$, $A^{2}_{j} \cap B^{2}_{j} \subseteq A^{1}_{i}$ for all $j \in \{1,2,3\}$.  
\end{definition}

\begin{figure}
\begin{center}
\includegraphics[scale=0.5]{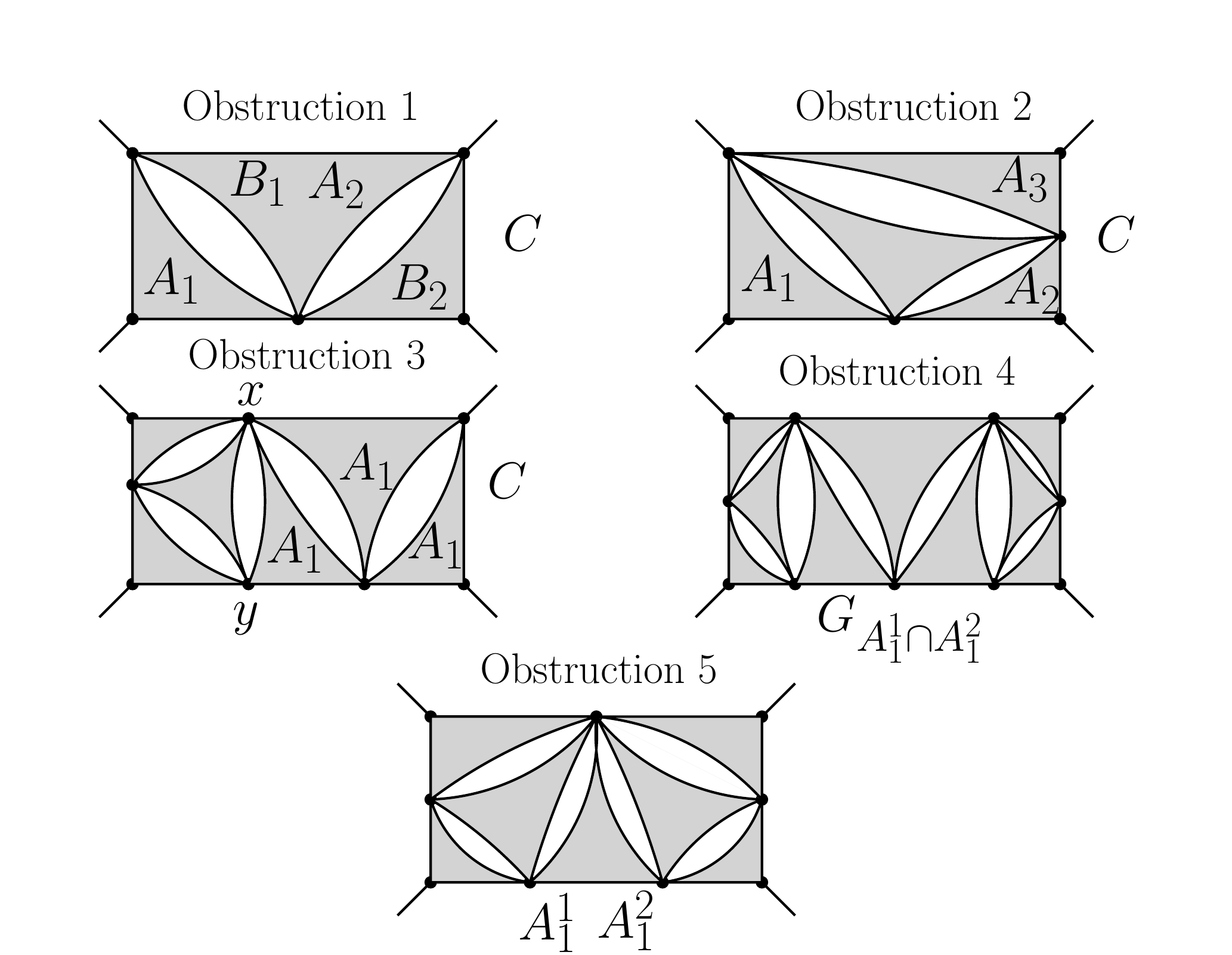}
\caption{The obstructions for Theorem \ref{w4cuts}. The vertices of $X$ are represented by vertices which have an edge not adjacent to a vertex. Curved lines represent a $2$-separation. Shaded sections are spanning subgraphs of webs.}
\label{obstructionset}
\end{center}
\end{figure}

\begin{theorem}
\label{w4cuts}
Let $G$ be a $2$-connected graph and $X = \{a,b,c,d\} \subseteq V(G)$. Suppose $G$ is a spanning subgraph of an $\{a,b,c,d\}$-web. Then $G$ is $W_{4}(X)$-minor free if and only if for every cycle $C$ where $X \subseteq V(C)$, we have one of the following obstructions (See Figure \ref{obstructionset}).  
\begin{enumerate}
\item{There is a terminal separating $2$-chain $((A_{1},B_{1}),\ldots,(A_{n},B_{n}))$ such that $A_{i} \cap B_{i} \subseteq V(C)$ for all $i \in \{1,\ldots, n\}$.}
\item{There is a terminal separating triangle $(A_{1},B_{1}),(A_{2},B_{2}),(A_{3},B_{3})$, such that $A_{1} \cap B_{1}$ contains a vertex from $X$ and $A_{i} \cap B_{i} \subseteq V(C)$, for all $i \in \{1,2,3\}$.}
\item{There is a terminal separating triangle $(A_{1},B_{1}),(A_{2},B_{2}),(A_{3},B_{3})$ such that $A_{1} \cap B_{1} = \{x,y\}$, where $x,y \not \in X$. Furthermore, the graph $G_{A_{1}} = G[A_{1}] \cup \{xy\}$ and $C_{A} = G[V(C) \cap A] \cup \{xy\}$  has a terminal separating $2$-chain $(A'_{1},B'_{1}),\ldots,(A'_{n},B'_{n})$ where we let $x$ and $y$ replace the two vertices in $X$ from $G$ not in $G_{A_{1}}$. Additionally, $A_{i} \cap B_{i} \subseteq V(C)$, for all $i \in \{1,2,3\}$, and $A'_{i} \cap B'_{i} \subseteq V(C_{A})$ for all $i \in \{1,\ldots,n\}$.}
\item{There are two distinct terminal separating triangles $((A^{1}_{1},B^{1}_{1}),(A^{1}_{2},B^{1}_{2}), (A^{1}_{3},B^{1}_{3}))$, and $((A^{2}_{1},B^{2}_{1}),(A^{2}_{2},B^{2}_{2}),(A^{2}_{3},B^{2}_{3}))$  where for all $i \in \{1,2,3\}$,  $A^{1}_{i} \cap B^{1}_{i} \subseteq A^{2}_{1}$ and $A^{2}_{i} \cap B^{2}_{i} \subseteq A^{1}_{3}$. Furthermore, consider the graph $G_{A^{2}_{1} \cap A^{1}_{1}} = G[A^{2}_{1} \cap A^{1}_{1}] \cup \{xy | x,y \in A^{i}_{1} \cap B^{i}_{1}, i \in \{1,2\}\}$ and the cycle $C' = G[V(C) \cap A^{2}_{1} \cap A^{1}_{1}] \cup \{xy | x,y \in A^{i}_{1} \cap B^{i}_{1}, i \in \{1,2\}\}$. Let $X'$ be defined to be the vertices $A^{2}_{1} \cap B^{2}_{1}$ and $A^{2}_{3} \cap B^{2}_{3}$. Then there is a terminal separating $2$-chain with respect to $X'$, $((A_{1},B_{1}),\ldots,(A_{n},B_{n}))$, in $G_{A^{2}_{1} \cap A^{1}_{1}}$ such that $A_{i} \cap B_{i} \subseteq V(C')$.}
\item{There are $2$ distinct terminal separating triangles $((A^{1}_{1},B^{1}_{1}), (A^{1}_{2},B^{1}_{2}), (A^{1}_{3},B^{1}_{3})),\\  ((A^{2}_{1},B^{2}_{1}), (A^{2}_{2},B^{2}_{2}),(A^{2}_{3},B^{2}_{3}))$ where for all $i \in \{1,2,3\}$,  $A^{1}_{i} \cap B^{1}_{i} \subseteq A^{2}_{1}$  and $A^{2}_{i} \cap B^{2}_{i} \subseteq A^{1}_{1}$, the set $A^{2}_{1} \cap B^{2}_{1} \cap A^{1}_{1} \cap B^{1}_{1}$ is not empty and $A^{j}_{i} \cap B^{j}_{i} \subseteq V(C)$ for all $i \in \{1,2,3\}$, and $j \in \{1,2\}$.}
\end{enumerate}
\end{theorem}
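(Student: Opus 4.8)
The plan is to turn everything into a statement about a single cycle through $X$ drawn inside a planar web. First I would record the standard reformulation: a graph has a $W_4(X)$-minor if and only if there is a cycle $\widetilde C$ through $X$ together with a connected subgraph $H$, vertex-disjoint from $\widetilde C$, having a neighbour in each of the four arcs into which the terminals divide $\widetilde C$ (in their cyclic order along $\widetilde C$). This follows by taking spanning trees of the rim branch sets to produce $\widetilde C$ and using the hub branch set as $H$, and conversely by cutting $\widetilde C$ at the extremal $H$-neighbours to define the rim branch sets. Since $G$ is a spanning subgraph of an $\{a,b,c,d\}$-web, Corollary~\ref{webcycle} supplies a cycle through $X$. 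After contracting the clique components $F_T$ exactly as in the proof of Corollary~\ref{webcycle} and using Observation~\ref{planarface} to fix a plane drawing in which the outer quadrilateral $abcd$ bounds the whole picture (the edges of this quadrilateral need not lie in $G$), I would assume $G$ is a $2$-connected planar spanning subgraph of a planar web; a $W_4(X)$-minor of the contracted graph lifts back to $G$, and since the contractions take place away from the terminals the five obstructions correspond to those of $G$.

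\textbf{The obstructions genuinely obstruct.} Suppose $G$ has a $W_4(X)$-minor and let $(\widetilde C,H)$ be as above. If $\widetilde C$ satisfied one of the five obstructions, then the associated terminal separating $2$-chain, triangle, or pair of triangles has all of its (boundedly many) boundary vertices on $\widetilde C$; being connected and disjoint from $\widetilde C$, $H$ therefore lies inside a single cell of that chain or triangle. The defining feature of a \emph{terminal separating} configuration is that every such cell meets $\widetilde C$ only in an arc that misses the ``sector'' of at least one terminal, so $H$ cannot have the required neighbour in all four arcs, a contradiction. Hence the cycle coming from a $W_4(X)$-minor is obstruction-free, which is the contrapositive of the backward implication of the theorem.

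\textbf{Obstruction-free cycle implies a $W_4(X)$-minor.} This is the substantial direction. Given a cycle $C$ through $X$ with none of the five obstructions, I would construct a hub for $C$ (possibly after rerouting $C$ inside its bridges). Work in the plane drawing: $C$ bounds an inner disc, and each bridge of $C$ either lies inside this disc or inside one of the four lens regions cut off by the outer edges $ab,bc,cd,da$, a lens bridge attaching only to the corresponding arc of $C$. If some inner bridge has neighbours in all four open arcs we are done. Otherwise I would take the inner bridge (or a union of inner bridges merged through a lens bridge or a rerouting of $C$) of maximum reach and look at the $C$-vertices witnessing where its reach stops; by $2$-connectivity (Menger, via Corollary~\ref{XYpaths}) and planarity (the Jordan Curve Theorem, used as in Lemma~\ref{webcycleplanar}) these either exhibit a separation of order at least $3$, across which the hub can be extended, or form a $2$-separation whose boundary lies on $C$ and which blocks a terminal's sector. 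In the latter case I would either reduce the number of blocked sectors by filling in the $2$-cut (when it is not terminal separating) or recognise that the configuration is forced to be one of (1)--(5): a single blocking $2$-cut trapping an isolated terminal on each side is case (1); a blocking cut meeting a triangular face of the web upgrades to a \emph{triangle}, giving case (2); and recursing into a corner of such a triangle yields the nested triangle-plus-chain of (3) or the two nested triangles of (4)--(5). Whenever a $2$-separation actually splits the terminals I would invoke the matching reduction from Lemmas~\ref{2sepW4}, \ref{oneterminalonesideothersother}, \ref{allonesidegeneralH}, \ref{noW4minor} and \ref{oneterminalinthecut2conn} to pass to a strictly smaller spanning subgraph of a web and induct on $|V(G)|$, the base case being $3$-connected $G$, which has a $W_4(X)$-minor by Theorem~\ref{3connectivityW4K4} because a web is $K_4(X)$-free (Corollary~\ref{k4(x)3conn}).

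\textbf{Main obstacle.} The hard part is the last step: showing that the blocking configurations actually encountered are \emph{exactly} the five listed — not fewer, so that obstruction-freeness really lets the construction terminate, and not more, so that the list is not over-complete. This requires a careful planar analysis of how the bridges of $C$ sit inside the triangular faces of the web and how successive $2$-cuts nest around $C$, carried out with the Jordan Curve Theorem in the spirit of Lemma~\ref{webcycleplanar}. A secondary nuisance is checking that the reduction lemmas can always be applied along a $2$-separation compatible with the web structure, so that the smaller graph is again a spanning subgraph of a web and inherits obstruction-freeness from $G$.
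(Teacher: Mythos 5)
Your high-level architecture matches the paper's: argue by vertex-minimal counterexample, show that every type of $2$-separation is excluded because it would force one of the five obstructions or let you pass to a smaller web, conclude that $G$ is $3$-connected, and then contradict Theorem~\ref{3connectivityW4K4} via Corollary~\ref{k4(x)3conn}. The one genuinely novel piece is your proof that the obstructions obstruct: you propose a direct geometric argument (the hub $H$ of a $W_4(X)$-model is disjoint from the rim cycle, hence trapped in a single cell of the separating configuration, hence cannot reach all four arcs), whereas the paper's Lemma~\ref{obstructionsareobstructions} instead runs a vertex-minimal induction, peeling off separations one at a time with Lemmas~\ref{oneterminalinthecut2conn}, \ref{oneterminalonesideothersother}, \ref{2sepW4} and \ref{noW4minor} until Lemma~\ref{noW4minor} kills the minor outright. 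Your hub-confinement idea is plausible and would give a clean non-inductive proof for obstruction~(1), but it is not obviously correct for the nested configurations (3), (4), (5): there a cell can in principle border all four arcs of $\widetilde C$, and verifying that the particular placement of terminals forced by ``terminal separating'' still excludes this is precisely the content that the paper's reduction lemmas package up. You would need to carry that check out case by case, which is comparable work to what the paper already does.

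The real gap, which you correctly flag as ``the main obstacle,'' is the completeness direction. Your bridge-of-maximum-reach sketch stops exactly where the work begins: ``recognise that the configuration is forced to be one of (1)--(5).'' The paper's proof of this fact consumes almost all of its length and is not a soft argument — it enumerates every way a $2$-separation can meet $X$ (five Claims), and within each, every way the obstructions recovered on $G_A$, $G_B$ or $G_{B^i_1}$ can combine. Each subcase requires Lemma~\ref{nicesplittinglemma} to normalise the recovered obstruction relative to the added edge $xy$, and uses the submodularity proposition to manufacture the intermediate $2$-separations (for instance the auxiliary $(A',B')$ with boundary $\{u,x\}$) needed to glue two recovered obstructions into one of the listed ones. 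Nothing in your bridge analysis supplies these ingredients, and there is no reason to believe that ``maximum reach'' naturally produces the nested triangle-chain structure of obstruction (4) rather than some unlisted configuration. You have also not shown that the recursion terminates — the paper's induction is on $|V(G)|$, and the reduction lemmas strictly shrink the graph, whereas your ``merge bridges / reroute $C$'' step could in principle cycle. So the proposal is a correct outline of the strategy but does not constitute a proof; to finish it you would end up rederiving essentially the same case analysis that the paper already gives.
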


Before proving this, we prove some lemmas to make the proof cleaner.

\begin{lemma}
\label{obstructionsareobstructions}
Let $G$ be a $2$-connected graph, $X = \{a,b,c,d\} \subseteq V(G)$. Let $C$ be a cycle in $G$ such that $X \subseteq V(C)$. If any of the obstructions in Theorem \ref{w4cuts} occur, then $G$ does not have a $W_{4}(X)$-minor.
\end{lemma}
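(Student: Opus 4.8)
The plan is to verify, obstruction by obstruction, that each of the five configurations in Theorem~\ref{w4cuts} genuinely prevents a $W_{4}(X)$-minor, leaning entirely on the $2$-connected reduction lemmas from the previous section (Lemmas~\ref{2sepW4}, \ref{oneterminalonesideothersother}, \ref{allonesidegeneralH}, \ref{noW4minor}, and \ref{oneterminalinthecut2conn}), together with the fact that $W_{4}$ is $3$-connected. The key observation tying everything together is that whenever we have a $2$-separation $(A,B)$ of $G$ with two roots strictly inside $A$ and two roots strictly inside $B$, Lemma~\ref{noW4minor} immediately kills the $W_{4}(X)$-minor; all the obstructions are engineered so that, after pushing roots across cut pairs using the other lemmas, we land in exactly this situation.

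First I would handle obstruction~(1): given a terminal separating $2$-chain $((A_{1},B_{1}),\ldots,(A_{n},B_{n}))$, I iterate. The separation $(A_{1},B_{1})$ has exactly one root in $A_{1}\setminus(A_{1}\cap B_{1})$; if $A_{1}\cap B_{1}$ contains a root I apply Lemma~\ref{oneterminalonesideothersother} or Lemma~\ref{oneterminalinthecut2conn} (depending on whether the cut vertex is itself a root) to move the analysis into $G_{B_{1}}$, replacing the inside root by a cut vertex; the chain condition $A_{i}\cap B_{i}\cap A_{i+1}\cap B_{i+1}\neq\emptyset$ guarantees the successive cuts still interact correctly. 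Walking down the chain to $(A_{n},B_{n})$, by the time we reach the last separation we have two roots forced onto $\{$the cut of $A_{n}\cap B_{n}\}$-side and the lone root of $B_{n}\setminus(A_{n}\cap B_{n})$ on the other, so Lemma~\ref{noW4minor} (or a direct application of Lemma~\ref{2sepW4} followed by Lemma~\ref{noW4minor}) finishes it. For obstruction~(2), the terminal separating triangle $(A_{1},B_{1}),(A_{2},B_{2}),(A_{3},B_{3})$ with a root in $A_{1}\cap B_{1}$: here the three cut pairs $\{x,y\},\{x,v\},\{v,y\}$ and the distribution of roots (two in $A_{1}$, one each in $A_{2}\setminus(A_{2}\cap B_{2})$ and $A_{3}\setminus(A_{3}\cap B_{3})$) mean that in any $W_{4}$-model the branch sets of the two ``spoke'' roots in the triangle's outer regions must each contain a cut vertex, and a short Menger/$3$-connectivity count on $W_{4}$ shows there cannot be three internally disjoint paths between the right pair of branch sets — contradiction. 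Obstructions~(3), (4), (5) are variants where one passes first to a reduced graph $G_{A_{1}}$ (or $G_{A^{2}_{1}\cap A^{1}_{1}}$) via Lemma~\ref{2sepW4} or Lemma~\ref{allonesidegeneralH}, noting the cycle $C$ restricts to the promised cycle $C_{A}$ (resp.\ $C'$) through the new roots, and then invokes obstruction~(1) or~(2) in that smaller graph; the ``distinct triangle'' conditions are exactly what is needed to ensure the reductions are nested consistently and no root is lost.

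The main obstacle I anticipate is purely bookkeeping: each reduction lemma has a precise hypothesis about \emph{which} roots sit where relative to a cut, and about the technical condition in Lemma~\ref{2sepW4} concerning swappable maps (which the remark after that lemma says $W_{4}(X)$ satisfies). I will need to check each obstruction produces separations meeting those hypotheses exactly — in particular that after replacing an internal root by a cut vertex, the resulting $4$-element root set still has the ``two-and-two across the next cut'' pattern required downstream. The condition $A_{i}\cap B_{i}\subseteq V(C)$ for the fixed cycle $C$ is not needed for this direction (it matters for the converse), so I can ignore it here. Once the reductions are set up correctly, the contradiction in every case comes down to the same fact: $W_{4}$ has no $2$-vertex cut, so no $2$-separation of $G$ separating two roots from two roots can survive in a $W_{4}(X)$-model, and this is already packaged in Lemma~\ref{noW4minor}. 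I would therefore structure the write-up as: state that all five cases reduce (via the cited lemmas) to the hypothesis of Lemma~\ref{noW4minor} on some minor/subgraph of $G$ carrying the induced roots, then cite Lemma~\ref{noW4minor} to conclude.
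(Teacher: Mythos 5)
Your high-level strategy matches the paper's: peel off the nested $2$-separations with the reduction lemmas from the preceding section until you land on a separation satisfying Lemma~\ref{noW4minor}, inducting on $|V(G)|$. The key point that $W_{4}$ is $3$-connected, so any $2$-separation with two roots strictly on each side is fatal, is exactly right. However, the proposal as written contains two concrete gaps.

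First, a small confusion in case~1. You suggest applying ``Lemma~\ref{oneterminalonesideothersother} or Lemma~\ref{oneterminalinthecut2conn}, depending on whether the cut vertex is itself a root.'' But by definition a terminal separating $2$-chain always has at least one root of $X$ in $A_{1}\cap B_{1}$, so the hypothesis of Lemma~\ref{oneterminalonesideothersother} (no root in the cut) is never met at $(A_{1},B_{1})$. Only Lemma~\ref{oneterminalinthecut2conn} (when $n\geq 2$) or Lemma~\ref{noW4minor} (when $n=1$, or when $A_{1}\cap B_{1}$ contains two roots) can apply there, which is what the paper uses.

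Second, and more seriously, your treatment of obstructions (3), (4), (5) does not close the argument. You propose to ``pass to $G_{A_{1}}$ via Lemma~\ref{2sepW4}'' and then invoke the inherited obstruction there. But Lemma~\ref{2sepW4} is a biconditional with an \emph{or}: $G$ has a $W_{4}(X)$-minor if and only if $G_{A_{1}}$ has one or $G_{B_{1}}$ has one. To conclude that $G$ has no $W_{4}(X)$-minor you must rule out \emph{both} sides, and the hypotheses of obstruction~(3) only hand you an obstruction inside $G_{A_{1}}$; you say nothing about $G_{B_{1}}$. (In fact $G_{B_{1}}$ does inherit a terminal separating $2$-chain built from $(A_{2},B_{2})$ and $(A_{3},B_{3})$, but that has to be argued separately.) The paper avoids this entirely by reducing at $(A_{2},B_{2})$ instead --- a separation with one root strictly on one side and the other three on the far side --- using Lemma~\ref{oneterminalonesideothersother}, which drops into a \emph{single} reduced graph $G_{B_{2}}$ (with two candidate root relabelings), and then tracking carefully that both candidate relabelings funnel into the same further-reduced graph $G''$, at which point Lemma~\ref{noW4minor} or obstruction~(1) applies. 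Without either supplying the missing argument about $G_{B_{1}}$ or switching to the paper's choice of which separation to reduce first, the proposal does not complete cases (3)--(5).
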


\begin{proof}
We deal with each case separately. In each case suppose $G$ is a minimal counterexample with respect to the number of vertices. 

\textbf{Case 1:} Suppose we have a terminal separating $2$-chain $((A_{1},B_{1}),\ldots,(A_{n},B_{n}))$. If $n=1$, then $(A_{1},B_{1})$ is a $2$-separation satisfying the conditions in Lemma \ref{noW4minor} and thus $G$ does not have a $W_{4}(X)$-minor. Therefore we assume $n \geq 2$. Then $(A_{1},B_{1})$ satisfies the conditions in Lemma \ref{oneterminalinthecut2conn}. Let $G'$ be the graph obtained after applying Lemma \ref{oneterminalinthecut2conn} to $(A_{1},B_{1})$. Then $G$ has a $W_{4}(X)$-minor if and only if $G'$ has a $W_{4}(X_{1})$-minor, where $X_{1}$ is defined from Lemma \ref{oneterminalinthecut2conn}. Notice in $G'$, $((A_{2},B_{2}),\ldots,(A_{n},B_{n}))$ is a terminal separating $2$-chain satisfying the properties of obstruction $1$ for the cycle $G'[V(C) \cap B_{1}]$ when we replace the vertex of $X$ in $A_{1} \setminus (A_{1} \cap B_{1})$ with the vertex in $(A_{1} \cap B_{1}) \setminus X$. Since $G$ is a vertex minimal counterexample, $G'$ has no $W_{4}(X)$-minor, and thus $G$ has no $W_{4}(X)$-minor.

\textbf{Case 2:} Suppose there is a terminal separating triangle $(A_{1},B_{1}),(A_{2},B_{2}),(A_{3},B_{3})$ satisfying properties in the second obstruction.  Then we apply Lemma \ref{oneterminalinthecut2conn} to $(A_{1},B_{1})$ giving a new graph $G'$ which has a $W_{4}(X)$-minor if and only if $G'$ has a $W_{4}(X_{1})$-minor, where $X_{1}$ is defined from Lemma \ref{oneterminalinthecut2conn}. In $G'$, apply Lemma \ref{oneterminalinthecut2conn} to $(A_{2},B_{2})$ giving a graph $G''$. Then $G''$ has a $W_{4}(X_{2})$-minor if and only if $G$ has a $W_{4}(X)$-minor. Observe in $G''$,  $(A_{3},B_{3})$ is a separation satisfying Lemma \ref{noW4minor} and thus $G''$ does not have a $W_{4}(X_{2})$-minor. But $G''$ has a $W_{4}(X)$-minor if and only if $G$ has a $W_{4}(X)$-minor, so therefore $G$ has no $W_{4}(X)$-minor.

\textbf{Case 3:} Suppose there is a terminal separating triangle $(A_{1},B_{1}),(A_{2},B_{2}),(A_{3},B_{3})$ and a terminal separating $2$-chain $((A'_{1},B'_{1}),\ldots (A'_{n},B'_{n}))$ in the graph $G_{A_{1}}$, as in the third obstruction. Apply Lemma \ref{oneterminalonesideothersother} to $(A_{2},B_{2})$ to obtain two new reduced graphs $G'_{1}$ and $G'_{2}$. Now in one of $G'_{1}$ and $G'_{2}$, we can apply Lemma \ref{oneterminalinthecut2conn} to $(A_{1},B_{1})$ to obtain a graph $G''$, and in one of $G'_{1}$ and $G'_{2}$, we can apply Lemma \ref{oneterminalinthecut2conn} twice to $(A_{3},B_{3})$ and $(A_{1},B_{1})$ to obtain the graph $G''$ (note that the graph $G''$ obtained from both $G'_{1}$ and $G'_{2}$ is indeed the same graph). Then $G''$ has a $W_{4}(X)$-minor if and only if $G$ has a $W_{4}(X)$-minor. Notice that in the graph $G''$, $((A'_{1},B'_{1}),
\ldots,(A'_{n},B'_{n}))$ is a terminal separating $2$-chain satisfying obstruction $1$. Then by case one, $G''$ has no $W_{4}(X)$-minor, and thus $G$ has no $W_{4}(X)$-minor.

\textbf{Case 4:} Suppose we have the fourth obstruction in Theorem \ref{w4cuts} and let $(A_{1},B_{1}), \\ (A_{2},B_{2}),(A_{3},B_{3})$ be one of the terminal separating triangles. If we apply Lemma \ref{oneterminalonesideothersother} and Lemma \ref{oneterminalinthecut2conn} to $(A_{1},B_{1}),(A_{2},B_{2}),(A_{3},B_{3})$ as we did in case $3$, we obtain a graph $G'$ which has a $W_{4}(X)$-minor if and only if $G$ does. Furthermore, in the graph $G'$, the other terminal separating triangle and terminal separating $2$-chain given by the fourth obstruction for $G$ and $C$ satisfy the properties of obstruction $3$. But then by case $3$, $G'$ has no $W_{4}(X)$-minor, and thus $G$ has no $W_{4}(X)$-minor. 

\textbf{Case 5:} Let $(A^{1}_{1},B^{1}_{1}), (A^{1}_{2},B^{1}_{2}), (A^{1}_{3},B^{1}_{3})$ and $(A^{2}_{1},B^{2}_{1}), (A^{2}_{2},B^{2}_{2}),(A^{2}_{3},B^{2}_{3})$ be the two distinct triangles satisfying the properties in obstruction five. Applying Lemma \ref{oneterminalonesideothersother} and Lemma \ref{oneterminalinthecut2conn} to $(A^{1}_{1},B^{1}_{1}), (A^{1}_{2},B^{1}_{2}), (A^{1}_{3},B^{1}_{3})$ as in case $3$, we obtain a graph $G'$ which has a $W_{4}(X)$-minor if and only if $G$ has a $W_{4}(X)$-minor. If $A^{1}_{1} \cap B^{1}_{1} = A^{2}_{1} \cap B^{2}_{1}$, then in $G'$, the separations $(A^{2}_{2}, B^{2}_{2})$ and $(A^{2}_{3},B^{2}_{3})$ are a terminal separating $2$-chain in $G'$ satisfying obstruction $1$ on the cycle $C' = G'[V(C) \cap B^{1}_{1}]$. Then by case $1$, $G'$ has no $W_{4}(X)$-minor. Therefore we assume that $A^{1}_{1} \cap B^{1}_{1} \neq A^{2}_{1} \cap B^{2}_{1}$. Then in $G'$ on the cycle $C'$, $(A^{2}_{1},B^{2}_{1}), (A^{2}_{2},B^{2}_{2}),(A^{2}_{3},B^{2}_{3})$ is a terminal separating triangle, so by case $2$, $G'$ has no $W_{4}(X)$-minor, and thus $G$ has no $W_{4}(X)$-minor.    
\end{proof}

\begin{lemma}
\label{nicesplittinglemma}
Let $G$ be a $2$-connected graph, $X = \{a,b,c,d\} \subseteq V(C)$. Let $C$ be a cycle in $G$ such that $X \subseteq V(C)$. Suppose that $a,b,c,d$ appear in that order on $C$, and suppose that $cd \in E(C)$. Then
\begin{itemize}
  \item{if there is a terminal separating triangle satisfying the properties of obstruction $2$, then  $A_{1} \cap B_{1}$ contains exactly one of vertices $c$ or $d$.}
  \item{If $C$ has a terminal separating triangle and a terminal separating $2$-chain $(A_{1},B_{1}),\ldots, \\ (A_{n},B_{n})$ as in obstruction $3$, then either $A_{1} \cap B_{1}$ contains $c$ or $d$ or $A_{n} \cap B_{n}$ contains $c$ or $d$.}
  \item{ Obstructions $4$ and $5$ do not occur on $C$.}
  \end{itemize}  
\end{lemma}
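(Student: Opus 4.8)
The plan is to reduce all five cases to a single observation about how $2$-separations interact with the fixed cycle $C$. Suppose $(A',B')$ is any $2$-separation of $G$ with $A'\cap B'=\{p,q\}\subseteq V(C)$. Deleting $p$ and $q$ from $C$ leaves (at most) two arcs, and each such arc lies entirely on one side of the separation, since $C$ meets $A'\setminus B'$ and $B'\setminus A'$ only through $p$ and $q$. Now if $c,d\notin\{p,q\}$, the edge $cd\in E(C)$ is not incident to $p$ or $q$, so it survives in $C-\{p,q\}$ and forces $c$ and $d$ into the same arc; hence: \emph{if $A'\cap B'$ contains neither $c$ nor $d$, then $c$ and $d$ lie on the same side of $(A',B')$}. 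I would prove this first, as a standalone claim, together with a companion ``normal form'' recording where $a,b,c,d$ sit relative to the two arcs cut out by a single such separation.

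For the first bullet I would argue by contradiction, assuming $A_1\cap B_1=\{x,y\}$ meets neither $c$ nor $d$. By the observation $\{c,d\}$ lies on one side of $(A_1,B_1)$. If $c,d\in A_1$, then since $A_1$ contains exactly two vertices of $X$ these are $c$ and $d$, so $A_1\cap B_1$ contains no vertex of $X$, contradicting obstruction $2$. If $c,d\in B_1\setminus(A_1\cap B_1)$, then $c$ and $d$ are the two vertices of $X$ outside $A_1$, so (up to swapping) $c\in A_2\setminus(A_2\cap B_2)$ and $d\in A_3\setminus(A_3\cap B_3)\subseteq B_2\setminus(A_2\cap B_2)$; here $v\notin X$ (all four terminals lie in $A_1$ or in the interiors of $A_2,A_3$) and $x\notin\{c,d\}$ (the vertex of $X$ in $A_1\cap B_1$ is one of $a,b$), so $\{x,v\}\cap\{c,d\}=\emptyset$ and the observation applied to $(A_2,B_2)$ forces $c,d$ to the same side, a contradiction. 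Thus $A_1\cap B_1$ contains $c$ or $d$; applying the observation to $(A_2,B_2)$ and $(A_3,B_3)$ then excludes $\{c,d\}$ from a single cut, pinning it to exactly one. The delicate subpoint here is the degenerate configuration $A_1\cap B_1=\{c,d\}$, which the normal form must be used to dispatch.

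For the second bullet and for obstructions $4$ and $5$ I would push the same idea through the more elaborate definitions. In obstruction $3$ all four vertices of $X$ lie in the interiors of $A_1,A_2,A_3$, so every cut of the triangle is disjoint from $X$; the observation puts $c,d$ on one side of $(A_1,B_1)$, and since $cd\in E(C)$ survives in $C_A=G[V(C)\cap A_1]\cup\{xy\}$, the same dichotomy applies inside $G_{A_1}$ to the chain $(A'_1,B'_1),\dots,(A'_n,B'_n)$. Tracking the terminal set $\{c,d,x,y\}$ of that chain, one sees that unless $A'_1\cap B'_1$ or $A'_n\cap B'_n$ contains $c$ or $d$, some cut of the chain lying on $V(C_A)$ would separate $c$ from $d$, which is impossible. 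Obstructions $4$ and $5$ each package two nested terminal separating triangles; their nesting gives a linear decomposition of $G$ into four pieces, each carrying one vertex of $X$ in its interior, with all relevant $2$-cuts on $V(C)$ and disjoint from $X$. Then $c$ and $d$ occupy two distinct pieces, so one of these cuts separates them, contradicting the observation; hence obstructions $4$ and $5$ cannot occur on a cycle containing the edge $cd$.

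The main obstacle will not be any single deduction — each reduces to a one-line contradiction via the observation — but the bookkeeping: correctly reading the dense definitions of triangle, $2$-chain, and terminal separating, and keeping straight which vertices are genuine terminals and which only play the role of terminals in a reduced graph such as $G_{A_1}$ or $G_{A^2_1\cap A^1_1}$. For this reason I would first establish the arc observation and the normal form once, and only then run the five cases, so that the bookkeeping is isolated and each case stays short.
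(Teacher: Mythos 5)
Your ``arc observation'' --- that a $2$-cut $\{p,q\}\subseteq V(C)$ splits $C$ into two arcs each lying wholly on one side of the separation, so the surviving edge $cd$ forces $c,d$ onto the same side whenever $\{p,q\}\cap\{c,d\}=\emptyset$ --- is correct, and it is the geometric fact that does all the work; you have just extracted it cleanly, whereas the paper leaves it implicit. The paper's own proof instead decomposes $C$ into the four arcs $P_{a,b},P_{b,c},P_{c,d},P_{d,a}$ and runs explicit case analysis on where the cut vertices $v$ and $x$ of the triangle can sit, deriving a contradiction with the terminal separating definitions in each sub-case, and for obstructions $4$ and $5$ it reuses the obstruction $3$ analysis rather than giving a fresh argument. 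Your packaging is more reusable and easier to audit, but be aware that several positional facts you lean on silently do need a sentence each: in the second bullet you assume $c,d$ land in $A_1$ rather than $B_1$ (the $B_1$ alternative needs the same contradiction you ran for the first bullet); and in both the first and second bullets you use something like $A_3\setminus(A_3\cap B_3)\subseteq B_2$, which follows from the disjointness clause of the triangle definition but is not automatic. Finally, on the ``delicate subpoint'' you flag: the paper's proof of the first bullet only ever establishes that $A_1\cap B_1$ \emph{contains} $c$ or $d$ (``If $A_1\cap B_1$ contains $c$ or $d$ we are done''), never ruling out $A_1\cap B_1=\{c,d\}$, and every downstream application of this lemma invokes only that weaker form. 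So the ``exactly one'' you correctly identify as hard to justify is in fact not justified in the paper either and can be dropped from your write-up without loss.
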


\begin{proof}

Let $P_{a,b}$ be the $(a,b)$-path on $C$ such that $c,d \not \in V(P_{a,b})$. Similarly define $P_{b,c}$, $P_{c,d}$ and $P_{d,a}$. Then by construction, $P_{a,b} \cup P_{b,c} \cup P_{c,d} \cup P_{d,a} = C$. 

We first show that if we have a terminal separating triangle $(A_{1},B_{1}),(A_{2},B_{2}),(A_{3},B_{3})$ as in obstruction $2$, that $A_{1} \cap B_{1}$ contains one of $c$ or $d$.

 If $A_{1} \cap B_{1}$ contains $c$ or $d$ we are done. Therefore we assume that $A_{1} \cap B_{1}$ contains $a$. Let $(A_{1} \cap B_{1}) \setminus \{a\} = v$. Then to satisfy the definition of a terminal separating triangle we have that $v \in V(P_{b,c})$ or $v \in V(P_{c,d})$. This follows since if $v$ belonged to either of $P_{a,b} \setminus \{a,b\}$ or $P_{d,a} \setminus \{d,a\}$, then either there would not be two exactly two vertices of $X$ contained in $A_{1}$, or we could not satisfy the condition that $A_{j} \not \subseteq A_{i}$ for all $i,j \in \{1,2,3\}$, $i \neq j$ and maintain that $A_{2} \setminus (A_{2} \cap B_{2})$ and $A_{3} \setminus (A_{3} \cap B_{3})$ both contain a vertex of $X$. 

If $v=b$, then notice that the vertices of $A_{2} \cap B_{2}$ and $A_{3} \cap B_{3}$ that are not $a$ or $b$ lie on $P_{c,d}$. Since $cd \in E(C)$, without loss of generality $A_{2} \cap B_{2} = \{b,c\}$ and $A_{3} \cap B_{3} = \{a,c\}$, but this is not a terminal separating triangle, a contradiction. 

Therefore $v \neq b$. If $v = c$ or $v = d$ we are done, and since $cd \in E(C)$ it suffices to consider the case when $v \in P_{b,c} \setminus \{b,c\}$. Suppose $A_{2} \cap B_{2} = \{v,x\}$. If $x \not \in V(P_{a,d})$, then either $A_{2} \setminus (A_{2} \cap B_{2})$ does not contain a vertex of $X$, or $A_{3} \setminus (A_{3} \cap B_{3})$ does not contain a vertex of $X$ which is a contradiction. If $x = d$, then $A_{3} \cap B_{3} = \{a,d\}$ which implies that $A_{3} \setminus (A_{3} \cap B_{3})$ does not contain a vertex from $X$, a contradiction. Therefore $A_{1} \cap B_{1}$ cannot contain $a$. Mirroring the above argument, $A_{1} \cap B_{1}$ cannot contain $b$, and thus since by definition $A_{1} \cap B_{1}$ contains a vertex of $X$ it contains $c$ or $d$. 

Now suppose we have a terminal separating triangle $(A^{1}_{1},B^{1}_{1}), (A^{1}_{2}, B^{1}_{2}), (A^{1}_{3}, B^{1}_{3})$ and a terminal separating $2$-chain $(A_{1},B_{1}),\ldots,(A_{n},B_{n})$ in $G_{A_{1}}$ satisfying the properties of obstruction $3$. We will show that either $A_{1} \cap B_{1}$ or $A_{n} \cap B_{n}$ contains $c$ or $d$. 

 Since $A^{1}_{1} \cap B^{1}_{1} = \{x,y\}$ and $x,y \not \in X$, to be a terminal separating triangle, for all $i \in \{1,2,3\}$,  $A^{1}_{i} \cap B^{1}_{i}$ does not contain a vertex from $X$. This follows from the definition of terminal separating triangle as two vertices of $X$ lie in $A^{1}_{1} \setminus (A^{1}_{1} \cap B^{1}_{1})$, so if one of $A^{1}_{i} \cap B^{1}_{i}$, for $i \in \{2,3\}$ contained a vertex of $X$, then at least one of $A^{1}_{2} \setminus (A^{1}_{2} \cap B^{1}_{2})$ or $A^{1}_{3} \setminus (A^{1}_{3} \cap B^{1}_{3})$ does not contain a vertex from $X$, a contradiction. Now notice that since $A^{1}_{1} \setminus \{x,y\}$ contains two vertices of $X$, and $cd \in E(C)$, either $c,d \in A^{1}_{1} \setminus \{x,y\}$ or $a,b \in A^{1}_{1} \setminus \{x,y\}$. If $a,b \in A^{1}_{1} \setminus \{x,y\}$ then to remain a terminal separating triangle, one of $A^{1}_{2} \cap B^{1}_{2}$ or $A^{1}_{3} \cap B^{1}_{3}$ contains a vertex from $P_{c,d}$. But since $cd \in E(C)$, and we know that for all $i \in \{1,2,3\}$,  no vertex of $X$ is contained in $A^{1}_{i} \cap B^{1}_{i}$, a contradiction. Therefore $c,d \in A^{1}_{1} \setminus \{x,y\}$. Then by definition of terminal separating $2$-chain and since $a,b,c,d$ appear in that order on $C$, the terminal separating $2$-chain in $G_{A_{1}}$ contains either $d$ or $c$ in $A_{n} \cap B_{n}$ or $A_{1} \cap B_{1}$.

Now suppose we have two distinct terminal separating triangles $(A^{1}_{1}, B^{1}_{1}), (A^{1}_{2},B^{1}_{2}), (A^{1}_{3}, B^{1}_{3})$ and $(A^{2}_{1}, B^{2}_{1}), (A^{2}_{2},B^{2}_{2}), (A^{2}_{3}, B^{2}_{3})$, and a terminal separating $2$-chain in $G[A_{1} \cap A_{2}]$ satisfying the properties of obstruction $4$. We will show that this obstruction does not exist since $cd \in E(C)$.

 Notice from the assumptions that $A^{i}_{1} \cap B^{i}_{1}$ does not contain any vertices from $X$ for $i \in \{1,2\}$. Then since $cd \in E(C)$, without loss of generality we may assume that $a,b \in A^{1}_{1}$ and $c,d \in A^{2}_{1}$. But then by the previous discussion, one of $A^{2}_{2} \cap B^{2}_{2}$ and $A^{2}_{3} \cap B^{2}_{3}$ contains a vertex from $P_{c,d}$. But then one of $A^{2}_{2} \cap B^{2}_{2}$ and $A^{2}_{3} \cap B^{2}_{3}$ contains $c$ or $d$, which implies that $(A^{2}_{1}, B^{2}_{1}), (A^{2}_{2},B^{2}_{2}), (A^{2}_{3}, B^{2}_{3})$ is not a terminal separating triangle, a contradiction. 

Finally, suppose that the fifth obstruction occurred. Then we have two distinct terminal separating triangles $((A^{1}_{1},B^{1}_{1}), (A^{1}_{2},B^{1}_{2}), (A^{1}_{3},B^{1}_{3})), ((A^{2}_{1},B^{2}_{1}),  (A^{2}_{2},B^{2}_{2}),(A^{2}_{3},B^{2}_{3}))$ satisfying the properties of obstruction $5$. Then from our assumptions and since $cd \in E(C)$, without loss of generality we may assume  $c,d \in A^{1}_{1} \setminus (A^{1}_{1} \cap B^{1}_{1})$ and $a,b \in A^{2}_{1} \setminus (A^{2}_{1} \cap B^{2}_{1}$. By the same argument as for the obstruction $4$ case, this gives a contradiction, completing the proof.    
\end{proof}

We will extensively make use of the following sub-modularity lemma for separations.

\begin{proposition}
Let $G$ be a graph with $2$-separations $(A_{1},B_{1})$ and $(A_{2},B_{2})$. Let $A_{1} \cap B_{1} = \{u,v\}$ and $A_{2} \cap B_{2} = \{x,y\}$ where $x,y,u$ and $v$ are distinct vertices. Furthermore, suppose that $x \in A_{1} \setminus (A_{1} \cap B_{1})$ and $y \in B_{1} \setminus (A_{1} \cap B_{1})$. Then let $u \in A_{2} \setminus (A_{2} \cap B_{2})$ and $v \in B_{2} \setminus (A_{2} \cap B_{2})$. Then there is a separation $(A',B')$ such that $A' \cap B' = \{u,x\}$ and $A' \subseteq A_{1}$ and $B_{1} \subseteq B'$. 
\end{proposition}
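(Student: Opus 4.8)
The plan is to use the standard ``uncrossing'' construction: set $A' = A_{1} \cap A_{2}$ and $B' = B_{1} \cup B_{2}$. The required containments are then immediate, since $A_{1} \cap A_{2} \subseteq A_{1}$ and $B_{1} \subseteq B_{1} \cup B_{2}$, so all the work lies in checking that $(A',B')$ is genuinely a $2$-separation with vertex boundary exactly $\{u,x\}$.

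First I would verify $A' \cup B' = V(G)$: given $w \in V(G)$, if $w \notin B_{1} \cup B_{2}$, then $w \in A_{1}$ (as $A_{1} \cup B_{1} = V(G)$) and $w \in A_{2}$ (as $A_{2} \cup B_{2} = V(G)$), so $w \in A_{1} \cap A_{2} = A'$. Next I would compute the boundary: $A' \cap B' = (A_{1} \cap A_{2} \cap B_{1}) \cup (A_{1} \cap A_{2} \cap B_{2})$. Since $A_{1} \cap B_{1} = \{u,v\}$, the first set is contained in $\{u,v\} \cap A_{2}$; the hypothesis $u \in A_{2} \setminus (A_{2} \cap B_{2})$ gives $u \in A_{2}$, while $v \in B_{2} \setminus (A_{2} \cap B_{2})$ gives $v \notin A_{2}$, and since $u \in A_{1} \cap B_{1}$ and $u \in A_{2}$ we get $A_{1} \cap A_{2} \cap B_{1} = \{u\}$. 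Symmetrically, since $A_{2} \cap B_{2} = \{x,y\}$, the second set is contained in $\{x,y\} \cap A_{1}$, and $x \in A_{1}$ while $y \in B_{1} \setminus (A_{1} \cap B_{1})$ forces $y \notin A_{1}$, so $A_{1} \cap A_{2} \cap B_{2} = \{x\}$. Hence $A' \cap B' = \{u,x\}$, and $u \neq x$ by the distinctness hypothesis, so $|A' \cap B'| = 2$.

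Finally I would check that there are no edges between $A' \setminus B'$ and $B' \setminus A'$. Suppose $pq \in E(G)$ with $p \in A' \setminus B'$ and $q \in B' \setminus A'$. From $p \in A_{1} \cap A_{2}$ and $p \notin B_{1} \cup B_{2}$ we get $p \in A_{1} \setminus B_{1}$ and $p \in A_{2} \setminus B_{2}$. From $q \notin A_{1} \cap A_{2}$, either $q \notin A_{1}$, in which case $q \in B_{1} \setminus A_{1}$ and $pq$ is an edge between $A_{1} \setminus B_{1}$ and $B_{1} \setminus A_{1}$, contradicting that $(A_{1},B_{1})$ is a separation; or $q \notin A_{2}$, in which case $q \in B_{2} \setminus A_{2}$ and $pq$ contradicts that $(A_{2},B_{2})$ is a separation. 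Thus $(A',B')$ is the desired separation.

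I do not anticipate a real obstacle here, as this is a routine submodularity computation; the only point requiring care is the membership bookkeeping, namely invoking the four hypotheses $x \in A_{1} \setminus (A_{1}\cap B_{1})$, $y \in B_{1} \setminus (A_{1}\cap B_{1})$, $u \in A_{2} \setminus (A_{2}\cap B_{2})$, $v \in B_{2} \setminus (A_{2}\cap B_{2})$ in exactly the right spots to pin the boundary down to $\{u,x\}$ rather than a larger set.
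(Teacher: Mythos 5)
Your proof is correct and uses the same standard uncrossing construction as the paper, setting $A' = A_1 \cap A_2$; your choice $B' = B_1 \cup B_2$ is actually the cleaner complement (the paper writes $B' = V(G)\setminus(A_1\cap A_2)$, which literally fails to contain $B_1$ since $u \in A_1 \cap A_2 \cap B_1$, so your version fixes a small slip there). The boundary bookkeeping pinning $A'\cap B'$ to exactly $\{u,x\}$ and the edge check are both done correctly.
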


\begin{proof}
Let $A' = A_{1} \cap A_{2}$ and $B' = V(G) \setminus (A_{1} \cap A_{2})$. Let $z$ be any vertex in $A_{1} \cap A_{2}$ and consider a path $P$ from $z$ to any vertex not in $A_{1} \cap A_{2}$. Consider the first vertex in $P$ which is not in $A_{1} \cap A_{2}$. If this vertex is in $B_{1} \cap A_{2}$, then since $u \in A_{2} \setminus (A_{2} \cap B_{2})$, this vertex is $u$, and thus $u \in V(P)$. If this vertex is in $A_{1} \cap B_{2}$ then since $x \in A_{1} \setminus (A_{1} \cap B_{1})$ this vertex is $x$ and thus $x \in V(P)$. Notice that these are the only options, and thus $(A',B')$ is a $2$-separation with $A' \cap B' = \{u,x\}$. Additionally, it is immediate that $A' \subseteq A_{1}$ and $B_{1} \subseteq B'$.
\end{proof}

Now we prove the theorem. Throughout the proof we will abuse the notation of separations slightly.  Given a graph $G$ and a $2$-separation $(A,B)$, if in the graph $G_{B}$, there is a $2$-separation $(A',B')$, then we will refer to the $2$-separation $(A' \cup A,B)$ in $G$ as $(A',B')$ to avoid notational clutter. Thus essentially if a separation $(A',B')$ in $G_{B}$ induces a natural separation in $G$, then we refer to the separation in $G$ as $(A',B')$.

\begin{proof}[Proof of Theorem \ref{w4cuts}]
Lemma \ref{obstructionsareobstructions} proves one direction of the theorem.

For the other direction, consider a graph $G$ which is a minimal counterexample with respect to $|V(G)|$. That is, we consider a graph $G$ such that there exists a cycle $C$ such that $X \subseteq V(C)$ where none of the five above obstructions exist on the cycle $C$, and $G$ is $W_{4}(X)$-minor free. Note such cycle always exists by Corollary \ref{webcycle}. Without loss of generality let $a,b,c,d$ appear in that order on $C$. Let $P_{a,b}$ be the $(a,b)$-path on $C$ such that $c,d \not \in V(P_{a,b})$. Similarly define $P_{b,c},P_{c,d}$ and $P_{d,a}$.  The goal will be to show that since $G$ is a vertex minimal counterexample, $G$ is $3$-connected, but that contradicts that all $3$-connected graphs either have a $K_{4}(X)$ or $W_{4}(X)$-minor (Theorem \ref{3connectivityW4K4}). We go through all the different possibilities for where the vertices of $X$ can be in relation to a $2$-separation.

\textbf{Claim 1:} There is no $2$-separation $(A,B)$ such that $X \subseteq A$. 

Suppose we had such separation and first suppose $V(C) \subseteq A$. By applying Lemma \ref{allonesidegeneralH} to $(A,B)$, the graph $G$ has a $W_{4}(X)$-minor if and only if $G_{A}$ has a $W_{4}(X)$-minor. Since $G$ is a vertex minimal counterexample, $G_{A}$ has one of the obstructions on $C$. But then the obstruction exists in $G$, a contradiction. 

Therefore we can assume that there are vertices of $C$ in $B \setminus (A \cap B)$. Since $X \subseteq V(C)$ and $X \subseteq A$, all of the vertices in $V(C) \cap B$ lie in exactly one of $P_{a,b}$, $P_{b,c}$,  $P_{c,d}$ or $P_{d,a}$. Without loss of generality, suppose all the vertices in $V(C) \cap B$ lie on $P_{a,b}$. Furthermore, since there is a vertex in $B \setminus (A \cap B)$, this implies that $A \cap B \subseteq V(P_{a,b})$. Then after applying Lemma \ref{allonesidegeneralH} to $(A,B)$ notice that in the graph $G_{A}$, that $C_{A} = G_{A}[V(C) \cap A]$ is a cycle. As $G$ is a minimal counterexample, $C_{A}$ has one of the five obstructions. Notice that regardless of the obstruction, since all the vertices of $C$ that were in $B$ were on $P_{a,b}$, the obstruction for $C_{A}$ in $G_{A}$ exists in $G$ for $C$. But this is a contradiction. 

\textbf{Claim 2:} There is no $2$-separation $(A,B)$ such that two vertices of $X$ lie in $A \setminus (A \cap B)$ and two vertices of $X$ that lie in $B \setminus (A \cap B)$.

Suppose such separation existed and let $A \cap B = \{x,y\}$. Notice that for such a separation to exist we have that $x \in V(C)$ and $y \in V(C)$.  Consider the graphs $G_{A}$ and $G_{B}$ inherited from Lemma \ref{2sepW4} and let $C_{A}$ and $C_{B}$ be the cycles where $C_{A} = G_{A}[V(C) \cap A]$ and $C_{B} = G_{B}[V(C) \cap A]$. By minimality, both $C_{A}$ and $C_{B}$ have one of the five obstructions. We consider the various cases.
 
 \textbf{Case 1:} The cycle $C_{B}$ has a terminal separating $2$-chain as in obstruction $1$, say $((A^{2}_{1},B^{2}_{1}),\ldots,(A^{2}_{n},B^{2}_{n}))$.
 
 \textbf{Subcase 1:} The cycle $C_{A}$ has a terminal separating $2$-chain as in obstruction $1$, $((A^{1}_{1},B^{1}_{1}),\ldots,(A^{1}_{n},B^{1}_{n}))$.

  Then if $A^{1}_{n} \cap B^{1}_{n} \cap A^{2}_{1} \cap B^{2}_{1} \neq \emptyset$ we may concatenate the two terminal separating $2$-chains giving a terminal separating $2$-chain of $G$ on $C$ satisfying obstruction $1$. Therefore we can assume $A^{1}_{n} \cap B^{1}_{n}$ does not share a vertex with $A^{2}_{1} \cap B^{2}_{1}$. But then the terminal separating $2$-chain $((A^{1}_{1},B^{1}_{1}),\ldots,(A^{1}_{n},B^{1}_{n}),(A,B), (A^{2}_{1},B^{2}_{1}),\ldots,(A^{2}_{n},B^{2}_{n}))$  satisfies obstruction $1$ on $C$, a contradiction. 
 
 \textbf{Subcase 2:} The cycle $C_{A}$ has a terminal separating triangle satisfying obstruction two, $(A^{1}_{1},B^{1}_{1}), (A^{1}_{2},B^{1}_{2}),(A^{1}_{3},B^{1}_{3})$.
 
  Since $xy \in E(C_{A})$, the terminal vertex in $A^{1}_{1} \cap B^{1}_{1}$ is either $x$ or $y$, by Lemma \ref{nicesplittinglemma}. But then $(A^{2}_{1},B^{2}_{1}), (A^{2}_{2},B^{2}_{2}), \\ (A^{2}_{3},B^{2}_{3})$ combines with either the terminal separating $2$-chain in $G_{B}$ or the terminal separating $2$ chain plus the separation $(A,B)$ to form the third obstruction in $G$, a contradiction. 

\textbf{Subcase 3:}  The cycle $C_{A}$ has a terminal separating triangle $(A_{1},B_{1}),(A_{2},B_{2}),(A_{3},B_{3})$ and a terminal separating $2$-chain $((A^{1}_{1},B^{1}_{1}),\ldots,(A^{1}_{n},B^{1}_{n}))$ as in obstruction $3$. 

By Lemma \ref{nicesplittinglemma}, we can assume that either $x$ or $y$ is in $A^{1}_{1} \cap B^{1}_{1}$. But then either we can concatenate the two terminal separating $2$-chains such that there resulting terminal separating $2$-chain and the terminal separating triangle satisfy obstruction $3$, or we can add in the separation $(A,B)$ to the terminal separating $2$-chains such that the terminal separating $2$-chains plus $(A,B)$ plus the terminal separating triangle satisfy obstruction $3$. 

\textbf{Subcase 4:} Obstruction $4$ or $5$ occurs on $C_{A}$.

By Lemma \ref{nicesplittinglemma} the fourth and fifth obstructions do not occur in $G_{A}$, since $xy \in E(C_{A})$. Now we consider cases where $G_{B}$ does not have a terminal separating $2$-chain.

\textbf{Case 2:} The cycle $C_{B}$ has a terminal separating triangle $(A^{2}_{1},B^{2}_{1}),(A^{2}_{2},B^{2}_{2}),(A^{2}_{3},B^{2}_{3})$ satisfying obstruction $2$.
 
 Notice by Lemma \ref{nicesplittinglemma} the terminal vertex contained in $A^{2}_{1} \cap B^{2}_{1}$ is $x$ or $y$. 
 
 \textbf{Subcase 1:} The cycle $C_{A}$ has a terminal separating triangle $(A^{1}_{1},B^{1}_{1}),(A^{1}_{2},B^{1}_{2}),(A^{1}_{3},B^{1}_{3})$ satisfying obstruction $2$.
 
  Again, by Lemma \ref{nicesplittinglemma} the terminal vertex contained in $A^{1}_{1} \cap B^{1}_{1}$ is $x$ or $y$. If the terminal vertex in $A^{i}_{1} \cap B^{i}_{1}$ is $x$ for both $i =1,2$, then notice in $G$ the two triangles form obstruction five. A similar statement holds if they are both $y$. Then we must have that one of the triangles contains $x$ as the terminal vertex in $A^{i}_{1} \cap B^{i}_{1}$, $i \in \{1,2\}$ and the other contains $y$. But then the two triangles plus the separation $(A,B)$ form obstruction $4$ in $G$, a contradiction. 

\textbf{Subcase 2:} The cycle  $C_{A}$ has a terminal separating triangle and a terminal separating $2$-chain as in obstruction $3$. 

By Lemma \ref{nicesplittinglemma} the terminal separating $2$-chain contains one of the vertices $x$ or $y$. Then by possibly adding in the separation $(A,B)$ to the existing terminal separating $2$-chains, we can extend this to a triangle plus a terminal separating $2$-chain and another terminal separating triangle, as in obstruction $4$, a contradiction. 

\textbf{Case 3:} Both of the cycles $C_{A}$ and $C_{B}$ have obstructions $3$.

But then by Lemma \ref{nicesplittinglemma} the terminal separating $2$-chains in both $C_{A}$ and $C_{B}$ both contain $x$ or $y$, and thus after possibly adding in $(A,B)$, we get in $G$, the fourth obstruction exists on $C$, a contradiction.  

We note these are all of the cases, and therefore there is no $2$-separation $(A,B)$ such that two vertices of $X$ lie in $A \setminus (A \cap B)$ and two vertices of $X$ that lie in $B \setminus (A \cap B)$, and $A \cap B \subseteq V(C)$. 
 
\textbf{Claim 3:}  There is no $2$-separation $(A,B)$ such that $b \in A \cap B$, $a \in A \setminus (A \cap B)$, and $c,d \in B \setminus (B \cap A)$. 

Suppose there is such a separation, $(A,B)$, and let $A \cap B = \{v,b\}$. First notice that since $C$ is a cycle containing $X$, for such separation to exist $v \in V(C)$. Applying Lemma \ref{oneterminalinthecut2conn} to $(A,B)$ we get that $G$ has a $W_{4}(X)$-minor if and only if $G_{B}$ has a $W_{4}(X)$-minor. Since we picked $G$ to be a minimal counterexample, $G_{B}$ has one of the five obstructions occuring on the cycle $C_{B} = G_{B}[V(C) \cap B]$. We consider the various cases.

\textbf{Case 1:} The cycle $C_{B}$ has a terminal separating $2$-chain $((A_{1},B_{1}),\ldots,(A_{n},B_{n}))$ satisfying obstruction $1$.

If $A_{1} \cap B_{1}$ or $A_{n} \cap B_{n}$ contains $b$ then it is a terminal separating $2$-chain in $G$ for $C$. Otherwise without loss of generality we have that $A_{1} \cap B_{1}$ contains $v$. But then $((A,B),(A_{1},B_{1}),\ldots,(A_{n},B_{n}))$ is a terminal separating $2$-chain in $G$ for $C$, contradicting that $G$ is a minimal counterexample. 
 
\textbf{Case 2:} The cycle $C_{B}$ has a terminal separating triangle $(A_{1},B_{1}), (A_{2},B_{2}),(A_{3},B_{3})$ satisfying obstruction $2$.

 First suppose that  $b \in A_{1} \cap B_{1}$. Then the terminal separating triangle exists in $G$, contradicting that we have a minimal counterexample. Then $v \in A_{1} \cap B_{1}$. But then in $G$, the separation $(A,B)$ plus $(A_{1},B_{1}), (A_{2},B_{2}),(A_{3},B_{3})$ satisfies obstruction $3$, contradicting that $G$ is a minimal counterexample. 

\textbf{Case 3:} The cycle $C_{B}$ has a terminal separating triangle plus a terminal separating $2$-chain as in obstruction $3$.

Then by Lemma \ref{nicesplittinglemma}, then the terminal separating $2$-chain contains either $v$ or $b$. In the case where the terminal separating $2$-chain contains $v$, then by adding in the separation $(A,B)$, we get a terminal separating $2$-chain plus terminal separating triangle in $G$. In the case where the terminal separating $2$-chain contains $b$, the terminal separating $2$-chain and terminal separating triangle were already an obstruction in $G$, a contradiction.

\textbf{Case 4:} Obstructions $4$ or $5$ occur on $C_{B}$. 

By Lemma \ref{nicesplittinglemma}, these obstructions do not occur, a contradiction. We have considered all possible cases, so therefore we can assume there is no $2$-separation $(A,B)$ such that $b \in A \cap B$, $a \in A \setminus (A \cap B)$, and $c,d \in B \setminus (B \cap A)$. 
 
 \textbf{Claim 4:} There are no $2$-separations of the form $(A,B)$ such that $A \cap B = \{u,v\}$, $a \in A \setminus \{u,v\}$, and $b,c,d \in B \setminus \{u,v\}$.

 Without loss of generality, we may assume that $u \in V(P_{a,b})$ and $v \in V(P_{a,d})$. By applying Lemma \ref{oneterminalonesideothersother} we get that $G$ has a $W_{4}(X)$-minor if and only if $G_{B}$ has either a $W_{4}(X_{1})$-minor or a $W_{4}(X_{2})$-minor, where $X_{1} = \{b,c,d,u\}$ and $X_{2} = \{b,c,d,v\}$. Since we have a minimal counterexample with respect to the number of vertices, we get that $C_{B} = G_{B}[V(C) \cap B]$ has one of the obstructions when we consider $X_{1}$ and when we consider $X_{2}$. First notice that if under either $X_{1}$ or $X_{2}$, we get obstruction $4$ or $5$, then the obstruction exists in $G$, a contradiction. We consider the various other cases.
 
 \textbf{Case 1:} Under $X_{2}$, we get a terminal separating $2$-chain  $((A_{1},B_{1}),\ldots,(A_{n},B_{n}))$.
 
 Notice that we may assume that $v \in A_{1} \cap B_{1}$ or $v \in A_{n} \cap B_{n}$, as otherwise $((A_{1},B_{1}),\ldots, \\ (A_{n},B_{n}))$ exists in $G$ for $C$, a contradiction. Therefore without loss of generality, we suppose that $v \in A_{1} \cap B_{1}$. 
 
 \textbf{Subcase 1:} Under $X_{1}$, we get a terminal separating $2$-chain $((A'_{1},B'_{1}),\ldots,(A'_{n'},B'_{n'}))$. 
 
 By similar reasoning as above, we may assume that $u \in A'_{1} \cap B'_{1}$. Notice that the vertex in $A'_{1} \cap B'_{1} \setminus \{u\}$ lies in $P_{b,c} \setminus \{b\}$ or in $P_{d,c} \setminus \{d\}$ and and the vertex in $A_{1} \cap B_{1} \setminus \{v\}$ lies in $P_{b,c} \setminus \{b\}$ or $P_{d,c} \setminus \{d\}$. Consider the case where both the vertex in $A'_{1} \cap B'_{1} \setminus \{u\}$ and the vertex in $A_{1} \cap B_{1} \setminus \{v\}$ lie on $P_{b,c} \setminus \{b\}$.

We consider various subcases. Suppose the vertex in $A'_{1} \cap B'_{1} \setminus \{u\}$ is the same vertex as $A_{1} \cap B_{1} \setminus \{v\}$. If this vertex is $c$, then $(A,B), (B_{1},A_{1}), (A'_{1},B'_{1})$ is a terminal separating triangle in $G$ satisfying obstruction $2$, a contradiction. Thus we assume the vertex in $A'_{1} \cap B'_{1} \setminus \{u\}$ is not $c$. But then $(A,B), (A_{1},B_{1}), (A'_{1},B'_{1})$ plus $(A_{2},B_{2}),\ldots,(A_{n},B_{n})$ is a terminal separating triangle and terminal separating $2$-chain satisfying obstruction $3$, a contradiction. 
 
 Therefore we can assume that $A'_{1} \cap B'_{1} \setminus \{u\} \neq A_{1} \cap B_{1} \setminus \{v\}$. First suppose that the vertex in $A'_{1} \cap B'_{1} \setminus \{u\}$ lies in $A_{1}$. Notice that one of the vertices in $A'_{2} \cap B'_{2}$ lies on $P_{u,d}$, and call this vertex $x$  (if $x \not \in V(P_{u,d})$, then either $((A'_{1},B'_{1}),\ldots,(A'_{n},B'_{n}))$ is not a dissection, or not a terminal separating chain).  If  $x =v$, then $(A,B),(A'_{1},B'_{1}),(A'_{2},B'_{2})$ plus $((A_{1},B_{1}),\ldots (A_{n},B_{n}))$ satisfies obstruction $3$ in in $G$, a contradiction. Therefore $x \neq v$. But notice that $uv \in E(C_{B})$, which implies that $x$ lies on the $(v,d)$-subpath of $P_{u,d}$. But then the vertex in $A'_{1} \cap B'_{1} \setminus \{u\}$ and $v$ are a $2$-vertex cut. Let $(A',B')$ be the $2$-separation such that $A' \cap B' = \{u, (A'_{1} \cap B'_{1} \setminus \{u\})\}$. Then $(A,B),(A',B'),(A'_{1},B'_{1})$ plus $((A_{1},B_{1}),\ldots, (A_{n},B_{n}))$ satisfy obstruction $3$ in $G$, a contradiction. 
 
 Therefore we can assume that the vertex in $A'_{1} \cap B'_{1} \setminus \{u\}$ lies in $B_{1}$. Let $x$ be the vertex in $A_{1} \cap B_{1} \setminus \{v\}$. Notice that $x \neq c$ as the vertex in $A'_{1} \cap B'_{1} \setminus \{u\}$ lies in $B_{1}$. Since the vertex in $A'_{1} \cap B'_{1} \setminus \{u\}$ lies in $B_{1}$, we have that $\{x,u\}$ is a $2$-vertex cut. Let $(A',B')$ be this $2$-separation. Then $(A,B),(A',B'),(A_{1},B_{1})$ plus $((A_{2},B_{2}),\ldots (A_{n},B_{n}))$ is a terminal separating triangle plus terminal separating $2$-chain satisfying obstruction $3$ a contradiction. 
 
 Therefore we can assume that the vertex in  $A'_{1} \cap B'_{1} \setminus \{u\}$ and the vertex in $A_{1} \cap B_{1} \setminus \{v\}$ both do not lie on $P_{b,c} \setminus \{b\}$. By essentially the same argument, we can assume that the vertex in $A'_{1} \cap B'_{1} \setminus \{u\}$ and the vertex in $A_{1} \cap B_{1} \setminus \{v\}$ both do not lie on $P_{c,d} \setminus \{d\}$. 
 
 Now consider the case where the vertex in $A_{1} \cap B_{1} \setminus \{v\}$ lies in $P_{b,c} \setminus \{b\}$ and the vertex in $A'_{1} \cap B'_{1} \setminus \{u\}$ lies in $P_{c,d} \setminus \{d\}$. Notice that at $c$ is not both in $A_{1} \cap B_{1} \setminus \{v\}$ and $A'_{1} \cap B'_{1} \setminus \{u\}$.  Let $x$ be the vertex in $A_{1} \cap B_{1} \setminus \{v\}$ and suppose $x \neq c$. Then $(A_{2},B_{2})$ exists and $A_{2} \cap B_{2} \setminus \{x\}$ lies on $P_{v,d}$. Then notice that $\{u,x\}$ is a $2$-vertex cut in $G$. Let $(A',B')$ be the separation such that $A' \cap B' = \{u,x\}$. 
Then $(A,B), (A_{1}, B_{1}), (A',B')$ plus $((A_{2},B_{2}),\ldots,(A_{n},B_{n}))$ is a terminal separating triangle and terminal separating chain as in obstruction $3$. The case where $c$ is not in $A'_{1} \cap B'_{1} \setminus \{u\}$ follows similarly.  
 
 Now consider the case where the vertex in $A_{1} \cap B_{1} \setminus \{v\}$ lies in $P_{c,d} \setminus \{d\}$ and the vertex in $A'_{1} \cap B'_{1} \setminus \{u\}$ lies in $P_{b,c} \setminus \{b\}$. Notice that at least one of the vertices in $A'_{1} \cap B'_{1} \setminus \{u\}$ and $A_{1} \cap B_{1} \setminus \{v\}$ is not $c$. Let $x$ be the vertex in $A_{1} \cap B_{1} \setminus \{v\}$ and suppose that $x \neq c$. Then since $x \neq c$, we have that $(A_{2},B_{2})$ exists and lies on $P_{v,b}$. Then since $uv \in E(C_{B})$, notice that $\{u,x\}$ is a $2$-vertex cut. Let $(A',B')$ be the separation such that $A' \cap B' = \{u,x\}$. Then $(A,B),(A',B'), (A_{1},B_{1})$ plus $((A_{2},B_{2}),\ldots,(A_{n},B_{n}))$ is a terminal separating $2$-chain plus terminal separating triangle satisfying obstruction $3$, a contradiction. The case where $c$ is not in $A'_{1} \cap B'_{1} \setminus \{u\}$ follows similarly.

 \textbf{Subcase 2:} Suppose that under $X_{1}$, we get a terminal separating triangle $(A'_{1},B'_{1}),\\ (A'_{2},B'_{2}),(A'_{3},B'_{3})$ satisfying obstruction $2$.
 
 
Suppose that the vertex in $A'_{1} \cap B'_{1} \setminus u$ lies in $A_{1}$.  Call this vertex $x$. First suppose $x \in A_{1} \cap B_{1} \setminus \{v\}$. Notice that $x \neq c$, as if $x =c$ then $(A'_{1},B'_{1}),(A'_{2},B'_{2}),(A'_{3},B'_{3})$ would not satisfy the definition of a terminal separating triangle, a contradiction. Therefore $x \neq c$, but then $(A,B), (A'_{1},B'_{1}),(A_{1},B_{1})$ and $((A_{2},B_{2}),\ldots,(A_{n},B_{n}))$ is a terminal separating $2$-chain satisfying obstruction $3$, a contradiction. 

Now consider when $x \in A_{1} \setminus (A_{1} \cap B_{1})$ and suppose that $x$ lies in $P_{b,c} \setminus \{b\}$. Then the vertex in $A'_{2} \cap B'_{2} \setminus \{x\}$ lies in either $P_{u,d} \setminus \{u,v\}$ or $P_{c,d}$. In either case, notice that $\{x,v\}$ is a $2$-vertex cut. Let $(A',B')$ be the separation such that $A' \cap B' = \{x,v\}$. But then $(A,B),(A',B'),(A'_{1},B'_{1})$ plus $((A_{1},B_{1}),\ldots,(A_{n},B_{n}))$ satisfies obstruction $3$ in $G$, a contradiction. Now suppose that $x$ lies in $P_{c,d} \setminus \{d\}$. Then since $x \in A_{1} \setminus (A_{1} \cap B_{1})$, we have that $u$ and the vertex in $A_{1} \cap B_{1} \setminus \{v\}$ form a $2$-vertex cut. Let $(A',B')$ be the separation induced by this $2$-vertex cut. Then if $n=1$, we have $(A,B), (A',B'), (A_{1},B_{1})$ form a terminal separating triangle in $G$, a contradiction. Otherwise $n >1$ and $(A,B), (A',B'), (A_{1},B_{1})$ plus $((A_{2},B_{2}), \ldots, (A_{n},B_{n}))$ form a terminal separating triangle plus a terminal separating $2$-chain in $G$, a contradiction. 

Therefore we can assume $x$ lies in $B_{1} \setminus (A_{1} \cap B_{1})$. Let $y$ be the vertex in $A_{1} \cap B_{1} \setminus \{v\}$. Then since $x \in B_{1} \setminus (A_{1} \cap B_{1})$,  notice that $u,y$ induces a $2$-separation. Let $(A',B')$ be the $2$-separation such that $A' \cap B' = \{u,y\}$. If $n=1$, then $(A,B), (A',B'), (A_{1},B_{1})$ is a terminal separating triangle satisfying obstruction $2$ in $G$, a contradiction. Otherwise, $(A,B),(A',B'),(A_{1},B_{1})$ plus $((A_{2},B_{2}),\ldots, (A_{n},B_{n}))$ satisfy obstruction $3$ in $G$, a contradiction. 

\textbf{Subcase 3:} Suppose that under $X_{1}$, we get a terminal separating triangle $(A'_{1},B'_{1}),\\ (A'_{2},B'_{2}),(A'_{3},B'_{3})$ and a terminal separating $2$-chain $((A''_{1},B''_{1}), \ldots, (A''_{n},B''_{n})$ satisfying obstruction $3$.

Notice that if $u \not \in A''_{1} \cap B''_{1}$ or $u \not \in A''_{n} \cap B''_{n}$, then the obstruction exists already in $G$, a contradiction. Therefore without loss of generality, $u \in A''_{1} \cap B''_{1}$. Now by the same arguments as in subcase $1$, we can look at the position of the vertex in $A''_{1} \cap B''_{1} \setminus \{u\}$ and show that some obstruction $3$ or $2$ always exists in $G$.

\textbf{Case 2:} Under $X_{2}$, we get a terminal separating triangle $(A_{1},B_{1}),(A_{2},B_{2}),(A_{3},B_{3})$ satisfying obstruction $2$.

Notice that $A_{1} \cap B_{1}$ contains $v$ as otherwise the obstruction is an obstruction of $G$, a contradiction. Then since $v \in A_{1} \cap B_{1}$, we may assume that no other vertex of $X_{2}$ is in $A_{1} \cap B_{1}, A_{2} \cap B_{2},$ or $A_{3} \cap B_{3}$.  Also note that $u$ is not in any of $A_{1} \cap B_{1}$, $A_{2} \cap B_{2}$, $A_{3} \cap B_{3}$ as otherwise $(A_{1},B_{1}),(A_{2},B_{2}),(A_{3},B_{3})$ would not be a terminal separating triangle. Then we may assume that $u \in A_{1} \setminus (A_{1} \cap B_{1})$. Also note that we may assume the vertex in $A_{1} \cap B_{1} \setminus \{v\}$ lies in $P_{b,c}$. Notice by symmetry and the above cases, we do not need to consider the case where under $X_{1}$ we get a terminal separating $2$-chain.

\textbf{Subcase 1:} Under $X_{1}$, we get a terminal separating triangle satisfying obstruction $2$, $(A'_{1},B'_{1}),(A'_{2},B'_{2}),(A'_{3},B'_{3})$.

By a similar argument as above, we may assume that $u \in A'_{1} \cap B'_{1}$ as otherwise the obstructions exists in $G$. Without loss of generality we may assume that the vertex in $A'_{1} \cap B'_{1} \setminus \{u\}$ lies in $P_{b,c} \setminus \{c\}$. Let $x$ be the vertex in $A'_{1} \cap B'_{1} \setminus \{u\}$. If $x \in A_{1} \cap B_{1} \setminus \{v\}$, then $(A,B), (A'_{1},B'_{1}),(A_{1},(B_{1})$ and $(A_{1},B_{1}),(A_{2},B_{2}),(A_{3},B_{3})$ satisfy obstruction $5$ in $G$, a contradiction.

Now suppose that $x$ is in $A_{1} \setminus (A_{1} \cap B_{1})$. Notice that $A'_{2} \cap B'_{2} \setminus \{x\}$ lies on $P_{u,d}$. But then since $uv \in E(C_{B})$, $\{x,v\}$ is a $2$-vertex cut. Let $(A',B')$ be the separation where $A' \cap B' = \{x,v\}$. But then $(A,B), (A',B'), (A'_{1},B'_{1})$ and $(A_{1},B_{1}),(A_{2},B_{2}),(A_{3},B_{3})$ satisfy obstruction $5$ in $G$, a contradiction. 

Now suppose that $x$ lies in $B_{1} \setminus (A_{1} \cap B_{1})$. Let $y \in A_{1} \cap B_{1} \setminus \{v\}$. Then $\{u,y\}$ is a $2$ vertex cut. Let $(A',B')$ be the separation such that $A' \cap B' = \{u,y\}$. Then $(A,B),(A',B'), (A_{1},B_{1})$ and $(A_{1},B_{1}),(A_{2},B_{2}),(A_{3},B_{3})$ are terminal separating triangles satisfying obstruction $5$ in $G$, a contradiction. 

Now suppose that $x$ is in $A_{1} \cap B_{1} \setminus \{v\}$. Then easily $(A,B),(A_{1},B_{1}),(A'_{1},B'_{1})$ and $(A'_{1},B'_{1}),(A'_{2},B'_{2}),(A'_{3},B'_{3})$ satisfies obstruction $5$ in $G$ a contradiction. 
 
\textbf{Subcase 2:} Suppose under $X_{1}$, we get a terminal separating triangle $(A'_{1},B'_{1}),(A'_{2},B'_{2}), \\ (A'_{3},B'_{3})$ and terminal separating $2$-chain $((A''_{1},B''_{1}) \ldots (A''_{n'},B''_{n'}))$ as in obstruction $3$.

 Notice that if $u \not \in A''_{1} \cap B''_{1}$ and $u \not \in A''_{n} \cap B''_{n}$ then the obstruction exists in $G$, a contradiction. Thus without loss of generality, we may assume that $u \in A''_{1} \cap B''_{1}$. Then notice that the vertex in $A''_{1} \cap B''_{1} \setminus \{u\}$ lies on $P_{b,c} \setminus \{b\}$ or $P_{c,d} \setminus \{d\}$. Let $x$ be the vertex in $A''_{1} \cap B''_{1}$.

Now suppose that $x$ is in $A_{1} \setminus (A_{1} \cap B_{1})$. Since we assumed that the other vertex in $A_{1} \cap B_{1} \setminus \{v\}$ was in $P_{b,c}$, this implies that $x \in P_{b,c}$. Then notice that since $c \not \in A_{1} \cap B_{1} \setminus \{v\}$, then either $(A''_{2},B''_{2})$ exists and the vertex in $A''_{2} \cap B''_{2} \setminus \{x\}$ lies on $P_{u,d}$ or $(A''_{2},B''_{2})$ does not exist and the vertex in $A'_{1} \cap B'_{1} \setminus \{x\}$ lies in $P_{u,d}$. First suppose the vertex in $A''_{2} \cap B''_{2} \setminus \{x\}$ lies on $P_{u,d}$. But then since $uv \in E(C_{B})$, $\{x,v\}$ is a $2$-vertex cut. Let $(A',B')$ be the separation where $A' \cap B' = \{x,v\}$. But then $(A,B), (A',B'), (A'_{1},B'_{1})$ and $(A_{1},B_{1}),(A_{2},B_{2}),(A_{3},B_{3})$ satisfy obstruction $5$ in $G$, a contradiction. A similar analysis holds for the case when $(A''_{2},B''_{2})$ does not exist and the vertex in $A'_{1} \cap B'_{1} \setminus \{x\}$ lies in $P_{u,d}$. 


Now suppose that $x$ lies in $B_{1} \setminus (A_{1} \cap B_{1})$. Let $y \in A_{1} \cap B_{1} \setminus \{v\}$. Then $\{u,y\}$ is a $2$ vertex cut. Let $(A',B')$ be the separation such that $A' \cap B' = \{u,y\}$. Then $(A,B),(A',B'), (A_{1},B_{1})$ and $(A_{1},B_{1}),(A_{2},B_{2}),(A_{3},B_{3})$ are terminal separating triangles satisfying obstruction $5$, a contradiction.

 \textbf{Case 3:} Under $X_{2}$, we have a terminal separating $2$-chain $((A'_{1},B'_{1}),\ldots,(A'_{n},B'_{n}))$ and a terminal separating triangle $(A_{1}, B_{1}),(A_{2},B_{2}),(A_{3},B_{3})$. 
 
 Notice that if $v \not \in A'_{1} \cap B'_{1}$ and $v \not \in A'_{n} \cap B'_{n}$ then the obstruction exists in $G$, a contradiction. Without loss of generality, assume that $v \in A'_{1} \cap B'_{1}$. Notice that the vertex in $v \in A'_{1} \cap B'_{1} \setminus \{v\}$ lies in either $P_{b,c}$ or $P_{c,d}$. By symmetry and the previous cases, we only need to consider the case when under $X_{1}$ we get obstruction $3$.

\textbf{Subcase 1:} Under $X_{1}$ we get a terminal separating $2$ chain $((A''_{1},B''_{1}), \ldots (A''_{n},B''_{n}))$ and a terminal separating triangle $(A'''_{1},B'''_{1}),(A'''_{2},B'''_{2}),(A'''_{3},B'''_{3})$ satisfying obstruction $3$. 

By the same arguments as above, without loss of generality we may assume that $u \in A''_{1} \cap B''_{1}$. Then let $x$ be the vertex in $A''_{1} \cap B''_{1} \setminus \{u\}$. Notice that $x$ lies in $P_{b,c} \setminus \{b\}$ or $P_{c,d} \setminus \{d\}$. Consider the case where $x$ lies on $P_{b,c} \setminus \{b\}$ and the vertex in $A'_{1} \cap B'_{1} \setminus \{v\}$ lies on $P_{b,c} \setminus \{b\}$.

First suppose that $x \in A_{1} \cap B_{1} \setminus \{v\}$. Then if $n =1$, $(A,B),(A''_{1},B''_{1}),(A'_{1},B'_{1})$ and $(A_{1}, B_{1}),(A_{2},B_{2}),(A_{3},B_{3}))$ satisfies obstruction $5$. Otherwise $n \geq 1$ and $(A,B),(A''_{1},B''_{1}), \\ (A'_{1},B'_{1})$ plus $((A'_{2},B'_{2}) \ldots (A'_{n},B'_{n}))$ plus $(A_{1}, B_{1}),(A_{2},B_{2}),(A_{3},B_{3})$ satisfy obstruction $4$ in $G$, a contradiction. 

Now suppose that $x \in A'_{1} \setminus (A'_{1} \cap B'_{1})$. Then either the vertex in  $A''_{2} \cap B''_{2} \setminus \{x\}$ lies on $P_{u,d}$ or $(A''_{2},B''_{2})$ does not exist and the vertex in $A'''_{1} \cap B'''_{1} \setminus \{x\}$ lies on $P_{u,d}$. Consider the case where $A''_{2} \cap B''_{2} \setminus \{x\}$ lies on $P_{u,d}$. Then since $uv \in E(C_{B})$, we have that $\{x,v\}$ is a $2$-vertex cut. Let $(A',B')$ be the separation where $A' \cap B' = \{x,v\}$. But then $(A,B), (A',B'), (A''_{1},B''_{1})$ plus $((A_{1},B_{1}),\ldots,(A_{n},B_{n}))$ plus $(A'_{1}, B'_{1}),(A'_{2},B'_{2}),(A'_{3},B'_{3})$ satisfies obstruction $4$ in $G$, a contradiction. 

Now suppose that $x \in B'_{1} \setminus (A'_{1} \cap B'_{1})$. Let $y \in A'_{1} \cap B'_{1} \setminus \{v\}$. Then notice that $\{y,u\}$ is a $2$-vertex cut in $G_{B}$. Let $(A',B')$ be the separation such that $A' \cap B' = \{y,u\}$. Then if $n =1$,  $(A,B), (A',B'),(A'_{1},B'_{1})$ plus $(A_{1},B_{1}),(A_{2},B_{2}),(A_{3},B_{3})$ satisfies obstruction $5$ in $G$, a contradiction. Otherwise, $n \geq 2$, and $(A,B), (A',B'),(A'_{1},B'_{1})$ plus $((A'_{2},B'_{2}),\ldots,(A'_{n},B'_{n}))$ plus $(A_{1},B_{1}),(A_{2},B_{2}),(A_{3},B_{3})$ satisfies obstruction $4$ in $G$, a contradiction. 

Notice the case where $x$ lies on $P_{c,d} \setminus \{d\}$ and the vertex in $A'_{1} \cap B'_{1} \setminus \{v\}$ lies on $P_{b,d}$ follows in a similar fashion.

 Now suppose that $x$ lies on $P_{b,c} \setminus \{b\}$ and the vertex in $A'_{1} \cap B'_{1} \setminus \{v\}$ lies on $P_{c,d} \setminus \{d\}$. Let $y$ be the vertex in $A'_{1} \cap B'_{1} \setminus \{v\}$. Then $\{u,y\}$ is a $2$-vertex cut. Let $(A',B')$ be the separation induced by this $2$-vertex cut. Then if $n=1$, we have $(A,B),(A',B'), (A'_{1},B'_{1})$ plus $(A_{1},B_{1}),(A_{2},B_{2}),(A_{3},B_{3})$ form obstruction $5$, a contradiction. Otherwise, $n >1$ and $(A,B),(A',B'), (A'_{1},B'_{1})$ plus $((A'_{2},B'_{2}) \ldots (A'_{n},B'_{n}))$ and $(A_{1},B_{1}),(A_{2},B_{2}),(A_{3},B_{3})$ satisfies obstruction $4$, a contradiction. 
 
 Now suppose that $x$ lies on $P_{c,d} \setminus \{d\}$ and the vertex in $A'_{1} \cap B'_{1} \setminus \{v\}$ lies in $P_{b,c} \setminus \{c\}$. We note that a similar argument to when both $x$ and the vertex in $A'_{1} \cap B'_{1} \setminus \{v\}$ were in $P_{b,c}$ and $x \in B'_{1} \setminus (A'_{1} \cap B'_{1})$ works in this case.

 Therefore, there are no $2$-separations of the form $(A,B)$ such that $A \cap B = \{u,v\}$, $a \in A \setminus \{u,v\}$, and $b,c,d \in B \setminus \{u,v\}$. 

\textbf{Claim 5:} There are no $2$-separations $(A,B)$ such that $A \cap B$ contain two vertices from $X$, and $A \setminus (A \cap B)$ contains a vertex from $X$ and $B \setminus (A \cap B)$ contains a vertex from $X$. 

If such separation existed, it would be a terminal separating $2$-chain, a contradiction. 

Therefore, our graph $G$ has no $2$-separations. Therefore $G$ is $3$-connected. But every $3$-connected graph which does not have a $K_{4}(X)$-minor has a $W_{4}(X)$-minor by Theorem \ref{3connectivityW4K4}. But this contradicts our choice of $G$, completing the claim. 
\end{proof}

\section{A characterization of graphs without a $K_{4}(X)$, $W_{4}(X)$, or a $K_{2,4}(X)$-minor}

First we define what we mean by $K_{2,4}(X)$-minor. Let $V(K_{2,4}) = \{t_{1},t_{2},t_{3},t_{4},s_{1},s_{2}\}$ where $E(K_{2,4}) = \{t_{i}s_{j} \ | \ \forall i \in \{1,2,3,4\}, j \in \{1,2\}\}$.  Let $G$ be a graph and $X = \{a,b,c,d\} \subseteq V(G)$. Let $\mathcal{F}$ be the family of maps from $X$ to $V(K_{2,4})$ such that each vertex of $X$ goes to a distinct vertex in $\{t_{1},t_{2},t_{3},t_{4}\}$. For the purposes of this thesis, a $K_{2,4}(X)$ minor refers to the $X$ and family of maps given above. 

\begin{figure}
\begin{center}
\includegraphics[scale =0.25]{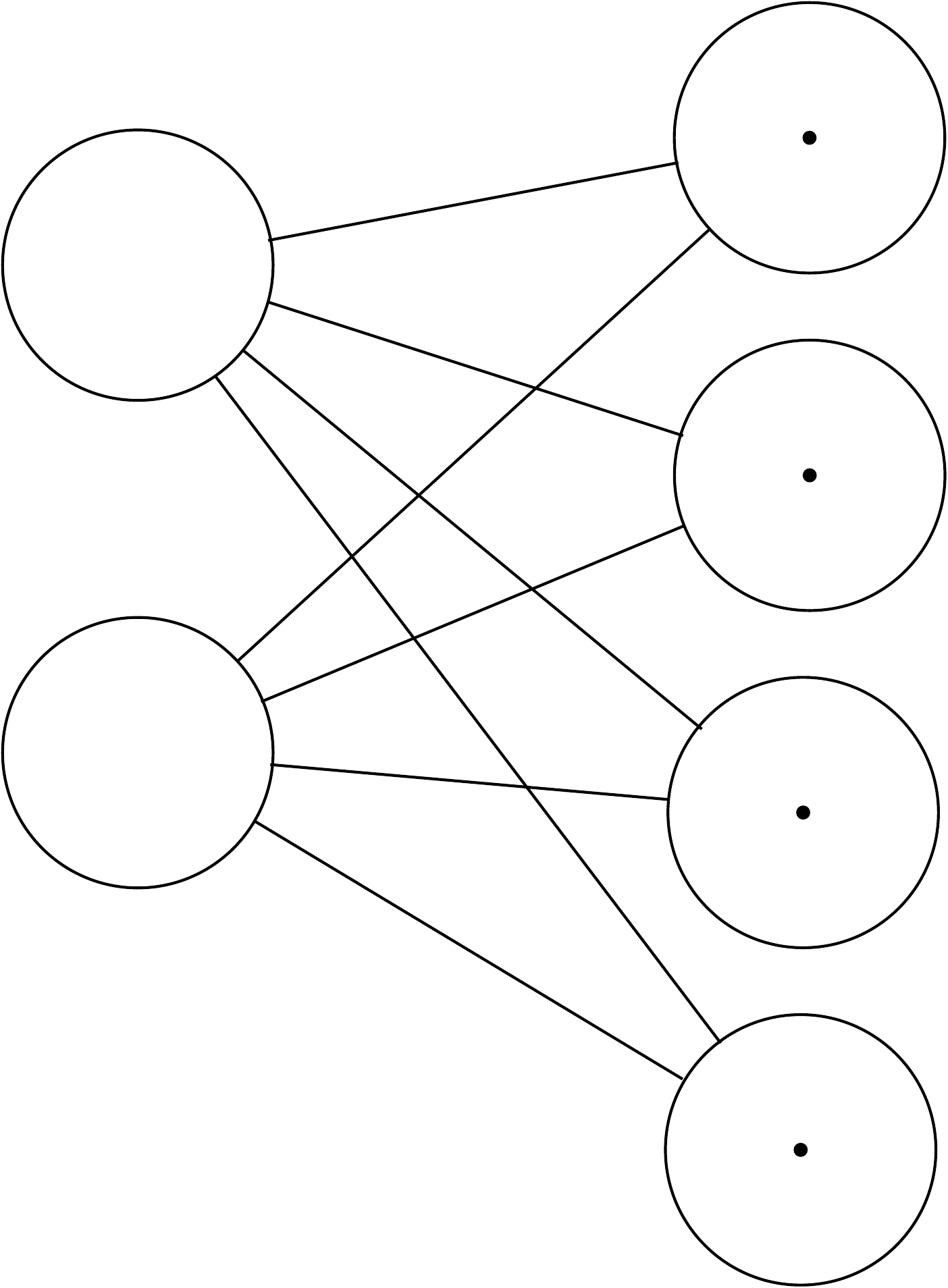}
\caption{A graph $G$ with a model of a $K_{2,4}(X)$-minor. The circles represent connected subgraphs, and the vertices in the circles represent vertices from $X$.}
\end{center}
\end{figure}

Graphs not containing $K_{2,4}(X)$-minors was the subject of Demasi's PhD thesis (\cite{linodemasithesis}). He managed to give a complete chracterization of planar graphs without a $K_{2,4}(X)$-minor. We do not require the full strength of the planar $K_{2,4}(X)$ characterization, but in obtaining a full characterization of graphs without a $K_{4}(X)$, $W_{4}(X)$ or $K_{2,4}(X)$-minor we will make use of the reduction lemmas given in \cite{linodemasithesis} to throw out most of the non-web classes of $K_{4}(X)$ and $W_{4}(X)$ free graphs. Then by appealing to Theorem \ref{w4cuts}, we will show that if an $\{a,b,c,d\}$-web has a $K_{2,4}(X)$-minor, then it also has a $W_{4}(X)$-minor (see Theorem \ref{k4w4k24characterization}). 

Of particular interest will be $K_{2,2}(X)$-minors due to a useful reduction lemma. Let $V(K_{2,2}) = \{t_{1},t_{2},s_{1},s_{2}\}$ where $E(K_{2,2}) = \{t_{i}s_{j} \ | \ \forall i,j \in \{1,2\}\}$. Let $G$ be a graph and $X = \{a,b,c,d\} \subseteq V(G)$. Define $\mathcal{F}$ to be the family of maps where $a$ and $b$ are mapped to $\{s_{1},s_{2}\}$ and $c$ and $d$ are mapped to $\{t_{1},t_{2}\}$. For the purpose of this thesis, a $K_{2,2}(X)$-minor refers to the $X$ and $\mathcal{F}$ above.

\begin{theorem}[\cite{linodemasithesis}]
\label{k22characterization}
Let $a,b,c,d$ be distinct vertices in a graph $G$. Then $G$ contains a 
$K_{2,2}(X)$-minor if and only if there exists an $(a,c)$-path $P_{a,c}$ and a $(b,d)$-path $P_{b,d}$ such that $P_{a,c} \cap P_{b,d} = \emptyset$ and there exists an $(a,d)$-path $P_{a,d}$ and a $(b,c)$-path $P_{b,c}$ such that $P_{b,c} \cap P_{a,d} = \emptyset$.
\end{theorem}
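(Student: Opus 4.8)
The forward implication is a routine unfolding of the definitions, so I would dispatch it first. Given a $K_{2,2}(X)$-model $\{G_a,G_b,G_c,G_d\}$ with $a\in G_a$, $b\in G_b$, $c\in G_c$, $d\in G_d$, the minor requires an edge between each of $G_a,G_b$ and each of $G_c,G_d$. Using the $G_a$--$G_c$ edge I can concatenate a path in $G_a$ from $a$ to one endpoint of that edge, the edge itself, and a path in $G_c$ from the other endpoint to $c$, obtaining an $(a,c)$-path lying entirely in $G_a\cup G_c$; likewise a $(b,d)$-path in $G_b\cup G_d$, an $(a,d)$-path in $G_a\cup G_d$, and a $(b,c)$-path in $G_b\cup G_c$. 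Since the four branch sets are pairwise disjoint, $G_a\cup G_c$ is disjoint from $G_b\cup G_d$ and $G_a\cup G_d$ is disjoint from $G_b\cup G_c$, so $P_{ac}\cap P_{bd}=\emptyset$ and $P_{ad}\cap P_{bc}=\emptyset$.

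For the backward implication the plan is to build the branch sets explicitly from the given paths. Take the disjoint pair $P_{ad},P_{bc}$ as a \emph{frame}: two vertex-disjoint paths with ends $a,d$ and $b,c$. Route $P_{ac}$ through the frame by letting $u$ be the last vertex of $P_{ac}$ (traversed from $a$) that lies on $P_{ad}$ and $v$ the first subsequent vertex of $P_{ac}$ on $P_{bc}$; then $Q_1:=P_{ac}[u,v]$ is a $u$--$v$ path meeting the frame only at $u,v$. Symmetrically, from $P_{bd}$ extract a $w$--$z$ path $Q_2:=P_{bd}[w,z]$ with $w\in P_{bc}$, $z\in P_{ad}$, meeting the frame only at $w,z$. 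Because $P_{ac}\cap P_{bd}=\emptyset$ we get $Q_1\cap Q_2=\emptyset$, and also $u\neq z$, $v\neq w$ for the same reason. In the favourable configuration where $u$ precedes $z$ along $P_{ad}$ and $w$ precedes $v$ along $P_{bc}$, I set $G_a=P_{ad}[a,z^{-}]\cup Q_1^{\circ}$, $G_d=P_{ad}[z,d]$, $G_b=P_{bc}[b,v^{-}]\cup Q_2^{\circ}$, $G_c=P_{bc}[v,c]$, where $z^{-},v^{-}$ are the predecessors of $z,v$ and $Q_i^{\circ}$ is the interior of $Q_i$. A short verification shows these are pairwise disjoint, each connected, and realize all four required adjacencies: $G_a\sim G_d$ and $G_b\sim G_c$ along the frame, $G_a\sim G_c$ and $G_b\sim G_d$ through the bridges. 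Hence they form a $K_{2,2}(X)$-model.

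The real work, and the step I expect to be the main obstacle, is the remaining configurations: when the two attachment points interleave unfavourably on a frame path (e.g. $z$ lies between $a$ and $u$), or degenerate to an endpoint ($z=d$, $v=c$, $u=a$, $w=b$). In these cases the naive assignment above leaves some branch set adjacent to only one of the other three. The plan is to handle them by re-assigning which bridge realizes which adjacency and, when that still fails, by instead using $P_{ac},P_{bd}$ as the frame and routing $P_{ad},P_{bc}$ through it; the crux is to show that for at least one of the two frame choices the order of attachment points is favourable. I anticipate that pinning this down cleanly will require first normalizing the situation — passing to a minimal counterexample, suppressing vertices of degree at most $2$ outside $X$ and applying the connectivity reductions of the previous sections (legitimate since $K_{2,2}$ is $2$-connected), and choosing the four paths to minimize total length — so that the overlap pattern of the two disjoint pairs is rigid enough for the finite case check to go through.
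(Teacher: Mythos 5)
The paper cites this result from Demasi's thesis without reproducing a proof, so your attempt must stand on its own. Your forward direction is correct, and your construction in the favourable configuration is also correct: the four sets $G_a,G_b,G_c,G_d$ you define are pairwise disjoint, each is connected, and all four required adjacencies are realized (with the edges $z^{-}z$ and $v^{-}v$ on the frame and the last edges of $Q_1,Q_2$ supplying the two cross adjacencies). The gap is exactly where you flag it, and it cannot be waved away. If, say, $z$ precedes $u$ on $P_{ad}$ while $w$ precedes $v$ on $P_{bc}$, then in the subgraph $P_{ad}\cup P_{bc}\cup Q_1\cup Q_2$ the four pendants hanging off the unique cycle attach in the cyclic order $a,d,c,b$. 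A cycle with four pendant paths in cyclic order $a,d,c,b$ has \emph{no} $K_{2,2}(X)$-minor with bipartition $\{a,b\}\mid\{c,d\}$: any $C_4$-minor must use the one cycle, each branch set meets it in a contiguous arc, and the forced cyclic order of the branch sets puts $a$ opposite $c$ rather than opposite $b$. So ``re-assigning which bridge realizes which adjacency'' inside that subgraph can never succeed; you must pick different bridges from $P_{ac},P_{bd}$, use more of those paths than the single sub-paths $Q_1,Q_2$, or change the frame.

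Your two remedies — switch to the frame $P_{ac},P_{bd}$, or minimize total length and run a finite case check — are reasonable instincts, but neither is argued, and neither is obviously sufficient. It is not a priori clear that whenever the $(P_{ad},P_{bc})$ frame yields only unfavourable bridge pairs, the $(P_{ac},P_{bd})$ frame must yield a favourable one; the content of the missing step is precisely a lemma of the form ``some pair of bridges, for some choice of frame, crosses in the strip picture,'' and that does not follow from symmetry alone. (For a concrete instance where the switch happens to save you: take $P_{ad}=a\,y\,x\,d$, $P_{bc}=b\,c$, $P_{ac}=a\,e\,x\,c$, $P_{bd}=b\,f\,y\,d$; the first frame gives the non-crossing bridges $x c$ and $b f y$, while the second gives the crossing bridges $a y$ and $b c$.) Similarly, ``minimize and check cases'' is a plan, not a proof: you have not said what is being minimized, shown that minimality forces $P_{ac}$ and $P_{bd}$ to each contribute a single bridge to the frame, or enumerated and dispatched the residual configurations (including the degenerate ones where an attachment coincides with an endpoint). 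As written, the backward direction is a correct construction in one case together with an unproven reduction to that case.
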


Since $K_{2,4}$ is $2$-connected, by previous discussion, we may assume we are dealing with $2$-connected graphs (We note that \cite{linodemasithesis} has the same cut vertex reductions specialized to $K_{2,4}(X)$-minors, though we note the proofs are in essence the same as those given in the cut vertex section).

Now we record some of the  $2$-connected reductions from \cite{linodemasithesis}. For the next three lemmas, suppose $G$ is a $2$-connected graph, $X = \{a,b,c,d\} \subseteq V(G)$ and $(A,B)$ a $2$-separation such that $A \cap B = \{u,v\}$. Also let $G_{A} = G[A] \cup \{uv\}$ and $G_{B} = G[B] \cup \{uv\}$, as before. 

\begin{lemma}
\label{alloneside}
Suppose that $X \subseteq B$. Then $G$ has a $K_{2,4}(X)$ minor if and only if $G_{B}$ has a $K_{2,4}(X)$-minor.
\end{lemma}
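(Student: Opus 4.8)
The plan is to observe that $K_{2,4}$ is $3$-connected, so this is essentially the special case $H=K_{2,4}$ of Lemma \ref{allonesidegeneralH}, with the names of the two sides of the separation interchanged; since the statement is phrased for the particular rooted minor $K_{2,4}(X)$, I will run the same argument directly. Throughout, recall $G_B = G[B]\cup\{uv\}$ and that $X\subseteq B$.

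For the easy direction, suppose $G_B$ has a $K_{2,4}(X)$-minor. Since $G$ is $2$-connected and $(A,B)$ is a proper $2$-separation with $A\cap B=\{u,v\}$, the graph $G_A=G[A]\cup\{uv\}$ is $2$-connected, so $G[A]$ is connected and contains a $(u,v)$-path; hence $G[A]$ can be contracted onto a single $uv$-edge. Carrying out this contraction in $G$ produces $G_B$ (after deleting the parallel edge, as agreed), so any $K_{2,4}(X)$-model of $G_B$ lifts to a $K_{2,4}(X)$-model of $G$ by re-expanding the branch set(s) meeting $\{u,v\}$.

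For the converse, let $\{G_x : x\in V(K_{2,4})\}$ be a model of a $K_{2,4}(X)$-minor in $G$. First I claim that no branch set lies entirely in $A\setminus\{u,v\}$: if $G_y\subseteq A\setminus\{u,v\}$ then $y$ is not a terminal (the terminals are in $X\subseteq B$), and since $\{u,v\}$ is a $2$-vertex-cut, contracting each branch set to a vertex shows there are at most two internally disjoint paths in $K_{2,4}$ from $y$ to any terminal in $B\setminus\{u,v\}$, contradicting that $K_{2,4}$ is $3$-connected. Consequently at most two branch sets use vertices of $A$, and if exactly two do, then one of them contains $u$ and the other contains $v$ (again because $\{u,v\}$ separates $A$ from $B$). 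In either case, since $uv\in E(G_B)$, the restrictions $\{G_B[V(G_x)\cap V(G_B)] : x\in V(K_{2,4})\}$ form a $K_{2,4}(X)$-model in $G_B$: any adjacency between branch sets that had been realized by an edge interior to $A$ is now realized by the edge $uv$, and the terminals, all in $B$, are untouched.

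There is no genuine obstacle here; the only point demanding care is the last step — verifying that when two branch sets dip into $A$ the single inter-branch-set adjacency they require survives in $G_B$ thanks to the added edge $uv$, and that no terminal is disturbed by restricting to $B$. This is exactly the bookkeeping carried out in the proof of Lemma \ref{allonesidegeneralH}, and it goes through unchanged.
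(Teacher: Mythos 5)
There is a genuine gap. Your proof rests on the assertion that $K_{2,4}$ is $3$-connected, but this is false: deleting the two degree-$4$ vertices $s_1,s_2$ disconnects $K_{2,4}$ into four isolated vertices, so $K_{2,4}$ has connectivity exactly $2$. This breaks the argument in two places. First, Lemma~\ref{allonesidegeneralH} is stated only for a $3$-connected $H$, so it does not apply with $H = K_{2,4}$ and you cannot simply cite it. Second, and more seriously, the direct argument you give for the converse direction collapses: you argue that if some branch set $G_y$ lies entirely in $A\setminus\{u,v\}$ then, after contraction, there are at most two internally disjoint paths in $K_{2,4}$ from $y$ to a terminal, ``contradicting that $K_{2,4}$ is $3$-connected.'' But the local connectivity between any $s_i$ and any $t_j$ in $K_{2,4}$ is exactly $2$ (in $K_{2,4}-s_it_j$, removing the other $s$-vertex already separates them), so there is no contradiction to be had: having only two internally disjoint paths is perfectly consistent with the structure of $K_{2,4}$.

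The intermediate claim you want --- that no branch set lies entirely in $A\setminus\{u,v\}$ --- is still true, but it needs a different justification that exploits the specific structure of $K_{2,4}$ rather than a connectivity count. Since the terminals all lie in $B$, only $G_{s_1}$ or $G_{s_2}$ could sit inside $A\setminus\{u,v\}$. Suppose $G_{s_1}\subseteq A\setminus\{u,v\}$. In $K_{2,4}$, the vertex $s_1$ is adjacent to every $t_j$, so each $G_{t_j}$ must have a vertex adjacent to $G_{s_1}$; by the definition of a separation, such a vertex lies in $A$. Since each $G_{t_j}$ also contains its terminal (a vertex of $X\subseteq B$) and is connected, each $G_{t_j}$ must contain $u$ or $v$. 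That puts four pairwise-disjoint branch sets into a two-element set, a contradiction. Once that claim is in place, the remainder of your bookkeeping (at most two branch sets touch $A$; the edge $uv$ in $G_B$ replaces any adjacency realized inside $A$) goes through as you wrote it. So the structure of your proof is salvageable, but the central step must be re-argued using the dominating adjacency of the $s_i$ in $K_{2,4}$ rather than a false $3$-connectivity claim.
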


\begin{lemma}
\label{splitvertices}
If $u, v \in X$, then $G$ has a $K_{2,4}(X)$-minor if and only if $X \subseteq A$ and $G_{A}$ has a $K_{2,4}(X)$-minor or $X \subseteq B$ and $G_{B}$ has a $K_{2,4}(X)$-minor. Suppose $u \in X$ and $v \not \in X$. Furthermore, suppose $a \in A \setminus (A \cap B)$ and $b,c,d \in B$. For each $\pi \in \mathcal{F}$, let $\pi_{B}: \{u,b,c,d\} \to V(K_{2,4})$ be such that $\pi_{B}(u) = \pi(a)$ and on $\{b,c,d\}$, $\pi_{B} = \pi$.  Then $G$ has a $K_{2,4}(X)$-minor if and only if $G_{B}$ has a $K_{2,4}(X)$-minor. 
\end{lemma}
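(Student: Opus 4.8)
The plan is to prove both halves of Lemma~\ref{splitvertices} by the same two-step scheme. Given a hypothetical $K_{2,4}(X)$-model $\{G_z : z \in V(K_{2,4})\}$ of $G$, one first analyses how its branch sets can meet the $2$-separation $(A,B)$, using the basic observation (immediate from the definition of a separation) that $G$ has no edge joining $A\setminus\{u,v\}$ to $B\setminus\{u,v\}$, so that any connected subgraph avoiding both $u$ and $v$ lies entirely on one side; conversely, one transfers models between $G$ and $G_B$ (or $G_A$) by contracting $G[A]$ onto the edge $uv$ (respectively onto the vertex $u$), which is legitimate because, by Menger's theorem and the $2$-connectivity of $G$, $G[A]$ contains a $u$--$v$ path whose interior lies in $A\setminus\{u,v\}$ and hence is disjoint from every branch set that lives in $B$.

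For the first assertion assume $u=a$, $v=b$ and fix $\pi\in\mathcal{F}$ sending $a,b,c,d$ to $t_1,t_2,t_3,t_4$. In a model of $G$, the branch sets $G_{t_3}$ and $G_{t_4}$ contain neither $u$ nor $v$ (they are disjoint from $G_{t_1}\ni u$ and $G_{t_2}\ni v$), so each lies wholly in $A\setminus\{u,v\}$ or wholly in $B\setminus\{u,v\}$. If $c$ and $d$ were on opposite sides then, since $s_1$ is adjacent in $K_{2,4}$ to both $t_3$ and $t_4$, the connected set $G_{s_1}$ would have to contain a vertex of $A\setminus\{u,v\}$ and a vertex of $B\setminus\{u,v\}$ while avoiding $u$ and $v$, which is impossible. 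Hence $c$ and $d$ lie on the same side; say in $B\setminus\{u,v\}$, so $X\subseteq B$. The same reasoning then forces $G_{s_1},G_{s_2}\subseteq B\setminus\{u,v\}$, while $G_{t_1}$ (resp.\ $G_{t_2}$) meets $B$ in a connected subgraph of $G_B$ — namely $u$ (resp.\ $v$) together with the components of $G_{t_1}-u$ that lie in $B$, each joined to $u$ by an edge of $G[B]$. Restricting each branch set to its trace on $B$ then gives a $K_{2,4}(X)$-model of $G_B$. For the converse, a model of $G_B$ that avoids the added edge $uv$ already lies in $G$; if it uses $uv$, replace that edge by the $u$--$v$ path $P\subseteq G[A]$ above, absorbing $P$ (or $P$ minus its last edge, when $uv$ realises an adjacency between two branch sets) into the branch set at $u$. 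The case $X\subseteq A$ is symmetric, which completes the first assertion; note that by Lemma~\ref{alloneside} the subcase in which all of $X$ is already on one side is immediate.

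For the second assertion the terminal $a$ lies in $A\setminus\{u,v\}$ and the remaining terminals lie in $B$, and the content is that $G$ has a $K_{2,4}(X)$-minor if and only if $G_B$ has a $K_{2,4}(\{u,b,c,d\})$-minor in which $u$ plays the former role of $a$. For the ``if'' direction, the $2$-connectivity of $G$ provides (via Menger's theorem applied in $G-v$) an $a$--$u$ path inside $G[A]$ avoiding $v$; contracting it identifies $a$ with $u$, and then contracting the remainder of $G[A]$ onto $\{u,v\}$ realises $G_B$ as a minor of $G$ with $\{u,b,c,d\}$ sitting on $\{a,b,c,d\}$, so a model of $G_B$ lifts to one of $G$. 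For the ``only if'' direction one repeats the branch-set analysis: because each $t_i$ has degree $2$ in $K_{2,4}$ and because $s_1$ and $s_2$ must each reach all four of $G_{t_1},\dots,G_{t_4}$, one shows that the only branch sets meeting $A\setminus\{u,v\}$ are the one containing $a$ and (possibly) the one containing $u$, after which restricting the model to $G_B$ with $u$ inheriting $a$'s label is again a valid $K_{2,4}$-model. The step I expect to be the main obstacle is exactly this last structural claim: since $K_{2,4}$ is only $2$-connected — unlike $W_4$ and $K_4$, whose $3$-connectivity gave the clean dichotomy exploited in Lemmas~\ref{allonesidegeneralH}--\ref{oneterminalinthecut2conn} — one cannot invoke a ``three internally disjoint paths contradict $3$-connectivity'' argument, and must instead lean on the degree-$2$ terminals and the bipartite structure of $K_{2,4}$ to rule out a branch set living strictly inside $A\setminus\{u,v\}$ apart from the one carrying $a$. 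Once that is established, the restriction and extension maps between models of $G$ and of $G_B$ are routine.
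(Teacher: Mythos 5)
The paper gives no proof of Lemma~\ref{splitvertices}; it is quoted from Demasi's thesis, so your argument stands alone and can only be judged on its own merits. Your treatment of the first assertion and of the ``if'' direction of the second assertion is sound. The ``only if'' direction of the second assertion contains a genuine gap, and you are right to flag that step as the main obstacle — but the difficulty is not quite where you locate it, and the structural claim you propose to prove is not the one you need.

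Your plan is to restrict each branch set $G_z$ of a $K_{2,4}(X)$-model of $G$ to its trace on $B$ and argue that this is a $K_{2,4}$-model of $G_B$, and the lemma you set out to verify is ``the only branch sets meeting $A\setminus\{u,v\}$ are the one containing $a$ and possibly one more.'' Even if that is established, it does not rule out the possibility that $G_{\pi(a)}$, the branch set carrying $a$, lies \emph{entirely} inside $A\setminus\{u,v\}$; in that case its trace on $B$ is empty and the restricted family cannot be a model. Indeed, without the hypothesis that one of the two cut vertices is itself a terminal — a hypothesis you never invoke — the statement you are arguing for is false. Take $G$ to be $K_{2,4}$, with bipartition $\{s_1,s_2\}$ and $\{t_1,t_2,t_3,t_4\}$, together with one extra vertex $a$ adjacent to $s_1$ and $s_2$. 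Put $A=\{a,s_1,s_2\}$, $B=V(K_{2,4})$, $u=s_1$, $v=s_2$, and $X=\{a,t_2,t_3,t_4\}$, so neither cut vertex is a terminal. Then $G$ has a $K_{2,4}(X)$-model: $\{a\},\{t_2\},\{t_3\},\{t_4\}$ as $t$-branch sets and $\{s_1,t_1\},\{s_2\}$ as $s$-branch sets. However $G_B=K_{2,4}+s_1s_2$ has no $K_{2,4}(\{s_1,t_2,t_3,t_4\})$-model in which $s_1$ carries $a$'s label: after the four singleton $t$-branch sets are placed, only $\{s_2,t_1\}$ remains for the two $s$-branch sets, and no way of splitting or merging these two vertices produces two disjoint connected branch sets adjacent to all four. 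Note that in this example your intended structural claim is \emph{satisfied} (only $\{a\}$ meets $A\setminus\{u,v\}$) while the conclusion fails, so the claim is not sufficient.

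The hypothesis that a cut vertex is a terminal is precisely what forces $G_{\pi(a)}$ out of $A\setminus\{u,v\}$, and this is the step your outline skips. Using your labeling in which $u$ is the cut vertex that will take over $a$'s role (so, reading the lemma as containing a transposition of $u$ and $v$ in the hypothesis ``$u\in X$, $v\notin X$'', the terminal cut vertex is $v$): since $v$ is a terminal, $v$ lies in a $t$-type branch set and hence in neither $G_{s_1}$ nor $G_{s_2}$. If $G_{\pi(a)}\subseteq A\setminus\{u,v\}$ then its required adjacencies to both $G_{s_1}$ and $G_{s_2}$ force one of $u,v$ into each of them, which is impossible since $v$ is unavailable. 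Hence $G_{\pi(a)}$ meets the boundary, and since it is disjoint from $G_{\pi(v)}$ it contains $u$. Only once this is established does the restriction produce a nonempty, connected branch set for $\pi(a)$ containing $u$, and only then does the rest of your restriction argument go through. As written, your ``degree-$2$ terminals plus bipartite structure'' plan never uses the membership of a cut vertex in $X$, so it would prove a false statement.
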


\begin{lemma}
\label{thereductionwithk22init}
Suppose $a,b \in A \setminus (A \cap B)$ and $c,d \in B \setminus (A \cap B)$. For each $\pi \in \mathcal{F}$, let $\pi_{A}: \{a,b,u,v\} \to V(K_{2,4})$ be such that $\pi_{A} = \pi$ on $\{a,b\}$ and $\pi_{A}(u) =\pi(c) $ and $\pi_{A}(v) = \pi(d)$. 
Similarly for each $\pi \in \mathcal{F}$, define $\pi_{B} : \{c,d,u,v\} \to V(K_{2,4})$. Additionally for each $\pi \in \mathcal{F}$, let $\pi'_{A}: \{a,b,u,v\} \to V(K_{2,2})$ be such that $\pi'_{A}= \pi$ on $\{a,b\}$ and $\pi'_{A}(u) = \pi(c)$ and $\pi'_{A}(v) = \pi(d)$. 
Also, for each $\pi \in \mathcal{F}$, let $\pi'_{B}: \{c,d,u,v\} \to V(K_{2,2})$ be such that $\pi'_{B} = \pi$ on $\{c,d\}$ and, $\pi'_{A}(u) = \pi(a)$ and $\pi'_{A}(v) = \pi(b)$.  Then $G$ has a $K_{2,4}(X)$-minor if and only if either $G_{A}$ or $G_{B}$ has a $K_{2,4}(X)$-minor, or both of $G_{A}$ and $G_{B}$ have a $K_{2,2}(X)$ minor. 
\end{lemma}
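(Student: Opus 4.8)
The plan is to prove both directions by the standard analysis of how a minor model interacts with the $2$-separation $(A,B)$, splitting on how the branch sets of $G$ meet the cut $\{u,v\}$. Throughout, for a given model write $R$ for the set of branch sets that meet $\{u,v\}$; since branch sets are pairwise disjoint and $|\{u,v\}|=2$ we always have $|R|\le 2$, and in particular three pairwise disjoint branch sets cannot all meet $\{u,v\}$ — this pigeonhole remark drives the case analysis. Note also that in both $K_{2,4}(X_A)$ and $K_{2,2}(X_A')$ the vertices to which $u$ and $v$ are rooted are non-adjacent, so the added edge $uv$ of $G_A$ (respectively $G_B$) is never used by such a model, and every such model lives inside $G[A]$ (respectively $G[B]$).

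\textbf{Sufficiency.} Suppose first that $G_A$ has a $K_{2,4}(X_A)$-minor; its model lies in $G[A]$, with $u$ and $v$ in distinct branch sets $G'_u,G'_v$ rooted at the images of $c,d$. Since $G$ is $2$-connected, Corollary \ref{XYpaths} yields two disjoint $(\{u,v\},\{c,d\})$-paths, and as $c,d\in B\setminus\{u,v\}$ and one path must leave $\{u,v\}$ at $u$ and the other at $v$, these paths lie in $G[B]$ and meet $A$ only in $\{u,v\}$. Appending one to $G'_u$ and the other to $G'_v$ produces branch sets reaching $c$ and $d$ that are disjoint from the other four branch sets and preserve all required adjacencies, so $G$ has a $K_{2,4}(X)$-minor; the hypothesis on $G_B$ is symmetric. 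If instead $G_A$ has a $K_{2,2}(X_A')$-minor and $G_B$ a $K_{2,2}(X_B')$-minor, both models live in $G[A]$ and $G[B]$ respectively; let $G^A_u,G^A_v$ and $G^B_u,G^B_v$ be their branch sets containing $u,v$. Then $G^A_u\cup G^B_u$ and $G^A_v\cup G^B_v$ are connected, the six sets $G^A_a,G^A_b,G^B_c,G^B_d,G^A_u\cup G^B_u,G^A_v\cup G^B_v$ are pairwise disjoint, and the eight required adjacencies of $K_{2,4}$ are supplied, four from the $A$-model and four from the $B$-model, giving a $K_{2,4}(X)$-minor of $G$.

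\textbf{Necessity.} Let $\{G_w\}$ be a $K_{2,4}(X)$-model with $a\in G_{t_1},b\in G_{t_2},c\in G_{t_3},d\in G_{t_4}$; then $G_{t_1},G_{t_2}$ have a vertex in $A\setminus\{u,v\}$, $G_{t_3},G_{t_4}$ have a vertex in $B\setminus\{u,v\}$, and every branch set not meeting $\{u,v\}$ lies in one component of $G-\{u,v\}$, hence inside $A\setminus\{u,v\}$ or inside $B\setminus\{u,v\}$. We split on $R\cap\{G_{s_1},G_{s_2}\}$. If exactly one, say $G_{s_1}$, meets $\{u,v\}$, then $G_{s_2}$ lies on one side, say in $A\setminus\{u,v\}$; the edges realizing $G_{s_2}\sim G_{t_3}$ and $G_{s_2}\sim G_{t_4}$ force $G_{t_3}$ and $G_{t_4}$ each to meet $\{u,v\}$, which with $G_{s_1}$ contradicts the pigeonhole remark (and $G_{s_2}\subseteq B\setminus\{u,v\}$ is symmetric via $G_{t_1},G_{t_2}$), so this case cannot occur. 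If neither $G_{s_1}$ nor $G_{s_2}$ meets $\{u,v\}$, they lie on the same side, say $A\setminus\{u,v\}$ (opposite sides again contradicts pigeonhole through $G_{t_1},G_{t_3},G_{t_4}$); then $G_{t_3},G_{t_4}$ each meet $\{u,v\}$ and so cover it, forcing $G_{t_1},G_{t_2}\subseteq A\setminus\{u,v\}$, and intersecting every branch set with $A$ (each of $G_{t_3}\cap A,G_{t_4}\cap A$ is connected, as that branch set meets the cut in a single vertex) gives the required $K_{2,4}(X_A)$-minor of $G_A$; the mirror case gives $G_B$ a $K_{2,4}(X_B)$-minor. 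Finally, if both $G_{s_1},G_{s_2}$ meet $\{u,v\}$, then $R=\{G_{s_1},G_{s_2}\}$, one contains $u$ and the other $v$, no $G_{t_i}$ meets $\{u,v\}$, so $G_{t_1},G_{t_2}\subseteq A\setminus\{u,v\}$ and $G_{t_3},G_{t_4}\subseteq B\setminus\{u,v\}$; then $\{G_{t_1},G_{t_2},G_{s_1}\cap A,G_{s_2}\cap A\}$ is a $K_{2,2}(X_A')$-model of $G_A$ and $\{G_{t_3},G_{t_4},G_{s_1}\cap B,G_{s_2}\cap B\}$ is a $K_{2,2}(X_B')$-model of $G_B$.

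\textbf{Main obstacle.} The delicate point is this last necessity case: one must verify that two straddling central branch sets produce $K_{2,2}(X')$-minors on \emph{both} sides at once — in particular that $G_{s_i}\cap A$ and $G_{s_i}\cap B$ are connected, which relies on each $G_{s_i}$ meeting the cut in exactly one vertex — and that the ``exactly one $s$-set meets the cut'' case truly cannot arise. Once those structural facts and the path-through-$B$ observation from $2$-connectivity are pinned down, the sufficiency direction is a routine gluing argument.
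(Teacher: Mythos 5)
Your proof is correct. It is worth noting that the paper does not actually prove Lemma \ref{thereductionwithk22init} at all: the lemma is one of the ``$2$-connected reductions from \cite{linodemasithesis}'' (Demasi's PhD thesis) that the paper records without proof. So there is no proof in the paper to compare against, only the statement itself. Your argument is the natural one and it checks out on all the points that matter.

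On the specifics: your pigeonhole observation ($|R|\le 2$ since branch sets are pairwise disjoint and $|\{u,v\}|=2$) correctly eliminates the ``exactly one $s$-set meets the cut'' case, and the ``both $s$-sets off the cut, opposite sides'' case, because in each of those cases three or more of the six pairwise-disjoint branch sets are forced to meet $\{u,v\}$. The key structural fact you flag — that $G_{s_i}\cap A$ and $G_{s_i}\cap B$ (and similarly $G_{t_i}\cap A$ in the one-sided case) are connected when the branch set contains exactly one of $u,v$ — is right, and holds for exactly the reason you indicate: every vertex of, say, $G_{s_1}\cap A$ has a path to $u$ inside $G_{s_1}$, and that path cannot cross into $B$ without passing through $u$ or $v$, but $v\notin G_{s_1}$, so the path reaches $u$ while staying in $A$. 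Your observation that neither a $K_{2,4}(X_A)$-model nor a $K_{2,2}(X_A')$-model ever needs the added edge $uv$ (since $u,v$ are rooted to non-adjacent vertices of the target graph) is also correct and is what lets you move cleanly between $G_A$ and $G[A]$. In the sufficiency direction, the $2$-connectivity/Menger argument that two disjoint $(\{u,v\},\{c,d\})$-paths necessarily stay inside $G[B]$ is sound once one notes that after the first step off $u$ (resp.\ $v$) the path cannot return to the cut, and the gluing of two $K_{2,2}$-models along the shared cut vertices is verified correctly for both disjointness and the eight required adjacencies. One small observation: in the ``both $s$-sets in $A\setminus\{u,v\}$'' case the labelling of which of $u,v$ lands in which of $G_{t_3},G_{t_4}$ could go either way, but since $\mathcal{F}$ (and hence the induced family $\{\pi_A\}$) ranges over all bijections of the roots onto $\{t_1,\dots,t_4\}$, either assignment yields a legitimate $K_{2,4}(X_A)$-minor.
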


That completes the $2$-connected reductions from \cite{linodemasithesis} that will be needed. There is one $3$-connected reduction from \cite{linodemasithesis} which is useful. 

\begin{lemma}
\label{3connk24}
Let $G$ be a graph and let $(A,B)$ be a tight $3$-separation where $A \cap B = \{v_{1},v_{2},v_{3}\}$. Suppose that $X \subseteq A$. Let $G' = G[A] \cup \{v_{1}v_{2},v_{1}v_{3},v_{2}v_{3}\}$. Then $G$ has a $K_{2,4}(X)$-minor if and only if $G'$ has a $K_{2,4}(X)$-minor. 
\end{lemma}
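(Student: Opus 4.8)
\textbf{Proof sketch for Lemma \ref{3connk24}.}

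The plan is to prove both directions by exhibiting an explicit $K_{2,4}(X)$-model in one graph from a model in the other, using the hypothesis that $(A,B)$ is a \emph{tight} $3$-separation together with Menger's theorem. For the forward direction, suppose $G$ has a $K_{2,4}(X)$-minor with model $\{G_{t_1},G_{t_2},G_{t_3},G_{t_4},G_{s_1},G_{s_2}\}$, where the terminals $a,b,c,d$ land in $G_{t_1},\dots,G_{t_4}$. Since all terminals lie in $A$, no branch set is forced to reach into $B\setminus(A\cap B)$ for the sake of containing a terminal; I would first argue that at most three branch sets meet $B\setminus(A\cap B)$ at all, because $\{v_1,v_2,v_3\}$ is a $3$-vertex-cut and branch sets are connected and disjoint, and then show that I can ``pull back'' each such branch set to $G[A]$ while preserving the required adjacencies. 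The point is that the only adjacencies possibly lost when restricting $G_x$ to $G[A]$ are realized by paths through $B$; any such path between two branch sets must pass through $\{v_1,v_2,v_3\}$, and since $(A,B)$ is tight each of $v_1,v_2,v_3$ genuinely has a neighbour in $B\setminus(A\cap B)$ and the separation cannot be realized by a proper subset, so the three cut-vertices are ``essentially used'' and I can simulate a connection through $B$ by one of the added edges $v_iv_j$. Concretely, if a connection between $G_x$ and $G_y$ was realized only through $B$, then contracting the relevant piece of $B$ together with $\{v_1,v_2,v_3\}$ yields, by tightness and Menger, a configuration where some $v_iv_j$ edge in $G'$ provides the missing adjacency after redistributing $v_1,v_2,v_3$ among the affected branch sets; I would need to check the small number of cases according to how many of $v_1,v_2,v_3$ each of the (at most three) branch sets touching $B$ contains.

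For the reverse direction, suppose $G'=G[A]\cup\{v_1v_2,v_1v_3,v_2v_3\}$ has a $K_{2,4}(X)$-model. The only edges of $G'$ not present in $G$ are among $\{v_1v_2,v_1v_3,v_2v_3\}$, so I need to show that each such edge used to witness an adjacency between branch sets can be rerouted through $G[B]$. Because $(A,B)$ is a tight $3$-separation, $G[B]$ together with $\{v_1,v_2,v_3\}$ is $3$-connected in the sense relevant here; in particular, by Corollary \ref{XYpaths} (or directly by Menger) there exist disjoint paths in $G[B]$ joining any pairing of $\{v_1,v_2,v_3\}$ to itself that realizes a matching, hence a single edge $v_iv_j$ used in the $G'$-model can be replaced by a $v_i$--$v_j$ path inside $B\setminus(A\cap B)$, and if two such edges (say $v_1v_2$ and $v_2v_3$, or $v_1v_2$ and $v_1v_3$) are used simultaneously one uses a pair of internally disjoint paths in $G[B]$. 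I would enumerate which subsets of $\{v_1v_2,v_1v_3,v_2v_3\}$ can simultaneously appear as inter-branch-set edges in a $K_{2,4}$-model — since $K_{2,4}$ is bipartite and triangle-free, at most two of these three edges are ``needed'' for distinct adjacencies, and the disjointness of the relevant paths in $B$ follows from tightness — and in each case produce the rerouted model in $G$.

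The main obstacle I expect is the bookkeeping in the forward direction: carefully showing that when a branch set $G_x$ of $G$ dips into $B$, restricting to $G[A]$ and possibly reassigning the cut-vertices $v_1,v_2,v_3$ does not destroy connectivity of the branch sets \emph{and} preserves exactly the six-cycle-of-adjacencies pattern of $K_{2,4}$. Tightness of the separation is precisely what prevents degenerate situations (e.g.\ two of the $v_i$ being interchangeable or one of them being superfluous), so I would lean on it heavily; the argument is essentially the same flavour as Lemma \ref{allonesidegeneralH} and Lemma \ref{2sepW4} but with a $3$-cut and the specific structure of $K_{2,4}$, and I expect it can be organized into a short case analysis on $|V(G_x)\cap\{v_1,v_2,v_3\}|$ for the at-most-three branch sets meeting $B$.
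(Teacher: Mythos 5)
Note first that the paper does not prove Lemma \ref{3connk24}; it is recorded as a black-box reduction from Demasi's thesis (\cite{linodemasithesis}), so there is no in-paper proof for your sketch to be measured against. Your forward direction is in the right shape (you should make explicit that no branch set can lie entirely inside $B\setminus(A\cap B)$: if some $G_{s_j}$ did, all four pairwise-disjoint $G_{t_i}$ would be forced to contain one of $v_1,v_2,v_3$, which is impossible). The genuine gap is in the reverse direction. You claim that tightness makes ``$G[B]$ together with $\{v_1,v_2,v_3\}$ ... $3$-connected in the sense relevant here'' and then invoke Menger / Corollary \ref{XYpaths} inside $G[B]$ to produce a \emph{pair of internally disjoint} $(v_i,v_j)$-paths; you even write that ``the disjointness of the relevant paths in $B$ follows from tightness.'' It does not. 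Tightness of the $3$-separation only says that no proper subset of $\{v_1,v_2,v_3\}$ is the boundary of a proper separation of $G$; it imposes no connectivity on $G[B]$ itself. For instance $G[B]$ can be the star $K_{1,3}$ with a single centre $w$ adjacent to $v_1,v_2,v_3$ while $(A,B)$ is tight — this is exactly what the $B$-side of a web triangle with $|F_T|=1$ looks like, i.e.\ precisely the situation in which the lemma is invoked in Lemma \ref{k24planarreduction}. There every $(v_i,v_j)$-path in $G[B]$ passes through $w$, so no two such paths are internally disjoint, and the rerouting step of your argument fails as written.

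What tightness plus properness actually gives you is weaker but sufficient: $G[B]-v_i$ is connected for each $i$ (if not, either $\{v_j,v_k\}$ or $\{v_i\}$ would be the boundary of a proper separation of $G$, or $(A,B)$ would not be proper). Hence a single $(v_i,v_j)$-path in $G[B]$ avoiding $v_k$ always exists, and when two triangle edges with a common endpoint $v_j$ are both used you should not try to make the paths disjoint. Instead take a $(v_i,v_j)$-path avoiding $v_k$ and a $(v_j,v_k)$-path avoiding $v_i$ — they are allowed to overlap — and assign \emph{all} of their internal vertices, which lie in $B\setminus(A\cap B)$ and so meet no $A$-side branch set, to the one branch set containing $v_j$. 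That branch set remains connected because both paths end at $v_j$, and it acquires the two needed adjacencies at the other endpoints. You should also handle the case your sketch omits, namely a triangle edge used internally to keep a single branch set connected (two of $v_1,v_2,v_3$ in the same branch set) rather than to witness an inter-branch adjacency; the same path-dumping manoeuvre works. Since $K_{2,4}$ is triangle-free and each $v_i$ lies in at most one branch set, you cannot need all three triangle edges simultaneously, which is what makes the case analysis terminate.
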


Additional reduction lemmas are proven in \cite{linodemasithesis}, but these suffice for our needs.
To avoid repeating the same statements in the next lemmas, we make the following observation.

\begin{figure}
\begin{center}
\includegraphics[scale=0.5]{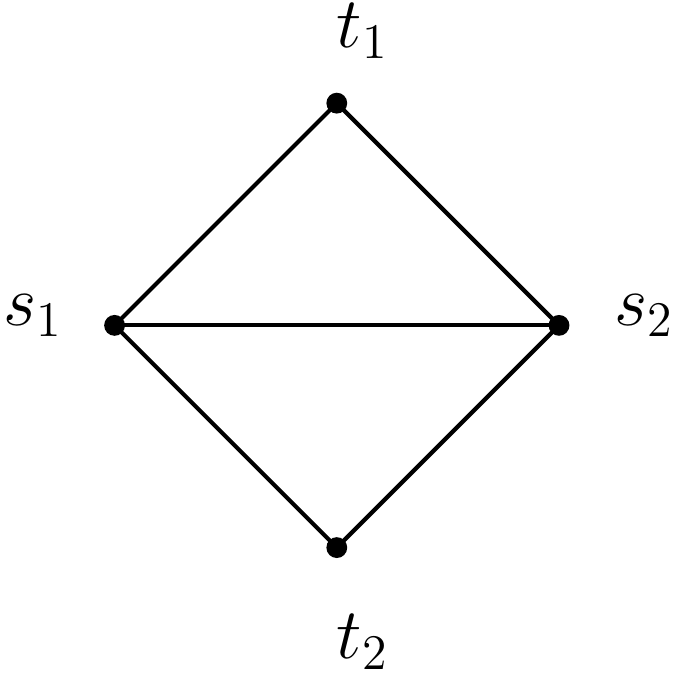}
\caption{The small graph $H$, a diamond from Lemma \ref{smallgraphk22}.}
\label{ThegraphH}
\end{center}
\end{figure}

\begin{lemma}
\label{smallgraphk22}
Let $H$ be the graph where $V(H) = V(K_{2,2})$ and $E(H) = E(K_{2,2}) \cup \{s_{1}s_{2}\}$. Let $G$ be a $2$-connected spanning subgraph of $H^{+}$. From our definition of $K_{2,2}(X)$-minors, let $t_{1},t_{2}$ take the place of $a,b$ and $s_{1}, s_{2}$ take the place of $c$ and $d$. Then $G$ has a $K_{2,2}(X)$-minor, and $G$ does not have a $K_{2,4}(X)$-minor. 
\end{lemma}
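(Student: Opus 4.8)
The plan is to prove the two assertions of the lemma separately, and in both cases to exploit a single structural fact about $H^{+}$. The graph $H$ is a diamond whose only two triangles are $T_{1}=\{s_{1},s_{2},t_{1}\}$ and $T_{2}=\{s_{1},s_{2},t_{2}\}$, so $H^{+}$ is obtained by attaching a clique $F_{1}:=F_{T_{1}}$ completely to $T_{1}$ and a clique $F_{2}:=F_{T_{2}}$ completely to $T_{2}$. First I would record that in $H^{+}$ the neighbourhood of $\{t_{1}\}\cup F_{1}$ is contained in $\{s_{1},s_{2}\}$, and likewise for $\{t_{2}\}\cup F_{2}$. Writing $A=\{t_{1},s_{1},s_{2}\}\cup F_{1}$ and $B=\{t_{2},s_{1},s_{2}\}\cup F_{2}$, this says that $(A,B)$ is a $2$-separation of $G$ with $A\cap B=\{s_{1},s_{2}\}$, there are no edges of $G$ between $F_{1}$ and $B\setminus\{s_{1},s_{2}\}$, and none between $F_{2}$ and $A\setminus\{s_{1},s_{2}\}$; in particular $\{s_{1},s_{2}\}$ is a $2$-vertex cut of $G$ separating the two ``lobes'' $\{t_{1}\}\cup F_{1}$ and $\{t_{2}\}\cup F_{2}$.

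For the $K_{2,2}(X)$-minor I would invoke Theorem~\ref{k22characterization}, with $t_{1},t_{2}$ playing the roles of $a,b$ and $s_{1},s_{2}$ the roles of $c,d$. Thus it suffices to find a $(t_{1},s_{1})$-path vertex-disjoint from a $(t_{2},s_{2})$-path, and a $(t_{1},s_{2})$-path vertex-disjoint from a $(t_{2},s_{1})$-path. Since $G$ is $2$-connected, $G-s_{2}$ is connected, so it contains a $(t_{1},s_{1})$-path $R$; because $\{s_{1},s_{2}\}$ separates the two lobes and $s_{2}$ has been deleted, any such path cannot reach $B\setminus\{s_{1},s_{2}\}$ before its last vertex $s_{1}$, hence $V(R)\subseteq A\setminus\{s_{2}\}$. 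Symmetrically, $G-s_{1}$ contains a $(t_{2},s_{2})$-path $R'$ with $V(R')\subseteq B\setminus\{s_{1}\}$. Since $(A\setminus\{s_{2}\})\cap(B\setminus\{s_{1}\})=\emptyset$, the paths $R,R'$ are disjoint. Running the same argument with $s_{1}$ and $s_{2}$ interchanged produces a $(t_{1},s_{2})$-path inside $A\setminus\{s_{1}\}$ disjoint from a $(t_{2},s_{1})$-path inside $B\setminus\{s_{2}\}$, so Theorem~\ref{k22characterization} gives the $K_{2,2}(X)$-minor.

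For the absence of a $K_{2,4}(X)$-minor I would argue by contradiction using the branch-set formulation (Proposition~\ref{branchsets}). Suppose $G$ had a $K_{2,4}(X)$-model. Since $|X|=4$ equals the size of the $4$-element part of $K_{2,4}$ and every vertex of $X$ must lie in the branch set of a distinct vertex of that part, each of $t_{1},t_{2},s_{1},s_{2}$ lies in a distinct branch set indexed by that part; let $P_{1}\ni t_{1}$ and $P_{2}\ni t_{2}$ be two of them, and let $Q_{1},Q_{2}$ be the branch sets of the two vertices of the $2$-element part, so that each $Q_{j}$ is adjacent to both $P_{1}$ and $P_{2}$. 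Because $P_{1}$ is connected, contains $t_{1}$, and is disjoint from the branch sets containing $s_{1}$ and $s_{2}$, it lies in $G-\{s_{1},s_{2}\}$ and hence (by the separation fact) $V(P_{1})\subseteq\{t_{1}\}\cup F_{1}$; likewise $V(P_{2})\subseteq\{t_{2}\}\cup F_{2}$. Each $Q_{j}$ is connected, non-empty, and avoids $\{t_{1},t_{2},s_{1},s_{2}\}$, so $V(Q_{j})\subseteq F_{1}\cup F_{2}$; as $H^{+}$ has no edge between $F_{1}$ and $F_{2}$, either $V(Q_{j})\subseteq F_{1}$ or $V(Q_{j})\subseteq F_{2}$. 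In the first case $Q_{j}$ has no edge to $\{t_{2}\}\cup F_{2}\supseteq V(P_{2})$, and in the second it has no edge to $\{t_{1}\}\cup F_{1}\supseteq V(P_{1})$; either way $Q_{j}$ is not adjacent to one of $P_{1},P_{2}$, contradicting the adjacencies of $K_{2,4}$.

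There is no serious obstacle here; the argument is entirely a consequence of the fact that $\{s_{1},s_{2}\}$ is a $2$-cut of $H^{+}$ isolating the two lobes $\{t_{i}\}\cup F_{T_{i}}$. The only point requiring a little care in the write-up is the overloaded notation (the symbols $t_{i},s_{j}$ denote vertices both of $H$ and of $K_{2,4}$ and $K_{2,2}$), and making precise the elementary claim that a path in $G-s_{2}$ from a lobe-vertex to $s_{1}$ stays in that lobe together with $s_{1}$.
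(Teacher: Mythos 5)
Your proposal is correct. The proof of the $K_{2,2}(X)$ part is essentially identical to the paper's: use $2$-connectivity to find the four required paths inside the two lobes $G[A]$ and $G[B]$ and then apply Theorem~\ref{k22characterization}. For the $K_{2,4}(X)$ part, however, you take a different route: the paper simply observes that $\{s_1,s_2\}\subseteq X$ is the boundary of a $2$-separation with one terminal in each side and cites the reduction Lemma~\ref{splitvertices} (a lemma imported from Demasi's thesis) to conclude immediately that no $K_{2,4}(X)$-minor exists. You instead give a direct branch-set argument via Proposition~\ref{branchsets}: the four branch sets containing $t_1,t_2,s_1,s_2$ are forced into specific locations, the two remaining branch sets $Q_1,Q_2$ must sit entirely in $F_1$ or $F_2$, and then either option fails to reach one of the $t_i$-lobes, contradicting the $K_{2,4}$ adjacencies. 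Your version is more elementary and self-contained — it does not rely on an external lemma — but it is in effect re-deriving the relevant special case of Lemma~\ref{splitvertices}; the paper's citation is terser and exploits machinery that is reused elsewhere. Both are valid, and there is no gap in your argument.
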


\begin{proof}
Notice that $\{s_{1},s_{2}\}$ are the vertex boundary for a $2$-separation $(A,B)$  in $H^{+}$ such that $t_{1} \in A \setminus \{s_{1},s_{2}\}$ and $t_{2} \in B \setminus \{s_{1},s_{2}\}$. Then by Lemma \ref{splitvertices},  $H^{+}$ has no $K_{2,4}(X)$-minor and thus $G$ has no $K_{2,4}(X)$ minor.

 As $G$ is $2$-connected, we can find a $(s_{1},t_{1})$-path in $G[A]$ which does not contain $s_{2}$ and a $(s_{2},t_{2})$-path in $G[B]$ which does not use $s_{1}$. Similarly there exists a $(s_{1},t_{2})$-path in $G[B]$ which does not use $s_{2}$ and a $(s_{2},t_{1})$-path in $G[A]$ which does not use $G[B]$. Therefore by Theorem \ref{k22characterization}, $G$ has a $K_{2,2}(X)$-minor.  
\end{proof}

\begin{corollary}
Let $G$ be a $2$-connected spanning subgraph of a class $\mathcal{A},\mathcal{B},$ or $\mathcal{C}$. If $G$ is the spanning subgraph of a class $\mathcal{A}$ graph, $G$ does not have a $K_{2,4}(X)$-minor. If $G$ is the spanning subgraph of a class $\mathcal{B},$ or $\mathcal{C}$, then $G$ has a $K_{2,4}(X)$-minor.
\end{corollary}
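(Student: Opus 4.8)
The plan is to handle the three graph classes separately, in each case using the structural description of class $\mathcal{A}$, $\mathcal{B}$, and $\mathcal{C}$ graphs together with the $2$-connected reduction lemmas for $K_{2,4}(X)$-minors from \cite{linodemasithesis} that were just recorded. Throughout I would use the labellings from Theorem \ref{k4free}: a class $\mathcal{A}$ graph $H^+$ has $V(H)=\{a,b,c,d,e\}$ with the terminals $a,b,c,d$ each adjacent only to $d$ and $e$; a class $\mathcal{B}$ graph has $V(H)=\{a,b,c,d,e,f\}$ with each terminal adjacent to $e$ and $f$; a class $\mathcal{C}$ graph has $V(H)=\{a,b,c,d,e,f,g\}$ with $a,b$ adjacent to $e,g$ and $c,d$ adjacent to $f,g$ (plus the triangle edges $ef,eg,fg$), and in each case $H^+=(H,F)$ adds cliques hanging off triangles.

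For class $\mathcal{A}$, the key observation is that $\{d,e\}$ is a $2$-vertex cut separating the terminals: in $H^+$, each of $a,b,c$ lies in its own component of $H^+-\{d,e\}$ (their only neighbours are $d$ and $e$, modulo triangle-cliques whose attachment stays inside), and $d$ is itself a terminal sitting in the cut. So I would apply Lemma \ref{splitvertices} (the case $u\in X$, $v\notin X$, with $u=d$) repeatedly, reducing to a graph where three terminals are identified into the pair $\{d,e\}$; after the reduction the remaining graph has a $2$-separation with one terminal on one side and (the images of) at most two on the other, and iterating the split-vertex/one-side lemmas collapses everything so that no $K_{2,4}$ survives --- exactly as in the $W_4$ argument, since a $K_{2,4}(X)$-model needs four internally-disjoint connections from the ``$s$-side'' and the $2$-cut $\{d,e\}$ cannot supply them. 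So class $\mathcal{A}$ spanning subgraphs have no $K_{2,4}(X)$-minor.

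For classes $\mathcal{B}$ and $\mathcal{C}$ I would instead exhibit a $K_{2,4}(X)$-minor directly using $2$-connectivity. In class $\mathcal{B}$, $e$ and $f$ are adjacent to all four terminals, so for a $2$-connected spanning subgraph Menger's theorem (Corollary \ref{XYpaths}, or just $2$-connectivity) gives, for each terminal $x\in\{a,b,c,d\}$, a connected subgraph of $G$ touching both $e$ and $f$ and containing $x$ --- more carefully, one finds two internally disjoint paths from $x$ to $\{e,f\}$, and since $G$ is $2$-connected the four terminals can be routed to the pair $\{e,f\}$ using pairwise disjoint branch sets (the triangle-cliques give enough room, or a short case analysis on where disjointness could fail shows the $2$-cuts present are not obstructions); contracting gives a $K_{2,4}(X)$-minor with $\{e,f\}$ playing the role of $\{s_1,s_2\}$. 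For class $\mathcal{C}$, the vertex $g$ is adjacent to all four terminals and $\{e,f\}$ ``covers'' them two-and-two with $ef,eg,fg$ present, so I would take $g$ as one of $s_1,s_2$ and build the second side of the bipartition from a path through $e$ and $f$ (using the edges $eg,fg,ef$), again routing each terminal to $\{g, (\text{the }e\text{-}f\text{ path})\}$ by disjoint paths guaranteed by $2$-connectivity; this yields a $K_{2,4}(X)$-minor. I expect the main obstacle to be the class $\mathcal{B}$ and $\mathcal{C}$ constructions: one must be careful that the four terminal branch sets, together with the two ``hub'' branch sets, can be chosen pairwise disjoint even when $G$ is only a sparse $2$-connected spanning subgraph, so a small argument (probably invoking Lemma \ref{thereductionwithk22init} to reduce to the generic case, or a direct Menger-type disjoint-paths argument across the $2$-separations induced by $\{e,f\}$ or $\{f,g\}$) will be needed rather than a one-line appeal.
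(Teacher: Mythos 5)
Your overall plan is sound, but it diverges from the paper's route for classes $\mathcal{B}$ and $\mathcal{C}$, and it is worth making that difference explicit.

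For class $\mathcal{A}$ you and the paper do essentially the same thing: use $\{d,e\}$ and the $2$-connected reduction lemmas. The paper applies Lemma \ref{splitvertices} once (the $u\in X$, $v\notin X$ case) to collapse onto the diamond of Lemma \ref{smallgraphk22}, and then \ref{smallgraphk22} finishes; you apply \ref{splitvertices} twice, with the second application really being the $u,v\in X$ case of \ref{splitvertices}, not the $u\in X, v\notin X$ case you name. Your closing phrase ``the $2$-cut $\{d,e\}$ cannot supply four internally-disjoint connections'' is the right intuition, but as stated it is not a proof; the reduction lemma is what makes it rigorous. (Also a small slip: in class $\mathcal{A}$ it is $a,b,c$ that are adjacent only to $d,e$; $d$ is a terminal that sits in the cut and is adjacent to $a,b,c,e$.)

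For classes $\mathcal{B}$ and $\mathcal{C}$ the paper does \emph{not} build a $K_{2,4}(X)$-model directly. It applies Lemma \ref{thereductionwithk22init} to the separating pair ($\{e,f\}$ in $\mathcal{B}$; the pair with $g$ in $\mathcal{C}$), which reduces the question to whether the two halves each have a $K_{2,2}(X)$-minor, and both halves are $2$-connected spanning subgraphs of the diamond $H^{+}$ in Lemma \ref{smallgraphk22}, which supplies the $K_{2,2}(X)$-minor by Theorem \ref{k22characterization}. Your approach instead constructs the branch sets directly: fan-Menger from each terminal inside its bubble $\{x\}\cup F_{T}$ to the separating pair, take $\{e\},\{f\}$ (class $\mathcal{B}$) or $\{g\}$ and an $(e,f)$-path avoiding $g$ (class $\mathcal{C}$) as the two $s$-branch sets. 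This can be made to work, and you have correctly identified where the care is needed: in a spanning subgraph of $H^{+}$ the edges $eg,fg,ef$ you list as ``present'' need not be present, and for class $\mathcal{C}$ you must argue that an $(e,f)$-path exists avoiding both $g$ and the four terminals --- this does follow from $2$-connectedness (removing $g$ leaves $G$ connected and the only bridges across are $ef$ and $F_{efg}$, and a shortest such path cannot detour through a terminal), but it is exactly the sort of bookkeeping that the paper's reduction to Lemma \ref{smallgraphk22} is designed to avoid. The reduction route also has the advantage that the hard content is packaged once in Lemma \ref{smallgraphk22} and reused; your direct construction buys a more concrete picture of the minor at the cost of a separate disjointness argument in each of $\mathcal{B}$ and $\mathcal{C}$.
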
 

\begin{proof}
If $G$ is the spanning subgraph of a class $\mathcal{A}$ graph, then $\{d,e\}$ is the vertex boundary of a $2$-separation $(A,B)$ where two vertices of $X$ lie in $A \setminus \{d,e\}$ and one lies in $B \setminus \{d,e\}$. By applying Lemma \ref{splitvertices} we see that $G$ has a $K_{2,4}(X)$-minor if and only if the graph in Lemma \ref{smallgraphk22} has a $K_{2,4}(X)$-minor. By Lemma \ref{smallgraphk22}, it does not. Therefore $G$ is $K_{2,4}(X)$-minor free.

If $G$ is the spanning subgraph of a class $\mathcal{B}$ or $\mathcal{C}$ graph, then $\{e,f\}$ is the vertex boundary of a $2$-separation $(A,B)$ where two vertices of $X$ lie in $A \setminus \{e,f\}$ and two vertices of $X$ lies in $B \setminus \{e,f\}$. Then applying Lemma \ref{thereductionwithk22init}, we see that $G$ has a $K_{2,4}(X)$-minor if the graph in Lemma \ref{smallgraphk22} has a $K_{2,2}(X)$-minor. By Lemma \ref{smallgraphk22}, the graph in question has a $K_{2,2}(X)$-minor and therefore $G$ has a $K_{2,4}(X)$-minor. 
\end{proof}

Now we deal with class $\mathcal{E}$ and $\mathcal{F}$ graphs.

\begin{lemma}
\label{planarreductionk22}
Let $G$ be a $2$-connected spanning subgraph of a $\{t_{1},t_{2},s_{1},s_{2}\}$-web, $H^{+} = (H,F)$. Then there is a planar $2$-connected graph $G'$ such that $G$ has a $K_{2,2}(X)$-minor if and only if $G'$ has a $K_{2,2}(X)$-minor. 
\end{lemma}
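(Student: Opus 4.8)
The plan is to mimic the contraction argument used in Lemma~\ref{planarreduction} and Corollary~\ref{webcycle}, adapting it to $K_{2,2}(X)$-minors and invoking the path characterization of Theorem~\ref{k22characterization} for the nontrivial direction. Write $H^{+} = (H,F)$, so $H$ is planar with a $4$-cycle outer face on the terminals $X = \{t_{1},t_{2},s_{1},s_{2}\}$, every internal face a triangle and every triangle a face, and $F$ the disjoint union of cliques $F_{T}$ over the triangles $T$ of $H$, each $F_{T}$ complete to $V(T)$. Since $G$ is a $2$-connected spanning subgraph of $H^{+}$, for each triangle $T$ with $V(T) = \{x_{1},x_{2},x_{3}\}$ every connected component of $G[V(F_{T})]$ has neighbours among at least two of $x_{1},x_{2},x_{3}$.

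First I would define $G'$ exactly as in Corollary~\ref{webcycle}: for each triangle $T$, if some component of $G[V(F_{T})]$ is adjacent to all three vertices of $T$, contract one such component to a single vertex $v_{T}$ and contract every other component of $G[V(F_{T})]$ onto an arbitrary vertex of $T$; otherwise every component is adjacent to exactly two vertices of $T$, and we keep (contracted to a single vertex) one representative component for each pair $\{x_{i},x_{j}\}$ that actually occurs and contract all remaining components onto vertices of $T$; in every case delete resulting parallel edges and loops. No terminal is ever contracted, since the terminals lie on the outer $4$-cycle and not in any $F_{T}$. The two verifications carried out in Corollary~\ref{webcycle} then apply verbatim: $G'$ is $2$-connected, because any two internally disjoint $(x,y)$-paths of $G$ with $x,y \in V(H)$ can be rerouted through the retained vertices $v_{T}$ or pair-representatives (at most one of the two paths meets any given component of $G[V(F_{T})]$), and $G'$ is planar, because a planar embedding of $H$ can be extended by placing inside each triangular face either one vertex joined to all three corners or up to three vertices each joined to a distinct pair of corners, yielding a planar graph that contains $G'$ as a subgraph.

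It remains to show that $G$ has a $K_{2,2}(X)$-minor if and only if $G'$ does. One direction is immediate: $G'$ is a minor of $G$ obtained by contracting edges not incident with any terminal, so a $K_{2,2}(X)$-minor of $G'$ is a $K_{2,2}(X)$-minor of $G$. For the converse, suppose $G$ has a $K_{2,2}(X)$-minor; by Theorem~\ref{k22characterization}, writing $X = \{a,b,c,d\}$, there are vertex-disjoint paths $P_{a,c},P_{b,d}$ and vertex-disjoint paths $P_{a,d},P_{b,c}$ in $G$. Fix a triangle $T$. Since the endpoints of all these paths are terminals and so lie outside $V(F_{T})$, for any one of these paths $P$ the vertices of $P$ inside $V(F_{T})$ form the interior of a single subpath of $P$ between two vertices of $V(T)$ (a second entry into $V(F_{T})$ would leave $P$ trapped there, contradicting that its endpoints lie outside). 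Hence among two vertex-disjoint paths at most one can meet $V(F_{T})$, since two such subpaths would require two disjoint $2$-element endpoint sets inside the $3$-element set $V(T)$. So in each of the disjoint pairs $\{P_{a,c},P_{b,d}\}$ and $\{P_{a,d},P_{b,c}\}$ at most one path uses $V(F_{T})$, and that path crosses $F_{T}$ along a single $x_{i}$--$x_{j}$ subpath, which I would reroute in $G'$ through $v_{T}$ or through the retained representative adjacent to $\{x_{i},x_{j}\}$ (one of these always survives the construction). Performing this at every triangle converts the two disjoint pairs of paths in $G$ into two disjoint pairs of paths in $G'$ with the same endpoints, so by Theorem~\ref{k22characterization} $G'$ has a $K_{2,2}(X)$-minor.

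The main obstacle I anticipate is the bookkeeping in the forward direction: checking that the rerouting at all triangles can be done simultaneously without a rerouted segment of one path of a disjoint pair colliding with the other path of that pair. This is exactly where the observation that at most one path of a disjoint pair can meet any fixed $F_{T}$ (forced by $|V(T)| = 3$) is essential — the other path never touches $V(F_{T})$, hence never touches $v_{T}$ or the other kept vertices of that triangle, so the rerouting stays disjoint. Everything else is a routine adaptation of the web-reduction arguments already established in Corollary~\ref{webcycle} and Lemma~\ref{planarreduction}.
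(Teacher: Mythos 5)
Your proposal is correct and matches the paper's approach essentially step for step: construct $G'$ as in Corollary~\ref{webcycle}, note the easy direction since $G'$ is a minor of $G$, then invoke Theorem~\ref{k22characterization} and observe that any path meeting $F_T$ must pick up at least two vertices of the three-element set $V(T)$, so the other path of each disjoint pair avoids $F_T$ entirely and the rerouting preserves disjointness. Your writeup is a bit more explicit than the paper's about which retained vertex ($v_T$ or a pair-representative) the rerouted segment uses, but the argument is the same.
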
 

\begin{proof}
We construct $G'$ in the same way that we construct $G'$ in Corollary \ref{webcycle}. We refer the reader to the proof of Corollary \ref{webcycle} for the verification that $G'$ is $2$-connected and planar. Notice that if $G'$ has a $K_{2,2}(X)$-minor, then immediately $G$ has a $K_{2,2}(X)$-minor. 

Therefore we assume that $G$ has a $K_{2,2}(X)$-minor. Then by Theorem \ref{k22characterization}, there is an $(t_{1},s_{1})$-path $P_{t_{1},s_{1}}$ and a $(t_{2},s_{2})$-path, $P_{t_{2},s_{2}}$ such that $P_{t_{1},s_{1}} \cap P_{t_{2},s_{2}} \neq \emptyset$ and there is an $(t_{1},s_{2})$-path, $P_{t_{1},s_{2}}$ and a $(t_{2},s_{1})$-path $P_{t_{2},s_{1}}$ such that $P_{t_{1},s_{2}} \cap P_{t_{2},s_{1}} \neq \emptyset$. 

Suppose $P_{t_{1},s_{1}}$ contains a vertex from $F_{T}$ for some $T$. Then at least two vertices from $T$ are in $P_{t_{1},s_{1}}$. Then since $|V(T)| = 3$, $P_{t_{2},s_{2}}$ does not contain any vertex from $F_{T}$. Therefore if we contract $F_{T}$ down to a vertex, after contracting appropriately      $P_{t_{1},s_{1}}$ is still a $(t_{1},s_{1})$-path, and $P_{t_{1},s_{1}} \cap P_{t_{2},s_{2}} = \emptyset$. A similar statement holds for $P_{t_{2},s_{1}}$ and $P_{t_{1},s_{2}}$. Applying that argument to each triangle $T$ in $H$ and appealing to Theorem \ref{k22characterization}, $G'$ has a $K_{2,2}(X)$-minor. 
\end{proof}

\begin{lemma}
\label{k22minorswebs}
Let $G$ be a $2$-connected spanning subgraph of an $\{t_{1},t_{2},s_{1},s_{2}\}$-web, $H^{+} = (H,F)$. Assume that $t_{1},t_{2},s_{1}$ and $s_{2}$ appear in that order in the outerface of $H$. Then $G$ does not contain a $K_{2,2}(X)$-minor. 
\end{lemma}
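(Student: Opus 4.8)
Lemma \ref{k22minorswebs} (restated): If $G$ is a $2$-connected spanning subgraph of an $\{t_1,t_2,s_1,s_2\}$-web $H^+ = (H,F)$, where $t_1,t_2,s_1,s_2$ appear in that cyclic order on the outerface of $H$, then $G$ has no $K_{2,2}(X)$-minor.

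The plan is to first reduce to the planar case and then use a topological (Jordan Curve Theorem) argument. By Lemma \ref{planarreductionk22}, there is a planar $2$-connected graph $G'$ such that $G$ has a $K_{2,2}(X)$-minor if and only if $G'$ does, and the construction producing $G'$ (as in Corollary \ref{webcycle} / Lemma \ref{planarreductionk22}) does not contract any of $t_1,t_2,s_1,s_2$ together; moreover $G'$ is a spanning subgraph of a planar graph in which the outerface is the $4$-cycle through $t_1,t_2,s_1,s_2$ in that order — this is exactly the content of Observation \ref{planarface} applied to the web. So it suffices to prove the statement for $G'$, i.e.\ for a $2$-connected planar graph in which $t_1,t_2,s_1,s_2$ lie on a common face in that cyclic order. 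First I would fix such a planar embedding of $G'$ with $t_1,t_2,s_1,s_2$ on the outerface in this cyclic order.

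Now suppose for contradiction that $G'$ has a $K_{2,2}(X)$-minor. By Theorem \ref{k22characterization}, there is a $(t_1,s_1)$-path $P_{t_1,s_1}$ disjoint from a $(t_2,s_2)$-path $P_{t_2,s_2}$. Here is the key step: consider the closed curve in the plane formed by $P_{t_1,s_1}$ together with the arc of the outerface boundary from $t_1$ to $s_1$ that passes through $t_2$ — wait, more carefully, I would argue as follows. Add the four outerface edges $t_1t_2, t_2s_1, s_1s_2, s_2t_1$ (they exist in the ambient web-graph; add them if not already present, maintaining planarity, as in Observation \ref{planarface}) so that $C_0 = t_1 t_2 s_1 s_2 t_1$ bounds a face. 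The path $P_{t_1,s_1}$ runs between two non-adjacent vertices of $C_0$; together with either of the two $t_1$--$s_1$ arcs of $C_0$ it forms a closed curve, and by the Jordan Curve Theorem this separates the plane into two regions, with $t_2$ in one region and $s_2$ in the other (since $t_2$ and $s_2$ are interleaved with $t_1, s_1$ on $C_0$). Since $P_{t_2,s_2}$ must go from $t_2$ to $s_2$ and cannot cross $C_0$ (no edge of the graph crosses the facial cycle $C_0$), $P_{t_2,s_2}$ must intersect $P_{t_1,s_1}$, contradicting disjointness. The one subtlety is that $P_{t_1,s_1}$ itself may use internal vertices of the arcs of $C_0$; I would handle this exactly as in the proof of Lemma \ref{3connectedplanarpaths} — by choosing the curve to hug $P_{t_1,s_1}$ and the relevant subpaths of $C_0$, or by first rerouting $P_{t_1,s_1}$ to be internally disjoint from $C_0$ using $2$-connectivity, so that the separating curve is genuinely simple.

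The main obstacle, I expect, is the bookkeeping about which vertices of $C_0$ the paths may share and ensuring the Jordan curve we build is simple: the minor model's branch sets and their realizing paths can wander onto the facial cycle, so I need to argue that after contracting/rerouting we may assume $P_{t_1,s_1} \cap C_0 = \{t_1,s_1\}$ (or handle the general case by an innermost-intersection argument, splitting at the first and last shared vertices, as was done for $P_{x_1,x_2}$ in Lemma \ref{3connectedplanarpaths}). Once that is in place, the interleaving of $t_1,t_2,s_1,s_2$ on $C_0$ forces $t_2$ and $s_2$ into opposite regions, and the conclusion that $P_{t_2,s_2}$ cannot avoid $P_{t_1,s_1}$ is immediate. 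Hence $G'$ (and so $G$) has no $K_{2,2}(X)$-minor. Note we only needed \emph{one} of the two path-pair conditions of Theorem \ref{k22characterization} to fail, so there is no need to also analyze $P_{t_1,s_2}$ and $P_{t_2,s_1}$.
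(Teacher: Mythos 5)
Your proposal is correct and follows essentially the same route as the paper: reduce to the planar case via Lemma \ref{planarreductionk22}, use Observation \ref{planarface} to realize the $4$-cycle $t_1t_2s_1s_2$ as a face, and then apply the Jordan Curve Theorem to conclude that any $(t_1,s_1)$-path and any $(t_2,s_2)$-path must intersect (so the first condition of Theorem \ref{k22characterization} fails). The paper's proof is more terse — it simply cases on whether $t_2,s_2$ lie on $P_{t_1,s_1}$ (resp.\ $t_1,s_1$ on $P_{t_2,s_2}$) and otherwise invokes the Jordan Curve Theorem directly — whereas you spend more effort on the technicalities of producing a simple separating curve; these technicalities are harmless but largely unnecessary, since the whole graph lives in the closed disk bounded by the facial $4$-cycle, and a path joining $t_1$ to $s_1$ inside that disk already separates $t_2$ from $s_2$ (they lie on opposite boundary arcs) unless the path passes through one of them, in which case the intersection is immediate.
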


\begin{proof}
We note it suffices to show that $H^{+}$ does not have a $K_{2,2}(X)$-minor. By Lemma \ref{planarreductionk22}, we may assume that $H^{+}$ is planar. Then by observation \ref{planarface} the cycle with edge set $t_{1}t_{2}, t_{2}s_{1}, s_{1}s_{2}, s_{2}t_{1}$ is a face. Consider any  $(t_{1},s_{1})$-path $P_{t_{1},s_{1}}$ and any $(t_{2},s_{2})$-path $P_{t_{2},s_{2}}$. We claim that $P_{t_{1},s_{1}} \cap P_{t_{2},s_{2}} \neq \emptyset$. If $t_{2},s_{2} \in P_{t_{1},s_{1}}$ or $t_{1},s_{1} \in P_{t_{2},s_{2}}$ then we are done. Therefore we assume that $t_{2},s_{2} \not \in P_{t_{1},s_{1}}$ and $t_{1},s_{1} \not \in P_{t_{2},s_{2}}$. But then by the Jordan Curve Theorem, $P_{t_{1},s_{1}} \cap P_{t_{2},s_{2}}$ is non-empty, completing the claim. 
\end{proof}

\begin{lemma}
Let $G$ be a $2$-connected spanning subgraph of a class $\mathcal{E}$ or a $\mathcal{F}$ graph. If $G$ is a spanning subgraph of a class $\mathcal{E}$ graph, then $G$ has a $K_{2,4}(X)$-minor if and only if the $\{e,f,c,d\}$-web has a $K_{2,4}(X)$-minor. If $G$ is a spanning subgraph of a class $\mathcal{F}$ graph, then $G$ has a $K_{2,4}(X)$-minor if and only if the $\{e,f,g,h\}$-web has a $K_{2,4}(X)$-minor. 
\end{lemma}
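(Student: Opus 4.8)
The statement to prove concerns $2$-connected spanning subgraphs of class $\mathcal{E}$ and $\mathcal{F}$ graphs, and asserts that the presence of a $K_{2,4}(X)$-minor reduces to the presence of a $K_{2,4}(X)$-minor in an associated web (the $\{e,f,c,d\}$-web for class $\mathcal{E}$, and the $\{e,f,g,h\}$-web for class $\mathcal{F}$).

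\textbf{Approach.} The plan is to recall the structure of class $\mathcal{E}$ and class $\mathcal{F}$ graphs from Theorem \ref{k4free} and identify a natural $2$-separation in each case, then apply the $2$-connected reduction lemmas for $K_{2,4}(X)$-minors from \cite{linodemasithesis} (namely Lemma \ref{alloneside}, Lemma \ref{splitvertices}, and Lemma \ref{thereductionwithk22init}) together with the small-graph analysis of Lemma \ref{smallgraphk22} to peel away the non-web portion of the graph. In a class $\mathcal{E}$ graph, $H$ is built from a planar graph $H'$ with a $4$-cycle outerface $\{c,d,e,f\}$ by adding two new vertices $a,b$ each adjacent to $e$ and $f$; so $\{e,f\}$ is a $2$-vertex cut separating $\{a,b\}$ from the web part $H'^{+}$. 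The roots $a,b$ lie on one side of this separation and $c,d$ lie on the other side (inside the web part). I would apply Lemma \ref{thereductionwithk22init} to this separation: $G$ has a $K_{2,4}(X)$-minor iff one side has a $K_{2,4}(X)$-minor or both sides have a $K_{2,2}(X)$-minor. The side containing $\{a,b\}$ is a $2$-connected spanning subgraph of (essentially) the graph $H$ from Lemma \ref{smallgraphk22} (a ``diamond'' on $\{a,b,e,f\}$ together with the triangle-cliques), so by Lemma \ref{smallgraphk22} it has no $K_{2,4}(X)$-minor but does have a $K_{2,2}(X)$-minor. Hence $G$ has a $K_{2,4}(X)$-minor iff the web side has a $K_{2,4}(X)$-minor or the web side has a $K_{2,2}(X)$-minor; and since a $K_{2,4}(X)$-minor contains a $K_{2,2}(X)$-minor (on an appropriate relabelling of roots), and conversely a $\{c,d,e,f\}$-web with a $K_{2,2}(X)$-minor can be shown to have a $K_{2,4}(X)$-minor when combined with the diamond side — this last step is exactly the point where the extra structure of the diamond side feeds two disjoint paths back in — we conclude equivalence with the $\{e,f,c,d\}$-web having a $K_{2,4}(X)$-minor. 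The class $\mathcal{F}$ case is handled symmetrically: here the web part $H'^{+}$ has outerface $\{e,f,g,h\}$, and two separate diamond gadgets (one on $\{a,b,e,f\}$, one on $\{c,d,g,h\}$) are attached; I would apply Lemma \ref{thereductionwithk22init} across $\{e,f\}$ and then again across $\{g,h\}$, each time using Lemma \ref{smallgraphk22} to dispose of the diamond side, landing on the $\{e,f,g,h\}$-web with a $K_{2,4}(X)$-minor.

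\textbf{Key steps, in order.} First, fix the labelled structure of class $\mathcal{E}$ (resp. $\mathcal{F}$) graphs and write down the $2$-separation(s) along the attaching pair(s) of vertices. Second, observe the root distribution relative to these separations (two roots on the diamond side, two on the web side in case $\mathcal{E}$; roots split across two diamonds and the web in case $\mathcal{F}$). Third, apply Lemma \ref{thereductionwithk22init} to reduce to checking $K_{2,4}(X)$- and $K_{2,2}(X)$-minors on the pieces. Fourth, invoke Lemma \ref{smallgraphk22} to determine exactly which minors the diamond piece has (no $K_{2,4}(X)$-minor, but a $K_{2,2}(X)$-minor). Fifth, combine: $G$ has a $K_{2,4}(X)$-minor iff the web piece has a $K_{2,4}(X)$-minor or a $K_{2,2}(X)$-minor, and then argue that for a web with the relevant cyclic order of roots these two conditions are equivalent to the web having a $K_{2,4}(X)$-minor (using that the full class $\mathcal{E}$/$\mathcal{F}$ graph with the diamond present reconstitutes the fourth and missing apex of the $K_{2,4}$). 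For class $\mathcal{F}$, iterate the fourth and fifth steps for the second diamond.

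\textbf{Main obstacle.} The delicate point is the fifth step: translating ``the web has a $K_{2,2}(X)$-minor'' back into ``the original graph has a $K_{2,4}(X)$-minor.'' A $K_{2,2}(X)$-minor gives only two of the four required internally-disjoint paths from the two branch sets playing the role of $s_1,s_2$; the remaining two paths (to the branch sets playing $t_3,t_4$, i.e. the roots $a,b$) must be supplied by the diamond side. I expect the argument to mirror the final sentence of the plan for Theorem \ref{k4w4k24characterization} in spirit — one shows that the diamond on $\{a,b,e,f\}$, being $2$-connected, always supplies an $(e,a)$-path and an $(f,b)$-path (or the crossed pair) internally disjoint from each other, which together with the $K_{2,2}(X)$-structure in the web yields the full $K_{2,4}(X)$-model. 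Care is needed that the relabelling of roots in Lemma \ref{thereductionwithk22init} and in Lemma \ref{smallgraphk22} is compatible with the family of maps $\mathcal{F}$ for $K_{2,4}(X)$-minors, but this is a bookkeeping matter once the path-routing is in hand.
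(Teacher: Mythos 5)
There is a genuine gap in step~5 of your plan. After applying Lemma~\ref{thereductionwithk22init} to the $\{e,f\}$-separation and invoking Lemma~\ref{smallgraphk22} for the diamond side, you correctly land on: $G$ has a $K_{2,4}(X)$-minor if and only if the web side has a $K_{2,4}(X')$-minor, or the web side has a $K_{2,2}(X')$-minor (the diamond always supplies the other $K_{2,2}$ and never a $K_{2,4}$). But from there you try to argue that ``the web has a $K_{2,2}(X')$-minor'' combines with the diamond to yield a $K_{2,4}(X)$-minor, and then ``conclude equivalence with the web having a $K_{2,4}$-minor.'' That last jump does not follow: combining the web's $K_{2,2}$ with the diamond's $K_{2,2}$ only produces a $K_{2,4}(X)$-minor in \emph{$G$} (which is precisely what Lemma~\ref{thereductionwithk22init} already says), not a $K_{2,4}$-minor in the \emph{web}. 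You are trying to show ``$G$ has $K_{2,4}$'' implies ``web has $K_{2,4}$,'' and you reach it by passing through ``$G$ has $K_{2,4}$'' again --- that is circular.

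What is actually needed --- and what the paper uses --- is Lemma~\ref{k22minorswebs}: a $2$-connected spanning subgraph of a web whose four terminals appear on the outer $4$-cycle in the order $t_1,t_2,s_1,s_2$ has \emph{no} $K_{2,2}(X)$-minor at all, by a Jordan-curve argument. In the class $\mathcal{E}$ case the web side inherits roots $\{c,d,e,f\}$ with $e,f$ playing the $s$-roles and $c,d$ playing the $t$-roles, and they lie on the outer cycle in the order $c,d,e,f$ --- exactly the forbidden ordering. Hence the disjunct ``web has $K_{2,2}$'' is empty, and the equivalence $G$ has $K_{2,4}(X)$ iff the web has $K_{2,4}(X')$ falls out immediately. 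For class $\mathcal{F}$ you must invoke Lemma~\ref{k22minorswebs} again after the second application of Lemma~\ref{thereductionwithk22init} at $\{g,h\}$. The rest of your decomposition (the choice of $2$-separations, the use of Lemma~\ref{thereductionwithk22init}, the use of Lemma~\ref{smallgraphk22} to dispose of the diamond pieces) matches the paper's proof; only the closing step is missing the right ingredient.
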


\begin{proof}
First suppose that $G$ is a $2$-connected spanning subgraph of a class $\mathcal{E}$ graph. If the $\{e,f,c,d\}$-web has a $K_{2,4}(X)$-minor, then immediately $G$ has a $K_{2,4}(X)$-minor. 

Therefore assume that $G$ has a $K_{2,4}(X)$-minor. Apply Lemma \ref{thereductionwithk22init} to the two separation with vertex boundary $\{e,f\}$. Then by appealing to Lemma \ref{k22minorswebs} and Lemma \ref{smallgraphk22}, we get that $G$ having a $K_{2,4}(X)$-minor implies the $\{e,f,c,d\}$-web has a $K_{2,4}(X)$-minor, completing the claim. 
Essentially the same argument gives the claim for the class $\mathcal{F}$ graphs. 
\end{proof}

Now it suffices to deal with webs to complete the characterization. Note we can reduce the problem of finding $K_{2,4}(X)$-minors down to the planar case. 

\begin{lemma}
\label{k24planarreduction}
Let $H^{+} = (H,F)$ be an $\{a,b,c,d\}$-web. Let $G$ be $2$-connected spanning subgraph of $H^{+}$. Then there is a planar graph $K$ such that $G$ has a $K_{2,4}(X)$-minor if and only if $K$ has a $K_{2,4}(X)$-minor. 
\end{lemma}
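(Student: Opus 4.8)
The plan is to take $K$ to be exactly the graph called $G'$ in the proof of Corollary~\ref{webcycle}: for every triangle $T \in H$ with $V(T) = \{x_1,x_2,x_3\}$ look at the components $M_1,\dots,M_n$ of $G[V(F_T)]$, and contract them by the recipe there --- if some $M_i$ is adjacent to all of $x_1,x_2,x_3$, contract it to a single vertex $v_T$ and push every other component of $G[V(F_T)]$ into $V(T)$; otherwise (each $M_i$ meets exactly two vertices of $T$, by $2$-connectivity) retain one representative $v^i_T,v^j_T,v^k_T$ for each of the pairs $\{x_1,x_2\},\{x_1,x_3\},\{x_2,x_3\}$ that occurs and push the rest into $V(T)$. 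The proof of Corollary~\ref{webcycle} already shows that this $K$ is $2$-connected and planar (it is a subgraph of the graph obtained by adding one or three new vertices inside each triangular face of a fixed plane embedding of $H$) and that no two of $a,b,c,d$ are identified, so $K$ is in particular a minor of $G$. Hence the ``if'' direction is immediate: a $K_{2,4}(X)$-minor of $K$ is a $K_{2,4}(X)$-minor of $G$.

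For the ``only if'' direction I would start from a $K_{2,4}(X)$-model $\{G_y : y \in V(K_{2,4})\}$ of $G$, chosen so that the branch sets partition $V(G)$. The key structural observation, which I would prove first, is that for every triangle $T$ no branch set lies entirely inside $V(F_T)$: no vertex of $F_T$ is in $X$, so such a branch set would have to be $G_{s_1}$ or $G_{s_2}$, and since $N_{K_{2,4}}(s_i) = \{t_1,t_2,t_3,t_4\}$ and that branch set would be contained in a single component $M_j$ of $G[V(F_T)]$, each of the four pairwise-disjoint root branch sets $G_{t_1},\dots,G_{t_4}$ would be forced to meet $V(T)$ --- impossible, as $|V(T)| = 3$. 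Consequently the only branch sets meeting $V(F_T)$ are the at most three branch sets containing $x_1,x_2,x_3$, and any $2$-adjacent component $M_\ell$ is met by at most the two branch sets containing its two neighbours in $T$.

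With this in hand I would rebuild a $K_{2,4}(X)$-model of $K$ triangle by triangle. For a triangle $T$, deleting the vertices of a discarded component $M_\ell$ from the branch sets only risks disconnecting a branch set $B$ that was using $M_\ell$ to join two vertices of $V(T)$ that it contains; but the representative retained by the construction for that pair of $T$-vertices is adjacent to both of them in $K$, so assigning it to $B$ reconnects $B$, and a retained representative can likewise be assigned to re-realize any $K_{2,4}$-adjacency between two of the at most three branch sets meeting $V(F_T)$ that a discarded component alone had provided. One then checks that the retained $v$-vertices of $T$ can be distributed among these branch sets so that the branch sets stay pairwise disjoint and connected and every edge of $K_{2,4}$ is still realized (extra edges between branch sets are harmless for a minor model); iterating over all triangles yields a $K_{2,4}(X)$-model of $K$. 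The main obstacle is precisely this last distribution step: one must verify that, across all triangles simultaneously, a retained representative is available for every role a discarded component played and that the assignments never force two branch sets to share a vertex. This is the $K_{2,4}$-analogue of the path-rerouting carried out in Corollary~\ref{webcycle} and Lemma~\ref{planarreductionk22}, and I expect it to go through with the same style of case analysis on which pairs of $V(T)$ a component hits.
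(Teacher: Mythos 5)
Your approach is genuinely different from the paper's, and it contains a real gap. The paper's proof is two lines and relies on a tool you don't use at all: for each triangle $T$ it first applies the $2$-separation reduction (Lemma~\ref{allonesidegeneralH}; equivalently Demasi's $K_{2,4}$-specific Lemma~\ref{alloneside}) to dispose of components of $G[V(F_T)]$ that attach to only two vertices of $T$, and then applies Lemma~\ref{3connk24} --- the \emph{tight $3$-separation reduction} for $K_{2,4}(X)$-minors from Demasi's thesis --- to replace the remainder of $F_T$ by the triangle on $V(T)$ in one stroke. Since $X$ lies in the outer $4$-cycle of $H$ it is always on the correct side of these separations, so every step preserves the existence of a $K_{2,4}(X)$-minor by the cited lemmas, and the resulting graph is a subgraph of $H$, hence planar. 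No model surgery is needed.

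You instead take $K$ to be the graph $G'$ of Corollary~\ref{webcycle}, which retains representative vertices from each $F_T$, and you try to rebuild a $K_{2,4}(X)$-model of $G'$ directly from one of $G$. Your structural observation --- no branch set of a $K_{2,4}(X)$-model can be contained in $V(F_T)$, since such a set would be $G_{s_1}$ or $G_{s_2}$ and would force four pairwise disjoint root branch sets to meet the three-element set $V(T)$ --- is correct, and the consequence that every branch set meeting $V(F_T)$ contains a vertex of $V(T)$ is a reasonable first step. But the rebuilding phase, where you reassign the retained $v^i_T$'s so that branch sets stay disjoint and connected and every $K_{2,4}$-edge is still witnessed, is exactly where you stop and say it is ``the main obstacle'' that you ``expect'' to go through. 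That is the entire content of the reverse direction, and it is not carried out. The appeal to Lemma~\ref{planarreductionk22} does not help: that proof leans on the two-crossing-path characterization of $K_{2,2}(X)$-minors (Theorem~\ref{k22characterization}), which has no $K_{2,4}$ analogue, so the argument does not transfer automatically. The cleaner route is the one the paper takes: recognize that Lemma~\ref{3connk24} already encapsulates the needed branch-set manipulation inside each $F_T$, and use it to delete the $F_T$'s entirely rather than preserving representatives.
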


\begin{proof}
For each triangle $T$ in $H$, and every two element subset of $V(T)$ which induces a $2$-separation $(A,B)$ such that $B \setminus (A \cap B) = V(F_{T})$, apply Lemma \ref{allonesidegeneralH}. After doing this to every triangle, notice that for every triangle $T \in H$, $T$ induces a tight $3$-separation $(A,B)$ such that $B = V(F_{T}) \cup V(T)$. Then we may apply Lemma \ref{3connk24} to $(A,B)$. Call the resulting graph $K$. By construction, $K$ has a $K_{2,4}(X)$-minor if and only if $G$ has a $K_{2,4}(X)$-minor. Additionally, notice that $K$ is a subgraph of $H$, and $H$ is planar, so thus $K$ is planar.
\end{proof}

Since Demasi characterized exactly when planar graphs have $K_{2,4}(X)$ minors (\cite{linodemasithesis}), a full characterization of graphs without $K_{2,4}(X)$ and $K_{4}(X)$ is known. It is exactly when the the characterizations given in \cite{root} and \cite{linodemasithesis} agree. We do not explore this to try and give a cleaner characterization, we just note that for webs, $K_{2,4}(X)$ minors only appear when $W_{4}(X)$-minors appear for webs.

\begin{theorem}
\label{k4w4k24characterization}
Let $G$ be a $2$-connected graph which is the spanning subgraph of an $\{a,b,c,d\}$-web. Let $X = \{a,b,c,d\} \subseteq V(G)$.  If $G$ does not have a $W_{4}(X)$-minor, then $G$ does not have a $K_{2,4}(X)$-minor. 
\end{theorem}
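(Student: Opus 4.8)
The plan is to prove the contrapositive: assuming $G$ is a $2$-connected spanning subgraph of an $\{a,b,c,d\}$-web that does contain a $K_{2,4}(X)$-minor, I will produce a $W_{4}(X)$-minor. By Lemma \ref{k24planarreduction} I may reduce to the case where $G$ is planar, since that reduction preserves both the property of being (a spanning subgraph of) a web and the existence of a $K_{2,4}(X)$-minor; in fact the reduction lemmas it invokes (\ref{allonesidegeneralH}, \ref{3connk24}) only contract or add edges inside the $F_T$ gadgets, so one checks the output is still a spanning subgraph of a (possibly smaller) web. Thus it suffices to show: \emph{every planar $2$-connected spanning subgraph of an $\{a,b,c,d\}$-web which has a $K_{2,4}(X)$-minor also has a $W_{4}(X)$-minor.}

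For the planar case I would exploit the structural results already available. First, by Corollary \ref{webcycle} there is a cycle $C$ with $X \subseteq V(C)$; fix a planar embedding and (via Observation \ref{planarface}) think of the $4$-cycle $abcd$ bounding a face, so $a,b,c,d$ appear in this cyclic order on $C$. The idea is that a $K_{2,4}(X)$-minor gives, for each of the four terminals, a branch path from that terminal into one of the two ``hub'' branch sets $B_{s_1}, B_{s_2}$; contracting each hub branch set to a single vertex yields two vertices $s_1, s_2$ each joined to all four of $a,b,c,d$ by internally disjoint paths, with the eight paths pairwise disjoint except at their endpoints in $X$. Now I want to recover a $W_4(X)$: the wheel with rim $a,b,c,d$ in this cyclic order and a hub adjacent to all four. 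The natural candidate for the hub is obtained by taking the $s_1$--$s_2$ piece together with suitable terminal paths. Concretely, using the $abcd$-face and planarity, the paths from $s_1$ and from $s_2$ must interleave around $C$, so a Jordan-curve argument (just as in Lemma \ref{Planar $3$-connected}) forces some intersection pattern that lets me route a single connected ``hub'' subgraph meeting branch paths to all of $a,b,c,d$, plus the rim cycle from $C$. I would also want to use Theorem \ref{w4cuts}: rather than building the $W_4(X)$-model by hand, I can argue that if $G$ had no $W_4(X)$-minor then one of the five separation obstructions of Theorem \ref{w4cuts} holds on $C$, and then show each obstruction is incompatible with the existence of a $K_{2,4}(X)$-minor. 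This is likely the cleanest route: each obstruction is a $2$-chain or triangle of $2$-separations isolating the terminals, and I would apply the $K_{2,4}$-specific reduction Lemmas \ref{splitvertices} and \ref{thereductionwithk22init} along those same separations, reducing to the small ``diamond'' graph $H$ of Lemma \ref{smallgraphk22}, which has no $K_{2,4}(X)$-minor (only a $K_{2,2}(X)$-minor), contradicting the assumed $K_{2,4}(X)$-minor.

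So the steps, in order, are: (1) reduce to planar $G$ via Lemma \ref{k24planarreduction}, checking the web structure is preserved; (2) assume for contradiction no $W_4(X)$-minor, and invoke Theorem \ref{w4cuts} to obtain one of the five obstruction configurations on a cycle $C \supseteq X$; (3) for each obstruction type, identify the governing $2$-separations (the $2$-chain links, the triangle cuts) and apply the $K_{2,4}$-reduction Lemmas \ref{alloneside}, \ref{splitvertices}, \ref{thereductionwithk22init} to those separations, just as the earlier corollaries did for classes $\mathcal{A}$--$\mathcal{F}$; (4) observe that after all reductions the graph has the shape of Lemma \ref{smallgraphk22}'s diamond (or a spanning subgraph thereof), which is $K_{2,4}(X)$-minor-free, contradicting the existence of the $K_{2,4}(X)$-minor. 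The bookkeeping in step (3) mirrors the obstruction-by-obstruction analysis in Lemma \ref{obstructionsareobstructions}, so each case should go through by the same ``apply the reduction lemma to the relevant cut, then recurse'' pattern.

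The main obstacle I anticipate is step (3): Theorem \ref{w4cuts}'s obstructions are phrased for $W_4(X)$-minors using the $W_4$-specific reduction lemmas (\ref{noW4minor}, \ref{oneterminalinthecut2conn}, \ref{2sepW4}, \ref{oneterminalonesideothersother}), whereas the $K_{2,4}$-reductions in \cite{linodemasithesis} (Lemmas \ref{splitvertices}, \ref{thereductionwithk22init}) branch differently — in particular Lemma \ref{thereductionwithk22init} can produce two $K_{2,2}(X)$-subproblems rather than a single reduced graph. I will need to verify that along every separation appearing in a $W_4$-obstruction, the corresponding $K_{2,4}$-reduction does \emph{not} split off a genuine $K_{2,4}(X)$-minor but at worst a $K_{2,2}(X)$-minor on each side, and that Lemma \ref{smallgraphk22} (together with Lemma \ref{k22minorswebs} to rule out $K_{2,2}(X)$ on the web side when the terminals lie in cyclic order) closes every branch. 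Matching the two families of reduction lemmas carefully — essentially showing the $W_4$-obstructions are ``$K_{2,4}$-obstructions'' too — is where the real work lies; once that dictionary is set up, each of the five cases is a short chase.
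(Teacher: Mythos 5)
Your high-level strategy matches the paper's: assume no $W_4(X)$-minor, take a vertex-minimal counterexample with a $K_{2,4}(X)$-minor, invoke Theorem~\ref{w4cuts} to obtain one of the five obstructions on a cycle $C \supseteq X$, then apply the $K_{2,4}$-reduction lemmas from \cite{linodemasithesis} along the obstruction's $2$-separations and recurse. Your step (1) — the planar reduction via Lemma~\ref{k24planarreduction} — is unnecessary and not used in the paper's proof; the obstruction analysis works without it, and carrying the planarity hypothesis through the recursion would require verifying that each reduced graph $G_{A}$, $G_{B}$ stays a spanning subgraph of a web, an extra burden the paper avoids.

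The genuine gap is in step (4). You claim that ``after all reductions the graph has the shape of Lemma~\ref{smallgraphk22}'s diamond,'' and that Lemma~\ref{smallgraphk22} together with Lemma~\ref{k22minorswebs} will close every branch. Neither point survives inspection. For obstructions 4 and 5, applying Lemma~\ref{thereductionwithk22init} to the triangle separation $(A^1_1,B^1_1)$ leaves you needing to show that the side $G_{B^1_1}$ carrying the terminal separating triangle has \emph{no} $K_{2,2}(X_1)$-minor; but Lemma~\ref{smallgraphk22} says the diamond \emph{has} a $K_{2,2}(X)$-minor, which is the wrong direction, and the reduced graph is not the diamond anyway — it is the graph of a terminal separating triangle (Figure~\ref{ThegraphGA}). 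Lemma~\ref{k22minorswebs} would give the negative $K_{2,2}$ conclusion, but only if you first prove $G_{B^1_1}$ is itself a spanning subgraph of a $\{t_1,t_2,b,a\}$-web with those terminals in cyclic order on the outer face, a claim that does not follow immediately from $G$ being a web (the separation boundary $\{t_1,t_2\}$ is interior to the original web's outer $4$-cycle). The paper sidesteps all of this: it uses Theorem~\ref{k22characterization} and the separation structure alone — every $(a,t_2)$-path must hit $v$ or $t_1$, every $(b,t_1)$-path must hit $v$ or $t_2$, and a short case check forces an intersection. That direct path argument needs no web or planarity hypothesis and is what actually closes cases 4 and 5. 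Your instinct that ``the real work lies'' in matching the two lemma families is exactly right; you have just reached for the wrong tools to do it.
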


\begin{proof}

Since $G$ does not have a $W_{4}(X)$-minor, by Theorem \ref{w4cuts} for every cycle $C$ such that $X \subseteq V(C)$, we have one of five obstructions. By Corollary \ref{webcycle} we know at least one such cycle exists. Suppose for sake of contradiction, that $G$ is a minimal counterexample with respect to vertices. We proceed by checking all the cases. 

\textbf{Case 1:} Suppose that there is a terminal separating $2$-chain $((A_{1},B_{1}),(A_{2},B_{2}),\ldots, \\ (A_{n},B_{n}))$ such that $A_{i} \cap B_{i} \subseteq V(C)$ for all $i \in \{1,\ldots,n\}$. If $A_{1} \cap B_{1}$ contains two vertices of $X$, then by Lemma \ref{splitvertices} there is no $K_{2,4}(X)$-minor. Therefore we can assume that $A_{1} \cap B_{1}$ contains exactly one vertex from $X$. Then by Lemma \ref{splitvertices}, the graph $G$ has $K_{2,4}(X)$-minor if and only if the graph $G_{B_{1}}$ has a $K_{2,4}(X_{1})$-minor, where $X_{1}$ is defined from Lemma \ref{splitvertices}. Since $G$ did not have a $W_{4}(X)$-minor, $G_{B_{1}}$ does not have a $W_{4}(X_{1})$-minor, and thus by minimality $G_{B_{1}}$ does not have a $K_{2,4}(X_{1})$-minor.

\textbf{Case 2:} Suppose that there is a terminal separating triangle $(A_{1},B_{1}),(A_{2},B_{2}),(A_{3},B_{3})$ such that $A_{i} \cap B_{i} \subseteq V(C)$ and $A_{1} \cap B_{1}$ contains a vertex of $X$. Then applying Lemma \ref{splitvertices} to the separation $(A_{1},B_{1})$ we that $G$ has a $K_{2,4}(X)$-minor if and only if the graph $G_{B_{1}}$ has a $K_{2,4}(X_{1})$-minor, where $X_{1}$ is defined from Lemma \ref{splitvertices}. Since $G$ does not have a $W_{4}(X)$-minor, $G_{B_{1}}$ does not have a $W_{4}(X_{1})$-minor, and thus by minimality, $G_{B_{1}}$ does not have a $K_{2,4}(X_{1})$-minor, and therefore $G$ does not have a $K_{2,4}(X)$-minor. 

\textbf{Case 3:} Suppose that there is a terminal separating triangle $(A^{1}_{1},B^{1}_{1}), (A^{1}_{2},B^{1}_{2}),(A^{1}_{3},B^{1}_{3})$ and a terminal separating $2$-chain $((A_{1},B_{1}),\ldots (A_{n},B_{n}))$ in $G_{A^{1}_{1}}$. Then suppose that $A_{n} \cap B_{n}$ contains a vertex of $X$. Then applying Lemma \ref{splitvertices} to the separation $(A_{n},B_{n})$, we have that $G$ has a $K_{2,4}(X)$-minor if and only if $G_{A_{n}}$ has a $K_{2,4}(X_{1})$-minor, where $X_{1}$ is defined from Lemma \ref{splitvertices}. Since $G$ does not have a $W_{4}(X)$-minor, $G_{A_{n}}$ does not have a $W_{4}(X_{1})$-minor, and thus by minimality, $G_{A_{n}}$ does not have a $K_{2,4}(X_{1})$-minor so $G$ does not have a $K_{2,4}(X)$-minor. 

\textbf{Case 4:} Suppose we have terminal separating triangles $((A^{1}_{1},B^{1}_{1}), (A^{1}_{2},B^{1}_{2}), (A^{1}_{3},B^{1}_{3}))$, $((A^{2}_{1},B^{2}_{1}),(A^{2}_{2},B^{2}_{2}),(A^{2}_{3},B^{2}_{3}))$  where for all $i \in \{1,2,3\}$,  $A^{1}_{i} \cap B^{1}_{i} \subseteq A^{2}_{1}$  and $A^{2}_{i} \cap B^{2}_{i} \subseteq A^{1}_{3}$, and if we consider the graph $G[A^{2}_{1} \cap A^{1}_{1}]$, and the cycle $C' = G[V(C) \cap A^{2}_{1} \cap A^{1}_{1}] \cup \{xy | x,y \in A^{i}_{1} \cap B^{i}_{1}, i \in \{1,2\}\}$ and we let $X'$ be defined to be the vertices $A^{2}_{1} \cap B^{2}_{1}$ and $A^{2}_{3} \cap B^{2}_{3}$, then there is a terminal separating $2$-chain in $G[A^{2}_{3} \cap A^{1}_{1}]$ with respect to $X'$. 

Consider the two separation $(A^{1}_{1},B^{1}_{1})$. From previous analysis, there are two vertices of $X \in A^{1}_{1} \setminus (A^{1}_{1} \cap B^{1}_{1})$ and two vertices of $X$ are in $B^{1}_{1} \setminus (A^{1}_{1} \cap B^{1}_{1})$. Without loss of generality, let $a,b$ be the two vertices in $X \in A^{1}_{1} \setminus (A^{1}_{1} \cap B^{1}_{1})$. Therefore we can apply Lemma \ref{thereductionwithk22init} to $(A^{1}_{1},B^{1}_{1})$ and get a two graphs $G_{A^{1}_{1}}$, $G_{B^{1}_{1}}$ such that $G$ has a $K_{2,4}(X)$-minor if and only if either $G_{A^{1}_{1}}$ has a $K_{2,4}(X_{1})$ or both have a $K_{2,2}(X_{1})$-minor, where $X_{1}$ is defined from Lemma \ref{thereductionwithk22init}. Since $G$ did not have a $W_{4}(X)$-minor both $G_{A^{1}_{1}}$ and $G_{B^{1}_{1}}$ do not have a $W_{4}(X_{1})$-minor, and thus by minimality, both $G_{A^{1}_{1}}$ and $G_{B^{1}_{1}}$ do not have $K_{2,4}(X_{1})$-minors. 

Therefore it suffices to show that $G_{B^{1}_{1}}$ does not have a $K_{2,2}(X_{1})$-minor (see Figure \ref{ThegraphGA} for a picture). Let $A^{1}_{1} \cap B^{1}_{1} = \{t_{1},t_{2}\}$. Without loss of generality, let $A^{1}_{2} \cap B^{1}_{2} = \{v,t_{1}\}$ and $A^{1}_{3} \cap B^{1}_{3} = \{v, t_{2}\}$. Now since we had a terminal separating triangle, we have that $v \neq a,b$. So without loss of generality, we may assume $a \in A^{1}_{2} \setminus (A^{1}_{2} \cap B^{1}_{2})$ and $b \in A^{1}_{3} \setminus (A^{1}_{3} \cap B^{1}_{3})$. Now by Theorem \ref{k22characterization}, it suffices to show that for every  $(a,t_{2})$-path, $P_{a,t_{2}}$ and every $(b,t_{1})$-path, $P_{b,t_{1}}$ in $G_{B^{1}_{1}}$ we have $P_{a,t_{2}} \cap P_{b,t_{1}} \neq \emptyset$. Let $P_{a,t_{2}}$ be any $(a,t_{2})$-path. Since $a \in A^{1}_{2} \setminus \{v,t_{1}\}$ and $t_{2} \in B^{1}_{2} \setminus \{v,t_{1}\}$, either $v$ or $t_{1} \in V(P_{a,t_{2}})$. If $t_{1} \in V(P_{a,t_{2}})$, then any $(b,t_{2})$-path $P_{b,t_{2}}$ contains $t_{1}$ by definition, so $P_{a,t_{2}} \cap P_{b,t_{1}} \neq \emptyset$. Therefore we only have to consider when $v \in P_{a,t_{2}}$.
Now since $b \in A^{1}_{3} \setminus \{v,t_{2}\}$ and $t_{1} \in B^{1}_{3} \setminus \{v,t_{2}\}$ every $(b,t_{1})$-path $P_{b,t_{1}}$ contains either $v$ or $t_{2}$. By similar reasoning as above, we may assume that $t_{2} \not \in P_{b,t_{1}}$. Therefore $v \in P_{b,t_{1}}$. But then $P_{b,t_{1}} \cap P_{a,t_{2}} \neq \emptyset$, which implies $G_{B^{1}_{1}}$ does not have a $K_{2,2}(X_{1})$-minor. Combining this with what we already showed, this implies that $G$ does not have a $K_{2,4}(X)$-minor. 

\textbf{Case 5:} Now suppose there are $2$ distinct terminal separating triangles $((A^{1}_{1},B^{1}_{1}), \\ (A^{1}_{2},B^{1}_{2}), (A^{1}_{3},B^{1}_{3})), ((A^{2}_{1},B^{2}_{1}), (A^{2}_{2},B^{2}_{2}),(A^{2}_{3},B^{2}_{3}))$ where for all $i \in \{1,2,3\}$,  $A^{1}_{i} \cap B^{1}_{i} \subseteq A^{2}_{1}$  and $A^{2}_{i} \cap B^{2}_{i} \subseteq A^{1}_{1}$, $A^{2}_{1} \cap B^{2}_{1} \cap A^{1}_{1} \cap B^{1}_{1}$ is not empty and $A^{j}_{i} \cap B^{j}_{i} \subseteq V(C)$ for all $i \in \{1,2,3\}$, $j \in \{1,2\}$. Now notice if we apply Lemma \ref{thereductionwithk22init} to $(A^{1}_{1},B^{1}_{1})$, we get two graphs $G_{A^{1}_{1}}$ and $G_{B^{1}_{1}}$ such that $G$ has a $K_{2,4}(X)$-minor if and only if either one of $G_{A^{1}_{1}}$ or $G_{B^{1}_{1}}$ has a $K_{2,4}(X_{1})$-minor or both have $K_{2,2}(X_{1})$-minors, where $X_{1}$ is defined from Lemma \ref{thereductionwithk22init}. Now since $G$ does not have a $W_{4}(X)$-minor both of $G_{B^{1}_{1}}$ and $G_{A^{1}_{1}}$  do not have $W_{4}(X_{1})$-minors and thus by minimality both do not have $K_{2,4}(X_{1})$-minors. Observe that we can apply the same argument as case four to $G_{B^{1}_{1}}$ to obtain that $G_{B^{1}_{1}}$ does not have a $K_{2,2}(X_{1})$-minor. Therefore $G$ does not have a $K_{2,4}(X)$-minor, completing the claim. 
\end{proof}

\begin{figure}
\begin{center}
\includegraphics[scale =0.5]{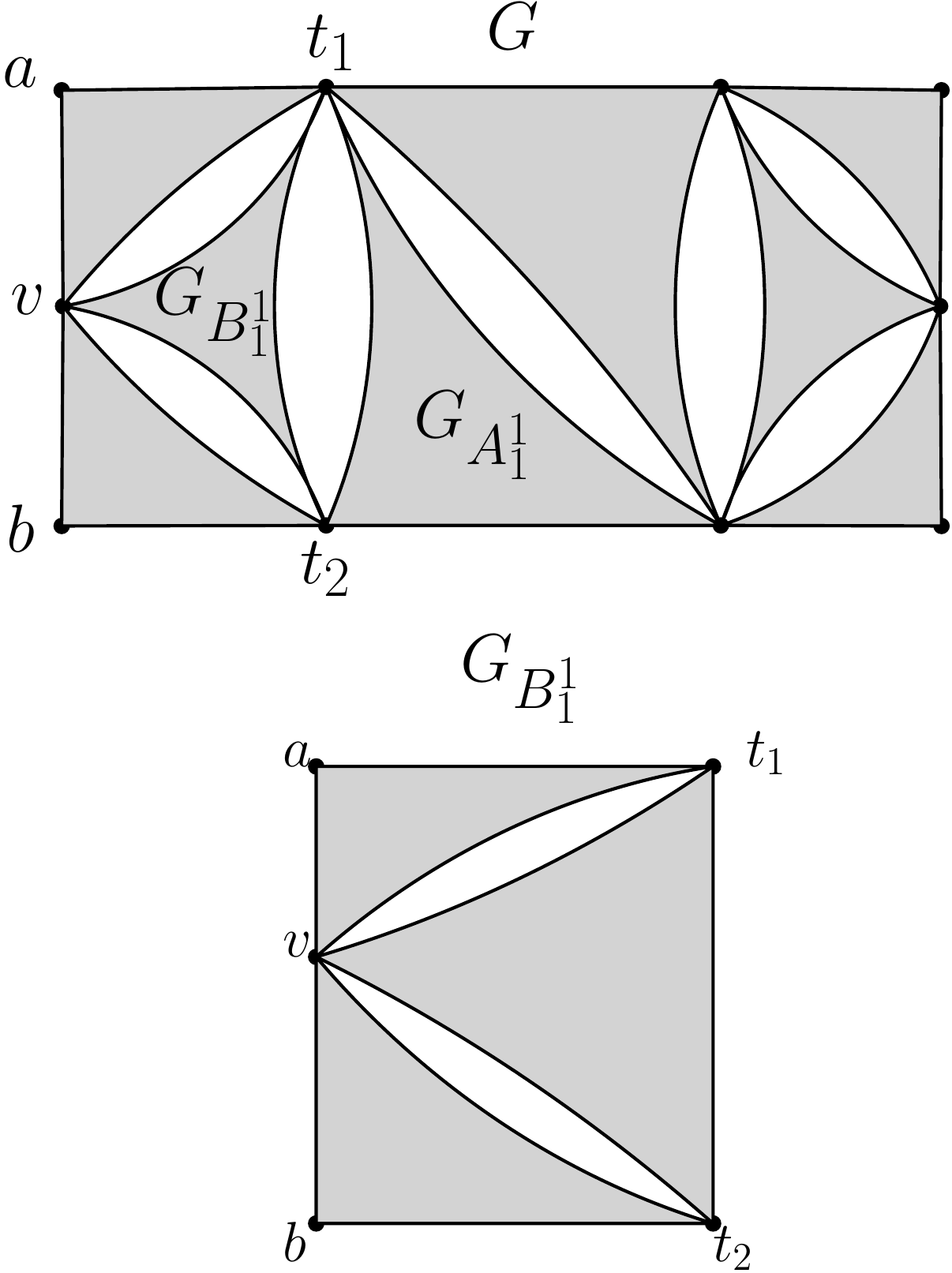}
\caption{The graph $G_{B^{1}_{1}}$ in case four of Theorem \ref{k4w4k24characterization}. The shaded sections are spanning subgraphs of webs.}
\label{ThegraphGA}
\end{center}
\end{figure}

As a recap of what we have so far.

\begin{corollary}
Let $G$ be a $2$-connected graph and $X = \{a,b,c,d\} \subseteq V(G)$. The graph $G$ has no $K_{4}(X)$, $K_{2,4}(X)$ or $W_{4}(X)$-minor if and only if $G$ belongs to class $\mathcal{A}$ (see Theorem \ref{k4free}) or $G$ is the spanning subgraph of a class $\mathcal{D}$, $\mathcal{E}$ and $\mathcal{F}$ graph and the corresponding web does not have a $W_{4}(X)$-minor (see Theorem \ref{w4cuts}). 
\end{corollary}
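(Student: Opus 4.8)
The plan is to assemble this corollary purely from results already established, with no new combinatorial argument. For the forward direction I would start with a $2$-connected $G$ having none of $K_4(X)$, $W_4(X)$, $K_{2,4}(X)$ as a rooted minor. Since $G$ has no $K_4(X)$-minor, Theorem \ref{k4free} realizes $G$ as a spanning subgraph of a member of one of the six classes $\mathcal{A}$--$\mathcal{F}$. The first step is to eliminate classes $\mathcal{B}$ and $\mathcal{C}$: by the corollary stating that every $2$-connected spanning subgraph of a class $\mathcal{B}$ or $\mathcal{C}$ graph has a $K_{2,4}(X)$-minor, the witnessing class member cannot lie in $\mathcal{B}$ or $\mathcal{C}$, so $G$ is a spanning subgraph of a member of $\mathcal{A}$, $\mathcal{D}$, $\mathcal{E}$, or $\mathcal{F}$. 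If that member is a class $\mathcal{D}$ web or lies in $\mathcal{E}$ or $\mathcal{F}$, I would invoke the observation following Theorem \ref{w4cuts} --- a $2$-connected spanning subgraph of a class $\mathcal{E}$ or $\mathcal{F}$ graph has a $W_4(X)$-minor iff the corresponding class $\mathcal{D}$ web does (trivially so when $G$ is itself a web) --- to conclude that, since $G$ has no $W_4(X)$-minor, neither does the corresponding web. This is exactly the asserted structure.

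For the \textbf{backward direction} I would split on the two cases of the dichotomy. If $G$ belongs to class $\mathcal{A}$: it is $K_4(X)$-minor free (part of the characterization in Theorem \ref{k4free}); it has no $W_4(X)$-minor by the lemma handling $2$-connected spanning subgraphs of classes $\mathcal{A},\mathcal{B},\mathcal{C}$; and it has no $K_{2,4}(X)$-minor by the corollary deduced from Lemma \ref{smallgraphk22} through Lemma \ref{splitvertices}. If instead $G$ is a $2$-connected spanning subgraph of a class $\mathcal{D}$, $\mathcal{E}$, or $\mathcal{F}$ graph whose corresponding web has no $W_4(X)$-minor, then $G$ is $K_4(X)$-minor free as a spanning subgraph of a $K_4(X)$-free graph; $G$ has no $W_4(X)$-minor by the same $\mathcal{E}/\mathcal{F}$-to-web observation; and $G$ has no $K_{2,4}(X)$-minor by Theorem \ref{k4w4k24characterization} directly in the $\mathcal{D}$ case, and in the $\mathcal{E}/\mathcal{F}$ case by combining Theorem \ref{k4w4k24characterization} with the reduction lemma that says such a graph has a $K_{2,4}(X)$-minor iff its corresponding web does.

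The only point requiring care --- which I would regard as the \emph{main obstacle}, though it is a bookkeeping matter rather than a genuine difficulty --- is the interaction between the ``spanning subgraph'' phrasing of Theorem \ref{k4free}, the $2$-connectedness hypothesis, and the overlap among the classes: a fixed $2$-connected $G$ may simultaneously be a spanning subgraph of members of several classes, so the argument must only ever claim that \emph{some} witness lies in the allowed list $\{\mathcal{A},\mathcal{D},\mathcal{E},\mathcal{F}\}$, and must verify that the class-$\mathcal{B}/\mathcal{C}$ exclusion and the $\mathcal{E}/\mathcal{F}$-to-web reductions are all stated for $2$-connected graphs (they are). I would also point out explicitly that the $K_{2,4}(X)$-freeness hypothesis is used in the forward direction only to discard classes $\mathcal{B}$ and $\mathcal{C}$: by Theorem \ref{k4w4k24characterization} it is automatically implied by the absence of a $W_4(X)$-minor once $G$ is a spanning subgraph of a web or of an $\mathcal{E}/\mathcal{F}$ graph, which is why the final condition only records $W_4(X)$-freeness of the associated web.
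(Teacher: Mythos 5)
Your proof is correct and assembles exactly the ingredients the paper intends: the paper states this corollary as a recap with no proof body, and the intended argument is precisely the one you give, combining Theorem~\ref{k4free}, the $W_4(X)$-lemma and $W_4(X)$ web-reduction remark for classes $\mathcal{A}$--$\mathcal{C}$ and $\mathcal{E}/\mathcal{F}$, the $K_{2,4}(X)$-corollary for classes $\mathcal{A}/\mathcal{B}/\mathcal{C}$, the $\mathcal{E}/\mathcal{F}$-to-web $K_{2,4}(X)$ reduction, and Theorem~\ref{k4w4k24characterization}. Your closing remark --- that the $K_{2,4}(X)$-freeness in the hypothesis is used only to rule out classes $\mathcal{B}$ and $\mathcal{C}$, and is otherwise a consequence of $W_4(X)$-freeness of the web --- is a correct and worthwhile clarification of why the stated condition only records the web's $W_4(X)$-freeness.
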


\section{A characterization of graphs without a $K_{4}(X)$, $W_{4}(X)$, $K_{2,4}(X)$ or an $L(X)$-minor}
We define the graph $L$ to have vertex set $V(L) = \{v_{1},\ldots,v_{8}\}$ and $E(L) = \{v_{1}v_{2},v_{1}v_{5},v_{2}v_{7}, \\ v_{2}v_{8},v_{2}v_{3},v_{3}v_{4},v_{4}v_{5}, v_{4}v_{7},v_{5}v_{6},v_{6}v_{7},v_{6}v_{8},v_{7}v_{8}\}$ (see Figure \ref{L(X)labelling}). Let $G$ be a graph and $X = \{a,b,c,d\} \subseteq V(G)$. Let $\mathcal{F}$ be the family of maps from $X$ to $V(L)$ where each vertex of $X$ goes to a distinct vertex in $\{v_{1},v_{3},v_{4},v_{5}\}$. For the purposes of this thesis, an $L(X)$-minor refers to the $X$ and family of maps defined above. It is easy to see that the graph $L$ is $2$-connected, so the cut vertex section applies. Therefore we may assume that all graphs are at least $2$-connected. 

We let $L'$ denote the graph induced by $\{v_{2},v_{8},v_{7},v_{6},v_{5},v_{4}\}$ in $L$. Let $G$ be a graph and $X = \{a,b,c\} \subseteq V(G)$. Let $\mathcal{F}$ be the family of surjective maps from $X$ to $\{v_{2},v_{4},v_{5}\}$. An $L'(X)$-minor will refer to the $\mathcal{F}$ and $X$ above. It is easy to see that $L'$ is $2$-connected and thus we may assume all graphs are $2$-connected. 

\begin{figure}
\begin{center}
\includegraphics[scale=0.5]{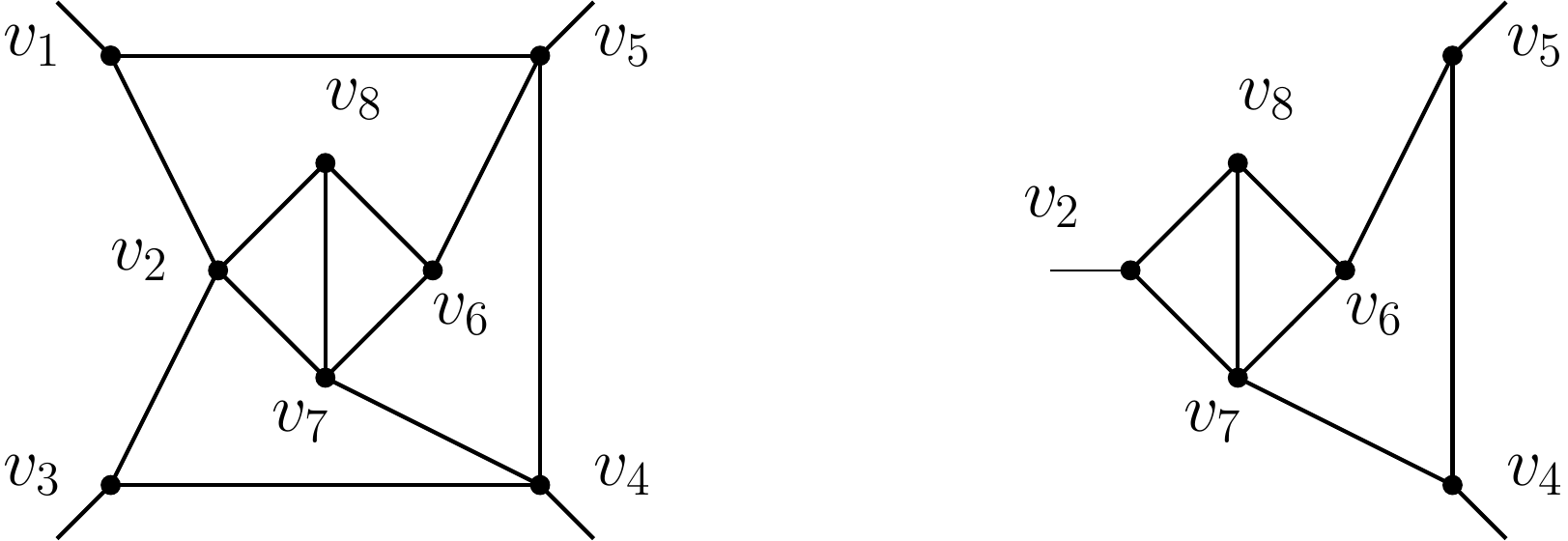}
\caption{The graph $L$ and the graph $L'$. Vertices with lines with only one endpoint indicate the vertices where the roots are being mapped to.}
\label{L(X)labelling}
\end{center}
\end{figure}

\begin{lemma}
\label{L'(X')minors}
Let $G$ be a $2$-connected graph and let $X = \{a,b,c\}$. Then $G$ has an $L'(X)$-minor if and only if there are three distinct cycles $C_{1}$, $C_{2}$, $C_{3}$ and three distinct paths $P_{1},P_{2},P_{3}$ satisfying the following properties: 
\begin{enumerate}
\item{$|V(C_{1}) \cap V(C_{2})| \geq 2$, $|V(C_{2}) \cap V(C_{3})| \geq  2$,  $|C_{3}| \geq 4$, there is at least one edge in $E(C_{2})$ which is not contained in either of $E(C_{1})$ and $E(C_{2})$, and there exists a vertex $v_{1} \in (V(C_{2}) \cap V(C_{3})) \setminus V(C_{1})$ and a vertex $v_{2} \in (V(C_{1}) \cap V(C_{2})) \setminus V(C_{3})$.}

\item{The vertices $a,b$ and $c$ are endpoints of $P_{1},P_{2}$ and $P_{3}$ respectively. Additionally, the other endpoint of $P_{1}$ is in $V(C_{1})$, and the other endpoint of point $P_{2}$ and $P_{3}$ is in  $V(C_{3})$. Furthermore, $P_{1} \cap C_{j} = \emptyset$ for all $j \in \{2,3\}$, and $P_{i} \cap C_{j} = \emptyset$ for all $i \in \{2,3\}, j \in \{1,2\}$.}
\end{enumerate}
\end{lemma}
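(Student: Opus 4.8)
The plan is to prove both directions by unwinding what an $L'(X)$-model looks like in $L'$ itself and then transferring that structure back to $G$. First I would set up notation: recall $V(L') = \{v_2,v_4,v_5,v_6,v_7,v_8\}$ with the edges inherited from $L$, namely $v_2v_7, v_2v_8, v_4v_5, v_4v_7, v_5v_6, v_6v_7, v_6v_8$. The roots of an $L'(X)$-minor are mapped surjectively onto $\{v_2,v_4,v_5\}$. I would look directly at the cycle structure of $L'$: the vertices $\{v_2,v_7,v_4,v_5,v_6,v_8\}$ support the cycle $v_2 v_7 v_4 v_5 v_6 v_8 v_2$ (a six-cycle), the triangle-like cycle $v_6 v_7 v_8 v_6$, and the cycle $v_2 v_7 v_6 v_8 v_2$, among others. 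The key is to identify which cycles of $L'$ will play the roles of $C_1, C_2, C_3$: $C_3$ should be the ``long'' cycle containing $v_4$ and $v_5$ (so $|C_3| \ge 4$), $C_2$ the middle cycle sharing at least two vertices with $C_3$ and with $C_1$, and $C_1$ the cycle ``closest'' to the root $a$ which maps to $v_2$. Then the three paths $P_1, P_2, P_3$ are the branch-set-to-branch-set connections: $P_1$ links the branch set of $a$ into $C_1$, while $P_2, P_3$ link the branch sets of $b$ and $c$ into $C_3$, with the stated disjointness from the other cycles encoding the fact that in $L'$ the edges $v_4v_5$, $v_4v_7$, $v_5v_6$ are ``private'' to the part near $v_4, v_5$.

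For the forward direction, assuming $G$ has an $L'(X)$-minor, I would take a model $\{G_x : x \in V(L')\}$ with $a \in G_{v_2}$, $b \in G_{v_4}$, $c \in G_{v_5}$ (up to relabelling via the family $\mathcal{F}$). Inside the union of the branch sets $G_{v_6}, G_{v_7}, G_{v_8}$ together with the connecting edges I would extract a cycle $C_2$ realizing the cycle $v_6 v_7 v_8$ of $L'$; extending through $G_{v_4}$ and $G_{v_5}$ and the edges $v_4v_5, v_4v_7, v_5v_6$ gives a larger cycle $C_3$ with $|C_3| \ge 4$ and $|V(C_2) \cap V(C_3)| \ge 2$ (sharing the realized images of $v_6$ and $v_7$, say); extending the other way through $G_{v_2}$ and the edges $v_2v_7, v_2v_8$ gives $C_1$ with $|V(C_1) \cap V(C_2)| \ge 2$. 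The vertex $v_1$ of condition (1) is the realized image of $v_4$ or $v_5$ lying in $C_2 \cap C_3$ but not $C_1$, and $v_2$ of condition (1) is the realized image of $v_8$ lying in $C_1 \cap C_2$ but not $C_3$ (this uses that $v_8$ is not adjacent to $v_4$ or $v_5$ in $L'$). Finally $P_1$ is a path from $a$ through $G_{v_2}$ reaching $C_1$, and $P_2, P_3$ are paths from $b, c$ through $G_{v_4}, G_{v_5}$ reaching $C_3$; the required disjointness from other cycles follows because branch sets are pairwise disjoint and one can choose the cycles to use only branch sets that $P_i$ avoids. I would need to be a little careful to choose the cycles and paths coherently so that all the disjointness conditions in (2) hold simultaneously, rerouting within branch sets as needed.

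For the converse, given the three cycles and three paths with the stated properties, I would build an $L'(X)$-model of $G$ by contraction. The plan: contract $C_3$ down to a cycle on the images of $v_4, v_5, v_6, v_7$ (possible since $|C_3| \ge 4$ and it meets $C_2$ in at least two vertices, one of which is $v_1 \notin C_1$, giving a vertex to call the image of $v_4$ or $v_5$); contract $C_2$, using the private edge of $C_2$ and the shared vertices, down to the triangle $v_6 v_7 v_8$, with the vertex $v_2$ of condition (1) becoming the image of $v_8$; contract $C_1$ down to an edge or triangle hitting the image of $v_8$ and $v_7$, yielding the edges $v_2 v_7$ and $v_2 v_8$; and contract each path $P_i$ to provide the branch set of $a$ (resp. $b$, $c$), attached to $C_1$ (resp. $C_3$) at its non-root endpoint, with the disjointness conditions guaranteeing the branch sets do not collide. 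Then reading off the resulting minor gives exactly $L'$ with $a,b,c$ in the branch sets of $v_2, v_4, v_5$.

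The main obstacle I expect is bookkeeping in the forward direction: ensuring that the three cycles can be chosen so that the edge-privacy condition on $C_2$ and all six disjointness statements $P_1 \cap C_j = \emptyset$ and $P_i \cap C_j = \emptyset$ hold at once, rather than one at a time. This is essentially a matter of being disciplined about which branch sets each object is allowed to touch — $P_1$ lives in $G_{v_2}$, $P_2$ in $G_{v_4}$, $P_3$ in $G_{v_5}$, $C_1$ in $G_{v_2} \cup G_{v_7} \cup G_{v_8}$, $C_2$ in $G_{v_6} \cup G_{v_7} \cup G_{v_8}$, $C_3$ in $G_{v_4} \cup G_{v_5} \cup G_{v_6} \cup G_{v_7}$ — but making the choices consistent and verifying the two ``witness vertex'' requirements in condition (1) (that $v_1 \in (V(C_2)\cap V(C_3))\setminus V(C_1)$ and $v_2 \in (V(C_1)\cap V(C_2))\setminus V(C_3)$ exist) requires noting the specific non-adjacencies in $L'$, namely that $v_8$ is adjacent only to $v_2, v_6, v_7$ and that $v_4, v_5$ are not adjacent to $v_8$. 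Once that is organized, both directions are routine.
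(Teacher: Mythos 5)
Your approach matches the paper's almost exactly: realize the triangle $v_2v_7v_8$ of $L'$ as $C_1$ inside $G_{v_2} \cup G_{v_7} \cup G_{v_8}$, the triangle $v_6v_7v_8$ as $C_2$ inside $G_{v_6} \cup G_{v_7} \cup G_{v_8}$, and the four-cycle $v_4v_7v_6v_5$ as $C_3$ inside $G_{v_4} \cup G_{v_5} \cup G_{v_6} \cup G_{v_7}$, take $P_1, P_2, P_3$ to be paths from the roots inside $G_{v_2}, G_{v_4}, G_{v_5}$, and for the converse contract each object down to the matching piece of $L'$.

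There is, however, a concrete slip that recurs in both directions and would make the written-out proof fail: you identify the witness $v_1 \in (V(C_2) \cap V(C_3)) \setminus V(C_1)$ with ``the realized image of $v_4$ or $v_5$.'' But $C_2$ is realized entirely inside $G_{v_6} \cup G_{v_7} \cup G_{v_8}$, so no vertex of $G_{v_4}$ or $G_{v_5}$ can lie on $C_2$, let alone on $C_2 \cap C_3$. The correct witness comes from $G_{v_6}$: in $L'$, $v_6$ is the unique vertex lying on both the triangle $v_6v_7v_8$ and the four-cycle $v_4v_7v_6v_5$ but not on the triangle $v_2v_7v_8$, and since $G_{v_6}$ is disjoint from every branch set $C_1$ uses, any vertex of $G_{v_6}$ on $C_2 \cap C_3$ automatically avoids $C_1$. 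You applied exactly this branch-set-disjointness reasoning correctly for the other witness ($v_2$ coming from $G_{v_8}$, which $C_3$ cannot touch), so you only need to apply it symmetrically for $v_1$. The same correction is needed in your converse, where $v_1$ should become the image of $v_6$ rather than of $v_4$ or $v_5$ when contracting $C_3$. With that fix your argument coincides with the paper's.
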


\begin{proof}

Let $\{G_{x} | x \in V(L')\}$ be a model of an $L'(X)$-minor in $G$. Let $x_{1}$ be a vertex in $G_{v_{2}}$ which is adjacent to a vertex $x_{2} \in G_{v_{7}}$. Let $x_{3}$ be a vertex in $G_{v_{7}}$ which is adjacent to a vertex $x_{4}$ in $G_{v_{8}}$. Let $x_{5}$ be a vertex in $G_{v_{8}}$ which is adjacent to a vertex $x_{6}$ in $G_{v_{2}}$. Then since $G_{v_{2}}$ is connected, there is a $(x_{1},x_{6})$-path, $P_{x_{1},x_{6}}$, contained in $G_{v_{2}}$. Similarly, there is a $(x_{2},x_{3})$-path, $P_{x_{2},x_{3}}$, contained in $G_{v_{7}}$ and a $(x_{4},x_{5})$-path, $P_{x_{4},x_{5}}$, contained in $G_{v_{8}}$. Then $C_{1} = P_{x_{1},x_{6}} \cup P_{x_{2},x_{3}} \cup P_{x_{4},x_{5}} \cup \{x_{1}x_{2}, x_{3}x_{4},x_{5}x_{6}\}$ is a cycle. 

Now there is a vertex $v_{1} \in G_{v_{7}}$ which is adjacent to a vertex $v_{2} \in G_{v_{6}}$. Additionally there is a vertex $v_{3} \in G_{v_{6}}$ which is adjacent to a vertex, $v_{4} \in G_{v_{8}}$. As $G_{v_{6}}$ is connected, there is a $(v_{2},v_{3})$-path, $P_{v_{2},v_{3}}$, contained in $G_{v_{6}}$. As $v_{4}$ and $x_{4}$ are in $G_{v_{8}}$, there is a $(v_{4},x_{4})$-path, $P_{v_{4},x_{4}}$, contained in $G_{v_{8}}$. Similarly, as $v_{1}$ and $x_{3}$ are in $G_{v_{7}}$, there is a $(v_{1},x_{3})$-path, $P_{v_{1},x_{3}}$, contained in $G_{v_{7}}$. Then $C_{2} = P_{v_{2},v_{3}} \cup P_{v_{1},x_{3}} \cup P_{v_{4},x_{4}} \cup \{v_{3}v_{4}, x_{3}x_{4},v_{1}v_{2}\}$ is a cycle.

By definition of an $L(X)$-model, there is a vertex $y_{1} \in G_{v_{5}}$ which is adjacent to a vertex $y_{2} \in G_{v_{4}}$. There is a vertex $y_{3} \in G_{v_{4}}$ which is adjacent to a vertex $y_{4} \in G_{v_{7}}$. There is a vertex $y_{5}$ in $G_{v_{5}}$ which is adjacent to a vertex $y_{6}$ in $G_{v_{6}}$. As $G_{v_{7}}$ is connected, this is a $(v_{1},y_{4})$-path, $P_{v_{1},y_{4}}$, contained in $G_{v_{7}}$. As $G_{v_{6}}$ is connected, there is a $(v_{2},y_{6})$-path, $P_{v_{2},y_{6}}$, which is contained in $G_{v_{6}}$. As $G_{v_{5}}$ is connected, there is a $(y_{1},y_{5})$-path, $P_{y_{1},y_{5}}$ contained in $G_{v_{5}}$. As $G_{v_{4}}$ is connected there is a $(y_{3},y_{2})$-path, $P_{y_{3},y_{2}}$ which is contained inside $G_{v_{4}}$. Then let $C_{3} = P_{y_{3},y_{2}} \cup P_{v_{1},y_{4}} \cup P_{y_{3},y_{2}} \cup P_{y_{1},y_{5}} \cup \{y_{3}y_{4}, v_{1}v_{2},y_{6}y_{5},y_{1}y_{2}\}$ is a cycle. Notice that $|C_{3}| \geq 4$ and $|C_{3} \cap C_{2}| \geq 2$. Now without loss of generality let $a \in G_{v_{2}}$. There is a $(a, x_{1})$-path $P_{a,x_{1}}$ contained in $G_{v_{2}}$ as $G_{v_{2}}$ is connected. Without loss of generality let $b \in G_{v_{5}}$. There is a $(b,y_{1})$-path, $P_{b,y_{1}}$, contained in $G_{v_{5}}$ since $G_{v_{5}}$ is connected. Then $c \in G_{v_{4}}$ and there is a $(c,y_{2})$-path $P_{c,y_{2}}$ contained in $G_{v_{4}}$. Then notice it is easy to see that that $P_{1},P_{2},P_{3}, C_{1},C_{2},C_{3}$ satisfy the claim.

Conversely, if given $P_{1},P_{2},P_{3},C_{1},C_{2}$ and $C_{3}$ satisfying the lemma statement, we simply contract $P_{1}$, $P_{2}$,$P_{3}$ down to a single vertex, contract $C_{1}$ and $C_{2}$ to a diamond, and then $C_{3}$ to a $4$-cycle. 
\end{proof}

As with the other rooted minor sections, we start off with some lemmas about how the $L(X)$-minor behaves across $2$-separations.

\begin{lemma}
\label{allononesideL(X)}
Let $G$ be a graph and $(A,B)$ be a $2$-separation with vertex boundary $\{x,y\}$. If $X = \{a,b,c,d\} \subseteq A$, then $G$ has an $L(X)$-minor if and only if $G_{A} = G[A] \cup \{xy\}$ has an $L(X)$-minor. 
\end{lemma}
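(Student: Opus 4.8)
The plan is to mirror the proof of Lemma~\ref{allonesidegeneralH}, adjusting for the fact that $L$ is only $2$-connected rather than $3$-connected. As in the $2$-connected reductions section, I will assume $G$ is $2$-connected (legitimate, since the cut-vertex reductions apply to $L$) and that the separation $(A,B)$ is proper, with boundary $\{x,y\}$.

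First the easy direction: if $G_A$ has an $L(X)$-minor, then since $G$ is $2$-connected there is an $(x,y)$-path in $G$ whose internal vertices all lie in $B\setminus\{x,y\}$ (or $xy\in E(G)$ already); contracting this path exhibits $G_A$ as a minor of $G$, so $G$ inherits the $L(X)$-minor.

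For the converse, let $\{G_v : v\in V(L)\}$ be an $L(X)$-model of $G$, chosen so that the branch sets partition $V(G)$. The crucial claim is that no branch set lies inside $B\setminus\{x,y\}$. Suppose $G_{w}\subseteq B\setminus\{x,y\}$. Since $X\subseteq A$, the four root branch sets $G_{v_1},G_{v_3},G_{v_4},G_{v_5}$ each meet $A$ and $w\notin\{v_1,v_3,v_4,v_5\}$; moreover at most two branch sets contain $x$ or $y$, so at least two distinct root vertices $r_1,r_2\in\{v_1,v_3,v_4,v_5\}$ satisfy $G_{r_i}\subseteq A\setminus\{x,y\}$. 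Contracting each branch set to a vertex, the set $S$ of vertices of $L$ whose branch sets contain $x$ or $y$ separates $w$ from $\{r_1,r_2\}$ in $L$, and $S\neq\emptyset$ since the branch sets partition $V(G)$, while $|S|\le 2$. As $L$ is $2$-connected we must have $|S|=2$, and one checks (directly, or by observing that suppressing the degree-$2$ vertices $v_1$ and $v_3$ produces a $3$-connected graph on $\{v_2,v_4,v_5,v_6,v_7,v_8\}$) that the only $2$-cuts of $L$ are $\{v_2,v_5\}$, which isolates the root $v_1$, and $\{v_2,v_4\}$, which isolates the root $v_3$. In either case one side of the cut is a single root vertex and the other side contains exactly two of the four roots; since $w$ is not a root it lies on the large side, forcing $r_1$ and $r_2$ onto the small side, which contains only one root — a contradiction. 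Hence no branch set lies in $B\setminus\{x,y\}$.

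Given this, I finish as in Lemma~\ref{allonesidegeneralH}: every branch set meets $A$, at most two branch sets use vertices of $B\setminus\{x,y\}$, and each such branch set contains $x$ or $y$. I then claim $\{\,G_A[V(G_v)\cap A] : v\in V(L)\,\}$ is an $L(X)$-model of $G_A$. The point is that any component of $G_v\cap A$ that is not all of $G_v$ must contain $x$ or $y$, because edges leaving $A\setminus\{x,y\}$ can only reach $B$ through $x$ and $y$; thus $G_v\cap A$ has at most two components in $G[A]$, each meeting $\{x,y\}$, and the added edge $xy$ of $G_A$ reconnects them, so each restricted branch set is connected and nonempty. The same observation shows any adjacency between $G_u$ and $G_v$ that in $G$ was routed through $B$ is supplied in $G_A$ by the edge $xy$. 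Since $X\subseteq A$, the roots remain in their respective restricted branch sets, giving the desired $L(X)$-minor of $G_A$. The one genuinely new ingredient is the structural statement about the $2$-cuts of $L$ (equivalently, that $L$ becomes $3$-connected after suppressing $v_1$ and $v_3$); everything else is bookkeeping transported from the $3$-connected case.
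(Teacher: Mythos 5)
Your proof is correct, and it takes a genuinely different route from the paper's. The paper argues the converse direction by a case analysis on \emph{which} two branch sets contain vertices of $B$ (two roots; one root and one non-root; two non-roots), in each case invoking the degree of the relevant vertex of $L$ to locate $x$ and $y$ inside those two branch sets. You instead prove up front that no branch set can lie entirely inside $B\setminus\{x,y\}$, by observing that such a branch set would give a $2$-cut of $L$ separating a non-root from two roots, and by classifying the $2$-cuts of $L$ explicitly (only $\{v_2,v_5\}$ and $\{v_2,v_4\}$, each isolating a single degree-$2$ root). Once that is established, the bookkeeping — every branch set meets $A$, at most two touch $B\setminus\{x,y\}$ and those carry $x$ and $y$, restricted branch sets are reconnected by the added edge $xy$ — goes through uniformly, exactly as in the $3$-connected reduction of Lemma~\ref{allonesidegeneralH}. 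Your approach buys two things: it isolates the only genuinely $L$-specific ingredient into a single clean structural fact about $L$ (its $2$-cut set), and it handles in one stroke the possibility of three or more branch sets meeting $B$, which the paper's enumeration of ``two branch sets containing vertices of $B$'' does not explicitly cover even though that case must be ruled out for $\{G_A[V(G_v)\cap A]\}$ to be a well-defined model. The paper's approach is more elementary in that it never needs the full list of $2$-cuts, only that the non-root vertices $v_2,v_6,v_7,v_8$ all have degree at least $3$, but your version is the tighter argument.
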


\begin{proof}
If $G_{A}$ has an $L(X)$-minor then since $G_{A}$ is a minor of $G$ by contracting all of $G[B]$ onto $\{x,y\}$, we get that $G$ has an $L(X)$-minor.

Conversely, let $\{G_{x} | x \in V(L)\}$ be a model for an $L(X)$-minor. We claim that $\{G_{A}[V(G_{x}) \cap A] | x \in V(L)\}$ is a model of an $L(X)$-minor in $G_{A}$. If there is only one branch set containing vertices of $B$ then the result trivially holds. Therefore we may assume there are at least two distinct branch sets containing vertices of $B$. If the branch sets containing vertices of $B$ are two of $G_{v_{1}}, G_{v_{3}},G_{v_{4}},G_{v_{5}}$, then since $X \subseteq A$, all other branch sets are contained inside $G[A]$. Then since $xy \in E(G_{A})$, $\{G_{A}[V(G_{x}) \cap A] | x \in V(L)\}$ is a model of an $L(X)$-minor.

 Now suppose that exactly one of $G_{v_{1}},G_{v_{3}},G_{v_{4}},$ and $G_{v_{5}}$ contains a vertex from $B$. Suppose that $G_{v_{1}}$ is the branch set which contains the vertex. Then suppose that $G_{v_{2}}$ is the other branch set which contains a vertex from $B$. Since $\deg(v_{2}) >2$, we may assume that $x \in G_{v_{1}}$ and $y \in G_{v_{2}}$ and all other branch sets are contained in $G[A \setminus \{x,y\}]$. Then since $xy \in E(G_{A})$, $\{G_{A}[V(G_{x}) \cap A] \ | \ x \in V(L)\}$ is a model for an $L(X)$-minor. A similar analysis works for the other cases.

Therefore we can assume that two of $G_{v_{2}}$, $G_{v_{8}}$, $G_{v_{7}}$, and $G_{v_{6}}$ contain vertices from $B$. Suppose $G_{v_{2}}$ and $G_{v_{8}}$ are two branch sets containing vertices from $B$. Then since $\deg(v_{2}) >2$ and $\deg(v_{8}) >2$ in $L$, we may assume that $u \in G_{v_{2}}$ and $x \in G_{v_{8}}$ and that all other branch sets are contained in $A \setminus \{x,y\}$. But then since $xy \in E(G_{A})$, $\{G_{A}[V(G_{x}) \cap A] | x \in V(L)\}$ is a model for an $L(X)$-minor. The other cases follow by essentially the same argument. 
\end{proof}

\begin{lemma}
\label{terminalseparatingL(X)}
Let $G$ be a graph and $(A,B)$ a $2$-separation with vertex boundary $\{x,y\}$. Let $X = \{a,b,c,d\} \subseteq V(G)$. If $x,y \in X$ and one vertex of $X$ lies in $A \setminus \{x,y\}$, and the other lies in $B \setminus \{x,y\}$, then $G$ does not have an $L(X)$-minor. 
\end{lemma}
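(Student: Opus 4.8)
\textbf{Proof plan for Lemma \ref{terminalseparatingL(X)}.}

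The structure of this argument should mirror the analogous facts for $W_4(X)$-minors, in particular Lemma \ref{noW4minor}, since $L$ (like $W_4$) is $3$-connected and the obstruction is the same phenomenon: two roots separated by a $2$-cut that itself consists of roots. The plan is to argue by contradiction. Suppose $\{G_x : x \in V(L)\}$ is a model of an $L(X)$-minor in $G$, where (say) $a = x$, $b = y$ are the two vertices of the separator $A \cap B$, the root $c$ lies in $A \setminus \{x,y\}$, and the root $d$ lies in $B \setminus \{x,y\}$. Since the four roots must lie in four distinct branch sets, let $G_c$ and $G_d$ be the branch sets with $c \in V(G_c)$ and $d \in V(G_d)$; note $G_c \neq G_d$ and neither of them is $G_a$ or $G_b$. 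Because $G_c$ is a connected subgraph containing $c \in A \setminus \{x,y\}$ and does not meet $\{x,y\}$ (as those vertices belong to $G_a, G_b$ respectively and the branch sets are disjoint), we get $V(G_c) \subseteq A \setminus \{x,y\}$. Symmetrically $V(G_d) \subseteq B \setminus \{x,y\}$.

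Now I would pass to the minor: contract every branch set to a single vertex, obtaining a graph $H$ that contains $L$ as a subgraph, with the vertex $\bar c$ (image of $G_c$) playing the role of the chosen rim/root vertex and $\bar d$ playing the role of another, and with $\bar a, \bar b$ being the images of $G_a, G_b$. Since $V(G_c) \subseteq A \setminus \{x,y\}$ and $V(G_d) \subseteq B \setminus \{x,y\}$, and $\{x,y\} \subseteq V(G_a) \cup V(G_b)$, the vertex set $\{\bar a, \bar b\}$ is a vertex cut of $H$ separating $\bar c$ from $\bar d$: any $(\bar c, \bar d)$-path in $H$ corresponds to a walk in $G$ from $V(G_c) \subseteq A$ to $V(G_d) \subseteq B$, which must pass through $A \cap B = \{x,y\}$, hence through $G_a$ or $G_b$. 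Therefore in $H$ there are at most two internally disjoint $(\bar c, \bar d)$-paths. But $L$ is $3$-connected (as noted in the text preceding the lemma), and $\bar c, \bar d$ are distinct non-adjacent or adjacent vertices of the copy of $L$ inside $H$; in either case Menger's Theorem applied within $L \subseteq H$ would give three internally disjoint $(\bar c, \bar d)$-paths lying inside $L$, contradicting the bound of two. (If $\bar c \bar d$ is an edge of $L$, delete it first and use that $L$ minus an edge is still $2$-connected, or simply observe $L$ being $3$-connected yields three paths even between adjacent vertices by a standard application of Menger after suppressing the edge.)

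The only subtlety — and the step I would be most careful about — is making sure the reduction to "a cut of size two in $H$ separating $\bar c$ from $\bar d$" is airtight given that extra edges and vertices of $G$ outside $\bigcup V(G_x)$ may have been contracted arbitrarily to form $H$. The clean way around this is not to form $H$ at all, but to argue directly in $L$: the $L(X)$-model gives, for every edge $uv \in E(L)$, a $G$-edge between $V(G_u)$ and $V(G_v)$, and each $V(G_u)$ is connected; so a path in $L$ from the vertex hosting $c$ to the vertex hosting $d$ lifts to a connected subgraph of $G$ joining $V(G_c)$ to $V(G_d)$, which therefore meets $\{x,y\}$ and hence meets $V(G_a) \cup V(G_b)$. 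Since $L$ has three internally disjoint paths between the two vertices hosting $c$ and $d$ (by $3$-connectivity of $L$), their lifts give three connected subgraphs of $G$ joining $V(G_c)$ to $V(G_d)$ that are pairwise disjoint except along $V(G_c) \cup V(G_d)$; but only two of them can contain a vertex of $\{x,y\}$, so at least one lift avoids $\{x,y\}$ entirely, giving a $(V(G_c), V(G_d))$-connection inside $G[A \setminus \{x,y\}] \cup G[B \setminus \{x,y\}]$ — impossible since $\{x,y\}$ separates $A$ from $B$. This contradiction shows no $L(X)$-model exists, and since the construction is symmetric in the placement of the single roots in $A$ and $B$, the lemma follows in all cases. (This also handles automatically any relabeling of which two of $a,b,c,d$ lie in the separator, since the family $\mathcal{F}$ of maps treats the four roots symmetrically among $\{v_1,v_3,v_4,v_5\}$.)
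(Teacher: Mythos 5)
There is a genuine gap, and it is fatal to your argument: the graph $L$ is \emph{not} $3$-connected. Check the edge set $E(L) = \{v_{1}v_{2},v_{1}v_{5},v_{2}v_{7}, v_{2}v_{8},v_{2}v_{3},v_{3}v_{4},v_{4}v_{5}, v_{4}v_{7},v_{5}v_{6},v_{6}v_{7},v_{6}v_{8},v_{7}v_{8}\}$ --- the vertices $v_{1}$ and $v_{3}$ each have degree two, so $\{v_{2},v_{5}\}$ and $\{v_{2},v_{4}\}$ are $2$-vertex-cuts and the minimum degree already blocks $3$-connectivity. The paper itself says only that $L$ is $2$-connected, and the $2$-connected reduction section that produces Lemma \ref{noW4minor} explicitly hypothesizes that $H$ is $3$-connected; that is precisely why the paper cannot reuse Lemma \ref{noW4minor} and instead proves a bespoke lemma for $L$.

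Your entire argument pivots on applying Menger's theorem inside $L$ to obtain three internally disjoint paths between the two terminals of $L$ that host the ``inside'' roots $c$ and $d$. But those two terminals are some pair from $\{v_{1},v_{3},v_{4},v_{5}\}$, and whenever either of them is $v_{1}$ or $v_{3}$ there are at most two internally disjoint paths between them in $L$ --- so ``at least one lift avoids $\{x,y\}$'' does not follow. In fact the only pair of terminals with three internally disjoint paths in $L$ is $\{v_{4},v_{5}\}$, and the family $\mathcal{F}$ allows any bijection of $X$ onto $\{v_{1},v_{3},v_{4},v_{5}\}$, so most placements of the roots defeat your argument. The paper's actual proof replaces the Menger count by a case analysis on which pair of $\{v_{1},v_{3},v_{4},v_{5}\}$ the separator vertices $\{x,y\}$ represent, and then derives a contradiction from the specific adjacency structure of $L$ (e.g.\ if the two confined roots land on $v_{4}$ and $v_{5}$ one immediately contradicts $v_{4}v_{5} \in E(L)$; if they land on $v_{3}$ and $v_{5}$ one must chase the branch sets of the diamond $\{v_{2},v_{6},v_{7},v_{8}\}$ to a side of the cut and contradict an adjacency there). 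That structural tracking cannot be avoided given that $L$ has degree-$2$ terminals; your Menger shortcut only works for $3$-connected $H$.

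Your preliminary observations that $V(G_c) \subseteq A \setminus \{x,y\}$, $V(G_d) \subseteq B \setminus \{x,y\}$, and $x,y$ lie in the two remaining root branch sets are all correct and match the paper's setup --- it is only the conclusion drawn from Menger that is unjustified.
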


\begin{proof}
Suppose towards a contradiction that $\{G_{x} | x \in V(L)\}$ was a model of an $L(X)$-minor in $G$. If $x = v_{1}$ and $y = v_{3}$ then without loss of generality we have that $G_{v_{4}} \subseteq G[A \setminus \{x,y\}]$ and $G_{v_{5}} \subseteq G[B \setminus \{x,y\}]$. But this contradicts that there is a vertex in $G_{v_{4}}$ which is adjacent to a vertex in $G_{v_{5}}$. The same argument works if $x= v_{1}$, $y =v_{5}$ or if $x = v_{3}$ or $y = v_{4}$.  

Now assume that $x = v_{1}$ and $y = v_{4}$. Then without loss of generality we may assume that $G_{v_{3}} \subseteq G[A \setminus \{x,y\}]$ and $G_{v_{5}} \subseteq G[B \setminus \{x,y\}]$. Then $G_{v_{2}} \subseteq G[A \setminus \{x,y\}]$ as $G_{v_{2}}$ has a vertex which is adjacent to a vertex in $G_{v_{3}}$. By similar reasoning, $G_{v_{8}}, G_{v_{7}}$ and $G_{v_{6}}$ are all contained in $G[A \setminus \{x,y\}]$. But then there is no vertex in $G_{v_{6}}$ which is adjacent to a vertex in $G_{v_{5}}$, a contradiction. The case where $x = v_{3}$ and $y = v_{5}$ follows similarly. 

Lastly, assume that $x= v_{4}$ and $y = v_{5}$. Without loss of generality, we may assume that $G_{v_{3}}$ is contained in $G[B \setminus \{x,y\}]$ and $G_{v_{1}}$ is contained in $G[A \setminus \{x,y\}]$. Then $G_{v_{2}}$ is contained in either $G[A \setminus \{x,y\}]$ or $G[B \setminus \{x,y\}]$. Suppose that $G_{v_{2}}$ is contained in $G[A \setminus \{x,y\}]$. But then there is no vertex in $G_{v_{3}}$ which is adjacent to a vertex in $G_{v_{2}}$. Then $G_{v_{2}}$ is contained in $G[B \setminus \{x,y\}]$. But then there is no vertex in $G_{v_{1}}$ which is adjacent to a vertex in $G_{v_{2}}$, a contradiction. 
\end{proof}

\begin{lemma}
\label{subdivisionlabellingL(X)}
Let $G$ be a graph and $(A,B)$ be a $2$-separation with vertex boundary $\{u,v\}$. Let $X = \{a,b,c,d\} \subseteq V(G)$.  Suppose that there is exactly one vertex, $z$, such that $z \in X \cap (A \setminus \{u,v\})$, and exactly two vertices from $X$ in $B \setminus \{x,y\}$ and $u \in X$. Let $X_{1} = X \setminus \{z\} \cup \{v\}$. For each $\pi \in \mathcal{F}$, define $\pi': X_{1} \to V(L)$ such that $\pi' = \pi$ on $X \setminus \{z\}$ and $\pi'(v) = \pi(z)$. If there is a model of an $L(X)$-minor, $\{G_{x} | x \in L(X)\}$, then either $\{G_{B}[V(G_{x}) \cap B)]| x \in V(L)\}$ is a model of an $L(X_{1})$-minor in $G_{B}$, or the vertex from $X$ in $ A \setminus \{u,v\}$ is not in branch sets $G_{v_{4}}$ or $G_{v_{5}}$. 
\end{lemma}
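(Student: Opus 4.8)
The plan is to analyze a model $\{G_{x} \mid x \in V(L)\}$ of an $L(X)$-minor in $G$ according to which branch sets meet $B \setminus \{u,v\}$, using the fact that $\{u,v\}$ is a $2$-vertex-cut and that $L$ is $2$-connected with two vertices ($v_{2}$ and the others incident to it) of degree greater than $2$. Let $z \in X \cap (A \setminus \{u,v\})$ be the unique root of $X$ inside $A \setminus \{u,v\}$, and suppose $\pi$ is the map realizing the model, so $z \in G_{\pi(z)}$. The goal is: assuming $z \in G_{v_{4}}$ or $z \in G_{v_{5}}$, we must produce an $L(X_{1})$-model living inside $G_{B}$, where $X_{1} = (X \setminus \{z\}) \cup \{v\}$ and $\pi'$ agrees with $\pi$ off $z$ and sends $v$ to $\pi(z)$.

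First I would observe that since $z \in A \setminus \{u,v\}$ and the two other roots mapped near $z$ lie in $B \setminus \{u,v\}$, the branch set $G_{\pi(z)}$ (which is $G_{v_{4}}$ or $G_{v_{5}}$) must contain at least one of $u,v$; otherwise $G_{\pi(z)} \subseteq G[A \setminus \{u,v\}]$ and, tracing a path in $L$ from $\pi(z)$ to a vertex whose branch set is forced into $B \setminus \{u,v\}$ (here I would use the $2$-connectivity of $L$ to get two such internally disjoint paths, only one of which can use the branch set containing $u$ and the branch set containing $v$), we would get a contradiction. Next I would argue that in fact $G_{\pi(z)}$ contains exactly one of $u,v$ and that no branch set other than possibly one is wholly contained in $A \setminus \{u,v\}$: if two distinct branch sets were contained in $A \setminus \{u,v\}$, then since $\{u,v\}$ separates them from the two roots in $B$, we would again violate the $2$-connectivity of $L$ by the standard argument (three internally disjoint paths in $L$ between the corresponding pair of vertices, each forced through $u$'s branch set or $v$'s branch set). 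The delicate point is to pin down \emph{which} of $u$, $v$ lies in $G_{\pi(z)}$; since $u \in X$, the branch set $G_{\pi(u)}$ contains $u$, so $G_{\pi(z)}$ cannot contain $u$ (branch sets are disjoint), hence $v \in G_{\pi(z)}$, which is exactly why we relabel with $v$ taking the role of $z$.

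Once we know $v \in G_{\pi(z)}$ and at most one branch set (namely $G_{\pi(z)}$, together with at most the branch set of $u$) uses vertices of $A$, I would conclude that all other branch sets are contained in $G[B]$, and I would take $G'_{x} = G_{B}[V(G_{x}) \cap B]$. Each $G'_{x}$ is connected (for $x \ne \pi(z)$ this is immediate since $G_{x} \subseteq B$; for $x = \pi(z)$ we use $v \in G_{x}$ and that $G_{x} \cap B$ is connected because any path in $G_{x}$ leaving $B$ through $A$ must pass through $u$ or $v$, and $u \notin G_{x}$, so it returns through $v$, allowing rerouting within $B \cup \{uv\}$), and the required adjacencies are preserved because the edge $uv \in E(G_{B})$ substitutes for any adjacency between the $u$-branch set and the $v$-branch set. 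This yields an $L(X_{1})$-model with $\pi'$ as described, since $v \in G'_{\pi(z)} = G'_{\pi'(v)}$ and all other roots of $X_{1}$ are unchanged. The main obstacle will be the case analysis establishing that $G_{\pi(z)}$ contains $v$ and not $u$ and that no \emph{two} branch sets can be confined to $A \setminus \{u,v\}$; this is where the specific structure of $L$ (the degrees of $v_{2}, v_{4}, v_{5}, v_{6}, v_{7}, v_{8}$ and the positions of the roots $v_{1}, v_{3}, v_{4}, v_{5}$) enters, and it should parallel the arguments in Lemma \ref{terminalseparatingL(X)} and Lemma \ref{allononesideL(X)} rather than requiring anything new.
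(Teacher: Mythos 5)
Your outline has the right shape but the key connectivity arguments do not close. The graph $L$ is only $2$-connected (the vertices $v_{1}$ and $v_{3}$ have degree $2$), so the ``standard argument'' you invoke in two places fails. When you argue that $G_{\pi(z)}$ cannot lie entirely in $A \setminus \{u,v\}$ by taking two internally disjoint paths in $L$ from $\pi(z)$ to a vertex whose branch set sits in $B$, there is no contradiction: there are two crossing branch sets available (the one containing $u$ and the one containing $v$), and two internally disjoint paths can each route through one of them, so $2$-connectivity alone forces nothing. And when you rule out two branch sets being confined to $A \setminus \{u,v\}$ by appealing to ``three internally disjoint paths\ldots each forced through $u$'s branch set or $v$'s branch set,'' you are implicitly using $3$-connectivity of $L$, which $L$ does not have. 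The generic $2$-separation lemmas in the connectivity-reductions section are stated for $3$-connected $H$ precisely for this reason, and they do not transfer to $L$.

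The paper's proof avoids both steps: it never asserts up front that $G_{\pi(z)}$ must meet $\{u,v\}$. Instead it cases directly on whether $v \in G_{v_{4}}$, and in each branch argues from the concrete adjacency structure of $L$ --- the diamond on $\{v_{2},v_{6},v_{7},v_{8}\}$, the fact that each of $v_{1},v_{3},v_{4},v_{5}$ is attached to that diamond, and that $u$ being a root pins $u$ into one of $G_{v_{1}},G_{v_{3}},G_{v_{5}}$ --- to either force all branch sets outside $\{G_{\pi(z)},G_{\pi(u)}\}$ into $B\setminus\{u,v\}$ (so the restriction succeeds) or to exhibit a required $L$-adjacency straddling the separation with no available crossing vertex (a contradiction). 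That structural bookkeeping is exactly what your proposal defers as ``requiring nothing new,'' but it is the substance of the proof and cannot be replaced by a connectivity count. The remainder of your write-up (that $u \notin G_{\pi(z)}$ because $u$ is a root in its own branch set, that $G_{\pi(z)} \cap B$ is connected via $v$ once you know $u \notin G_{\pi(z)}$, and that the added edge $uv$ in $G_{B}$ supplies the needed adjacency between $G_{\pi(z)}$ and $G_{\pi(u)}$) is fine and matches the paper, but it is conditional on the two unproven reductions above.
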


\begin{proof}
Suppose $\{G_{x} | x \in L(X)\}$ is a model of an $L(X)$-minor, and suppose that $\{G_{B}[V(G_{x}) \cap B)] \ | \  x \in V(L)\}$ is not a model of an $L(X_{1})$-minor. Furthermore, suppose the vertex from $X$ in $A \setminus \{u,v\}$ is in $G_{v_{4}}$.

 First consider the case when $v \in G_{v_{4}}$. Suppose any of $G_{v_{2}}$, $G_{v_{7}}$, $G_{v_{6}}$, or $G_{v_{8}}$ 
is contained in $G[A \setminus \{u,v\}]$. Since $v_{2},v_{7},v_{6}$ and $v_{8}$ induce a diamond in $L$, and $v \in G_{v_{4}}$, and $u \in X$, each of $G_{v_{2}}$, $G_{v_{7}}$, $G_{v_{6}}$ and $G_{v_{8}}$ are contained in $G[A \setminus \{u,v\}]$. But then at least two of $G_{v_{1}}, G_{v_{3}}$, and $G_{v_{5}}$ are contained in $G[B \setminus \{u,v\}]$.  But this is a contradiction, since in $L$, all of $v_{1},v_{5}$ and $v_{4}$ are adjacent to at least one of $v_{2},v_{7}$ and $v_{6}$. Therefore we can assume that none of $G_{v_{2}}$, $G_{v_{7}}$, $G_{v_{6}}$, or $G_{v_{8}}$ are in $G[A \setminus \{u,v\}]$. But then since two vertices of $X$ lie in $B \setminus \{u,v\}$, there are at most two branch sets containing vertices from $A$. But then $\{G_{B}[V(G_{x}) \cap B)]| x \in V(L)\}$ is a model of an $L(X_{1})$-minor in $G_{B}$, a contradiction. 

Therefore we can assume that $v \not \in V(G_{v_{4}})$, and thus $G_{v_{4}} \subseteq G[A \setminus \{u,v\}]$. Now at least one of $G_{v_{3}}$ and $G_{v_{5}}$ contains a vertex from $B$ which is not $u$, and thus either $v \in G_{v_{3}}$ or $v \in G_{v_{5}}$. In either case, this implies that $G_{v_{7}}$ is contained in $G[A \setminus \{u,v\}]$. By the same reasoning as before, this implies that all of $G_{v_{2}}, G_{v_{8}}$ and $G_{v_{6}}$ are contained in $G[A \setminus \{u,v\}]$. But then since at least one of $G_{v_{1}}, G_{v_{3}}$ and $G_{v_{5}}$ are contained in $G[B \setminus \{u,v\}]$, which contradicts that $\{G_{x} | x \in L(X)\}$ is an $L(X)$-model. The case where the vertex from $X$ in $ A \setminus \{u,v\}$ is in $G_{v_{5}}$ follows similarly. 
 \end{proof}

\begin{lemma}
\label{twooneachsideL(X)}
Let $G$ be a $2$-connected graph and $(A,B)$ be a $2$-separation with vertex boundary $\{x,y\}$. Let $X = \{a,b,c,d\} \subseteq V(G)$.  Suppose  $a, b \in A \setminus (A \cap B)$ and $c,d \in B \setminus (B \cap A)$. Let $X_{1}= (X \cap A) \cup \{x,y\}$. For each $\pi \in \mathcal{F}$, define $\pi_{1}: X_{1} \to V(L)$ such that $\pi_{1} = \pi$ on $a,b$ and $\pi_{1}(x) = \pi(c)$ and $\pi_{1}(d) = \pi$. Let $X_{2} = (X \cap B) \cup \{x,y\}$. For each $\pi \in \mathcal{F}$, define $\pi_{2}: X \to V(L)$ such that $\pi_{2} = \pi$ on $\{c,d\}$ and $\pi_{2}(x) = \pi(a)$ and $\pi_{2}(y) = \pi(b)$.  Then $G$ has an $L(X)$-minor if and only if $G_{A} = G[A] \cup \{xy\}$ has an $L(X_{1})$-minor or $G_{B} = G[B] \cup \{xy\}$ has an $L(X_{2})$-minor. 
\end{lemma}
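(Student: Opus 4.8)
The statement to prove is Lemma \ref{twooneachsideL(X)}, a standard "$2$-separation with two terminals on each side" reduction for $L(X)$-minors. The plan is to mimic the structure used throughout this chapter for the analogous $K_{2,4}(X)$, $W_{4}(X)$, and $K_{2,2}(X)$ reductions (compare Lemma \ref{thereductionwithk22init} and Lemma \ref{2sepW4}): prove the easy ``if'' direction by lifting a model of $L(X_{1})$ (or $L(X_{2})$) back to $G$ using connectivity to contract the other side onto the cut, and prove the ``only if'' direction by taking an $L(X)$-model in $G$ and arguing that, because $L$ is $3$-connected, one side of the $2$-separation cannot contain a branch set disjoint from the cut, so that side's branch sets collapse onto $\{x,y\}$.

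For the ``if'' direction, suppose $G_{A}$ has an $L(X_{1})$-model. Since $G$ is $2$-connected, by Corollary \ref{XYpaths} (or Menger) there are two disjoint $(\{x,y\},\{c,d\})$-paths inside $G[B]$, so we can contract $G[B]$ down to the edge $xy$ in such a way that $c$ maps to $x$ and $d$ maps to $y$ (matching the definition of $\pi_{1}$); extending the branch sets of the $L(X_{1})$-model along these contractions and noting $xy$ is present gives an $L(X)$-model in $G$. The case where $G_{B}$ has an $L(X_{2})$-model is symmetric. This is routine and should be dispatched in a sentence or two.

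For the ``only if'' direction, let $\{G_{z} : z \in V(L)\}$ be an $L(X)$-model in $G$ with $a\in G_{v_1}, b\in G_{v_3}, c\in G_{v_4}, d\in G_{v_5}$ (say). The key observation is that $L$ is $3$-connected, so there is no $z \in V(L)$ with $G_{z} \subseteq G[A \setminus \{x,y\}]$ \emph{and} simultaneously $z' \in V(L)$ with $G_{z'} \subseteq G[B \setminus \{x,y\}]$ for $z\neq z'$, because contracting the model to $L$ would force every $(z,z')$-path in $L$ through the (at most two) vertices obtained by contracting the branch sets meeting $\{x,y\}$, contradicting $3$-connectivity of $L$. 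Hence one of the two sides, say $B$, has no branch set contained in $B\setminus\{x,y\}$. Since $\{x,y\}$ is a $2$-vertex cut, at most two branch sets contain vertices of $B$, and these must contain $x$ and $y$ respectively; then $\{G_{A}[V(G_{z})\cap A] : z\in V(L)\}$, using the edge $xy$ to supply any adjacency that ran through $G[B]$, is an $L(X_{1})$-model in $G_{A}$ — here one checks that the two branch sets meeting $B$ are exactly the ones carrying $c,d$ (i.e. $G_{v_4}, G_{v_5}$), so they receive the roots $x,y$ as prescribed by $\pi_{1}$; the case where $A$ has no branch set in $A\setminus\{x,y\}$ symmetrically yields $G_{B}$ has an $L(X_{2})$-model.

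The main obstacle is the bookkeeping in that last step: one must verify that when side $B$ collapses, the two branch sets that do reach into $B$ are precisely $G_{c}=G_{v_4}$ and $G_{d}=G_{v_5}$ (not, say, some internal branch set $G_{v_6}$ together with $G_{v_4}$), so that relabelling $x\mapsto\pi(c)$, $y\mapsto\pi(d)$ is legitimate and matches $\pi_{1}$. This follows because $c,d$ lie in $B\setminus\{x,y\}$ so their branch sets necessarily meet $B$, and there are at most two branch sets meeting $B$; but one should also rule out the degenerate possibility that a single branch set meets $B$ (then the reduction is even easier, as in the analogous lemmas) and handle the various choices of which two of the four roots land on each side by symmetry of $L$ under its relevant automorphisms together with the fact that $\mathcal{F}$ is closed under swapping roots within a side. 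I expect the argument to go through exactly as the $K_{2,4}(X)$ analogue in Lemma \ref{thereductionwithk22init}, with the $3$-connectivity of $L$ playing the role that $3$-connectivity of $W_{4}$ and $K_{2,4}$ played there.
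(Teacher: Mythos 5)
The ``if'' direction you give is fine and matches the paper's. The ``only if'' direction, however, is built on a false premise: you claim $L$ is $3$-connected, but it is not. Looking at the edge list $E(L) = \{v_1v_2, v_1v_5, v_2v_7, v_2v_8, v_2v_3, v_3v_4, v_4v_5, v_4v_7, v_5v_6, v_6v_7, v_6v_8, v_7v_8\}$, the vertices $v_1$ and $v_3$ each have degree $2$, so $L$ has $2$-cuts (e.g.\ $\{v_2,v_5\}$ isolates $v_1$) and is only $2$-connected — which is precisely what the paper asserts just before this sequence of lemmas. This matters a great deal: the whole reason the thesis has a dedicated batch of $L$-specific $2$-separation lemmas is that the generic reductions of Lemmas \ref{2sepW4}--\ref{oneterminalinthecut2conn} only apply when the target graph $H$ is $3$-connected. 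If your claim were true, Lemma \ref{2sepW4} would already cover $L$ and the lemma you are proving would be redundant.

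Concretely, the inference you draw from $3$-connectivity — that no branch set is entirely in $A\setminus\{x,y\}$ while another is entirely in $B\setminus\{x,y\}$ — can genuinely fail. Take $G[B]$ isomorphic to $L$ with $x,y,c,d$ sitting at $v_1,v_5,v_3,v_4$ respectively, and take $G[A]$ to be the $4$-cycle $a$--$x$--$b$--$y$--$a$. Then the model with $G_{v_1}=\{a\}$, $G_{v_2}=\{x,u_2\}$, $G_{v_3}=\{c\}$, $G_{v_4}=\{d\}$, $G_{v_5}=\{b,y\}$, and singleton branch sets for $v_6,v_7,v_8$ at $u_6,u_7,u_8$, is a valid $L(X)$-model in which $G_{v_1}\subseteq A\setminus\{x,y\}$ and $G_{v_3}\subseteq B\setminus\{x,y\}$ simultaneously. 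The lemma's conclusion still holds for this $G$ (here $G_B$ is literally $L$ with the roots in the right places), but the reason it holds is not the one your proof supplies: the restricted model $\{G_z\cap B\}$ is \emph{not} an $L(X_2)$-model since $G_{v_1}\cap B=\emptyset$, so one must find a fresh model, not simply truncate the old one. The paper's actual proof circumvents all of this by doing a bespoke case analysis on which of the branch sets $G_{v_1},G_{v_3},G_{v_4},G_{v_5}$ (the root branch sets) versus $G_{v_2},G_{v_6},G_{v_7},G_{v_8}$ contain $x$ and $y$, using the explicit edge structure of $L$ to rule out configurations or to show all non-root branch sets sit on one side. You will need to reproduce that kind of hands-on analysis; the high-level connectivity argument you propose simply does not apply to $L$.
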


\begin{proof}
If $G_{A}$ has an $L(X_{1})$-minor, then we can contract $B$ onto $\{x,y\}$ such that the vertices of $X$ do not get contracted together. This is possible since $G$ is $2$-connected. Then $G$ has an $L(X)$-minor. A similar argument works when $G_{B}$ has an $L(X_{2})$-minor. Now assume that $\{G_{x} | x \in V(L)\}$ is a model of an $L(X)$-minor in $G$.

Suppose $x$ is in one of $G_{v_{2}}$, $G_{v_{7}}, G_{v_{6}},$ or $G_{v_{8}}$.  We consider cases based on which branch sets contain the vertices of $X$.

First, suppose that the vertices from $X$ in $A \setminus \{x,y\}$ are in branch sets $G_{v_{1}}$ and $G_{v_{3}}$. Then the vertices in $X$ in $B \setminus \{x,y\}$ are in branch sets $G_{v_{5}}$ and $G_{v_{4}}$. But then since at most one branch set contains $y$, either there is no vertex in $G_{v_{3}}$ adjacent to a vertex in $G_{v_{4}}$ or there is no vertex in $G_{v_{1}}$ which is adjacent to a vertex in $G_{v_{5}}$.

Now suppose  the vertices from $X$ in $A \setminus \{x,y\}$ are in branch sets $G_{v_{1}}$ and $G_{v_{4}}$. Then the vertices in $X$ in $B \setminus \{x,y\}$ are in branch sets $G_{v_{5}}$ and $G_{v_{3}}$. But then since at most one branch set contains $y$, either there is no vertex in $G_{v_{3}}$ adjacent to a vertex in $G_{v_{4}}$ or there is no vertex in $G_{v_{1}}$ which is adjacent to a vertex in $G_{v_{5}}$. In either case, this is a contradiction. 

Now suppose the vertices from $X$ in $A \setminus \{x,y\}$ are in branch sets $G_{v_{1}}$ and $G_{v_{5}}$. Then the vertices from $X$ in $B \setminus \{x,y\}$ are in branch sets $G_{v_{3}}$ and $G_{v_{4}}$. First suppose that $x \not \in G_{v_{2}}$. Then since $v_{1}$ and $v_{3}$ are adjacent to $v_{2}$ in $L$, we have that $y \in G_{v_{2}}$. But then there is no vertex in $G_{v_{4}}$ which is adjacent to a vertex in $G_{v_{5}}$, a contradiction. Therefore $x \in G_{v_{2}}$. Then $y \in G_{v_{4}}$ or $G_{v_{5}}$. Then $G_{v_{8}},G_{v_{7}}$ and $G_{v_{6}}$ all have vertex sets in either $B \setminus \{x,y\}$ or $A \setminus \{x,y\}$. But then either $G_{v_{6}}$ does not have a vertex adjacent to a vertex in $G_{v_{5}}$ or $G_{v_{4}}$ does not have a vertex adjacent to a vertex in $G_{v_{7}}$. In either case, this is a contradiction.

Therefore $x$ is not in $V(G_{v_{2}})$, $V(G_{v_{6}}), V(G_{v_{7}}),$ or $V(G_{v_{8}})$. Similarly, $y$ is not in $V(G_{v_{2}})$, $V(G_{v_{6}}), V(G_{v_{7}}),$ or $V(G_{v_{8}})$.  Then $x$ and $y$ belong to two of $G_{v_{1}},G_{v_{3}},G_{v_{4}}$ and $G_{v_{5}}$. Now suppose that $G_{v_{1}}$ is contained in $G[A \setminus \{x,y\}]$ and $G_{v_{3}}$ is contained in $G[B \setminus \{x,y\}]$. Then since $x,y$ belong to two of $G_{v_{1}},G_{v_{3}},G_{v_{4}}$ and $G_{v_{5}}$, we have that $G_{v_{2}}$ is contained in one of $G[B \setminus \{x,y\}]$ or $G[A \setminus \{x,y\}]$. But then without loss of generality there is no vertex in $G_{v_{2}}$ which is adjacent to a vertex in $G_{v_{1}}$, a contradiction. A similar analysis shows that for any two of $G_{v_{1}},G_{v_{3}},G_{v_{4}}$ and $G_{v_{5}}$, if one of the branch sets is contained in $G[B \setminus \{x,y\}]$ and the other in $G[A \setminus \{x,y\}]$ we get a contradiction. Therefore the two branch sets from $G_{v_{1}},G_{v_{3}},G_{v_{4}}$ and $G_{v_{5}}$ which do not contain $x,y$ are contained on the same side of the $2$-separation. Suppose the two branch sets from $G_{v_{1}},G_{v_{3}},G_{v_{4}}$ and $G_{v_{5}}$ which do not contain $x,y$ are contained in $G[A \setminus \{x,y\}]$. Then by the same reasoning as above, all of $G_{v_{2}},G_{v_{8}},G_{v_{6}},$ and $G_{v_{7}}$ are contained in $G[A \setminus \{x,y\}]$. But then $\{G_{A}[V(G_{x}) \cap A] | x \in V(L)\}$ is a model of an $L(X_{1})$-minor in $G_{A}$. In the other case, by the same argument we get an $L(X_{2})$-minor in $G_{B}$.
\end{proof}

Now we show that class $\mathcal{A}$ graphs do not have an $L(X)$-minors.

\begin{lemma}
\label{noclassAgraphsL(X)}
Let $G$ be $2$-connected spanning subgraph of a class $\mathcal{A}$ graph. Then $G$ does not have an $L(X)$-minor. 
\end{lemma}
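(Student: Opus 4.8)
The plan mirrors the earlier argument that $2$-connected spanning subgraphs of class $\mathcal{A}$ graphs have no $W_4(X)$-minor, but now using the $L(X)$ reduction lemmas. Write the class $\mathcal{A}$ graph as $H^{+}=(H,F)$ with $V(H)=\{a,b,c,d,e\}$ and $E(H)=\{ae,ad,be,bd,ce,cd,de\}$ as in Theorem \ref{k4free}. The key structural observation is that $\{d,e\}$ is a $2$-vertex cut of $H^{+}$ and $H^{+}-\{d,e\}$ has exactly three components, one containing each of $a$, $b$, $c$ (the component of $w\in\{a,b,c\}$ being $\{w\}\cup F_{\{w,d,e\}}$). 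Since $G$ is a $2$-connected spanning subgraph of $H^{+}$, the set $\{d,e\}$ is still a $2$-vertex cut of $G$, and $G-\{d,e\}$ still has exactly three components $C_a\ni a$, $C_b\ni b$, $C_c\ni c$; note $d\in X$ while $e\notin X$. Suppose for contradiction that $\{G_x : x\in V(L)\}$ is a model of an $L(X)$-minor in $G$.

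First I would fix $w\in\{a,b,c\}$ and consider the $2$-separation $(A_w,B_w)$ with $A_w\cap B_w=\{d,e\}$, where $A_w=V(C_w)\cup\{d,e\}$ and $B_w$ is $\{d,e\}$ together with the other two components. Then $w$ is the unique vertex of $X$ in $A_w\setminus\{d,e\}$, the other two of $\{a,b,c\}$ lie in $B_w\setminus\{d,e\}$, and $d\in X\cap(A_w\cap B_w)$, so Lemma \ref{subdivisionlabellingL(X)} applies with $u=d$, $v=e$, $z=w$: either $\{G_{B_w}[V(G_x)\cap B_w] : x\in V(L)\}$ is a model of an $L(X_w)$-minor in $G_{B_w}$, where $X_w=(X\setminus\{w\})\cup\{e\}=\{b,c,d,e\}\text{-type}$, or the branch set containing $w$ is neither $G_{v_4}$ nor $G_{v_5}$, hence (since $w$ must lie in one of the four root branch sets $G_{v_1},G_{v_3},G_{v_4},G_{v_5}$) is $G_{v_1}$ or $G_{v_3}$.

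Next I would rule out the first alternative. In $G_{B_w}=G[B_w]\cup\{de\}$ the set $\{d,e\}$ remains a $2$-vertex cut, and $G_{B_w}-\{d,e\}$ has exactly the two components obtained from the two of $C_a,C_b,C_c$ lying in $B_w$, each containing one of the two roots of $X_w$ other than $d$ and $e$. This yields a $2$-separation of $G_{B_w}$ whose vertex boundary $\{d,e\}$ consists of two roots of $X_w$, with exactly one remaining root on each side; by Lemma \ref{terminalseparatingL(X)}, $G_{B_w}$ has no $L(X_w)$-minor. Hence for every $w\in\{a,b,c\}$, the branch set containing $w$ is $G_{v_1}$ or $G_{v_3}$. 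But in an $L(X)$-model the vertices $a,b,c$ lie in pairwise distinct branch sets, and there are only two available, a contradiction; therefore $G$ has no $L(X)$-minor.

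The main obstacle is not any calculation but making sure the two reduction lemmas can legitimately be invoked given only that $G$ is a $2$-connected spanning subgraph of $H^{+}$ — specifically, checking that $\{d,e\}$ genuinely induces the claimed proper $2$-separations both in $G$ and in each $G_{B_w}$, with the roots distributed exactly as the lemmas require (one free root plus $d$ in the cut on the $A$-side for Lemma \ref{subdivisionlabellingL(X)}; both cut vertices being roots with one root strictly on each side for Lemma \ref{terminalseparatingL(X)}). Once these separations are set up, the lemmas do all the work and the pigeonhole step finishes the proof; no graph-automorphism hypothesis on $G$ is needed, only the structural symmetry of the cut $\{d,e\}$ among $a,b,c$.
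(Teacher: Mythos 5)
Your argument is correct and follows the same route as the paper: you apply Lemma \ref{subdivisionlabellingL(X)} across each of the three $2$-separations induced by $\{d,e\}$, rule out the "restricted model in $G_{B_w}$" case via Lemma \ref{terminalseparatingL(X)}, and finish by pigeonhole on $a,b,c$ being forced into $G_{v_1}$ or $G_{v_3}$. The only cosmetic difference is that the paper routes the second step through the observation that $G_B$ is a spanning subgraph of the diamond graph's $H^{+}$ from Lemma \ref{smallgraphk22} before invoking Lemma \ref{terminalseparatingL(X)}, whereas you identify the required separation of $G_{B_w}$ directly; both are equivalent.
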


\begin{proof}
Suppose towards a contradiction that we have a model of an $L(X)$-minor in $G$, $\{G_{x} | x \in V(L)\}$. Then observe that $\{e,d\}$ is the vertex boundary of a separation $(A,B)$ such that $a \in A \setminus \{d,e\}$ and $b,c \in B \setminus \{d,e\}$ so the hypotheses of Lemma \ref{subdivisionlabellingL(X)} are satisfied. Then we consider two cases. If $\{G_{B}[V(G_{x} \cap B)]| x \in V(L)\}$ is a model of an $L(X_{1})$-minor, then notice that this means that a spanning subgraph of a graph $H^{+}$, where $H$ is the graph defined in Lemma \ref{smallgraphk22} has an $L(X_{1})$-minor. But in this graph $H^{+}$, $\{d,e\}$ is the vertex boundary of a $2$-separation satisfying Lemma \ref{terminalseparatingL(X)}, and thus $H^{+}$ does not have an $L(X_{1})$-minor, a contradiction. 

Therefore we must be in the case where $a \in G_{v_{1}}$ or $a \in G_{v_{3}}$. Notice that $\{e,d\}$  induces a $2$-separation in $G$, $(A',B')$ such that $c \in A' \setminus \{e,d\}$ and $a,b \in B' \setminus \{e,d\}$. Additionally, $\{e,d\}$ is the vertex boundary of a $2$-separation $(A'',B'')$ such that $b \in A'' \setminus \{e,d\}$, and $a,c \in B'' \setminus \{e,d\}$. Therefore by applying the exact same analysis as the the $(A,B)$ separation, we get that all of $a,b,$ and $c$ are in $G_{v_{1}}$ or $G_{v_{3}}$, which is a contradiction.  
\end{proof}

Now we reduce class $\mathcal{E}$ and $\mathcal{F}$ down to looking at webs. 

\begin{lemma}
Let $G$ be a $2$-connected spanning subgraph of a class $\mathcal{E}$ or a $\mathcal{F}$ graph. If $G$ is a spanning subgraph of a class $\mathcal{E}$ graph, then $G$ has an $L(X)$-minor if and only if the $\{e,f,c,d\}$-web has an $L(X)$-minor. If $G$ is a spanning subgraph of a class $\mathcal{F}$ graph, then $G$ has an $L(X)$-minor if and only if the $\{e,f,g,h\}$-web has an $L(X)$-minor.
\end{lemma}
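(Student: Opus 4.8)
The plan is to prove both cases by the same two--separation argument that was used repeatedly for the $K_{4}(X)$, $W_{4}(X)$, $K_{2,4}(X)$, and $L(X)$ reductions in the class $\mathcal{E}$/$\mathcal{F}$ lemmas earlier in the chapter. Recall the structure of a class $\mathcal{E}$ graph: $V(H) = V(H') \cup \{a,b\}$ where $H'$ is a plane triangulated graph with outer $4$-cycle on $c,d,e,f$ (in that cyclic order), and the only edges incident to $\{a,b\}$ are $ae,af,be,bf$. Thus $\{e,f\}$ is a $2$-vertex cut of $H$ separating $\{a,b\}$ from $V(H')\setminus\{e,f\}$, and the same holds for any $2$-connected spanning subgraph $G$ of $H^{+} = (H,F)$: every triangle of $H$ lies inside the $H'$ side, so the pendant cliques $F$ do not touch the $\{a,b\}$ side. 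Write $(A,B)$ for the induced $2$-separation with $A\cap B = \{e,f\}$, $\{a,b\}\subseteq A\setminus\{e,f\}$, and $\{c,d\}\subseteq B\setminus\{e,f\}$.

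First I would observe that by Corollary \ref{k4(x)planar}/Theorem \ref{k4free} the roots $X=\{a,b,c,d\}$ are exactly the four "outer" vertices, so we are precisely in the situation of Lemma \ref{twooneachsideL(X)}: two roots ($a,b$) lie in $A\setminus(A\cap B)$ and two roots ($c,d$) lie in $B\setminus(A\cap B)$. Applying that lemma to $(A,B)$, the graph $G$ has an $L(X)$-minor if and only if $G_{A}=G[A]\cup\{ef\}$ has an $L(X_{1})$-minor or $G_{B}=G[B]\cup\{ef\}$ has an $L(X_{2})$-minor, where $X_{1}=\{a,b,e,f\}$, $X_{2}=\{c,d,e,f\}$, and the associated map families are those from the lemma statement. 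The "if" direction of the lemma we are proving is immediate: if the $\{e,f,c,d\}$-web (which is exactly $G_{B}$ when $G$ is a spanning subgraph of the corresponding web) has an $L(X)$-minor, then $G\supseteq$ that configuration up to contracting the $\{a,b\}$-side onto $\{e,f\}$, so $G$ has an $L(X)$-minor. (More carefully: the class $\mathcal{E}$ web $H'$ together with the $H'$-side pendant cliques is $G_{B}$; one checks the labelling $X_{2}$ matches the web's root set $\{e,f,c,d\}$ under the $\mathcal F$ maps.)

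For the "only if" direction, the key step is to rule out the $G_{A}$ alternative. Here $G_{A}$ is a $2$-connected spanning subgraph of $H''{}^{+}$, where $H''$ has vertex set $\{a,b,e,f\}$ with $ae,af,be,bf,ef$ — i.e.\ $H''$ is exactly the diamond graph $H$ from Lemma \ref{smallgraphk22}. In $G_{A}$, the edge $ef$ gives a $2$-vertex cut $\{e,f\}$ with $a$ on one side and $b$ on the other, so $\{e,f\}\subseteq X_{1}$ with one root of $X_{1}$ (namely $a$) in $A'\setminus\{e,f\}$ and the other ($b$) in $B'\setminus\{e,f\}$; this is exactly the hypothesis of Lemma \ref{terminalseparatingL(X)}, which gives that $G_{A}$ has no $L(X_{1})$-minor. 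Hence the only surviving possibility is $G_{B}$ having an $L(X_{2})$-minor, which is the statement that the $\{e,f,c,d\}$-web has an $L(X)$-minor (after matching up $X_{2}$ with the web's roots). The class $\mathcal{F}$ case is identical in spirit: now $\{e,f,g,h\}$ is the outer $4$-cycle of $H'$ and $\{a,b\}$ attach at $e,f$ while $\{c,d\}$ attach at $g,h$; one shows that the $\{e,f\}$-side together with the $\{g,h\}$-side both collapse appropriately. Concretely, apply Lemma \ref{twooneachsideL(X)} to the $\{e,f\}$-cut first, killing the $\{a,b\}$-side via Lemma \ref{terminalseparatingL(X)} as above, then apply it again (or Lemma \ref{allononesideL(X)}) to the $\{g,h\}$-cut to kill the $\{c,d\}$-side, leaving only the $\{e,f,g,h\}$-web as the relevant object.

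The main obstacle I anticipate is bookkeeping the root-map families through Lemma \ref{twooneachsideL(X)}: one must verify that the maps $\pi_{1},\pi_{2}$ produced by that lemma are precisely the $L(X)$-map family (each root to a distinct vertex of $\{v_{1},v_{3},v_{4},v_{5}\}$) when restricted to the web's natural root set, and that no accidental identification of two roots is forced. This is exactly the kind of check that Lemma \ref{terminalseparatingL(X)}'s hypotheses are tailored to, so once the separations are named correctly the argument is a short case-free application of the two lemmas; the only real content is confirming that the class $\mathcal{E}$ and $\mathcal{F}$ graphs have their $4$-cycles positioned so that the two roots on each side of the $\{e,f\}$ (resp.\ $\{g,h\}$) cut are genuinely separated, which is immediate from the definitions of those classes in Theorem \ref{k4free}. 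I would therefore keep the write-up to: (1) identify the separation, (2) cite Lemma \ref{twooneachsideL(X)}, (3) cite Lemma \ref{terminalseparatingL(X)} to discard one branch, (4) note the remaining branch is the asserted web, and (5) remark that class $\mathcal{F}$ follows by the same argument applied twice.
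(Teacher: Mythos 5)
Your proposal is correct and uses the same argument as the paper: apply Lemma \ref{twooneachsideL(X)} to the $2$-separation with boundary $\{e,f\}$, use Lemma \ref{terminalseparatingL(X)} to discard the diamond-shaped branch $G_A$ (the graph $H^{+}$ from Lemma \ref{smallgraphk22}), and observe that the surviving branch $G_B$ is the asserted web; for class $\mathcal{F}$ the same reduction is applied again at $\{g,h\}$. One small slip: the parenthetical ``(or Lemma \ref{allononesideL(X)})'' for the second cut in the class $\mathcal{F}$ case is not available, since after the first reduction the current root set $\{e,f,c,d\}$ still has two roots on each side of the $\{g,h\}$-cut, so Lemma \ref{allononesideL(X)}'s hypothesis that all of $X$ lie on one side fails; Lemma \ref{twooneachsideL(X)} is the correct tool both times, as your main sentence already says.
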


\begin{proof}
First suppose that $G$ is a $2$-connected spanning subgraph of a class $\mathcal{E}$ graph. Then apply Lemma \ref{twooneachsideL(X)} to the separation whose vertex boundary is $\{e,f\}$. Notice that one of the graphs we get from Lemma \ref{twooneachsideL(X)} is the graph $H^{+}$ where $H$ is the graph from Lemma \ref{smallgraphk22}. Notice that $H^{+}$ does not have an $L(X)$-minor since $\{e,f\}$ forms a separation satisfying Lemma \ref{terminalseparatingL(X)}. Notice that the other graph we obtain from Lemma \ref{twooneachsideL(X)} is the $\{e,f,c,d\}$-web, which completes the claim. The argument for the class $\mathcal{F}$ graphs is essentially the same.
\end{proof}

Therefore it suffices to look at graphs which are spanning subgraphs of webs satisfying the obstructions given in Theorem \ref{w4cuts}. We will look at each case separately, but it turns out that essentially all the obstructions reduce to instances of terminal separating $2$-chains having $L(X)$-minors. First we look at the terminal separating $2$-chain obstruction. 

\begin{figure}
\begin{center}
\includegraphics[scale =0.5]{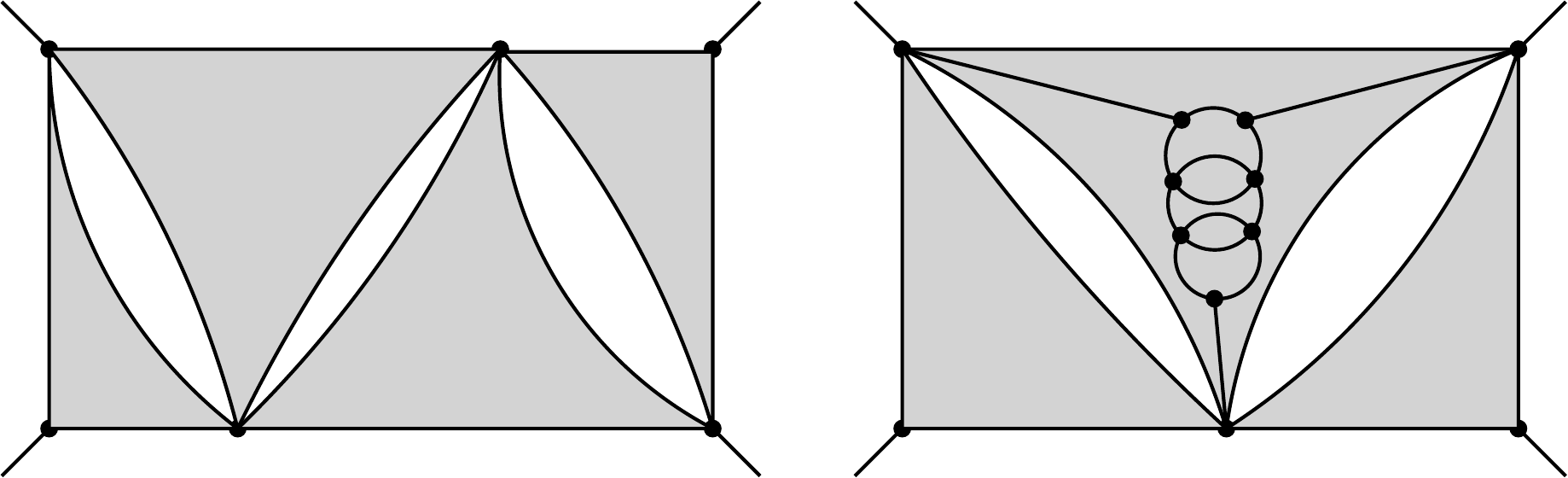}
\caption{Graph on the left has no $L(X)$-minor by Lemma \ref{terminalseparting2chainsL(X)} and the graph on the right has an $L(X)$-minor and satisfies the conditions of Lemma \ref{terminalseparting2chainsL(X)}. Curved lines indicate $2$-separations. The shaded sections are spanning subgraphs of webs.}
\end{center}
\end{figure}

\begin{lemma}
\label{terminalseparting2chainsL(X)}
Let $G$ be a $2$-connected spanning subgraph of an $\{a,b,c,d\}$-web. Let $X =\{a,b,c,d\} \subseteq V(G)$ and suppose that $G$ does not have a $W_{4}(X)$-minor. Consider any cycle $C$ for which $X \subseteq V(C)$. Suppose $a,b,c,d$ appears in that order on $C$. Suppose there is a terminal separating $2$-chain $((A_{1},B_{1}),(A_{2},B_{2}),\ldots, (A_{n},B_{n}))$ satisfying obstruction $1$ from Theorem \ref{w4cuts}. Furthermore, suppose that $A_{1} \cap B_{1} \cap X = \{a\}$, and $b = A_{1} \cap X \setminus (A_{1} \cap B_{1})$. If $n \equiv 1 \pmod 2$, then $G$ has no $L(X)$-minor. Now consider when $n \equiv 0 \pmod 2$. Then $c \in A_{n} \cap X \setminus (A_{n} \cap B_{n})$. Then $G$ has an $L(X)$-minor if and only if the following occurs. There exists an $i \in \{1,\ldots,n-1\}$, $i \equiv 1 \pmod 2$, such that the graph $G_{B_{i} \cap A_{i+1}} =  G[B_{i} \cap A_{i+1}] \cup \{xy | x,y \in A_{i} \cap B_{i}\} \cup \{xy | x,y \in A_{i+1} \cap B_{i+1}\}$ has an $L'(X')$-minor, where $X' = \{(A_{1} \cap B_{1}), (A_{2} \cap B_{2})\}$. Furthermore, the vertex from $X'$ which is contained in the branch set $G_{v_{2}}$ lies on the $(b,c)$-path $P_{b,c}$ where $a,d \not \in V(P_{b,c})$ and $V(P_{b,c}) \subseteq V(C)$.
\end{lemma}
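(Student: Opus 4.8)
The plan is to mimic the structure of the proof of Lemma~\ref{terminalseparating2chainsL(X)}'s predecessors in this chapter: induct on $|V(G)|$, peel off the separations of the $2$-chain one at a time using the reduction lemmas, and reduce the whole question to a single $L'(X')$-minor in the middle of the chain. Throughout I would take $G$ to be a vertex-minimal counterexample to the stated equivalence.

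First I would deal with the parity-odd case $n\equiv 1\pmod 2$. Here the strategy is to apply Lemma~\ref{oneterminalinthecut2conn} (the ``one terminal in the cut'' $2$-connected reduction) successively to $(A_1,B_1),(A_2,B_2),\dots$, alternately replacing the terminal vertex of $X$ on the ``$A$ side'' by the non-terminal cut vertex. Because the chain is terminal separating, each application is legitimate; after $n-1$ steps we arrive at a graph $G'$ in which the last separation $(A_n,B_n)$ has both its terminals being members of $X$, with one vertex of the (relabelled) $X$ strictly inside $A_n$ and one strictly inside $B_n$. Then Lemma~\ref{terminalseparatingL(X)} applies directly and shows $G'$ has no $L(X)$-minor; since each reduction preserved having an $L(X)$-minor (by the respective reduction lemma applied with the $L$-minor in the role of $H$, exactly as in the cases of Theorem~\ref{k4w4k24characterization}), $G$ has no $L(X)$-minor.

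For the even case $n\equiv 0\pmod 2$, the first observation is that $c\in A_n\cap X\setminus(A_n\cap B_n)$: since $a,b,c,d$ appear in that order on $C$ and the chain is nested with $a$ and $b$ on the $A_1$ side, the terminal forced to lie strictly inside $B_n$ by the definition of a terminal separating $2$-chain must be the next vertex of $X$ encountered along $C$, namely $c$ (this is essentially the argument of Lemma~\ref{nicesplittinglemma}). Then I would again run Lemma~\ref{oneterminalinthecut2conn} from both ends of the chain, collapsing $(A_1,B_1),\dots,(A_{i-1},B_{i-1})$ from the left and $(A_{i+1},B_{i+1}),\dots,(A_n,B_n)$ from the right, for each candidate ``link'' index $i\equiv 1\pmod 2$. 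After these reductions the graph $G[B_i\cap A_{i+1}]$ together with the two new edges on the cut pairs is isolated, carrying only the three roots $X'=\{(A_1\cap B_1),(A_2\cap B_2)\}$ — wait, more precisely the three roots become the two cut vertices of $A_i\cap B_i$ together with the cut vertex of $A_{i+1}\cap B_{i+1}$ on the appropriate side; so one gets an $L'(X')$-minor question on $G_{B_i\cap A_{i+1}}$. Using Lemma~\ref{L'(X')minors} to translate $L'(X')$-minors into the three-cycles-and-three-paths configuration, and pasting those cycles/paths back along the collapsed links of the chain to recover a genuine $L(X)$-model in $G$ (with the diamond $C_1\cup C_2$ living in one link and the $4$-cycle $C_3$ reaching out to pick up $c$ and $d$), gives the ``if'' direction. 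For the ``only if'' direction I would take an $L(X)$-model in $G$, observe via the reduction lemmas that it descends to $L'(X')$-models in successive links, and check that at least one link must carry the full diamond-plus-$4$-cycle structure, forcing the existence of the required index $i$; the condition that the $G_{v_2}$-root lies on $P_{b,c}$ comes from tracking, through the reductions, which side of the cycle $C$ the branch set $G_{v_2}$ occupies.

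The main obstacle, I expect, is the bookkeeping in the even case: verifying that the iterated applications of Lemma~\ref{oneterminalinthecut2conn} at both ends are compatible (so that the graph $G_{B_i\cap A_{i+1}}$ and the root set $X'$ really are what the statement claims, independent of the order of reduction), and then proving that an $L(X)$-model in $G$ genuinely localizes — i.e.\ that the diamond $C_1\cup C_2$ of the $L'$-structure cannot be ``spread across'' two different links of the chain. The second point is the heart of it: one must argue, using $2$-connectivity and the fact that each $A_i\cap B_i$ is a $2$-cut contained in $V(C)$, that if two branch sets of the $L$-model straddle a common cut pair then the required adjacencies among $\{v_2,v_6,v_7,v_8\}$ force all four of those branch sets to the same side, which then pins the diamond to a single link. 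This is the analogue of the ``no branch set contained strictly on one side'' arguments in Lemmas~\ref{allononesideL(X)} and~\ref{subdivisionlabellingL(X)}, and I would lean on those lemmas (and on Lemma~\ref{nicesplittinglemma} for the cycle-position claims) rather than redo the case analysis from scratch.
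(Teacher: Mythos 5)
Your plan hinges on iterating Lemma~\ref{oneterminalinthecut2conn}, but that lemma lives in the ``$2$-connected reductions'' section where the target graph $H$ is assumed to be $3$-connected, and $L$ is not $3$-connected: both $v_1$ and $v_3$ have degree $2$ in $L$. This is not a cosmetic issue --- it is precisely why the paper proves the dedicated $L$-specific reduction lemmas (\ref{allononesideL(X)}, \ref{terminalseparatingL(X)}, \ref{subdivisionlabellingL(X)}, \ref{twooneachsideL(X)}). The $L$-analogue of the ``one terminal in the cut'' reduction is Lemma~\ref{subdivisionlabellingL(X)}, and it is strictly weaker than what you need: it is only a one-directional implication, with an escape clause (``or the vertex from $X$ in $A\setminus\{u,v\}$ is not in branch sets $G_{v_4}$ or $G_{v_5}$'', i.e.\ it lands in a degree-$2$ branch set $G_{v_1}$ or $G_{v_3}$ of $L$). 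So ``each application is legitimate'' is false as stated, and the odd case does not follow; you cannot conclude that after $n-1$ reductions the $L$-minor question has been transferred intact to a graph where Lemma~\ref{terminalseparatingL(X)} applies.

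The paper avoids this by peeling the chain from the inside, not from the ends: it applies the two-terminals-on-each-side lemma (\ref{twooneachsideL(X)}) to $(A_2,B_2)$, which does have a clean if-and-only-if form for $L$ and shortens the chain by two while preserving parity, and it handles the irreducible base case $n=2$ by invoking Lemma~\ref{subdivisionlabellingL(X)} once and then doing an explicit branch-set case analysis precisely to chase down the escape case where $b\in G_{v_1}$ or $b\in G_{v_3}$. That case analysis --- tracking which of $G_{v_1},G_{v_3}$, and the diamond branch sets $G_{v_2},G_{v_6},G_{v_7},G_{v_8}$ can straddle the cut $A_1\cap B_1$ --- is where the $L'(X')$-minor in the middle link of the chain actually emerges and where the $P_{b,c}$ condition comes from. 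Your proposal correctly identifies this as ``the heart of it,'' but defers it to the $3$-connected lemmas, which cannot carry that load for $L$. To repair the proposal you would need to replace every invocation of Lemma~\ref{oneterminalinthecut2conn} by Lemma~\ref{subdivisionlabellingL(X)} and then supply the missing branch-set argument for its escape case, at which point you would essentially be reproducing the paper's $n=2$ analysis.
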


\begin{proof}
First suppose that $n \equiv 1 \pmod 2$. If $n=1$, then $(A_{1},B_{1})$ satisfies Lemma \ref{terminalseparatingL(X)} and thus $G$ does not have an $L(X)$-minor. Now suppose $n \geq 3$ and $G$ is a vertex minimal counterexample to the claim. Then apply Lemma \ref{twooneachsideL(X)} to $(A_{2},B_{2})$ gives two graphs $G_{A_{2}}$ and $G_{B_{2}}$ such that $G$ has an $L(X)$-minor if and only if $G_{A_{2}}$ has an $L(X_{1})$-minor or $G_{B_{2}}$ has an $L(X_{2})$-minor.   Now both $G_{A_{2}}$ and $G_{B_{2}}$ have terminal separating $2$-chains of odd length. Namely, $(A_{1},B_{1})$ is a terminal separating $2$-chain in $G_{A_{2}}$ and $(A_{3},B_{3}),\ldots,(A_{n},B_{n})$ is a terminal separating $2$-chain in $G_{B_{2}}$ which has length $n-2$, which is odd.  Then since $G$ is a vertex minimal counterexample, both $G_{A_{2}}$ and $G_{B_{2}}$ do not have $L(X)$-minors and therefore $G$ does not have an $L(X)$-minor. 

Now suppose that $n \equiv 0 \pmod 2$. First suppose that $n \geq 4$ and suppose that $G$ is a vertex minimal counterexample. Then apply Lemma \ref{twooneachsideL(X)} to $(A_{2},B_{2})$. Then we get two graphs $G_{A_{2}}$ and $G_{B_{2}}$ such that $G$ has an $L(X)$-minor if and only if either $G_{A_{2}}$ has an $L(X_{1})$-minor or $G_{B_{2}}$ has an $L(X_{2})$-minor. We note that $G_{A_{2}}$ does not have an $L(X_{1})$-minor since $(A_{1},B_{1})$ satisfies Lemma \ref{terminalseparatingL(X)}. Therefore since $G$ has an $L(X)$-minor, $G_{B_{2}}$ has an $L(X_{2})$-minor. Furthermore, $((A_{3},B_{3}),\ldots,(A_{n},B_{n}))$ is an even length terminal separating $2$-chain in $G_{B_{2}}$. Then as $G$ is a vertex minimal counterexample, $G_{B_{2}}$ satisfies the claim. But then notice that the vertex in $X_{2}$ which takes the place of $b$ in the claim lies on $P_{b,c}$ since $b \in A_{1} \setminus (A_{1} \cap B_{1})$ implies that the vertex in $(A_{1} \cap B_{1}) \setminus \{a\}$ lies on $P_{b,c}$. Therefore $G$ satisfies the claim, a contradiction.

Therefore we can assume that $n =2$, and suppose $\{G_{x} \ | \ x \in V(L)\}$ is a model of an $L(X)$-minor in $G$. Apply Lemma \ref{subdivisionlabellingL(X)} to $(A_{1},B_{1})$. Then we have two cases. First suppose we have an $L(X_{1})$-minor in $G_{B_{1}}$ where $X_{1} = X \setminus \{b\} \cup (A_{1} \cap B_{1})$. Then in $G_{B_{1}}$ under $X_{1}$, the $2$-separation $(A_{2},B_{2})$ satisfies Lemma \ref{terminalseparatingL(X)}, implying that $G_{B_{1}}$ does not have an $L(X_{1})$-minor, a contradiction. Therefore $b \in G_{v_{1}}$ or $b \in G_{v_{3}}$. First suppose that $b \in G_{v_{1}}$. Notice that $(A_{2},B_{2})$ satisfies Lemma \ref{subdivisionlabellingL(X)}, and that by symmetry we may assume that $c \in G_{v_{1}}$ or $c \in G_{v_{3}}$. As we assumed $b \in G_{v_{1}}$, we assume that $c \in G_{v_{3}}$. Suppose that $G_{v_{1}}$ is not contained in $G[A_{1} \setminus (A_{1} \cap B_{1})]$. Then the vertex in $A_{1} \cap B_{1} \setminus \{a\}$ is in $G_{v_{1}}$. Then $G_{v_{3}}$ is contained in $G[B_{2}]$. Then since $G_{v_{3}}$ has a vertex adjacent to a vertex in $G_{v_{2}}$, we have that $G_{v_{2}}$ is contained in $G[B_{1}]$. Then this implies that $G_{v_{8}}, G_{v_{6}},$ and $G_{v_{7}}$ are contained in $G[B_{2} \setminus (A_{2} \cap B_{2})]$. But $a \in G_{v_{5}}$ or $a \in G_{v_{4}}$, so either $G_{v_{4}}$ is contained in $G[A_{2} \setminus (A_{2} \cap B_{2})]$ or $G_{v_{5}}$ is contained in $G[A_{2} \setminus (A_{2} \cap B_{2})]$. But that contradicts that $\{G_{x} \ | \ x \in L(X)\}$ is an $L(X)$-minor. Therefore $G_{v_{1}}$ is contained in $G[A_{1} \setminus (A_{1} \cap B_{1})]$. By symmetry, $G_{v_{3}}$ is contained in $G[B_{2} \setminus (A_{2} \cap B_{2})]$. That implies that $G_{v_{2}}$ contains the vertex in $A_{1} \cap B_{1} \setminus \{a\}$. Suppose that $G_{v_{8}}$ is contained in $G[A_{1} \setminus (A_{1} \cap B_{1})]$. Then since $G_{v_{2}}$ contains the vertex in $A_{1} \cap B_{1} \setminus \{a\}$, the branch sets $G_{v_{7}}$ and $G_{v_{6}}$ are contained in $G[A_{1} \setminus (A_{1} \cap B_{1})]$. But then since $d \in G_{v_{5}}$ or $d \in G_{v_{4}}$, this implies that either $G_{v_{5}}$ is contained in $G[B_{1} \setminus (A_{1} \cap B_{1})]$ or $G_{v_{4}}$ is contained in $G[B_{1} \setminus (A_{1} \cap B_{1})]$. But this contradicts that $\{G_{x} \ | \ x \in V(L)\}$ is a model of an $L(X)$-minor. By essentially the same argument, none of $G_{v_{8}}, G_{v_{7}}$ or $G_{v_{6}}$ are contained in $G[B_{2} \setminus (A_{2} \cap B_{2})]$. Therefore all of $G_{v_{8}}, G_{v_{7}}$ and $G_{v_{6}}$ are contained in $G[B_{1} \cap A_{2}]$. But then the set $\{G_{B_{1} \cap A_{2}}[G_{x} \cap B_{1} \cap A_{2}] \ | \ x \in V(L')\}$ is a model of $L'(X')$ minor satisfying the properties of the lemma.  

Now we prove the converse.  Suppose there exists an $i \in \{1,\ldots,n-1\}$ and $i \equiv 1 \pmod 2$  such that the graph $G_{B_{i} \cap A_{i+1}}$ has an $L'(X')$-minor satisfying the properties in the lemma. Let $\{G_{x} | x \in V(L')\}$ be a model of an $L'(X')$-minor satisfying the above properties. Let $v$ be the vertex in $X'$ which is also in $G_{v_{2}}$ in the $L'(X)$ model. Observe that since $i \equiv 1 \pmod 2$, there is exactly one vertex in $X'$ which lies on $P_{b,c}$, so therefore $v \in P_{b,c}$ and both other vertices of $X'$ lie on $P_{a,d}$, the $(a,d)$-path such that $V(P_{a,d}) \subseteq V(C)$ and $b,c \not \in V(P_{a,d})$. Then to get an $L(X)$-minor in $G$, first let $x$ be the vertex in $X'$ such that the $(a,x)$-subpath on $P_{a,d}$ does not contain the other vertex from $X'$ on $P_{a,d}$, and without loss of generality, let $x \in G_{v_{5}}$. Then extend $G_{v_{5}}$ to include the subpath from $(a,x)$ on $P_{a,d}$. Now let $y$ be the vertex in $X$ such that the $(y,d)$-subpath on $P_{a,d}$ does not contain $x$. Since $x \in G_{v_{5}}$, we have $y \in G_{v_{4}}$. Extend $G_{v_{4}}$ along the $(y,d)$-subpath on $P_{a,d}$. Now we create $G_{v_{1}}$  by letting it be the $(a,v)$-path, $P_{a,v}$ such that $V(P_{a,v}) \subseteq V(C)$ and $d,c \not \in P_{a,v}$ and do not include either $a$ or $v$. Note that $b \in V(P_{a,v})$. Similarly, let $G_{v_{3}}$ be the $(d,v)$-path, $P_{d,v}$ such that $V(P_{d,v}) \subseteq V(C)$ and $a,b \not \in P_{d,v}$, not including either $d$ or $v$.
 we $G_{v_{1}}$ contain the path from $v$ to $b$ on $P_{b,c}$, not including $v$, we let $G_{v_{3}}$ contain the path from $v$ to $c$ on $P_{b,c}$ not including $v$. Then by construction and since we already had an $L'(X')$-minor, we have an $L(X)$-minor in $G$.
\end{proof}

Now we look at when our graph has a terminal separating triangle as in obstruction $2$.

\begin{lemma}
\label{trianglereductionL(X)}
Let $G$ be a $2$-connected spanning subgraph of an $\{a,b,c,d\}$-web. Let $X =\{a,b,c,d\} \subseteq V(G)$ and suppose that $G$ does not have a $W_{4}(X)$-minor. Consider any cycle $C$ for which $X \subseteq V(C)$, and suppose that there is a terminal separating triangle $(A_{1},B_{1}),(A_{2},B_{2}),(A_{3},B_{3})$ satisfying obstruction $2$ of Theorem \ref{w4cuts}. Then $G$ has an $L(X)$-minor if and only if either the graph $G_{B_{1}}$ has an $L(X_{1})$-minor where $X_{1} = (X \cap B_{1}) \cup (A_{1} \cap B_{1})$ or the graph $G_{B_{3}}$ has an $L(X_{2})$-minor  where $X_{2} = (X \cap B_{3}) \cup (A_{3} \cap B_{3})$. 
\end{lemma}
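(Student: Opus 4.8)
The plan is to prove Lemma~\ref{trianglereductionL(X)} by a careful case analysis of how a purported $L(X)$-model interacts with the terminal separating triangle $(A_1,B_1),(A_2,B_2),(A_3,B_3)$, mirroring the style of Lemmas~\ref{allononesideL(X)}, \ref{terminalseparatingL(X)}, \ref{subdivisionlabellingL(X)}, and \ref{twooneachsideL(X)}. One direction is easy: if $G_{B_1}$ has an $L(X_1)$-minor, then since $G$ is $2$-connected we can route $2$ disjoint paths from $A_1\cap B_1$ to the terminal in $A_1\setminus(A_1\cap B_1)$ (using Menger/Corollary~\ref{XYpaths}), contract $G[A_1]$ appropriately onto $A_1\cap B_1$ so that the terminal gets absorbed, and obtain an $L(X)$-minor in $G$; similarly if $G_{B_3}$ has an $L(X_2)$-minor. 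So the content is the forward direction.

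For the forward direction, I would take a vertex-minimal counterexample $G$. First I would recall from the definition of a terminal separating triangle that exactly two vertices of $X$ lie in $A_1$, one in $A_2\setminus(A_2\cap B_2)$, one in $A_3\setminus(A_3\cap B_3)$, and that $A_1\cap B_1=\{x,y\}$ contains a vertex of $X$ by hypothesis. The strategy is to apply Lemma~\ref{subdivisionlabellingL(X)} (or Lemma~\ref{oneterminalinthecut2conn}-style reasoning, adapted to $L$) successively to the two separations $(A_2,B_2)$ and $(A_3,B_3)$ to push the two isolated terminals of $X$ into the cut vertices, reducing to the separation $(A_1,B_1)$, and then argue about $(A_1,B_1)$ directly. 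Concretely: apply Lemma~\ref{subdivisionlabellingL(X)} to $(A_2,B_2)$; either the restricted model is an $L$-model of the reduced graph with the terminal relabelled to the cut vertex, or the terminal in $A_2\setminus(A_2\cap B_2)$ lies in a branch set $G_{v_2},G_{v_7},G_{v_6},$ or $G_{v_8}$ (i.e.\ not in $G_{v_4}$ or $G_{v_5}$). In the first case repeat with $(A_3,B_3)$ and then with $(A_1,B_1)$, at which point minimality of $G$ (the reduced graph has fewer vertices) forces the claimed $L(X_1)$- or $L(X_2)$-minor, a contradiction. In the second "exceptional" case I would derive a direct contradiction with the structure of $L$: because $v_2,v_6,v_7,v_8$ induce a diamond in $L$ and one cut vertex of $A_2\cap B_2$ is forced onto a terminal, the connectivity requirements between $\{v_1,v_3,v_4,v_5\}$ and the diamond cannot all be met across the $2$-separation — this is exactly the kind of argument run in Lemma~\ref{subdivisionlabellingL(X)} and Lemma~\ref{twooneachsideL(X)}.

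The main obstacle I anticipate is bookkeeping: there are several subcases depending on which of $\{v_1,v_3,v_4,v_5\}$ each of the four terminals of $X$ maps into, and which side of each $2$-separation each branch set falls on, and I will need to verify in every branch that either a reduction lemma applies cleanly (reducing vertex count and invoking minimality) or that a connectivity obstruction in $L$ gives an immediate contradiction. The asymmetry between the cut vertex of $A_1\cap B_1$ that is a terminal and the one that is not (and between $G_{v_4}$/$G_{v_5}$ versus $G_{v_2}$/$G_{v_6}$/$G_{v_7}$/$G_{v_8}$) is what makes the exceptional cases in Lemma~\ref{subdivisionlabellingL(X)} behave well, and I expect the same dichotomy to resolve each branch here. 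A secondary subtlety is making sure the relabelled root sets $X_1=(X\cap B_1)\cup(A_1\cap B_1)$ and $X_2=(X\cap B_3)\cup(A_3\cap B_3)$ genuinely have four distinct vertices each and that the induced family of maps $\mathcal{F}$ is the correct restriction — this follows because $A_1\cap B_1$ contains exactly one vertex of $X$ and $A_3\cap B_3$ is disjoint from $X$ by the terminal-separating-triangle conditions, so no collisions occur. Once the forward direction is handled, combining it with the easy backward direction completes the proof.
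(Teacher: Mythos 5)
There is a genuine gap. Your plan is to apply Lemma~\ref{subdivisionlabellingL(X)} successively to $(A_2,B_2)$ and $(A_3,B_3)$ and then argue about $(A_1,B_1)$, but Lemma~\ref{subdivisionlabellingL(X)} requires one of the two cut vertices to be a terminal of $X$. With the labelling forced by the statement (so that $X_1$ and $X_2$ are $4$-element sets), the unique terminal $a$ in $A_1\cap B_1$ is also the common vertex of $A_1\cap B_1$ and $A_3\cap B_3$; it is \emph{not} in $A_2\cap B_2$, and none of $b,c,d$ can be a cut vertex of the triangle either, since each lies in some $A_i\setminus(A_i\cap B_i)$. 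So $(A_2,B_2)$ has no cut vertex in $X$, and Lemma~\ref{subdivisionlabellingL(X)} simply does not apply to it. The paper instead applies Lemma~\ref{subdivisionlabellingL(X)} to $(A_1,B_1)$ and $(A_3,B_3)$ (the two separations whose boundary contains $a$), which is the version that matches the conclusion ``$G_{B_1}$ or $G_{B_3}$.'' Your secondary claim that ``$A_3\cap B_3$ is disjoint from $X$'' is also false and is at odds with your own observation that $X_2$ should have four distinct elements: $X_2=(X\cap B_3)\cup(A_3\cap B_3)$ has size four precisely because $a\in A_3\cap B_3$, giving $\{a,b,c,v\}$; if $A_3\cap B_3$ were disjoint from $X$ you would get five vertices.

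You also misstate the exceptional conclusion of Lemma~\ref{subdivisionlabellingL(X)}. The lemma says the isolated terminal is \emph{not} in $G_{v_4}$ or $G_{v_5}$; since the four terminals of an $L(X)$-model must occupy the four terminal branch sets $G_{v_1},G_{v_3},G_{v_4},G_{v_5}$, this means the terminal is in $G_{v_1}$ or $G_{v_3}$. Your reading (that it lies in $G_{v_2},G_{v_6},G_{v_7},$ or $G_{v_8}$) is impossible — those are non-root branch sets. This matters because the paper's contradiction mechanism runs through exactly this dichotomy: after applying the lemma to $(A_1,B_1)$ and $(A_3,B_3)$, the exceptional branches force $b$ and $d$ to sit in $G_{v_1}$ and $G_{v_3}$ (in some order), hence $a$ and $c$ sit in $G_{v_4}$ and $G_{v_5}$. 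Now $v_4v_5\in E(L)$, but $a\in B_2\setminus(A_2\cap B_2)$ while $c\in A_2\setminus(A_2\cap B_2)$, and the remaining branch sets are penned on one side of the $2$-separation $(A_2,B_2)$ by the same connectivity bookkeeping — so the required $G_{v_4}$–$G_{v_5}$ adjacency cannot be realized. This is the actual punchline, and it does not appear in your sketch. To repair your proposal, start with $(A_1,B_1)$ and $(A_3,B_3)$ (not $(A_2,B_2)$), track in the exceptional cases which of $G_{v_1},G_{v_3}$ contains $b$ and $d$, and then use $(A_2,B_2)$ only at the end to block the $v_4v_5$ adjacency.
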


\begin{figure}
\begin{center}
\includegraphics[scale =0.5]{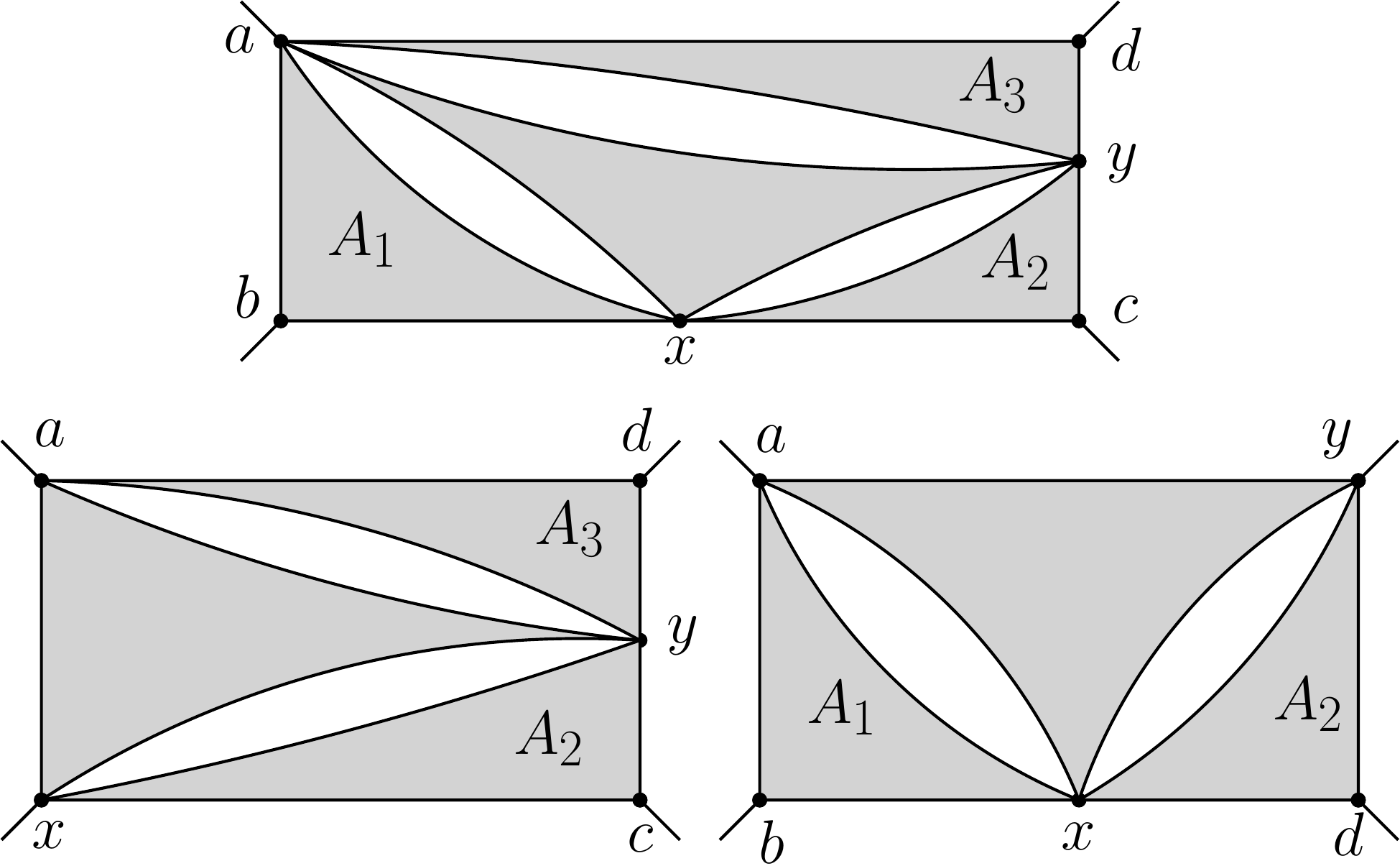}
\caption{A terminal separating triangle and the graphs $G_{B_{1}}$ and $G_{B_{3}}$ as in Lemma \ref{trianglereductionL(X)}. Note both $G_{B_{1}}$ and $G_{B_{3}}$ have terminal separating $2$-chain obstructions. Shaded sections are spanning subgraphs of webs.}
\end{center}
\end{figure}

\begin{proof}
If $G_{B_{1}}$ has an $L(X_{1})$-minor then since $G$ is $2$-connected we can contract the vertex in $X$ in $A_{1}$ to $A_{1} \cap B_{1}$ such that it does not get contracted together with another vertex of $X$. But then $G$ has an $L(X)$-minor. A similar argument holds for $G_{B_{3}}$.

Now suppose that $\{G_{x} | x \in V(L)\}$ is a model of an $L(X)$-minor in $G$, and furthermore suppose we have a vertex minimal counterexample to the claim. Without loss of generality let $a,b,c,d$ appear in that order on $C$, and without loss of generality suppose that $a$ is the vertex from $X$ in $A_{1} \cap B_{1}$, and that $b \in  A_{1} \setminus (A_{1} \cap B_{1})$. Similarly, without loss of generality we may assume that $c \in A_{2} \setminus (A_{2} \cap B_{2})$ and $d \in A_{3} \setminus (A_{3} \cap B_{3})$. Then notice that $(A_{1},B_{1})$ satisfies Lemma \ref{subdivisionlabellingL(X)}. We consider two cases. 

First consider the case where $b \in G_{v_{1}}$ or $b \in G_{v_{3}}$. We assume that $b \in G_{v_{1}}$. Now notice that since we assumed $d \in A_{3}$, $(A_{3},B_{3})$ is a $2$-separation satisfying Lemma \ref{subdivisionlabellingL(X)}. First suppose that $d \in G_{v_{3}}$ (note this is without loss of generality, since $b \in G_{v_{1}}$). First consider the case where $G_{v_{1}}$ is contained in $G[A_{1} \setminus (A_{1} \cap B_{1})]$. Then $G_{v_{2}}$ contains the vertex in $(A_{1} \cap B_{1})$ which is not $a$. Similarly, if $G_{v_{3}}$ is contained in $G[A_{3} \setminus (A_{3} \cap B_{3})]$, then $G_{v_{2}}$ contains the vertex in $A_{3} \cap B_{3} \setminus \{a\}$. But then $a \in B_{2} \setminus (A_{2} \cap B_{2})$ and $c \in A_{2} \setminus (A_{2} \cap B_{2})$, which implies that there is no vertex in $G_{v_{5}}$ which is adjacent to a vertex in $G_{v_{4}}$, a contradiction. Therefore $G_{v_{3}}$ is not contained in $G[A_{3} \setminus (A_{3} \cap B_{3})]$ and the vertex in $A_{3} \cap B_{3} \setminus \{a\}$ is in $G_{v_{3}}$. But then as before, $a \in B_{2} \setminus (A_{2} \cap B_{2})$ and $c \in A_{2} \setminus (A_{2} \cap B_{2})$, contradicting that there is a vertex in $G_{v_{5}}$ which is adjacent to a vertex in $G_{v_{4}}$. Therefore we can assume that $d \not \in G_{v_{3}}$. But then by Lemma \ref{subdivisionlabellingL(X)}, $G_{B_{3}}$ has an $L(X_{2})$-minor, which satisfies the claim. 

Therefore we can assume that $G_{v_{1}}$ is not contained in $G[A_{1} \setminus (A_{1} \cap B_{1})]$. Since the cases are symmetric, the only situation left to consider is when  $d \in G_{v_{3}}$ and not contained in $G[A_{3} \setminus (A_{1} \cap B_{1})]$. Then the vertex in $(A_{1} \cap B_{1}) \setminus \{a\}$ and the vertex in $(A_{3} \cap B_{3}) \setminus \{a\}$ are in $G_{v_{1}}$ and $G_{v_{3}}$ respectively. But then there is no vertex in $G_{v_{4}}$ which is adjacent to $G_{v_{5}}$, a contradiction. 
The same situation happens when $b \in G_{v_{3}}$. Therefore we assume that $b \not \in G_{v_{3}}$ and $b \not \in G_{v_{1}}$. But then by Lemma \ref{subdivisionlabellingL(X)}, $G_{B_{1}}$ has an $L(X_{1})$-minor, which completes the claim.
\end{proof}

Observe that both the graphs $G_{B_{1}}$ and $G_{B_{2}}$ have terminal separating $2$-chains as obstructions, and thus when we have a terminal separating triangle as an obstruction, the problem reduces to appealing to Lemma \ref{terminalseparting2chainsL(X)}. Now we look at what happens when the graph has a terminal separating $2$-chain and a terminal separating triangle as in obstruction $3$. 

\begin{lemma}
\label{terminalseparatingchainplustriangleL(X)}
Let $G$ be a $2$-connected spanning subgraph of an $\{a,b,c,d\}$-web. Let $X = \{a,b,c,d\} \subseteq V(G)$ and suppose that $G$ does not have a $W_{4}(X)$-minor. Consider any cycle $C$ for which $X \subseteq V(C)$, and suppose there is a terminal separating triangle $(A_{1},B_{1}),(A_{2},B_{2}), \\ (A_{3},B_{3})$ such that $A_{1} \cap B_{1} = \{x,y\}$, where $x,y \not \in X$. Furthermore, the graph $G_{A_{1}} = G[A_{1}] \cup \{xy\}$ and $C_{A} = G[V(C) \cap A] \cup \{x,y\}$  has a terminal separating $2$-chain $(A'_{1},B'_{1}),\ldots,(A'_{n},B'_{n})$ where we let $x$ and $y$ replace the two vertices in $X$ from $G$ not in $G_{A_{1}}$ and $A_{i} \cap B_{i} \subseteq V(C)$, $i \in \{1,2,3\}$, and $A'_{i} \cap B'_{i} \subseteq V(C_{A})$ for all $i \in \{1,\ldots,n\}$. Then $G$ has an $L(X)$-minor if and only if the graph $G_{B_{1}} = G[B_{1}] \cup \{xy | x,y \in A_{1} \cap B_{1}\}$ has an $L(X_{1})$-minor where $L(X_{1}) = X \cap B_{1} \cup (A_{1} \cap B_{1})$ or $G_{A_{1}} =  G[B_{1}] \cup \{xy | x,y \in A_{1} \cap B_{1}\}$ has an $L(X_{2})$-minor where $X_{2} = X \cap A_{1} \cup (A_{1} \cap B_{1})$.
\end{lemma}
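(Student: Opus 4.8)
\textbf{Proof proposal for Lemma~\ref{terminalseparatingchainplustriangleL(X)}.}
The plan is to follow the same template used in the preceding $L(X)$-reduction lemmas (Lemma~\ref{terminalseparting2chainsL(X)} and Lemma~\ref{trianglereductionL(X)}): prove the easy direction by contraction, and the hard direction by taking a minimal counterexample and analysing where the branch sets of a putative $L(X)$-model can sit relative to the three $2$-separations of the terminal separating triangle. For the forward (easy) direction, if $G_{B_1}$ has an $L(X_1)$-minor then, since $G$ is $2$-connected and $X\cap A_1$ contains exactly two vertices with two disjoint $(\{x,y\},X\cap A_1)$-paths available in $G[A_1]$ (Corollary~\ref{XYpaths}), we can contract $G[A_1]$ onto $\{x,y\}$ without identifying vertices of $X$, producing the edge set joining $A_1\cap B_1$ and hence recovering an $L(X)$-minor of $G$; symmetrically if $G_{A_1}$ has an $L(X_2)$-minor we contract $G[B_1]$ onto $\{x,y\}$. (Here I use that the triangle edges among $A_i\cap B_i$ are realizable as contractions, exactly as in the set-up paragraph before Theorem~\ref{w4cuts}.)

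For the converse, I would take $G$ to be a vertex-minimal counterexample with a model $\{G_z : z\in V(L)\}$ of an $L(X)$-minor, and fix the cyclic order $a,b,c,d$ on $C$. The triangle being terminal separating means (by the analysis already established, and using Lemma~\ref{nicesplittinglemma}-type reasoning) that two vertices of $X$ lie in $A_1\setminus(A_1\cap B_1)$ and the other two in $B_1\setminus(A_1\cap B_1)$, so the separation $(A_1,B_1)$ satisfies the hypotheses of Lemma~\ref{twooneachsideL(X)}. Applying Lemma~\ref{twooneachsideL(X)} to $(A_1,B_1)$ immediately yields that $G$ has an $L(X)$-minor iff $G_{A_1}$ has an $L(X_2)$-minor or $G_{B_1}$ has an $L(X_1)$-minor, which is precisely the statement; the point of the lemma is then that this dichotomy is the ``right'' reduction because both resulting pieces inherit a simpler obstruction structure (a terminal separating $2$-chain on one side, a shorter terminal separating triangle or chain on the other), so that the characterization can be closed by induction via Lemma~\ref{terminalseparting2chainsL(X)}. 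I would spell out that $G_{B_1}$ inherits the terminal separating triangle structure restricted to $B_1$ and $G_{A_1}$ inherits the terminal separating $2$-chain $(A'_1,B'_1),\dots,(A'_n,B'_n)$, so that minimality applies to each.

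The main obstacle I anticipate is verifying that the reduced graphs $G_{A_1}$ and $G_{B_1}$ actually still fall under the inductive hypothesis with the \emph{same} kind of obstruction witnessed on the \emph{same} cycle. Concretely: after applying Lemma~\ref{twooneachsideL(X)} one must check that $C_A=G[V(C)\cap A_1]\cup\{xy\}$ (resp.\ the analogous cycle in $G_{B_1}$) is a cycle through the new terminal set $X_2$ (resp.\ $X_1$), that the separations $A'_i\cap B'_i$ and $A_j\cap B_j$ remain inside these cycles, and that the roles of $x,y$ are consistent with the maps $\pi_1,\pi_2$ defined in Lemma~\ref{twooneachsideL(X)}. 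This is where I expect the bookkeeping to be delicate — in particular ruling out the degenerate case where the two new roots $x,y$ coincide with a cut-pair of a further $2$-separation, which is handled by Lemma~\ref{terminalseparatingL(X)} giving ``no $L(X)$-minor'' outright. Once those compatibility checks are done, the lemma follows: the forward direction is the contraction argument above, and the reverse direction is a direct quotation of Lemma~\ref{twooneachsideL(X)} applied to $(A_1,B_1)$, with minimality used only to justify that no obstruction was overlooked on the two pieces.
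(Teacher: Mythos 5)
Your proposal is correct and lands on exactly the paper's argument: the whole proof is the single observation that, because $x,y\notin X$ and the triangle is terminal separating, two vertices of $X$ lie on each side of $(A_1,B_1)$, so Lemma~\ref{twooneachsideL(X)} applied to $(A_1,B_1)$ directly yields the stated equivalence. The minimal-counterexample framing, the separate contraction argument for the forward direction, and the worry about matching inductive obstruction structure on $G_{A_1}$ and $G_{B_1}$ are all superfluous here — Lemma~\ref{twooneachsideL(X)} already packages both directions, and this lemma is a standalone reduction statement rather than an inductive step, so nothing further needs to be verified.
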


\begin{proof}
This follows immediately from applying Lemma \ref{twooneachsideL(X)} to $(A_{1},B_{1})$.
\end{proof}

The point of the above observation is that the graphs  $G_{A_{1}}$ and $G_{B_{1}}$ have terminal separating $2$-chain obstructions, and thus the problem of finding $L(X)$-minors in that case reduces to the problem of finding $L(X)$-minors in terminal separating  $2$-chains case, which is done in Lemma \ref{terminalseparatingL(X)}. Now we look at what happens when we have two terminal separting triangles and a terminal separating $2$-chain as in obstruction $4$.

\begin{lemma}
Let $G$ be a $2$-connected spanning subgraph of an $\{a,b,c,d\}$-web. Let $X = \{a,b,c,d\} \subseteq V(G)$ and suppose that $G$ does not have a $W_{4}(X)$-minor. Consider any cycle $C$ for which $X \subseteq V(C)$, and suppose there are two distinct terminal separating triangles $((A^{1}_{1},B^{1}_{1}), (A^{1}_{2},B^{1}_{2}), (A^{1}_{3},B^{1}_{3}))$, $((A^{2}_{1},B^{2}_{1}),(A^{2}_{2},B^{2}_{2}),(A^{2}_{3},B^{2}_{3}))$  where for all $i \in \{1,2,3\}$,  $A^{1}_{i} \cap B^{1}_{i} \subseteq A^{2}_{1}$  and $A^{2}_{i} \cap B^{2}_{i} \subseteq A^{1}_{3}$. Furthermore, if we consider the graph $G[A^{2}_{1} \cap A^{1}_{1}]$ and the cycle $C' = G[V(C) \cap A^{2}_{1} \cap A^{1}_{1}] \cup \{xy | x,y \in A^{i}_{1} \cap B^{i}_{1}, i \in \{1,2\}\}$ and we let $X'$ be defined to be the vertices $A^{2}_{1} \cap B^{2}_{1}$ and $A^{2}_{3} \cap B^{2}_{3}$, then there is a terminal separating $2$-chain in $G[A^{2}_{3} \cap A^{1}_{1}]$ with respect to $X'$. Then $G$ has an $L(X)$-minor if and only if either $G_{B^{1}_{1}}$ has an $L(X_{1})$-minor where $X_{1} = X \cap B^{1}_{1} \cup (A^{1}_{1} \cap B^{1}_{1})$ or $G_{B^{2}_{1}}$ has an $L(X_{2})$-minor where $X_{2} = X \cap B^{2}_{1} \cup (A^{1}_{1} \cap B^{1}_{1})$ or $G[A^{2}_{3} \cap A^{1}_{1}]$ has an $L(X')$-minor. 
\end{lemma}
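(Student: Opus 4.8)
The plan is to reduce this final obstruction to the terminal separating $2$-chain case, which is already handled by Lemma \ref{terminalseparting2chainsL(X)}, using the same strategy we used in the previous two lemmas. First I would establish the easy direction: if $G_{B^{1}_{1}}$ has an $L(X_{1})$-minor, or $G_{B^{2}_{1}}$ has an $L(X_{2})$-minor, or $G[A^{2}_{3} \cap A^{1}_{1}]$ has an $L(X')$-minor, then since $G$ is $2$-connected we can, by Menger's Theorem, contract the relevant pieces of $G$ along disjoint paths so that no two roots get identified, producing an $L(X)$-minor in $G$. This direction is routine and parallels the forward directions in Lemmas \ref{trianglereductionL(X)} and \ref{terminalseparatingchainplustriangleL(X)}.

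For the harder direction, I would apply Lemma \ref{twooneachsideL(X)} to the $2$-separation $(A^{1}_{1}, B^{1}_{1})$. Since $A^{1}_{1} \cap B^{1}_{1} = \{x,y\}$ with $x,y \not\in X$ (as we observed for terminal separating triangles in the proof of Lemma \ref{nicesplittinglemma}, when $cd \in E(C)$ the cut vertices are non-terminal), and since exactly two vertices of $X$ lie in $A^{1}_{1} \setminus (A^{1}_{1} \cap B^{1}_{1})$ and two in $B^{1}_{1} \setminus (A^{1}_{1} \cap B^{1}_{1})$, Lemma \ref{twooneachsideL(X)} tells us that $G$ has an $L(X)$-minor if and only if $G_{A^{1}_{1}}$ has an $L(X_{2})$-minor or $G_{B^{1}_{1}}$ has an $L(X_{1})$-minor. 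The latter case directly gives one of our conclusions. So I would focus on the case where $G_{A^{1}_{1}}$ has an $L(X_{2})$-minor. In $G_{A^{1}_{1}}$, the second terminal separating triangle $(A^{2}_{1},B^{2}_{1}),(A^{2}_{2},B^{2}_{2}),(A^{2}_{3},B^{2}_{3})$ still exists (all its cuts are inside $A^{1}_{3} \subseteq A^{1}_{1}$ by hypothesis), and now with respect to $X_{2}$ it becomes a terminal separating triangle with a terminal separating $2$-chain sitting on the $A^{2}_{1}$ side — precisely the hypothesis of Lemma \ref{terminalseparatingchainplustriangleL(X)}. Applying that lemma to $G_{A^{1}_{1}}$ yields that $G_{A^{1}_{1}}$ has an $L(X_{2})$-minor if and only if $G_{B^{2}_{1}}$ has an $L(X_{2})$-minor or $G[A^{2}_{3} \cap A^{1}_{1}]$ (after adding the edges between the pairs of cut vertices) has an $L(X')$-minor, which are exactly the remaining two conclusions.

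The main obstacle I anticipate is bookkeeping: making sure that the root sets $X_{1}, X_{2}, X'$ and the families of maps induced at each reduction step line up correctly, and that when we apply Lemma \ref{terminalseparatingchainplustriangleL(X)} inside $G_{A^{1}_{1}}$ the hypotheses are genuinely met — in particular that the terminal separating $2$-chain in $G[A^{2}_{3} \cap A^{1}_{1}]$ with respect to $X'$ is unaffected by the contraction of $B^{1}_{1}$ onto $\{x,y\}$ (it is, since that chain lives entirely inside $A^{1}_{1}$, and contracting $B^{1}_{1}$ only touches vertices outside $A^{1}_{1} \setminus \{x,y\}$). One also needs to check the condition required by Lemma \ref{twooneachsideL(X)} on the family of maps, namely that swapping the two roots mapped into a pair is permitted; but as remarked after Lemma \ref{2sepW4}, all the rooted minors considered here (and in particular $L(X)$, whose family is all maps sending $X$ bijectively to $\{v_{1},v_{3},v_{4},v_{5}\}$) satisfy this, so the condition is automatic. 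With these checks in place the argument is a short chain of applications of already-proved lemmas, so the proof should be only a paragraph or two.
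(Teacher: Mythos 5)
Your proposal takes essentially the same approach as the paper: apply Lemma~\ref{twooneachsideL(X)} to $(A^{1}_{1},B^{1}_{1})$, observe that in the resulting graph $G_{A^{1}_{1}}$ the second triangle and the terminal separating $2$-chain form an instance of obstruction~$3$, and conclude by Lemma~\ref{terminalseparatingchainplustriangleL(X)}. Two small notes: your separate paragraph on the easy direction is redundant, since both lemmas you invoke are already biconditionals and carry it along automatically; and the parenthetical justification ``all its cuts are inside $A^{1}_{3} \subseteq A^{1}_{1}$'' cannot be right, since by the convention on triangles $A^{1}_{3} \setminus (A^{1}_{3}\cap B^{1}_{3})$ and $A^{1}_{1} \setminus (A^{1}_{1}\cap B^{1}_{1})$ are disjoint, so $A^{1}_{3} \not\subseteq A^{1}_{1}$ --- what you actually need (and what the hypothesis should give, up to the paper's own notational looseness) is that each $A^{2}_{i}\cap B^{2}_{i}$ lies in $A^{1}_{1}$, which the paper simply asserts.
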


\begin{proof}
Apply Lemma \ref{twooneachsideL(X)} to $(A^{1}_{1},B^{1}_{1})$. Then $G$ has an $L(X)$-minor if and only if $G_{B^{1}_{1}}$ has an $L(X_{1})$ minor where $X_{1} = X \cap B^{1}_{1} \cup (A^{1}_{1} \cap B^{1}_{1})$ or $G_{A^{1}_{1}}$ has an $L(X'_{1})$-minor where $X'_{1} = X \cap A^{1}_{1} \cup (A^{1}_{1} \cap B^{1}_{1})$. Notice that in $G_{A^{1}_{1}}$, we have a terminal separating $2$-chain and terminal separating triangle as in obstruction $3$. But then we can apply Lemma \ref{terminalseparatingchainplustriangleL(X)} which gives us exactly the claim.  
\end{proof}

\begin{lemma}
\label{2disjointtrianglesL(X)}
Let $G$ be a $2$-connected spanning subgraph of an $\{a,b,c,d\}$-web. Let $X = \{a,b,c,d\} \subseteq V(G)$ and suppose that $G$ does not have a $W_{4}(X)$-minor. Consider any cycle $C$ for which $X \subseteq V(C)$, and suppose that there two distinct terminal separating triangles $((A^{1}_{1},B^{1}_{1}), (A^{1}_{2},B^{1}_{2}), (A^{1}_{3},B^{1}_{3})), ((A^{2}_{1},B^{2}_{1}), (A^{2}_{2},B^{2}_{2}),(A^{2}_{3},B^{2}_{3}))$ which satisfy obstruction $5$ in Theorem \ref{w4cuts}. Then $G$ has an $L(X)$-minor if and only if either the graph $G_{B^{1}_{1}} = G[B^{1}_{1}] \cup \{xy | x,y \in A^{1}_{1} \cap B^{1}_{1}\}$ has an $L(X_{1})$-minor where $X_{1} = (X \cap B^{1}_{1}) \cup (A^{1}_{1} \cap B^{1}_{1})$ or the graph $G_{B^{2}_{1}}= G[B^{2}_{1}] \cup \{xy | x,y \in A^{2}_{1} \cap B^{2}_{1}\}$ has an $L(X_{2})$-minor where $X_{2} = (X \cap B^{2}_{1}) \cup (A^{2}_{1} \cap B^{2}_{1})$.  
\end{lemma}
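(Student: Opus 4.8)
The plan is to mirror the structure of the preceding lemmas on obstructions $1$ through $4$: reduce the fifth obstruction to a situation already handled, by splitting along one of the two terminal separating triangles and showing that each of the resulting pieces either has no $L(X)$-minor for trivial reasons or falls under an earlier case. The easy direction is as always: if $G_{B^{1}_{1}}$ has an $L(X_{1})$-minor then, since $G$ is $2$-connected, we can use Corollary \ref{XYpaths} (or Menger) to contract $G[A^{1}_{1}]$ onto the two vertices of $A^{1}_{1}\cap B^{1}_{1}$ while keeping the two terminals of $X$ inside $A^{1}_{1}\setminus(A^{1}_{1}\cap B^{1}_{1})$ in distinct branch sets, and similarly for $G_{B^{2}_{1}}$; this gives an $L(X)$-minor in $G$.

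For the forward direction, suppose $\{G_{x}\mid x\in V(L)\}$ is a model of an $L(X)$-minor and take $G$ to be a vertex-minimal counterexample. First I would apply Lemma \ref{twooneachsideL(X)} to the $2$-separation $(A^{1}_{1},B^{1}_{1})$: since two terminals of $X$ lie in $A^{1}_{1}\setminus(A^{1}_{1}\cap B^{1}_{1})$ and two in $B^{1}_{1}\setminus(A^{1}_{1}\cap B^{1}_{1})$ (a consequence of $A^{1}_{i}\cap B^{1}_{i}\subseteq V(C)$ containing no vertex of $X$, exactly as in the proof of Lemma \ref{nicesplittinglemma}), $G$ has an $L(X)$-minor if and only if $G_{B^{1}_{1}}$ has an $L(X_{1})$-minor or $G_{A^{1}_{1}}$ has an $L(X'_{1})$-minor with $X'_{1} = (X\cap A^{1}_{1})\cup (A^{1}_{1}\cap B^{1}_{1})$. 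In the first case we are done, so assume $G_{A^{1}_{1}}$ has the $L(X'_{1})$-minor. Now in $G_{A^{1}_{1}}$, the second triangle $(A^{2}_{1},B^{2}_{1}),(A^{2}_{2},B^{2}_{2}),(A^{2}_{3},B^{2}_{3})$ together with the edges $\{xy\mid x,y\in A^{1}_{1}\cap B^{1}_{1}\}$ either persists as a terminal separating triangle or, when $A^{1}_{1}\cap B^{1}_{1}$ and $A^{2}_{1}\cap B^{2}_{1}$ share a vertex (which is the content of obstruction $5$), collapses so that $(A^{2}_{2},B^{2}_{2}),(A^{2}_{3},B^{2}_{3})$ becomes a terminal separating triangle on the cycle $C' = G_{A^{1}_{1}}[V(C)\cap A^{1}_{1}]$. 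In either sub-case, Lemma \ref{trianglereductionL(X)} (obstruction $2$) applies to $G_{A^{1}_{1}}$: $G_{A^{1}_{1}}$ has an $L(X'_{1})$-minor if and only if the corresponding "$G_{B}$" piece does, and that piece contains $G_{B^{2}_{1}}$ with root set $X_{2}$, completing the reduction. By minimality, the counterexample cannot survive this, which gives the claim.

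The main obstacle I expect is the careful bookkeeping needed to verify that after applying Lemma \ref{twooneachsideL(X)} to $(A^{1}_{1},B^{1}_{1})$, the second triangle really does reappear in $G_{A^{1}_{1}}$ as one of the configurations that Lemma \ref{trianglereductionL(X)} or Lemma \ref{terminalseparatingchainplustriangleL(X)} can digest. Concretely, one must track which of the vertices of $A^{2}_{i}\cap B^{2}_{i}$ become identified with $x$ or $y$, check that the cyclic order of $a,b,c,d$ on $C$ is inherited by $C'$, and confirm that the terminal separating $2$-chain hypothesis is preserved under the contraction of $G[B^{1}_{1}]$ onto $A^{1}_{1}\cap B^{1}_{1}$. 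This is exactly the kind of case analysis carried out in Lemma \ref{nicesplittinglemma} and in the obstruction-$4$ lemma, so it should go through, but it requires splitting on whether $A^{1}_{1}\cap B^{1}_{1} = A^{2}_{1}\cap B^{2}_{1}$ or the two share exactly one vertex, and treating each carefully; the single-shared-vertex case is the one that genuinely invokes Lemma \ref{trianglereductionL(X)} via a surviving triangle, while the degenerate case reduces directly to a terminal separating $2$-chain and hence to Lemma \ref{terminalseparting2chainsL(X)}.
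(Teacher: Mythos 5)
Your proposal is correct and mirrors the paper's argument: apply Lemma \ref{twooneachsideL(X)} to $(A^{1}_{1},B^{1}_{1})$, observe that in $G_{A^{1}_{1}}$ the second triangle becomes a terminal separating triangle satisfying obstruction $2$ (precisely because obstruction $5$ forces $A^{2}_{1}\cap B^{2}_{1}\cap A^{1}_{1}\cap B^{1}_{1}\neq\emptyset$, so $A^{2}_{1}\cap B^{2}_{1}$ meets the new root set $X'_{1}$), and then invoke Lemma \ref{trianglereductionL(X)}. The only simplification the paper makes that you anticipate but do not pin down is the degenerate case: the paper just splits on whether $(A^{1}_{1},B^{1}_{1})=(A^{2}_{1},B^{2}_{1})$ (in which case $G_{B^{1}_{1}}=G_{B^{2}_{1}}$ and there is nothing to do) rather than invoking the $2$-chain machinery of Lemma \ref{terminalseparting2chainsL(X)}, and it does not need a minimal-counterexample framework since the reduction is a direct two-step application of earlier lemmas.
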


\begin{proof}
If $G_{B^{1}_{1}}$ has an $L(X_{1})$-minor, then we simply contract all of $A^{1}_{1}$ onto $B^{1}_{1} \cap A^{1}_{1}$ such that we do not contract two vertices of $X$ together. This is possible as $G$ is $2$-connected. The same strategy applies to $G_{B^{2}_{1}}$.

Now suppose that $G$ has an $L(X)$-minor. Then $(A^{1}_{1},B^{1}_{1})$ satisfies Lemma \ref{twooneachsideL(X)}. Therefore $G$ has an $L(X)$-minor if and only if $G_{B^{1}_{1}}$ has an $L(X_{1})$-minor where $X_{1} = X \cap B^{1}_{1} \cup (B^{1}_{1} \cap A^{1}_{1})$  or $G_{A^{1}_{1}}$ has an $L(X'_{1})$-minor where $X'_{1} = A^{1}_{1} \cap X \cup (A^{1}_{1} \cap B^{1}_{1})$. If $(A^{1}_{1},B^{1}_{1}) = (A^{2}_{1},B^{2}_{1})$, then the claim follows immediately. Thus we assume that $(A^{1}_{1},B^{1}_{1}) \neq (A^{2}_{1},B^{2}_{1})$. Notice that in $G_{A^{1}_{1}}$, the triangle  $(A^{2}_{1},B^{2}_{1}), (A^{2}_{2},B^{2}_{2}),(A^{2}_{3},B^{2}_{3})$ is a terminal separating triangle satisfying obstruction $2$. Then by Lemma \ref{trianglereductionL(X)}, $G_{A^{1}_{1}}$ has an $L(X'_{1})$-minor if and only if the graph $G_{B^{2}_{1}}$ has an $L(X_{2})$-minor, which completes the claim.
\end{proof}

With that, one can determine exactly which graphs do not have an $K_{4}(X)$-minor, $W_{4}(X)$-minor, $K_{2,4}(X)$-minor and an $L(X)$-minor. To sum up what we have, a graph does not have one of the four above minors if and only if $G$ is a class $\mathcal{A}$ graph, or it is the spanning subgraph of a class $\mathcal{D}$, $\mathcal{E}$, or $\mathcal{F}$ graph where the corresponding web has one of the obstructions from Theorem \ref{w4cuts}, and then after the reductions given above, we do not end up with an $L'(X')$-minor (Lemma \ref{L'(X')minors}) satisfying the properties in Lemma \ref{terminalseparatingL(X)}. We note it would be nice to obtain a cleaner structure theorem for when a graph has an $L'(X')$-minor. 

\section{$K_{3,4}$ and $K_{6}$-minors}

As we noted before, $K_{3,4}$ is a forbidden minor for reduciblity of the first Symanzik polynomial. Therefore it is interesting to look at the structure of graphs not containing a $K_{3,4}$-minor. A full characterization of graphs without $K_{3,4}$-minors is not known, although partial progress has been made. Namely, when $G$ is embeddable in the projective plane, two different but equivalent characterizations of graphs not containing $K_{3,4}$-minors are known (\cite{K34minorsprojpart1}, \cite{K34minorsprojpart2}). We can use the fact that $K_{3,4}$ is not reducible with respect to the first Symanzik polynomial to gain some insight on the structure of reducible graphs.

It will be useful to consider some rooted minor classes. First, let $V(K_{2,3}) = \{s_{1},s_{2},t_{1},t_{2},t_{3}\}$, and $E(K_{2,3}) = \{s_{i}t_{j} \ | i \in \{1,2\}, j \in \{1,2,3\}\}$. Let $G$ be a graph and $X = \{a,b,c\} \subseteq V(G)$. Let $\mathcal{F}$ be the family of maps such that each vertex of $X$ gets mapped to a distinct vertex of $\{t_{1},t_{2},t_{3}\}$. For the purposes of this thesis, a $K_{2,3}(X)$-minor will refer to the $X$ and $\mathcal{F}$ above.

Let $V(K_{3,3}) = \{s_{1},s_{2},s_{3},t_{1},t_{2},t_{3}\}$ and $E(K_{3,3}) = \{s_{i}t_{j} \ | i,j \in \{1,2,3\} \}$. Let $G$ be a graph and $X = \{a,b,c\} \subseteq V(G)$. Let $\mathcal{F}$ be the family of maps from $X$ to $\{t_{1},t_{2},t_{3}\}$. For the purposes of this thesis, a $K_{3,3}(X)$-minor will refer to the $X$ and $\mathcal{F}$ above.

Let $V(K_{3,1}) = \{s_{1},t_{1},t_{2},t_{3}\}$ and $E(K_{3,1}) = \{s_{1}t_{i} | i \in \{1,2,3\}\}$. Let $G$ be a graph and $X = \{a,b,c\} \subseteq V(G)$. Let $\mathcal{F}$ be the family of maps from $X$ to $\{t_{1},t_{2},t_{3}\}$. For the purposes of this thesis, $K_{3,1}(X)$-minor will refer to the $X$ and $\mathcal{F}$ above. 

Suppose we know a graph $G$ does not have a $K_{4}(X)$-minor. Then we know from Theorem \ref{k4free} that $G$ belongs to one of six classes of graphs. We can ask the question, when do one of these graphs have a $K_{3,4}$-minor? Clearly, if one of these graphs has a $K_{3,4}$-minor, then the graph must be non-planar. By how each of the classes of graphs are constructed, that implies that one of the $F_{T}$ sections must be causing the non-planarity. Thus there is a section of the graph contained inside a $3$-separation which is causing the non-planarity. It is reasonable to suspect if there are a few distinct triangles where the graph induced by $F_{T}$ is non-planar, and there is enough connectivity between the triangles, that you could obtain a $K_{3,4}$-minor. We show this now, using the following theorem from Robertson and Seymour.

\begin{theorem}[\cite{rootedk23theorem}]
\label{K23(X)minors}
Let $G$ be a $3$-connected graph and $X = \{a,b,c\} \subseteq V(G)$. Then either $G$ has a $K_{2,3}(X)$-minor on $a,b,c$ or $G$ is planar and $a,b,c$ lie on a face.
\end{theorem}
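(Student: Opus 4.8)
The plan is to reformulate the statement in terms of the planarity of an augmented graph. Let $G^{+}$ be obtained from $G$ by adding a new vertex $w$ with $N(w)=\{a,b,c\}$. Since $G$ is $3$-connected and $|X|=3>2$, a short check (any $2$-cut of $G^{+}$ avoiding $w$ would be a $2$-cut of $G$ since $w$ keeps a neighbour, and no $2$-cut contains $w$ since $G-y$ is connected) shows $G^{+}$ is again $3$-connected. I claim the theorem is equivalent to the assertion that $G^{+}$ is planar if and only if $G$ has no $K_{2,3}(X)$-minor. Granting this, the conclusion follows: if $G^{+}$ is planar then it is $3$-connected planar, so by Whitney's theorem its embedding is unique; deleting the degree-$3$ vertex $w$ merges the three faces incident with $w$ into a single face of $G$ whose boundary contains $a,b,c$, giving the desired common face. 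Conversely, if $G$ is planar with $a,b,c$ on a common face $F$, placing $w$ inside $F$ and joining it to $a,b,c$ keeps planarity.

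First I would dispatch the easy direction. If $G$ has a $K_{2,3}(X)$-model with terminal branch sets $A\ni a$, $B\ni b$, $C\ni c$ and hub branch sets $P,Q$, then $\{w\},A,B,C,P,Q$ is a $K_{3,3}$-model in $G^{+}$: take the parts $\{\{w\},P,Q\}$ and $\{A,B,C\}$, noting $w$ is adjacent to $A,B,C$ through $a,b,c$, and $P,Q$ are adjacent to $A,B,C$ by hypothesis. Hence $G^{+}$ is non-planar, so $G^{+}$ planar forces $G$ to have no $K_{2,3}(X)$-minor.

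The substance is the reverse implication: if $G^{+}$ is non-planar then $G$ has a $K_{2,3}(X)$-minor. By Wagner's theorem $G^{+}$ has a $K_{5}$- or $K_{3,3}$-minor; choose one whose branch sets partition $V(G^{+})$ and let $W$ be the branch set containing $w$. The clean case is $W=\{w\}$: since $\deg_{G^{+}}(w)=3$ with neighbours $a,b,c$, the minor must be $K_{3,3}$ (a $K_{5}$ branch vertex needs degree $4$), and the three branch sets opposite $w$ contain $a,b,c$ respectively, while the remaining two are connected subgraphs of $G$ adjacent to all of them — exactly a $K_{2,3}(X)$-model. To reach this clean case I would proceed by one of two routes. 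Route (i): invoke the fact that in a $3$-connected non-planar graph every edge lies on a Kuratowski subdivision, apply it to the edge $wa$ so that $w$ appears in a subdivision $H$, observe $w$ is either a branch vertex of a subdivided $K_{3,3}$ (giving the model immediately) or an internal degree-$2$ vertex of some path of $H$ through $a$ and one of $b,c$, and eliminate the latter by a local rerouting that replaces the $a$–$w$–$b$ subpath using $3$-connectivity. Route (ii): split according to whether $G$ itself is non-planar; if it is, take a $K_{5}$/$K_{3,3}$-minor of $G$, argue by minimality over all such minors (using $3$-connectivity to pull apart two terminals that share a branch set) that $a,b,c$ lie in three distinct branch sets, and then convert it to a $K_{2,3}(X)$-minor by a short case check on which parts of $K_{3,3}$ the terminals occupy (absorbing one extra branch set into a terminal set when needed); if $G$ is planar, then $a,b,c$ fail to lie on a common face, and I would construct the two hubs directly from the unique embedding, routing $P$ and $Q$ through distinct faces via three internally disjoint terminal-to-terminal paths.

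The main obstacle is precisely this last step: controlling how $w$ (equivalently, the triple $a,b,c$) sits inside a Kuratowski subdivision of $G^{+}$. The mere existence of a $K_{5}$- or $K_{3,3}$-minor is insufficient, and one genuinely needs the local rerouting/uncrossing argument where the $3$-connectivity hypothesis is used in an essential way; this is the technical heart of the Robertson–Seymour proof and the part that must be carried out carefully.
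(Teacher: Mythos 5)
The paper does not prove this theorem; it states it as a known result of Robertson and Seymour and cites \cite{rootedk23theorem}, so there is no internal proof to compare against. Evaluating your attempt on its own merits: the reformulation via the apex graph $G^{+}$ (adding $w$ with $N(w)=\{a,b,c\}$) is a correct and natural way to set the problem up. The verification that $G^{+}$ is $3$-connected, the equivalence ``$G^{+}$ planar $\Leftrightarrow$ $G$ planar with $a,b,c$ on a common face'' via Whitney's uniqueness theorem, and the easy direction --- a $K_{2,3}(X)$-model together with the singleton $\{w\}$ gives a $K_{3,3}$-model in $G^{+}$, so $G^{+}$ is non-planar --- are all sound.

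The genuine gap is precisely the reverse implication, which you label the ``technical heart'' but do not supply: from $G^{+}$ being $3$-connected and non-planar you need a $K_{3,3}$-subdivision in which $w$ is a branch vertex, not merely a vertex. Route~(i) invokes, without proof or precise reference, that every edge of a $3$-connected non-planar graph lies on a Kuratowski subdivision; and even granting this, you must handle the case where $w$ sits in the interior of a subdivision path $\cdots a\,w\,b\cdots$ with $c$ absent from the subdivision, and the ``local rerouting'' that promotes $w$ to a branch vertex is the whole difficulty --- $3$-connectivity gives internally disjoint $a$--$b$ paths but not an $a$--$b$ path avoiding the rest of the Kuratowski subgraph, so a careful bridge/uncrossing argument is required and is missing. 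Route~(ii) is a sketch that essentially restates the planar subcase of the theorem without argument: the claimed disjointness of the two hub branch sets from the three terminal paths is asserted, not shown. So the framework and one direction are right, but the core content of the theorem remains unproved in your proposal.
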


\begin{corollary}
Let $G$ be a graph. Let $T_{1} = \{x_{1},x_{2},x_{3}\}$ be the vertex boundary of a $3$-separation $(A_{1},B_{1})$. Let $T_{2} = \{y_{1},y_{2},y_{3}\}$ be the vertex boundary of a $3$-separation $(A_{2},B_{2})$. Suppose that $A_{1} \cap A_{2} = \emptyset$, $G[A_{i}]$ is non-planar for $i \in \{1,2\}$,  and that in $G[B_{1} \cap B_{2}]$ there are $3$ disjoint $(T_{1},T_{2})$-paths. Then $G$ has a $K_{3,4}$-minor. 
\end{corollary}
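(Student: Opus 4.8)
The plan is to reduce each non-planar side $G[A_i]$ to a $K_{3,3}$ rooted at its separator, glue them together along the three disjoint paths through $G[B_1 \cap B_2]$, and recognize the result as a $K_{3,4}$-model. First I would handle the connectivity subtlety: $G[A_i]$ together with the three attachment vertices $T_i$ need not be $3$-connected, so I cannot directly apply Theorem~\ref{K23(X)minors}. Instead I would form $G_i^{+} := G[A_i] \cup \{x\,y : x,y \in T_i\}$ (adding the triangle on $T_i$); since $G[A_i]$ is non-planar, $G_i^{+}$ is non-planar as well, because adding three edges among the separator cannot make a non-planar graph planar only if those edges were already forced — more carefully, I would argue that $G[A_i]$ non-planar implies $G_i^{+}$ has a $K_5$ or $K_{3,3}$ minor disjoint from the new triangle edges, or absorb the new triangle into that minor; the cleanest route is to observe $G_i^{+}$ is non-planar and then pass to a $3$-connected minor (splitting off low-order separations via Lemma~\ref{lowordersepsminors}-style reasoning) that still contains $T_i$ as distinct branch vertices. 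Having a $3$-connected non-planar graph $H_i$ with $T_i = \{x_1^i,x_2^i,x_3^i\}$ marked, Theorem~\ref{K23(X)minors} gives a $K_{2,3}(T_i)$-minor; contracting one of the degree-$3$ branch sets of the $K_{2,3}$ into one of its neighbours promotes this to a $K_{3,1}(T_i)$-minor, i.e.\ a branch vertex $s_i$ joined by three internally disjoint paths to the three vertices of $T_i$ inside $G[A_i]$ (after undoing the added edges, which is legitimate since the $K_{3,3}$/$K_{2,3}$ structure and hence the three internally disjoint paths can be taken inside $G[A_i]$ up to rerouting around the at most three separator vertices).

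Once I have, for $i=1,2$, a vertex $s_i$ with three internally disjoint paths $Q_1^i,Q_2^i,Q_3^i$ in $G[A_i]$ from $s_i$ to $x_1^i,x_2^i,x_3^i$ respectively, and three disjoint $(T_1,T_2)$-paths $R_1,R_2,R_3$ in $G[B_1\cap B_2]$, I would relabel so that $R_j$ joins $x_j^1$ to $x_j^2$ for $j=1,2,3$ (possible since the $R_j$'s realize some bijection between $T_1$ and $T_2$ after permuting indices). Then the three branch sets $Q_j^1 \cup R_j \cup Q_j^2$ (for $j=1,2,3$) are pairwise disjoint connected subgraphs, each adjacent to $s_1$ and to $s_2$; together with two more branch vertices I still need, this is the skeleton of a $K_{3,4}$. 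The two extra ``right-hand'' vertices of the $K_{3,4}$ come from within $G[A_1]$ and $G[A_2]$: the $K_{2,3}(T_i)$-minor actually supplies \emph{two} vertices of the side of size two, so I should not collapse all the way to $K_{3,1}$ — rather, from the $K_{2,3}(T_i)$ I get vertices $s_i,s_i'$ each joined to all of $T_i$ by internally disjoint paths within $G[A_i]$ (with the two ``stars'' sharing only their leaves $T_i$). Taking $s_1, s_1'$ from the first side and $s_2, s_2'$ from the second as the four vertices of the degree-$3$ side, and the three path-unions $Q_j^1\cup R_j\cup Q_j^2$ as the three vertices of the degree-$4$ side, I get exactly a $K_{3,4}$-model in $G$: each $s$ or $s'$ is adjacent (via its star inside the relevant $G[A_i]$, extended along a $Q_j^i$) to each of the three long branch sets, and there are no required adjacencies among the $s$'s or among the long branch sets.

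The main obstacle I expect is bookkeeping the disjointness: the two stars inside $G[A_i]$ given by $K_{2,3}(T_i)$ are internally disjoint from each other only at the leaves $T_i$, so I must route the four ``legs'' to each $x_j^i$ so that $s_i$'s leg and $s_i'$'s leg to the same $x_j^i$ do not interfere — but this is precisely what a $K_{2,3}$-model provides (the branch set of $t_j$ is a single connected piece receiving one edge from $s_i$'s branch set and one from $s_i'$'s), so after contracting each $t_j$-branch set onto $x_j^i$ I genuinely get the required structure, and the outside paths $R_j$ live in $G[B_1\cap B_2]$ which is vertex-disjoint from both $A_1$ and $A_2$, so no conflict arises there. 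The secondary obstacle is justifying ``$G[A_i]$ non-planar $\Rightarrow$ I may extract a $3$-connected non-planar minor still carrying $T_i$ as three distinct marked vertices''; I would prove this by a standard splitter-type argument (repeatedly splitting off $1$- and $2$-separations that do not separate any two vertices of $T_i$, using that $G$ is connected through $T_i$ to the rest, and noting non-planarity is preserved by taking the appropriate block), invoking Lemma~\ref{lowordersepsminors} as needed. With these two points in hand the assembly of the $K_{3,4}$-model is the routine part.
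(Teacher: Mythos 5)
Your assembly of the $K_{3,4}$-model is the paper's: take a $K_{2,3}(T_i)$-minor inside each $G[A_i]$, identify the three leaf branch sets along the three disjoint $(T_1,T_2)$-paths in $G[B_1\cap B_2]$, and read off the four $s$-vertices (two from each $K_{2,3}$) as the size-$4$ class and the three glued chains as the size-$3$ class. You and the paper agree on this constructive part, which the paper states in a single sentence.

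The genuine gap is in the step you introduce to justify invoking Theorem~\ref{K23(X)minors}. The reduction you propose, ``pass to a $3$-connected non-planar minor of $G_i^{+}$ still carrying $T_i$ as three distinct marked vertices,'' cannot always be carried out, because Lemma~\ref{lowordersepsminors}-style localization is about \emph{unrooted} minors: it lets you discard one side of a $1$- or $2$-separation, but it does not guarantee the roots stay on the same side as the non-planar core. Concretely, let $G[A_1]$ be $K_5$ on $\{u,w_1,\dots,w_4\}$ together with $x_1,x_2,x_3$ each adjacent only to $u$, and $T_1=\{x_1,x_2,x_3\}$. Then $G[A_1]$ is non-planar yet has no $K_{2,3}(T_1)$-minor: each of the two $s$-branch sets would need a vertex adjacent to all three $x_j$, and $u$ is their only common neighbour. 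Building $G$ from two such pieces glued by the three edges $x_jy_j$ satisfies every hypothesis of the corollary but is a tree of blocks (a $K_5$, a $K_5$, and a theta graph) joined at cut vertices $u,u'$, none of which contains a $K_{3,4}$-minor, so the conclusion fails. The corollary really needs $G[A_i]$ to be $3$-connected --- which holds in the $F_T$-web context that motivates it, but is not stated --- and the paper's one-line proof silently assumes this when it applies the Robertson--Seymour theorem to $G[A_i]$ directly. You correctly noticed the missing hypothesis check; the splitter-type reduction you sketch does not close it.
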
 

\begin{proof}
Let $X_{1} = T_{1}$ and $X_{2} = T_{2}$. By Theorem \ref{K23(X)minors}, $G[A_{i}]$ has a $K_{2,3}(X_{i})$ minor for $i \in \{1,2\}$. Then since $A_{1} \cap A_{2} = \emptyset$ and there are three disjoint $(T_{1},T_{2})$-paths, we can contract $G[A_{i}]$ to a $K_{2,3}(X_{i})$-minor and then contract along the three disjoint $(T_{1},T_{2})$-paths to obtain a $K_{3,4}$-minor. 
\end{proof}

Recall that for graphs without a $K_{4}(X)$-minor, they fall into one of six classes that are all formed based off of taking a graph $H$ and then constructing the graph $H^{+} = (H,F)$. Then for that scenario, the above corollary says to not have a $K_{3,4}$-minor, for every pair of triangles $T_{1}$ and $T_{2}$ which are not separated by a $2$-separation, at most one of $T_{1} \cup F_{T_{1}}$, and $T_{2} \cup F_{T_{2}}$ is non-planar. 

We could play the exact same game with $K_{3,3}(X)$-minors and rooted $K_{3,1}(X)$-minors. We have the following immediate proposition. 

\begin{proposition}
Let $G$ be a graph and let $(A,B)$ be a $3$-separation where $A \cap B = X$. If $G[A]$ has a $K_{3,3}(X)$-minor and $G[B]$ has a $K_{3,1}(X)$ minor, then $G$ has a $K_{3,4}$-minor.
\end{proposition}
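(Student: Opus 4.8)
The statement asks us to show that if $(A,B)$ is a $3$-separation of $G$ with $A \cap B = X = \{x_1, x_2, x_3\}$, and $G[A]$ has a $K_{3,3}(X)$-minor while $G[B]$ has a $K_{3,1}(X)$-minor, then $G$ has a $K_{3,4}$-minor. The plan is to build the $K_{3,4}$-model explicitly by gluing together the two given rooted models along the three shared vertices of $X$. Recall that $K_{3,4}$ has parts $\{s_1, s_2, s_3, s_4\}$ and $\{t_1, t_2, t_3, t_4\}$; I will produce branch sets $\{G_s : s\} \cup \{G_t : t\}$ with the required adjacencies.

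First I would unpack the two hypotheses using Proposition \ref{branchsets}. A $K_{3,1}(X)$-minor of $G[B]$ gives branch sets: one for the central vertex $s_1$ of $K_{3,1}$, call it $G_B^{s_1} \subseteq B$, and three branch sets containing $x_1, x_2, x_3$ respectively, each adjacent to $G_B^{s_1}$. A $K_{3,3}(X)$-minor of $G[A]$ gives six branch sets in $A$: three ``$s$-side'' branch sets $S_1, S_2, S_3$ and three ``$t$-side'' branch sets containing $x_1, x_2, x_3$, with every $S_i$ adjacent to every $x_j$-branch set. Since branch sets in the $G[A]$-model lie in $A$ and those in the $G[B]$-model lie in $B$, the only vertices shared between the two models are (subsets of) $X$, which lie in $A \cap B$. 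The key point is that each of $x_1, x_2, x_3$ appears as a root in \emph{both} models, so the $t$-branch set of $x_j$ from the $G[A]$-model and the branch set of $x_j$ from the $G[B]$-model both contain $x_j$; taking their union gives a connected subgraph of $G$ (connectivity is preserved since both pieces contain the common vertex $x_j$, and within $A$ resp.\ $B$ each piece is connected).

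Next I would assemble the final $K_{3,4}$-model. Set $G_{t_j}$ (for $j = 1,2,3$) to be the union of the $x_j$-branch set from the $G[A]$-model with the $x_j$-branch set from the $G[B]$-model; these three are pairwise disjoint since the original roots are distinct and the $A$- and $B$-pieces only overlap on $X$. Set $G_{t_4} = G_B^{s_1}$, the central branch set of the $K_{3,1}$-model (living in $B \setminus X$ after we discard any $X$-vertices it might contain, or more carefully: it is disjoint from the three $x_j$-branch sets of the $G[B]$-model by hypothesis, hence disjoint from the $G_{t_j}$). Set $G_{s_i} = S_i$ for $i = 1,2,3$ (the $s$-side branch sets of the $K_{3,3}$-model, which live in $A$ and are disjoint from $B\setminus X$, hence from $G_{t_4}$, and disjoint from each other and from the $x_j$-pieces). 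Then I need a fourth $s$-branch set $G_{s_4}$: but in $K_{3,4}$ there are four $s$-vertices, each adjacent to all four $t$-vertices. I must recheck — we only have three $S_i$ from $K_{3,3}$. So instead I should orient things so that the side of size $3$ is $\{s_1,s_2,s_3,s_4\}$ is wrong; rather, $K_{3,4}$ has one side of size $3$ and one of size $4$. Let the size-$3$ side be $\{u_1,u_2,u_3\}$ and the size-$4$ side be $\{w_1,w_2,w_3,w_4\}$. I take $G_{u_i} = S_i$ for $i=1,2,3$; take $G_{w_j} = (\text{$x_j$-piece from }G[A]) \cup (\text{$x_j$-piece from }G[B])$ for $j=1,2,3$; take $G_{w_4} = G_B^{s_1}$. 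I then verify adjacencies: $u_i \sim w_j$ for $j \le 3$ holds because $S_i$ is adjacent to the $x_j$-branch set in the $K_{3,3}(X)$-model (this edge lies in $A$); and $u_i \sim w_4$ requires $S_i$ adjacent to $G_B^{s_1}$ — this is the gap in the naive argument, since $S_i \subseteq A \setminus X$ and $G_B^{s_1} \subseteq B \setminus X$ have no edge between them.

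The resolution, and the main obstacle, is to route the $u_i$--$w_4$ adjacencies through $X$. Observe that each $S_i$ is adjacent (in the $K_{3,3}$-model) to all three $x_j$-branch sets, and $G_B^{s_1}$ is also adjacent to all three $x_j$-branch sets. So I should instead let each $G_{w_j}$ ($j=1,2,3$) be just large enough to realize its adjacencies: specifically, keep $G_{w_j}$ to be the union of the two $x_j$-pieces as above, giving $u_i \sim w_j$ for all $i,j \le 3$. Then for $w_4$, rather than taking the central $K_{3,1}$ branch set alone, I note we do not in fact need a fourth $w$-vertex to touch the $u_i$'s in a way that clashes — wait, we do, since $w_4$ is on the size-$4$ side and every $w$-vertex is adjacent to every $u$-vertex. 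The correct fix is that the $K_{3,3}(X)$-minor of $G[A]$ gives us, after absorbing, a structure where we can split one of the $t$-branch sets; more cleanly, I would argue that $G[A]$ having a $K_{3,3}(X)$-minor means $A$ is ``$3$-connected enough'' that there are in fact three internally-disjoint paths from each $x_j$ to a common structure — but the cleanest route is: the $K_{3,3}(X)$-model plus the central branch set of the $K_{3,1}$-model, connected to $X$, together form a $K_{4,3}$-subdivision-like pattern once we contract each $x_j$-piece to the single vertex $x_j$. Concretely, contract $G[A]$ onto its $K_{3,3}(X)$-minor so that $A$ becomes $K_{3,3}$ with parts $\{S_1,S_2,S_3\}$ and $\{x_1,x_2,x_3\}$; contract $G[B]$ onto its $K_{3,1}(X)$-minor so $B$ becomes a vertex $z$ adjacent to $x_1,x_2,x_3$. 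The resulting minor of $G$ has vertex set $\{S_1,S_2,S_3,x_1,x_2,x_3,z\}$ with edges $S_i x_j$ (all $i,j$) and $z x_j$ (all $j$): this is exactly $K_{3,4}$ with size-$3$ side $\{x_1,x_2,x_3\}$ and size-$4$ side $\{S_1,S_2,S_3,z\}$. That is the whole proof, and it sidesteps the apparent obstacle entirely — the $t$-roots $x_1,x_2,x_3$ of both rooted minors play the role of the size-$3$ side of $K_{3,4}$, and the four ``hubs'' $S_1,S_2,S_3,z$ form the size-$4$ side. I would write this up as a short contraction argument: reduce $G[A]$ to $K_{3,3}$ rooted at $X$, reduce $G[B]$ to the star $K_{3,1}$ rooted at $X$, and observe the union is $K_{3,4}$; the only thing to check carefully is that the two reductions take place on edge sets internal to $A$ and $B$ respectively and hence do not interfere, which holds because $A \cap B = X$ consists of the roots, which are never contracted into anything else.
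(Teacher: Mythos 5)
Your final argument — contract $G[A]$ to its rooted $K_{3,3}(X)$-minor and $G[B]$ to its rooted $K_{3,1}(X)$-minor, with $\{x_1,x_2,x_3\}$ forming the size-$3$ side and the four ``hubs'' $S_1,S_2,S_3,z$ forming the size-$4$ side — is exactly the paper's proof. The false start placing $X$ on the size-$4$ side adds nothing and could be cut; the paper simply states the correct orientation directly.
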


\begin{proof}
Contract $G[B]$ down to the $K_{3,1}(X)$-minor and $G[A]$ down to the $K_{3,3}(X)$-minor. Since $A \cap B = X$, there is a vertex from the $K_{3,1}(X)$-minor which is adjacent to all of $X$, and there are $3$ vertices from the $K_{3,3}(X)$-minor adjacent to all of $X$, and all of these vertices are distinct. Therefore $G$ has a $K_{3,4}$-minor.  
\end{proof}

Deciding if a graph has a $K_{3,1}(X)$-minor is easy. 

\begin{observation}
Let $G$ be a graph and $X = \{a,b,c\} \subseteq V(G)$. Then $G$ has a $K_{3,1}(X)$-minor if and only if there is no $X' \subset X$ such that $X'$ is the vertex boundary of a separation $(A,B)$ such that $B = X \setminus X'$.
\end{observation}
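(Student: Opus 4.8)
The statement to prove is that $G$ has a $K_{3,1}(X)$-minor if and only if there is no proper nonempty subset $X' \subsetneq X$ that is the vertex boundary of a separation $(A,B)$ with $B = X \setminus X'$ (equivalently, $B \cap A = X'$ and $B \setminus A = X \setminus X'$). I would begin by unwinding the definition of $K_{3,1}(X)$-minor. Since $K_{3,1}$ is the star with center $s_1$ and leaves $t_1,t_2,t_3$, Proposition~\ref{branchsets} says $G$ has a $K_{3,1}(X)$-minor exactly when there exist four pairwise disjoint connected subgraphs $G_{s_1}, G_{t_1}, G_{t_2}, G_{t_3}$ with $a \in G_{t_1}$, $b \in G_{t_2}$, $c \in G_{t_3}$, and with an edge from $G_{s_1}$ to each of $G_{t_1}, G_{t_2}, G_{t_3}$. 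By contracting each leaf branch set onto its root and absorbing leftover vertices into $G_{s_1}$, this is equivalent to the existence of a connected subgraph $S$ of $G - \{a,b,c\}$ (or a single vertex of $\{a,b,c\}$ with the degenerate reading) together with three internally disjoint paths from $S$ to $a$, $b$, $c$ respectively, meeting $\{a,b,c\}$ only at their ends; more cleanly, it is equivalent to the existence of a vertex $v$ (possibly after contraction) with three internally disjoint $(v,\{a,b,c\})$-paths, one ending at each of $a,b,c$. So the natural reformulation is: $G$ has a $K_{3,1}(X)$-minor iff $G$ contains a subdivided-star-like structure joining $a,b,c$ to a common ``hub,'' i.e. a \emph{Y} rooted at $a,b,c$.

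For the main equivalence I would argue the contrapositive in the forward direction and Menger-type reasoning in the reverse. Suppose first that such a separation $(A,B)$ exists with $A \cap B = X'$, $B \setminus A = X \setminus X'$, and $1 \le |X'| \le 2$ (note $|X'| \ne 0$ since then $X \setminus X'$ would have no connection to the rest, and $|X'| \ne 3$ is excluded because $X' \subsetneq X$). I would show there is no $K_{3,1}(X)$-minor: any hub branch set $S$ together with paths to $a,b,c$ would need, for each vertex of $X \setminus X'$, a path into $S$; but every such path must pass through $X'$, so $|X'| \ge |X \setminus X'|$ disjoint paths are forced, and checking the two cases ($|X'|=1$ with $|X\setminus X'|=2$, and $|X'|=2$ with $|X\setminus X'|=1$) shows the required disjointness fails in the first case and a careful count is needed in the second — here one uses that the hub and the two-vertex cut can interact, so I would instead phrase it as: contracting $A$ down, $X \setminus X'$ lies in a component of $G - X'$ disjoint from the hub, and since $|X'| < 3$ there cannot be three internally disjoint paths from the hub to all three of $a,b,c$. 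Conversely, suppose no such separation exists. I would consider the auxiliary graph $G^+$ obtained by adding a new vertex $w$ adjacent to $a$, $b$, and $c$, and apply Menger's Theorem: $G$ has a $K_{3,1}(X)$-minor iff $G^+$ has three internally disjoint $(w, \{a,b,c\})$-paths hitting $\{a,b,c\}$ in distinct vertices, which (after a short argument that the three paths can be taken to end at $a$, $b$, $c$ respectively) holds iff there is no vertex cut of size $\le 2$ in $G$ separating $w$-side structure appropriately — and the failure of that is precisely the negation of the existence of the bad separation.

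The step I expect to be the main obstacle is making the ``only if'' direction fully rigorous in the case $|X'| = 2$: when $X' = \{u,v\}$ and $X \setminus X' = \{c\}$ say, one must rule out a hub $S$ that lives partly on the $B$-side and uses both $u$ and $v$ as the points where paths to $a, b$ enter, while the path to $c$ stays inside $B$ — a naive count of ``$2 < 3$'' is not quite the argument, since the hub is a connected set, not a single vertex, and the path to $c$ need not pass through the cut. The clean fix is to contract the hub branch set $S$ to a single vertex $v_S$; then $v_S$ lies on one side of the separation $(A,B)$, and one checks that whichever side it lies on, at least one of $a, b, c$ is separated from $v_S$ by a set of size $\le 2$, contradicting the need for three internally disjoint paths out of $v_S$. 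I would also need a small lemma that in searching for the $K_{3,1}(X)$-model we may assume the leaf branch sets are single vertices $a, b, c$ (absorbing the rest into $S$), which is immediate from connectivity, and a remark handling the degenerate situations where the hub coincides with one of $a,b,c$ — these are subsumed by allowing $S$ to be a trivial subgraph. With these reductions in place the equivalence follows directly from Menger's Theorem applied in $G^+$.
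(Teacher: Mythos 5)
Your reformulation is the right starting point, and after the absorption argument you describe it yields the clean statement: $G$ has a $K_{3,1}(X)$-minor if and only if some component of $G-X$ has a neighbor at each of $a$, $b$, and $c$. But the Menger step you lean on for the ``if'' direction is vacuous. In $G^{+}$, the vertex $w$ is adjacent to exactly $a,b,c$, so the three edges $wa$, $wb$, $wc$ are already three internally disjoint $(w,\{a,b,c\})$-paths hitting distinct vertices, whatever $G$ looks like; they say nothing about a $K_{3,1}(X)$-minor. What one would actually want is three internally disjoint paths from $w$ to a common hub $v$, but $v$ is existentially quantified over $V(G)\setminus X$ and is not supplied by Menger's theorem, which speaks to a fixed source--sink pair. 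To make this direction rigorous you would have to argue component-by-component in $G-X$, not via a single application of Menger in $G^{+}$.

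There is a more basic problem: under the only coherent reading of ``$B=X\setminus X'$'' --- yours, $A\cap B=X'$ and $B\setminus A=X\setminus X'$, so $B=X$ --- the biconditional is simply false. The ``separation $\Rightarrow$ no minor'' direction is fine (your instinct to contract the hub to $v_S$ and note that $v_S\in A\setminus B$ cannot see $X\setminus X'\subseteq B\setminus A$ is exactly right, and this is essentially what the paper's proof gestures at). The converse fails. Take $G$ to be two triangles $abu$ and $acv$ glued at $a$, with $X=\{a,b,c\}$: the components of $G-X$ are $\{u\}$ (adjacent only to $a,b$) and $\{v\}$ (adjacent only to $a,c$), so there is no $K_{3,1}(X)$-minor; yet each of $a,b,c$ has a neighbor outside $X$, so there is no separation $(A,B)$ with $A\cap B=X'\subsetneq X$ and $B=X$. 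The paper's own proof is no more complete here: its two paragraphs both establish ``minor $\Rightarrow$ no separation'' (the second, despite being labelled the converse, is just the contrapositive of the first), so the ``no separation $\Rightarrow$ minor'' direction is never argued --- and cannot be, because the negation of ``$G$ has a $K_{3,1}(X)$-minor'' is a universal statement over all components of $G-X$, not the non-existence of a single separation.
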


\begin{proof}
It is immediate to see if we have such a separation $X'$ that there is no $K_{3,1}(X)$-minor. For the converse if $\{G_{x} | x \in V(K_{3,1})\}$ is a model of a $K_{3,1}(X)$-minor in $G$. Then we can delete the branch sets containing any strict subset of $X$ and remain connected by definition of a $K_{3,1}(X)$-minor. 
\end{proof}

However it appears to be of great difficulty to give a nice characterization of $K_{3,3}(X)$-minors. The best result known appears to be the following bound.

\begin{theorem}[\cite{extermalrootedminorleif}]
\label{rootedk33minors}
Let $G$ be a $3$-connected graph and let $X$ be a set of three vertices. If $|E(G)| \geq 4|V(G)| -9$, then $G$ has a $K_{3,3}(X)$-minor. 
\end{theorem}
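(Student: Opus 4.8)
The final statement to prove is Theorem \ref{rootedk33minors}, which I'll restate: Let $G$ be a $3$-connected graph and $X$ a set of three vertices. If $|E(G)| \ge 4|V(G)| - 9$, then $G$ has a $K_{3,3}(X)$-minor. This is attributed to \cite{extermalrootedminorleif} (Leif's thesis on extremal rooted minors), so the proof here will be a sketch following an extremal-function argument.

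The plan is to argue by induction on $|V(G)|$, taking a counterexample with the fewest vertices. First I would handle small base cases: if $|V(G)|$ is small enough that $4|V(G)|-9$ forces $G$ to be (close to) complete, one checks directly that $K_{3,3}(X)$ appears — for instance when $G$ is a sufficiently large complete graph the three vertices of $X$ can be taken as one side of the bipartition and any three other vertices as the other side. Then for the inductive step I would look at the neighbourhood structure around the vertices of $X$ and the low-degree vertices of $G$. The edge bound $|E(G)| \ge 4|V(G)| - 9$ means the average degree is just under $8$, so there must be many vertices of degree at most $7$; I would pick such a vertex $v \notin X$ and attempt to either delete it or contract an edge at it to produce a smaller graph $G'$ still satisfying the hypothesis (after possibly adding edges among $N(v)$ to restore $3$-connectivity, using that $G$ is $3$-connected so $|N(v)| \ge 3$). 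The arithmetic to check is that removing $v$ and its $\le 7$ incident edges, then adding back at most $\binom{3}{2}=3$ edges to keep things $3$-connected, leaves $|E(G')| \ge |E(G)| - 7 + \ge \dots \ge 4|V(G')| - 9$; this is where the constant $4|V(G)|-9$ is exactly calibrated. When $v \in X$, I would instead contract an edge from $v$ to a neighbour not creating a problem, using $3$-connectivity to keep a rooted minor model intact, and then lift the $K_{3,3}(X)$-minor back.

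The second ingredient I would invoke is the relationship to rooted $K_{2,3}$-minors via Theorem \ref{K23(X)minors} and to the rooted $K_{3,1}$ observation stated just above: a $K_{3,3}(X)$-minor can be built by gluing a $K_{2,3}(X)$-type structure with additional connectivity. Concretely, in a $3$-connected graph, by Menger there are three internally disjoint paths from $X$ to any other triple of vertices, so once one finds a small dense piece (forced by the edge count) realizing one half of $K_{3,3}$ and routes three disjoint paths, the other three "branch vertices" can be absorbed. The reductions in the earlier connectivity sections (Lemma \ref{lowordersepsminors} for $3$-connected targets, and the cut-vertex / $2$-separation reductions) let me assume $G$ has no small separations beyond what $3$-connectivity allows, so the extremal argument applies cleanly.

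The main obstacle will be the bookkeeping in the inductive step: choosing the right vertex to delete or contract so that (a) the edge-count hypothesis survives the operation with the precise constant $-9$, (b) $3$-connectivity is preserved, and (c) a $K_{3,3}(X)$-minor in the reduced graph genuinely lifts to one in $G$ respecting the roots. In particular, when the low-degree vertex lies in $X$ or is adjacent to two or three vertices of $X$, one must be careful that contraction does not merge two roots; here I would use that $G$ is $3$-connected to find an edge at $v$ avoiding the bad merges, or split $v$ appropriately. I would also need to dispose of a handful of sporadic small graphs (those near the extremal bound that are genuinely $K_{3,3}(X)$-free) by hand, mirroring how the extremal function $4|V|-9$ is tight. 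Since this theorem is quoted from \cite{extermalrootedminorleif}, a full proof is outside the scope of the excerpt, but the outline above — minimal counterexample, delete/contract a low-degree vertex with careful edge-accounting, restore connectivity, lift the rooted minor, check base cases — is the approach I would carry out.
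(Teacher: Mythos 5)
The paper does not prove this theorem; it is stated purely as a citation to J{\o}rgensen and Kawarabayashi (the reference \cite{extermalrootedminorleif}) and is used as a black box, with the text immediately after remarking that whether the bound $4|V(G)|-9$ is tight is left open in that source. So there is no ``paper's own proof'' against which to compare your sketch.

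On the sketch itself, the high-level shape (minimal counterexample, reduce on a low-degree vertex, restore $3$-connectivity, lift the rooted minor) is standard for this kind of extremal result, but the arithmetic you give for the reduction step does not close. Deleting a vertex of degree $d$ loses $d$ edges while the threshold $4|V|-9$ drops by only $4$, so you need $d \le 4$, not $d \le 7$; $3$-connectivity gives minimum degree $\ge 3$, but when the average degree is near $8$ there need not be any vertex of degree $\le 4$, so deletion alone cannot drive the induction. The operation that genuinely preserves the density hypothesis is \emph{contraction} of an edge lying in at most three triangles (you lose one vertex, $1+$(number of common neighbours) edges, and the threshold drops by $4$), and an edge-minimal counterexample can be assumed to have exactly $4|V|-9$ edges, which constrains what contractions are safe. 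Your proposed fix of deleting $v$ and adding a few edges inside $N(v)$ also needs justification in the lifting direction: a $K_{3,3}(X)$-minor in the modified graph that uses several of the added edges cannot in general be routed back through the single vertex $v$, since those added edges would all be realised by paths through $v$ and hence not be vertex-disjoint. Finally, invoking Theorem \ref{K23(X)minors} together with three disjoint $(X,Y)$-paths only produces a $K_{2,3}$ glued to a claw (i.e.\ a $K_{3,4}$ construction), not directly a rooted $K_{3,3}$ on the chosen side of the bipartition; the actual argument in \cite{extermalrootedminorleif} will need to be consulted for how the three ``other side'' branch sets are produced from the density hypothesis.
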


It is left as an open problem in \cite{extermalrootedminorleif} if this bound is tight. While the problem of determining when a graph has $K_{3,4}$-minor appears to be quite difficult,  Bohme et al. have a result on $K_{3,k}$-minors, $k \in \mathbb{N}$, for large highly connected graphs. 

\begin{theorem}[\cite{large7connectedk34}]
For any positive integer $k$, there exists a constant $N(k)$ such that every $7$-connected graph $G$ on at least $N(k)$ vertices contains $K_{3,k}$ as a minor.
\end{theorem}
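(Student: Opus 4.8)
The plan is to argue by contradiction. Fix $k$, and suppose that for arbitrarily large $N$ there is a $7$-connected graph $G$ with $|V(G)| \ge N$ and no $K_{3,k}$-minor; I will show this fails once $N$ exceeds an explicit function of $k$. The overall strategy is: use the fact that $K_{3,k}$-minor-free graphs have a restricted global shape, use the $7$-connectivity of $G$ to collapse that shape down to a near-embedding in a surface of bounded genus, and then get a contradiction from a degree count, since a $7$-connected graph has minimum degree at least $7$ while bounded-genus graphs have average degree below $7$ when they are large.

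\textbf{Step 1 (structure of $K_{3,k}$-minor-free graphs).} Since $K_{3,k}$ is a subgraph, hence a minor, of $K_{k+3}$, any graph with no $K_{3,k}$-minor also has no $K_{k+3}$-minor, so the graph minor structure theorem (\cite{graphstructuretheorem}) applies with $t = k+3$: there are constants $g=g(k)$, $a=a(k)$, $w=w(k)$ so that $G$ is obtained by clique-sums of order at most $3$ from graphs that embed in a surface of Euler genus at most $g$ after deleting at most $a$ apex vertices and adding at most $a$ vortices of width at most $w$. One would prefer the sharper, $K_{3,k}$-specific decomposition (with polynomial bounds on $g,a,w$), and that refinement is what \cite{large7connectedk34} actually carries out, but the qualitative statement is enough to organise the argument.

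\textbf{Step 2 (collapsing the decomposition using $7$-connectivity).} A clique-sum of order at most $3$ yields a proper separation of $G$ of order at most $3$ unless one side is contained in the clique; as $G$ is $7$-connected it has no such separation, so $G$ is a single almost-embeddable piece. Next I argue the apex set and vortices are small. The boundary of a width-$w$ vortex is covered by $O(w)$ vertices, so together with the $a$ apices it is confined to a subgraph $H$ of $G$ meeting the rest in $O(a+w)$ vertices; if $|V(H)|$ were more than a bounded constant, then a Menger/fan argument inside $G$ would either produce a forbidden separation of order at most $6$ or would already build a $K_{3,k}$-minor (take three boundary-or-apex vertices as the $3$-side and $k$ internally disjoint paths reaching $k$ disjoint pieces — exactly the phenomenon seen for "thick" highly connected graphs). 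Hence there is a set $A$ with $|A| \le a'=a'(k)$ such that $G-A$ embeds in a fixed surface $\Sigma$ of Euler genus at most $g$, and moreover one can push $a'$ down to a small constant: four or more apices over a bounded-genus graph on many vertices already force a $K_{3,k}$-minor by picking three apices as the $3$-side and extracting $k$ disjoint connected neighbourhoods in the embedded part, so $a' \le 3$, and a closer look (using that $G-A$ is $(7-a')$-connected) gives $a' \le 2$.

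\textbf{Step 3 (counting contradiction).} A simple graph on $n \ge 3$ vertices embeddable in a surface of Euler genus $g$ has at most $3n+3g-6$ edges. Applying this to $G - A$ and then re-attaching $A$ (which contributes at most $a'$ to each degree and at most $(a')^2$ edges among its own vertices), $G$ has average degree at most $6 + 2a' + O(g/n)$; combining this with the fact that $G-A$ is $\ge 5$-connected and embedded in $\Sigma$, so has only $O(g)$ faces of length greater than $6$, one sharpens the estimate so that for $n$ larger than a function of $g$ and $a'$ the average degree of $G$ is strictly below $7$. This contradicts $7$-connectivity, which forces minimum degree at least $7$. Taking $N(k)$ larger than this threshold completes the proof. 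The main obstacle is Step 2 — extracting from the structure theorem that $7$-connectivity forces the clique-sum tree, the vortices, and the apex set to be (essentially) trivial, and in particular bounding the number of apices by a small absolute constant; this is where all the work of \cite{large7connectedk34} lies and where the value of $N(k)$ is determined, whereas the Euler-formula endgame is routine once a surface embedding of $G$ minus a bounded set is in hand.
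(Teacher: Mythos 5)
This statement is quoted in the thesis from \cite{large7connectedk34} and is not proved there, so there is no proof in the paper for you to match; I can only assess your proposal on its own terms, and it has a genuine gap in Step~3.

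The degree-count endgame does not produce the promised contradiction. You correctly note that after reattaching the apex set $A$ (with $|A| = a'$), the average degree of $G$ is bounded by roughly $6 + 2a' + O(g/n)$. But you then claim this can be pushed below $7$ for large $n$, and that is false for every $a' \ge 1$, no matter how the surface estimate is sharpened. Concretely, take a $6$-connected triangulation $T$ of a fixed surface on $n$ vertices and add a single universal vertex $v$. The graph $T + v$ is $7$-connected (any $6$-set missing $v$ cannot separate it, and any $6$-set containing $v$ leaves a $5$-set that cannot separate $T$), it is exactly ``bounded genus plus one apex,'' and its average degree is close to $8$. So a $7$-connected graph can perfectly well be almost-embeddable with $a' = 1$ and have average degree well above $7$; nothing in your Euler-formula bookkeeping rules this out. (This graph is not a counterexample to the theorem itself --- $T$ contains a large $K_{2,k}$-minor, which together with $v$ gives $K_{3,k}$ --- but it is a counterexample to your claimed inequality, and it shows where the contradiction must actually come from: one has to build the $K_{3,k}$-minor inside the near-embedding, not count edges.)

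Put differently, the only way a degree count alone could finish is if you could drive $a'$ all the way to $0$, i.e.\ show that a large $7$-connected $K_{3,k}$-minor-free graph genuinely embeds in a bounded-genus surface with no apices and no vortices. You do not establish that, and the $T + v$ example shows you cannot hope to: its genus is unbounded as $n$ grows, so the structure theorem really will hand you an apex there, yet it is $7$-connected. So even granting all of Step~2 (itself only sketched, as you acknowledge), Step~3 is not a ``routine endgame'' --- it is where the argument currently breaks. The actual proof in \cite{large7connectedk34} has to extract $K_{3,k}$ constructively from the near-embedding, typically by choosing a suitable triple of vertices (apices, vortex-boundary vertices, or contracted paths) as the $3$-side and using the connectivity via Menger-type routing to produce $k$ disjoint connected pieces each attached to all three; a counting argument cannot substitute for that.
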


While this is of no use when looking at reduciblity of both Symanzik polynomials and $4$-external momenta, it does apply to reducibility with respect to the first Symanzik polynomial. 

Shifting gears, we left as an open problem whether or not $K_{6}$ was reducible with respect to the first Symanzik polynomial. We show here that even if $K_{6}$ is not reducible, we do not gain that many additional graphs which we know are not reducible. To do this we need to introduce some definitions.

 Let $G$ be a graph. If $u$ and $v$ are non-adjacent vertices in $G$, then adding an edge is simply creating the graph $G \cup \{uv\}$.  Now let $v \in V(G)$ such that $\deg(v) \geq 4$. To split $v$ is to first delete $v$ from $G$, and then add two vertices to $G$, $v_{1}$ and $v_{2}$ such that $v_{1}v_{2} \in E(G)$, each neighbour of $v$ in $G$ is a neighbour of exactly one of $v_{1}$ or $v_{2}$, and $\deg(v_{i}) \geq 3$ for $i \in \{1,2\}$. Now we give a well-known result of Tutte.
 
\begin{theorem}[\cite{wheelsandwhirls}]
A graph is $3$-connected if and only if it is obtained from a wheel by repeatedly adding edges and splitting vertices.
\end{theorem}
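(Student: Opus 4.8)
The plan is to prove the two implications separately, with essentially all of the content in the ``only if'' direction.

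\textbf{Easy direction.} First I would record that every wheel $W_{n}$ is $3$-connected and that the two allowed moves preserve $3$-connectivity. Adding an edge to a $3$-connected graph cannot destroy any required system of internally disjoint paths, so $G\cup\{uv\}$ is again $3$-connected. For splitting a vertex $v$ of degree at least $4$ into adjacent vertices $v_{1},v_{2}$ with $\deg(v_{i})\geq 3$: given $x,y$ in the new graph $G'$, take three internally disjoint paths in $G$ between $x'$ and $y'$, where $x'=x$ unless $x\in\{v_{1},v_{2}\}$ in which case $x'=v$ (similarly for $y$), and convert them to internally disjoint $(x,y)$-paths in $G'$ by rerouting at most one path through the edge $v_{1}v_{2}$; the degree hypotheses guarantee that $v_{1}$ and $v_{2}$ are not cut vertices and that $|V(G')|\geq 4$. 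Hence anything built from a wheel by these moves is $3$-connected.

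\textbf{Hard direction, strategy.} For the converse I would induct on $|V(G)|+|E(G)|$. The base case is $K_{4}=W_{3}$, the unique smallest $3$-connected graph, which is already a wheel. For the inductive step, let $G$ be $3$-connected and not itself a wheel; it suffices to exhibit a $3$-connected graph $G'$ with strictly smaller $|V|+|E|$ from which $G$ is obtained by a single edge addition or a single vertex split, since applying induction to $G'$ then finishes. There are two cases. If some edge $e$ has $G\setminus e$ still $3$-connected, put $G'=G\setminus e$. Otherwise $G$ is \emph{minimally $3$-connected}, and I would find an edge $e=uv$ such that $u$ and $v$ have no common neighbour and $G/e$ is $3$-connected; then $G'=G/e$ has one fewer vertex, the contracted vertex has degree $\deg(u)+\deg(v)-2\geq 4$ (both endpoints have degree $\geq 3$), and $G$ is recovered from $G'$ by splitting that vertex back into $u$ and $v$. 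The ``no common neighbour'' condition is exactly what makes this a legitimate split, since a split partitions $N(v)$ between $v_{1}$ and $v_{2}$; note also that $G/e$ is then automatically simple, so the whole induction stays inside the class of simple graphs.

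\textbf{Main obstacle.} The heart of the matter, and where I expect the real work, is the claim that a minimally $3$-connected graph $G$ which is not a wheel has a contractible edge whose ends have no common neighbour. This is Tutte's wheel lemma. I would attack it with the known structure of minimally $3$-connected graphs (Halin's theorem that such a graph has a vertex of degree $3$, that degree-$3$ vertices cannot be too clustered, etc.) together with a local analysis of failures: if $G/e$ is not $3$-connected then $G/e$ has a $2$-separation, and pulling this back to $G$ yields either a $3$-vertex cut meeting $\{u,v\}$ or a triangle through $e$; assuming that \emph{every} edge fails forces $G$ into the rim-and-hub configuration of a wheel, a contradiction. One must also verify that wheels are genuinely the irreducible objects, so that the induction legitimately terminates: a wheel is minimally $3$-connected (deleting any rim or spoke edge creates a degree-$2$ vertex) and admits no valid un-split (the ends of a rim edge share the hub, and the ends of a spoke share two rim vertices), so neither reduction move applies to it.

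\textbf{Bookkeeping.} Finally I would note the minor point that both reductions keep the graph simple -- deleting a redundant edge obviously does, and the contraction above was chosen precisely to avoid parallel edges -- so that the operations ``adding an edge'' and ``splitting a vertex'' as defined in the excerpt suffice and never need to mention multigraphs.
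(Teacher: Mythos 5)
The paper does not supply a proof here: the statement is Tutte's Wheel Theorem, cited from the literature, and is used only in combination with Seymour's Splitter Theorem to deduce Corollary~\ref{k6minors}. So there is no in-paper argument for me to compare your sketch against, and I can only assess it on its own terms.

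Your outline is the standard one --- induction on $|V(G)|+|E(G)|$ hinging on Tutte's Wheel Lemma (every $3$-connected graph that is not a wheel has an edge whose deletion or contraction keeps it $3$-connected) --- and the strategy is sound, but two steps are stated more confidently than they deserve. In the easy direction, ``rerouting at most one path through $v_1v_2$'' does not obviously work when $x$ or $y$ is one of $v_1,v_2$: if $x=v_1$ and all the Menger paths in $G$ leave $v$ via $v_2$-side neighbours, you would have to push more than one of them through the single edge $v_1v_2$. A cut-set argument (pushing a hypothetical $\leq 2$-cut of $G'$ back to a $\leq 2$-cut of $G$) or a fan-lemma argument closes this cleanly. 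In the hard direction, you ask for a contractible edge $uv$ whose endpoints have \emph{no common neighbour}, so that $G$ is recovered from $G/uv$ by a single split. That is a genuine strengthening of what Tutte's lemma gives you, and it is not clear it is always available: the guaranteed contractible edge may lie in a triangle. The textbook remedy is to contract any edge that preserves $3$-connectivity, simplify, and then recover $G$ from $G/uv$ by one split \emph{followed by} adding back the edges lost to simplification; you also need to confirm that the contracted vertex has degree at least $4$ so that the split is even legal under the definition used in the thesis. Neither issue is fatal --- the theorem is true and your plan is the right one --- but the sketch has real gaps at precisely the two places that make the proof nontrivial, and your ``bookkeeping'' paragraph, which leans on the no-common-neighbour assumption to keep everything simple, inherits the second gap.
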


A partial extension is given by Seymour's splitter theorem.

\begin{theorem}[\cite{splittertheorem}]
\label{splittertheorem}
Suppose a $3$-connected graph $H \neq W_{3}$ is a proper minor of a $3$-connected graph $G \neq W_n$. Then $G$ has a minor $J$, which is obtained from $H$ by either adding an edge or splitting a vertex.
\end{theorem}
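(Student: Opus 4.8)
The plan is to prove the statement by induction on $|V(G)| + |E(G)|$, with Tutte's Wheel Theorem (stated just above) as the engine. It is convenient to rephrase the conclusion: we must exhibit a minor $J$ of $G$ such that either $J \setminus e \cong H$ for some edge $e$ of $J$ (``$J$ is obtained from $H$ by adding an edge''), or $J / e \cong H$ for some edge $e = v_{1}v_{2}$ of $J$ with $\deg_{J}(v_{1}), \deg_{J}(v_{2}) \geq 3$ (``$J$ is obtained from $H$ by splitting a vertex''). Iterating this conclusion --- with $H$ replaced by the strictly larger $3$-connected minor $J$ --- recovers the familiar ``chain'' form of the Splitter Theorem, a sequence of $3$-connected graphs $H = J_{0}, J_{1}, \dots, J_{k} = G$ in which each $J_{i+1}$ arises from $J_{i}$ by one such move; but the single-step form above is what the induction manipulates directly.

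Since $G$ is $3$-connected and is not a wheel, Tutte's Wheel Theorem provides an edge $e$ with $G \setminus e$ or $G / e$ again $3$-connected. \textbf{Easy case:} some such reduction $G_{1} \in \{G \setminus e,\ G / e\}$ is $3$-connected \emph{and still has $H$ as a minor}. If $G_{1} \cong H$ then $G$ itself is obtained from $H$ by adding an edge (when $G_{1} = G \setminus e$) or by splitting a vertex (when $G_{1} = G / e$, the vertex created by the contraction being the one we split, its two halves having degree $\geq 3$ because $G$ is $3$-connected), so we may take $J := G$. If $G_{1} \not\cong H$ and $G_{1}$ is not a wheel, then $H$ is a proper $3$-connected minor of the strictly smaller graph $G_{1}$ with $H \neq W_{3}$, so the inductive hypothesis applied to the pair $(G_{1}, H)$ yields the required $J$ as a minor of $G_{1}$, hence of $G$. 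The only remaining possibility is $G_{1} \cong W_{n}$ with $H$ a proper $3$-connected minor of $W_{n}$; since the $3$-connected minors of a wheel are precisely the smaller wheels and $H \neq W_{3}$, this forces $H \cong W_{m}$ with $3 < m < n$, and one checks directly that $W_{m+1}$ is a minor of $W_{n}$ obtained from $W_{m}$ by splitting a rim vertex, so $J := W_{m+1}$ works.

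\textbf{Hard case:} \emph{every} single edge-deletion or edge-contraction of $G$ that remains $3$-connected destroys $H$ as a minor. This case carries essentially all the difficulty of the theorem, and the plan is to show it is vacuous. Taking the first operation in any minor sequence from $G$ down to $H$, we obtain a one-edge reduction $G_{1}$ of $G$ that still has $H$ as a minor; by the hard-case hypothesis $G_{1}$ cannot be $3$-connected, so $G_{1}$ has a vertex of degree at most $2$ or a proper $2$-separation, while its $3$-connected ``core'' --- obtained by suppressing degree-$2$ vertices and deleting parallel edges --- still has $H$ as a minor (a $3$-connected minor survives this cleanup). The crux is then to analyse the $2$- and $3$-separations of $G$ that this cleanup exposes: because $G$ is $3$-connected, its $2$-element bonds and $2$-element cycles are tightly controlled, so the cleanup involves only a small local piece of $G$ --- a \emph{fan} --- and a (fairly long) case analysis over these fans forces the conclusion that $G$ is built from $H$ by a string of moves all confined to a single fan, which makes $G$ or $H$ a wheel and contradicts the hypotheses. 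This fan-and-$3$-separation bookkeeping is exactly Seymour's argument and is the main obstacle; once it is established the hard case is impossible, so the easy case always applies and the induction closes.
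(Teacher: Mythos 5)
The paper imports this theorem from Seymour with a citation and gives no proof of its own, so there is no argument in the thesis to compare against; the review is of your sketch on its own terms.

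Your proposal has a genuine gap: the entire content of the Splitter Theorem sits in what you call the ``hard case,'' and you dispose of it by pointing at ``Seymour's argument.'' The claim that the hard case is vacuous --- i.e.\ that there is always some edge $e$ for which one of $G\setminus e$, $G/e$ is $3$-connected and still has $H$ as a minor --- \emph{is} the strong form of the Splitter Theorem. Once that is granted, the easy case plus induction is routine bookkeeping; indeed the strong form directly yields the chain $H = J_0, J_1, \dots, J_k = G$ with each step $3$-connected and $H$-preserving, and you can read off $J := J_1$ without running your induction at all. So what you have written is a reduction of the Splitter Theorem to the Splitter Theorem: the fan-and-$2$/$3$-separation analysis you gesture at is precisely the idea that is missing, not an optional technicality.

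Two smaller loose ends in the easy case. When $G/e \cong H$, you assert $G$ is obtained from $H$ by a single vertex split; this is only true if the endpoints of $e$ have no common neighbour in $G$ (a split never creates parallel edges, so a contraction that would needs an extra edge-addition to reverse). The clean fix is to invoke the version of Tutte's Wheel Theorem that guarantees an edge $e$ with $G\setminus e$ $3$-connected, or with $G/e$ $3$-connected \emph{and simple}. Second, your wheel subcase silently uses that every $3$-connected minor of a wheel is a smaller wheel (obtained only by contractions); that is correct and easy to check, but it is exactly what makes the exclusions $H \neq W_3$ and $G \neq W_n$ bite, so it deserves an explicit sentence.
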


Now we have the following corollary:

\begin{corollary}
\label{k6minors}
Let $G$ be a $3$-connected graph. Suppose $G$ has either  $K_{3,4}$-minor or a $K_{6}$-minor. Then either $G$ has a $K_{3,4}$-minor, or $G$ is isomorphic to $K_{6}$.
\end{corollary}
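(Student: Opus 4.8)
The plan is to use Seymour's Splitter Theorem (Theorem~\ref{splittertheorem}) with $H = K_6$, exploiting the fact that $K_6$ has no triangle-free analogue forcing, and that any single splitter step applied to $K_6$ already produces a $K_{3,4}$-minor. First I would dispense with the easy case: if $G$ has a $K_{3,4}$-minor there is nothing to prove, so assume $G$ has a $K_6$-minor but no $K_{3,4}$-minor, and suppose for contradiction that $G \not\cong K_6$. Since $G$ is $3$-connected and $K_6$ is a proper $3$-connected minor of $G$, and since neither $K_6$ nor $G$ is a wheel $W_n$ (note $K_6 \neq W_3 = K_4$), Theorem~\ref{splittertheorem} applies: $G$ has a minor $J$ obtained from $K_6$ by adding an edge or splitting a vertex. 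Adding an edge to $K_6$ is impossible since $K_6$ is complete (every pair is already adjacent), so $J$ must be obtained from $K_6$ by splitting a vertex $v$.

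Next I would analyze the vertex split directly. Splitting a vertex $v$ of $K_6$ of degree $5$ replaces $v$ by two adjacent vertices $v_1, v_2$, distributing the $5$ neighbors so that $\deg(v_i) \geq 3$ for $i \in \{1,2\}$; the only way (up to symmetry) is to give one new vertex two of the original neighbors and the other new vertex the remaining three (plus the edge $v_1v_2$, giving degrees $3$ and $4$). Call the resulting $7$-vertex graph $J$. I would then verify, either by an explicit branch-set construction or by appealing to Theorem~\ref{K23(X)minors}, that $J$ contains a $K_{3,4}$-minor: the five vertices $V(K_6) \setminus \{v\}$ together with $v_1$ form a $K_5$-ish core from which one can read off one side of size $4$ and find, through $v_2$ and the remaining structure, three common neighbors. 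Concretely, take the three vertices $v_2, w, w'$ where $w,w'$ are chosen among $V(K_6)\setminus\{v\}$ appropriately as the ``size-$3$'' side, and the four remaining vertices (including $v_1$) as the ``size-$4$'' side, checking that every required adjacency or connecting path is present in $J$; this is a small finite check. Since $J$ is a minor of $G$, $G$ would then have a $K_{3,4}$-minor, contradicting our assumption. Hence $G \cong K_6$, completing the proof.

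The main obstacle I anticipate is verifying cleanly that the split graph $J$ genuinely has a $K_{3,4}$-minor — this requires correctly identifying which neighbor-distribution in the split is forced (ruling out distributions that might not yield $K_{3,4}$) and exhibiting the model. I would handle this by noting that any split of $K_6$ contains, as a subgraph, a $K_4$ plus two more vertices each adjacent to at least three of the four $K_4$-vertices and to each other, and then use Theorem~\ref{K23(X)minors} (applied to the $3$-connected graph $J$ with a suitable triple $X$) to extract a $K_{2,3}(X)$-minor that extends to $K_{3,4}$; alternatively a direct branch-set exhibition suffices since $J$ has only seven vertices. A secondary subtlety is confirming the hypotheses of the Splitter Theorem are met, namely that $G \neq W_n$ for any $n$: but a wheel $W_n$ with $n \geq 4$ has a vertex of degree $3$ in its rim whose neighborhood structure precludes a $K_6$-minor for small $n$, and for the statement at hand one only needs that $G$, having a $K_6$-minor and not being isomorphic to $K_6$, is large and dense enough not to be a wheel — which I would argue by observing $K_6$ is not a minor of any wheel, so if $G$ were a wheel it could not have a $K_6$-minor at all, contradiction.
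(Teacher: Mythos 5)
Your proof takes essentially the same route as the paper: invoke Seymour's Splitter Theorem, observe that $K_6$ is complete so adding an edge is impossible and the unique vertex split (up to symmetry) produces a $7$-vertex graph, and verify that this graph contains $K_{3,4}$. The explicit bipartition you were looking for is $\{v_2,u_1,u_2\}$ against $\{v_1,u_3,u_4,u_5\}$, where $u_1,u_2$ are the two original neighbours of $v$ assigned to $v_1$ in the split; this exhibits $K_{3,4}$ as a subgraph (which is how the paper phrases it), so the detour through Theorem~\ref{K23(X)minors} is unnecessary.
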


\begin{proof}
It suffices to consider the case where we have a graph $G$ which has a $K_{6}$-minor and no $K_{3,4}$-minor. We will show that $G$ is isomorphic to $K_{6}$. Suppose the graph $G$ is not isomorphic $K_{6}$. Observe that $G$ non-planar and thus is not a wheel. Then since $G$ has a $K_{6}$-minor, by the splitter theorem, $G$ has a minor $J$ which is obtained by either adding an edge or splitting a vertex of $K_{6}$. As $K_{6}$ is complete, we can only split a vertex, and up to isomorphism splitting any vertex results in the same graph. It is easy to check that after splitting, the resulting graph has $K_{3,4}$ as a subgraph (see Figure \ref{splittingvertex}). But this contradicts that $G$ is not $K_{6}$. 
\end{proof}

\begin{figure}
\begin{center}
\includegraphics[scale=0.5]{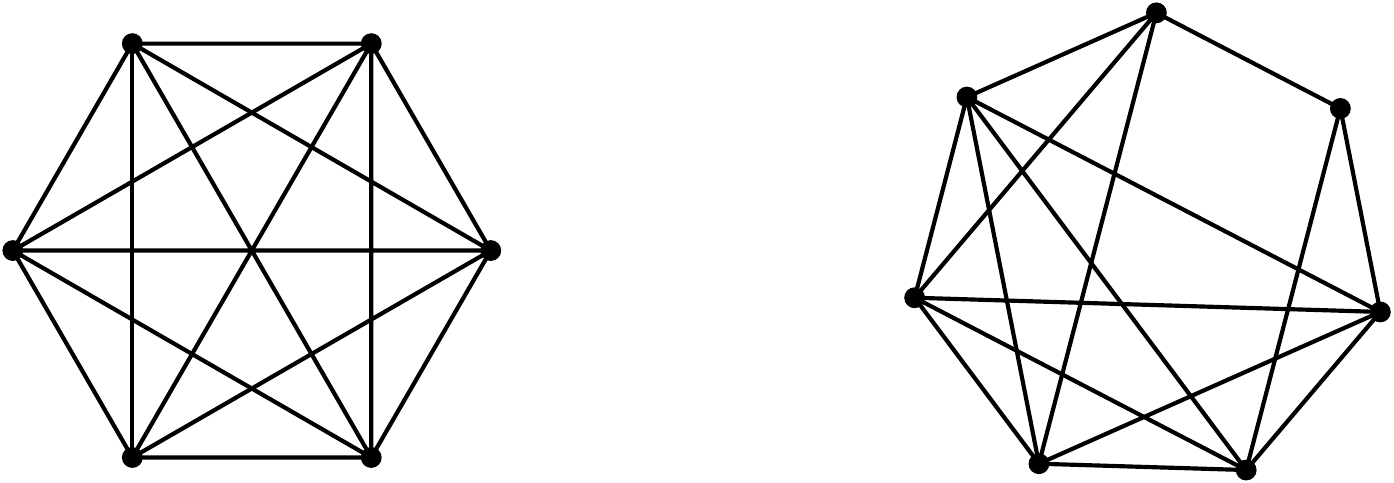}
\caption{The graph $K_{6}$ and $K_{6}$ after splitting any vertex.}
\label{splittingvertex}
\end{center}
\end{figure}

Since both $K_{3,4}$ and $K_{6}$ are $3$-connected, Lemma \ref{lowordersepsminors} applies so the $3$-connected assumption in Corollary \ref{k6minors} does not really matter. Therefore from the point of view of understanding reducibility with respect to the first Symanzik polynomial, if  $K_{6}$ is not reducible then we do not add that many more non-reducible graphs. Then if $K_{6}$ is not reducible, in some sense one can consider $K_6$ an exception since almost all graphs are not reducible for different reasons, in the same sense that $K_{5}$ is an exception for planarity.

\section{$V_8$ plus an edge minors}

One of the other graphs which we know is not reducible for the first Symanzik polynomial is $V_{8} \cup \{02\}$.  Therefore it is of interest to understand graphs which do not contain $V_{8} \cup \{02\}$ as a minor. We do not make any progress towards this, we just outline some known results and a possible approach.

One approach is to start with Robertson's characterization of $V_{8}$-topological minor free graphs. Before stating the theorem we have some definitions.

Let $G$ be a $3$-connected graph and let $(G_{1},G_{2})$ be a $3$-separation and suppose $|V(G)| \geq 5$. Suppose for every partition $(A,B)$ of $E(G)$ either $|A| \leq 3$, or $|B| \leq 3$, or at least four vertices of $G$ are incident with an edge in $A$ and an edge in $B$. Any graph satisfying these conditions is called \textit{internally $4$-connected}. Now we can state Robertson's Theorem.

\begin{theorem}[\cite{NoV8minors}]
\label{graphswithnoV8minors}
 If G is an internally-$4$-connected graph then either $G$ contains a subgraph $S$ which is isomorphic to a subdivision of $V_{8}$ or one of the following structural conditions hold:
\begin{enumerate}
\item{$G$ is planar,}
\item{ $G - \{x,y \}$ is a cycle, for some adjacent $x,y \in V(G)$,}
\item{$G - \{w,x,y,z\}$ is edgeless, for some $w,x,y,z \in V(G)$,}
\item{ $G$ is isomorphic to $L(K_{3,3})$}
\item{ or $|V(G)| \leq 7$.}
\end{enumerate}
\end{theorem}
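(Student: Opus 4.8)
The first observation is that since $V_{8}$ is cubic, a graph contains a subdivision of $V_{8}$ if and only if it contains $V_{8}$ as a minor; so the statement is equivalently a structural description of internally $4$-connected graphs with no $V_{8}$-minor, and the plan is to work with minors throughout. I would argue by induction on $|V(G)|$ using a generation theorem for internally $4$-connected graphs. Tutte's Wheels Theorem and Seymour's Splitter Theorem (Theorem~\ref{splittertheorem}) are the model, but they only control $3$-connectivity; what is needed here is the analogous \emph{internally $4$-connected chain theorem}: there is a constant $N$ and a short list of ``bridging'' operations (adding an edge between two vertices, and a bounded family of moves that split or expand a vertex, or insert a vertex of degree $3$ or $4$ across a small edge set) so that every internally $4$-connected $G$ with $|V(G)| > N$ arises from a smaller internally $4$-connected $G'$ by one such operation. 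The feature I want from each operation $\mathcal{O}$ is monotonicity: if $G' = \mathcal{O}^{-1}(G)$ has a $V_{8}$-minor, so does $G$; this is immediate since $G'$ is a minor of $G$ for each relevant operation.

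Given this, the induction runs as follows. Suppose $G$ is internally $4$-connected with $|V(G)| > N$ and no $V_{8}$-minor. Express $G$ from a smaller internally $4$-connected $G'$ via a bridging operation; by monotonicity $G'$ also has no $V_{8}$-minor, so by induction $G'$ lies in one of the classes (1)--(5). It then remains to prove the \emph{closure step}: for every $G'$ in (1)--(5) and every bridging operation $\mathcal{O}$ with $\mathcal{O}(G')$ internally $4$-connected, either $\mathcal{O}(G')$ again lies in (1)--(5) or $\mathcal{O}(G')$ has a $V_{8}$-minor. This is carried out class by class. For planar $G'$ one checks that adding an edge or expanding a vertex either keeps the graph planar or, using a planar embedding together with Menger's Theorem to route four pairwise-crossing paths across a cycle bounding a face (in the spirit of Lemma~\ref{3connectedplanarpaths} and Lemma~\ref{webcycleplanar}), produces the ``Hamilton cycle plus antipodal matching'' pattern that certifies a $V_{8}$-minor. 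For the class ``$G'-\{x,y\}$ is a cycle for adjacent $x,y$'' and the class ``$G'-\{w,x,y,z\}$ is edgeless'' one does a finite local analysis of where the new edge or vertex can attach while keeping the graph internally $4$-connected. For $L(K_{3,3})$ (which is itself internally $4$-connected and $V_{8}$-minor-free) one checks directly that every internally $4$-connected single-operation extension already contains a $V_{8}$-minor.

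The base case is the finite collection of internally $4$-connected graphs with $|V(G)| \le N$, handled by a (computer-assisted) enumeration: each is either in one of the classes or displays a $V_{8}$-subdivision explicitly. I expect the main obstacle to be the closure step, and in particular the planar case: one must show that an internally $4$-connected \emph{non}-planar graph obtained from a planar graph by a single bridging operation always has a $V_{8}$-minor, which is essentially a rooted-minor statement forcing crossing paths on a facial cycle, analogous to the arguments behind Corollary~\ref{k4(x)planar}. A secondary difficulty is formulating and proving the internally $4$-connected chain theorem with a manageable list of operations, since the naive analogue of vertex splitting need not preserve internal $4$-connectivity and must be replaced by a more delicate family of moves; one would either cite the Johnson--Thomas internally $4$-connected splitter theorem or re-derive the version needed here.
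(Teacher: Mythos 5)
This theorem is cited in the paper from \cite{NoV8minors} and is not re-proved there, so there is no in-paper argument to compare against; your proposal should be judged on its own terms. The high-level route you sketch --- pass to minors via the cubicity of $V_8$, induct on $|V(G)|$ via a chain theorem for internally $4$-connected graphs, and perform a class-by-class closure step with a finite base case --- is the correct strategy and is essentially the one used in the cited source, and you are right that the operation list must be more delicate than plain vertex-splitting, since that operation does not preserve internal $4$-connectivity.

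The concrete gap is in how you state the closure step for the planar class. You write that a single bridging operation applied to a planar internally $4$-connected $G'$ ``either keeps the graph planar or \ldots\ produces \ldots\ a $V_8$-minor,'' and later sharpen this to ``one must show that an internally $4$-connected non-planar graph obtained from a planar graph by a single bridging operation always has a $V_8$-minor.'' That is not the right target: the theorem's conclusion has more than two outcomes, and the extension $G$ could instead land in class (2) or class (3), each of which contains infinitely many non-planar internally $4$-connected graphs with no $V_8$-minor. For instance, $K_2 + C_n$ (class (2)) is $4$-connected, non-planar for $n \geq 3$, and $V_8$-minor-free, and --- unlike classes (4) and (5) --- this family cannot be dismissed by a size bound $N$. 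So the dichotomy ``planar or $V_8$'' must be a trichotomy ``planar, or in (2)--(3), or $V_8$,'' and the case analysis must either rule out transitions from the planar class into (2)--(3) (which may be true, but depends on the precise operation list your sketch leaves unspecified) or absorb them. The same remark applies to closure from classes (2) and (3) themselves. Once the closure-step target is corrected, the remaining gaps you flag --- pinning down the operation list of the internally $4$-connected chain theorem and enumerating the small base cases --- are real work but not conceptual obstacles.
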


While the above characterization is for topological minors, as $V_{8}$ has maximum degree $3$, topological minors and minors coincide. In general we have the following well known fact (see, for example, \cite{BondyAndMurty})

\begin{observation}
Let $H$ be a graph with maximum degree $3$. Then a graph $G$ has an $H$-minor if and only if $G$ has an $H$-topological minor. 
\end{observation}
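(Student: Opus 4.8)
The statement to prove is the folklore fact that a graph $H$ with maximum degree at most $3$ appears as a minor of $G$ if and only if it appears as a topological minor of $G$.

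The plan is to prove the two implications separately; one is immediate and the other uses the degree hypothesis in an essential way. First I would observe that if $G$ contains a subdivision of $H$ as a subgraph, then $G$ has an $H$-minor: contracting all the subdivision edges along each subdivided branch of the topological model collapses each branch path to a single edge, recovering $H$. This direction does not need the degree hypothesis at all and is really just the remark already made after Proposition \ref{branchsets} about how contracting connected subgraphs yields a minor. So the content is entirely in the converse.

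For the converse, I would start from an $H$-model $\{G_x : x \in V(H)\}$ in $G$, where each $G_x$ is a connected subgraph and for each edge $xy \in E(H)$ there is a chosen edge $e_{xy}$ of $G$ joining $V(G_x)$ to $V(G_y)$; I may assume the model is chosen with $\sum_{x} |V(G_x)|$ minimum. The key step is to show that under this minimality each $G_x$ is a tree, and moreover a tree whose only vertices of degree $\neq 2$ relevant to $H$ are the endpoints of the chosen edges $e_{xy}$ for $y$ adjacent to $x$. Concretely: within $G_x$, take the minimal subtree $T_x$ spanning the (at most three, since $\deg_H(x) \le 3$) endpoints in $V(G_x)$ of the edges $e_{xy}$. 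Minimality of the model forces $V(G_x) = V(T_x)$. Now $T_x$ is a tree with at most three leaves, so it has at most one vertex of degree $3$ and all other internal vertices have degree $2$; hence $T_x$ together with the pendant edges $e_{xy}$ forms exactly a subdivision of the ``star/path at $x$'', i.e. a subdivision of the local picture of $H$ around $x$. Gluing these over all $x \in V(H)$ and checking that the branch vertices are distinct (they lie in distinct $G_x$) yields a subgraph of $G$ isomorphic to a subdivision of $H$, which is what a topological minor is. A clean way to organize the degree bookkeeping is: assign to each branch vertex of the prospective subdivision the corresponding vertex of $H$, and note every other vertex used has degree exactly $2$ in the constructed subgraph because it is an internal vertex of one of the paths $T_x$ (degree $2$ inside $T_x$ and not incident to any $e_{xy}$) — this is where $\deg_H(x) \le 3$ is used, since a tree with $\ge 4$ leaves could be forced to have a vertex of degree $\ge 3$ that is not a branch vertex, breaking the argument.

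The main obstacle is the minimality/tree-reduction step: one must argue carefully that if some $G_x$ strictly contains $T_x$ then we could delete a vertex from the model and still have a valid $H$-model, contradicting minimality, and also that when $T_x$ has a degree-$3$ vertex it genuinely can serve as the unique branch vertex for $x$ without colliding with any requirement. Both points are routine but need the observation that deleting a leaf of $G_x$ not needed for any $e_{xy}$ (or pruning a redundant subtree) keeps $G_x$ connected and keeps all required adjacencies. Since the paper explicitly allows invoking earlier results and flags this as ``well known'', I would keep the write-up brief: state the easy direction in one line, then give the minimal-model argument for the hard direction in a short paragraph, citing \cite{BondyAndMurty} for the standard fact as the excerpt already does.
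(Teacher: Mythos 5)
Your proof is correct, and since the paper states this observation without proof (deferring to \cite{BondyAndMurty}), there is no in-text argument to compare it against; what you have written is the standard textbook argument the author is implicitly invoking. Both directions are handled properly: the forward implication is the trivial contraction of subdivision paths, and for the converse the Steiner-tree reduction inside a vertex-minimal $H$-model is exactly right. The arithmetic you allude to is the key point and worth making explicit: a tree whose leaves all lie among a set of at most three terminals has at most one vertex of degree $\geq 3$, and that vertex (if it exists) has degree exactly $3$; hence $T_x$ together with the chosen edges $e_{xy}$ is a subdivision of a star with at most three rays, which is the local picture of $H$ at $x$. Your remark about where the hypothesis $\Delta(H)\le 3$ is used is the right one---with four or more terminals the Steiner tree can have two vertices of degree $3$, and no single vertex can serve as the branch vertex of degree $\deg_H(x)$.

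Two small points of precision worth tightening in a final write-up. First, you twice refer to $T_x$ as a ``path''; it need not be one when $|S_x|=3$ (the Y-shape case), though the parenthetical clause that follows gives the correct degree count. Second, when $\deg_H(x)\le 2$ the designated branch vertex for $x$ is not uniquely determined by degree (every vertex of $T_x$ has degree $2$ in the union); one simply chooses any vertex of $T_x$ to play that role, and disjointness of the branch sets guarantees these choices are distinct across $x$. You should also note that the terminals in $S_x$ need not be distinct vertices, since two edges $e_{xy}$ and $e_{xz}$ may share an endpoint in $G_x$; this only shrinks $|S_x|$ and makes the Steiner tree smaller, so it causes no difficulty, but it affects the bookkeeping if one is counting leaves versus terminals. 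None of these affect correctness; the core argument is sound and is the same one a reader would find in the cited reference.
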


Therefore the above characterization applies to minors. Unfortunately $V_{8} \cup \{02\}$ is not internally $4$-connected, and thus we cannot reduce the problem of determining if a graph has a $V_{8} \cup \{02\}$-minor to internally $4$-connected graphs. However, $V_{8}$ is internally $4$-connected so we can remove the internally $4$-connected condition via standard reduction lemmas. 

\begin{lemma}[\cite{NoV8minors}]
Let $G$ be a $3$-connected graph and let $(G_{1},G_{2})$ be a $3$-separation. Suppose  $|E(G_{1})| \geq  4$ and $4 \leq |E(G_{2})|$. Define $G_{1}^{+}$, $G_{2}^{+}$  to be $G_{1}$ and $G_2$, with new vertices $x_1$ and $x_2$ respectively. Make $x_{1}$ and $x_{2}$ adjacent to each of the three vertices of $G_{1} \cap G_{2}$. Then $G$ has $G^{+}_{1}, G^{+}_{2}$ as topological minors, and $G$ has $V_{8}$ as a topological minor if and only if one of $G^{+}_{1}$ or $G^{+}_{2}$ have $V_{8}$ as a topological minor. 
\end{lemma}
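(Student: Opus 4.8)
The plan is to establish the two assertions separately: that $G$ has $G_1^+$ and $G_2^+$ as topological minors is routine, while the equivalence for $V_8$-topological minors is the substantive part.

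\emph{$G$ has $G_1^+$ and $G_2^+$ as topological minors.} Write $\{a,b,c\}$ for the vertex boundary of the separation $(G_1,G_2)$. Since $|E(G_2)| \geq 4$, the set $V(G_2)\setminus\{a,b,c\}$ is nonempty; pick a vertex $w$ in it. As $G$ is $3$-connected, the fan form of Menger's theorem gives three paths $P_a,P_b,P_c$ from $w$ to $a,b,c$ respectively, pairwise meeting only at $w$. Since $b\in P_b$ and $c\in P_c$, neither $b$ nor $c$ lies on $P_a$, so the only vertex of $\{a,b,c\}$ on $P_a$ is its endpoint $a$; because any walk from $V(G_2)\setminus\{a,b,c\}$ into $G_1$ must cross $\{a,b,c\}$, the path $P_a$ lies inside $G_2$, and likewise $P_b,P_c\subseteq G_2$ with interiors in $V(G_2)\setminus\{a,b,c\}$. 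Hence $G_1\cup P_a\cup P_b\cup P_c$ is a subdivision of $G_1^+$, with $w$ playing the role of $x_1$. The argument for $G_2^+$ is symmetric, using a vertex of $V(G_1)\setminus\{a,b,c\}$, nonempty because $|E(G_1)|\geq 4$. Consequently, if $G_1^+$ or $G_2^+$ contains a subdivision of $V_8$, then so does $G$, by transitivity of the topological-minor relation; this gives one direction of the equivalence.

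\emph{The reverse direction.} Suppose $S\subseteq G$ is a subdivision of $V_8$. Let $Z = V(S)\cap\{a,b,c\}$, so $|Z|\leq 3$, and consider the induced partition of $S$ into its $G_1$-part $S_1$ and its $G_2$-part $S_2$, which overlap only in $Z$. If $S$ has no vertex in $G_2\setminus\{a,b,c\}$ then $S\subseteq G_1\subseteq G_1^+$ and we are done (symmetrically if $S$ avoids $G_1\setminus\{a,b,c\}$). Otherwise both parts are nontrivial and $Z$ is a cut of $S$ of size at most three. Undoing the subdivisions of $S$ identifies this cut with a small separation of $V_8$: each vertex of $Z$ is either a branch vertex of $S$ or an interior vertex of some branch path $P_e$ ($e\in E(V_8)$); subdividing each such edge $e$ (at most three of them) yields a graph $V_8'$ obtained from $V_8$ by at most three subdivisions, of which $S$ is again a subdivision and for which $(S_1,S_2)$ descends to a genuine $\leq 3$-separation $(X,Y)$.

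\emph{Applying internal $4$-connectivity.} Since $V_8$ is internally $4$-connected (as noted earlier), and since subdividing at most three edges cannot produce an essentially new small separation, every $\leq 3$-separation $(X,Y)$ of $V_8'$ has one side trivial: it consists of the boundary together with at most one further vertex, whose neighbours are exactly the boundary. Translating back, one of $S_1,S_2$ — say $S_2$, the $G_2$-side — is reroutable: either it is a single branch path of $S$ with both endpoints in $Z$ that dips into $G_2$, or it is a single degree-three branch vertex of $S$ lying in $G_2$ and joined through $G_2$ to the three vertices $Z=\{a,b,c\}$, together with at most those three attaching pieces. In the first case, replace that portion of $S$ by the length-two path through $x_1$ joining the two relevant vertices of $\{a,b,c\}$; in the second, replace the branch vertex and its three attaching pieces by $x_1$ with its three edges $x_1a,x_1b,x_1c$. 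Either way the outcome is a subdivision of $V_8$ contained in $G_1^+$, which completes the proof.

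\emph{Main obstacle.} The conceptual heart is the statement "one side of the induced small separation is reroutable through a single apex vertex", which is precisely where internal $4$-connectivity of $V_8$ is used. The labour lies in the bookkeeping: one must enumerate how the at most three vertices of $Z$ can sit inside $S$ (branch vertices versus interior vertices of branch paths), handle the order-one and order-two cases alongside order three, verify that the trivial side of the corresponding $V_8'$-separation really is a single path or a single degree-three vertex with short attachments, and check that the rerouting through $x_1$ does not collide with the remainder of $S$ in $G_1^+$ (it cannot, as $x_1$ is new and $a,b,c$ already lie on the $G_1$-side). One should also dispatch the degenerate situations in which parallel edges make $|E(G_i)|\geq 4$ without forcing a fourth vertex, which either do not occur under the paper's conventions on $3$-separations or are checked directly.
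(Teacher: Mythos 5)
The paper states this lemma with a citation to \cite{NoV8minors} and supplies no proof of its own, so there is nothing in-paper to compare against; your outline is judged on its own terms. It is a correct reconstruction of the standard $3$-separation reduction for a target that is internally $4$-connected. The Menger-fan argument for $G_i^+\leq_{\text{top}} G$ is sound, and the easy direction does indeed follow by transitivity.

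For the hard direction your skeleton is right, but the dichotomy you write down for the small side is a little too narrow. You allow the small side to be either a single dipping sub-path of a branch path, or a degree-three branch vertex $v$ ``lying in $G_2$ and joined through $G_2$ to the three vertices $Z=\{a,b,c\}$.'' Two further configurations are permitted by internal $4$-connectivity and need to be addressed explicitly: (1) the branch vertex $v$ can itself lie in $Z\subseteq\{a,b,c\}$, with only two of its three branch paths dipping into $G_2$ and the third continuing into $G_1$; here one cannot simply delete $v$ when rerouting --- $x_1$ usurps $v$'s role as the branch vertex, while $v$ survives as a subdivision vertex on the branch path $x_1\!-\!v\!-\!\cdots$ heading into $G_1$; and (2) a single branch path of $S$ can meet $\{a,b,c\}$ at an interior subdivision vertex and so dip into $G_2$ twice; in that case the entire $z_1$-to-$z_3$ excursion (through the middle vertex $z_2$) is replaced by the one path $z_1\!-\!x_1\!-\!z_3$, and $z_2$ is simply discarded. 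With those two cases admitted, your rerouting rule does cover every shape that internal $4$-connectivity allows, so the argument closes; but as stated, the case analysis in your third paragraph would not survive a careful referee without the amendment.
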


Then from the splitter theorem, to characterize graphs with no $V_{8} \cup \{02\}$-minor, it suffices to show exactly what sequences of splits and adding edges you can do when you start with $V_{8}$. 

We have looked at this, however it is not obvious what the exactly class ends up being. Even if we exclude both $V_{8} \cup \{02\}$ and $K_{3,4}$-minors it is not clear what the class should be. From experimenting we appear to get at infinite class of graphs with a $V_{8}$-minor but no $V_{8} \cup \{02\}$ or $K_{3,4}$-minors. We leave it as an open problem to determine what the class of graphs with no $K_{3,4}$ or $V_{8} \cup \{02\}$-minor is. 

\chapter{Conclusion}

The main problem of this thesis was to give a forbidden minor characterization for reducibility when the graph has no massive edges and four on-shell momenta. We did not manage to do this, and instead looked at giving a structural characterization of what graphs which might be reducible look like. Since $K_{4}$, $W_{4}$, $K_{2,4}$ and $L$ are all forbidden minors when we have $4$ on-shell momenta, we characterized the graphs not containing any of those minors. In the end, what we obtained was that if a graph is reducible with respect to both Symanzik polynomials, then it has to satisfy a few properties. It is either a class $\mathcal{A}$ graph, or it is the spanning subgraph of a class $\mathcal{D}$, $\mathcal{E}$, or $\mathcal{F}$ graph, such that on the web in each of these classes, for every cycle containing $a,b,c,d$ one of the five obstructions from Theorem \ref{w4cuts} occurs. Then along certain terminal separating $2$-chains, we must avoid having an $L'(X')$-minor, which occur according to Lemma \ref{L'(X')minors}. Additionally, from looking at $K_{3,4}$-minors in $K_{4}(X)$-minor free graphs, we learned that in ``$3$-connected pieces'' of a reducible graph, there is at most one triangle $T$ for which $F_{T}$ is non-planar.  So we gained some understanding of what it means for a graph to be reducible, but we do not have the full picture as of yet. We leave the following as a question:

\begin{question}
What is the full forbidden minor set for reducibility with respect to both Symanzik polynomials when we have four on-shell momenta and no edges are massive?
\end{question}

In a similar vein we have the likely much harder question is still open:

\begin{question}
What is the full forbidden minor set for reducibility with respect to the first Symanzik polynomial? 
\end{question}

We leave the following as conjectures and questions as possible starting points for these questions.

\begin{Conjecture}
The graph $K_6$ is not reducible with respect to the first Symanzik polynomial.
\end{Conjecture}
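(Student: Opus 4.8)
The plan is to settle this computationally, but with enough symmetry reduction and structural bookkeeping to make the computation feasible, since a naive search over all orderings is hopeless: the loop order of $K_6$ is $15-6+1=10$, so $\psi_{K_6}$ is a homogeneous degree-$10$ polynomial in $15$ Schwinger variables with up to $6^4=1296$ monomials, and the reduction tree branches accordingly (this is exactly why the authors' week-long Westgrid run did not finish). First I would exploit that $\psi_{K_6}$ inherits the full $S_6$ action on the six vertices, which acts transitively on the $15$ edges; by the definition of compatibility graph reducibility it therefore suffices to fix one choice of $\sigma(1)$. Its stabiliser is $S_2\times S_4$, which splits the remaining $14$ edges into a handful of orbits (the eight edges meeting an endpoint of $\sigma(1)$, and the six edges spanned by the other four vertices), so only a few representatives need be considered for $\sigma(2)$, and similarly at the next level. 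I would run the reduction only along these orbit representatives, pruning by Remark~\ref{monomialsandconstants} and Lemma~\ref{irreduciblepolynomials} (discard monomials and constants, and factor into $\mathbb{Q}$-irreducibles at every step) and by the compatibility-graph rules to keep the surviving polynomial sets as small as possible.

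Second, rather than carrying the entire set $S_{[\sigma(1),\ldots,\sigma(k)]}$, which explodes, I would hunt for a shallow certificate: a fixed small depth $k$ — optimistically $k=2$ or $k=3$ — together with, for each orbit representative of $(\sigma(1),\ldots,\sigma(k))$, a small subset $S'\subseteq S_{[\sigma(1),\ldots,\sigma(k)]}$ whose set of reducible orders $O_{S'}$ is empty. By Lemma~\ref{subsetlemma} and the corollary following it, such an $S'$ forces $\{\psi_{K_6}\}$ to be non-reducible along every completion of that partial order, and then the symmetry reduction upgrades this to \emph{all} orders. The natural candidates for the polynomials of $S'$ are the partial derivatives and evaluations $g_i,h_i$ produced by the reduction together with their cross-differences $g_ih_j-h_ig_j$ that survive compatibility-graph pruning; for a graph as dense and symmetric as $K_6$ one expects some such cross-difference to be an irreducible polynomial of degree $\ge 2$ in a remaining variable (morally the same phenomenon that obstructs $K_4$ in Example~\ref{K4notreducible} and that governs denominator reductions of high-loop graphs). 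Identifying this obstruction in closed form — perhaps by deleting and contracting $K_6$ down to a $K_5$- or $K_{3,3}$-like core and observing that, while each such core is individually reducible, their reductions cannot be realised simultaneously inside $K_6$ — would be the crux.

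The main obstacle is exactly that last step: producing the offending quadratic and certifying its irreducibility over $\mathbb{Q}$ in many variables, for each surviving branch. Even after the $S_6$-symmetry collapse the depth-$3$ polynomials are large, and checking that some $g_ih_j-h_ig_j$ does not factor into factors linear in the next variable is a nontrivial multivariate factorisation for every branch. The most plausible route to a short, human-verifiable proof is to guess the obstruction polynomial from the combinatorics of $K_6$, prove it does not factor into linear pieces by the specialisation trick used in Example~\ref{K4notreducible} (substitute integers for the other variables and show a discriminant is not a perfect square), and then verify by the compatibility-graph construction that it genuinely appears and is compatible with the relevant generators — noting that the vertex-width criterion of Theorem~\ref{vertexwidth3} gives no shortcut here, since $K_6$ already fails vertex width $\le 3$ (it has a $K_5$-minor) yet $K_5$ itself is reducible. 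If no such short certificate exists, the conjecture would have to be confirmed by a fully optimised machine computation that exploits the symmetry at every level of the reduction; in either case, once $K_6$ is shown non-reducible, Corollary~\ref{MinorClosed} and Corollary~\ref{k6minors} immediately pin down how little is added to the class of known non-reducible graphs.
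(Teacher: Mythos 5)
The statement you are addressing is a \emph{Conjecture} in the paper, and the paper offers no proof of it: the authors explicitly leave open whether $K_6$ is reducible with respect to $\psi$, reporting only that a week-long Westgrid run of the compatibility-graph reduction did not terminate, and noting (Corollary~\ref{k6minors}) that the answer would change the class of non-reducible graphs very little. So there is no paper proof to compare against, and what you have written is a computational plan, not a proof.

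Your symmetry bookkeeping is sound: $\mathrm{Aut}(K_6)=S_6$ acts transitively on the $15$ edges, the edge stabiliser $S_2\times S_4$ splits the remaining $14$ edges into the two orbits you describe, and the appeals to Lemma~\ref{subsetlemma}, Lemma~\ref{irreduciblepolynomials}, Remark~\ref{monomialsandconstants}, and the discriminant specialisation of Example~\ref{K4notreducible} are all legitimate tools. The genuine gap, which you acknowledge, is that you never exhibit the obstruction. A proof requires, for each orbit representative of $(\sigma(1),\ldots,\sigma(k))$ at some fixed depth $k$, a concrete polynomial in $S_{[\sigma(1),\ldots,\sigma(k)]}$ shown to be non-linear in every remaining variable \emph{and} verified to survive the compatibility-graph pruning (a candidate $g_ih_j-h_ig_j$ can be discarded by the $2$-tuple compatibility rule, so merely writing one down is insufficient). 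Two further cautions: the $S_6$-reduction collapses which orderings to check but does nothing to shrink the polynomial sets themselves, which is precisely where the actual computation stalled; and the proposed ``reduce to a $K_5$- or $K_{3,3}$-like core'' heuristic cuts the wrong way, because reducibility is minor-closed (Corollary~\ref{MinorClosed}), so every proper minor of $K_6$ is reducible and the obstruction cannot be inherited from below --- it must be intrinsic to $K_6$ itself. As it stands this is a plausible research direction, not a proof, and it is in the same state as the paper's own conjecture.
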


\begin{question}
What is an excluded minor theorem for graphs not containing $K_{3,4}$? What is the excluded minor theorem for graphs not containing $K_{3,4}$ and $V_{8} \cup \{02\}$? Or just $V_{8} \cup \{02\}$?
\end{question}

The next conjecture is due to J{\o}rgensen and Kawarabayshi.

\begin{Conjecture}[\cite{extermalrootedminorleif}]
The bound in Theorem \ref{rootedk33minors} is tight. 
\end{Conjecture}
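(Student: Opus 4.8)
The plan is to exhibit, for each sufficiently large $n$ in a suitable arithmetic progression, a $3$-connected graph $G_n$ on $n$ vertices with exactly $4n-10$ edges together with a triple $X_n=\{a_n,b_n,c_n\}$ such that $G_n$ has no $K_{3,3}(X_n)$-minor; since $4n-10=(4n-9)-1$, this shows the edge bound in Theorem \ref{rootedk33minors} cannot be weakened. The first task is to identify the right family. The naive candidates fail: for $K_1+P$ with $P$ a $3$-connected maximal planar graph and $X$ a facial triangle, one can absorb the apex $u$ into a root branch set, after which the three disjoint connected subgraphs joining $b$ and $c$ guaranteed by $3$-connectivity of $P$ complete a $K_{3,3}(X)$-model via Proposition \ref{branchsets}. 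So the extremal graphs must be arranged so that no single $3$-connected block contains all of $a,b,c$ together with a dominating vertex. I would look for the family conjectured in \cite{extermalrootedminorleif} — plausibly a planar triangulation of a disc with $a,b,c$ on the boundary, augmented by a small fixed-size ``apex gadget'' whose vertices each dominate only part of the boundary, scaled so that the edge count is driven up to $4n-10$ while $a,b,c$ stay confined to a planar region of the graph.

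Granting such a family $\{(G_n,X_n)\}$, the next step is the bookkeeping: verify that $G_n$ is $3$-connected (so that the connectivity reduction lemmas and Theorem \ref{K23(X)minors} apply) and that $e(G_n)=4n-10$ exactly, not merely at most that, which would not establish tightness. Then comes the heart of the argument: proving $G_n$ has no $K_{3,3}(X_n)$-minor. Here I would proceed as in the rooted-minor sections of this thesis. Starting from a hypothetical $K_{3,3}(X_n)$-model, I would first apply reduction lemmas across the $3$-separations built into the gadget structure — the analogues for $K_{3,3}(X)$ of Lemma \ref{3connk24} and Lemma \ref{lowordersepsminors} — to push the model into the ``core'' of $G_n$ containing $X_n$, at the cost of re-rooting. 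The remaining task reduces to a statement about that core: a $K_{3,3}(X)$-model is a $K_{2,3}(X)$-model plus one extra branch set adjacent to all three root branch sets, so by Theorem \ref{K23(X)minors} the core, being planar with $a,b,c$ on a face, cannot even supply the $K_{2,3}(X)$ part — unless an apex vertex of the gadget is used, which the separation argument is designed to preclude. Combining these rules out the minor.

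The main obstacle I anticipate is this last step, for two linked reasons. First, there is no known structural characterization of $3$-connected graphs without a $K_{3,3}(X)$-minor analogous to the clean dichotomy of Theorem \ref{K23(X)minors}, so the minor cannot be ruled out by a black box and must be excluded by a careful global analysis of how branch sets route through the gadget. Second, and more subtly, one must control the possibility that a high-degree vertex of the apex gadget is absorbed into a root branch set, which effectively lowers the target to an unrooted $K_{2,3}$- or $K_{3,2}$-type minor that planar triangulations readily contain; the entire design of the extremal family and the corresponding case analysis must be engineered around preventing exactly this. A secondary difficulty is making the edge count land precisely at $4n-10$ while preserving $3$-connectivity, which constrains the gadget tightly; it may be necessary to use a parametrized family of gadgets, one per residue class of $n$, to cover all large $n$.
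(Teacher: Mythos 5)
This statement is labelled a \emph{Conjecture} in the thesis, attributed to J{\o}rgensen and Kawarabayashi (\cite{extermalrootedminorleif}), and the thesis does not prove it: it appears in the Conclusion as one of the explicitly open problems. So there is no proof in the paper to compare your proposal against.

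Beyond that mismatch, your proposal does not itself constitute a proof. You correctly diagnose the two essential ingredients (an explicit $3$-connected family $(G_n,X_n)$ with $e(G_n)=4n-10$, and an argument that $G_n$ has no $K_{3,3}(X_n)$-minor), and you correctly observe that the naive apex-over-planar-triangulation construction fails because the apex can be absorbed into a root branch set. But you then defer both ingredients: the extremal family is only hypothesized (``plausibly a planar triangulation \ldots\ augmented by a small fixed-size apex gadget''), and the non-existence of the rooted minor is acknowledged to have no black-box tool available, since there is no $K_{3,3}(X)$ analogue of Theorem \ref{K23(X)minors}. What you have written is a reasonable research plan that identifies the obstacles --- exactly the obstacles that presumably keep this a conjecture --- but it does not overcome them. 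In particular, without a concrete family and a concrete case analysis ruling out all ways a branch set can route through the gadget, nothing is established. If you want to make progress, the first concrete step would be to write down a candidate family for a single residue class of $n$, verify the edge count and $3$-connectivity, and then attempt the exclusion argument in full for small cases; only then would it be clear whether the ``apex absorption'' problem can genuinely be engineered away.
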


Additionally, all the structural results in this thesis apply to $2$-connected graphs. However reducibility with $4$-external momenta does not reduce to $2$-connected graphs in all cases. We ask the following question:

\begin{question}
Let $G$ be a graph with a $1$-separation $(A,B)$ such that $A \cap B = \{v\}$. Suppose $G$ has four external momenta and no massive edges. Furthermore suppose that at least two of the momenta lie in $A$ and two of the momenta lie in $B$. When is $G$ reducible with respect to both Symanzik polynomials?
\end{question}

An additional problem which we did not tackle but is interesting is the algorithmic complexity of determining when a graph has a $K_{4}(X)$-minor, $W_{4}(X)$-minor, $K_{2,4}(X)$-minor or an $L(X)$-minor. Monroy and Wood (\cite{root}) conjecture that there is a linear time algorithm in the number of vertices for determining if a graph has a $K_{4}(X)$-minor. This is plausible due to the heavy relationship between $K_{4}(X)$-minors and the two disjoint path problem, which has a practical and efficient algorithm. Given that if we know a graph is $K_{4}(X)$-free, that to determine if the graph is $W_{4}(X)$-free can be reduced to looking a two separations,  we conjecture the following:

\begin{Conjecture}
There is a linear time algorithm for determining if a graph $G$ has a $K_{4}(X)$-minor or $W_{4}(X)$-minor. 
\end{Conjecture}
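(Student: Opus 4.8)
The plan is to combine the structural machinery of Chapter~3 with standard linear-time graph decompositions. First I would reduce to the $2$-connected case using the cut-vertex lemmas (Lemma~\ref{allonesidecutvertex} and its companions): a graph $G$ has a $K_{4}(X)$- or $W_{4}(X)$-minor precisely when, after splitting off the blocks at cut vertices and relocating the roots in the prescribed way, one block carries the corresponding rooted minor. Since the block tree is computable in $O(|V(G)|)$ time and each cut-vertex reduction moves only a constant number of roots, this step is linear and only needs to be carried out once.

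Next I would pass to $3$-connected pieces via the SPQR/triconnected-component decomposition of a $2$-connected graph, which is also linear time. Lemmas~\ref{2sepW4}, \ref{oneterminalonesideothersother}, \ref{allonesidegeneralH}, \ref{noW4minor}, and \ref{oneterminalinthecut2conn} describe exactly how a $W_{4}(X)$-minor behaves across a $2$-separation and how the roots $\{a,b,c,d\}$ should be pushed onto the virtual edges. The key simplification is Theorem~\ref{3connectivityW4K4}: every $3$-connected graph already contains a $K_{4}(X)$- or $W_{4}(X)$-minor. Consequently, once we are inside a $3$-connected piece, testing for $W_{4}(X)$ is unnecessary; it suffices to decide whether the piece has a $K_{4}(X)$-minor with the inherited roots, and if it does not, it automatically has a $W_{4}(X)$-minor.

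This reduces the whole problem to a linear-time test for $K_{4}(X)$-minors, which is the conjecture of Monroy and Wood (\cite{root}). For this base case I would exploit the explicit structure of $K_{4}(X)$-free graphs: by Theorem~\ref{k4free} (or Corollary~\ref{k4(x)3conn} in the $3$-connected case) a $3$-connected graph is $K_{4}(X)$-free iff it is a spanning subgraph of an $\{a,b,c,d\}$-web, and by Corollary~\ref{k4(x)planar} a $3$-connected planar graph is $K_{4}(X)$-free iff $X$ lies on a common face. One would use the $3$-connected web reductions (as in Lemma~\ref{planarreduction} and Lemma~\ref{3connk24}) to produce a $3$-connected planar minor in linear time, run a planarity test, and then check cofaciality of $X$ by a bounded number of calls to a linear-time disjoint-paths routine, the $K_{4}(X)$-problem being essentially a $2$-disjoint-paths problem. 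Carrying the roots correctly through every reduction and bounding the number of reductions by $O(|E(G)|)$, so that the running time stays linear, is the bulk of the bookkeeping.

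The hardest part will be the $K_{4}(X)$ base case itself. Although Theorem~\ref{k4free} gives a completely explicit list of obstructions, converting it into a \emph{certified} linear-time recognition algorithm requires a careful amortized analysis of the web-recognition reductions and, at the bottom, a genuine linear-time algorithm for two disjoint paths (such an algorithm exists but is delicate). By contrast, everything above that level --- the block-tree reduction, the SPQR reduction, and the ``$3$-connected implies $K_{4}(X)$ or $W_{4}(X)$'' shortcut coming from Theorem~\ref{3connectivityW4K4} --- is routine given the lemmas already proved in this chapter.
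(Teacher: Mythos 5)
The statement you are asked to prove is a \emph{Conjecture} in the thesis; the author offers only heuristic motivation for it, not a proof, so there is nothing in the paper to compare your argument against on a step-by-step basis. Evaluated on its own terms, your sketch is not yet a proof, and one step is backwards.

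The main issue is your treatment of the base case. You invoke Theorem~\ref{3connectivityW4K4} (every $3$-connected graph has a $K_{4}(X)$- or a $W_{4}(X)$-minor) and conclude that ``once we are inside a $3$-connected piece, testing for $W_{4}(X)$ is unnecessary; it suffices to decide whether the piece has a $K_{4}(X)$-minor,'' thereby reducing the whole conjecture to the Monroy--Wood conjecture on linear-time $K_{4}(X)$-recognition, which is itself open. But the conjecture under discussion only asks you to decide the \emph{disjunction}: does $G$ have a $K_{4}(X)$-minor or a $W_{4}(X)$-minor? Theorem~\ref{3connectivityW4K4} says that once you have reached a $3$-connected piece with all four (possibly relocated) roots inside it, the answer is unconditionally ``yes.'' No $K_{4}(X)$-test, planarity test, or cofaciality check is needed there. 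So your entire fourth paragraph --- the web reductions, the planarity test, the two-disjoint-paths routine --- is unnecessary machinery aimed at a question the conjecture does not ask, and it needlessly ties the conjecture to an unproven one.

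Once the base case is discharged this way, the genuine content of the conjecture is exactly the part you call ``the bulk of the bookkeeping'' and do not actually establish: showing that the block-tree and SPQR reductions, together with the root-relocation rules of Lemmas~\ref{allonesidecutvertex}--\ref{oneterminalinthecut2conn} and~\ref{2sepW4}--\ref{oneterminalinthecut2conn}, can be implemented in total linear time. This is nontrivial because some of those lemmas (e.g.\ Lemma~\ref{2sepW4}) yield a disjunction over the two sides of a separation, so a na\"{\i}ve recursion could branch; because the roots must be tracked correctly into the right node of the SPQR tree; and because the ``obstruction'' separations (Lemma~\ref{noW4minor} and the cut-vertex case with two roots on each side) need to be detected as they arise. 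You assert linearity of this step without an argument, so the proposal as written is an informal plan rather than a proof. The fix is to drop the $K_{4}(X)$ base case entirely, use Theorem~\ref{3connectivityW4K4} to make every $3$-connected leaf an automatic ``yes,'' and then carry out the amortized analysis of the SPQR-tree walk; that analysis is where a real proof of this conjecture must live.
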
 

Additionally we pose the following question,

\begin{question}
Is there a linear time algorithm for determining if a graph $G$ has a $K_{4}(X)$-minor, $W_{4}(X)$-minor, $K_{2,4}(X)$-minor, or a $L(X)$-minor?
\end{question}

We note there is a cubic time algorithm by appealing to the algorithm of Robertson and Seymour in \cite{Minorsalgorithm}. However one should note that this algorithm is very impractical in general.

Lastly, in general one wants to use reducibility to actually compute integrals, and for that one needs to know a reducible order. Forbidden minors give a method for computing if a graph is reducible, but does not find a reducible order, therefore a natural question is

\begin{question}
Suppose $G$ is reducible. Is there a polynomial time algorithm to find a permutation $\sigma$ such that $G$ is reducible with respect to $\sigma$? 
\end{question}

%
%
%
%
%

\backmatter%
	\addtoToC{Bibliography}
	\bibliographystyle{plain}
	\bibliography{ThesisBib}

\end{document}